\theoremstyle{plain}
\newtheorem{theorem}{Theorem}[section]
\newtheorem{lemma}[theorem]{Lemma}
\newtheorem{prop}[theorem]{Proposition}
\newtheorem{corollary}[theorem]{Corollary}
\newtheorem{definition}[theorem]{Definition}
\newtheorem{remark}[theorem]{Remark}
\theoremstyle{definition}
\theoremstyle{remark}
\numberwithin{equation}{section} \numberwithin{figure}{section}
\begin{document}
\newcommand{\SL}{\mathcal L^{1,p}(\Om)}
\newcommand{\Lp}{L^p(\Omega)}
\newcommand{\CO}{C^\infty_0(\Omega)}
\newcommand{\Rn}{\mathbb R^n}
\newcommand{\Rm}{\mathbb R^m}
\newcommand{\R}{\mathbb R}
\newcommand{\Om}{\Omega}
\newcommand{\Hn}{\mathbb H^n}
\newcommand{\HH}{\mathbb H^1}
\newcommand{\eps}{\epsilon}
\newcommand{\BVX}{BV_H(\Omega)}
\newcommand{\IO}{\int_\Omega}
\newcommand{\bG}{\mathbb{G}}
\newcommand{\bg}{\mathfrak g}
\newcommand{\p}{\partial}
\newcommand{\Xnu}{\overset{\rightarrow}{ X_\nu}}
\newcommand{\nuX}{\boldsymbol{\nu}_H}
\newcommand{\Up}{\boldsymbol{\mathcal Y}_H}
\newcommand{\n}{\boldsymbol \nu}
\newcommand{\sigmau}{\boldsymbol{\sigma}^u_H}
\newcommand{\nui}{\nu_{H,i}}
\newcommand{\nuj}{\nu_{H,j}}
\newcommand{\dej}{\delta_{H,j}}
\newcommand{\cx}{\boldsymbol{c}_\mathscr S}
\newcommand{\sx}{\sigma_H}
\newcommand{\lx}{\mathcal L_H}
\newcommand{\pb}{\overline p}
\newcommand{\qb}{\overline q}
\newcommand{\ob}{\overline \omega}
\newcommand{\nuu}{\boldsymbol \nu_{H,u}}
\newcommand{\nuv}{\boldsymbol \nu_{H,v}}
\newcommand{\Bl}{\Bigl|_{\lambda = 0}}
\newcommand{\mS}{\mathscr S}
\newcommand{\delh}{\Delta_H}
\newcommand{\delinf}{\Delta_{H,\infty}}
\newcommand{\nabh}{\nabla^H}
\newcommand{\nabht}{\tilde{\nabla}^H}
\newcommand{\delp}{\Delta_{H,p}}
\newcommand{\mO}{\mathcal O}
\newcommand{\delhs}{\Delta_{H,S}}
\newcommand{\lhs}{\hat{\Delta}_{H,S}}
\newcommand{\bN}{\boldsymbol{N}}
\newcommand{\bnu}{\boldsymbol \nu}
\newcommand{\la}{\lambda}
\newcommand{\nup}{\boldsymbol{\nu}_H^\perp}
\newcommand{\fv}{\mathcal V^{H}_I(\mS;\mathcal X)}
\newcommand{\sv}{\mathcal V^{H}_{II}(\mS;\mathcal X)}
\newcommand{\di}{\nabla_{i}^{H,\mS}}
\newcommand{\one}{\nabla_{1}^{H,\mS}}
\newcommand{\two}{\nabla_{2}^{H,\mS}}
\newcommand{\del}{\nabla^{H,\mS}}
\newcommand{\delXY}{\nabla^{H,\mS}_X Y}
\newcommand{\oX}{\overline X}
\newcommand{\oY}{\overline Y}
\newcommand{\ou}{\overline u}
\newcommand{\duno}{\nabla^{H,\mS}_1}
\newcommand{\ddue}{\nabla^{H,\mS}_2}
\newcommand{\nh}{\nabla_H}
\newcommand{\jnu}{J_\nu}
\newcommand{\G}{\Gamma}
\newcommand{\vf}{\varphi}
\newcommand{\vt}{\vartheta}
\newcommand{\e}{\varepsilon}
\newcommand{\pt}{P^{(s)}_t} 
\newcommand{\mi}{\no^n}
\newcommand{\no}{\mathbb N_0}
\newcommand{\pa}{\p^\alpha}
\newcommand{\en}{e^{-2\pi i <\xi,x>}}
\newcommand{\eh}{e^{2\pi i <\xi,h>}}
\newcommand{\Ba}{\mathcal{P}_\alpha}
\newcommand{\us}{\R^{n+1}_+}
\newcommand{\din}{\dashint}
\newcommand{\Za}{Z_\alpha}
\newcommand{\ra}{\rho_\alpha}
\newcommand{\na}{\nabla_\alpha}
\newcommand{\Sa}{\mathbb{S}}
\newcommand{\Rnn}{\mathbb R^{n+1}}
\newcommand{\F}{\mathscr{F}}

\def\Xint#1{\mathchoice
{\XXint\displaystyle\textstyle{#1}}%
{\XXint\textstyle\scriptstyle{#1}}%
{\XXint\scriptstyle\scriptscriptstyle{#1}}%
{\XXint\scriptscriptstyle\scriptscriptstyle{#1}}%
\!\int}
\def\XXint#1#2#3{{\setbox0=\hbox{$#1{#2#3}{\int}$ }
\vcenter{\hbox{$#2#3$ }}\kern-.6\wd0}}
\def\ddashint{\Xint=}
\def\dashint{\Xint-}

\frontmatter

\title[Fractional thoughts]{Fractional thoughts}

\author{Nicola Garofalo\footnote{This work was supported in part by a grant ``Progetti d'Ateneo, 2013'', University of Padova}
}

\vskip 2.5in
 
\address{Dipartimento d'Ingegneria Civile e Ambientale (DICEA)\\ Universit\`a di Padova\\ Via Marzolo, 9 - 35131 Padova,  Italy}
\vskip 0.2in
\email{nicola.garofalo@unipd.it}

\maketitle

\begin{abstract}
In this note we present  some of the most basic aspects of the operator $(-\Delta)^s$ with a self-contained and purely didactic intent, and with a somewhat different slant from the existing excellent references. Given the interest that nonlocal operators have generated since the extension paper of Caffarelli and Silvestre \cite{CS07}, we feel it is appropriate offering to young researchers a quick additional guide to the subject which, we hope, will nicely complement the existing ones.
\end{abstract}

\tableofcontents

\mainmatter

\newpage

\dedicatory{\ \ \ \ \ \ \vskip 1.5in Dedicated to my family}

\newpage

\noindent ACKNOWLEDGMENT: 
I thank A. Banerjee, D. Danielli, D. M. Nhieu, A. Petrosyan, C. Pop and X. Ros-Oton for taking the time and effort to read the manuscript at different stages of its preparation and for so graciously providing me with much valuable feedback.

\newpage

\dedicatory{\ \ \ \ \ \ \vskip 1.0in \emph{``...Beyond this there is nothing but prodigies and fictions, the only inhabitants are the poets and inventors of fables; there is no credit, or certainty any farther"}
\vskip 0.2in Plutarch, Lives}

\newpage

\setcounter{page}{5}


\mainmatter



\section{Introduction}\label{S:intro}

In his visionary papers \cite{R} and \cite{R2} Marcel Riesz introduced the fractional powers of the Laplacean in Euclidean and Lorentzian space, developed the calculus of these nonlocal operators and studied the Dirichlet and Cauchy problems for respectively $(-\Delta)^s$ and $(\p_{tt} - \Delta)^s$. The introduction of \cite{R} reads:...``On peut en particulier consid\'erer certains proc\'ed\'es d'int\'egration de charact\`er elliptique, hyperbolique et parabolique respectivement. Dans tout ces proc\'ed\'es l'int\'egrale d'ordre deux joue un r\^ole particulier, elle constitue l'inverse des op\'erations qui figurent respectivement dans l'\'equation de Laplace, celle des ondes et celle de la chaleur. Nous nous sommes occup\'e en particulier des deux premiers proc\'ed\'es et nous avons l'intention de rassembler nos recherches dans un m\'emoire \'elabor\'e. En attendant, nous donnons dans le pr\'esent travail un r\'esum\'e assez d\'etaill\'e de nos r\'esultats concernant l'int\'egration elliptique et les potentiels qui y correspondant..."

 Pseudo-differential operators such as $(-\Delta)^s$, $(\p_{tt} - \Delta)^s$, $(\p_{t} - \Delta)^s$, and the very different operators $\p_{tt} + (-\Delta)^s$ and $\p_t + (-\Delta)^s$, play an important role in many branches of the applied sciences ranging from fluid dynamics, to elasticity and to quantum mechanics. For instance, a main protagonist of geophysical
fluid dynamics is the two-dimensional quasi-geostrophic equation (QGE)
\[
\begin{cases}
\theta_t + <u,\nabla \theta> = \kappa (-\Delta)^s \theta,
\\
u = \nabla^\perp \psi,\ \ \ \ - \theta = (-\Delta)^{1/2} \psi,
\end{cases}
\]
where:
\begin{itemize}
\item $\psi$ is the stream function;
\item $\theta$ is the potential temperature,
\item $u$ is the velocity.
\end{itemize}
The parameter $\kappa$ represents the viscosity, and $s\in (0,1)$. The QGE is one important instance in which the nonlocal operators $(-\Delta)^s$ and $\p_t + (-\Delta)^s$ appear, see \cite{CC2}, \cite{CV}, and the references therein. These nonlocal operators also present themselves in the convergence of nonlocal threshold dynamics approximations to front propagation, see \cite{CSo}. In \cite{FdlL} the authors study stability against collapse of a quantum mechanical system of $N$ electrons and $M$ nuclei interacting by pure Coulomb forces. For a single quantized electron attracted to a single nucleus having charge $Z$ the relevant operator at study is the Hamiltonian
\[
H = (-\Delta)^{1/2} - \frac{\alpha Z}{|x|},
\]
where $\alpha>0$ is the fine structure constant.
Another example comes from elasticity, where the famous Signorini problem has been shown to be equivalent to the obstacle problem for $(-\Delta)^{1/2}$, see  \cite{AC}, \cite{CS07}, \cite{ACS}, \cite{CSS}, \cite{GP} and \cite{PSU}. Yet another instance is the phenomenon of osmosis, whose description can be converted into an obstacle problem for the fractional heat equation $(\p_t - \Delta)^{1/2}$, see \cite{DL} and \cite{DGPT}. In the study of internal travelling solitary waves in a stable two-layer perfect fluid of infinite depth contained above a rigid horizontal bottom one, or in  soliton theory, one has the Benjamin-Ono equation
\[
(-\Delta)^{1/2} u + u - u^2 = 0
\]
on the line $\R$. A basic question is the uniqueness of solutions, see \cite{AT91}, and also the more recent works \cite{FLe13}, \cite{FLeS16} for important generalizations of the results in \cite{AT91}.

Besides these phenomena, the nonlocal operators listed above also arise prominently in other branches of mathematics, such as e.g. geometry, probability and financial mathematics. For some of these aspects we refer the reader to:
\begin{itemize} 
\item[1.] the classical volumes of E. Dynkin on Markov processes \cite{Dy};
\item[2.] the pioneering works of Silvestre \cite{Si}, and Caffarelli and Silvestre \cite{CS07};
\item[3.] the ``obstacle" book by Petrosyan, Shahgholian and Uraltseva \cite{PSU}; 
\item[4.] the hitchiker's guide by Di Nezza, Palatucci and Valdinoci \cite{DPV};
\item[5.] the lecture notes of Bucur and Valdinoci \cite{BV};
\item[6.] the survey paper \cite{dMG} by M. del Mar Gonz\'alez;
\item[7.] the variational book \cite{MRS16} by Molica Bisci, Radulescu and Servadei;
\item[8.] the survey paper \cite{DS} by Danielli and Salsa; 
\item[9.] the survey papers \cite{RO15}, \cite{RO17} and \cite{RO} by Ros-Oton;
\item[10.] the forthcoming volume edited by Kuusi and Palatucci \cite{KP};
\item[11.] the lecture notes \cite{DMV17} by Dipierro, Medina and Valdinoci;
\item[12.] the ``diffusion" lecture notes \cite{V17} by J. L. Vazquez; 
\item[13.] the recent lecture notes \cite{AV} by Abatangelo and Valdinoci.  
\end{itemize}
For an introduction to the subject of fractional differentiation and integration from the point of view of analysis the essential references are:
\begin{itemize}
\item[14.] M. Riesz' already cited original papers \cite{R} and \cite{R2};
\item[15.] E. Stein's landmark book on singular integrals \cite{St};
\item[16.] Landkov's book on potential theory \cite{La};
\item[17.] the volume on fractional differentiation by Samko, Kilbas and Marichev \cite{SKM}. 
\end{itemize}
An interesting account of the fractional calculus, its applications and historical development can be found in:
\begin{itemize}
\item[18.] the fractional book \cite{OS} by Oldham and Spanier;
\item[19.] the article \cite{Ross} by B. Ross.
\end{itemize}

Our objective in this note is presenting  some of the most basic aspects of the operator $(-\Delta)^s$ with a self-contained and purely didactic intent, and with a somewhat different slant from the above cited references which of course reflects the taste of the author. Given the interest that nonlocal operators have generated since the extension paper of Caffarelli and Silvestre \cite{CS07}, we feel it is appropriate offering to young researchers a quick additional guide to the subject which, we hope, will nicely complement the (often more advanced) existing ones. 

A list of the topics covered by this paper is provided by the table of content, but let us say something more in detail:
\begin{itemize}
\item In Section \ref{S:fl} we introduce the main pointwise definition of the nonlocal operator $(-\Delta)^s$, see \eqref{fls} below. This is the starting point of the whole note as all the material presented here is, in one way or the other, derived from it. In Proposition \ref{P:decay} we show that the definition \eqref{fls} implies a decay at infinity of the fractional Laplacean that plays an important role in its analysis.
\item Section \ref{S:mp} contains a brief discussion of the maximum principle, the Harnack inequality and the theorem of Liouville in the fractional setting. We do not make any attempt at discussing these aspects extensively, but we simply confine ourselves to make the (unfamiliar) reader acquainted of the differences with their local counterparts, and then refer to the existing sources.    
\item Section \ref{S:bi} constitutes a brief interlude on two important protagonists of classical analysis which also play a central role in this note: the Fourier transform and Bessel functions. These two classical subjects are inextricably connected. One the one hand, the Bessel functions are eigenfunctions of the Laplacean. On the other, they also appear (the curvature of the unit sphere in $\Rn$ is lurking in the shadows here) as the Fourier transform of the measure carried by the unit sphere. In this connection, and since it is a recurrent ingredient in this note, we recall the classical Fourier-Bessel integral formula due to Bochner, see Theorem \ref{T:Fourier-Bessel} below. 
\item Section \ref{S:ftb} opens with the proof of Proposition \ref{P:slapft}, which describes the action of $(-\Delta)^s$ on the Fourier transform side. This result proves an important fact: the fractional Laplacean is a \emph{pseudo-differential operator}, i.e., one of those nonlocal operators that can be written in the form
\[
Tu(x) = \int_{\Rn} e^{2\pi i<x,\xi>} p(x,\xi) \hat u(\xi) d\xi,
\]  
where the function $p(x,\xi)$, known as the \emph{symbol} of the operator, is required to belong to a certain class. A basic consequence of Proposition \ref{P:slapft} is the semigroup property in Corollary \ref{C:semi} and the ``integration by parts" Lemma \ref{L:ibp}, which shows that $(-\Delta)^s$ is a symmetric operator. We close the section with the computation in Proposition \ref{P:gns} of the normalization constant $\gamma(n,s)$ in the pointwise definition \eqref{fls}.
\item Section \ref{S:riesz} is devoted to discussing a basic question of interest in analysis and geometry which was asked by Strichartz in \cite{Str}, and which has generated a considerable amount of work. We introduce the vector-valued Riesz transform $\mathcal R = \nabla (-\Delta)^{-1/2}$, and we show that being able to answer in the affirmative such question hinges upon the $L^p$ mapping properties of $\mathcal R$. This is in turn intimately connected to the subject of  Section \ref{S:gamma} below.
\item Similarly to the classical Laplacean, $(-\Delta)^s$ preserves spherical symmetry. In Section \ref{S:flrad} we make this property more precise. Using Theorem \ref{T:Fourier-Bessel} we provide an ``explicit" formula for the fractional Laplacean of a spherically symmetric function.
\item The purpose of Section \ref{S:fs} is multifold. Our declared intent is computing the fundamental solution of $(-\Delta)^s$, i.e., proving Theorem \ref{T:fs}. This can be done in several ways. To the best of our knowledge, the approach we choose, although very classical, has not been tried before. We have introduced a regularization \eqref{Ee20} of the fundamental solution, and in Lemma \ref{L:fsreg0}  we compute its Fourier transform. In Lemma \ref{L:fsreg} below we use this result to calculate the fractional Laplacean of the regularized fundamental solution, and with such result we finally prove Theorem \ref{T:fs}. \item Using these results, in Section \ref{S:yamabe} we show that this approach leads in a natural way to the beautiful discovery of the functions \eqref{vyam}. Remarkably, such functions are solutions of the nonlocal nonlinear equation
\[
(-\Delta)^s u = u^{\frac{n+2s}{n-2s}},
\]
which generalizes to the fractional setting the celebrated \emph{Yamabe equation} from Riemannian geometry. The latter is obtained when $s=1$.
\item Section \ref{S:pk} presents in detail the central theme of the analysis of the fractional Laplacean: the extension problem of Caffarelli and Silvestre \eqref{ext2} below. We construct the Poisson kernel for the extension operator, and provide two proofs of \eqref{dn}, which characterizes $(-\Delta)^s$ as the weighted Dirichlet-to-Neumann map of the extension problem. The extension procedure is a very powerful tool which has been applied so far in many different directions, and it is hardly possible to accurately describe the impact of this paper in the field. A prominent one is the theory of \emph{free boundaries}, which was in fact the main motivation behind the work \cite{CS07} itself.  Another remarkable application has been given to geometry in the work \cite{CG11}, where the authors used the extension procedure to characterize the fractional powers of the so-called \emph{Paneitz operator}, a conformally covariant operator of order four, as the (weighted) Dirichlet-to-Neumann map on a conformally compact Einstein manifold. In the opening of Section \ref{S:pk} we also discuss briefly the beautiful 1965 paper \cite{MS} by Muchenhoupt and Stein which seems not known to the fractional community, but that deserves to be considered in connection with the extension procedure.     
\item In Section \ref{S:flso} we discuss one interesting aspect of the extension procedure which is perhaps not so well-known in the fractional community: the link between the nonlocal operator $(-\Delta)^s$ and the subelliptic operator $\Ba$, which we define in \eqref{baaa} below, that was  introduced by S. Baouendi in his 1967 Ph. D. Dissertation \cite{Ba67}. Proposition \ref{P:flp} below shows that $(-\Delta)^s$ arises as the true Dirichlet-to-Neumann map of the so-called Baouendi-Grushin operator. Furthermore, it is possible to relate in a one-to-one onto fashion solutions of the extension operator $L_a$ to those of $\Ba$. In Proposition \ref{P:rhod} we show that there is a direct link between the non-isotropic (sub-Riemannian) pseudo-balls naturally associated with $\Ba$, and the Euclidean balls which are instead the natural ones for the extension operator $L_a$. 
\item In Section \ref{S:how} we exploit this connection further to provide a proof of a fundamental property of the nonlocal operator $(-\Delta)^s$: its hypoellipticity. At first such property might appear surprising since now we are not dealing with solutions of a partial differential equation (pde). But, a moment's thought reveals that, in the end, what really makes harmonic functions infinitely smooth is the fact that they satisfy the crucial integral property \eqref{har} below, which then results into \eqref{k1}. The pde is only a vessel that takes harmonic functions into the blessed land of ``averaging". Since this aspect is shared by solutions of the nonlocal equation $(-\Delta)^s$, we should expect solutions of the latter to be infinitely smooth. The approach we take to the hypoellipticity of $(-\Delta)^s$ is ``elementary" and runs much along the lines of the Caccioppoli-Cimmino-Weyl lemma for the classical Laplacean, but we make use of the extension procedure. With the intent of advertising the link, discussed in Section \ref{S:flso}, with the theory of subelliptic equations, we start from Proposition \ref{P:gar}, which is a representation formula involving the Baouendi operator $\Ba$. We use it to establish a corresponding result for the extension operator $L_a$, see Proposition \ref{P:gar3} below. With such result we prove Theorem \ref{T:abg2} which provides an interesting mean-value formula for solutions to $(-\Delta)^s u = 0$. Finally, in Theorem \ref{T:fh} we establish our ``elementary" version of the Caccioppoli-Cimmino-Weyl lemma for $(-\Delta)^s$. We do not discuss at all the real-analytic hypoellipticity of $(-\Delta)^s$.   
\item Section \ref{S:reg} is devoted to the question of the regularity at the boundary for solutions of the Dirichlet problem \eqref{dpom}. Unlike the interior regularity, here the situation departs drastically from the local case, in the sense that there exist real-analytic domains and real-analytic ``boundary values"  for which the solution to \eqref{dpom} is not better then H\"older continuous at the boundary. One notable example of this negative phenomenon is the \emph{torsion function} for the ball for $(-\Delta)^s$, i.e., the solution to the Dirichlet problem $(-\Delta)^s u = 1$ in $B(0,R)$, $u = 0$ in $\Rn\setminus B(0,R)$. The relevance of such function, which we construct in Proposition \ref{P:tfs}, is multi-faceted. Remaining within the framework of the subject of interest of this section, the torsion function shows that standard Schauder theory fails for $(-\Delta)^s$, or at least such theory needs to be suitably reinterpreted. This negative phenomenon is akin, and not by chance, to the failure of Schauder theory which occurs at the so-called characteristic points in the theory of subelliptic equations. For the fractional Laplacean the correct boundary regularity is provided by Theorems \ref{T:xavi} and \ref{T:grubb} below: the former,  due to Ros-Oton and Serra, states that in the Dirichlet problem with zero ``boundary data" in a $C^{1,1}$ domain $\Om$, the function $\frac{u}{\operatorname{dist}(\cdot,\p \Om)^s}$ is H\"older continuous up to the boundary. The latter, due to Grubb, states that in a $C^\infty$ domain this same function is in fact $C^\infty$ up to the boundary.
\item In partial differential equations, the most fundamental property of interest is the so-called \emph{strong unique continuation property}. It states that if a solution to a certain differential operator $P(x,\p_x)$ vanishes to infinite order at a point of a connected open set, then it must vanish identically. This property is true when $P(x,\p_x) = -\Delta$, but it is shared by large classes of second order partial differential equations, even with very rough coefficients. Section \ref{S:almgren} is devoted to establishing the strong unique continuation property of the fractional Laplacean, see Theorem \ref{T:sucpfl} below. We prove such result using monotonicity formulas of Almgren type. We resort again to the extension procedure, and use the monotonicity formula in Theorem \ref{T:almgrenEO} from \cite{CS07}. The difficulty in using the extension procedure is that the information that a solution of $(-\Delta)^s u = 0$ vanishes to infinite order at a point does not transfer to the solution of the extension problem. One needs to further implement a delicate blowup analysis of a special family of rescalings first introduced in \cite{ACS} in the study of the Signorini problem.      
\item In Section \ref{S:smean} we discuss the nonlocal Poisson kernel for the ball and one of its direct consequences, the mean-value formula for $(-\Delta)^s$. These tools were introduced by M. Riesz in \cite{R} and are by now part of the fractional folklore. We consider the nonlocal mean-value operator $\mathscr A^{(s)}_r u(x)$ defined by \eqref{ar}, \eqref{smvo}, and in Proposition \ref{P:vague} we show that 
\[
\underset{s\to 1}{\lim} \mathscr A^{(s)}_r u(x) = \mathscr M_r u(x),
\]
where $\mathscr M_r u(x)$ is the spherical mean-value operator of the classical potential theory, see \eqref{MA0} below.
In Proposition \ref{P:bps} we show that the nonlocal mean-value operator can be used to provide yet another expression of the fractional Laplacean, much like the Blaschke-Privalov Laplacean is used in classical potential theory to define the Laplacean on nonsmooth functions. In Corollary \ref{C:nlkoebe} we establish a nonlocal analogue of the classical theorem of K\"oebe.
\item Section \ref{S:heat} departs from the previous ones in that we start discussing the heat flow associated with $(-\Delta)^s$. There is of course more than one nonlocal heat equation, but here we focus on $\p_t u + (-\Delta)^s u = 0$. We introduce the fractional heat semigroup \eqref{hsg} below, and we spend most part of the section proving a basic property of the fractional heat kernel $G_s(x,t)$, namely its positivity, see Propositions \ref{P:posG} and \ref{P:sp}.
\item In Section \ref{S:sub} we use the principle of subordination introduced by Bochner to establish a pointwise representation of $(-\Delta)^s$ in terms of the classical heat semigroup, see Theorem \ref{T:flheat} below. 
\item Section \ref{S:moresub}  contains yet another important instance of subordination. In \eqref{ftau} below we introduce Bochner's subordination function and in Theorem \ref{T:sub} show the important fact that the nonlocal heat semigroup is obtained through subordination with the standard heat semigroup. 
\item The chain rule is one of the most basic and useful tools in the theory of partial differential equations. In Section \ref{S:cr} we discuss a simple, yet quite remarkable nonlocal analogue of the chain rule which was first found in \cite{CC}. An important consequence of it is that when $u$ is a solution to $(-\Delta)^s u =0$,  then $u^2$ is also a subsolution. More in general, if $u$ is a nonnegative subsolution, then $u^p$ is a nonnegative subsolution for every $p>1$. We recall that, in the local case, such property is at the heart, for instance, of Moser's proof of the Harnack inequality for divergence form equations with bounded measurable coefficients. 
\item Over the recent years there has been an explosion of activity surrounding the so-called Gamma calculus of Bakry-Emery and various powerful generalizations of the latter. Section \ref{S:gamma} is devoted to providing the reader with a bird's eye-view of the basics of such calculus. Our primary motivation is proposing the development of a nonlocal Gamma calculus. In this perspective the reader might consider this section just as a glimpse into a possibly rich theory to come. In Definition \ref{D:nlcdc} we introduce the notion of nonlocal \emph{carr\'e du champ}. Such object defines a Dirichlet form whose associated energy $\mathscr E_{(s)}(u)$ is given in Definition \ref{D:energy}. Proposition \ref{P:el} shows that $(-\Delta)^s$ has a variational nature, since it arises as the Euler-Lagrange equation of the nonlocal energy $\mathscr E_{(s)}(u)$. The experienced fractional reader will immediately recognize familiar objects here. The section ends with a discussion of the famous Bakry-Emery \emph{curvature-dimension inequality}, at the heart of which there is the celebrated Bochner identity, and with a challenging open question. 
\item In Section \ref{S:LY} we continue the discussion from the previous one. Our intent is to provide the reader with an elementary motivation for undertaking a new bigger effort. Namely, understanding the beautiful Li-Yau theory. We introduce a special case of the celebrated Li-Yau inequality and in Theorem \ref{T:LYH} we provide an elementary proof of such inequality for the classical  heat semigroup in flat $\Rn$. We use this result to give a simple, yet elegant proof of the well-known scale invariant Harnack inequality for the standard heat equation independently proved by Pini and Hadamard in the 50's. We close the section with two equivalent interesting  conjectures regarding the nonlocal heat semigroup.   
\item Section \ref{S:bessel} is devoted to discussing a Li-Yau inequality for the Bessel semigroup \eqref{Ba} on the half-line. The Bessel process $\mathscr B_a$ is ubiquitous in the fractional world, especially in view of his role in the extension procedure. Since this topic is perhaps more frequented by workers in probability than analysts and geometers (with the exception of people in harmonic analysis), we provide a purely analytical construction of the fundamental solution with Neumann boundary conditions of the heat semigroup associated with $\mathscr B_a$, see Proposition \ref{P:fsbessel} below. We close the section with Proposition \ref{P:lybessel}, in which we show that the heat semigroup associated with the Bessel process satisfies an inequality of Li-Yau type. 
\item We end this note with the very brief Section \ref{S:fpl} in which we discuss a fractional \emph{nonlinear} operator which constitutes the nonlocal counterpart of the well-known $p$-\emph{Laplacean} defined by $-\Delta_p u = \operatorname{div}(|\nabla u|^{p-2} \nabla u)$. Since its introduction in \cite{AMRT} and independently in \cite{IN}, the nonlocal $p$-Laplacean $(-\Delta_p)^s$ has generated a great deal of interest in the fractional community and thus we could not close without a brief mention of the fundamental open question in the area: the optimal interior regularity of its variational solutions.  
\end{itemize}    
 
 A notable omission in this note is the beautiful developing theory of nonlocal minimal surfaces. For this we refer the reader to the seminal works \cite{CRS}, \cite{SV}, \cite{CSV}, \cite{CDS}, \cite{DSV}, \cite{DSV2}, and the references therein, as well as to the survey paper \cite{BV} which contains a nice introduction to the subject.

\medskip

The reader understands that, for obvious considerations of space, it is not possible to formally introduce every definition or tool used in this paper. Thus, for instance, we will not discuss the Schwartz space $\mathscr S(\Rn)$ of rapidly decreasing functions in $\Rn$, and its topological dual, the space $\mathscr S'(\Rn)$ of tempered distributions. Similarly, we will not explicitly introduce the topology of the spaces $C^\infty(\Rn)$, or $C^\infty_0(\Rn)$, and their duals, the spaces $\mathscr E'(\Rn)$ of compactly supported distributions, and the larger space of all distributions $\mathscr D'(\Rn)$ on $\Rn$. Nor we will discuss in detail the Fourier transform in $\Rn$. For these topics there exist several excellent classical books, such as for instance: \cite{BC}, \cite{Bo}, \cite{GeS}, \cite{Sc}, \cite{T}, \cite{St}, \cite{SW} and \cite{Y}. One additional source that the reader is encouraged to peruse is the monograph \cite{La}, in which the author provides an extensive discussion of the potential theoretic aspects of the nonlocal Laplacean, based on M. Riesz' paper \cite{R}.

\medskip

Finally, the present note has been written within the constraints imposed by timeliness. Many important and/or relevant references have been left out simply because it has been impossible, within the short amount of time available, to consult the ample existing literature on nonlocal equations. The author sincerely apologizes with all those people whose work is not properly acknowledged here.

\section{The fractional Laplacean}\label{S:fl}

In this section we introduce the protagonist of this note, M. Riesz' fractional Laplacean $(-\Delta)^s$, with $0<s<1$. At the onset we seek to define the action of such nonlocal  operator on a suitable function in the pointwise sense. With this objective in mind, it will be convenient to work with the space $\mathscr S(\Rn)$ of L. Schwartz' rapidly decreasing functions (whose dual $\mathscr S'(\Rn)$ is the space of tempered distributions), although larger classes can be allowed, see Remark \ref{R:more} and Proposition \ref{P:silv} below.
We recall that $\mathscr S(\Rn)$ is the space $C^\infty(\Rn)$ endowed with the metric topology
\[
d(f,g) = \sum_{p=0}^\infty 2^{-p} \frac{||f-g||_p}{1+||f-g||_p},
\]
 generated by the countable family of norms
\begin{equation}\label{cfn}
||f||_p = \underset{|\alpha|\le p}{\sup}\ \underset{x\in \Rn}{\sup}  (1+|x|^2)^{\frac p2} |\p^\alpha f(x)|,\ \ \ \ \ p\in \mathbb N \cup\{0\}.
\end{equation}
As it is customary, if $\alpha = (\alpha_1,...,\alpha_n)$, then $|\alpha| = \alpha_1+...+\alpha_n$, and we have indicated with $\p^\alpha$ the partial derivative $\frac{\p^{|\alpha|}}{\p x_1^{\alpha_1}...\p x_n^{\alpha_n}}$.
 
Our initial observation is the following simple calculus lemma which could be used to provide a probabilistic interpretation of the classical Laplacean on the real line.

\begin{lemma}\label{L:taylor1}
Let $f\in C^2(a,b)$, then for every $x\in (a,b)$ one has
\[
- f''(x) = \underset{y\to 0}{\lim} \frac{2 f(x) - f(x+y) - f(x-y)}{y^2}.
\]
\end{lemma}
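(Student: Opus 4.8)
The plan is to use Taylor's theorem with a suitable form of the remainder to show that the symmetric second difference quotient converges to $f''(x)$. First I would fix $x \in (a,b)$ and note that, since $f \in C^2(a,b)$, for $y$ small enough that $x \pm y \in (a,b)$ we may apply Taylor's formula to second order centered at $x$. The cleanest choice is the integral form of the remainder, writing
\[
f(x+y) = f(x) + f'(x) y + \int_0^y (y-t) f''(x+t)\, dt,
\]
and similarly, replacing $y$ by $-y$,
\[
f(x-y) = f(x) - f'(x) y + \int_0^{-y} (-y-t) f''(x+t)\, dt.
\]
Adding these two identities, the zeroth-order terms combine to $2f(x)$ and the first-order terms cancel, so that
\[
2 f(x) - f(x+y) - f(x-y) = -\int_0^y (y-t) f''(x+t)\, dt - \int_0^{-y} (-y-t) f''(x+t)\, dt.
\]
A change of variable $t \mapsto -t$ in the second integral turns it into $-\int_0^y (t - y) f''(x-t)\, dt = \int_0^y (y-t) f''(x-t)\, dt$ with the appropriate sign bookkeeping, so that after dividing by $y^2$ one obtains an expression of the form $\frac{1}{y^2}\int_0^y (y-t)\bigl(f''(x+t) + f''(x-t)\bigr)\, dt$ up to sign.

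Next I would perform the rescaling $t = y\tau$, $\tau \in (0,1)$, which gives $\frac{1}{y^2} \int_0^y (y-t)(\cdots)\, dt = \int_0^1 (1-\tau)\bigl(f''(x+y\tau) + f''(x - y\tau)\bigr)\, d\tau$. Now I can pass to the limit $y \to 0$ under the integral sign: the continuity of $f''$ at $x$ gives $f''(x \pm y\tau) \to f''(x)$ pointwise in $\tau$, and on a neighborhood of $x$ the function $f''$ is bounded (continuous on a compact subinterval), so dominated convergence applies with dominating function a constant times $(1-\tau)$. Hence the limit equals $f''(x) \int_0^1 2(1-\tau)\, d\tau = f''(x)$, and tracking the overall sign yields $-f''(x) = \lim_{y\to 0} \frac{2f(x) - f(x+y) - f(x-y)}{y^2}$, as claimed.

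There is really no serious obstacle here; the only point requiring a modicum of care is that we are given merely $f \in C^2(a,b)$, not $C^2$ up to the closed interval or $C^3$, so one cannot invoke a Taylor expansion with a pointwise error term $o(y^2)$ obtained from a third derivative, nor blithely use L'Hôpital twice. Using the integral remainder together with the substitution $t = y\tau$ and dominated convergence sidesteps this cleanly and uses only the continuity of $f''$. An alternative I would mention is to apply L'Hôpital's rule once in $y$ (valid since numerator and denominator vanish at $y=0$ and are differentiable), reducing the limit to $\lim_{y\to 0} \frac{f'(x-y) - f'(x+y)}{2y} = \lim_{y\to 0} \frac{1}{2}\bigl(\tfrac{f(x)-f(x+y)}{y} \cdot \tfrac{?}{} \bigr)$ — but this still ultimately needs the definition of $f''$ as a symmetric limit of first differences of $f'$, which again is exactly the continuity-of-$f''$ argument in disguise, so I would present the Taylor-remainder proof as the main line.
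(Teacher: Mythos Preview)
The paper does not actually prove this lemma; it is stated as a ``simple calculus lemma'' and left to the reader. Your integral-remainder argument is correct and is a perfectly appropriate way to supply the missing proof: the Taylor formula with integral remainder is valid under the hypothesis $f\in C^2(a,b)$, the first-order terms cancel as you say, the change of variable in the second integral and the rescaling $t=y\tau$ are handled correctly, and dominated convergence (with $f''$ bounded on a compact neighborhood of $x$) legitimately gives the limit $-f''(x)$. Since the difference quotient is even in $y$, treating $y>0$ suffices.

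One cosmetic point: your closing paragraph on the L'H\^opital alternative trails off mid-formula (the fragment with ``$\tfrac{?}{}$'' is not a valid expression). If you want to include that remark, the clean statement is that one application of L'H\^opital in $y$ gives
\[
\lim_{y\to 0}\frac{-f'(x+y)+f'(x-y)}{2y},
\]
and this equals $-f''(x)$ directly from the definition of the derivative of $f'$ at $x$ (no symmetric-derivative subtlety is needed, since $f'$ is differentiable at $x$). Either fix that sentence or drop it; the main argument stands on its own.
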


The expression in the right-hand side in the equation in Lemma \ref{L:taylor1} is known as the symmetric difference quotient of order two. If we introduce the ``spherical" surface and ``solid" averaging operators 
\[
\mathscr M_y f(x) = \frac{f(x+y) + f(x-y)}{2},\ \ \ \ \ \mathscr A_y f(x) = \frac{1}{2y} \int_{x-y}^{x+y} f(t) dt,
\]
then we can reformulate the conclusion in Lemma \ref{L:taylor1} as follows:
\[
- f''(x) = 2 \underset{y\to 0}{\lim} \frac{f(x) - \mathscr M_y f(x)}{y^2} = 6  \underset{y\to 0}{\lim} \frac{f(x) - \mathscr A_y f(x)}{y^2},
\]
where it is easily seen that the second equality follows from the first one and L'Hopital's rule. The result that follows generalizes this observation to $n\ge 2$.

\begin{prop}\label{P:genlap}
Let $\Om\subset \Rn$ be an open set. For any $f\in C^2(\Om)$ and $x\in \Om$ we have 
\begin{equation}\label{BP0}
- \Delta f (x) = 2n\ \underset{r\to 0}{\lim} \frac{f(x) - \mathscr M_r f(x)}{r^2} = 2(n+2) \underset{r\to 0}{\lim} \frac{f(x) - \mathscr A_r f(x)}{r^2},
\end{equation}
where $\Delta f = \sum_{k=1}^n \frac{\p^2 f}{\p x_k^2}$ is the operator of Laplace.
\end{prop}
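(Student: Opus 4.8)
The plan is to Taylor-expand $f$ to second order about $x$ and then integrate term by term over the sphere $\p B(x,r)$, respectively the ball $B(x,r)$. Fix $x\in\Om$ and pick $r_0>0$ with $\overline{B(x,r_0)}\subset\Om$. For $|y|\le r_0$, Taylor's formula with the Lagrange form of the remainder gives
\[
f(x+y)=f(x)+\langle\nabla f(x),y\rangle+\frac12\sum_{i,j=1}^n \p_{ij}f(x)\,y_iy_j+R(x,y),
\]
where $R(x,y)=\frac12\sum_{i,j=1}^n\bigl(\p_{ij}f(\xi)-\p_{ij}f(x)\bigr)y_iy_j$ for a point $\xi=\xi_{x,y}$ on the segment joining $x$ to $x+y$. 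Since $f\in C^2(\Om)$ and $\overline{B(x,r_0)}$ is compact, the quantity $\omega(r):=\sup\{\,|\p_{ij}f(z)-\p_{ij}f(x)|:\ |z-x|\le r,\ 1\le i,j\le n\,\}$ is nondecreasing and tends to $0$ as $r\to0^+$, whence $|R(x,y)|\le \frac n2\,\omega(|y|)\,|y|^2$ for $|y|\le r_0$.

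Next I would average the expansion over $\p B(x,r)$. The linear term drops out by oddness, and by the rotational symmetry of the sphere together with $\sum_i y_i^2=r^2$ one has $\frac{1}{\sigma(\p B(x,r))}\int_{\p B(x,r)}(z-x)_i(z-x)_j\,d\sigma(z)=\frac{r^2}{n}\delta_{ij}$. Therefore
\[
\mathscr M_r f(x)=f(x)+\frac{r^2}{2n}\,\Delta f(x)+E(r),\qquad |E(r)|\le \frac n2\,\omega(r)\,r^2=o(r^2),
\]
and dividing by $r^2$ and letting $r\to 0$ yields the first identity in \eqref{BP0}. For the second, I would pass to polar coordinates to write $\mathscr A_r f(x)=\frac{n}{r^n}\int_0^r\rho^{n-1}\,\mathscr M_\rho f(x)\,d\rho$, substitute the expansion of $\mathscr M_\rho f(x)$ just obtained, and use the elementary integrals $\int_0^r\rho^{n-1}d\rho=\frac{r^n}{n}$ and $\int_0^r\rho^{n+1}d\rho=\frac{r^{n+2}}{n+2}$ to get
\[
\mathscr A_r f(x)=f(x)+\frac{r^2}{2(n+2)}\,\Delta f(x)+o(r^2),
\]
the remainder being bounded by $\frac{n}{r^n}\int_0^r\rho^{n-1}\,\frac n2\,\omega(\rho)\,\rho^2\,d\rho\le\frac{n^2}{2(n+2)}\,\omega(r)\,r^2=o(r^2)$. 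Dividing by $r^2$ and letting $r\to0$ gives the last expression in \eqref{BP0}. (Alternatively, once the first limit is known, the equality of the second and third quantities in \eqref{BP0} follows from L'Hopital's rule, exactly as in the one-dimensional remark recorded just before the statement.)

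The computation is essentially routine; the only point deserving genuine care — and what I regard as the crux — is the uniform control of the second-order Taylor remainder, i.e.\ upgrading the pointwise estimate on $R(x,y)$ to the assertion that its spherical and solid averages are truly $o(r^2)$ and not merely $O(r^2)$. This is precisely where the hypothesis $f\in C^2(\Om)$, as opposed to mere twice-differentiability at the single point $x$, enters, through the uniform continuity of the second derivatives on the compact neighborhood $\overline{B(x,r_0)}$.
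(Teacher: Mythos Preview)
Your proof is correct. The paper does not actually supply a proof of this proposition; it is stated as a classical fact generalizing the one-dimensional Lemma~\ref{L:taylor1}, with only the remark (made in the $n=1$ discussion preceding the statement) that the second equality follows from the first via L'Hopital's rule. Your argument --- Taylor expansion, cancellation of the linear term by oddness, the identity $\frac{1}{|S(x,r)|}\int_{S(x,r)}(z-x)_i(z-x)_j\,d\sigma=\frac{r^2}{n}\delta_{ij}$, and then passage from spherical to solid averages via polar coordinates --- is exactly the standard one, and is consistent with the spirit of the paper's treatment.

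One small overstatement in your closing paragraph: the $o(r^2)$ control of the remainder does not genuinely require $C^2$ on a neighborhood. Twice differentiability of $f$ at the single point $x$ (in the Fr\'echet sense, i.e.\ existence of a quadratic form $Q$ with $f(x+y)=f(x)+\langle\nabla f(x),y\rangle+\frac12 Q(y,y)+o(|y|^2)$) already gives a remainder that is $o(|y|^2)$ uniformly in the direction $y/|y|$, which is all the averaging step needs. The $C^2$ hypothesis is of course sufficient and is what the statement assumes, but it is not the sharp threshold you suggest.
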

In the equation \eqref{BP0} we have indicated with
\begin{equation}\label{MA0}
\mathscr M_r u(x) = \frac{1}{\sigma_{n-1} r^{n-1}} \int_{S(x,r)} u(y) d\sigma(y),\ \ \ \ \ \ \ \ \ \  \mathscr A_r u(x) =  \frac{1}{\omega_{n} r^{n}} \int_{B(x,r)} u(y) dy, 
\end{equation}
the spherical surface and solid mean-value operators. Here, $B(x,r) = \{y\in \Rn\mid |y-x| < r\}$, $S(x,r) = \p B(x,r)$, $d\sigma$ is the $(n-1)$-Lebesgue measure on $S(x,r)$, and the numbers $\sigma_{n-1}$ and $\omega_n$ respectively represent the measure of the unit sphere and that of the unit ball in $\Rn$, see \eqref{sn1} below.
Either one of the limits in the right-hand side of \eqref{BP0} is known as the \emph{Blaschke-Privalov Laplacean}, and its relevance in potential theory is that, unlike the standard Laplacean, one can define such operator on functions which are not smooth, see for instance \cite{He} and \cite{DP}. For the probabilistic interpretation of the Laplacean one should see \cite{Dy}. 

Before proceeding, and in preparation for the central definition of this section, let us observe that using \eqref{sn1} 
it is easy to recognize that we can write the second identity in \eqref{BP0} in the more suggestive  
fashion:
\begin{equation}\label{MA00}
- \Delta u(x) = \frac{(n+2)\G(\frac{n}2 +1)}{\pi^{\frac{n}2}} \underset{r\to 0^+}{\lim} \int_{\Rn} [2u(x) - u(x+y) - u(x-y)] \frac{1}{r^{n+2}}\mathbf 1_{B(0,r)}(y) dy,
\end{equation}
where we have denoted by $\mathbf 1_E$ the indicator function of a set $E\subset \Rn$.

In the applied sciences it is of great importance to be able to consider fractional derivatives of functions. There exist many different definitions of such operations, see \cite{OS}, \cite{SKM}, and the recent paper \cite{BMST}, but perhaps the most prominent one is based on the notion of (Marcel) Riesz' potential of a function. To motivate such operation let us assume that $n\ge 3$, and recall that in mathematical physics the \emph{Newtonian potential} of a function $f\in \mathcal
S(\Rn)$ is given by
\[
I_2(f)(x) = \frac{1}{4 \pi^{\frac{n}{2}}}
\Gamma\left(\frac{n-2}{2}\right) \int_{\Rn} \frac{f(y)}{|x-y|^{n-2}}
dy,
\]
where we have denoted by $\G(z)$ Euler's gamma function (for its definition and basic properties see Section \ref{S:bi} below). Now, using \eqref{sn1} and the properties of the gamma function, one recognizes that the convolution kernel  $\frac{1}{4 \pi^{\frac{n}{2}}}
\Gamma\left(\frac{n-2}{2}\right) \frac{1}{|x|^{n-2}}$ in the definition of $I_2(f)$ is just the fundamental solution 
\[
E(x) = \frac{1}{(n-2)\sigma_{n-1}} \frac{1}{|x|^{n-2}}
\]
of $-\Delta$. With this observation in mind, we recall the well-known identity of Gauss-Green that says that for any $f\in \mathcal
S(\Rn)$ one has
\[
I_2(-\Delta f) = f.
\]
Recall M. Riesz' words in the opening of this note: ``...l'int\'egrale d'ordre deux joue un r\^ole particulier, elle constitue l'inverse des op\'erations qui figurent respectivement dans l'\'equation de Laplace..." In other words, the Newtonian potential is the inverse of $-\Delta$, i.e., $I_2 = (-\Delta)^{-1}$. This important observation leads to the introduction of M. Riesz' generalization of the Newtonian potential.

\begin{definition}[Riesz' potentials]\label{D:riesz} For any $n\in \mathbb N$, let $0<\alpha<n$. The \emph{Riesz potential} of order $\alpha$ is the operator whose action on a function $f\in \mathscr S(\Rn)$ is given by
\[
I_\alpha(f)(x) = \frac{\Gamma\left(\frac{n-\alpha}{2}\right)}{\pi^{\frac{n}{2}} 2^\alpha
\Gamma\left(\frac{\alpha}{2}\right)} \int_{\Rn}
\frac{f(y)}{|x-y|^{n-\alpha}} dy.
\]
\end{definition}
It is not difficult to prove that $I_\alpha(f)\in C^\infty(\Rn)$ for any $f\in \mathcal
S(\Rn)$. Concerning the definition of $I_\alpha$, we note that the normalization constant in it matches that of $I_2$ when $\alpha = 2$. The important reason behind it is that such constant is chosen to guarantee the 
validity of the following crucial result, a kind of fractional fundamental theorem of calculus, stating that for any $f\in \mathscr S(\Rn)$ one has in $\mathscr S'(\Rn)$
\begin{equation}\label{alphafi}
I_\alpha (-\Delta)^{\alpha/2}
f = (-\Delta)^{\alpha/2} I_\alpha f = f.
\end{equation}
Of course \eqref{alphafi} makes no sense unless we say what we mean by the fractional operator $(-\Delta)^{\alpha/2}$. The most natural way to introduce it (suggested in fact by the spectral theorem) is by defining the action of $(-\Delta)^{\alpha/2}$  on the Fourier
transform side by the equation
\begin{equation}\label{ftfl}
\F((-\Delta)^{\alpha/2}u) = (2\pi |\cdot|)^{\alpha} \F(u),\ \ \
u\in \mathscr S'(\Rn).
\end{equation}
The equation \eqref{alphafi} shows that $I_\alpha$ inverts the fractional powers of the Laplacean, i.e., 
\begin{equation}\label{fi}
I_\alpha = (-\Delta)^{-\alpha/2},\ \ \ \ 0<\alpha <n.
\end{equation}
For this reason $I_\alpha$ is also called the \emph{fractional integration operator} of order $\alpha$, see \cite{R}, but also \cite{St}, \cite{La}, \cite{SKM}. For a nice account of M. Riesz' work one should read the commemorative note \cite{Gar}. An interesting historical overview of the development of fractional calculus is provided by the book \cite{OS} and the article \cite{Ross}.

Since our focus in this note is the fractional Laplacean $(-\Delta)^s$ in the range $0<s<1$, we will henceforth let $s = \alpha/2$ in the above formulas, or equivalently $\alpha = 2s$. Although we have formally introduced such operator in the equation \eqref{ftfl} above, such definition has a major drawback: it is not easy to understand a given function (or a distribution) by prescribing its Fourier transform. It is for this reason that we begin our story by introducing a different pointwise definition of the fractional Laplacean that is more directly connected to the symmetric difference quotient of order two in the opening calculus Lemma \ref{L:taylor1}, and with \eqref{MA00}, and thus has the advantage of underscoring the probabilistic interpretation of the operator $(-\Delta)^s$ as a symmetric random process with jumps, see \cite{MO69}, \cite{Si}, \cite{RO15}  and \cite{BV}. Later in Proposition \ref{P:slapft} we will reconcile the two definitions.

\begin{definition}\label{D:fl}
Let $0<s<1$. The \emph{fractional Laplacean} of a function $u\in \mathscr S(\Rn)$ is the nonlocal operator in $\Rn$ defined by the expression
\begin{equation}\label{fls}
(-\Delta)^s u(x) = \frac{\gamma(n,s)}{2} \int_{\Rn} \frac{2 u(x) - u(x+y) - u(x-y)}{|y|^{n+2s}} dy,
\end{equation}
where $\gamma(n,s)>0$ is a suitable normalization constant that is given implicitly in \eqref{gns}, and explicitly in Proposition \ref{P:gns} below.
\end{definition}

Before proceeding we remark that when dealing with the nonlocal operator $(-\Delta)^s$ one often needs to distinguish between the three possible cases:
\begin{itemize}
\item  $0< s <\frac 12$;
\item $s = \frac 12$;
\item $\frac 12 < s<1$.
\end{itemize}

Since as $s\to 1^-$ the fractional Laplacean tends (at least, formally right now) to $-\Delta$, one might surmise that in the regime $\frac 12 < s<1$ the operator $(-\Delta)^s$ should display properties closer to those of the classical Laplacean, whereas since $(-\Delta)^s\to I$ as $s\to 0^+$, the stronger discrepancies might present themselves in the range $0< s <\frac 12$.

Having said this, it will be good for the reader who is for the first time confronted with definition \eqref{fls} above to have in mind the following quote from p. 51 in \cite{La}:...``\emph{In the theory of M. Riesz kernels, the role of the Laplace operator, which has a local character, is taken...by a non-local integral operator...This circumstance often substantially complicates the theory...}"   

It is obvious that \eqref{fls} defines a linear operator since for any $u, v\in \mathscr S(\Rn)$ and $c\in \R$ one has
\[
(-\Delta)^s(u + v) = (-\Delta)^s u + (-\Delta)^s v,\ \ \ \ \ \ \ \ (-\Delta)^s (cu) = c (-\Delta)^s u.
\]

It is also important to observe that the integral in the right-hand side of \eqref{fls} is convergent. To see this, it suffices to write
\begin{align*}
& \int_{\Rn} \frac{2 u(x) - u(x+y) - u(x-y)}{|y|^{n+2s}} dy = \int_{|y|\le 1} \frac{2 u(x) - u(x+y) - u(x-y)}{|y|^{n+2s}} dy
\\
& + \int_{|y|> 1} \frac{2 u(x) - u(x+y) - u(x-y)}{|y|^{n+2s}} dy.
\end{align*}
Taylor's formula for $C^2$ functions gives for $|y|\le 1$
\[
2 u(x) - u(x+y) - u(x-y) = - <\nabla^2 u(x) y,y> + o(|y|^2),
\]
where we have indicated with $\nabla^2 u$ the Hessian matrix of $u$.
Therefore, 
\[
\left|\int_{|y|\le 1} \frac{2 u(x) - u(x+y) - u(x-y)}{|y|^{n+2s}} dy\right| \le C \int_{|y|\le 1} \frac{dy}{|y|^{n-2(1-s)}} < \infty,
\]
since $0<s<1$. On the other hand, keeping in mind that $u\in \mathscr S(\Rn)$ implies in particular that $u\in L^\infty(\Rn)$, we have
\[
\left|\int_{|y|> 1} \frac{2 u(x) - u(x+y) - u(x-y)}{|y|^{n+2s}} dy\right| \le 4 ||u||_{L^\infty(\Rn)} \int_{|y|> 1} \frac{dy}{|y|^{n+2s}} < \infty.
\]

We have thus seen that for every $u\in \mathscr S(\Rn)$ definition \eqref{fls} provides a well-defined function on $\Rn$.

\begin{remark}\label{R:more}
The reader should note that we have in fact just proved that $(-\Delta)^s u(x)$ is well-defined for every $u\in C^2(\Rn)\cap L^\infty(\Rn)$. For instance, one seemingly trivial, yet useful, situation to which this remark applies is when $u \equiv c\in \R$, for which we have 
\[
(-\Delta)^s c \equiv 0.
\]
Note that such $u$ is not in $\mathscr S(\Rn)$, unless $c = 0$, but of course for such $u$ we have $u\in C^2(\Rn)\cap L^\infty(\Rn)$. 
\end{remark}

Two basic operations in analysis are the Euclidean translations and dilations 
\[
\tau_h f(x) = f(x+h),\ \ \ \  h\in \Rn,\ \ \ \ \ \ \ \  \delta_\la f(x) = f(\la x),\ \ \ \  \la >0.
\]
The next result clarifies the interplay of $(-\Delta)^s$ with them. Its simple proof based on \eqref{fls} is left to the reader.

\begin{lemma}\label{L:diltr}
For every function $u\in \mathscr S(\Rn)$ we have for every $h\in \Rn$
\begin{equation}\label{ti}
(-\Delta)^s (\tau_h u) = \tau_h((-\Delta)^s u),
\end{equation}
and every $\la>0$
\begin{equation}\label{di}
(-\Delta)^s (\delta_\la u) = \la^{2s} \delta_\la((-\Delta)^s u).
\end{equation}
\end{lemma}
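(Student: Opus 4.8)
The plan is to verify both identities directly from the pointwise formula \eqref{fls}, exploiting that the kernel $|y|^{-n-2s}$ behaves well under translations (it is translation invariant in $y$, and translations in $x$ pass through the whole expression) and under dilations (it picks up a simple homogeneity factor after a change of variable).

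First I would prove the translation identity \eqref{ti}. Fix $h\in\Rn$ and $x\in\Rn$, and apply the definition to the function $\tau_h u$, so that $(\tau_h u)(x\pm y) = u(x+h\pm y)$. Then
\[
(-\Delta)^s(\tau_h u)(x) = \frac{\gamma(n,s)}{2}\int_{\Rn}\frac{2u(x+h) - u(x+h+y) - u(x+h-y)}{|y|^{n+2s}}\,dy,
\]
and the integrand is exactly the integrand defining $(-\Delta)^s u$ evaluated at the point $x+h$. Hence the right-hand side equals $\left((-\Delta)^s u\right)(x+h) = \tau_h\!\left((-\Delta)^s u\right)(x)$, which is \eqref{ti}. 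No change of variables is needed here; the only thing to note is that the integral converges, which was already established in the discussion preceding Remark \ref{R:more}.

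Next I would prove the dilation identity \eqref{di}. Fix $\la>0$ and $x\in\Rn$, and write, using $(\delta_\la u)(x\pm y) = u(\la x \pm \la y)$,
\[
(-\Delta)^s(\delta_\la u)(x) = \frac{\gamma(n,s)}{2}\int_{\Rn}\frac{2u(\la x) - u(\la x + \la y) - u(\la x - \la y)}{|y|^{n+2s}}\,dy.
\]
Now perform the change of variables $z = \la y$, so that $dy = \la^{-n}\,dz$ and $|y|^{n+2s} = \la^{-(n+2s)}|z|^{n+2s}$; the Jacobian and the kernel combine to give a net factor $\la^{-n}\cdot\la^{n+2s} = \la^{2s}$. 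Thus
\[
(-\Delta)^s(\delta_\la u)(x) = \la^{2s}\,\frac{\gamma(n,s)}{2}\int_{\Rn}\frac{2u(\la x) - u(\la x + z) - u(\la x - z)}{|z|^{n+2s}}\,dz = \la^{2s}\left((-\Delta)^s u\right)(\la x),
\]
and the last expression is precisely $\la^{2s}\,\delta_\la\!\left((-\Delta)^s u\right)(x)$, establishing \eqref{di}.

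There is no real obstacle in this lemma; it is a routine consequence of the definition, which is presumably why the author leaves it to the reader. The only point requiring a word of care is the justification of the change of variables in the dilation case — one should note that the integral is absolutely convergent (by the estimates already given, valid for any $u\in C^2(\Rn)\cap L^\infty(\Rn)$, which includes $\delta_\la u$), so the substitution $z=\la y$ is legitimate and the manipulations above are rigorous rather than merely formal. If desired, one could alternatively split each integral into $\{|y|\le 1\}$ and $\{|y|>1\}$ before substituting, but this is unnecessary given the established absolute convergence.
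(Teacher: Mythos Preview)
Your proof is correct and is exactly the approach the paper intends: the author explicitly leaves the ``simple proof based on \eqref{fls}'' to the reader, and what you have written is precisely that computation, carried out carefully for both the translation and dilation cases.
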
  
\noindent We note explicitly that the equation \eqref{di} says in particular that $(-\Delta)^s$ is a homogeneous operator of order $2s$. Since obviously $0<2s<2$, one may surmise that this ``decreased" homogeneity is bound to create problems near the boundary in the Dirichlet problem. This aspect will be discussed more precisely in Section \ref{S:reg} below. 

A fundamental property of the Laplacean $\Delta$ is its invariance with respect to the action of the orthogonal group $\mathbb O(n)$ on $\Rn$. This means that if $u$ is a function in $\Rn$, then for every $T\in \mathbb O(n)$ one has $\Delta(u\circ T) = \Delta u \circ T$. The following lemma shows that $(-\Delta)^s$ enjoys the same property.

\begin{lemma}\label{L:sym}
Let $u(x) = f(|x|)$ be a function with spherical symmetry in $C^2(\Rn)\cap L^\infty(\Rn)$. Then, also $(-\Delta)^s u$ has spherical symmetry.
\end{lemma}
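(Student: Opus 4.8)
The plan is to exploit the rotational invariance of both the Lebesgue measure and the kernel $|y|^{-n-2s}$ in the defining formula \eqref{fls}. Fix an arbitrary $T\in\mathbb O(n)$ and set $v = u\circ T$. I would show $(-\Delta)^s v = ((-\Delta)^s u)\circ T$; since $u(x)=f(|x|)$ means $u\circ T = u$ for every $T\in\mathbb O(n)$, this identity immediately forces $(-\Delta)^s u$ to be $\mathbb O(n)$-invariant, hence (being a function on $\Rn$ constant on spheres) of the form $g(|x|)$, i.e.\ spherically symmetric. Note that Remark \ref{R:more} guarantees $(-\Delta)^s u$ is well-defined pointwise for $u\in C^2(\Rn)\cap L^\infty(\Rn)$, and $v=u\circ T$ lies in the same class, so all the integrals below converge absolutely and the manipulations are legitimate.

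The key computation is the change of variables $y = Tz$ in the integral defining $(-\Delta)^s v(x)$. First I would write, for $x\in\Rn$,
\[
(-\Delta)^s v(x) = \frac{\gamma(n,s)}{2}\int_{\Rn}\frac{2u(Tx) - u(Tx+Ty) - u(Tx-Ty)}{|y|^{n+2s}}\,dy,
\]
using $v(w)=u(Tw)$ and linearity of $T$. Then substitute $y = Tz$: since $T$ is orthogonal, $|Ty| = |y|$ (so $|y|^{n+2s}$ becomes $|z|^{n+2s}$) and the Jacobian $|\det T| = 1$, so $dy = dz$. The integral becomes
\[
\frac{\gamma(n,s)}{2}\int_{\Rn}\frac{2u(Tx) - u(Tx+z) - u(Tx-z)}{|z|^{n+2s}}\,dz = (-\Delta)^s u(Tx),
\]
which is exactly $((-\Delta)^s u)\circ T$ evaluated at $x$. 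Applying this with $u(x)=f(|x|)$, for which $u\circ T = u$, yields $(-\Delta)^s u = ((-\Delta)^s u)\circ T$ for all $T\in\mathbb O(n)$.

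The final step is to conclude from $\mathbb O(n)$-invariance that $(-\Delta)^s u$ has the form $g(|x|)$: given any $x,x'$ with $|x|=|x'|$, there is $T\in\mathbb O(n)$ with $Tx = x'$, so $(-\Delta)^s u(x') = (-\Delta)^s u(Tx) = ((-\Delta)^s u)\circ T^{-1}(x') $... more directly, $(-\Delta)^s u(x) = ((-\Delta)^s u)(Tx) = (-\Delta)^s u(x')$, so the value depends only on $|x|$. There is no real obstacle here; the only point requiring a line of care is the justification that the substitution $y\mapsto Tz$ is valid, which follows from absolute convergence established exactly as in the discussion preceding Remark \ref{R:more} (splitting into $|y|\le 1$ and $|y|>1$), applied to $v\in C^2(\Rn)\cap L^\infty(\Rn)$. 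One could alternatively present this as a special case of a more general statement — that $(-\Delta)^s$ commutes with the action of $\mathbb O(n)$ — but for the purpose of this lemma the direct change of variables is cleanest.
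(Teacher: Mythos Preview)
Your approach is correct and essentially identical to the paper's: both proofs exploit the $\mathbb O(n)$-invariance of the kernel $|y|^{-n-2s}$ and of Lebesgue measure via a linear change of variable in \eqref{fls}. One small slip: with the integrand containing $u(Tx\pm Ty)$, the substitution should be $z=Ty$ (i.e.\ $y=T^{t}z$), not $y=Tz$; as written, $Ty$ would become $T^{2}z$ rather than $z$. With that correction the argument goes through verbatim, and indeed this is exactly the change of variable $z=T^{t}y$ the paper uses.
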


\begin{proof}
This follows in a simple way from \eqref{fls}. In order to prove that $(-\Delta)^s u$ is spherically symmetric we need to show that for every $T\in \mathbb O(n)$ and every $x\in \Rn$ one has
\[
(-\Delta)^s u(Tx) = (-\Delta)^s u(x).
\]
We have
\begin{align*}
(-\Delta)^s u(Tx) & = \frac{\gamma(n,s)}{2} \int_{\Rn} \frac{2 f(|Tx|) - f(|Tx+y|) - f(|Tx-y|)}{|y|^{n+2s}} dy
\\
& = \frac{\gamma(n,s)}{2} \int_{\Rn} \frac{2 f(|x|) - f(|x+T^t y|) - f(|x-T^t y|)}{|y|^{n+2s}} dy.
\end{align*}
If we make the change of variable $z = T^t y$, we conclude
\begin{align*}
(-\Delta)^s u(Tx) & = \frac{\gamma(n,s)}{2} \int_{\Rn} \frac{2 f(|x|) - f(|x+z|) - f(|x-z|)}{|Tz|^{n+2s}} dy
\\
& = \frac{\gamma(n,s)}{2} \int_{\Rn} \frac{2 f(|x|) - f(|x+z|) - f(|x-z|)}{|z|^{n+2s}} dy = (-\Delta)^s u(x),
\end{align*}
and we are done.

\end{proof} 

Before proceeding we note the following alternative expression for $(-\Delta)^s$ that is at times quite useful in the computations.

\begin{prop}\label{P:equivalent}
For any $u\in \mathscr S(\Rn)$ one has
\begin{equation}\label{fl2}
(-\Delta)^s u(x) = \gamma(n,s) \operatorname{PV} \int_{\Rn} \frac{u(x) - u(y)}{|x-y|^{n+2s}} dy,
\end{equation}
where now the integral is taken according to Cauchy's principal value sense
\[
\operatorname{PV} \int_{\Rn} \frac{u(x) - u(y)}{|x-y|^{n+2s}} dy = \underset{\e\to 0^+}{\lim} \int_{|y-x|>\e} \frac{u(x) - u(y)}{|x-y|^{n+2s}} dy.
\]
\end{prop}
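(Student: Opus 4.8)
The plan is to reduce the claimed identity \eqref{fl2} to the defining formula \eqref{fls} by a change of variables, after first making sense of the principal value. First I would write, for fixed $x$, the substitution $y \mapsto x - y$ in the integral over $\{|y-x|>\e\}$, which turns $\operatorname{PV}\int_{|y-x|>\e} \frac{u(x)-u(y)}{|x-y|^{n+2s}}\,dy$ into $\int_{|y|>\e} \frac{u(x) - u(x-y)}{|y|^{n+2s}}\,dy$. Separately, the substitution $y \mapsto x+y$ gives $\int_{|y|>\e} \frac{u(x) - u(x+y)}{|y|^{n+2s}}\,dy$. Averaging these two representations of the same quantity (they are equal since both arise from the original integral by a measure-preserving change of variables, using that $|y|^{n+2s}$ is even) yields
\[
\int_{|y-x|>\e} \frac{u(x)-u(y)}{|x-y|^{n+2s}}\,dy = \frac12 \int_{|y|>\e} \frac{2u(x) - u(x+y) - u(x-y)}{|y|^{n+2s}}\,dy.
\]
This is the heart of the matter: the symmetrization that is built into \eqref{fls} is exactly what the principal value produces automatically.

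Next I would justify passing to the limit $\e \to 0^+$ on the right-hand side. The integrand $2u(x) - u(x+y) - u(x-y)$ is $O(|y|^2)$ near the origin by Taylor's formula for $C^2$ functions (as already noted in the discussion preceding \eqref{fl2}), so $\frac{2u(x)-u(x+y)-u(x-y)}{|y|^{n+2s}}$ is absolutely integrable on all of $\Rn$ — the estimate near $0$ gives $\int_{|y|\le 1} |y|^{2-n-2s}\,dy < \infty$ because $2s<2$, and the $L^\infty$ bound handles $|y|>1$. Therefore by dominated convergence $\int_{|y|>\e}(\cdots)\,dy \to \int_{\Rn}(\cdots)\,dy$ as $\e \to 0^+$, and multiplying by $\gamma(n,s)$ recovers precisely the right-hand side of \eqref{fls}. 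Hence the limit defining the principal value in \eqref{fl2} exists and equals $(-\Delta)^s u(x)$.

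The one subtlety worth flagging — and I would state it explicitly rather than sweep it under the rug — is that the principal value is genuinely needed on the left: the single integral $\int_{\Rn} \frac{u(x)-u(y)}{|x-y|^{n+2s}}\,dy$ without the symmetric truncation is \emph{not} absolutely convergent near $y=x$ when $s \ge \frac12$, since there $u(x)-u(y)$ is only $O(|y-x|)$ and $\int_{|y-x|\le 1}|y-x|^{1-n-2s}\,dy$ diverges for $s\ge\frac12$. (For $0<s<\frac12$ the integral does converge absolutely, and the principal value is superfluous.) This is why the computation is phrased through the truncated integrals over $\{|y-x|>\e\}$ and the cancellation is only exposed after the averaging step above. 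There is no real obstacle here — the whole proof is two changes of variable plus one application of dominated convergence — but getting the bookkeeping of the $\frac12$ and the evenness of $|y|^{n+2s}$ right is the only place one can slip.
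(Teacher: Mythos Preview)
Your proof is correct and is essentially the same argument as the paper's, run in the reverse direction: the paper starts from the symmetric integral \eqref{fls}, truncates, splits into two pieces, and changes variables to arrive at the principal value form, whereas you start from the truncated PV integral, symmetrize via the two changes of variable, and pass to the limit to recover \eqref{fls}. Your explicit discussion of why the principal value is genuinely needed for $s\ge\frac12$ matches the paper's closing remark.
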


\begin{proof}
 The expression \eqref{fl2} follows directly from \eqref{fls} above as follows
\begin{align*}
& \frac 12 \int_{\Rn} \frac{2u(x) - u(x+y) - u(x-y)}{|y|^{n+2s}} dy =  \frac 12 \underset{\e\to 0}{\lim} \int_{|y|>\e} \frac{2u(x) - u(x+y) - u(x-y)}{|y|^{n+2s}} dy
\\
& =  \frac 12 \underset{\e\to 0}{\lim} \int_{|y|>\e} \frac{u(x) - u(x+y)}{|y|^{n+2s}} dy + \frac 12 \underset{\e\to 0}{\lim} \int_{|y|>\e} \frac{u(x) - u(x-y)}{|y|^{n+2s}} dy
\\
& = \underset{\e\to 0}{\lim} \int_{|x-y|>\e} \frac{u(x) - u(y)}{|x-y|^{n+2s}} dy.
\end{align*} 
However, it is now necessary to take the principal value of the integral since we have eliminated the cancellation of the linear terms in the symmetric difference of order two, and $u(x) - u(y)$ is only $O(|x-y|)$. Thus, the smoothness of $u$ no longer guarantees the local integrability, unless we are in the regime $0<s<1/2$.  

\end{proof}

One can see from \eqref{fls3} in Proposition \ref{P:slapft} below that for $u\in \mathscr S(\Rn)$ it is not true in general that $(-\Delta)^s u\in \mathscr S(\Rn)$. However, one can verify that $(-\Delta)^s u\in C^\infty(\Rn)$. But $(-\Delta)^s u$ is not only smooth, it also suitably decays at infinity according to the following result.

\begin{prop}\label{P:decay}
Let  $u\in \mathscr S(\Rn)$. Then, for every $x\in \Rn$ with $|x|>1$, we have
\[
|(-\Delta)^s u(x)| \le C_{u,n,s} |x|^{-(n+2s)},
\]
where with $||u||_p$ as in \eqref{cfn}, we have let
\[
C_{u,n,s} = C_{n,s}\left(||u||_{n+2} + ||u||_n + ||u||_{L^1(\Rn)}\right).
\]
\end{prop}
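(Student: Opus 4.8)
The plan is to split the defining integral \eqref{fls} for $(-\Delta)^s u(x)$ into a near part $|y| \le |x|/2$ and a far part $|y| > |x|/2$, exploiting that $|x| > 1$ and that $u$ together with all its derivatives decays faster than any polynomial.

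\textbf{Far part.} On $\{|y| > |x|/2\}$ I would estimate the numerator crudely by $4\|u\|_{L^\infty(\Rn)} \le 4\|u\|_0$ and integrate: $\int_{|y| > |x|/2} |y|^{-(n+2s)}\,dy = c_{n,s}\,(|x|/2)^{-2s}$. This alone only gives decay $|x|^{-2s}$, which is not enough. To recover the extra factor $|x|^{-n}$ I would instead note that on this region at least one of the three points $x$, $x \pm y$ has modulus comparable to $|x|$ or to $|y|$, and use the Schwartz decay of $u$ itself: for $|z|$ large, $|u(z)| \le \|u\|_{n+2}\,(1+|z|^2)^{-(n+2)/2}$. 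Splitting the far region further into $\{|x|/2 < |y| \le 2|x|\}$ (where $|y| \sim |x|$, so the kernel is $\sim |x|^{-(n+2s)}$ and the integral of $|u(x \pm y)|$ over this annulus is bounded by $\|u\|_{L^1(\Rn)}$, while the $2u(x)$ term contributes $|u(x)| \cdot |x|^{-2s} \lesssim \|u\|_{n+2}|x|^{-(n+2s)}$) and $\{|y| > 2|x|\}$ (where $|x \pm y| \ge |y|/2$, so $|u(x\pm y)| \lesssim \|u\|_{n+2}|y|^{-(n+2)}$ and $\int_{|y| > 2|x|} |y|^{-(n+2s)-(n+2)}\,dy \lesssim |x|^{-(n+2s)-(n+2)} \le |x|^{-(n+2s)}$), one gets the desired bound $C\,|x|^{-(n+2s)}$ with constant controlled by $\|u\|_{n+2} + \|u\|_{L^1(\Rn)}$.

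\textbf{Near part.} On $\{|y| \le |x|/2\}$ all three points $x, x\pm y$ lie in $\{|z| \ge |x|/2\}$. Here I would use the second-order Taylor estimate already employed in the excerpt, $|2u(x) - u(x+y) - u(x-y)| \le |y|^2 \sup_{|z-x| \le |y|} \|\nabla^2 u(z)\|$, but now keeping track of the decay of the Hessian: since $|z| \ge |x|/2$ on the relevant segment, $\|\nabla^2 u(z)\| \lesssim \|u\|_{n+2}\,(1+|z|^2)^{-(n+2)/2} \lesssim \|u\|_{n+2}\,|x|^{-(n+2)}$. Hence the near integrand is bounded by $\|u\|_{n+2}\,|x|^{-(n+2)}\,|y|^{2-n-2s}$, and $\int_{|y| \le |x|/2} |y|^{2-n-2s}\,dy = c_{n,s}\,(|x|/2)^{2-2s}$, which yields a contribution $\lesssim \|u\|_{n+2}\,|x|^{-(n+2) + 2 - 2s} = \|u\|_{n+2}\,|x|^{-(n+2s)}$, as required. (The appearance of $\|u\|_{n}$ in the statement's constant presumably accommodates an alternative bookkeeping of the lower-order terms; one can always bound $\|u\|_n \le \|u\|_{n+2}$, so this is harmless.)

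\textbf{Main obstacle.} The routine parts are the kernel integrations; the delicate point is the far-region bookkeeping, specifically ensuring that the naive $|x|^{-2s}$ estimate is upgraded to $|x|^{-(n+2s)}$ by genuinely using the decay of $u$ (not just $\|u\|_{L^\infty}$) on the pieces where $|y| \gtrsim |x|$, and correctly pairing each of the three terms $2u(x)$, $u(x+y)$, $u(x-y)$ with the region where its argument is large. Assembling the three or four sub-estimates and reading off that the total constant has the advertised form $C_{n,s}(\|u\|_{n+2} + \|u\|_n + \|u\|_{L^1(\Rn)})$ is then a matter of collecting terms.
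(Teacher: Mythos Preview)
Your approach is essentially the paper's: the same split at $|y|=|x|/2$, the same Taylor-plus-Hessian-decay argument (controlled by $\|u\|_{n+2}$) on the near part, and the same combination of $\|u\|_{L^1}$ and pointwise decay of $u$ on the far part. The paper's far-region treatment is in fact simpler than yours---no sub-split into an annulus and an outer shell is needed, since on all of $\{|y|\ge |x|/2\}$ one bounds the kernel by $(2/|x|)^{n+2s}$ and integrates $|u(x+y)|$ directly to get the $\|u\|_{L^1}$ contribution, while the $2u(x)$ term is handled via $|u(x)|\lesssim \|u\|_n\,|x|^{-n}$ times $\int_{|y|\ge|x|/2}|y|^{-(n+2s)}\,dy\sim|x|^{-2s}$; this is exactly where the $\|u\|_n$ in the constant comes from.
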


\begin{proof}
 
To see this we write 
\begin{align*}
(\Delta)^s u(x) & = \frac{\gamma(n,s)}{2} \bigg(\int_{|y|<\frac{|x|}{2}} \frac{2 u(x) - u(x+y) - u(x-y)}{|y|^{n+2s}} dy 
\\
& + \int_{|y| \ge \frac{|x|}{2}} \frac{2 u(x) - u(x+y) - u(x-y)}{|y|^{n+2s}} dy\bigg).
\end{align*}  
Taylor's formula gives
\[
2 u(x) - u(x+y) - u(x-y) = - \frac 12 <\nabla^2 u(y^\star) y,y> - \frac 12 <\nabla^2 u(y^{\star \star}) y,y>,
\]
where $y^\star = x + t^\star y$, $y^{\star \star} = x + t^{\star \star} y$, for $t^\star, t^{\star \star}\in [0,1]$. We now observe that on the set where $|y|<\frac{|x|}{2}$ we have by the triangle inequality
\begin{equation}\label{unastar}
|x|  < 2 |y^\star|,\ \ \ \ \ \ \ \ \ \ \ |x| < 2 |y^{\star\star}|.
\end{equation}
Using \eqref{unastar} and the definition \eqref{cfn} of the norm $||u||_{n+2}$ in $\mathscr S(\Rn)$, we find
\begin{align*}
& \left|\int_{|y|<\frac{|x|}{2}} \frac{2 u(x) - u(x+y) - u(x-y)}{|y|^{n+2s}} dy\right| \le \frac 12 \int_{|y|<\frac{|x|}{2}} \frac{|\nabla^2 u(y^\star)| + |\nabla^2 u(y^{\star \star})|}{|y|^{n+2s}} |y|^2 dy 
\\
& \le C  ||u||_{n+2} \left(\int_{|y|<\frac{|x|}{2}} \frac{|y|^2}{(1+|y^\star|^2)^{\frac{n+2}{2}} |y|^{n+2s}} dy + \int_{|y|<\frac{|x|}{2}} \frac{|y|^2}{(1+|y^{\star\star}|^2)^{\frac{n+2}{2}} |y|^{n+2s}}  dy\right)
\\
& \le C |x|^{-n-2} ||u||_{n+2} \int_{|y|<\frac{|x|}{2}}  \frac{dy}{|y|^{n+2s-2}}   = C |x|^{-n-2} ||u||_{n+2} |x|^{2-2s} = C  ||u||_{n+2} |x|^{-(n+2s)},
\end{align*}   
where $C = C_{n,s}>0$. 

Next, we estimate
\begin{align*}
& \left|\int_{|y| \ge \frac{|x|}{2}} \frac{2 u(x) - u(x+y) - u(x-y)}{|y|^{n+2s}} dy\right| \le 2  \int_{|y| \ge \frac{|x|}{2}}  \frac{|u(x+y) - u(x)|}{|y|^{n+2s}} dy 
\\
& \le 2  \int_{|y| \ge \frac{|x|}{2}}  \frac{|u(x+y)| + |u(x)|}{|y|^{n+2s}} dy
\end{align*}
We have
\begin{align*}
& \int_{|y| \ge \frac{|x|}{2}}  \frac{|u(x)|}{|y|^{n+2s}} dy \le \underset{x\in\Rn}{\sup} \left((1+|x|^2)^{\frac n2} |u(x)|\right)  \int_{|y| \ge \frac{|x|}{2}}  \frac{dy}{(1+|x|^2)^{\frac n2} |y|^{n+2s}}
\\
& \le \underset{x\in\Rn}{\sup} \left((1+|x|^2)^{\frac n2} |u(x)|\right) |x|^{-n} \int_{|y| \ge \frac{|x|}{2}}  \frac{dy}{|y|^{n+2s}} = \frac{C ||u||_n}{|x|^{n+2s}},
\end{align*}
where  $C = C_{n,s}>0$. Finally, we have trivially
\begin{align*}
& \int_{|y| \ge \frac{|x|}{2}}  \frac{|u(x+y)|}{|y|^{n+2s}} dy \le \frac{2^{n+2s}}{|x|^{n+2s}} \int_{|y| \ge \frac{|x|}{2}} |u(x+y)| dy = \frac{2^{n+2s}||u||_{L^1(\Rn)}}{|x|^{n+2s}}.
\end{align*}
This completes the proof.

\end{proof}

Proposition \ref{P:decay} has the following nontrivial consequence.

\begin{corollary}\label{C:decay}
Let $u\in \mathscr S(\Rn)$. Then, $(-\Delta)^s u \in C^\infty(\Rn)\cap L^1(\Rn)$.
\end{corollary}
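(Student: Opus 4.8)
The plan is to combine the smoothness statement for $(-\Delta)^s u$, already announced in the text just before Proposition \ref{P:decay}, with the decay estimate of Proposition \ref{P:decay} itself; the corollary is then essentially a bookkeeping exercise. First I would recall (or briefly re-derive) why $(-\Delta)^s u \in C^\infty(\Rn)$ whenever $u\in \mathscr S(\Rn)$: one differentiates \eqref{fls} under the integral sign, noting that for a multi-index $\beta$ the difference $2\,\p^\beta u(x) - \p^\beta u(x+y) - \p^\beta u(x-y)$ enjoys exactly the same two bounds established right after Remark \ref{R:more} (an $O(|y|^2)$ bound near $0$ coming from Taylor's formula applied to $\p^\beta u\in C^2(\Rn)$, and an $L^\infty$ bound away from $0$ since $\p^\beta u\in L^\infty(\Rn)$), so that $\p^\beta\big((-\Delta)^s u\big)(x) = (-\Delta)^s(\p^\beta u)(x)$ and the differentiation under the integral sign is justified by dominated convergence. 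Since this holds for every $\beta$, we get $(-\Delta)^s u\in C^\infty(\Rn)$.

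Next I would settle integrability. By Proposition \ref{P:decay} there is a constant $C_{u,n,s}>0$ such that $|(-\Delta)^s u(x)| \le C_{u,n,s}\,|x|^{-(n+2s)}$ for all $|x|>1$. Hence
\[
\int_{|x|>1} |(-\Delta)^s u(x)|\,dx \le C_{u,n,s} \int_{|x|>1} \frac{dx}{|x|^{n+2s}} < \infty,
\]
the last integral being finite precisely because $n+2s > n$, i.e. $s>0$. On the bounded set $\{|x|\le 1\}$ the function $(-\Delta)^s u$ is continuous, hence bounded, hence integrable there. Adding the two contributions gives $(-\Delta)^s u\in L^1(\Rn)$, and combined with the previous paragraph we conclude $(-\Delta)^s u\in C^\infty(\Rn)\cap L^1(\Rn)$.

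There is no serious obstacle here; the only point requiring a modicum of care is the differentiation-under-the-integral argument for $C^\infty$ smoothness, and even that is a verbatim repetition of the convergence estimates already carried out in the text for the zeroth-order case. I would state it compactly rather than belabor it. One should also remark, for cleanliness, that the estimates are locally uniform in $x$ (the Taylor remainder and the $L^\infty$ norms involved are controlled on compact sets), which is what legitimizes passing the derivative inside; this is immediate from the form of the bounds and needs only a sentence.
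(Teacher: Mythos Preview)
Your proof is correct and matches the paper's approach: the paper gives no explicit proof of the corollary, simply presenting it as a consequence of the $C^\infty$ claim made just before Proposition~\ref{P:decay} together with the decay estimate of that proposition, and your write-up fleshes out exactly these two ingredients. The only minor difference is that the paper later (in the proof of Lemma~\ref{L:Ss}) establishes the identity $\p^\alpha(-\Delta)^s u = (-\Delta)^s \p^\alpha u$ via the Fourier transform rather than by direct differentiation under the integral sign, but either route is fine here.
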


The estimate in Proposition \ref{P:decay} can be written
\[
- C_{u,n,s} |x|^{-(n+2s)} \le -(-\Delta)^s u(x) \le C_{u,n,s} |x|^{-(n+2s)}.
\]
Let us notice that on a nonnegative  bump function the estimate from below can be made stronger, a fact that reflects the nonlocal character of $(-\Delta)^s$. Suppose for instance that $u\in C^\infty_0(\Rn)$, with $0\le u \le 1$, $u \equiv 1$ on $B(0,1)$ and supp$\ u\subset \overline B(0,2)$. Then, for $x\in \Rn\setminus B(0,3)$ one has from \eqref{fls}
\begin{align*}
-(-\Delta)^s u(x) & = \frac{\gamma(n,s)}{2} \int_{\Rn} \frac{u(x+y) + u(x-y)}{|y|^{n+2s}} dy 
\\
& \ge \gamma(n,s) \int_{B(0,1)} \frac{dz}{|x-z|^{n+2s}} dz.
\end{align*}
Since $|x-z|\ge 2$, for $|z|\le 1$ we infer $|x| \ge |x-z| - |z| \ge |x-z| -1\ge |x-z|/2$. This gives for some $C(n,s)>0$
\begin{equation}\label{nln}
-(-\Delta)^s u(x) \ge C(n,s) |x|^{-(n+2s)} > 0,
\end{equation}
which shows that $(-\Delta)^s u$ needs not to vanish even far away from the support of $u$. This is clearly impossible for local operators $P(x,\p_x)$, for which one has the obvious property supp$\ P(x,\p_x)u\subset$ supp$\ u$. 

In the next result we provide a useful expression of $(-\Delta)^s u(x)$ in terms of an integral involving the spherical mean-value operator $\mathscr M_r u(x)$. We will use this result later in the proof of Theorem \ref{T:flheat}.

\begin{prop}\label{P:flave}
Let $u \in \mathscr S(\Rn)$. For every $0<s<1$ one has
\begin{equation}\label{saslap}
(-\Delta)^s u(x) = - \sigma_{n-1} \gamma(n,s) \int_{0}^\infty r^{-1-2s} \big[\mathscr M_r u(x) - u(x)] dr,
\end{equation}
where $\gamma(n,s)$ is the constant in \eqref{fls} (and whose value is given in \eqref{gnsfin} in Proposition \ref{P:gns}).
\end{prop}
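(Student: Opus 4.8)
The plan is to pass to polar coordinates in the defining formula \eqref{fls}. Writing $y = r\omega$ with $r = |y|\in(0,\infty)$ and $\omega\in S^{n-1} = S(0,1)$, so that $dy = r^{n-1}\,dr\,d\sigma(\omega)$ and $|y|^{n+2s} = r^{n+2s}$, the factor $r^{n-1}$ cancels against part of the denominator and leaves a weight $r^{-1-2s}$ multiplying the angular integral of $2u(x) - u(x+r\omega) - u(x-r\omega)$ over $S^{n-1}$.

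First I would record that Fubini's theorem applies, which is exactly the absolute-integrability estimate already carried out right after Definition \ref{D:fl}: on $\{|y|\le 1\}$ Taylor's formula gives $|2u(x) - u(x+y) - u(x-y)| \le C|y|^2$, so the contribution is dominated by $\int_{|y|\le 1}|y|^{2-n-2s}\,dy<\infty$ since $s<1$; on $\{|y|>1\}$ the integrand is bounded by $4\|u\|_{L^\infty(\Rn)}|y|^{-n-2s}$, which is integrable since $s>0$. Consequently the integral in \eqref{fls} may be rewritten as the iterated integral
\[
\int_{\Rn}\frac{2u(x) - u(x+y) - u(x-y)}{|y|^{n+2s}}\,dy = \int_0^\infty r^{-1-2s}\left(\int_{S^{n-1}}\big[2u(x) - u(x+r\omega) - u(x-r\omega)\big]\,d\sigma(\omega)\right)dr .
\]

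Next I would evaluate the angular integral. Since $\int_{S^{n-1}}d\sigma = \sigma_{n-1}$, the definition \eqref{MA0} of the spherical mean gives $\int_{S^{n-1}}u(x+r\omega)\,d\sigma(\omega) = \sigma_{n-1}\mathscr M_r u(x)$, and the substitution $\omega\mapsto-\omega$ (which preserves $d\sigma$) shows $\int_{S^{n-1}}u(x-r\omega)\,d\sigma(\omega)$ equals the same quantity. Hence the angular integral is $2\sigma_{n-1}u(x) - 2\sigma_{n-1}\mathscr M_r u(x) = -2\sigma_{n-1}\big[\mathscr M_r u(x) - u(x)\big]$. Plugging this back in and multiplying by $\gamma(n,s)/2$ produces exactly \eqref{saslap}.

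I do not expect a genuine obstacle: the one point deserving a sentence is the justification of Fubini, and this is precisely the estimate above, now read in polar form — near $r=0$ the bracket $\mathscr M_r u(x) - u(x)$ is $O(r^2)$, so $r^{-1-2s}$ times it behaves like $r^{1-2s}$ with $1-2s>-1$, while near $r=\infty$ the bracket is bounded and $r^{-1-2s}$ is integrable since $2s>0$. Everything else is a routine change of variables.
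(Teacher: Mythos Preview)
Your proof is correct and follows essentially the same route as the paper: both pass to polar coordinates (the paper calls it Cavalieri's principle) and identify the angular integral with $\sigma_{n-1}\mathscr M_r u(x)$. The only cosmetic difference is that you work directly from the symmetric second-difference expression \eqref{fls}, whereas the paper writes the computation using the form $u(x)-u(y)$ from \eqref{fl2}; your version has the minor advantage that no principal value needs to be mentioned, and your justification of Fubini via the $O(r^2)$ behavior of $\mathscr M_r u(x)-u(x)$ near $r=0$ matches the paper's own convergence remark.
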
  

\begin{proof}
We observe that \eqref{BP0} in Proposition \ref{P:genlap} shows that $\mathscr M_r u(x) - u(x) = O(r^2)$ as $r\to 0^+$. Therefore, the integrand in the right-hand side of \eqref{saslap} behaves like $r^{1-2s}$ as $r\to 0^+$. Since at infinity it behaves like $r^{-1-2s}$, we conclude that the integral in the right-hand side of \eqref{saslap} is convergent. Next, we note that Cavalieri's principle allows to write \eqref{fls} in the following way
\begin{align*}
(-\Delta)^s u(x) & = \gamma(n,s) \int_0^\infty \int_{S(x,r)} \frac{u(x) - u(y)}{|x-y|^{n+2s}} d\sigma(y) dr 
\\
& = \gamma(n,s) \int_0^\infty \frac 1{r^{n+2s}} \int_{S(x,r)} [u(x) - u(y)] d\sigma(y) dr
\\
& = - \sigma_{n-1} \gamma(n,s)  \int_0^\infty \frac {r^{n-1}}{r^{n+2s}} [\mathscr M_r u(x) - u(x)] dr.
\end{align*}
This gives the desired conclusion.

\end{proof}

We close this section by introducing another functional class that is relevant in connection with the nonlocal operator $(-\Delta)^s$ and whose motivation will become clearer later in this note. Up to now, in order to define $(-\Delta)^s u(x)$ pointwise as in formula \eqref{fls} above we have considered functions $u$ in $\mathscr S(\Rn)$ or in $C^2(\Rn)\cap L^\infty(\Rn)$. There exist however  larger spaces in which it is still possible to define the nonlocal Laplacean either pointwise or as a tempered distribution. 

\begin{definition}\label{D:sobs}
Let $0<s<1$. We denote by $\mathscr L_s(\Rn)$ the space of measurable functions $u:\Rn\to \overline \R$ for which the norm
\[
||u||_{\mathscr L_s(\Rn)} = \int_{\Rn} \frac{|u(x)|}{1+|x|^{n+2s}} dx < \infty.
\]
\end{definition}
Notice that we trivially have $u\in \mathscr L_s(\Rn)\Longrightarrow u\in L^1_{loc}(\Rn)$. This inclusion implies in particular that 
\[
\mathscr S(\Rn) \subset \mathscr L_s(\Rn)\subset \mathscr S'(\Rn).
\]
We also note that for any $1\le p\le \infty$, we have $L^p(\Rn) \subset \mathscr L_s(\Rn)$.
 
\begin{remark}\label{R:space}
Definition \ref{D:sobs} can be found on p. 73 in \cite{Si}, but the class $\mathscr L_s(\Rn)$ was implicitly first introduced in (1.6.4) in \cite{La} in connection with the nonlocal mean-value operator. For the latter one should see Definition \ref{D:nlpk} below.
\end{remark}

From the definition \eqref{fls} it should be clear to the reader that if $u\in C^2(\Rn)\cap \mathscr L_s(\Rn)$, then we can define $(-\Delta)^s u(x)$ at every $x\in \Rn$. 
This conclusion continues to be true provided that $u$ has a minimal smoothness in terms of the following H\"older class.

\begin{definition}\label{D:holder}
Given $0<s<1$ and $\e>0$ sufficiently small, we define the class $C^{2s+\varepsilon}_{loc}$ according to the following convention:
\[
C_{loc}^{2s+\varepsilon} = \begin{cases}
C_{loc}^{0,2s+\e},\ \ \ \ \ \ \ \ \ \text{if}\ \ 0<s<\frac 12,
\\
C_{loc}^{1,2s+\e-1},\ \ \ \ \ \  \text{if}\ \ \frac 12 \le s < 1.
\end{cases}
\]
\end{definition}

The following result is a special case of Proposition 2.4 in \cite{Si}. For its simple proof we refer the reader to that source.

\begin{prop}\label{P:silv}
Let $u\in \mathscr L_s(\Rn)\cap C_{loc}^{2s+\varepsilon}$, for some $\e\in (0,1)$. Then, for every $x \in \Rn$ the integral in \eqref{fls} is convergent, and $(-\Delta)^s u\in C(\Rn)$.
\end{prop}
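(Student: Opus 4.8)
The plan is to split the integral in \eqref{fls} into a near-diagonal piece and a far piece, exactly as in the convergence discussion following Definition \ref{D:fl} and in the proof of Proposition \ref{P:decay}, but now keeping track of how each piece depends on $x$ so as to conclude continuity rather than mere finiteness. Fix a point $x_0\in\Rn$ and a ball $B = B(x_0,R)$; it suffices to show $(-\Delta)^s u$ is continuous on $B$. First I would write, for $x\in B$,
\[
(-\Delta)^s u(x) = \frac{\gamma(n,s)}{2}\left(\int_{|y|\le \rho} \frac{2u(x) - u(x+y) - u(x-y)}{|y|^{n+2s}}\,dy + \int_{|y|>\rho} \frac{2u(x) - u(x+y) - u(x-y)}{|y|^{n+2s}}\,dy\right)
\]
for a small fixed $\rho>0$, and estimate the two pieces separately.

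For the far piece, the integrand is dominated uniformly in $x\in B$ by an integrable function of $y$: indeed $|2u(x)-u(x+y)-u(x-y)| \le 2|u(x)| + |u(x+y)| + |u(x-y)|$, and since $u\in\mathscr L_s(\Rn)\cap C^{2s+\e}_{loc}$ one has $u$ bounded on the compact set $\overline B$ (it is continuous there) while $\int_{|y|>\rho}\frac{|u(x+y)|+|u(x-y)|}{|y|^{n+2s}}\,dy \le C_\rho\,||u||_{\mathscr L_s(\Rn)}$ uniformly for $x\in\overline B$, using $|y|^{n+2s}\ge c_\rho(1+|x+y|^{n+2s})$ once $|y|>\rho$ and $x$ is bounded. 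Hence by continuity of $u$ and dominated convergence, $x\mapsto \int_{|y|>\rho}(\cdots)\,dy$ is continuous on $B$. For the near piece, I would use the local Hölder regularity: by Definition \ref{D:holder}, on $|y|\le\rho$ a second-order Taylor-type expansion (first order when $s<\tfrac12$, via the $C^{0,2s+\e}$ bound on $u$; exact first order plus a Hölder modulus on $\nabla u$ when $s\ge\tfrac12$, via the $C^{1,2s+\e-1}$ bound) gives the pointwise bound $|2u(x)-u(x+y)-u(x-y)| \le C\,[u]_{\overline B(x_0,R+\rho)}\,|y|^{2s+\e}$ with a constant depending only on the relevant local Hölder seminorm of $u$ on a slightly enlarged compact set. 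Therefore the near-piece integrand is dominated by $C|y|^{\e - n}$, which is integrable near $0$ and independent of $x\in B$, so again dominated convergence plus continuity of $u$ (hence of $x\mapsto 2u(x)-u(x+y)-u(x-y)$ for each fixed $y$) yields continuity of the near piece.

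Combining the two, $(-\Delta)^s u$ is continuous at every point of $B$, and since $x_0$ and $R$ were arbitrary, $(-\Delta)^s u\in C(\Rn)$; convergence of the integral at each fixed $x$ is a byproduct of the same two bounds. The main obstacle, and the only place the hypotheses are used in an essential way, is producing the uniform (in $x$ over a compact set) domination for the near-diagonal piece in the full range $0<s<1$: this is exactly why one needs the case split in Definition \ref{D:holder}, since for $s\ge\tfrac12$ a bare Hölder bound on $u$ of order $2s+\e$ is not available and one must instead expand to first order and control the increment of $\nabla u$. Once that case analysis is set up, everything else is a routine application of the dominated convergence theorem, and — as the statement itself notes — the argument is precisely the one of Proposition 2.4 in \cite{Si}, to which the author defers for the details.
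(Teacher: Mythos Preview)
Your proposal is correct and is precisely the standard argument. The paper itself does not supply a proof of this proposition; it simply refers the reader to Proposition~2.4 in \cite{Si}, and your near/far decomposition together with the case split dictated by Definition~\ref{D:holder} is exactly the argument one finds there. There is nothing to compare.
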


Although the next result follows directly from the inclusion $\mathscr L_s(\Rn)\subset \mathscr S'(\Rn)$, we nonetheless present a different simple proof.

\begin{corollary}\label{C:sob}
 Let $u\in \mathscr L_s(\Rn)$, then $(-\Delta)^s u\in  \mathscr S'(\Rn)$.
 \end{corollary}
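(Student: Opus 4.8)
The plan is to show that $(-\Delta)^s u$, defined as a linear functional on test functions via the pointwise formula \eqref{fls}, extends continuously to all of $\mathscr S(\Rn)$, i.e. that it is a tempered distribution. The key idea is to pair $(-\Delta)^s u$ against a test function $\varphi \in \mathscr S(\Rn)$ and move the operator onto $\varphi$ by symmetrizing the difference quotient. Concretely, I would write
\[
\langle (-\Delta)^s u, \varphi\rangle = \frac{\gamma(n,s)}{2}\int_{\Rn}\int_{\Rn} \frac{2u(x) - u(x+y) - u(x-y)}{|y|^{n+2s}}\,\varphi(x)\,dy\,dx,
\]
and, using Fubini together with the translation-invariance of Lebesgue measure (replacing $x$ by $x\mp y$ in the appropriate pieces), re-express this as
\[
\langle (-\Delta)^s u, \varphi\rangle = \frac{\gamma(n,s)}{2}\int_{\Rn} u(x) \left(\int_{\Rn} \frac{2\varphi(x) - \varphi(x+y) - \varphi(x-y)}{|y|^{n+2s}}\,dy\right) dx = \int_{\Rn} u(x)\,(-\Delta)^s\varphi(x)\,dx.
\]
In other words, the claim reduces to the statement that $u \mapsto \int_{\Rn} u\,(-\Delta)^s\varphi\,dx$ is a continuous functional on $\mathscr S(\Rn)$ for each fixed $\varphi$, which follows once we know $(-\Delta)^s\varphi$ decays like $|x|^{-(n+2s)}$ at infinity and is bounded near the origin.

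The main technical input is precisely Proposition \ref{P:decay} (and Corollary \ref{C:decay}): for $\varphi \in \mathscr S(\Rn)$ we have $(-\Delta)^s\varphi \in C^\infty(\Rn)$ with $|(-\Delta)^s\varphi(x)| \le C_{\varphi,n,s}|x|^{-(n+2s)}$ for $|x|>1$, and $(-\Delta)^s\varphi$ is bounded on $\{|x|\le 1\}$ by continuity. Hence the weight $w(x) := (1+|x|^{n+2s})$ satisfies $|(-\Delta)^s\varphi(x)| \le C_{\varphi,n,s}\,w(x)^{-1}$ on all of $\Rn$, so that for any $u \in \mathscr L_s(\Rn)$,
\[
\left|\int_{\Rn} u(x)\,(-\Delta)^s\varphi(x)\,dx\right| \le C_{\varphi,n,s}\int_{\Rn}\frac{|u(x)|}{1+|x|^{n+2s}}\,dx = C_{\varphi,n,s}\,\|u\|_{\mathscr L_s(\Rn)} < \infty.
\]
This shows the pairing is finite. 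To get continuity in $\varphi$ one tracks the dependence of the constant $C_{\varphi,n,s}$ in Proposition \ref{P:decay}, which is a sum of the Schwartz seminorms $\|\varphi\|_{n+2} + \|\varphi\|_n + \|\varphi\|_{L^1(\Rn)}$, together with a sup bound for $(-\Delta)^s\varphi$ on the unit ball (also controlled by finitely many seminorms via the same Taylor-expansion estimate used in Proposition \ref{P:decay}). Since $\|\varphi\|_{L^1(\Rn)}$ is itself dominated by a Schwartz seminorm, the functional $\varphi \mapsto \langle(-\Delta)^su,\varphi\rangle$ is bounded by a finite sum of seminorms of $\varphi$, hence continuous on $\mathscr S(\Rn)$, which is exactly the definition of $(-\Delta)^s u \in \mathscr S'(\Rn)$.

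The one point requiring care — and the only real obstacle — is justifying the interchange of the order of integration and the change of variables in the symmetrization step above, since before symmetrizing, the double integral is not obviously absolutely convergent (for a general $u \in \mathscr L_s(\Rn)$ the inner $y$-integral need not converge absolutely near $y=0$ unless one first uses the second-difference cancellation). The clean way around this is to never symmetrize on the $u$ side at all: simply \emph{define} $\langle(-\Delta)^su,\varphi\rangle := \int_{\Rn} u\,(-\Delta)^s\varphi\,dx$, observe by the estimate above that this is a well-defined continuous functional on $\mathscr S(\Rn)$, and note that when $u \in \mathscr S(\Rn)$ (or more generally $u\in C^2\cap L^\infty$) it agrees with the pointwise definition \eqref{fls} by an elementary Fubini argument where absolute convergence \emph{is} available (using the second-difference Taylor bound on $u$ near $y=0$, as in the convergence discussion following Definition \ref{D:fl}). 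Thus $(-\Delta)^s u \in \mathscr S'(\Rn)$ for every $u \in \mathscr L_s(\Rn)$, completing the proof.
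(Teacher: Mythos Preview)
Your proof is correct and follows essentially the same approach as the paper: define the pairing by $\langle(-\Delta)^s u,\varphi\rangle := \int_{\Rn} u\,(-\Delta)^s\varphi\,dx$ and bound it using the decay estimate of Proposition~\ref{P:decay}. The paper's argument is more terse (it writes only the chain of inequalities), whereas you spell out the continuity in $\varphi$ and the Fubini issue more carefully, but the core idea is identical.
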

 
\begin{proof}
We have in fact for every $\vf\in \mathscr S(\Rn)$
\begin{align*}
& \left|<(-\Delta)^s u,\vf>\right| = \left|<u,(-\Delta)^s \vf>\right| 
\\
& =  \left|\int_{\Rn} u(x) (-\Delta)^s \vf(x) dx\right| \le C_{n,s}   \int_{\Rn} \frac{|u(x)|}{1+|x|^{n+2s}} dx < \infty,
\end{align*}
where in the second to the last inequality we have used Proposition \ref{P:decay} above.

\end{proof}


\section{Maximum principle, Harnack inequality and Liouville theorem}\label{S:mp}

In this section we briefly discuss the three properties in the title with the intent of bringing to the reader's attention those changes that are imposed by the nonlocal nature of $(-\Delta)^s$. One of the most fundamental properties of the theory of second order elliptic and parabolic equations is the so called maximum principle. Let $\Om\subset \Rn$ be an open set and let $u\in C^2(\Om)$. A standard fact from calculus states that if $u$ attains a local maximum at a point $x_0\in \Om$, then for the Hessian matrix of $u$ we must have $- \nabla^2 u(x_0) \ge 0$. In particular, this gives $- \Delta u(x_0) = \operatorname{trace}(-\nabla^2 u(x_0))\ge 0$. The (weak) maximum principle for $-\Delta$ states that a subharmonic function in a bounded open set (or, for that matter, any convex function) must attain it supremum on the boundary. The strong maximum principle says much more. For a subharmonic function $u$ in a connected open set we must have $u(x) < \underset{\Om}{\sup}\ u$ for every $x\in \Om$, unless $u \equiv \underset{\Om}{\sup}\ u$.  

What happens with $s$-subharmonic functions, i.e., solutions of $(-\Delta)^s u \le 0$? Suppose $u$ has a \emph{global maximum} at $x = x_0$, i.e., for instance $u(x) \le u(x_0)$ for every $x\in \Rn$. Then, a property analogous to the local case trivially holds. Under such assumption we have in fact from \eqref{fls} 
\begin{equation}\label{mp}
(-\Delta)^s u(x_0) = \frac{\gamma(n,s)}{2} \int_{\Rn} \frac{2 u(x_0) - u(x_0+y) - u(x_0-y)}{|y|^{n+2s}} dy \ge 0,
\end{equation}
with strict inequality if $x_0$ is a strict global maximum. 

However, the inequality \eqref{mp} fails to hold, in general, when $x_0$ is only a \emph{local maximum} for $u$! Thus, in the nonlocal case the maximum principle, weak or strong, does not admit a formulation similar to the local one. This is caused by the nonlocal nature of $(-\Delta)^s$ which makes a $s$-harmonic (or, more in general, a $s$-subharmonic function) feel the effect of far away data. For an interesting example of this negative aspect one should see Theorem 2.2 in \cite{Ka2}, and also Theorem 3.3.1 in \cite{BV}.  The appropriate nonlocal maximum principle for $(-\Delta)^s$ is as follows.

\begin{prop}\label{P:mp}
Let $\Om\subset \Rn$ be a bounded open set and let $(-\Delta)^s u \ge 0$ in $\Om$. If $u\ge 0$ in $\Rn \setminus \Om$, then $u\ge 0$ in $\Om$.
\end{prop}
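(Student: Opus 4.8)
The plan is to argue by contradiction, exploiting the global nature of the nonlocal operator to convert a would-be negative interior minimum into a contradiction with the hypothesis $(-\Delta)^s u \ge 0$. First I would suppose that the conclusion fails, i.e., that $\inf_\Om u < 0$. Since $u \ge 0$ on $\Rn \setminus \Om$ and (after the usual reduction) $\bar\Om$ is compact with $u$ at least continuous there, the infimum over $\Rn$ is attained at some point $x_0 \in \Om$, and $u(x_0) = \inf_{\Rn} u < 0$. In particular $x_0$ is a \emph{global} minimum of $u$ on all of $\Rn$, which is exactly the situation in which the pointwise formula \eqref{fls} behaves like the local case.

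The key step is then to evaluate $(-\Delta)^s u(x_0)$ using \eqref{fls}:
\[
(-\Delta)^s u(x_0) = \frac{\gamma(n,s)}{2} \int_{\Rn} \frac{2u(x_0) - u(x_0+y) - u(x_0-y)}{|y|^{n+2s}}\, dy.
\]
Because $x_0$ is a global minimum, $u(x_0+y) \ge u(x_0)$ and $u(x_0-y) \ge u(x_0)$ for every $y$, so the numerator is $\le 0$ everywhere, whence $(-\Delta)^s u(x_0) \le 0$. Combined with the hypothesis $(-\Delta)^s u(x_0) \ge 0$ this forces $(-\Delta)^s u(x_0) = 0$, and therefore the nonpositive integrand must vanish for a.e.\ $y$. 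This means $u(x_0 + y) + u(x_0 - y) = 2u(x_0)$ for a.e.\ $y$, and since each summand is $\ge u(x_0)$, we conclude $u(x_0+y) = u(x_0)$ for a.e.\ $y \in \Rn$, i.e., $u \equiv u(x_0)$ a.e.\ on $\Rn$.

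But $u(x_0) < 0$, while $u \ge 0$ on the set $\Rn \setminus \Om$, which has positive (indeed infinite) Lebesgue measure since $\Om$ is bounded. This is the desired contradiction, so $\inf_\Om u \ge 0$, i.e., $u \ge 0$ in $\Om$.

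The main obstacle — and the point where one must be slightly careful — is the regularity needed to make \eqref{fls} meaningful pointwise at $x_0$ and to guarantee that the infimum is actually attained: strictly speaking one should assume $u$ lies in a class (e.g.\ $\mathscr L_s(\Rn) \cap C^{2s+\e}_{loc}$, as in Proposition \ref{P:silv}, together with enough continuity on $\bar\Om$) for which $(-\Delta)^s u(x_0)$ is a well-defined number and the minimum is achieved; under such hypotheses the argument above goes through verbatim. If instead the statement is meant in the viscosity or distributional sense, one replaces the pointwise evaluation by testing against a suitable family of bump functions touching $u$ from below at $x_0$, but the mechanism — a global minimum makes the nonlocal integrand have a sign — is identical.
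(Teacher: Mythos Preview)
Your argument is correct and is essentially the standard proof. The paper itself does not give a proof of this proposition, referring instead to \cite{Si} and \cite{BV}, where precisely this contradiction argument appears; the paper does, however, isolate the key ingredient in equation \eqref{mp} just before the statement---at a global extremum the nonlocal integrand has a definite sign.

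One small simplification worth noting: you do not need the detour through ``$(-\Delta)^s u(x_0)=0$ forces $u\equiv u(x_0)$ a.e.'' Since $u(x_0)<0$ while $u\ge 0$ on $\Rn\setminus\Om$, for every $y$ with $x_0+y\in\Rn\setminus\Om$ one has $u(x_0+y)\ge 0 > u(x_0)$, so the integrand $2u(x_0)-u(x_0+y)-u(x_0-y)$ is strictly negative on a set of infinite Lebesgue measure. This gives $(-\Delta)^s u(x_0)<0$ directly, contradicting the hypothesis in one stroke. Your regularity caveat is well taken and is exactly the sort of hypothesis the cited references make explicit.
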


We stress that in Proposition \ref{P:mp} the ``boundary values" of the $s$-subharmonic function $u$ are prescribed not just on $\p \Om$, but globally in $\Rn\setminus \Om$. Interpreted in the sense of Proposition \ref{P:mp} also the strong maximum principle holds. For a proof of Proposition \ref{P:mp} we refer the reader to \cite{Si} and \cite{BV}. One should also see the discussion in the survey paper \cite{RO15}, which contains a version of the maximum principle for weak subsolutions of $(-\Delta)^s$ or more general nonlocal equations.

A direct consequence  of the weak maximum principle for $-\Delta$ is the uniqueness in the Dirichlet problem: given $f\in C(\Om)$ and $\vf\in C(\p \Om)$, find a function $u\in C^2(\Om)\cap C(\overline \Om)$ such that 
\[
\begin{cases}
- \Delta u = f\ \ \ \ \ \ \text{in}\ \Om,
\\
u = \vf\ \ \ \ \ \ \ \ \text{on}\ \p \Om.
\end{cases}
\]

Using Proposition \ref{P:mp} it is possible to obtain a similar result of uniqueness in the nonlocal Dirichlet problem which is formulated as follows: 
 given a function $f\in C(\Om)$ and $\vf\in C(\Rn\setminus \Om)$, find a function $u\in \mathscr L_s(\Rn)\cap C_{loc}^{2s+\varepsilon}$, for some $\e\in (0,1)$, such that 
\[
\begin{cases}
(-\Delta)^s u = f\ \ \ \ \ \ \text{in}\ \Om,
\\
u = \vf\ \ \ \ \ \ \ \ \text{in}\ \Rn\setminus \Om.
\end{cases}
\]  

Since in the nonlocal setting the strong maximum principle fails in its classical formulation, it is not surprising that also the Harnack inequality needs to be suitably stated. 

For these important aspects of the theory we refer the reader to the paper \cite{Ka3} and the nice introductory presentations in \cite{BV}, \cite{RO15}, but also to the original paper by Riesz \cite{R} and the book \cite{La}, as well as the  works \cite{BK}, \cite{Bo99}, \cite{BB}, \cite{BB2}, \cite{Si} and \cite{FKV}. 

The Liouville theorem for harmonic functions states that there is no such function on the whole $\Rn$ which is one-side bounded other than the constants. A similar result holds in the nonlocal case.

\begin{theorem}[see Lemma 3.2 in \cite{BKN}]\label{T:bkn}
Let $n\ge 2$ and $0<s<1$. Suppose that $u\ge 0$ satisfies $u\in \mathscr L_s(\Rn)$. If $(-\Delta)^s u = 0$ in $\mathscr D'(\Rn)$, then $u$ must be constant.
\end{theorem}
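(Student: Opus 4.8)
The plan is to prove the nonlocal Liouville theorem by reducing it to a statement about ordinary harmonic functions via the extension procedure of Caffarelli--Silvestre, or — since that machinery has not yet been developed at this point in the note — by a direct scaling-and-mean-value argument based on the representation \eqref{saslap} in Proposition \ref{P:flave} together with the nonlocal maximum principle in Proposition \ref{P:mp}. I will pursue the self-contained route. The key idea is that a nonnegative $s$-harmonic function on all of $\Rn$ can be shown to have vanishing oscillation by exploiting the interplay between the homogeneity relation \eqref{di} and the positivity of $u$, much as in Bogdan--Kulczycki--Nowak.

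First I would reduce to the case where $u$ is genuinely defined pointwise and smooth: since $(-\Delta)^s u = 0$ in $\mathscr D'(\Rn)$ and $u \ge 0$ with $u \in \mathscr L_s(\Rn)$, a mollification argument (convolving with a standard approximate identity, using that $(-\Delta)^s$ commutes with translations by \eqref{ti} and hence with convolution) produces nonnegative smooth functions $u_\e$ that are still $s$-harmonic on slightly smaller sets; it suffices to prove the statement for these and pass to the limit. So assume $u \in C^\infty(\Rn) \cap \mathscr L_s(\Rn)$, $u \ge 0$, and $(-\Delta)^s u = 0$ everywhere by Proposition \ref{P:silv}.

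Next, the heart of the argument: fix two points $x, x' \in \Rn$ and a large radius $R$, and apply the nonlocal maximum principle of Proposition \ref{P:mp} on the ball $B(0,R)$ to compare $u$ with suitable $s$-harmonic barriers (for instance, scaled copies of the Poisson-type kernel for the complement of a ball, or simply the constant $\inf_{B(0,R)} u$). The mechanism is that $u - \inf_{\Rn} u$ is a nonnegative $s$-subharmonic function; if this infimum is not attained one uses the decay of $u$ off large sets encoded in the $\mathscr L_s$ norm, together with \eqref{mp} applied at near-minimizing points, to force a Harnack-type oscillation bound of the form $\operatorname{osc}_{B(0,R/2)} u \le C(n,s)\,R^{-2s}\,(\text{global mass of } u)$; letting $R \to \infty$ kills the oscillation and $u$ is constant. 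Alternatively, and perhaps more cleanly, I would write, using Proposition \ref{P:flave} and $(-\Delta)^s u \equiv 0$,
\[
\int_0^\infty r^{-1-2s}\big[\mathscr M_r u(x) - u(x)\big]\,dr = 0 \qquad \text{for every } x \in \Rn,
\]
and combine this with the same identity at $x'$ and the nonnegativity of $u$ to deduce that the spherical averages of $u$ over large spheres around $x$ and around $x'$ must asymptotically agree, whence $u(x) = u(x')$.

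\textbf{Main obstacle.} The delicate point is controlling the contribution of the ``far field'' in the representation \eqref{saslap} or \eqref{mp}: unlike the local Laplacean, an $s$-harmonic function feels data at infinity, so the maximum principle in Proposition \ref{P:mp} only bites once one has genuine global information, and turning ``$u \ge 0$ and $(-\Delta)^s u = 0$'' into a quantitative decay of the oscillation requires carefully estimating $\int_{|y|>R}\frac{u(x_0+y)+u(x_0-y)}{|y|^{n+2s}}\,dy$ using only the $\mathscr L_s$-finiteness of $u$, and checking that the constant does not degenerate as $R \to \infty$. This is exactly where the hypothesis $u \in \mathscr L_s(\Rn)$ (rather than merely $u \in L^1_{loc}$) is essential, and I expect the bulk of the work — and the reason the authors cite \cite{BKN} rather than proving it inline — to lie in making this tail estimate uniform. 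The hypothesis $n \ge 2$ presumably enters to rule out the one-dimensional degeneracies and should be used when invoking the Harnack-type comparison.
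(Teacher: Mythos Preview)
The paper does not give a proof of this theorem at all: it is stated as a citation (``see Lemma 3.2 in \cite{BKN}'') and immediately followed by a discussion of later extensions in \cite{CDL} and \cite{Fa16}. So there is no ``paper's own proof'' to compare your proposal against.

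That said, a few remarks on your sketch itself. Your two suggested routes are both more heuristic than proof. The oscillation bound $\operatorname{osc}_{B(0,R/2)} u \le C(n,s) R^{-2s}(\text{global mass})$ is asserted but not derived; the nonlocal maximum principle of Proposition~\ref{P:mp} gives you a sign, not a quantitative decay rate, and it is not clear what barrier you would actually build. The second route via Proposition~\ref{P:flave} is also incomplete: the identity $\int_0^\infty r^{-1-2s}[\mathscr M_r u(x)-u(x)]\,dr=0$ is a single weighted integral over all radii, and comparing it at $x$ and $x'$ does not isolate the large-$r$ behavior of $\mathscr M_r u$ in the way you suggest. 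The cleanest self-contained argument in the spirit of this note would instead use the nonlocal Poisson representation of Section~\ref{S:smean} (Theorem~\ref{T:spe} and Proposition~\ref{P:conversemvp}): for $u\ge 0$ $s$-harmonic on all of $\Rn$ one has $u(x)=\int_{|y|>r}P^{(s)}_r(x,y)\,u(y)\,dy$ for every $r>0$, and since for fixed $x,x'$ the ratio $P^{(s)}_r(x,y)/P^{(s)}_r(x',y)\to 1$ uniformly in $y$ as $r\to\infty$, positivity of $u$ yields $u(x)=u(x')$ directly. This is essentially how such Liouville theorems are proved in \cite{BKN} and \cite{La}, and it sidesteps the tail-control obstacle you flagged.
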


Interesting extensions of Theorem \ref{T:bkn} have been recently independently obtained in \cite{CDL} and \cite{Fa16}. In the former paper the condition $u\ge 0$ is relaxed to 
\[
\underset{|x|\to \infty}{\liminf} \frac{u(x)}{|x|^\gamma} \ge 0,
\]
for some $\gamma\in [0,1]$ with $\gamma<2s$. In the latter, the author shows that if $u\in \mathscr L_s(\Rn)$ and $(-\Delta)^s u = 0$ in $\mathscr D'(\Rn)$, then $u$ must be affine, and constant if $0<s\le 1/2$. Liouville theorems for some anisotropic nonlocal operators are contained in \cite{FV1}, \cite{FV2}. 

The natural space of distributions for the operator $(-\Delta)^s$ is not the space $\mathscr S'(\Rn)$, but the smaller space $\mathscr S'_s(\Rn)$ introduced in the opening of Section \ref{S:fs} below. The motivation comes from Lemma \ref{L:Ss} there. A natural question is whether the assumption $u\in \mathscr L_s(\Rn)$ in Theorem \ref{T:bkn} above can be relaxed to $u\in \mathscr S'_s(\Rn)$. Nothing seems to be known about this intriguing aspect. In this connection we quote the interesting work \cite{DSV16} which develops an extension of the fractional Laplacean.  

In closing, we mention the remarkable recent work \cite{DSVJEMS} in which the authors show the surprising result that, given any positive integer $k$, and a function $f\in C^k(\overline B_1)$, there exists a solution of $(-\Delta)^s u = 0$ in $\Rn$, vanishing outside a suitably large ball $B_R$, which approximates $f$ in $C^k$ norm on $B_1$. This purely nonlocal result is in stunning contrast with what happens in the local case. It is not possible to approximate in $C^k$ norm a function on a ball with harmonic functions since such functions are very rigid. For instance, if they have a local maximum in the ball, they must be constant. 


\section{A brief interlude about very classical stuff}\label{S:bi}

To proceed with the analysis of the nonlocal operator $(-\Delta)^s$ we will need some basic properties of two important, and deeply interconnected, protagonists of classical analysis: the Fourier transform and Bessel functions. Since they both play a pervasive role in these notes, as a help to the reader in this section we recall their definition along with some elementary (and also not so elementary) facts. Before we do that, however, we introduce the ever present Euler's gamma function (see e.g. chapter 1 in \cite{Le}):
\[
\Gamma(x) = \int_0^\infty t^{x-1} e^{-t}\ dt,\ \ \ \ \ \ \ \ \ \ \ \ x>0.
\]
The well-known identity $\G(1/2) = \sqrt \pi$ is simply a reformulation of the famous integral 
\[
\int_\R e^{-x^2} dx = \sqrt \pi.
\]
 Of course, $\G(z)$ can be equally defined as a holomorphic function for every $z\in \mathbb C$ with $\Re z >0$. It is easy to check that for such $z$, one has
\begin{equation}\label{fact}
\G(z+1) = z \G(z).
\end{equation}
This formula, and its iterations, can be used to meromorphically extend $\G(z)$ to the whole complex plane having simple poles at $z = -k$, $k\in \mathbb N \cup\{0\}$, with residues $(-1)^k$. In particular, when $0<s<1$ one obtains from \eqref{fact}
\begin{equation}\label{gammas}
\G(1-s) = - s \G(-s).
\end{equation}
Furthermore, one has the following basic relations:
\begin{equation}\label{sine}
\G(z) \G(1-z) = \frac{\pi}{\sin \pi z}.
\end{equation}
and 
\begin{equation}\label{prod}
2^{2z-1} \G(z) \G(z+\frac 12) = \sqrt \pi \G(2z).
\end{equation}
\emph{Stirling's formula} provides the asymptotic behavior of the gamma function for large positive values of its argument
\begin{equation}\label{stirling}
\G(x) = \sqrt{2\pi}\ x^{x-\frac 12} e^{-x}\left(1+ O\left(\frac 1x\right)\right),\ \ \ \ \ \ \ \ \ \text{as}\ x\to +\infty.
\end{equation}

We close this brief prelude with a very classical formula which connects the gamma function to the $(n-1)$-dimensional Hausdorff measure of the unit sphere $\mathbb S^{n-1}\subset \Rn$, and the $n$-dimensional volume of the unit ball 
\begin{equation}\label{sn1}
\sigma_{n-1} = \frac{2 \pi^{\frac n2}}{\G(\frac n2)},\ \ \ \ \ \ \ \ \ \ \ \ \ \ \omega_{n} = \frac{\sigma_{n-1}}{n} = \frac{\pi^{\frac n2}}{\G(\frac n2 + 1)}.
\end{equation}
Here, Gaussians are lurking in the shadows! In fact, from Bonaventura Cavalieri's principle we easily see that for any $f\in L^1(\Rn)$ and spherically symmetric, i.e., such that $f(x) = f^\star(|x|)$, we have 
\begin{equation}\label{gaussians}
\sigma_{n-1} = \frac{\int_{\Rn} f(x) dx}{\int_0^\infty f^\star(r) r^{n-1} dr}.
\end{equation}
To compute $\sigma_{n-1}$ it thus suffices to produce one spherically symmetric $f\in L^1(\Rn)$ for which we know how to compute both numerator and denominator in \eqref{gaussians}. As it turns out, Gaussians are the first prize winners. If in fact we take $f(x) = e^{-|x|^2}$, then we know that 
\[
\int_{\Rn} f(x) dx = \pi^{\frac n2},
\]
 whereas
 \[
\int_0^\infty f^\star(r) r^{n-1} dr = \int_0^\infty e^{-r^2} r^{n} \frac{dr}{r} = \frac 12  \int_0^\infty e^{-t} t^{\frac n2} \frac{dt}{t} = \frac 12 \G(\frac n2).
\]
Substituting the latter two formulas in \eqref{gaussians} proves the first part of \eqref{sn1}.   

One identity that we will use is the following
\begin{equation}\label{hsg0}
\int_0^\infty u^{-s -1} (1-e^{-u}) du = \frac{\G(1-s)}{s},\ \ \ \ \ \ \ \  \ 0<s<1.
\end{equation}
It is easy to recognize that the integral converges absolutely. The proof of \eqref{hsg0} then easily follows writing $u^{-s-1} = \frac{d}{du} (\frac{u^{-s}}{-s})$ and integrating by parts as follows
\[
\int_0^\infty u^{-s-1} \left(1 -  e^{- u}\right) du = \frac 1s \int_0^\infty u^{-s} e^{-u} du = \frac{\G(1-s)}{s}.
\] 
 
Deeply connected with the gamma function is Euler's \emph{beta function} which for $x,y>0$  is defined as follows
\begin{equation}\label{beta}
B(x,y) = 2 \int_0^{\frac{\pi}{2}} \big(\cos \vt\big)^{2x-1}
\big(\sin \vt\big)^{2y-1} d\vt.
\end{equation}
It is an easy exercise to recognize that
\begin{equation}\label{beta2}
B(x,y) = 2 \int_0^1 (1-\tau^2)^{x-1} \tau^{2y-1} d\tau = \int_0^1
(1-s)^{x-1} s^{y-1} ds.
\end{equation}
The link between the beta  and the gamma function is expressed by the following equation
\begin{equation}\label{bg}
B(x,y) = \frac{\Gamma(x) \Gamma(y)}{\Gamma(x+y)},
\end{equation}
see e.g. (1.5.6) on p. 14 in \cite{Le}. A useful integral which is expressed in terms of the beta, or gamma function is contained in the following proposition.

\begin{prop}\label{P:poisson}
Let $b> - n $ and $a>n+b$, then
\begin{equation}\label{poissongen}
\int_{\Rn} \frac{|x|^b}{(1+|x|^2)^{\frac{a}{2}}}dx =
\frac{\pi^{\frac{n}{2}}}{\Gamma\left(\frac{n}{2}\right)}
\frac{\Gamma\left(\frac{b+n}{2}\right)
\Gamma\left(\frac{a-b-n}{2}\right)}{\Gamma\left(\frac{a+b}{2}\right)}.
\end{equation}
In particular, if $b=0$ and $a = n+1$, then
\begin{equation}\label{poisson0}
\int_{\Rn} \frac{dx}{(1+|x|^2)^{\frac{n+1}{2}}} =
\frac{\pi^{\frac{n+1}{2}}}{\Gamma\left(\frac{n+1}{2}\right)}\ .
\end{equation}
\end{prop}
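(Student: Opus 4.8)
The plan is to prove the general formula \eqref{poissongen} by converting the integral over $\Rn$ into a one-dimensional integral using polar coordinates, and then recognizing the resulting integral as a beta function. First I would pass to polar coordinates: since the integrand depends only on $|x| = r$, formula \eqref{gaussians} (or equivalently the definition of $\sigma_{n-1}$) gives
\[
\int_{\Rn} \frac{|x|^b}{(1+|x|^2)^{\frac a2}}\,dx = \sigma_{n-1} \int_0^\infty \frac{r^{b+n-1}}{(1+r^2)^{\frac a2}}\,dr,
\]
where the hypothesis $b > -n$ guarantees integrability at $r=0$, and $a > n+b$ guarantees integrability at $r = \infty$ (the integrand decays like $r^{b+n-1-a}$).

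Next I would reduce the radial integral to a beta function via the substitution $\tau = \frac{1}{1+r^2}$, or perhaps more transparently $t = \frac{r^2}{1+r^2}$ so that $1-t = \frac{1}{1+r^2}$ and $r^2 = \frac{t}{1-t}$. Either way, after computing the differential one finds
\[
\int_0^\infty \frac{r^{b+n-1}}{(1+r^2)^{\frac a2}}\,dr = \frac 12 \int_0^1 t^{\frac{b+n}{2}-1}(1-t)^{\frac{a-b-n}{2}-1}\,dt = \frac 12 B\!\left(\frac{b+n}{2}, \frac{a-b-n}{2}\right),
\]
using the representation \eqref{beta2} of the beta function; here the exponents $\frac{b+n}{2}$ and $\frac{a-b-n}{2}$ are both positive precisely by the standing hypotheses, so the beta integral converges. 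Then I would invoke \eqref{bg} to write $B(x,y) = \Gamma(x)\Gamma(y)/\Gamma(x+y)$, note that $\frac{b+n}{2} + \frac{a-b-n}{2} = \frac a2$, and combine with the value $\sigma_{n-1} = 2\pi^{n/2}/\Gamma(n/2)$ from \eqref{sn1}. The factor $\tfrac12$ from the substitution cancels against the $2$ in $\sigma_{n-1}$, and one arrives at exactly the right-hand side of \eqref{poissongen}. Finally, for the special case \eqref{poisson0}, I would simply set $b = 0$ and $a = n+1$: the formula gives $\frac{\pi^{n/2}}{\Gamma(n/2)} \cdot \frac{\Gamma(n/2)\Gamma(1/2)}{\Gamma((n+1)/2)}$, and since $\Gamma(1/2) = \sqrt\pi$ this collapses to $\pi^{(n+1)/2}/\Gamma((n+1)/2)$.

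There is no serious obstacle here — this is a routine computation. The only point requiring a little care is bookkeeping the substitution in the radial integral (choosing the cleanest change of variable and tracking the Jacobian) and verifying that the hypotheses $b>-n$ and $a>n+b$ are exactly what is needed for convergence at both endpoints and for the beta-function parameters to be positive. I would present the substitution $t = r^2/(1+r^2)$ explicitly since it makes the appearance of \eqref{beta2} immediate, and otherwise let the reader check the elementary details.
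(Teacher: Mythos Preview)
Your proposal is correct and follows essentially the same approach as the paper: pass to polar coordinates, reduce the radial integral to a beta function, and apply \eqref{bg} and \eqref{sn1}. The only cosmetic difference is the substitution used to reach the beta function---the paper takes $r = \tan\xi$ and invokes the trigonometric form \eqref{beta}, whereas you use $t = r^2/(1+r^2)$ and the algebraic form \eqref{beta2}; both are standard and equally direct.
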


\begin{proof}
Let us observe preliminarily that the assumption $b>-n$ serves to guarantee that the integrand belongs to $L^1_{loc}(\Rn)$, whereas it is in $L^1(\Rn)$ if and only if $a-b>n$. Under these hypothesis 
we have 
\begin{align*}
& \int_{\Rn} \frac{|x|^b}{(1+|x|^2)^{\frac{a}{2}}}dx = \sigma_{n-1}
\int_0^{\infty} \frac{r^{b+n-1}}{(1+r^2)^{\frac{a}{2}}} dr
\\
& (\text{change of variable}\ r = \tan \xi)
\\
& = \sigma_{n-1} \int_0^{\frac{\pi}{2}} \frac{(\tan
\xi)^{b+n-1}}{(1+\tan^2\xi)^{\frac{a-2}{2}}} d\xi = \sigma_{n-1}
\int_0^{\frac{\pi}{2}} (\sin \xi)^{b+n-1} (\cos \xi)^{a-b-n-1} d\xi
\\
& = \frac{\sigma_{n-1}}{2}
B\left(\frac{b+n}{2},\frac{a-b-n}{2}\right),
\end{align*}
where the last inequality follows by a comparison with \eqref{beta}.
If we now apply formulas \eqref{sn1} and \eqref{bg} we obtain
\[
\frac{\sigma_{n-1}}{2} B\left(\frac{b+n}{2},\frac{a-b-n}{2}\right) =
\frac{\pi^{\frac{n}{2}}}{\Gamma\left(\frac{n}{2}\right)}
\frac{\Gamma\left(\frac{b+n}{2}\right)
\Gamma\left(\frac{a-b-n}{2}\right)}{\Gamma\left(\frac{a+b}{2}\right)},
\]
which gives the desired conclusion \eqref{poissongen}. To obtain
\eqref{poisson0} it suffices to keep in mind that $\Gamma(1/2) = \sqrt \pi$.

\end{proof}

We are ready to introduce the queen of classical analysis: given a function $u\in L^1(\Rn)$, we define its Fourier transform as 
\[
\F(u)(\xi) = \hat u(\xi) = \int_{\Rn} e^{-2\pi i <\xi,x>} u(x) dx.
\]
We notice that the normalization that we have adopted in the above definition is the one which makes $\F$ an isometry of $L^2(\Rn)$ onto itself, see \cite{SW}. We recall next some of the basic properties of $\F$. If $\tau_h u(x) = u(x+h)$ and $\delta_\la u(x) = u(\la x)$ are the translation and dilation operators in $\Rn$, then we have
\begin{equation}\label{shift}
\widehat{\tau_y u}(\xi) = e^{2\pi i<\xi,y>} \hat u(\xi),
\end{equation}
and
\begin{equation}\label{dil}
\widehat{\delta_\la u}(\xi) = \la^{-n} \hat u\left(\frac{\xi}{\la}\right).
\end{equation}

The Fourier transform is also invariant under the action of the orthogonal group $\mathbb O(n)$. We have in fact for every $T\in \mathbb O(n)$
\begin{equation}\label{on}
\widehat{u\circ T} = \hat u \circ T.
\end{equation}
Formula \eqref{on} says that the Fourier transform of a spherically symmetric function is spherically symmetric as well, see also Theorem \ref{T:Fourier-Bessel} below for a deeper formulation of this fact.

Another crucial property is the Riemann-Lebesgue lemma:
 \begin{equation}\label{rl}
u\in L^1(\Rn)\ \Longrightarrow\  |\hat u(\xi)| \to 0\ \  \text{as}\ \  |\xi|\to \infty.
\end{equation}
This result has important consequences when combined with the following two formulas. Let $u\in L^1(\Rn)$ be such that for $\alpha\in \mi$ also $\pa u\in
L^1(\Rn)$. Then,
\begin{equation}\label{ftder}
\widehat{(\pa u)}(\xi) = (2\pi i)^{|\alpha|} \xi^\alpha \hat u(\xi).
\end{equation}
In particular, \eqref{ftder} and \eqref{rl} give: $|\xi^\alpha| |\hat u(\xi)|\to 0$ as $|\xi|\to \infty$. Furthermore, if $u\in L^1(\Rn)$ is such that for $\alpha\in \mi$ one has $x\to
x^\alpha u(x) \in L^1(\Rn)$, then,
\begin{equation}\label{derft}
\p^\alpha \hat u(\xi) = (-2\pi i)^{|\alpha|} \widehat{(\cdot)^\alpha
u}(\xi).
\end{equation}
In particular, \eqref{derft} and \eqref{rl} imply that: $\p^\alpha \hat u\in C(\Rn)$ and $|\p^\alpha \hat u(\xi)|\to 0$ as $|\xi|\to \infty$. 

Combining these observations one derives one of the central properties of $\F$: it maps continuously $\mathscr S(\Rn)$ onto
itself and is an isomorphism. Its inverse is also continuous, and is given by the Fourier inversion formula
\[
\F^{-1}(u)(x) = \int_{\Rn} e^{2\pi i <\xi,x>} \hat u(\xi) d\xi.
\]

We next introduce the second main character of this section: the Bessel functions. The  book \cite{Le} provides a rewarding account of this beautiful classical subject. For a comprehensive study the reader can also consult G. N. Watson's classical treatise \cite{W}, as well as the first two volumes of the Bateman manuscript project \cite{EMOT}. 

\begin{definition}\label{D:extbessel}
For every $\nu\in \mathbb C$ such that $\Re\nu>-\dfrac12$ we define
the  \emph{Bessel function of the first kind} and of complex order $\nu$ by the formula
\begin{equation}\label{extbessel}
\jnu(z)
=\frac{1}{\G(\frac12)\G(\nu+\frac{1}{2})}\left(\frac{z}2\right)^\nu\int^1_{-1}
e^{izt}(1-t^2)^{\frac{2\nu-1}2}dt,
\end{equation}
where $\G(x)$ denotes the Euler gamma function.
\end{definition}

The function $J_\nu(z)$ in \eqref{extbessel} derives its name from the fact that it solves the linear ordinary differential equation known as \emph{Bessel equation of order $\nu$} 
\begin{equation}\label{besseleq}
z^2 \frac{d^2 J}{dz^2} + z \frac{dJ}{dz} + (z^2 - \nu^2)J = 0.
\end{equation}
An expression of $J_\nu$ as a power series for an arbitrary value of $\nu\in \mathbb C$ is provided by
\begin{equation}\label{psbessel}
J_\nu(z) = \sum_{k=0}^\infty (-1)^k \frac{(z/2)^{\nu+2k}}{\G(k+1)\G(k+\nu+1)},\ \ \ \ \ |z|<\infty,\ \ |\arg(z)|<\pi,
\end{equation} 
see e.g. (5.3.2) on p. 102 in \cite{Le}.
When $\nu\not\in \mathbb Z$, another linearly independent solution of \eqref{besseleq} is provided by the function $J_{-\nu}(z)$. When $\nu\in \mathbb Z$ the two functions $J_{\nu}$ and $J_{-\nu}$ are linearly dependent, and in order to find a second solution linearly independent from $J_{\nu}$ one has to proceed differently.

The observation that follows is very important in most concrete applications of the theory. Suppose that $\Phi(z)$ be a solution to the Bessel equation
\eqref{besseleq}, and consider the function defined by the
transformation
\begin{equation}\label{besselcv}
u(y) = y^\alpha \Phi(\beta y^\gamma).
\end{equation}
Then, one easily verifies that $u(y)$ satisfies the
\emph{generalized Bessel equation}
\begin{equation}\label{genbessel0}
y^2 u''(y) + (1 - 2\alpha) y u'(y) + \left[\beta^2 \gamma^2
y^{2\gamma} + (\alpha^2 - \nu^2 \gamma^2)\right] u(y) = 0.
\end{equation}
We show with an example how this observation is applied. Consider the ball $B_R = \{x\in \Rn\mid |x|<R\}$ and the cylinder $\mathcal C = B_R\times \R\subset \R^{n+1}$. Denote the variable in $\mathcal C$ as $(x,t)$, with $x\in \Rn, t\in \R$. Suppose we look for nontrivial functions $u(x,t)$ which are harmonic in $\mathcal C$. A classical tool is using separation of variables, i.e., look for $u$ in the form $u(x,t) = \vf(x) h(t)$. Imposing that $\Delta u = 0$ in $\mathcal C$ leads to the equation
\[
h(t) \Delta \vf(x) + h''(t) \vf(x) = 0.
\]
If we divide by $\vf(x)h(t)$, we conclude that we must have
\[
- \frac{\Delta \vf(x)}{\vf(x)} = \frac{h''(t)}{h(t)}.
\]
This is possible only if there exists a number $\la\ge 0$ such that
\[
- \Delta \vf = \la \vf,\ \ \ \ \ \ \ \ h'' = \la h.
\]
The second equation clearly gives $h(t) = A e^{\sqrt \la t} + B e^{-\sqrt \la t}$, whereas for the first one we seek a solution which is spherically symmetric, i.e., $\vf(x) = F(|x|)$. We easily see that $F(r)$ must satisfy the equation
\begin{equation}\label{lapeigen}
r^2 F''(r) + (n-1) r F'(r) + \la  r^2 F(r) = 0.
\end{equation}
It is not obvious at first glance that this is a Bessel equation. However, if we compare \eqref{lapeigen} with  \eqref{genbessel0}, we conclude that the former is a special instance of the latter if we take
\[
1-2\alpha = n-1,\ \ \ \ \ \beta^2 = \la,\ \ \ \ \ \gamma = 1, \ \ \ \ \ \nu^2 = \alpha^2.
\]
We must thus have
\[
\alpha = - \frac{n-2}{2},\ \ \ \ \ \beta = \sqrt \la,\ \ \ \ \ \gamma = 1,\ \ \ \ \ \nu = \pm \frac{n-2}{2}.
\]
But then \eqref{besselcv} gives us, at least for $n\not= 2k+2$, the two linearly independent solutions 
\[
\vf_1(x) =  |x|^{- \frac{n-2}{2}} J_{\frac{n-2}{2}}(\sqrt \la |x|),\ \ \ \ \ \ \ \ \vf_2(x) = |x|^{- \frac{n-2}{2}} J_{- \frac{n-2}{2}}(\sqrt \la |x|).
\]
Since from \eqref{bfbehzero} below we see that $\vf_2\not\in C^2$, we must discard such solution and keep only $\vf_1$. In conclusion, the function
\[
u(x,t) = \left(A e^{\sqrt \la t} + B e^{-\sqrt \la t}\right) |x|^{- \frac{n-2}{2}} J_{\frac{n-2}{2}}(\sqrt \la |x|),
\]
provides a harmonic function in the cylinder $\mathcal C$, and the appearance of the Bessel function $J_{\frac{n-2}{2}}$ in such formula is the reason for which solutions of \eqref{besseleq}  are also called in the literature \emph{cylinder functions}.

Returning to Definition \ref{D:extbessel}, from \eqref{extbessel} and \eqref{beta2} we immediately find 
\[
z^{-\nu} J_\nu(z)\ \underset{z\to 0}{\longrightarrow}\ \frac{2^{-\nu +1}}{\G(\frac12)\G(\nu+\frac{1}{2})}  \int^1_{0} (1-s^2)^{\frac{2\nu-1}2}ds =  \frac{2^{-\nu +1}}{\G(\frac12)\G(\nu+\frac{1}{2})} B(\nu+\frac 12,\frac 12).
\]
From this asymptotic relation and \eqref{bg} one obtains
\begin{equation}\label{bfbehzero}
J_\nu(z)\cong\frac{2^{-\nu}}{\Gamma(\nu+1)}z^\nu,\quad\text{as }z\to 0.
\end{equation}

Unlike the simple expression of the asymptotic of $J_\nu(z)$ as $z\to 0$, the behavior at infinity of $J_\nu(z)$ is  more delicate to come by. We have the following result, see for instance Lemma 3.11 in \cite{SW}, or also (5.11.6) on p. 122 in \cite{Le}.
\begin{lemma}\label{L:jnuinfty}
Let $\Re\nu>-\dfrac12$. One has
\begin{align}\label{jnuinfty} J_\nu(z)&=\sqrt{\frac2{\pi
z}}\cos\left(z-\frac{\pi\nu}2-\frac\pi4\right)+
O(z^{-\frac32})\\
&\quad\text{as }|z|\to\infty,\quad-\pi+\delta<\arg z<\pi-\delta.
\notag
\end{align}
In particular, 
\begin{equation}\label{jnuinfty2}
J_\nu(z) = O(z^{-1/2}),\ \ \ \ \ \text{as}\ z\to \infty,\  z\ge 0.
\end{equation}
\end{lemma}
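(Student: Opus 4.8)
The plan is to read the asymptotics straight off the Poisson-type integral representation \eqref{extbessel}. Since
\[
J_\nu(z) = \frac{(z/2)^\nu}{\Gamma(\tfrac12)\Gamma(\nu+\tfrac12)}\,\mathcal I_\nu(z),\qquad \mathcal I_\nu(z) := \int_{-1}^1 e^{izt}(1-t^2)^{\nu-\frac12}\,dt,
\]
everything reduces to the behavior of $\mathcal I_\nu(z)$ as $|z|\to\infty$. The integrand is holomorphic in $t$ off the cuts $(-\infty,-1]\cup[1,\infty)$ and is smooth on $(-1,1)$; as is typical for such oscillatory integrals, the entire asymptotics is generated by the two branch points $t=\pm1$. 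I would treat $z>0$ first and then extend to the sector $|\arg z|\le\pi-\delta$.

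\textbf{Contour deformation and Watson's lemma.} For $z>0$, $e^{izt}$ decays as $\Im t\to+\infty$, so I deform the segment $[-1,1]$ upward onto the two vertical rays $t=1+iu$ and $t=-1+iu$, $u\in[0,\infty)$; the quarter-circles at infinity contribute nothing since the integrand is $e^{-zu}$ times a polynomially bounded factor. Writing $(1-t^2)^{\nu-\frac12}=(1-t)^{\nu-\frac12}(1+t)^{\nu-\frac12}$ and tracking the branch along the deformed path gives
\[
\mathcal I_\nu(z)=i\int_0^\infty e^{-zu}\Big[e^{-iz}(2-iu)^{\nu-\frac12}(iu)^{\nu-\frac12}-e^{iz}(2+iu)^{\nu-\frac12}(-iu)^{\nu-\frac12}\Big]\,du,
\]
with honest exponential decay in $u$, so no conditional-convergence subtleties arise. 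Rescaling $u=v/z$ and expanding $(2\mp iv/z)^{\nu-\frac12}=2^{\nu-\frac12}\big(1+O(v/z)\big)$, the leading term of each bracket is the convergent Gamma integral $\int_0^\infty e^{-v}v^{\nu-\frac12}\,dv=\Gamma(\nu+\tfrac12)$ (here $\Re\nu>-\tfrac12$ is used), while the $O(v/z)$ correction costs one extra power of $z^{-1}$. Collecting the powers of $i$, $2$, $z$ yields
\[
\mathcal I_\nu(z)=2^{\nu-\frac12}\Gamma(\nu+\tfrac12)\,z^{-\nu-\frac12}\Big[e^{iz}e^{-i\pi(\nu+\frac12)/2}+e^{-iz}e^{i\pi(\nu+\frac12)/2}\Big]+O\big(z^{-\nu-\frac32}\big),
\]
the two bracketed terms being the contributions of the endpoints $t=1$ and $t=-1$.

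\textbf{Assembling and the sector.} The bracket equals $2\cos\big(z-\tfrac{\pi\nu}{2}-\tfrac\pi4\big)$; multiplying by $\frac{(z/2)^\nu}{\Gamma(1/2)\Gamma(\nu+1/2)}$, the factor $z^\nu$ cancels $z^{-\nu}$ and the stray powers of $2$ combine, so the cosine term acquires the prefactor $\sqrt{2/(\pi z)}$ and the error becomes $O(z^{-3/2})$; this is \eqref{jnuinfty} for $z>0$. For $z$ in the sector $|\arg z|\le\pi-\delta$, the representation \eqref{extbessel} remains holomorphic and the same deformation applies after tilting the two rays to the direction $\arg t=\tfrac\pi2-\arg z$ (chosen so that $e^{izt}$ decays and neither cut is crossed); the fixed margin $\delta$ keeps this possible, and the estimates uniform, on the whole sector, which gives the uniform statement in \eqref{jnuinfty}. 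Finally \eqref{jnuinfty2} is immediate: for real $z\ge0$ the cosine is bounded, so $|J_\nu(z)|\le Cz^{-1/2}$.

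\textbf{Main obstacle.} Nothing here is conceptually deep; the work is bookkeeping. The delicate points are: (i) justifying the contour deformation rigorously --- controlling the arcs at infinity and, above all, tracking the branch of $(1-t^2)^{\nu-\frac12}$ along the deformed path so that the phases $e^{\mp i\pi(\nu+\frac12)/2}$ emerge with the correct signs; (ii) producing a genuine $O(z^{-3/2})$ rather than merely $o(z^{-1/2})$, which forces the explicit one-term expansion of $(2\mp iv/z)^{\nu-\frac12}$ together with a uniform bound on the remainder integral; and (iii) checking that the tilted-contour bounds in the last step are uniform on $|\arg z|\le\pi-\delta$. These are where the care is needed.
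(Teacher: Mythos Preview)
The paper does not actually prove this lemma: it states the result and refers the reader to Lemma~3.11 in \cite{SW} and to (5.11.6) on p.~122 of \cite{Le}. Your proposal, by contrast, supplies a genuine proof sketch via the Poisson integral \eqref{extbessel}, contour deformation to the steepest-descent rays emanating from $t=\pm1$, and Watson's lemma. This is precisely the classical route one finds in Lebedev's book (and in most treatments of Bessel asymptotics), so your approach is the standard one and the outline is correct: the phases assemble to $2\cos(z-\tfrac{\pi\nu}{2}-\tfrac{\pi}{4})$, the prefactor works out to $\sqrt{2/(\pi z)}$, and the one-term Taylor expansion of $(2\mp iv/z)^{\nu-1/2}$ yields an honest $O(z^{-3/2})$ remainder. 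Your ``main obstacle'' paragraph accurately identifies where the care lies; there is no conceptual gap.
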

 
Along with the Bessel equation \eqref{besseleq}, in Sections \ref{S:fs} and \ref{S:pk} below  we will need the \emph{modified Bessel equation} of order $\nu\in \mathbb C$,
\begin{equation}\label{modbesseleq}
z^2 \frac{d^2\Phi}{dz^2} + z \frac{d\Phi}{dz} - (z^2 + \nu^2)\Phi = 0.
\end{equation}
We stress that the (substantial) difference between \eqref{modbesseleq} and \eqref{besseleq} is the sign of the coefficient $z^2$ in the zero order term.

Two linearly independent solutions of \eqref{modbesseleq} are the modified Bessel function of the first kind, 
\begin{equation}\label{Inu}
I_\nu(z) = \sum_{k=0}^\infty \frac{(z/2)^{\nu+2k}}{\G(k+1) \G(k+\nu+1)},\ \ \ \ \ \ \  |z|<\infty,\ \ |\arg(z)| < \pi,
\end{equation}
and the modified Bessel function of the third kind, or Macdonald function, which for order $\nu\not= 0, \pm 1, \pm 2, ... $, is given by
\begin{equation}\label{Knu}
K_\nu(z) = \frac \pi{2} \frac{I_{-\nu}(z) - I_\nu(z)}{\sin \pi \nu},\ \ \ \ \ \ \ \ \ \ \  \ |\arg(z)| < \pi.
\end{equation}
Notice that $K_\nu(z) = K_{-\nu}(z)$. 

Similarly to what was observed for \eqref{genbessel} above, it is easy to verify that if $\Phi(z)$ is a solution to the modified Bessel equation
\eqref{modbesseleq}, then the function defined by the
transformation \eqref{besselcv}
satisfies the
\emph{generalized modified Bessel equation}
\begin{equation}\label{genbessel}
y^2 u''(y) + (1 - 2\alpha) y u'(y) + \left[(\alpha^2 - \nu^2 \gamma^2) - \beta^2 \gamma^2
y^{2\gamma}\right] u(y) = 0.
\end{equation}

As we have stated in the opening of this section the Fourier transform and the Bessel functions are deeply connected. One important instance of this link is the following result which provides a deeper meaning to the invariance of
the Fourier transform with respect to the
action of the orthogonal group $\mathbb O(n)$. We emphasize that the presence of Bessel functions in Theorem \ref{T:Fourier-Bessel} below underscores the interplay between curvature (that of the unit sphere $\mathbb S^{n-1}\subset \Rn$) and Fourier analysis. The reader who wants to become familiar with this deep aspect can look at the beautiful expository paper of E. Stein \cite{SHA} and also \cite{SW77}. For the following result we refer to Theorem 40 on p. 69 in \cite{BC}. 

\begin{theorem}[Fourier-Bessel representation]\label{T:Fourier-Bessel}
Let $u(x)=f(|x|)$, and suppose that 
\[
t \to t^{\frac{n}2} f(t) J_{\frac{n}2-1}
(t)\in L^1(\R^+),
\]
where we have denoted by $J_{\frac{n}2-1}$ the Bessel function of order $\nu = \frac n2 -1$ defined by \eqref{extbessel}.
Then,
$$
\hat u(\xi)=2\pi|\xi|^{-\frac{n}2 +1}\int^\infty_0 t^{\frac{n}2} f(t)J_{\frac{n}2-1}
(2\pi|\xi|t) dt.
$$
\end{theorem}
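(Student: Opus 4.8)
The plan is to pass to polar coordinates in the Fourier integral, recognize the angular factor as the Fourier transform of the surface measure on $\mathbb S^{n-1}$, compute that surface-measure transform by slicing the sphere with hyperplanes, and identify the resulting one-dimensional integral with a Bessel function through Definition \ref{D:extbessel}.

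First I would assume temporarily that $u\in L^1(\Rn)$, so that $\hat u$ is given by the defining integral and Fubini's theorem applies; removal of this extra assumption is addressed at the end. Writing $x=\rho\,\omega$ with $\rho>0$, $\omega\in\mathbb S^{n-1}$ and $dx=\rho^{n-1}\,d\rho\,d\sigma(\omega)$,
\begin{equation*}
\hat u(\xi)=\int_0^\infty f(\rho)\,\rho^{n-1}\,\widehat{d\sigma}(\rho\,\xi)\,d\rho,\qquad \widehat{d\sigma}(y):=\int_{\mathbb S^{n-1}} e^{-2\pi i\langle y,\omega\rangle}\,d\sigma(\omega).
\end{equation*}
By the $\mathbb O(n)$-invariance of $d\sigma$ (in the spirit of \eqref{on}), $\widehat{d\sigma}(y)$ depends on $y$ only through $|y|$: choosing $T\in\mathbb O(n)$ with $Ty=|y|\,e_n$ and changing variables $\omega\mapsto T^t\omega$ leaves $d\sigma$ invariant and reduces the integral to $\int_{\mathbb S^{n-1}} e^{-2\pi i|y|\,\omega_n}\,d\sigma(\omega)$.

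The core computation is the evaluation of this last integral. I would slice $\mathbb S^{n-1}$ by the hyperplanes $\{\omega_n=t\}$, $t\in(-1,1)$, each slice being a sphere of radius $\sqrt{1-t^2}$ in $\R^{n-1}$; the elementary identity $d\sigma_{\mathbb S^{n-1}}=(1-t^2)^{\frac{n-3}{2}}\,dt\,d\sigma_{\mathbb S^{n-2}}$ (verified directly for $n=2,3,4$, and in general via the parametrization $\omega=(t,\sqrt{1-t^2}\,\omega')$ with $\omega'\in\mathbb S^{n-2}$) gives
\begin{equation*}
\widehat{d\sigma}(y)=\sigma_{n-2}\int_{-1}^1 e^{-2\pi i|y|t}\,(1-t^2)^{\frac{n-3}{2}}\,dt .
\end{equation*}
Taking $\nu=\tfrac n2-1$, so that $\tfrac{2\nu-1}{2}=\tfrac{n-3}{2}$, and using that the integral is unchanged under $t\mapsto-t$ (so one may replace $e^{-2\pi i|y|t}$ by $e^{2\pi i|y|t}$), formula \eqref{extbessel} in Definition \ref{D:extbessel} with $z=2\pi|y|$ identifies it as $\G(\tfrac12)\G\big(\tfrac{n-1}{2}\big)(\pi|y|)^{-(\frac n2-1)}J_{\frac n2-1}(2\pi|y|)$. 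Inserting $\G(\tfrac12)=\sqrt\pi$ and $\sigma_{n-2}=2\pi^{(n-1)/2}/\G\big(\tfrac{n-1}{2}\big)$ from \eqref{sn1}, the factors $\G\big(\tfrac{n-1}{2}\big)$ cancel and the powers of $\pi$ collapse, yielding
\begin{equation*}
\widehat{d\sigma}(y)=2\pi\,|y|^{1-\frac n2}\,J_{\frac n2-1}(2\pi|y|).
\end{equation*}

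Finally, substituting $y=\rho\,\xi$ into the first display, pulling the factor $|\xi|^{1-\frac n2}$ out of the integral, and collecting $\rho^{n-1}\cdot\rho^{1-\frac n2}=\rho^{n/2}$ gives exactly
\begin{equation*}
\hat u(\xi)=2\pi\,|\xi|^{-\frac n2+1}\int_0^\infty \rho^{n/2}\,f(\rho)\,J_{\frac n2-1}(2\pi|\xi|\rho)\,d\rho ,
\end{equation*}
which is the assertion. None of these steps is deep; the only genuinely delicate point is that the hypothesis of the theorem, $t\mapsto t^{n/2}f(t)J_{\frac n2-1}(t)\in L^1(\R^+)$, is strictly weaker than $u\in L^1(\Rn)$ for $n\ge 2$: by \eqref{bfbehzero} it forces $\rho^{n-1}f(\rho)$ to be integrable near $0$, but by \eqref{jnuinfty2} it only controls $\rho^{(n-1)/2}f(\rho)$ at infinity, and on the other hand it is exactly what makes the right-hand side above absolutely convergent. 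One therefore first establishes the formula for radial $u\in L^1(\Rn)$ (where the argument above is rigorous) and then extends it to the stated class by truncating $f$ to $\{\rho\le R\}$, applying the formula just proved, and letting $R\to\infty$, invoking dominated convergence on the right-hand side and interpreting $\hat u$ in the appropriate limiting sense on the left.
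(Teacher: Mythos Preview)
The paper does not give its own proof of this theorem; it simply refers to Theorem 40 in Bochner--Chandrasekharan \cite{BC}. Your argument is precisely the classical Bochner proof: reduce to the Fourier transform of the surface measure on $\mathbb S^{n-1}$, compute it by slicing the sphere orthogonally to a fixed direction, and recognize the resulting one-dimensional integral as the Poisson-type representation \eqref{extbessel}. The paper in fact invokes exactly this slicing computation later, in the proof of Proposition \ref{P:gns}, calling it ``the beautiful idea of Bochner in his proof of Theorem \ref{T:Fourier-Bessel}'', and your evaluation of $\widehat{d\sigma}$ is (up to the normalization by $\sigma_{n-1}$) the content of Proposition \ref{P:FTspheren}, which the paper states without proof as a special case of the theorem. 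So your route is not merely correct but is the canonical one the paper has in mind.

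One small technical caveat on your closing paragraph: the stated hypothesis involves $J_{\frac n2-1}(t)$, whereas the right-hand side of the conclusion involves $J_{\frac n2-1}(2\pi|\xi|t)$. Since the zeros of these two oscillatory functions do not coincide, the hypothesis does not literally furnish an $L^1$ majorant for the integrand with argument $2\pi|\xi|t$, so your appeal to dominated convergence in the $R\to\infty$ step is not quite immediate as written. In practice the theorem is applied under pointwise decay on $f$ strong enough that $t^{(n-1)/2}|f(t)|\in L^1$ at infinity (which does dominate uniformly in the Bessel argument), and the cited source \cite{BC} formulates the hypotheses with this in mind; your identification of this as the only delicate point is accurate.
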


To check the integrability assumption in Theorem \ref{T:Fourier-Bessel} we can use the above given asymptotic \eqref{bfbehzero} and \eqref{jnuinfty2}  for the Bessel function $J_\nu$. 

One interesting application of Theorem \ref{T:Fourier-Bessel} that will be needed in Section \ref{S:reg} is the following result about the \emph{Bochner-Riesz kernels} in $\Rn$:
\begin{equation}\label{br}
B_z(x) = \frac{\left(1-|x|^2\right)^z_+}{\Gamma(z+1)},\ \ \ \ \Re
z>-1,
\end{equation}
where we have denoted $a_+ = \max\{a,0\}$. Notice that, thanks to
the assumption $\Re z>-1$, we have $B_z\in L^1(\Rn)$.

\begin{lemma}\label{L:BR}
For every $z\in \mathbb C$, such that $\Re z > -1$, one has
\begin{equation}\label{FTBR}
\hat
B_z(\xi)=\pi^{-z}|\xi|^{-\left(\frac{n}2+z\right)}J_{\frac{n}2+z}(2\pi|\xi|),\
\ \ \ \xi\in \Rn.
\end{equation}
\end{lemma}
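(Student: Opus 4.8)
The plan is to exploit the radial symmetry of $B_z$ and feed it into the Fourier--Bessel representation of Theorem \ref{T:Fourier-Bessel}, thereby reducing the computation of $\hat B_z$ to a single classical one-dimensional integral of Sonine type. Write $B_z(x) = f(|x|)$ with $f(t) = (1-t^2)^z_+/\Gamma(z+1)$, a function supported in $[0,1]$. Near $t = 0$ one has $t^{\frac n2} f(t) J_{\frac n2 - 1}(t) = O(t^{n-1})$ by \eqref{bfbehzero}, while near $t = 1$ the factor $(1-t^2)^{\Re z}$ is integrable precisely because $\Re z > -1$; hence the hypothesis of Theorem \ref{T:Fourier-Bessel} is met, and I obtain
\[
\hat B_z(\xi) = \frac{2\pi |\xi|^{-\frac n2 + 1}}{\Gamma(z+1)} \int_0^1 t^{\frac n2} (1-t^2)^z\, J_{\frac n2 - 1}(2\pi |\xi| t)\, dt.
\]

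The heart of the matter is therefore the evaluation of the Sonine-type integral
\[
\mathcal I_\nu(\mu, a) := \int_0^1 t^{\nu+1} (1-t^2)^\mu\, J_\nu(at)\, dt = 2^\mu\, \Gamma(\mu+1)\, a^{-\mu-1}\, J_{\nu+\mu+1}(a), \qquad \Re\mu > -1,
\]
to be applied with $\nu = \frac n2 - 1$, $\mu = z$, $a = 2\pi|\xi|$ (note $t^{\nu+1} = t^{n/2}$). To prove this I would insert the power series \eqref{psbessel} for $J_\nu(at)$, interchange summation and integration — legitimate since, by Stirling's formula \eqref{stirling}, the series of absolute values is dominated up to constants by the convergent series $\sum_k \frac{(a/2)^{2k}}{k!}\,k^{-\Re\mu - 1}$ — and evaluate the $k$-th term via the substitution $s = t^2$, which turns $\int_0^1 t^{2\nu + 2k + 1}(1-t^2)^\mu\, dt$ into $\tfrac12 B(\nu + k + 1, \mu + 1) = \tfrac{\Gamma(\nu+k+1)\,\Gamma(\mu+1)}{2\,\Gamma(\nu+k+\mu+2)}$ by \eqref{beta2} and \eqref{bg}. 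The factors $\Gamma(\nu+k+1)$ cancel against those in the denominator of \eqref{psbessel}, leaving
\[
\mathcal I_\nu(\mu, a) = \frac{\Gamma(\mu+1)}{2} \sum_{k=0}^\infty \frac{(-1)^k (a/2)^{\nu+2k}}{\Gamma(k+1)\,\Gamma(k + \nu + \mu + 2)} = \frac{\Gamma(\mu+1)}{2}\, (a/2)^{-\mu-1}\, J_{\nu+\mu+1}(a),
\]
where the last equality is again the defining series \eqref{psbessel}, now for the order $\nu + \mu + 1$. Rewriting $\tfrac12 (a/2)^{-\mu-1} = 2^\mu a^{-\mu-1}$ yields the claimed formula.

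It remains to substitute back and keep track of constants. With $\nu = \frac n2 - 1$, $\mu = z$, $a = 2\pi|\xi|$, we get $\int_0^1 t^{\frac n2}(1-t^2)^z J_{\frac n2 - 1}(2\pi|\xi| t)\, dt = 2^z \Gamma(z+1)(2\pi|\xi|)^{-z-1} J_{\frac n2 + z}(2\pi|\xi|)$, so that
\[
\hat B_z(\xi) = \frac{2\pi |\xi|^{-\frac n2 + 1}}{\Gamma(z+1)}\cdot 2^z \Gamma(z+1)(2\pi)^{-z-1} |\xi|^{-z-1} J_{\frac n2 + z}(2\pi|\xi|) = \pi^{-z} |\xi|^{-\left(\frac n2 + z\right)} J_{\frac n2 + z}(2\pi|\xi|),
\]
using $2\pi \cdot 2^z (2\pi)^{-z-1} = \pi^{-z}$ and $|\xi|^{-\frac n2 + 1 - z - 1} = |\xi|^{-(\frac n2 + z)}$, and noting $\nu + \mu + 1 = \frac n2 + z$. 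The only genuinely delicate point is the term-by-term integration in the proof of $\mathcal I_\nu(\mu,a)$: one must ensure the Beta integrals converge (which is exactly the hypothesis $\Re z > -1$) and that the interchange is valid for complex $z$; this is what the Stirling estimate above secures. Alternatively, one may first establish the identity for $z$ real and nonnegative, where absolute convergence is transparent, and then extend it to the strip $\Re z > -1$ by analytic continuation, both sides being holomorphic in $z$ there for each fixed $\xi \neq 0$. Everything else is direct substitution.
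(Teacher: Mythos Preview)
Your proof is correct and follows the same approach as the paper: apply the Fourier--Bessel representation (Theorem \ref{T:Fourier-Bessel}) and then evaluate the resulting one-dimensional integral via the Sonine formula. The only difference is that the paper simply cites this formula from Gradshteyn--Ryzhik (formula 6.567, recorded as \eqref{Besselintegral}), whereas you supply a self-contained proof of it via term-by-term integration of the power series for $J_\nu$ and recognition of the resulting series as $J_{\nu+\mu+1}$; this makes your argument more complete but otherwise identical in structure.
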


\begin{proof}
Since $B_z(x) = B^\star(|x|)$, with $B^\star(r) = \frac{\left(1-r^2\right)^z_+}{\Gamma(z+1)}$,
applying Theorem \ref{T:Fourier-Bessel} we find
$$
\hat B_z(\xi)=\frac{2\pi|\xi|^{-\frac{n-2}2}}{\Gamma(z+1)}\int^1_0
\left(1-r^2\right)^z J_{\frac{n-2}2} (2\pi|\xi|r)r^{\frac{n}2}dr.
$$
Next, we use the following formula 6.567 in \cite{GR} 
\begin{equation}\label{Besselintegral}
\int^1_0(1-r^2)^z
s^{\nu+1}J_\nu(ar)dr=2^z\Gamma(z+1)a^{-(z+1)}J_{\nu+z+1}(a),
\end{equation}
which is valid for any $a>0, \Re z>-1$, and $\Re \nu >-1$. Applying
\eqref{Besselintegral} with $\nu=\dfrac{n-2}2$, $a=2\pi|\xi|$, we
finally obtain \eqref{FTBR}.

\end{proof}

Another important instance of Theorem \ref{T:Fourier-Bessel} is the following. Consider the spherical mean-value operator 
\[
\mathscr M_r(u,x) = \frac{1}{\sigma_{n-1} r^{n-1}} \int_{S(x,r)} u(y) d\sigma(y), 
\]
introduced in \eqref{MA0}. The normalized surface measure on the sphere $S(0,r)$, i.e., 
\begin{equation}\label{nsm}
d\sigma_r = \frac{1}{\sigma_{n-1} r^{n-1}} d\sigma(y),
\end{equation}
is the compactly supported distribution whose action on a test function $\vf$ is defined by
\[
<d\sigma_r,\vf> = \int_{S(0,r)} \vf(y) d\sigma_r(y).
\]
Clearly, supp$\ d\sigma_r = S(0,r)$. Since $d\sigma_r\in \mathscr E'(\Rn)$, by the Theorem of Paley-Wiener its Fourier transform $\F(d\sigma_r)(\xi)$ is not just in $\mathscr S'(\Rn)$, but it is in fact a $C^\infty$ function in $\Rn$. An easy exchange of order of integration argument shows that 
\[
\F(d\sigma_r)(\xi) = \frac{1}{\sigma_{n-1} r^{n-1}} \int_{S(0,r)} e^{-2\pi i <\xi,x>} d\sigma(x).
\]

The following beautiful classical result, which is a special case of Theorem \ref{T:Fourier-Bessel}, provides us with an explicit expression of such function, see p. 154 in \cite{SW}.

\begin{prop}\label{P:FTspheren}
For any $r>0$ one has
\[
\F(d\sigma_r)(\xi) = \frac{2\pi}{\sigma_{n-1}} \left(r |\xi|\right)^{-\frac{n}2 +1}
J_{\frac n2 -1}(2\pi r |\xi|).\]
\end{prop}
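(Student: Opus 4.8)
The plan is to derive Proposition \ref{P:FTspheren} directly from the Fourier--Bessel representation in Theorem \ref{T:Fourier-Bessel}, since the normalized surface measure $d\sigma_r$ is, morally, a radial ``function'' supported on the sphere $\{|y| = r\}$. First I would observe that although $d\sigma_r$ is a distribution and not an $L^1$ function, its Fourier transform is computed by the absolutely convergent integral
\[
\F(d\sigma_r)(\xi) = \frac{1}{\sigma_{n-1} r^{n-1}} \int_{S(0,r)} e^{-2\pi i <\xi,x>} d\sigma(x),
\]
as recalled just before the statement. The strategy is to recognize this as the (suitably interpreted) limit of Fourier transforms of radial $L^1$ functions that concentrate on the sphere $S(0,r)$, so that Theorem \ref{T:Fourier-Bessel} can be applied in the limit, or — more cleanly — to apply the Fourier--Bessel machinery to the measure $\mu_r := \frac{1}{\sigma_{n-1} r^{n-1}} d\sigma|_{S(0,r)}$ treated as a radial measure with ``profile'' the Dirac mass $\delta_{\{t=r\}}$ on $(0,\infty)$ with respect to the measure $\sigma_{n-1} t^{n-1} dt$ used in Cavalieri's principle \eqref{gaussians}.

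Carrying this out: the derivation of Theorem \ref{T:Fourier-Bessel} for a radial function $u(x) = f(|x|)$ expresses $\hat u(\xi)$ by integrating $f(t)$ against the kernel obtained by averaging $e^{-2\pi i <\xi, x>}$ over the sphere $|x| = t$; indeed one shows
\[
\int_{S(0,t)} e^{-2\pi i <\xi, x>} d\sigma(x) = \sigma_{n-1} t^{n-1} \cdot 2\pi (t|\xi|)^{-\frac n2 + 1} J_{\frac n2 - 1}(2\pi t |\xi|),
\]
which is exactly the inner spherical-average identity lurking inside the proof of Theorem \ref{T:Fourier-Bessel}. Dividing both sides by $\sigma_{n-1} r^{n-1}$ and setting $t = r$ gives
\[
\F(d\sigma_r)(\xi) = \frac{1}{\sigma_{n-1} r^{n-1}} \cdot \sigma_{n-1} r^{n-1} \cdot 2\pi (r|\xi|)^{-\frac n2 + 1} J_{\frac n2 - 1}(2\pi r |\xi|) \cdot \frac{1}{?},
\]
and I would need to track the normalization carefully so that the $\sigma_{n-1}$ factors land as in the stated formula $\F(d\sigma_r)(\xi) = \frac{2\pi}{\sigma_{n-1}} (r|\xi|)^{-\frac n2 + 1} J_{\frac n2 - 1}(2\pi r |\xi|)$. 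The cleanest way to present it is: apply Theorem \ref{T:Fourier-Bessel} formally with $f = $ (the radial profile of $d\sigma_r$), i.e. a unit mass of the one-dimensional measure $\sigma_{n-1} t^{n-1} dt$ placed at $t = r$; the formula $\hat u(\xi) = 2\pi |\xi|^{-\frac n2 + 1} \int_0^\infty t^{\frac n2} f(t) J_{\frac n2 - 1}(2\pi |\xi| t) dt$ then ``evaluates'' at $t=r$ against that measure, with the normalizing constant $\frac{1}{\sigma_{n-1} r^{n-1}}$ pulled out, producing the claimed expression directly.

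The main obstacle is rigor at the distributional level: Theorem \ref{T:Fourier-Bessel} as stated requires $t \mapsto t^{n/2} f(t) J_{n/2-1}(t) \in L^1(\R^+)$, which a Dirac mass does not literally satisfy, so I would either (i) establish the spherical-average identity
\[
\int_{S^{n-1}} e^{-2\pi i <\xi, \omega>} d\sigma(\omega) = \sigma_{n-1} (2\pi|\xi|)^{-\frac n2 + 1} J_{\frac n2 - 1}(2\pi|\xi|) \cdot \tfrac{(2\pi|\xi|)^{\frac n2-1}}{?}
\]
directly by the same computation that underlies Theorem \ref{T:Fourier-Bessel} (writing the exponential in terms of the angle between $\xi$ and $\omega$, integrating over the remaining sphere, and recognizing \eqref{extbessel}), and then rescale from $S^{n-1}$ to $S(0,r)$ by $x = r\omega$, $d\sigma(x) = r^{n-1} d\sigma(\omega)$; or (ii) approximate $d\sigma_r$ by radial $L^1$ functions $f_\e$ supported in the annulus $r - \e < |x| < r + \e$ with total integral $1$ and pass to the limit in the Fourier--Bessel formula, using the continuity of $\xi \mapsto J_{n/2-1}(2\pi|\xi|t)$ in $t$ to identify the limit. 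I expect route (i) to be the more transparent and the one I would write up, since it isolates the single genuinely computational fact — the spherical average of a plane wave is a Bessel function — which is precisely the content of Theorem \ref{T:Fourier-Bessel} and merely needs to be specialized to the sphere rather than integrated against a radial profile.
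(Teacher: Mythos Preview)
Your approach is correct and matches the paper's own treatment: the paper does not give a proof but simply declares Proposition~\ref{P:FTspheren} ``a special case of Theorem~\ref{T:Fourier-Bessel}'' and refers to p.~154 of \cite{SW}. Your route (i) --- isolating the spherical average of a plane wave via the integral representation \eqref{extbessel} and then rescaling to radius $r$ --- is exactly the computation underlying Theorem~\ref{T:Fourier-Bessel}; for the normalization you were uncertain about, just take the radial profile of $d\sigma_r$ to be $f(t)=\frac{1}{\sigma_{n-1} r^{n-1}}\delta_r(t)$ in the formula $\hat u(\xi)=2\pi|\xi|^{-\frac n2+1}\int_0^\infty t^{\frac n2}f(t)J_{\frac n2-1}(2\pi|\xi|t)\,dt$, and the factor $r^{\frac n2}\cdot\frac{1}{\sigma_{n-1}r^{n-1}}=\frac{1}{\sigma_{n-1}}r^{-\frac n2+1}$ combines with $2\pi|\xi|^{-\frac n2+1}$ to give the claimed $\frac{2\pi}{\sigma_{n-1}}(r|\xi|)^{-\frac n2+1}$.
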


Now, the operator $\mathscr M_r u(\cdot)$ can be seen as the convolution of $d\sigma_r$ with the function $u$. In fact, for any $u\in \mathscr S(\Rn)$, if we let $\check{u}(x) = u(-x)$, then we have 
\begin{align*}
<d\sigma_r \star u,\vf> & = <d\sigma_r, \check{u}\star \vf> = \frac{1}{\sigma_{n-1} r^{n-1}} \int_{S(0,r)} \check{u}\star \vf(y) d\sigma(y)
\\
& = \frac{1}{\sigma_{n-1} r^{n-1}} \int_{S(0,r)} \int_{\Rn} \check{u}(y-x)\vf(x) dx d\sigma(y) 
\\
& = \int_{\Rn} \left(\frac{1}{\sigma_{n-1} r^{n-1}} \int_{S(0,r)} u(x-y)d\sigma(y)\right) \vf(x) dx
\\
& = \int_{\Rn} \mathscr M_r u(x) \vf(x) dx = <\mathscr M_r u,\vf>.  
\end{align*}
This shows that in $\mathscr S'(\Rn)$ we have
\begin{equation}\label{nsm2}
\mathscr M_r u  = d\sigma_r \star u.
\end{equation}
If we take the Fourier transform of both sides of \eqref{nsm2} we obtain
\begin{equation}\label{nsmFT}
\F(\mathscr M_r u)(\xi) = \F(d\sigma_r)(\xi) \F(u)(\xi).
\end{equation}
Combining \eqref{nsmFT} with Proposition \ref{P:FTspheren}, we conclude that the Fourier transform of the spherical mean-value operator is given by 
\begin{equation}\label{beau}
\F(\mathscr M_r u)(\xi) = \frac{2\pi}{\sigma_{n-1}} \left(r |\xi|\right)^{-\frac{n}2 +1}
J_{\frac n2 -1}(2\pi r |\xi|) \F(u)(\xi),
\end{equation}
yet one more instance of the fascinating interplay between a classical operation of analysis, such as taking the spherical mean-value of a function, and the Bessel functions. Again, the presence of these special functions is a manifestation of curvature, see \cite{SHA} and \cite{SW77}.

Another family of special functions that will be needed in this paper are the so-called \emph{hypergeometric functions}. In order to introduce them we recall the definition of the Pochammer's symbols
\[
\alpha_0 = 1,\ \ \ \ \alpha_k \overset{def}{=} \frac{\G(\alpha + k)}{\G(\alpha)} = \alpha(\alpha+1)...(\alpha + k -1),\ \ \ \ \ \ \ \ \ \ k\in \mathbb N. 
\] 
Notice that since, as we have said, the gamma function has a pole in $z=0$, we have
\[
0_k = \begin{cases}
1\ \ \ \ \ \ \text{if}\ k = 0
\\
0\ \ \ \ \ \ \text{for}\ k\ge 1.
\end{cases}
\]
\begin{definition}\label{D:hyper}
Let $p, q\in \no$ be such that $p\le q+1$, and let $\alpha_1,...,\alpha_p$ and $\beta_1,...,\beta_q$ be given parameters such that $-\beta_j\not\in \no$ for $j=1,...,q$. Given a number $z\in \mathbb C$, the power series
\[
_p F_q(\alpha_1,...,\alpha_p;\beta_1,...,\beta_q;z) = \sum_{k=0}^\infty \frac{(\alpha_1)_k . . . (\alpha_p)_k}{(\beta_1)_k . . . (\beta_q)_k} \frac{z^k}{k!}
\]
is called the \emph{generalized hypergeometric function}. When $p = 2$ and $q=1$, then the function $_2 F_1(\alpha_1,\alpha_2;\beta_1;z)$ is the \emph{Gauss' hypergeometric function}, and it is usually denoted by $F(\alpha_1,\alpha_2;\beta_1;z)$.
\end{definition}

Using the ratio test one easily verifies that the radius of convergence of the above hypergeometric series is $\infty$ when $p\le q$, whereas it equals $1$ when $p = q+1$. For later reference we record the following facts that follow easily from Definition \ref{D:hyper}:
\begin{equation}\label{zeroF}
F(\alpha,0;\beta;z) = F(0,\alpha;\beta;z) = 1,
\end{equation}
and (see also p. 275 in \cite{Le})
\begin{equation}\label{fs6}
F(\alpha,\beta;\beta;-z) =\ _1F_0(\alpha;-z) = (1+z)^{-\alpha}.
\end{equation}
It is also interesting to observe that the hypergeometric function $_0F_1$ is in essence a Bessel function, up to powers and rescaling. One has in fact from \eqref{Inu} and Definition \ref{D:hyper},
\begin{equation}\label{hypbessel}
I_\nu(z) = \frac{1}{\G(\nu+1)} \left(\frac{z}{2}\right)^{\nu}\  _0F_1(\nu+1;(z/2)^2).
\end{equation}
Formula \eqref{hypbessel}, Definition \ref{D:hyper} and a change of variable give 
\begin{align*}
\int_0^z t^a I_\nu(t) dt & = \frac{2^{\frac{a-\nu-1}{2}}}{\G(\nu+1)} \int_0^{(z/2)^2}  t^{\frac{a+\nu-1}{2}}\  _0F_1(\nu+1;t) dt
\\
& = \frac{2^{-2\nu}}{(a+\nu+1)\G(\nu+1)}\ _1F_2(\frac{a+\nu+1}{2};\nu+1, \frac{a+\nu+1}{2}+1;(z/2)^2). 
\end{align*}

We will also need the following beautiful result that connects Bessel functions to the Gauss' hypergeometric function $F(\alpha_1,\alpha_2;\beta_1;z)$. For its proof see p.51 and forward in Vol.2 of \cite{EMOT}. If one is interested only in the result, see $6.574$, formula $3.$ on p.692 in \cite{GR}.

\begin{lemma}[The discontinuous integral of Weber and Schafheitlin]\label{L:GR}
Let $\Re(\nu + \mu - \la + 1)>0$, $\Re\la > -1$, $0<b<a$. Then,
\begin{align*}
 \int_0^\infty t^{-\la} J_\nu(at) J_\mu(bt) dt & = \frac{b^\mu \Gamma(\frac{\nu + \mu - \la + 1}{2})}{2^\la a^{\mu-\la+1} \Gamma(\frac{\nu - \mu + \la + 1}{2}) \Gamma(\mu + 1)}
\\
& \times F\left(\frac{\nu + \mu - \la + 1}{2},\frac{-\nu + \mu - \la + 1}{2};\mu+1,\frac{b^2}{a^2}\right). 
\end{align*}
\end{lemma}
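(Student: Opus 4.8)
The plan is to reduce the double-index Bessel integral to a single hypergeometric series by expanding one of the two Bessel factors as a power series and then evaluating term-by-term against a known Mellin-type integral of the remaining Bessel function. Concretely, since $0 < b < a$, I would expand $J_\mu(bt)$ using the power series \eqref{psbessel}:
\[
J_\mu(bt) = \sum_{k=0}^\infty \frac{(-1)^k}{\G(k+1)\G(k+\mu+1)} \left(\frac{bt}{2}\right)^{\mu+2k},
\]
and substitute this into the integral, interchanging sum and integral (justified by the decay estimates \eqref{bfbehzero} at the origin and \eqref{jnuinfty2} at infinity, which under the hypotheses $\Re(\nu+\mu-\la+1)>0$ and $\Re\la>-1$ make the iterated integral absolutely convergent after one checks the tail and near-zero contributions). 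This leaves
\[
\int_0^\infty t^{-\la} J_\nu(at) J_\mu(bt)\, dt = \sum_{k=0}^\infty \frac{(-1)^k (b/2)^{\mu+2k}}{\G(k+1)\G(k+\mu+1)} \int_0^\infty t^{\mu+2k-\la} J_\nu(at)\, dt.
\]

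The next step is to evaluate the inner integral $\int_0^\infty t^{\rho} J_\nu(at)\, dt$ with $\rho = \mu+2k-\la$. This is the classical Weber-Schafheitlin integral for a single Bessel function — a Mellin transform of $J_\nu$ — whose value is
\[
\int_0^\infty t^{\rho} J_\nu(at)\, dt = \frac{2^\rho}{a^{\rho+1}}\, \frac{\G\!\left(\frac{\nu+\rho+1}{2}\right)}{\G\!\left(\frac{\nu-\rho+1}{2}\right)},
\]
valid for $-\Re\nu - 1 < \Re\rho < \frac12$ (the restriction at the lower end is the condition $\Re(\nu+\mu-\la+1)>0$ with $k=0$, and the upper restriction, though it may fail for large $k$, is handled by analytic continuation in $\la$, or equivalently by working first in a parameter range where everything converges and then continuing). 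This formula itself can be derived from \eqref{extbessel} by writing $J_\nu$ as a Bessel-type integral and swapping orders, or simply quoted; given the didactic spirit of the paper I would cite it (e.g. from \cite{GR} or \cite{W}) and note it is the $\mu=0$, $b=0$ degenerate case of the very identity being proved.

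Substituting this back in, each term of the series carries a factor $(b/2)^{\mu+2k} \cdot 2^{\mu+2k-\la}/a^{\mu+2k-\la+1} = 2^{-\la}\, b^{\mu} a^{-\mu+\la-1} (b^2/a^2)^k$, together with the gamma-function ratio $\G\!\big(\frac{\nu+\mu+2k-\la+1}{2}\big)/\G\!\big(\frac{\nu-\mu-2k+\la+1}{2}\big)$ and the $1/(\G(k+1)\G(k+\mu+1))$ from the expansion. Collecting the overall constant out front gives exactly the prefactor $\frac{b^\mu \G(\frac{\nu+\mu-\la+1}{2})}{2^\la a^{\mu-\la+1}\G(\frac{\nu-\mu+\la+1}{2})\G(\mu+1)}$ once one uses the Pochhammer identity $\G(z+k) = \G(z)(z)_k$ — applied to $z = \frac{\nu+\mu-\la+1}{2}$ in the numerator gamma (giving $(\frac{\nu+\mu-\la+1}{2})_k$), to $z=\mu+1$ in the denominator (giving $(\mu+1)_k$), to $k+1$ (giving $k!$), and, after using the reflection formula \eqref{sine} or \eqref{gammas} to flip the sign of $k$ in $\G(\frac{\nu-\mu+\la+1}{2}-k)$, a factor producing $(\frac{-\nu+\mu-\la+1}{2})_k$ in the numerator. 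The remaining series $\sum_k \frac{(\frac{\nu+\mu-\la+1}{2})_k(\frac{-\nu+\mu-\la+1}{2})_k}{(\mu+1)_k}\frac{(b^2/a^2)^k}{k!}$ is by Definition \ref{D:hyper} precisely $F\!\big(\frac{\nu+\mu-\la+1}{2},\frac{-\nu+\mu-\la+1}{2};\mu+1;\frac{b^2}{a^2}\big)$, and since $b<a$ the argument lies in the disk of convergence.

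The main obstacle I anticipate is the bookkeeping around the sign-flip in the denominator gamma function: the factor $\G\!\big(\frac{\nu-\mu+\la+1}{2}-k\big)$ appearing in the inner-integral formula must be rewritten as $\G\!\big(\frac{\nu-\mu+\la+1}{2}\big)$ times a $k$-dependent factor, and this is where \eqref{sine} (via $\G(z-k) = (-1)^k \pi/[\sin(\pi z)\,\G(1-z+k)]$, or more cleanly the identity $\frac{\G(z-k)}{\G(z)} = \frac{(-1)^k}{(1-z)_k}$) turns the stray $(-1)^k$ from the $J_\mu$ expansion into the correctly-signed Pochhammer symbol $(\frac{-\nu+\mu-\la+1}{2})_k$. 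A secondary technical point is the justification of term-by-term integration and the analytic continuation in $\la$ to cover the full stated parameter range $\Re(\nu+\mu-\la+1)>0$, $\Re\la>-1$ rather than just the smaller range where the interchange is manifestly legitimate; both sides are analytic in $\la$ on the stated domain, so it suffices to verify the identity on a nonempty open subset and continue.
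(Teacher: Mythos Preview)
The paper does not actually prove this lemma: it states the result and refers the reader to \cite{EMOT}, Vol.~2, p.~51ff, or formula 6.574.3 in \cite{GR}. Your proof plan is correct and is precisely the classical derivation those references contain (and that one finds in Watson's treatise): expand the Bessel function with the smaller argument $b$ in its power series, integrate term-by-term against the single-Bessel Mellin integral $\int_0^\infty t^\rho J_\nu(at)\,dt$, and recognize the resulting series as ${}_2F_1$ via Pochhammer symbols.

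The bookkeeping you anticipate is exactly right. With $z=\frac{\nu-\mu+\la+1}{2}$ the identity $1/\G(z-k) = (-1)^k(1-z)_k/\G(z)$ gives $1-z = \frac{-\nu+\mu-\la+1}{2}$, which is the second hypergeometric parameter, and the $(-1)^k$ cancels against the one coming from the $J_\mu$ expansion. Your remark about the upper restriction $\Re\rho<\tfrac12$ failing for large $k$ and the need for analytic continuation in $\la$ (or an $e^{-\e t}$ regularization) is the standard way this is handled and is the only genuinely delicate point; both sides being analytic in $\la$ on the stated domain, verifying equality on a sub-strip suffices.
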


\begin{remark} Let us note explicitly that, thanks to \eqref{bfbehzero}, the assumption $\Re(\nu + \mu - \la + 1)>0$ guarantees that $t\to t^{-\la} J_\nu(at) J_\mu(bt)\in L^1(0,\delta)$ for every $\delta>0$. On the other hand, the hypothesis $\Re\la > -1$ guarantees, in view of Lemma \ref{L:jnuinfty}, that $t\to t^{-\la} J_\nu(at) J_\mu(bt)\in L^1(\delta,\infty))$. Therefore, under the given assumptions we do have $t\to t^{-\la} J_\nu(at) J_\mu(bt)\in L^1(\R^+)$.
\end{remark}


\section{Fourier transform, Bessel functions and $(-\Delta)^s$}\label{S:ftb}

After our brief interlude on the Fourier transform and Bessel functions, we now return to the main protagonist of these notes. The two main objectives of this section are: 
\begin{itemize}
\item[(i)] to establish an alternative way of computing $(-\Delta)^s$ based on the Fourier transform, see Proposition \ref{P:slapft} below, and show that the fractional Laplacean is an elliptic pseudodifferential operator in the Kohn-Nirenberg's class $\Psi^{2s}_{1,0}$, see Proposition \ref{P:pseudo}; 
\item[(ii)] to compute explicitly the constant $\gamma(n,s)$ in \eqref{fls}, see Proposition \ref{P:gns} below.
\end{itemize}
\begin{prop}[Pseudodifferential nature of $(-\Delta)^s$]\label{P:slapft}
Let $\gamma(n,s)>0$ be the number identified by the following formula
\begin{equation}\label{gns}
\gamma(n,s)  \int_{\Rn} \frac{1 - \cos(z_n)}{|z|^{n+2s}} dz = 1.
\end{equation}
Then, for any $u\in \mathscr S(\Rn)$ we have
\begin{equation}\label{fls3}
\widehat{(-\Delta)^s u}(\xi) = (2\pi |\xi|)^{2s} \hat u(\xi).
\end{equation}
\end{prop}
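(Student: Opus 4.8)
The plan is to compute the Fourier transform of the right-hand side of \eqref{fls} directly, exploiting linearity of $\F$ and the translation formula \eqref{shift}. First I would write, for $u\in\mathscr S(\Rn)$,
\[
(-\Delta)^s u(x) = \frac{\gamma(n,s)}{2}\int_{\Rn}\frac{2u(x)-u(x+y)-u(x-y)}{|y|^{n+2s}}\,dy,
\]
and argue that, since the integrand is dominated (uniformly in $x$) by an $L^1$ function of $y$ — exactly the estimate already carried out after Definition \ref{D:fl}, splitting into $|y|\le 1$ and $|y|>1$ — one may bring the Fourier transform (an $x$-integral against $e^{-2\pi i\langle\xi,x\rangle}$) inside the $y$-integral by Fubini's theorem. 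Using $\F(\tau_y u)(\xi)=e^{2\pi i\langle\xi,y\rangle}\hat u(\xi)$ and $\F(\tau_{-y}u)(\xi)=e^{-2\pi i\langle\xi,y\rangle}\hat u(\xi)$, this yields
\[
\widehat{(-\Delta)^s u}(\xi) = \frac{\gamma(n,s)}{2}\,\hat u(\xi)\int_{\Rn}\frac{2-e^{2\pi i\langle\xi,y\rangle}-e^{-2\pi i\langle\xi,y\rangle}}{|y|^{n+2s}}\,dy = \gamma(n,s)\,\hat u(\xi)\int_{\Rn}\frac{1-\cos(2\pi\langle\xi,y\rangle)}{|y|^{n+2s}}\,dy.
\]
So everything reduces to evaluating the multiplier $m(\xi):=\gamma(n,s)\int_{\Rn}\frac{1-\cos(2\pi\langle\xi,y\rangle)}{|y|^{n+2s}}\,dy$ and showing it equals $(2\pi|\xi|)^{2s}$.

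The main step is therefore a homogeneity-plus-rotation-invariance argument for this integral. For $\xi\ne 0$ I would substitute $y = \frac{z}{2\pi|\xi|}$, which turns $2\pi\langle\xi,y\rangle$ into $\langle\tfrac{\xi}{|\xi|},z\rangle$ and produces a Jacobian factor $(2\pi|\xi|)^{-n}$ together with $|y|^{-(n+2s)} = (2\pi|\xi|)^{n+2s}|z|^{-(n+2s)}$, giving
\[
\int_{\Rn}\frac{1-\cos(2\pi\langle\xi,y\rangle)}{|y|^{n+2s}}\,dy = (2\pi|\xi|)^{2s}\int_{\Rn}\frac{1-\cos\langle\omega,z\rangle}{|z|^{n+2s}}\,dz,\qquad \omega:=\tfrac{\xi}{|\xi|}\in\mathbb S^{n-1}.
\]
Then a rotation $z\mapsto Tz$ with $T\in\mathbb O(n)$ chosen so that $T^t\omega = e_n$, combined with the fact that $|Tz|=|z|$ and that Lebesgue measure is $\mathbb O(n)$-invariant, shows the last integral is independent of $\omega$ and equals $\int_{\Rn}\frac{1-\cos(z_n)}{|z|^{n+2s}}\,dz$, which by the defining relation \eqref{gns} is precisely $1/\gamma(n,s)$. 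Hence $m(\xi) = (2\pi|\xi|)^{2s}$, and \eqref{fls3} follows. One should also note the trivial case $\xi = 0$: there $1-\cos(2\pi\langle 0,y\rangle)=0$, so both sides vanish.

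The one genuine technical point — and the only place where care is really needed — is the convergence and finiteness of the normalizing integral $\int_{\Rn}\frac{1-\cos(z_n)}{|z|^{n+2s}}\,dz$, i.e. that $\gamma(n,s)$ in \eqref{gns} is a well-defined positive number; this is what makes the substitution above legitimate and confirms $m(\xi)$ is finite. Near the origin one uses $1-\cos(z_n) = O(|z_n|^2) = O(|z|^2)$, so the integrand is $O(|z|^{2-n-2s}) = O(|z|^{-(n-2(1-s))})$, which is integrable over $\{|z|\le 1\}$ since $2(1-s)>0$; at infinity $1-\cos(z_n)$ is bounded, so the integrand is $O(|z|^{-(n+2s)})$, integrable over $\{|z|>1\}$ since $2s>0$. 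Positivity is clear since $1-\cos(z_n)\ge 0$ and is not identically zero. Everything else — Fubini, the change of variables, the rotation — is routine once this is in place, so I would expect the write-up to be short, with the dominated-convergence/Fubini justification (reusing the earlier splitting estimate verbatim) being the only spot deserving an explicit sentence.
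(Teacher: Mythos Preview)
Your proposal is correct and follows essentially the same route as the paper: apply the translation formula \eqref{shift} to reduce to the multiplier $\int_{\Rn}\frac{1-\cos(2\pi\langle\xi,y\rangle)}{|y|^{n+2s}}\,dy$, then use the scaling $z=2\pi|\xi|y$ together with $\mathbb O(n)$-invariance to pull out $(2\pi|\xi|)^{2s}$ and land on the normalizing integral \eqref{gns}. The paper's write-up differs only cosmetically (it invokes rotation invariance before the substitution rather than after, and is terser about the Fubini justification), but the argument is the same.
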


\begin{proof}
Let us observe that in view of Corollary \ref{C:decay} we know that $(-\Delta)^s u\in L^1(\Rn)$ and thus we can take its Fourier transform in the sense of $L^1$. Having said this, if we denote by $\tau_h u(x) = u(x+h)$ the translation operator in $\Rn$, we can rewrite \eqref{fls} in the following way
\begin{equation}\label{fls2}
(-\Delta)^s u(x) = \frac{\gamma(n,s)}{2} \int_{\Rn} \frac{2 u(x) - \tau_y u(x) - \tau_{-y}u(x)}{|y|^{n+2s}} dy.
\end{equation}

Using \eqref{shift} we easily find
\begin{equation}\label{fls22}
\widehat{(-\Delta)^s u}(\xi) = \gamma(n,s) \left(\int_{\Rn} \frac{1 - \cos(2\pi<\xi,y>)}{|y|^{n+2s}} dy\right) \hat u(\xi) = J(\xi)\hat u(\xi),
\end{equation}
where we have let
\[
J(\xi) = \gamma(n,s) \int_{\Rn} \frac{1 - \cos(2\pi<\xi,y>)}{|y|^{n+2s}} dy.
\]
We notice that the integral defining $J(\xi)$ only depends on $|\xi|$. For every $T\in \mathbb O(n)$ one in fact easily verifies that $J(T\xi) = J(\xi)$. 
For $\xi\not= 0$ we can thus write 
\[
J(\xi) = \gamma(n,s) \int_{\Rn} \frac{1 - \cos(<\frac{\xi}{|\xi|},2\pi |\xi| y>)}{|y|^{n+2s}} dy.
\]
The change of variable $z = 2\pi |\xi| y$ now gives
\begin{align}\label{JJ}
J(\xi) & = (2\pi |\xi|)^{2s} \gamma(n,s)  \int_{\Rn} \frac{1 - \cos(<\frac{\xi}{|\xi|},z>)}{|z|^{n+2s}} dz
\\
& = (2\pi |\xi|)^{2s} \gamma(n,s)  \int_{\Rn} \frac{1 - \cos(<e_n,z>)}{|z|^{n+2s}} dz = (2\pi |\xi|)^{2s} \gamma(n,s)  \int_{\Rn} \frac{1 - \cos (z_n)}{|z|^{n+2s}} dz.
\notag
\end{align}
Notice that the integrand in the right-hand side of the latter equation is nonnegative, and that the integral is convergent. We have in fact
\begin{align*}
& \int_{\Rn} \frac{1 - \cos(z_n)}{|z|^{n+2s}} dz = \int_{|z|\le 1} \frac{1 - \cos(z_n)}{|z|^{n+2s}} dz + \int_{|z|>1} \frac{1 - \cos (z_n)}{|z|^{n+2s}} dz
\\
& \le C \int_{|z|\le 1} \frac{dz}{|z|^{n-2(1-s)}}  + 2 \int_{|z|>1} \frac{dz}{|z|^{n+2s}} < \infty.
\end{align*}
Finally, if we substitute in \eqref{fls22} the expression given by \eqref{JJ}, it becomes clear that if we choose $\gamma(n,s)>0$ as in \eqref{gns}, then \eqref{fls3} holds.

\end{proof}

Formula \eqref{fls3} in Proposition \ref{P:slapft} shows that the fractional Laplacean $(-\Delta)^s$ belongs to a class of operators known as \emph{pseudodifferential operators}. Their action on functions is specified by the following formula
\begin{equation}\label{Ppseudo}
Pf(x) = \int_{\Rn} e^{2\pi i <\xi,x>} p(x,\xi) \hat f(\xi) d\xi,
\end{equation}
where the function $p(x,\xi)$, known as a \emph{symbol}, is requested to fulfill suitable hypothesis, see e.g. \cite{Ta}. For instance, if $p(x,\xi)$ is a $C^\infty$ function on $\Rn\times\Rn$ with the property that there exist $m\in \R$ such that for every $\alpha, \beta \in \mathbb N \cup\{0\}$ and every $x, \xi\in \Rn$ one has
\[
|\partial _{\xi }^{\alpha }\partial _{x}^{\beta } p(x,\xi)| \leq C_{{\alpha ,\beta }}\,(1+|\xi |)^{{m-|\alpha |}},
\]
for some constant $C_{{\alpha ,\beta }}$, then we say that $p(x,\xi)$ belongs to the symbol class $S_{{1,0}}^{m}$ introduced by Kohn and Nirenberg in their seminal works \cite{KN1}, \cite{KN2}. A more general class of symbols, denoted by $S^m_{\rho,\delta}$, was introduced by H\"ormander, see \cite{Ho66}, and also \cite{Ta}. If $p(x,\xi)\in S^m_{1,0}$, then the corresponding operator $P$ defined by \eqref{Ppseudo} is called a pseudodifferential operator of order $m$ and it is said to belong to the class 
$\Psi _{{1,0}}^{m}$. A pseudodifferential operator $P\in \Psi^m_{1,0}$ is called \emph{elliptic} if there exists $r>0$ such that its symbol $p(x,\xi)$ satisfies the following condition
\[
|p(x,\xi)^{-1}| \le \frac{C}{1+|\xi|^m},\ \ \ \ \ |x|\ge r.
\]
From the equation \eqref{fls3} above, we see that the symbol of $(-\Delta)^s$ is $p(x,\xi) = (2\pi |\xi|)^{2s}$, and therefore one easily sees that $p\in S^{2s}_{1,0}$ and that $(-\Delta)^s$ is elliptic. We state this observation in a proposition since it is a basic aspect of $(-\Delta)^s$ which has important repercussions. For a notable one we refer the reader to  Theorem \ref{T:hypo} below.

\begin{prop}\label{P:pseudo}
The operator $(-\Delta)^s$ is an elliptic pseudodifferential operator in the class $\Psi^{2s}_{1,0}$.
\end{prop}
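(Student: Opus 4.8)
The plan is to read the symbol of $(-\Delta)^s$ directly from \eqref{fls3} and then to verify the two assertions of the statement: that the operator lies in $\Psi^{2s}_{1,0}$ --- equivalently, that its symbol lies in the Kohn--Nirenberg class $S^{2s}_{1,0}$ --- and that it is elliptic. First, by Proposition \ref{P:slapft} one has $\widehat{(-\Delta)^s u}(\xi) = (2\pi|\xi|)^{2s}\hat u(\xi)$ for $u\in\mathscr S(\Rn)$, so applying the Fourier inversion formula gives
\[
(-\Delta)^s u(x) = \int_{\Rn} e^{2\pi i<\xi,x>}\, (2\pi|\xi|)^{2s}\, \hat u(\xi)\, d\xi ,
\]
which is exactly the representation \eqref{Ppseudo} of a pseudodifferential operator with the $x$-independent symbol $p(x,\xi) = (2\pi|\xi|)^{2s}$. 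From here everything reduces to elementary properties of this single explicit function of $\xi$.

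Next come the symbol estimates, which I would obtain from homogeneity. Since $p$ does not depend on $x$, every derivative $\p_x^\beta p$ with $|\beta|\ge 1$ vanishes identically, so only $\xi$-derivatives need to be bounded; and since $\xi\mapsto|\xi|^{2s}$ is $C^\infty$ on $\Rn\setminus\{0\}$ and positively homogeneous of degree $2s$, each $\p_\xi^\alpha (2\pi|\xi|)^{2s}$ is $C^\infty$ on $\Rn\setminus\{0\}$ and homogeneous of degree $2s-|\alpha|$, so compactness of the unit sphere yields a constant $C_\alpha$ with
\[
\big|\p_\xi^\alpha (2\pi|\xi|)^{2s}\big|\ \le\ C_\alpha\, |\xi|^{2s-|\alpha|}\ \le\ C_\alpha\, (1+|\xi|)^{2s-|\alpha|},\qquad |\xi|\ge 1 .
\]
The only technical point I would flag --- and I expect it to be the spot where a careful reader pauses rather than a genuine obstacle --- is that, since $0<s<1$, the function $(2\pi|\xi|)^{2s}$ fails to be $C^\infty$ at $\xi=0$, so it is not, on the nose, an element of $S^{2s}_{1,0}$. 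This low-frequency defect is immaterial: one fixes a cutoff $\chi\in C^\infty_0(\Rn)$ with $\chi\equiv 1$ near the origin and works instead with $(1-\chi(\xi))(2\pi|\xi|)^{2s}+\chi(\xi)$, an honest element of $S^{2s}_{1,0}$ that coincides with $(2\pi|\xi|)^{2s}$ for $|\xi|$ large; the difference operator has the compactly supported bounded symbol $\chi(\xi)\big[(2\pi|\xi|)^{2s}-1\big]$, hence is convolution by a bounded smooth function and is of lower order, playing no role in the order-$2s$ classification. I would record this in a short remark rather than dwell on it.

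Finally, ellipticity is immediate from the explicit symbol: $p(x,\xi)^{-1} = (2\pi)^{-2s}|\xi|^{-2s}$, so $|p(x,\xi)^{-1}|\le C\,(1+|\xi|^{2s})^{-1}$ for $|\xi|$ large (indeed for $|\xi|\ge 1$), which is precisely the required decay of $p^{-1}$ with $m=2s$; the same estimate holds for the modified symbol above on the region where $\chi\equiv 0$. In all, the argument is short and mechanical, the only non-routine ingredient being the standard cosmetic smoothing of the symbol at the origin.
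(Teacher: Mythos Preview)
Your proposal is correct and follows the same approach as the paper, which in fact gives no formal proof beyond the sentence preceding the proposition: ``From the equation \eqref{fls3} above, we see that the symbol of $(-\Delta)^s$ is $p(x,\xi) = (2\pi |\xi|)^{2s}$, and therefore one easily sees that $p\in S^{2s}_{1,0}$ and that $(-\Delta)^s$ is elliptic.'' Your write-up is actually more careful than the paper's, since you flag the failure of smoothness of $|\xi|^{2s}$ at the origin and indicate the standard cutoff fix---a point the paper passes over in silence.
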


The pseudodifferential character of the operator $(-\Delta)^s$ has been exploited in the recent work by Epstein and Pop \cite{EP} to study the regularity theory for the fractional Laplacean with a drift in the supercritical range $0<s<1/2$. More general pseudodifferential operators which include $(-\Delta)^s$ as a special case have been treated in the works of G. Grubb \cite{Gr1}, \cite{Gr2}, \cite{Gr3}.

Equation \eqref{fls3} in Proposition \ref{P:slapft} has the following immediate consequence.

\begin{corollary}[Semigroup property]\label{C:semi}
Let $0<s, s' <1$, with $s+s'\le 1$. Then, for any $u\in \mathscr S(\Rn)$ we have
\[
(-\Delta)^{s+s'} u = (-\Delta)^s (-\Delta)^{s'} u = (-\Delta)^{s'} (-\Delta)^{s} u.
\]
\end{corollary}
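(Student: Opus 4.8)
The plan is to work entirely on the Fourier transform side, where the statement becomes a triviality about multiplication operators. Given $u \in \mathscr S(\Rn)$, Proposition \ref{P:slapft} tells us that $\widehat{(-\Delta)^{s'} u}(\xi) = (2\pi|\xi|)^{2s'}\hat u(\xi)$. The first thing I would want to check is that we are allowed to apply Proposition \ref{P:slapft} a second time, i.e.\ that $(-\Delta)^{s'} u$ is again a legitimate input for $(-\Delta)^s$. By Corollary \ref{C:decay} we know $(-\Delta)^{s'} u \in C^\infty(\Rn) \cap L^1(\Rn)$, and in fact the decay estimate of Proposition \ref{P:decay} gives $|(-\Delta)^{s'} u(x)| \le C|x|^{-(n+2s')}$ for $|x|>1$; since $2s' > 0$ this means $(-\Delta)^{s'} u \in \mathscr L_s(\Rn)$, and combined with its smoothness it lies in the class on which $(-\Delta)^s$ acts pointwise (Proposition \ref{P:silv}). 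So $(-\Delta)^s\big((-\Delta)^{s'} u\big)$ is well-defined. One should also note that the hypothesis $s + s' \le 1$ is exactly what guarantees $(-\Delta)^{s+s'}$ itself is one of the operators covered by Definition \ref{D:fl}, so the left-hand side makes sense.

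Next I would take Fourier transforms of each of the three expressions in the claimed identity. For the middle term, applying \eqref{fls3} to the function $(-\Delta)^{s'} u$ gives
\begin{align*}
\widehat{(-\Delta)^s (-\Delta)^{s'} u}(\xi) &= (2\pi|\xi|)^{2s}\, \widehat{(-\Delta)^{s'} u}(\xi) \\
&= (2\pi|\xi|)^{2s} (2\pi|\xi|)^{2s'} \hat u(\xi) = (2\pi|\xi|)^{2(s+s')} \hat u(\xi).
\end{align*}
By symmetry of the roles of $s$ and $s'$, the same computation yields $\widehat{(-\Delta)^{s'}(-\Delta)^s u}(\xi) = (2\pi|\xi|)^{2(s+s')}\hat u(\xi)$ as well. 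On the other hand, applying \eqref{fls3} directly with exponent $s+s' \in (0,1]$ gives $\widehat{(-\Delta)^{s+s'} u}(\xi) = (2\pi|\xi|)^{2(s+s')}\hat u(\xi)$. Thus all three functions have the same Fourier transform.

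Finally I would invoke injectivity of the Fourier transform on the relevant space — all three functions are in $L^1(\Rn)$ (again by Corollary \ref{C:decay}, noting that a second application of $(-\Delta)^s$ to an $\mathscr L_s \cap C^\infty$ function produces a continuous function, and more carefully an $L^1$ function by the same decay argument), and two $L^1$ functions with equal Fourier transforms coincide a.e.; since they are continuous they coincide everywhere. Alternatively, and perhaps cleaner, one can regard everything inside $\mathscr S'(\Rn)$ via Corollary \ref{C:sob}, where the Fourier transform is a bijection, so equality of Fourier transforms immediately gives equality of the distributions. I do not anticipate a genuine obstacle here; the only point requiring a little care is the bookkeeping in the first paragraph — verifying that the intermediate function $(-\Delta)^{s'} u$ lies in a class to which Proposition \ref{P:slapft} applies, and that the borderline case $s+s'=1$ is handled (it is, since Definition \ref{D:fl} and \eqref{fls3} are stated for $0<s<1$, and $s+s'=1$ is allowed; if one were uneasy, a density/limiting argument as $s+s'\uparrow 1$ would also work, but it is unnecessary).
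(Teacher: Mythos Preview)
Your proof is correct and follows exactly the same approach as the paper: compute the Fourier transform of each of the three expressions via \eqref{fls3}, observe they all equal $(2\pi|\xi|)^{2(s+s')}\hat u(\xi)$, and conclude by injectivity. You are more scrupulous than the paper about the bookkeeping (checking that $(-\Delta)^{s'}u$, which is not Schwartz, is still a legitimate input; handling the endpoint $s+s'=1$), which the paper simply glosses over.
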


\begin{proof}
It is enough to verify the desired inequality on the Fourier transform side. Using \eqref{fls3} we find
\begin{align*}
\F\left((-\Delta)^{s+s'} u\right) & = (2\pi|\xi|)^{2(s+s')} \hat u = (2\pi|\xi|)^{2s} (2\pi|\xi|)^{2s'} \hat u 
\\
& = \F((-\Delta)^s (-\Delta)^{s'} u) = \F((-\Delta)^{s'} (-\Delta)^{s} u).
\end{align*}

\end{proof}

With Proposition \ref{P:slapft} in hands we can now prove the following important ``integration by parts" formula.

\begin{lemma}[Symmetry]\label{L:ibp}
Let $0<s\le 1$. Then, for any $u ,v\in \mathscr S(\Rn)$ we have
\begin{equation}\label{ibp}
\int_{\Rn} u(x) (-\Delta)^s v(x) dx = \int_{\Rn} (-\Delta)^s u(x) v(x) dx.
\end{equation}
\end{lemma}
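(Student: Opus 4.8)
The plan is to pass to the Fourier transform side, where the operator $(-\Delta)^s$ acts as multiplication by the real-valued symbol $(2\pi|\xi|)^{2s}$, and then invoke Parseval's identity, which reduces symmetry of $(-\Delta)^s$ to the obvious symmetry of multiplication by a real function.

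First I would recall the Parseval (multiplication) formula: for any two functions $f, g \in \mathscr S(\Rn)$ one has
\[
\int_{\Rn} \hat f(\xi)\, g(\xi)\, d\xi = \int_{\Rn} f(x)\, \hat g(x)\, dx,
\]
which follows from Fubini's theorem applied to the absolutely convergent double integral $\int\int e^{-2\pi i\langle \xi,x\rangle} f(x) g(\xi)\, dx\, d\xi$. A more convenient variant, obtained by combining this with the Fourier inversion formula, is the ``generalized Plancherel'' identity
\[
\int_{\Rn} \widehat F(\xi)\, \overline{\widehat G(\xi)}\, d\xi = \int_{\Rn} F(x)\, \overline{G(x)}\, dx;
\]
but since our functions need not be real, I would prefer to work directly with the bilinear pairing $\int \hat f\, \check g$, or simply note that $\widehat{(-\Delta)^s v}\in L^1(\Rn)$ by Corollary \ref{C:decay}, so every integral below converges absolutely and Fubini applies without fuss.

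Then the main computation is short. By Proposition \ref{P:slapft}, $\widehat{(-\Delta)^s v}(\xi) = (2\pi|\xi|)^{2s}\hat v(\xi)$ and likewise $\widehat{(-\Delta)^s u}(\xi) = (2\pi|\xi|)^{2s}\hat u(\xi)$. Writing $u$ in terms of its Fourier transform via the inversion formula $u(x) = \int_{\Rn} e^{2\pi i\langle\xi,x\rangle}\hat u(\xi)\,d\xi$ and inserting this into $\int_{\Rn} u(x)(-\Delta)^s v(x)\,dx$, I would interchange the order of integration (justified since $(-\Delta)^s v\in L^1(\Rn)$ by Corollary \ref{C:decay} and $\hat u \in \mathscr S(\Rn)$) to get
\[
\int_{\Rn} u(x)(-\Delta)^s v(x)\,dx = \int_{\Rn} \hat u(\xi)\,\widehat{(-\Delta)^s v}(-\xi)\,d\xi = \int_{\Rn} \hat u(\xi)\,(2\pi|\xi|)^{2s}\,\hat v(-\xi)\,d\xi,
\]
using evenness of $|\xi|^{2s}$. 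The right-hand side is now manifestly symmetric in $u$ and $v$: repeating the argument with the roles of $u$ and $v$ exchanged gives exactly $\int_{\Rn}(-\Delta)^s u(x)\,v(x)\,dx$, proving \eqref{ibp}. Alternatively one can observe directly that $\int_{\Rn}\widehat{(-\Delta)^s u}\cdot \hat v^{\,\vee} = \int_{\Rn}(2\pi|\xi|)^{2s}\hat u(\xi)\hat v(-\xi)\,d\xi = \int_{\Rn}\hat u\cdot \widehat{(-\Delta)^s v}^{\,\vee}$ and then undo the Fourier transform on both sides.

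I do not anticipate a serious obstacle here: the only point requiring a word of care is the justification of the interchange of integrals, and this is precisely what Corollary \ref{C:decay} is for — it guarantees $(-\Delta)^s u, (-\Delta)^s v \in L^1(\Rn)$, so the relevant double integrals are absolutely convergent and Fubini's theorem applies. One should also note that the case $s=1$ in the statement is consistent, since then $(-\Delta)^s = -\Delta$ and \eqref{ibp} is the classical Green's identity for Schwartz functions (equivalently, the symbol computation still gives $(2\pi|\xi|)^2 = 4\pi^2|\xi|^2$, which is again real and even).
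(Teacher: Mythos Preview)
Your proposal is correct and follows essentially the same approach as the paper: both pass to the Fourier side via Proposition \ref{P:slapft}, where $(-\Delta)^s$ becomes multiplication by the real, even symbol $(2\pi|\xi|)^{2s}$, and both invoke Corollary \ref{C:decay} to justify the absolute convergence needed for Fubini. The only cosmetic difference is that the paper applies the duality formula $\int \hat f\, g = \int f\, \hat g$ twice (writing $v = \F(\F^{-1}v)$ and then shifting the symbol onto $\F^{-1}v$), whereas you write out Fourier inversion for $u$ directly and observe the manifest symmetry of $\int \hat u(\xi)(2\pi|\xi|)^{2s}\hat v(-\xi)\,d\xi$; these are the same computation unwound slightly differently.
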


\begin{proof}
The case $s=1$ is well-known, and it is just integration by parts, so let us focus on $0<s<1$. Since by Corollary \ref{C:decay} we know $\widehat{(-\Delta)^s u}, \widehat{(-\Delta)^s v}\in L^1(\Rn)$, we can use the following formula, valid for any $f, g \in L^1(\Rn)$,
\begin{equation}\label{sft}
\int_{\Rn} \hat f(\xi) g(\xi) d\xi = \int_{\Rn} f(\xi) \hat g(\xi) d\xi.
\end{equation}
Applying \eqref{sft} and \eqref{fls3} in Proposition \ref{P:slapft}, we find
\begin{align*}
& \int_{\Rn} (-\Delta)^s u(x) v(x) dx = \int_{\Rn} (-\Delta)^s u(x) \F(\F^{-1}v)(x) 
dx = \int_{\Rn} \F((-\Delta)^s u)(\xi) \F^{-1}v(\xi) 
d\xi
\\
& = \int_{\Rn} (2\pi |\xi|)^{2s} \hat u(\xi)  \F^{-1}v(\xi) = \int_{\Rn}  \hat u(\xi)  (2\pi |\xi|)^{2s} \F^{-1}v(\xi) 
d\xi.
\end{align*}
Using \eqref{fls3} again we have
\begin{equation}\label{ifls3}
\F^{-1}((-\Delta)^s v)(\xi) = (2\pi |\xi|)^{2s} \F^{-1} v(\xi).
\end{equation}
Inserting this information in the above equation, and applying \eqref{sft} again, we find
\begin{align*}
& \int_{\Rn} (-\Delta)^s u(x) v(x) dx = \int_{\Rn}  \hat u(\xi) \F^{-1}((-\Delta)^s v)(\xi) 
d\xi
\\
& = \int_{\Rn} \F^{-1}(\hat u)(x) (-\Delta)^s v(x) 
dx = \int_{\Rn} u(x) (-\Delta)^s v(x) dx.
\end{align*}

\end{proof}

\begin{remark}\label{R:bogdan} 
For an extension of Lemma \ref{L:ibp} one should see \cite{BB2}.
\end{remark}

We next turn to computing explicitly the constant $\gamma(n,s)$ in \eqref{gns}.

\begin{prop}\label{P:gns} Let $0<s<1$. Then, we have
\begin{equation}\label{gnsfin}
\gamma(n,s) = \frac{s 2^{2s} \G\left(\frac{n+ 2s}{2}\right)}{\pi^{\frac n2} \G(1-s)}.
\end{equation}
\end{prop}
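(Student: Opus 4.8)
The plan is to reduce the identity \eqref{gnsfin} to the evaluation of the integral
\[
A(n,s) := \int_{\Rn} \frac{1-\cos(z_n)}{|z|^{n+2s}}\, dz,
\]
since by the defining relation \eqref{gns} one has $\gamma(n,s) = 1/A(n,s)$; thus it suffices to show that $A(n,s) = \pi^{n/2}\G(1-s)/\big(s\, 2^{2s}\,\G(\tfrac{n+2s}{2})\big)$. Note that the integrand is nonnegative, so every interchange of integrals below is justified by Tonelli's theorem.

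First I would linearize the kernel $|z|^{-(n+2s)}$ by the usual Gamma-function device: for every $z\ne 0$,
\[
\frac{1}{|z|^{n+2s}} = \frac{1}{\G(\frac{n+2s}{2})} \int_0^\infty t^{\frac{n+2s}{2}-1} e^{-t|z|^2}\, dt.
\]
Substituting this into $A(n,s)$ and exchanging the order of integration reduces everything to two Gaussian integrals,
\[
\int_{\Rn} e^{-t|z|^2}\, dz = \Big(\frac{\pi}{t}\Big)^{n/2}, \qquad \int_{\R} e^{-t \tau^2} \cos\tau\, d\tau = \sqrt{\frac{\pi}{t}}\, e^{-1/(4t)},
\]
so that $\int_{\Rn}(1-\cos z_n)e^{-t|z|^2}\,dz = (\pi/t)^{n/2}\big(1-e^{-1/(4t)}\big)$ and hence
\[
A(n,s) = \frac{\pi^{n/2}}{\G(\frac{n+2s}{2})} \int_0^\infty t^{s-1}\big(1 - e^{-1/(4t)}\big)\, dt.
\]
The remaining one-dimensional integral I would then evaluate via the change of variable $u = 1/(4t)$, which transforms it into $2^{-2s}\int_0^\infty u^{-s-1}(1-e^{-u})\,du$; by the already established formula \eqref{hsg0} this equals $2^{-2s}\,\G(1-s)/s$. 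Combining the pieces gives $A(n,s) = \pi^{n/2}\G(1-s)/\big(s\,2^{2s}\,\G(\tfrac{n+2s}{2})\big)$, and taking reciprocals yields \eqref{gnsfin}.

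I do not expect a genuine obstacle: the only points needing a little care are the justification of the interchange of the $t$- and $z$-integrals (immediate from Tonelli, as $1-\cos z_n\ge 0$) and the Gaussian cosine integral $\int_{\R} e^{-t\tau^2}\cos\tau\,d\tau = \sqrt{\pi/t}\,e^{-1/(4t)}$, which one gets either by writing $\cos\tau = \tfrac12(e^{i\tau}+e^{-i\tau})$ and completing the square, or by observing that $\varphi(t):=\int_{\R} e^{-t\tau^2}\cos\tau\,d\tau$ satisfies $\varphi'(t) = -\tfrac{1}{4t}\varphi(t)$ together with $\varphi(t)\sim\sqrt{\pi/t}$ as $t\to\infty$. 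As an alternative route to $A(n,s)$ one could integrate first in $z' = (z_1,\dots,z_{n-1})$ using a Beta-function identity of the type in Proposition \ref{P:poisson}, thereby reducing matters to the classical one-dimensional integral $\int_{\R}(1-\cos\tau)\,|\tau|^{-1-2s}\,d\tau$; but the subordination computation above is cleaner, as it plugs directly into \eqref{hsg0}.
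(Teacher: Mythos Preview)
Your argument is correct and genuinely different from the paper's. The paper evaluates $A(n,s)=\int_{\Rn}\frac{1-\cos z_n}{|z|^{n+2s}}\,dz$ by Bochner's spherical integration: it writes the integral in polar coordinates, recognizes the angular integral as producing the Bessel function $J_{\frac{n-2}{2}}$ via Definition~\ref{D:extbessel}, integrates by parts using the recursion $(z^{-\nu}J_\nu)' = -z^{-\nu}J_{\nu+1}$, and finally appeals to a tabulated Hankel-type integral (formula (17) on p.~684 of \cite{GR}) to close the computation. Your route instead linearizes the singular kernel via the Gamma subordination $|z|^{-(n+2s)} = \Gamma(\tfrac{n+2s}{2})^{-1}\int_0^\infty t^{\frac{n+2s}{2}-1}e^{-t|z|^2}\,dt$, which after Tonelli reduces everything to two elementary Gaussian integrals and the already-proved identity \eqref{hsg0}. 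The trade-off: the paper's approach showcases the Fourier--Bessel machinery of Section~\ref{S:bi} and keeps the treatment self-contained within that framework, while yours is shorter, avoids special functions and external tables entirely, and fits more naturally with the heat-semigroup viewpoint developed later in Sections~\ref{S:heat}--\ref{S:sub}. Both are rigorous; yours is the more economical.
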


\begin{proof}
We use the beautiful idea of Bochner in his proof of Theorem \ref{T:Fourier-Bessel} above. If we denote by $\theta\in [0,\pi]$ the angle that the vector $z\in \Rn\setminus \{0\}$ forms with the positive direction of the $z_n$-axis, then Cavalieri's principle, and Fubini's theorem, give
\begin{align*}
\int_{\Rn} \frac{1 - \cos z_n}{|z|^{n+2s}} dz & = \int_0^\infty \int_{\mathbb S^{n-1}} \frac{1 - \cos(r\cos \theta)}{r^{n+2s}} d\sigma r^{n-1} dr
\\
& = \int_0^\infty \frac{1}{r^{1+2s}} \int_0^{\pi} \left[1 - \cos(r\cos \theta)\right] \int_{L_{\theta}} d\sigma' d\theta dr,
\end{align*} 
where we have indicated by $L_\theta = \{y\in \mathbb S^{n-1}\mid <y,e_n> = \cos \theta\}$ the $(n-2)$-dimensional sphere in $\Rn$ with radius $\sin \theta$ obtained by intersecting $\mathbb S^{n-1}$ with the hyperplane $y_n = \cos \theta$. Since with $\sigma_{n-2}$ given by \eqref{sn1} above we have
\[
\int_{L_{\theta}} d\sigma' = \sigma_{n-2} (\sin \theta)^{n-2},
\] 
we obtain
\begin{align}\label{good}
& \int_{\Rn} \frac{1 - \cos z_n}{|z|^{n+2s}} dz  =  \sigma_{n-2}  \int_0^\infty \frac{1}{r^{1+2s}} \int_0^{\pi} \left[1 - \cos(r\cos \theta)\right] (\sin \theta)^{n-2} d\theta dr
\\
& = \sigma_{n-2} \int_0^\infty \frac{1}{r^{1+2s}} \int_0^{\pi} \left[1 - \cos(r\cos \theta)\right] (1 -\cos^2 \theta)^{\frac{n-3}{2}} \sin \theta d\theta dr\ \ \ (\text{set}\ u = \cos \theta)
\notag\\
& =  \sigma_{n-2}  \int_0^\infty \frac{1}{r^{1+2s}} \int_{-1}^{1} \left[1 - \cos(r u)\right] (1 -u^2)^{\frac{n-3}{2}} du dr
\notag\\
& = \sigma_{n-2} \int_0^\infty \frac{1}{r^{1+2s}} \left[\int_{-1}^{1}(1 -u^2)^{\frac{n-3}{2}} du - \int_{-1}^{1} \cos(r u) (1 -u^2)^{\frac{n-3}{2}} du\right] dr.
\notag
\end{align} 

From \eqref{beta} and \eqref{beta2} we thus find
 \[ \int^1_{-1} (1-s^2)^{\frac{2\nu-1}2}ds = 2 \int_0^1 (\cos
\theta)^{2\nu} d\theta = B\left(\nu+\frac{1}{2},\frac{1}{2}\right) =
\frac{\Gamma\left(\nu+\frac{1}{2}\right)\Gamma\left(\frac{1}{2}\right)}{\Gamma(\nu
+ 1)}.
\] 
This gives
\[
\int_{-1}^{1}(1 -u^2)^{\frac{n-3}{2}} du = \frac{\G(\frac{n-1}{2})\Gamma(\frac 12)}{\G(\frac n2)}.
\]
On the other hand, we have
\[
\int_{-1}^{1} \cos(r u) (1 -u^2)^{\frac{n-3}{2}} du = \int_{-1}^{1} e^{i r u} (1 -u^2)^{\frac{n-3}{2}} du.
\]
From this equation and \eqref{extbessel} in Definition \ref{D:extbessel} we obtain with $\nu = \frac{n-2}{2}$ and $z = r$, 
\[
\int_{-1}^{1} \cos(r u) (1 -u^2)^{\frac{n-3}{2}} du = \G(\frac{n-1}{2}) \Gamma(\frac 12)  \left(\frac{2}{r }\right)^{\frac{n-2}{2}} J_{\frac{n-2}{2}}(r)
\]
Substituting in \eqref{good} above, we find
\begin{align*}
\int_{\Rn} \frac{1 - \cos(z_n)}{|z|^{n+2s}} dz & = \sigma_{n-2} \frac{\G(\frac{n-1}{2})\Gamma(\frac 12)}{\G(\frac n2)} \int_0^\infty \frac{1}{r^{1+2s}} \left[1 - \G(\frac n2)\left(\frac{2}{r }\right)^{\frac{n-2}{2}} J_{\frac{n-2}{2}}(r)\right] dr.
\end{align*}
Keeping \eqref{sn1} in mind, which gives
\[
\sigma_{n-2} = \frac{2 \pi^{\frac{n-1}{2}}}{\G(\frac{n-1}{2})},
\]
and that $\sqrt{\pi} = \G(1/2)$, we conclude that
\[
\int_{\Rn} \frac{1 - \cos(z_n)}{|z|^{n+2s}} dz = \sigma_{n-1}  \int_0^\infty \frac{1}{r^{1+2s}} \left[1 - \G(\frac n2)\left(\frac{2}{r }\right)^{\frac{n-2}{2}} J_{\frac{n-2}{2}}(r)\right] dr.
\]
From this equation and \eqref{gns} in Proposition \ref{P:slapft} above, it is clear that the constant $\gamma(n,s)$ must be chosen so that
\begin{equation}\label{gammans}
\gamma(n,s) \sigma_{n-1}  \int_0^\infty \frac{1}{r^{1+2s}} \left[1 - \G(\frac n2)\left(\frac{2}{r }\right)^{\frac{n-2}{2}} J_{\frac{n-2}{2}}(r)\right] dr = 1.
\end{equation}
In order to complete the proof, we are thus left with computing explicitly the integral in the right-hand side of \eqref{gammans}. 

With $\nu = \frac n2 - 1$, consider now the function
\[
\Psi_\nu(r) = 1 - \G(\nu+1)\left(\frac{2}{r }\right)^{\nu} J_{\nu}(r).
\]
From the series expansion of $J_\nu(r)$, see \eqref{psbessel} above, we have
\[
J_\nu(r) = \frac{\left(\frac r2\right)^\nu}{\G(\nu+1)} - \frac{\left(\frac r2\right)^{\nu+2}}{\G(\nu+2)} + \frac{\left(\frac r2\right)^{\nu+4}}{\G(\nu+3)} -...
\]
This expansion gives for some function $h(r) = O(r^2)$ as $r\to 0$,
\begin{equation}\label{lim}
\Psi_\nu(r) = \left(1+h(r)\right)\left(\frac r2\right)^{2}.
\end{equation}
On the other hand, \eqref{jnuinfty2} implies that as $r\to \infty$
\begin{equation}\label{lim2}
\Psi_\nu(r) = 1 + O(r^{-(\nu+\frac 12)}), 
\end{equation}
and thus, in particular, $\Psi_\nu\in L^\infty[0,\infty)$.
We thus find
\begin{align*}
& \int_0^\infty \frac{1}{r^{1+2s}} \left[1 - \G(\frac n2)\left(\frac{2}{r }\right)^{\frac{n-2}{2}} J_{\frac{n-2}{2}}(r)\right] dr = \int_0^\infty \left(\frac{r^{-2s}}{-2s}\right)' \Psi_\nu(r) dr
\\
& = \underset{R\to \infty, \varepsilon \to 0^+}{\lim} \int_\varepsilon^R \left(\frac{r^{-2s}}{-2s}\right)' \Psi_\nu(r) dr = - \underset{R\to \infty}{\lim}  \frac{R^{-2s}}{2s} \Psi_\nu(R) + \underset{\varepsilon\to 0^+}{\lim}  \frac{\varepsilon^{-2s}}{2s} \Psi_\nu(\varepsilon)
\\
& +  \int_0^\infty \frac{r^{-2s}}{2s} \Psi_\nu'(r) dr.
\end{align*}
Since as we have observed $\Psi_\nu\in L^\infty[0,\infty)$, we clearly have
\[
\underset{R\to \infty}{\lim}  \frac{R^{-2s}}{2s} \Psi_\nu(R) = 0.
\]
From \eqref{lim} and the fact that $0<s<1$, we obtain
\[
\underset{\varepsilon\to 0^+}{\lim}  \frac{\varepsilon^{-2s}}{2s} \Psi_\nu(\varepsilon) = 0.
\]
We thus infer that
\[
\int_0^\infty \frac{1}{r^{1+2s}} \left[1 - \G(\frac n2)\left(\frac{2}{r }\right)^{\frac{n-2}{2}} J_{\frac{n-2}{2}}(r)\right] dr =  \frac{1}{2s} \int_0^\infty \frac{1}{r^{2s}} \Psi'_\nu(r) dr.
\]
On the other hand, the recursion formula for $J_\nu$, see e.g. (5.3.5) on p. 103 in \cite{Le},
\[
(z^{-\nu} J_\nu(z))' = - z^{-\nu} J_{\nu+1}(z),
\]
 gives
\[
\Psi'(r) = - 2^\nu \G(\nu+1)\left(r^{- \nu} J_{\nu}(r)\right)' = 2^\nu \G(\nu+1)r^{- \nu} J_{\nu+1}(r).
\]
We thus find
\[
\int_0^\infty \frac{1}{r^{1+2s}} \left[1 - \G(\frac n2)\left(\frac{2}{r }\right)^{\frac{n-2}{2}} J_{\frac{n-2}{2}}(r)\right] dr =  \frac{2^\nu \G(\nu+1)}{2s} \int_0^\infty \frac{1}{r^{\frac n2 - 1 +2s}} J_{\frac n2}(r) dr.
\]
Recalling that $\nu = \frac n2 - 1$ we can write the right-hand side as follows
\[
\frac{2^\nu \G(\nu+1)}{2s} \int_0^\infty \frac{1}{r^{\nu +2s}} J_{\nu +1}(r) dr = \frac{2^\nu \G(\nu+1)}{2s} \int_0^\infty \frac{1}{r^{\mu - q}} J_{\mu}(r) dr,
\]
where $\mu = \nu+1 = \frac n2$, and $q = 1-2s$. We now invoke the following result,  which is formula (17) on p. 684 in \cite{GR}:
\begin{equation}\label{17}
\int_0^\infty \frac{1}{r^{\mu - q}} J_{\mu}(ar) dr = \frac{\G(\frac{q+1}{2})}{2^{\mu-q} a^{q-\mu+1} \G(\mu-\frac q2 + \frac 12)},
\end{equation}
provided that
\[
- 1 < \Re q < \Re \mu - \frac 12.
\]
With the above values of the parameters $\mu$ and $q$ this condition becomes
\[
- 1 < 1-2s< \frac n2 - \frac 12.
\]
Now, the former inequality is satisfied since it is equivalent to $s<1$, and the second is also satisfied since it is equivalent to $s>\frac{1-n}{4}$, which is of course true since $s>0$, whereas $\frac{1-n}{4}\le 0$. In conclusion, we obtain from \eqref{17}
\[
\frac{2^\nu \G(\nu+1)}{2s}   \int_0^\infty \frac{1}{r^{\nu +2s}} J_{\nu +1}(r) dr = \frac{2^{\frac n2 - 1} \G(\frac n2)}{2s}  \frac{\G(1-s)}{2^{\frac n2 - 1 + 2s} \G(\frac n2 + s)} = \frac{\G(\frac n2)}{2s}  \frac{\G(1-s)}{2^{2s} \G(\frac n2 + s)}.
\]
Returning to \eqref{gammans}, and keeping the first identity in \eqref{sn1} in mind, we reach the conclusion that the constant $\gamma(n,s)$ is given by the equation 
\[
\gamma(n,s)  \frac{2 \pi^{\frac n2}}{\G(\frac n2)} \frac{\G(\frac n2)}{2s}  \frac{\G(1-s)}{2^{2s} \G(\frac n2 + s)} = 1,
\]
which finally gives 
\[
\gamma(n,s) = \frac{s 2^{2s} \G(\frac n2 + s)}{\pi^{\frac n2} \G(1-s)}.
\]
This proves \eqref{gnsfin}, thus completing the lemma.

\end{proof}


\section{The fractional Laplacean and Riesz transforms}\label{S:riesz}

In this section we pause for discussing some interesting consequences of Corollary \ref{C:semi} and Lemma \ref{L:ibp}. In analysis and geometry one is interested in the following basic question. Consider a $n$-dimensional Riemannian manifold $M$, with gradient $\nabla$ and Laplacean $\Delta$. Given $1<p<\infty$, when is it true that the two Sobolev spaces of order one obtained by completion of $C^\infty_0(M)$ with respect to the seminorms $ \|\nabla f \|_{L^p(M)}$ and $\|\Delta^{1/2} f \|_{L^p(M)}$ coincide? This question is important for the purpose of developing analysis on the manifold $M$ and was raised in 1983 by R. Strichartz  in \cite{Str}.  

In order to understand it, let us remain within the familiar surroundings of flat Euclidean space, i.e., when $M = \Rn$. 
If we denote by $(\cdot,\cdot)$ the inner product in $L^2(\Rn)$, a simple integration by parts shows that for any $u\in \mathscr S(\Rn)$ we have
\[
(-\Delta u,u) = (\nabla u,\nabla u) = ||\nabla u||^2_{L^2(\Rn)}.
\]
However, applying Corollary \ref{C:semi} and Lemma \ref{L:ibp} we find 
\[
(-\Delta u,u) = ((-\Delta)^{1/2} (-\Delta)^{1/2}u,u) = ((-\Delta)^{1/2}u,(-\Delta)^{1/2}u) = ||(-\Delta)^{1/2}u||^2_{L^2(\Rn)}. 
\]
The reader should not underestimate the apparent simplicity of the latter conclusion. In a way, it is rather unintuitive that composing two nonlocal operators, such as $(-\Delta)^{1/2}$, we obtain a local operator.
 
Combining the latter two equations we obtain the remarkable conclusion
\begin{equation}\label{nice}
||\nabla u||^2_{L^2(\Rn)} = ||(-\Delta)^{1/2}u||^2_{L^2(\Rn)}.
\end{equation}
We emphasize at this point that \eqref{nice} allows to identify the first-order Sobolev subspaces of $L^2(\Rn)$ obtained by completion of $C^\infty_0(\Rn)$ with respect to the seminorms $||\nabla u||^2_{L^2(\Rn)}$ and $||(-\Delta)^{1/2}u||^2_{L^2(\Rn)}$. For the definition of the latter we refer the reader to \eqref{fss} below.

However, when $1<p<\infty$ and $p \neq 2$, a similar identification with respect to the seminorms $ \|\nabla f \|_{L^p(\Rn)}$ and $\|(-\Delta)^{1/2} f \|_{L^p(\Rn)}$ is no longer such a simple matter. 
It is a easy to recognize that an estimate such as 
\begin{align}\label{RZ}
A_p \|(-\Delta)^{1/2} f \|_{L^p(\Rn)} \le \| \nabla f \|_{L^p(\Rn)} \le B_p \| (-\Delta)^{1/2} f \|_{L^p(\Rn)},\ \ \ \ \ f \in C_0^\infty(\Rn),
\end{align}
would suffice for such identification. It is also easy to see (by a duality argument) that the validity of the right-hand inequality in \eqref{RZ} for a certain $1<p<\infty$ implies that of  the left-hand inequality in $L^{p'}(\Rn)$, where $\frac 1p + \frac 1p' = 1$. 

It is at this point that the Riesz transform enters the stage. One operator that occupies a central position in analysis is the $k$-th \emph{Riesz transform} $R_k$, which, on the Fourier transform side, is defined by the formula
\[
\widehat{R_k u}(\xi) = i \frac{\xi_k}{|\xi|} \hat u(\xi),\ \ \ \ \ \ \ \ k = 1,...,n.
\]
The vector-valued Riesz transform $\mathcal R = (R_1,...,R_n)$ are the first basic examples of singular integrals, as they generalize to dimension $n\ge 2$ the classical Hilbert transform, see \cite{St}. Using the Fourier transform it is immediate to verify that 
\begin{equation}\label{rieszt}
R_k = \frac{\p}{\p x_k} (-\Delta)^{-1/2},\ \ \ \ \ \ \ \ \ \ \ k=1,...,n,
\end{equation} 
which in vector-valued form can be compactly written as $\mathcal R = \nabla (-\Delta)^{-1/2}$. If we now apply \eqref{nice} to the function $u = (-\Delta)^{-1/2} f$ (this is fine, if $f\in \mathscr S(\Rn)$), we have proved the following result.

\begin{prop}\label{P:rieszt}
The vector-valued Riesz transform $R$ maps $L^2(\Rn)$ to $L^2(\Rn)$, and one has for any $f\in L^2(\Rn)$ 
\[
||\mathcal R f||^2_{L^2(\Rn)} = ||f||^2_{L^2(\Rn)}.
\]
\end{prop}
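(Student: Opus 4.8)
The plan is to follow the road already paved in the lines preceding the statement: marry the $L^2$ identity \eqref{nice} to the factorization $\mathcal R = \nabla(-\Delta)^{-1/2}$ recorded in \eqref{rieszt}. First I would fix $f\in\mathscr S(\Rn)$ and introduce $u = (-\Delta)^{-1/2}f = I_1 f$, which is meaningful because $n\ge 2$ forces $0<1<n$, so the Riesz potential of order $1$ is defined; as noted after Definition \ref{D:riesz}, $u\in C^\infty(\Rn)$, and by \eqref{alphafi}--\eqref{fi} one has $(-\Delta)^{1/2}u = f$, a priori in $\mathscr S'(\Rn)$ but hence everywhere since both sides are continuous. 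On the Fourier side $\hat u(\xi) = (2\pi|\xi|)^{-1}\hat f(\xi)$, from which $\nabla u\in L^2(\Rn)$.

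Next I would apply \eqref{nice} to this particular $u$, obtaining $\|\nabla u\|_{L^2(\Rn)}^2 = \|(-\Delta)^{1/2}u\|_{L^2(\Rn)}^2$. By \eqref{rieszt} the left-hand side is $\sum_{k=1}^n \|R_k f\|_{L^2(\Rn)}^2 = \|\mathcal R f\|_{L^2(\Rn)}^2$, while the right-hand side equals $\|f\|_{L^2(\Rn)}^2$; this is precisely the claimed identity for $f\in\mathscr S(\Rn)$. To pass to a general $f\in L^2(\Rn)$ one invokes density of $\mathscr S(\Rn)$ in $L^2(\Rn)$: the identity just shown makes $\mathcal R$ an isometry on a dense subspace, so it extends uniquely and continuously to an isometry of $L^2(\Rn)$, and the extension agrees with the multiplier $\widehat{R_k f}(\xi) = i\,\xi_k|\xi|^{-1}\hat f(\xi)$ by continuity of the Fourier transform on $L^2$.

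I do not anticipate a genuine obstacle here; the only delicate bookkeeping is twofold. First, $u$ is smooth but not Schwartz, so one must check that \eqref{nice} --- established for $\mathscr S(\Rn)$ via Corollary \ref{C:semi} and Lemma \ref{L:ibp} --- still applies; it does, since both $\nabla u$ and $(-\Delta)^{1/2}u = f$ lie in $L^2(\Rn)$ and one may approximate. Second, one must justify the composition rule \eqref{alphafi} used to identify $(-\Delta)^{1/2}u$ with $f$. If one prefers a fully self-contained argument that sidesteps \eqref{nice} and \eqref{alphafi} entirely, the cleanest substitute is the direct Plancherel computation: from $\widehat{R_k f}(\xi) = i\xi_k|\xi|^{-1}\hat f(\xi)$ one gets $\sum_{k=1}^n |\widehat{R_k f}(\xi)|^2 = \big(\sum_k \xi_k^2/|\xi|^2\big)|\hat f(\xi)|^2 = |\hat f(\xi)|^2$, and integrating over $\Rn$ together with Plancherel's theorem yields $\|\mathcal R f\|_{L^2(\Rn)}^2 = \|f\|_{L^2(\Rn)}^2$ at once.
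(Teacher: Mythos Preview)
Your proposal is correct and follows precisely the route the paper takes: apply \eqref{nice} to $u=(-\Delta)^{-1/2}f$ for $f\in\mathscr S(\Rn)$, then extend by density. The paper also remarks parenthetically that the direct Plancherel computation you give at the end would work just as well in $\Rn$, so your alternative is anticipated there too.
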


Thus \eqref{nice} is equivalent to the $L^2(\Rn)$ continuity of the Riesz operator $\mathcal R$ (obviously, in $\Rn$ we could have proved Proposition \ref{P:rieszt} using the Fourier transform as well, but the above proof works as well in situations in which such tool is not available). In a similar way, the right-hand inequality in \eqref{RZ} is equivalent to the $L^p$ continuity of the Riesz operator $\mathcal R$. In conclusion, the inequality \eqref{RZ} is true  for all $1<p<\infty$ if  
\begin{equation}\label{RZ2}
||\mathcal R f||_{L^p(\Rn)} \le C_p ||f||_{L^p(\Rn)},\ \ \ \ \ f\in C^\infty_0(\Rn).
\end{equation}

These arguments show that, at least when $M = \Rn$, the above question whether the two Sobolev spaces of order one obtained by completion of $C^\infty_0(M)$ with respect to the seminorms $ \|\nabla f \|_{L^p(M)}$ and $\|(-\Delta)^{1/2} f \|_{L^p(M)}$ coincide can be answered affirmatively if one knows that \eqref{RZ2} holds. On the other hand, one of the major accomplishments of the theorem of singular integrals is precisely their continuity in $L^p(\Rn)$, $1<p<\infty$, and thus the opening question of this section admits an affirmative answer in $\Rn$. 

It was because of the above considerations that in the above cited paper \cite{Str} R. Strichartz asked what hypothesis on a Riemannian manifold $M$ would ensure the continuity of the Riesz operator $\mathcal R = \nabla (-\Delta)^{-1/2}$ in $L^p(M)$ for $1<p<\infty$. An interesting answer was given in 1987 by D. Bakry who proved in \cite{Bak0} that if the Ricci tensor of $M$ is bounded from below by a non negative constant then \eqref{RZ2}, and therefore \eqref{RZ} hold for every $1<p<\infty$. The reader should also consult the subsequent developments in the papers \cite{ACDH}, \cite{CD}, and the more recent generalization to sub-Riemannian geometry in \cite{BG2}. These developments are intimately connected to the framework introduced in Section \ref{S:gamma} below.


\section{The fractional Laplacean of a radial function}\label{S:flrad}

We have seen in Lemma \ref{L:sym} that when $u(x) = f(|x|)$, then $(-\Delta)^s u(x)$ also has spherical symmetry. The next result provides a useful recipe for actually computing such function. It constitutes the non-local replacement of the well-known formula $\Delta u(x) = f''(|x|) + \frac{n-1}{|x|} f'(|x|)$ of the Laplacean of a spherically symmetric function.

\begin{lemma}\label{L:ftss}
Let $u(x) = f(|x|)$. Then, 
\begin{align}\label{ftss}
(-\Delta)^s u(x) & = \frac{(2\pi)^{2s+2}}{|x|^{\frac n2 - 1}} \int_0^\infty t^{2s+1} J_{\frac n2 -1}(2\pi |x| t) \left(\int_0^\infty \tau^{\frac n2} f(\tau) J_{\frac n2 - 1}(2\pi t \tau) d\tau\right) dt.
\\
& = |x|^{- \frac n2 - 2s - 1} \int_0^\infty t^{2s+1} J_{\frac n2 -1}(t) \left(\int_0^\infty \tau^{\frac n2} f(\tau) J_{\frac n2 - 1}(t |x|^{-1} \tau) d\tau\right) dt,
\notag
\end{align}
provided that the integrals exist and are convergent.
\end{lemma}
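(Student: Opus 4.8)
The plan is to reduce everything to two applications of the Fourier--Bessel representation (Theorem \ref{T:Fourier-Bessel}) together with the pseudodifferential formula \eqref{fls3} of Proposition \ref{P:slapft}. First I would observe that, since $u\in\mathscr S(\Rn)$, Proposition \ref{P:slapft} gives $\widehat{(-\Delta)^s u}(\xi)=(2\pi|\xi|)^{2s}\hat u(\xi)$, a function which is again radial, lies in $L^1(\Rn)$ (it is bounded near the origin because $2s>0$ and decays rapidly at infinity since $\hat u\in\mathscr S(\Rn)$), and hence for which Fourier inversion holds pointwise:
\[
(-\Delta)^s u(x)=\int_{\Rn}e^{2\pi i\langle\xi,x\rangle}(2\pi|\xi|)^{2s}\hat u(\xi)\,d\xi.
\]
Applying Theorem \ref{T:Fourier-Bessel} to $u=f(|\cdot|)$ yields $\hat u(\xi)=2\pi|\xi|^{-\frac n2+1}\int_0^\infty\tau^{\frac n2}f(\tau)J_{\frac n2-1}(2\pi|\xi|\tau)\,d\tau$, so the radial profile of $\widehat{(-\Delta)^s u}$ is
\[
h(t)=(2\pi t)^{2s}\,2\pi t^{-\frac n2+1}\int_0^\infty\tau^{\frac n2}f(\tau)J_{\frac n2-1}(2\pi t\tau)\,d\tau=(2\pi)^{2s+1}\,t^{2s-\frac n2+1}\int_0^\infty\tau^{\frac n2}f(\tau)J_{\frac n2-1}(2\pi t\tau)\,d\tau .
\]

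Next I would invert. Since radial functions are even, $\F^{-1}$ acts on them by the same Fourier--Bessel kernel as $\F$, so applying Theorem \ref{T:Fourier-Bessel} once more, now to $\widehat{(-\Delta)^s u}=h(|\cdot|)$, gives
\[
(-\Delta)^s u(x)=2\pi|x|^{-\frac n2+1}\int_0^\infty t^{\frac n2}h(t)J_{\frac n2-1}(2\pi|x|t)\,dt.
\]
Substituting the expression for $h(t)$, collecting the powers of $t$ (namely $t^{\frac n2}\cdot t^{2s}\cdot t^{-\frac n2+1}=t^{2s+1}$) and the constants ($2\pi\cdot(2\pi)^{2s+1}=(2\pi)^{2s+2}$) produces exactly the first identity in \eqref{ftss}. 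The second identity then follows from the first by the change of variable $t\mapsto t/(2\pi|x|)$ in the outer integral: this converts $2\pi|x|t$ into $t$ inside $J_{\frac n2-1}$, turns $2\pi t\tau$ into $t|x|^{-1}\tau$ in the inner integral, and, after tallying the Jacobian $dt/(2\pi|x|)$ and the factors $(2\pi)^{-(2s+1)}|x|^{-(2s+1)}$, collapses the constant $(2\pi)^{2s+2}$ to $1$ and replaces $|x|^{-\frac n2+1}$ by $|x|^{-\frac n2-2s-1}$.

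The point requiring care --- rather than any genuine obstacle --- is the legitimacy of the two interchanges of order of integration implicit in applying the Fourier--Bessel formula, and of the Fourier inversion step; all of these are covered by the proviso that ``the integrals exist and are convergent'', and in the model case $u\in\mathscr S(\Rn)$ they are automatic from the decay estimate of Proposition \ref{P:decay} (hence $(-\Delta)^s u\in L^1(\Rn)$ by Corollary \ref{C:decay}) and from $\hat u\in\mathscr S(\Rn)$. One should also record the precise integrability needed to run Theorem \ref{T:Fourier-Bessel} in each direction: $t\mapsto t^{\frac n2}f(t)J_{\frac n2-1}(t)\in L^1(\R^+)$ for the first application, and the analogous condition on the profile $h$ above for the second; these are exactly the hypotheses the statement hides behind its proviso.
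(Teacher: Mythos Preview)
Your proof is correct and follows essentially the same route as the paper's: apply Proposition~\ref{P:slapft} to write $\widehat{(-\Delta)^s u}(\xi)=(2\pi|\xi|)^{2s}\hat u(\xi)$, compute $\hat u$ via Theorem~\ref{T:Fourier-Bessel}, and then apply Theorem~\ref{T:Fourier-Bessel} once more to invert. You supply more detail than the paper does, in particular the explicit change of variable $t\mapsto t/(2\pi|x|)$ yielding the second line of \eqref{ftss} and the remarks on integrability, but the underlying argument is identical.
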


\begin{proof}
Let $U(x) = (-\Delta)^s u(x)$. We know from Lemma \ref{L:sym} that $U(x) = F(|x|)$.
We also know by \eqref{fls3} in Proposition \ref{P:slapft} that $\hat U(\xi) = (2\pi |\xi|)^{2s} \hat u(\xi)$. Combining this with Theorem \ref{T:Fourier-Bessel}, we find
\[
\hat U(\xi) = (2\pi)^{2s+1} |\xi|^{-\frac{n}2 +1 + 2s} \int^\infty_0 \tau^{\frac{n}2} f(\tau) J_{\frac{n}2 - 1}
(2\pi|\xi|\tau) d\tau.
\]
Applying again Theorem \ref{T:Fourier-Bessel} we obtain
\[
U(x) = (2\pi)^{2s+2} |x|^{-\frac n2 + 1} \int_0^\infty t^{\frac n2} J_{\frac n2 - 1}(2\pi|x| t) t^{-\frac{n}2 +1 + 2s} \left(\int_0^\infty \tau^{\frac n2} f(\tau) J_{\frac n2 - 1}(2\pi t \tau) d\tau\right) dt.
\]
This gives the desired conclusion \eqref{ftss}.

\end{proof}

For more elaborated representations related to Lemma \ref{L:ftss} the reader should see the paper \cite{FV}.


\section{The fundamental solution of $(-\Delta)^s$}\label{S:fs}

In this section we compute the fundamental solution of the fractional Laplacean operator. 
In most parts of this paper we will be implicitly assuming that the dimension of the ambient space is $n\ge 2$. Since $0<s<1$, this assumption obviously forces $s< \frac n2$. However, unlike the local case of the Laplacean, the situation when $n=1$ has its own interest when dealing with $(-\Delta)^s$ and at times it needs to be discussed separately. Theorem \ref{T:fs} below is one instance of this situation. The main reason is that, when $n=1$, then the case $s = \frac n2$ does occur when $s = \frac 12$. A remarkable study of the nondegeneracy and uniqueness for the nonlocal nonlinear equation 
\[
(-\Delta)^s u + u - |u|^\alpha u = 0,\ \ \ \ \ \ \ \ \ \ \alpha>0,
\]
entirely in the case $n=1$ is \cite{FLe13}. One should also see the sequel paper \cite{FLeS16} in which the authors obtain a generalization to any dimension $n\ge 1$.

Before we turn to the proof of the main results we pause for a moment to recall that there exist spaces larger than $\mathscr S(\Rn)$, or $L^\infty(\Rn)\cap C^2(\Rn)$, in which it is still possible to define the nonlocal Laplacean either pointwise or as a tempered distribution. We have seen an instance of this in Proposition \ref{P:silv} above. Following Definition 2.3 in \cite{Si}, given $0<s<1$ we can also consider the linear space of the functions $u\in C^\infty(\Rn)$ such that for every multi-index $\alpha\in \mi$
\[
[u]_{\alpha} = \underset{x\in \Rn}{\sup} \left(1+|x|^{n+2s}\right) |\p^\alpha u(x)| < \infty. 
\]
We denote by $\mathscr S_s(\Rn)$ the space $C^\infty(\Rn)$ endowed with the countable family of seminorms $[\cdot]_\alpha$, and by $\mathscr S'_s(\Rn)$ its  topological dual. We clearly have the inclusions
\begin{equation}\label{inclu1}
C^\infty_0(\Rn) \hookrightarrow \mathscr S(\Rn) \hookrightarrow \mathscr S_s(\Rn) \hookrightarrow
C^\infty(\Rn),
\end{equation}
with the dual inclusions given by
\begin{equation}\label{inclu2}
\mathscr E'(\Rn) \hookrightarrow \mathscr S'_s(\Rn) \hookrightarrow \mathscr S'(\Rn) \hookrightarrow
\mathscr D'(\Rn),
\end{equation}
where we recall that $\mathscr E'(\Rn)$ indicates the space of distributions with compact support. 
The next lemma justifies the introduction of the space $\mathscr S_s(\Rn)$.

\begin{lemma}\label{L:Ss}
Let $u\in \mathscr S(\Rn)$. Then,  $(-\Delta)^s u\in \mathscr S_s(\Rn)$.
\end{lemma}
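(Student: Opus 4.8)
The plan is to show two things about $v = (-\Delta)^s u$ when $u \in \mathscr S(\Rn)$: first, that $v \in C^\infty(\Rn)$, and second, that each seminorm $[v]_\alpha = \sup_{x\in\Rn}(1+|x|^{n+2s})|\partial^\alpha v(x)|$ is finite. The smoothness of $v$ is already available from Corollary \ref{C:decay} (where it is asserted that $(-\Delta)^s u \in C^\infty(\Rn)$), so the real content is the weighted decay estimate on all derivatives $\partial^\alpha v$. For this I would exploit the fact — easily read off from \eqref{fls} by differentiation under the integral sign — that $\partial^\alpha$ commutes with $(-\Delta)^s$ on $\mathscr S(\Rn)$, i.e. $\partial^\alpha\big((-\Delta)^s u\big) = (-\Delta)^s(\partial^\alpha u)$. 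This is legitimate because $\partial^\alpha u \in \mathscr S(\Rn)$ and the difference-quotient kernel in \eqref{fls} is integrable against $C^2$ functions that decay, exactly as in the convergence discussion following Definition \ref{D:fl}; differentiating under the integral is justified by dominated convergence using the same two-regime ($|y|\le 1$ and $|y|>1$) bounds used there.

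Once the commutation is in hand, the decay estimate becomes an immediate corollary of Proposition \ref{P:decay}. Indeed, applying that proposition to the Schwartz function $w = \partial^\alpha u$ in place of $u$, we get for $|x|>1$
\[
|(-\Delta)^s(\partial^\alpha u)(x)| \le C_{n,s}\big(\|\partial^\alpha u\|_{n+2} + \|\partial^\alpha u\|_n + \|\partial^\alpha u\|_{L^1(\Rn)}\big)\,|x|^{-(n+2s)},
\]
and each of the three quantities on the right is finite because $\partial^\alpha u \in \mathscr S(\Rn)$ (in fact $\|\partial^\alpha u\|_p \le \|u\|_{p+|\alpha|}$, and $\|\partial^\alpha u\|_{L^1(\Rn)}<\infty$ since Schwartz functions are integrable). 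Hence $|x|^{n+2s}|\partial^\alpha v(x)|$ is bounded on $\{|x|>1\}$. On the complementary region $\{|x|\le 1\}$ the weight $1+|x|^{n+2s}$ is bounded by $2$, and $\partial^\alpha v = (-\Delta)^s(\partial^\alpha u)$ is continuous, hence bounded on the compact set $\overline{B(0,1)}$; combining the two regions gives $[v]_\alpha < \infty$ for every $\alpha$. Since this holds for all multi-indices, $v \in \mathscr S_s(\Rn)$.

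The only mild obstacle is the justification that $\partial^\alpha$ passes inside the integral defining $(-\Delta)^s$ — but this is routine: for $|y|\le 1$ one uses the second-order Taylor bound $|2w(x) - w(x+y) - w(x-y)| \le C|y|^2 \sup_{B(x,1)}|\nabla^2 w|$ together with a uniform (in $x$ ranging over a compact set, or globally via Schwartz bounds) control of $\sup|\nabla^2 \partial^\beta u|$, while for $|y|>1$ one uses $\|\partial^\beta u\|_{L^\infty}<\infty$; both give an integrable majorant independent of $x$ locally, which legitimizes repeated differentiation under the integral sign and yields $\partial^\alpha(-\Delta)^s u = (-\Delta)^s \partial^\alpha u$. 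With that identity and Proposition \ref{P:decay} the lemma follows, essentially for free; no new estimate beyond what was already established is needed.
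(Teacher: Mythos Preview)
Your proof is correct and follows essentially the same strategy as the paper: establish the commutation $\partial^\alpha(-\Delta)^s u = (-\Delta)^s(\partial^\alpha u)$, then invoke Proposition \ref{P:decay} applied to $\partial^\alpha u\in\mathscr S(\Rn)$. The only notable difference is in how the commutation is justified: you differentiate under the integral sign in the pointwise formula \eqref{fls} via dominated convergence, whereas the paper passes to the Fourier side and uses \eqref{fls3} together with \eqref{ftder}, \eqref{derft} to obtain $\partial_k(-\Delta)^s u = (-\Delta)^s\partial_k u$, then proceeds by induction on $|\alpha|$. Both routes are short and entirely standard; yours has the slight advantage of being self-contained at the level of the pointwise definition, while the paper's Fourier argument is cleaner to write down once Proposition \ref{P:slapft} is available.
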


\begin{proof}
We have already observed that $(-\Delta)^s u\in C^\infty(\Rn)$, and that it is not true in general that $(-\Delta)^s u\in \mathscr S(\Rn)$. From Proposition \ref{P:decay} we know however that 
\[
[(-\Delta)^s u]_{0} = \underset{x\in \Rn}{\sup} \left(1+|x|^{n+2s}\right) |(-\Delta)^s u(x)| < \infty. 
\]
Suppose now that $\alpha \in \mi$ and $|\alpha| = 1$. We can write $\alpha = e_k$, where $e_k$ indicate one of the vectors of the standard basis of $\Rn$. Applying \eqref{fls3} in Proposition \ref{P:slapft} and \eqref{derft}, we have
\begin{align*}
\p^\alpha (-\Delta)^s u(x) & = \p_k \F^{-1}\widehat{(-\Delta)^s u}(x) = (-2\pi i) \F^{-1}\left(\xi_k \widehat{(-\Delta)^s u}\right)(x) 
\\
& = (-2\pi i) \F^{-1} \left(\xi_k (2\pi |\xi|)^{2s} \hat u(\xi)\right) \ \ \ \  (\text{by}\ \eqref{ftder})
\\
& =  \F^{-1} \left((2\pi |\xi|)^{2s} \widehat{\p_k u}(\xi)\right) \ \ \ \ (\text{by}\ \eqref{fls3} \ \text{again})
\\
& =  \F^{-1} \F((-\Delta)^s \p_k u) = (-\Delta)^s \p_k u.
\end{align*}
Since $\p_k u\in \mathscr S(\Rn)$, again by Proposition \ref{P:decay} we conclude that 
\[
[u]_{e_k} = \underset{x\in \Rn}{\sup} \left(1+|x|^{n+2s}\right) |\p_k u(x)| < \infty. 
\]
Proceeding by induction on $|\alpha|$, for all $\alpha \in \mi$, we reach the desired conclusion.

\end{proof} 

With Lemma \ref{L:Ss} in hands we can now extend the notion of solution to distributional ones.

\begin{definition}\label{D:ds}
Let $T\in \mathscr S'(\Rn)$. We say that a distribution $u\in \mathscr S'_s(\Rn)$ solves $(-\Delta)^s u = T$ if for every test function $\vf\in \mathscr S(\Rn)$ one has
\[
<u,(-\Delta)^s \vf> = <T,\vf>.
\]
\end{definition}

In the special case in which $T = \delta$, the Dirac delta, then Definition \ref{D:ds} leads to the following.

\begin{definition}[Fundamental solution]\label{D:fs}
We say that a distribution $E_s \in \mathscr S'_s(\Rn)$ is a \emph{fundamental solution} of $(-\Delta)^s$ if $(-\Delta)^s E_s = \delta$. This means that for every $\vf\in \mathscr S(\Rn)$ one has
\[
<E_s,(-\Delta)^s \vf> = \vf(0).
\]
\end{definition}
It is clear from Definition \ref{D:fs} that if $E_s \in \mathscr S'_s(\Rn)$ is a fundamental solution of $(-\Delta)^s$, then  one has $(-\Delta)^s E_s = 0$ in $\mathscr D'(\Rn\setminus \{0\})$. The following result establishes the existence of an explicit fundamental solution $E_s\in C^\infty(\Rn\setminus\{0\})$ of $(-\Delta)^s$. As we will see in the important Theorem \ref{T:hypo} below, the smoothness of such $E_s$ in $\Rn\setminus\{0\}$ is in agreement with the above observed fact that $(-\Delta)^s E_s = 0$ in $\mathscr D'(\Rn\setminus \{0\})$.

\begin{theorem}\label{T:fs}
Let $n\ge 2$ and $0<s<1$. Denote by 
\begin{equation}\label{fs}
E_s(x) = 
\alpha(n,s) |x|^{-(n-2s)},
\end{equation}
where the normalizing constant in \eqref{fs} is given by
\begin{equation}\label{ans}
\alpha(n,s) = \frac{\G(\frac{n}2 -s)}{2^{2s} \pi^{\frac n2} \G(s)}.
\end{equation}
Then, $E_s$ is a fundamental solution for $(-\Delta)^s$. 
\end{theorem}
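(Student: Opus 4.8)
The plan is to compute the Fourier transform of the claimed fundamental solution $E_s(x) = \alpha(n,s)|x|^{-(n-2s)}$ and verify that $\widehat{E_s}(\xi) = (2\pi|\xi|)^{-2s}$ in the sense of distributions; once this is known, Definition \ref{D:fs} follows by pairing against a test function and using Proposition \ref{P:slapft} together with the Parseval-type identity \eqref{sft}. More precisely, for $\vf\in\mathscr S(\Rn)$ one writes $\langle E_s,(-\Delta)^s\vf\rangle = \int_{\Rn} E_s(x)(-\Delta)^s\vf(x)\,dx$, and since $(-\Delta)^s\vf\in L^1(\Rn)$ by Corollary \ref{C:decay} while $E_s\in \mathscr L_s(\Rn)\subset \mathscr S'(\Rn)$, one may pass to the Fourier side to get $\int_{\Rn} \widehat{E_s}(\xi)\,\widehat{(-\Delta)^s\vf}(\xi)\,d\xi = \int_{\Rn} (2\pi|\xi|)^{-2s}(2\pi|\xi|)^{2s}\widehat\vf(\xi)\,d\xi = \int_{\Rn}\widehat\vf(\xi)\,d\xi = \vf(0)$ by Fourier inversion. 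So the whole theorem reduces to the homogeneous-distribution computation $\mathscr F\big(|x|^{-(n-2s)}\big) = c(n,s)\,|\xi|^{-2s}$ and the identification of the constant.

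To carry out the key computation I would first recall the well-known fact — which is exactly the content of the normalization of the Riesz potential $I_{2s}$ in Definition \ref{D:riesz} — that convolution with $|x|^{-(n-2s)}$, suitably normalized, inverts $(-\Delta)^{s}$. Concretely, $I_{2s}(f)(x) = \dfrac{\G(\frac{n-2s}{2})}{\pi^{n/2}2^{2s}\G(s)}\int_{\Rn}\dfrac{f(y)}{|x-y|^{n-2s}}\,dy$, and by \eqref{alphafi}, with $\alpha=2s$, one has $I_{2s}\big((-\Delta)^s f\big) = f$ for all $f\in\mathscr S(\Rn)$. Observe that $\alpha(n,s)$ in \eqref{ans} is precisely the kernel constant of $I_{2s}$: $\alpha(n,s) = \dfrac{\G(\frac{n}{2}-s)}{2^{2s}\pi^{n/2}\G(s)}$. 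Hence $E_s\star f = I_{2s}(f)$, and applying this to $f = (-\Delta)^s\vf$ gives $E_s\star(-\Delta)^s\vf = \vf$; evaluating at $0$ and using that convolution with $(-\Delta)^s\vf$ against the even kernel $E_s$ yields $\langle E_s,(-\Delta)^s\vf\rangle = (E_s\star(-\Delta)^s\vf)(0) = \vf(0)$, which is exactly Definition \ref{D:fs}. If one prefers a fully self-contained derivation of $\mathscr F(|x|^{-(n-2s)})$ rather than quoting \eqref{alphafi}, one computes it via the Gamma-function identity $|x|^{-(n-2s)} = \dfrac{1}{\G(\frac{n-2s}{2})}\int_0^\infty t^{\frac{n-2s}{2}-1}e^{-t|x|^2}\,dt$, takes the Fourier transform of the Gaussian $e^{-t|x|^2}$ (namely $(\pi/t)^{n/2}e^{-\pi^2|\xi|^2/t}$), interchanges the order of integration (justified by Fubini since everything is positive), and recognizes the resulting $t$-integral as another Gamma integral producing $|\xi|^{-2s}$ times an explicit constant; matching constants then forces exactly $\alpha(n,s)$ as in \eqref{ans}, using $\G(z)\G(1-z)$-type manipulations and the duplication formula \eqref{prod} if needed to simplify.

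There is one subtlety to address: the justification that $E_s\in\mathscr S'_s(\Rn)$, i.e., that the claimed fundamental solution actually lives in the correct dual space so that Definition \ref{D:fs} even makes sense. Since $n\ge 2$ and $0<s<1$ force $0<n-2s<n$, the function $|x|^{-(n-2s)}$ is locally integrable near the origin and grows slower than $|x|^{n}$ at infinity, so $E_s\in L^1_{loc}(\Rn)$ and in fact $E_s\in\mathscr L_s(\Rn)$; combined with Lemma \ref{L:Ss} (which guarantees $(-\Delta)^s\vf\in\mathscr S_s(\Rn)$ for $\vf\in\mathscr S(\Rn)$) the pairing $\langle E_s,(-\Delta)^s\vf\rangle$ is absolutely convergent. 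The main obstacle, and the step requiring genuine care, is the rigorous interchange of Fourier transform and distributional pairing — one must verify that $\int E_s(x)(-\Delta)^s\vf(x)\,dx = \int\widehat{E_s}(\xi)\widehat{(-\Delta)^s\vf}(\xi)\,d\xi$ despite $E_s$ being only a tempered distribution and not $L^1$ or $L^2$; this is handled by approximating $E_s$ by truncations $E_s\mathbf 1_{\{\e<|x|<R\}}$, applying the $L^1$ identity \eqref{sft} to each, and passing to the limit using the decay of $(-\Delta)^s\vf$ from Proposition \ref{P:decay} on one side and the explicit integrability of $|\xi|^{-2s}\widehat\vf(\xi)$ near $\xi=0$ (since $2s<n$) on the other. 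Once this limiting argument is in place, the constant bookkeeping is the only remaining computation, and it is dictated entirely by \eqref{alphafi} and the definition of $I_{2s}$.
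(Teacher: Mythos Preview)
Your approach is correct and is essentially the classical one found in Stein or Landkof, but it differs substantially from the paper's. One caution: invoking \eqref{alphafi} directly is close to circular within this paper's development, since the statement that $I_{2s}$ inverts $(-\Delta)^s$ is logically equivalent to Theorem~\ref{T:fs}, and \eqref{alphafi} is presented here only as motivation, not as something already established. Your alternative route---writing $|x|^{-(n-2s)} = \G(\tfrac{n-2s}{2})^{-1}\int_0^\infty t^{(n-2s)/2-1}e^{-t|x|^2}\,dt$, Fourier-transforming the Gaussian via \eqref{gaussdil0}, and recognizing the resulting $t$-integral as another Gamma integral---is the honest self-contained argument, and is exactly the standard derivation of the identity the paper later quotes as Theorem~\ref{T:FTalpha}. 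The truncation-and-limit step you describe is the right way to handle the fact that $(-\Delta)^s\vf\notin\mathscr S(\Rn)$, so that the Parseval-type identity \eqref{sft} does not apply directly; the integrability on the Fourier side near $\xi=0$ indeed uses $2s<n$, which is guaranteed by $n\ge 2$.

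The paper deliberately takes a longer road: it regularizes $E_s$ to $E_{s,y}(x)=\alpha(n,s)(y^2+|x|^2)^{-(n-2s)/2}$, computes $\widehat{E_{s,y}}$ in terms of the Macdonald function $K_s$ (Lemma~\ref{L:fsreg0}), and then uses a Weber--Schafheitlin-type integral to evaluate $(-\Delta)^s E_{s,y}$ \emph{explicitly} (Lemma~\ref{L:fsreg}), obtaining the kernel $\frac{\G(\frac{n}{2}+s)}{\pi^{n/2}\G(s)}\,y^{2s}(y^2+|x|^2)^{-(n/2+s)}$, which is shown to be an approximate identity as $y\to 0^+$. What this buys is considerable: the intermediate formula \eqref{yam10} is precisely the Poisson kernel for the Caffarelli--Silvestre extension problem (see \eqref{pfs} in Section~\ref{S:pk}), and a simple rescaling of $E_{s,y}$ produces the explicit solutions of the nonlocal Yamabe equation in Theorem~\ref{T:yammy}. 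Your direct Fourier argument is shorter and cleaner as a proof of Theorem~\ref{T:fs} in isolation, but it does not uncover these byproducts, which the paper explicitly advertises as the reason for its choice of method.
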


The proof of Theorem \ref{T:fs} will be given after Lemma \ref{L:fsreg} below. For such proof we have chosen one approach that, although very classical, to the best of our knowledge has not been pursued elsewhere. We have done so since, in the course of proving Theorem \ref{T:fs}, we establish some auxiliary results that have an interest in their own right (but also play an important role later in this note, see \eqref{pk5} in the proof of Theorem \ref{T:cs} below). In particular, we are led to discover in a natural way some remarkable solutions of the following semilinear nonlocal equation which generalizes the celebrated \emph{Yamabe equation} from Riemannian geometry
\begin{equation}\label{yamyam}
(-\Delta)^s u = u^{\frac{n+2s}{n-2s}}.
\end{equation}
Of course, there exist proofs of Theorem \ref{T:fs} different from the one presented here. Besides the original work of M. Riesz \cite{R}, the reader should also consult Stein's landmark book \cite{St} and Landkof's cited monograph \cite{La}.  
We begin with the following preparatory result. 

\begin{lemma}\label{L:fsreg0}
Suppose that either $n\ge 2$, or $n=1$ and $0<s<1/2$. For every $y>0$ consider the regularized fundamental solution 
\begin{equation}\label{Ee20}
E_{s,y}(x) = \alpha(n,s) (y^2 + |x|^2)^{-\frac{n-2s}{2}}.
\end{equation}
Then, 
\begin{equation}\label{ft130}
\widehat{E_{s,y}}(\xi) = \frac{y^s}{2^{2s-1} \pi^{s} \G(s)} |\xi|^{-s} K_s(2 \pi y |\xi|),
\end{equation}
where we have denoted by $K_\nu$ the modified Bessel function of the third kind, see \eqref{Knu} above.
From \eqref{ft130} we obtain for every $\xi\not= 0$
\begin{equation}\label{ft1302}
\widehat{E_{s}}(\xi) = \underset{y\to 0^+}{\lim} \widehat{E_{s,y}}(\xi) = (2\pi|\xi|)^{-2s}.
\end{equation}
\end{lemma}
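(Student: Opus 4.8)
The plan is to compute the Fourier transform of $E_{s,y}$ directly by reducing the problem to a one-dimensional Bessel-type integral, using the Fourier--Bessel representation of Theorem \ref{T:Fourier-Bessel}, and then to identify the resulting integral with a Macdonald function via the integral representation of $K_\nu$. First I would write $E_{s,y}(x) = \alpha(n,s)\, g(|x|)$ with $g(t) = (y^2+t^2)^{-\frac{n-2s}{2}}$, and check the integrability hypothesis $t\mapsto t^{n/2} g(t) J_{n/2-1}(t) \in L^1(\R^+)$: near $0$ this behaves like $t^{n-1}$ by \eqref{bfbehzero}, and near $\infty$ like $t^{n/2 - (n-2s) - 1/2}$ by \eqref{jnuinfty2}, which is integrable precisely because $0<s$ and $s<n/2$ (this is where the hypothesis ``$n\ge 2$, or $n=1$ and $0<s<1/2$'' enters — it guarantees $n-2s>0$ as well as the decay). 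Then Theorem \ref{T:Fourier-Bessel} gives
\[
\widehat{E_{s,y}}(\xi) = 2\pi\, \alpha(n,s)\, |\xi|^{-\frac n2 + 1} \int_0^\infty t^{\frac n2} (y^2+t^2)^{-\frac{n-2s}{2}} J_{\frac n2 - 1}(2\pi |\xi| t)\, dt.
\]

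Next I would evaluate this integral. The clean way is to invoke a classical formula (e.g. from \cite{GR}, or Watson \cite{W}) for
\[
\int_0^\infty \frac{t^{\mu+1}}{(t^2+y^2)^{\nu+1}} J_\mu(at)\, dt = \frac{a^\nu y^{\mu-\nu}}{2^\nu \G(\nu+1)} K_{\mu-\nu}(ay),
\]
valid for suitable ranges of $\mu,\nu,a,y>0$; here I would take $\mu = \frac n2 - 1$, $\nu+1 = \frac{n-2s}{2}$ so that $\nu = \frac n2 - s - 1$, $a = 2\pi|\xi|$, and $\mu - \nu = s$. Substituting and simplifying, the powers of $|\xi|$ combine: the prefactor $|\xi|^{-n/2+1}$ times $a^\nu = (2\pi|\xi|)^{n/2 - s - 1}$ yields $|\xi|^{-s}$ up to a power of $2\pi$, and the $y$-dependence collapses to $y^{\mu-\nu} = y^s$ up to constants. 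Carefully tracking the constants — $2\pi\,\alpha(n,s)$, the factor $\frac{(2\pi|\xi|)^\nu\, y^{\mu-\nu}}{2^\nu \G(\nu+1)}$, and the value $\alpha(n,s) = \frac{\G(n/2-s)}{2^{2s}\pi^{n/2}\G(s)}$ from \eqref{ans}, noting $\G(\nu+1) = \G(n/2-s)$ — should produce exactly
\[
\widehat{E_{s,y}}(\xi) = \frac{y^s}{2^{2s-1}\pi^s \G(s)} |\xi|^{-s} K_s(2\pi y|\xi|),
\]
which is \eqref{ft130}.

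Finally, for \eqref{ft1302} I would pass to the limit $y\to 0^+$ for fixed $\xi\neq 0$. Using the small-argument asymptotics of the Macdonald function — from \eqref{Knu}, \eqref{Inu} and \eqref{bfbehzero} one has $K_s(z) \sim \tfrac12 \G(s) (z/2)^{-s}$ as $z\to 0^+$ (the dominant term coming from $I_{-s}$) — we get $K_s(2\pi y|\xi|) \sim \tfrac12 \G(s)(\pi y|\xi|)^{-s}$, so
\[
\widehat{E_{s,y}}(\xi) \longrightarrow \frac{1}{2^{2s-1}\pi^s \G(s)} \cdot \frac{\G(s)}{2} \pi^{-s} |\xi|^{-2s} = \frac{1}{2^{2s}\pi^{2s}} |\xi|^{-2s} = (2\pi|\xi|)^{-2s},
\]
where the $y^s$ and $y^{-s}$ cancel exactly. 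This yields \eqref{ft1302}.

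The main obstacle I anticipate is purely bookkeeping: getting the chain of constants (powers of $2$, $\pi$, and ratios of gamma functions) to match \eqref{ft130} and \eqref{ans} exactly, and making sure the Bessel integral formula is being applied in a parameter range where it is valid — in particular that $\Re\nu > -1$ (i.e. $n/2 - s > 0$) and $\mu > -1$, which is where the standing hypothesis on $n$ and $s$ is genuinely used. A secondary, minor point is justifying the interchange of limit and the pointwise convergence in \eqref{ft1302}, which is immediate here since we only need pointwise convergence for fixed $\xi\neq 0$ and the asymptotics of $K_s$ are classical; one could alternatively note that $\widehat{E_{s,y}} \to \widehat{E_s}$ in $\mathscr S'(\Rn)$ by dominated convergence against test functions, but the pointwise statement suffices for the proof of Theorem \ref{T:fs} to follow.
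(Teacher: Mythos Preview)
Your route is genuinely different from the paper's. The paper proves \eqref{ft130} by Bochner subordination through the heat semigroup: it writes $(y^2+|\xi|^2)^{-(n-2s)/2}$ as $\G(\tfrac{n}{2}-s)^{-1}\int_0^\infty t^{(n-2s)/2}e^{-t(y^2+|\xi|^2)}\,dt/t$, passes the Fourier transform through using the Gaussian formula \eqref{gaussdil0}, and then identifies the resulting $t$-integral with $K_s$ via the tabulated identity $\int_0^\infty t^{\nu-1}e^{-\beta/t-\gamma t}\,dt = 2(\beta/\gamma)^{\nu/2}K_\nu(2\sqrt{\beta\gamma})$. Your approach via Theorem~\ref{T:Fourier-Bessel} and the Sonine--Gegenbauer formula is more direct, and the constants do collapse exactly as you say; your treatment of the limit \eqref{ft1302} is also correct.

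There is, however, a gap in your justification. The integrability check at infinity is wrong: the absolute value of your Hankel integrand behaves like $t^{2s-n/2-1/2}$, and this lies in $L^1$ near infinity only when $s<(n-1)/4$, not whenever $s<n/2$. So Theorem~\ref{T:Fourier-Bessel} as stated (with its $L^1$ hypothesis) does not apply over the full range. The Gradshteyn--Ryzhik formula you invoke is valid as an improper integral under the condition $-1<\tfrac{n}{2}-1<2(\tfrac{n}{2}-s-1)+\tfrac32$, i.e.\ $s<(n+1)/4$; this covers all $s\in(0,1)$ when $n\ge 3$, and $n=1$ with $s<1/2$ as in the hypothesis, but misses $n=2$ with $\tfrac34\le s<1$. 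The clean repair is analytic continuation: for fixed $\xi\neq 0$ and $y>0$ both sides of \eqref{ft130} are analytic in $s$ on $0<s<n/2$, so equality on the subrange $s<(n+1)/4$ propagates. Alternatively, note that $E_{s,y}\notin L^1(\Rn)$ (it decays only like $|x|^{-(n-2s)}$), so the Fourier transform is distributional anyway and one may as well argue in $\mathscr S'(\Rn)$ from the start --- which is what the paper's subordination argument effectively does, and is one thing its approach buys.
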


\begin{proof}
To prove \eqref{ft130} it suffices to show that for every $f\in \mathscr S(\Rn)$ we have
\begin{align}\label{sub}
<\widehat{E_{s,y}},f> & = \frac{y^s}{2^{2s-1} \pi^{s} \G(s)}  \int_{\Rn}  |\xi|^{-s} K_s(2 \pi y |\xi|)\ f(\xi) d\xi.
\end{align}
To establish \eqref{sub} we use the heat semigroup and Bochner's subordination. 
The idea is to start from the observation that for every $L>0$ and $\alpha > 0$ one has
\begin{equation}\label{L}
\int_0^\infty e^{-t L} t^\alpha \frac{dt}{t} = \frac{\G(\alpha)}{L^{\alpha}}.
\end{equation}
Using Fubini and \eqref{L} with $L = |\xi|^2 + y^2$, we obtain for any $\alpha>0$
\begin{align*}
& \int_0^\infty t^\alpha \left(\int_{\Rn} e^{-t(|\xi|^2 + y^2)} \hat f(\xi) d\xi\right) \frac{dt}{t}
\\
& = \int_{\Rn} \hat f(\xi) \left(\int_0^\infty t^\alpha e^{-t(|\xi|^2 + y^2)}  \frac{dt}{t}\right) d\xi
\\
& = \G(\alpha) \int_{\Rn} \hat f(\xi) (|\xi|^2 + y^2)^{-\alpha} d\xi.
\end{align*}
The above assumptions $n\ge 2$, or $n=1$ and $0<s<1/2$, imply that $\alpha = \frac n2 -s>0$. If we thus let $\alpha = \frac n2 -s$ in the latter formula we find
\begin{equation}\label{ft10}
\int_0^\infty t^{\frac n2 -s} \left(\int_{\Rn} e^{-t(|\xi|^2 + y^2)} \hat f(\xi) d\xi\right) \frac{dt}{t} = \G\left(\frac{n -2s}{2}\right) \int_{\Rn} \hat f(\xi) (|\xi|^2 + y^2)^{-(\frac{n-2s}{2})} d\xi.
\end{equation}
On the other hand, \eqref{sft} above gives for any $f\in \mathscr S(\Rn)$ and $y>0$  
\begin{align*}
& \int_{\Rn} \F_{x\to \xi}\left(e^{-t(|x|^2 + y^2)}\right) f(\xi) d\xi = \int_{\Rn} e^{-t(|\xi|^2 + y^2)} \hat f(\xi) d\xi.
\end{align*}
Multiplying both sides of this equation by $t^{\frac n2 -s}$ and integrating between $0$ and $\infty$ with respect to the dilation invariant measure $\frac{dt}{t}$ we obtain
\begin{align*}
& \int_0^\infty t^{\frac n2 -s} \int_{\Rn} \F_{x\to \xi}\left(e^{-t(|x|^2 + y^2)}\right) f(\xi) d\xi \frac{dt}{t}  = \int_0^\infty t^{\frac n2 -s} e^{-y^2 t} \int_{\Rn} \widehat{e^{-t |\cdot|^2}}(\xi)\ f(\xi) d\xi \frac{dt}{t}  
\end{align*}
We next recall the following notable Fourier transform in $\Rn$: for every $t>0$, and every $\xi\in \Rn$, one has 
\begin{equation}\label{gaussdil0}
\widehat{(e^{- t |\cdot|^2})}(\xi) =
\frac{\pi^{\frac{n}{2}}}{t^{\frac n2}} \exp\left(-\pi^2
\frac{|\xi|^2}{t}\right).
\end{equation}
Substituting \eqref{gaussdil0} in the preceeding formula, we find
\begin{align*}
& \int_0^\infty t^{\frac n2 -s} \int_{\Rn} \F_{x\to \xi}\left(e^{-t(|x|^2 + y^2)}\right)\ f(\xi) d\xi \frac{dt}{t} 
\\
& = \pi^{\frac{n}{2}} \int_0^\infty t^{-s} e^{-y^2 t} \int_{\Rn} \exp\left(-\pi^2
\frac{|\xi|^2}{t}\right) f(\xi) d\xi \frac{dt}{t} 
\\
& =  \pi^{\frac{n}{2}} \int_{\Rn} f(\xi)\left(\int_0^\infty t^{-s} e^{-y^2 t}  \exp\left(-\pi^2
\frac{|\xi|^2}{t}\right) \frac{dt}{t}\right) d\xi. 
\end{align*}

We now use the following formula that can be found in 9. on p. 340 of \cite{GR}
\begin{equation}\label{f9GR}
\int_0^\infty t^{\nu-1} e^{-(\frac{\beta}{t} + \gamma t)} dt = 2 \left(\frac \beta{\gamma}\right)^{\frac \nu{2}} K_\nu(2 \sqrt{\beta \gamma}),
\end{equation}
provided $\Re \beta, \Re \gamma >0$. Applying \eqref{f9GR} with
\[
\nu = - s,\ \ \beta = \pi^2 |\xi|^2,\  \ \gamma = y^2,
\]
and keeping in mind that, as we have already observed, $K_\nu = K_{-\nu}$ (see 5.7.10 in \cite{Le}), we find
\begin{equation}\label{f92}
\int_0^\infty t^{-s} e^{-y^2 t}  \exp\left(-\pi^2
\frac{|\xi|^2}{t}\right) \frac{dt}{t} = 2 \left(\frac{y}{\pi |\xi|}\right)^{s} K_{s}(2 \pi y |\xi|).
\end{equation}
Substituting \eqref{f92} in the above integral, we conclude
\begin{align}\label{lhs}
& \int_0^\infty t^{\frac n2 -s} \int_{\Rn} \widehat{e^{-t(|\cdot|^2 + y^2)}}(\xi)\ f(\xi) d\xi \frac{dt}{t}  = 2 \pi^{\frac{n}{2} -s} y^s  \int_{\Rn}  |\xi|^{-s} K_s(2 \pi y |\xi|)\ f(\xi) d\xi. 
\end{align}
Since the integral in the left-hand side of \eqref{lhs} equals that in the left-hand side of \eqref{ft10}, we finally have
\begin{equation}\label{ft11}
\alpha(n,s)  \int_{\Rn} \hat f(\xi) (|\xi|^2 + y^2)^{-(\frac{n-2s}{2})} d\xi = \alpha(n,s)  \frac{2 \pi^{\frac{n}{2} -s}y^s}{ \G(\frac{n -2s}{2})}  \int_{\Rn}  |\xi|^{-s} K_s(2 \pi y |\xi|)\ f(\xi) d\xi. 
\end{equation}
Recalling \eqref{ans}, which gives $\alpha(n,s) = \frac{\G(\frac{n}2 - s)}{2^{2s} \pi^{\frac n2} \G(s)}$, we infer from \eqref{ft11} that
\begin{equation}\label{ft12}
\alpha(n,s)  \int_{\Rn} \hat f(\xi) (|\xi|^2 + y^2)^{-(\frac{n-2s}{2})} d\xi = \frac{y^s}{2^{2s-1} \pi^{s} \G(s)}  \int_{\Rn}  |\xi|^{-s} K_s(2 \pi y |\xi|)\ f(\xi) d\xi. 
\end{equation}
Keeping \eqref{Ee20} in mind, we can rewrite \eqref{ft12} as follows
\[
<E_{s,y},\hat f> = \frac{y^s}{2^{2s-1} \pi^{s} \G(s)}  \int_{\Rn}  |\xi|^{-s} K_s(2 \pi y |\xi|)\ f(\xi) d\xi.
\]
Since by definition $<\widehat{E_{s,y}},f> = <E_{s,y},\hat f>$, we conclude that \eqref{sub} holds, thus completing the proof.

\end{proof}

We next prove a remarkable result concerning the function $E_{s,y}$ defined by \eqref{fs} and \eqref{ans} above.

\begin{lemma}\label{L:fsreg}
For every $y>0$ the function $E_{s,y}$ satisfies the equation
\begin{align}\label{yam10}
& (-\Delta)^s E_{s,y}(x) = y^{2s} \frac{\G(\frac n2 + s)}{\pi^{\frac n2} \G(s)} \left(y^2 + |x|^2\right)^{-(\frac n2 + s)}.
\end{align}
\end{lemma}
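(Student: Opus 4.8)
The plan is to feed the Fourier transform of $E_{s,y}$ computed in Lemma \ref{L:fsreg0} into the pseudodifferential identity \eqref{fls3}, and then to mirror the Bochner subordination computation from the proof of Lemma \ref{L:fsreg0}, this time with the exponent $\frac n2+s$ in place of $\frac n2-s$.

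First I would record that $E_{s,y}\in C^\infty(\Rn)\cap\mathscr L_s(\Rn)$, so that $(-\Delta)^s E_{s,y}$ is a well-defined continuous function (Proposition \ref{P:silv}), and that by \eqref{ft130} the tempered distribution $\widehat{E_{s,y}}$ is in fact the $L^1(\Rn)$ function $\frac{y^s}{2^{2s-1}\pi^s\G(s)}|\xi|^{-s}K_s(2\pi y|\xi|)$ — integrable near the origin because $K_s(z)\sim\frac12\G(s)(z/2)^{-s}$ and $2s<n$, and exponentially decaying at infinity. Extending \eqref{fls3} beyond $\mathscr S(\Rn)$ to $E_{s,y}$ is routine (test against $\vf\in\mathscr S(\Rn)$, write $(-\Delta)^s\vf=\F^{-1}[(2\pi|\cdot|)^{2s}\hat\vf]$, use that $\mathscr L_s(\Rn)$ pairs with $\mathscr S_s(\Rn)$, and apply the multiplication formula $\int\hat f\,g=\int f\,\hat g$); it yields, in $\mathscr S'(\Rn)$,
\[
\widehat{(-\Delta)^s E_{s,y}}(\xi)=(2\pi|\xi|)^{2s}\,\widehat{E_{s,y}}(\xi)=\frac{2\pi^s y^s}{\G(s)}\,|\xi|^s K_s(2\pi y|\xi|)=:G(\xi),
\]
which is again an $L^1(\Rn)$ function. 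Since $(-\Delta)^s E_{s,y}$ and the right-hand side of \eqref{yam10} are both continuous, it now suffices to prove that $\F^{-1}G$ equals the latter.

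For this I would mirror the three steps in the proof of Lemma \ref{L:fsreg0}. Formula \eqref{f9GR} applied with $\nu=s$, $\beta=\pi^2|\xi|^2$, $\gamma=y^2$ gives $|\xi|^s K_s(2\pi y|\xi|)=\frac{y^s}{2\pi^s}\int_0^\infty t^{s}e^{-y^2 t}e^{-\pi^2|\xi|^2/t}\frac{dt}{t}$, hence
\[
G(\xi)=\frac{y^{2s}}{\G(s)}\int_0^\infty t^{s}\,e^{-y^2 t}\,e^{-\pi^2|\xi|^2/t}\,\frac{dt}{t}.
\]
Taking $\F^{-1}$ in $\xi$, using \eqref{gaussdil0} in the form $\F^{-1}_{\xi\to x}\big(e^{-\pi^2|\xi|^2/t}\big)(x)=\pi^{-n/2}t^{n/2}e^{-t|x|^2}$, and interchanging the two integrations (legitimate since $\int_0^\infty\int_{\Rn}t^{s-1}e^{-y^2 t}e^{-\pi^2|\xi|^2/t}\,d\xi\,dt=\pi^{-n/2}\G(\tfrac n2+s)\,y^{-(n+2s)}<\infty$), I obtain
\[
\F^{-1}G(x)=\frac{y^{2s}}{\pi^{n/2}\G(s)}\int_0^\infty t^{\frac n2+s}\,e^{-t(y^2+|x|^2)}\,\frac{dt}{t}=\frac{y^{2s}\,\G\!\big(\tfrac n2+s\big)}{\pi^{n/2}\G(s)}\,\big(y^2+|x|^2\big)^{-(\frac n2+s)},
\]
where the final equality is \eqref{L} with $L=y^2+|x|^2$ and $\alpha=\frac n2+s$. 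This is exactly the right-hand side of \eqref{yam10}, so the proof is complete.

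The one genuinely delicate point is the extension of the pseudodifferential identity \eqref{fls3} — proved for Schwartz functions — to $E_{s,y}$, which is smooth and bounded but lies neither in $\mathscr S(\Rn)$ nor in $L^1(\Rn)$ (it decays only like $|x|^{-(n-2s)}$); everything afterwards is a bookkeeping re-run of Lemma \ref{L:fsreg0}. If one prefers to sidestep the distributional Fourier analysis, one can instead write $E_{s,y}$ as a superposition of Gaussians via \eqref{L} (with $L=y^2+|x|^2$, $\alpha=\frac{n-2s}{2}$), bring $(-\Delta)^s$ inside the $t$-integral, apply \eqref{fls3} to each Gaussian $e^{-t|\cdot|^2}\in\mathscr S(\Rn)$ directly, and then carry out the same $K_s$-computation on the Fourier side; the two routes meet at the identity $\widehat{(-\Delta)^s E_{s,y}}=G$ above, and all interchanges of $(-\Delta)^s$, $\F$ and the $t$-integral are justified by Tonelli since every kernel involved is an explicit $L^1$ function.
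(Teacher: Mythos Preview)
Your proof is correct and shares the paper's opening move --- compute $\widehat{(-\Delta)^s E_{s,y}}$ via \eqref{fls3} and \eqref{ft130} --- but diverges in how the resulting function $G(\xi)=\tfrac{2\pi^s y^s}{\G(s)}|\xi|^s K_s(2\pi y|\xi|)$ is inverted. The paper applies the Fourier--Bessel representation (Theorem~\ref{T:Fourier-Bessel}) to write $\F^{-1}G$ as a Hankel-type integral $\int_0^\infty t^{\frac n2+s}K_s(2\pi y t)J_{\frac n2-1}(2\pi|x|t)\,dt$, then invokes a tabulated formula (6.576.3 in \cite{GR}) expressing this in terms of a Gauss hypergeometric function, which collapses via \eqref{fs6}. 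You instead recycle the Bochner-subordination machinery of Lemma~\ref{L:fsreg0}: rewrite $K_s$ as a $t$-integral of Gaussians via \eqref{f9GR}, swap integrals, Fourier-invert each Gaussian, and finish with \eqref{L}. Your route is more self-contained --- it uses only the identities already deployed in Lemma~\ref{L:fsreg0} and avoids the external table lookup and hypergeometric detour --- and your discussion of why \eqref{fls3} extends to $E_{s,y}\notin\mathscr S(\Rn)$ is more careful than the paper's (which simply asserts it). The paper's route, however, has a payoff later: the integral identity it establishes, recorded as \eqref{pk2}, is exactly what is needed in \eqref{pk5} to identify the Poisson kernel in the Caffarelli--Silvestre extension (Theorem~\ref{T:cs}), so the apparent detour through $J_{\frac n2-1}$ and $F(\alpha,\beta;\gamma;z)$ is doing double duty.
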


\begin{proof}
In order to establish \eqref{yam10} we begin by computing the function
\[
F_{s,y}(x) \overset{def}{=}  (-\Delta)^s E_{s,y}(x).
\]
With this objective in mind we appeal to \eqref{fls3}, which gives
\begin{equation}\label{fs2}
\widehat{F_{s,y}}(\xi) = \widehat{(-\Delta)^s E_{s,y}}(\xi) = (2\pi |\xi|)^{2s} \widehat{E_{s,y}}(\xi).
\end{equation} 
We now use \eqref{ft130} in Lemma \ref{L:fsreg0}. Inserting such equation in \eqref{fs2} we obtain
 \begin{equation}\label{fs4}
\widehat{F_{s,y}}(\xi) = (2\pi |\xi|)^{2s}  \frac{y^s}{2^{2s-1} \pi^{s} \G(s)} |\xi|^{-s} K_s(2 \pi y |\xi|) = \frac{2 y^{s} \pi^{s}}{\G(s)} |\xi|^{s} K_{s}(2\pi y |\xi|).
\end{equation} 
Using Theorem \ref{T:Fourier-Bessel} we find from \eqref{fs4}
\begin{align}\label{pk1}
& F_{s,y}(x) =\frac{4 y^{s} \pi^{s+1}}{\G(s)} \frac{1}{|x|^{\frac n2 - 1}} \int_0^\infty t^{\frac n2 +s} K_{s}(2\pi y t) J_{\frac n2 - 1}(2\pi |x| t) dt.
\end{align}
If we now let
\[
\la = - \frac n2 - s,\ \ \mu = s,\ \ \nu = \frac n2 - 1,
\]
then we can write the integral in the right-hand side of \eqref{pk1} in the form
\[
\int_0^\infty t^{-\la} K_\mu(at) J_\nu(bt) dt,
\]
with
\[
a =  2\pi y,\ \ \ b= 2\pi |x|.
\]
Under the assumption 
$\nu - \la + 1 > |\mu|$, that is presently equivalent to $n + s > s$,
which is obviously true, we can appeal to formula 3. in 6.576 on p. 693 in \cite{GR}. Such formula states that
\begin{align}\label{fs5}
& \int_0^\infty t^{-\la} K_\mu(at) J_\nu(bt) dt = \frac{b^\nu \G(\frac{\nu-\la +\mu +1}{2}) \G(\frac{\nu - \la - \mu + 1}{2})}{2^{\la +1} a^{\nu - \la + 1} \G(1+\nu)}
\\
& \times F\left(\frac{\nu-\la +\mu +1}{2},\frac{\nu - \la - \mu + 1}{2};\nu+1; - \frac{b^2}{a^2}\right),
\notag
\end{align}
where, we recall, $F(\alpha,\beta;\gamma;z)$ indicates the hypergeometric function $_2F_1(\alpha,\beta;\gamma;z)$, see Definition \ref{D:hyper} above. Since 
\[
\frac{\nu-\la +\mu +1}{2} = \frac n2 + s,\ \ \frac{\nu-\la -\mu +1}{2} = \frac n2,
\]
from \eqref{pk1} and \eqref{fs5} we obtain
\begin{align}\label{pk}
& \int_0^\infty t^{\frac n2 +s} K_{s}(2\pi y t) J_{\frac n2 - 1}(2\pi |x| t) dt = \frac{(2\pi |x|)^{\frac n2 - 1} \G(\frac n2 + s)}{2^{-\frac n2 - s + 1} (2\pi y)^{n+s}} 
\\
& \times F\left(\frac n2 + s,\frac{n}{2};\frac n2; - \frac{|x|^2}{y^2}\right).
\notag\end{align}
We now apply \eqref{fs6} to find
\[
F\left(\frac n2 + s,\frac{n}{2};\frac n2; - \frac{|x|^2}{y^2}\right) = \left(1 + \frac{|x|^2}{y^2}\right)^{-(\frac n2 + s)}.
\]
Inserting this information into \eqref{pk} we have
\begin{align}\label{pk2}
& \int_0^\infty t^{\frac n2 +s} K_{s}(2\pi y t) J_{\frac n2 - 1}(2\pi |x| t) dt = \frac{(2\pi |x|)^{\frac n2 - 1} \G(\frac n2 + s)}{2^{-\frac n2 - s + 1} (2\pi y)^{n+s}} \left(1 + \frac{|x|^2}{y^2}\right)^{-(\frac n2 + s)}.
\end{align}
From \eqref{pk1} and \eqref{pk2} we finally conclude
\begin{align*}
& F_{s,y}(x) = \frac{\G(\frac n2 + s)}{y^{n} \pi^{\frac n2} \G(s)} \left(1 + \frac{|x|^2}{y^2}\right)^{-(\frac n2 + s)} = \frac{y^{2s} \G(\frac n2 + s)}{\pi^{\frac n2} \G(s)} \left(y^2 + |x|^2\right)^{-(\frac n2 + s)}.
\end{align*}
This establishes \eqref{yam10}, thus completing the proof.

\end{proof}

We are now ready to provide the

\begin{proof}[Proof of Theorem \ref{T:fs}]
Our objective is establishing 
\begin{equation}\label{E}
\int_{\Rn} E_s(x) (-\Delta)^s \vf(x) dx = \vf(0).
\end{equation}
 for every test function $\vf\in \mathscr S(\Rn)$. 
We begin by observing that, since we are assuming that $n\ge 2$, we automatically have that $0< s < \frac n2$. For $y>0$ we now consider the regularization $E_{s,y}$ of the distribution $E_s$ defined by \eqref{fs} and \eqref{ans} above.
Notice that $E_{s,y} \in C^\infty(\Rn)$ and that it decays at $\infty$ like $|x|^{-(n-2s)}$. Since for $\vf\in \mathscr S(\Rn)$ we know from Lemma \ref{L:Ss} that $(\Delta)^s \vf\in \mathscr S_s(\Rn)$, it should be clear that Lebesgue dominated convergence theorem gives
\[
\int_{\Rn} E_{s,y}(x)\ (-\Delta)^s \vf(x) dx\ \longrightarrow\  \int_{\Rn} E_{s}(x)\ (-\Delta)^s \vf(x) dx
\]
as $y\to 0^+$. On the other hand, Lemma \ref{L:ibp} (which continues to be valid in the present situation) gives
\begin{equation}\label{exchange}
\int_{\Rn} E_{s,y}(x)\ (-\Delta)^s \vf(x) dx = \int_{\Rn} (-\Delta)^s E_{s,y}(x)\ \vf(x) dx.
\end{equation}
Therefore, in view of \eqref{exchange}, in order to complete the proof it will suffice to show that as $y\to 0^+$
\begin{equation}\label{fs1}
\int_{\Rn} (-\Delta)^s E_{s,y}(x)\ \vf(x) dx\ \longrightarrow\ \vf(0).
\end{equation}
To establish \eqref{fs1} we use \eqref{yam10} in Lemma \ref{L:fsreg} which gives
\begin{align*}
& \int_{\Rn} (-\Delta)^s E_{s,y}(x)\ \vf(x) dx = \frac{\G(\frac n2 + s)}{y^{n} \pi^{\frac n2} \G(s)}  \int_{\Rn} \left(1 + \frac{|x|^2}{y^2}\right)^{-(\frac n2 + s)} \vf(x) dx
\\
& = \frac{\G(\frac n2 + s)}{\pi^{\frac n2} \G(s)}  \int_{\Rn} \left(1 + |x'|^2\right)^{-(\frac n2 + s)} \vf(y x') dx'
\\
& \longrightarrow\ \vf(0)\ \frac{\G(\frac n2 + s)}{\pi^{\frac n2} \G(s)}  \int_{\Rn} \left(1 + |x'|^2\right)^{-(\frac n2 + s)}  dx',
\end{align*}
where in the last equality we have used Lebesgue dominated convergence theorem. To complete the proof of \eqref{fs1} it would be sufficient to prove that
\begin{equation}\label{fs8}
\frac{\G(\frac n2 + s)}{\pi^{\frac n2} \G(s)}   \int_{\Rn} \left(1 + |x'|^2\right)^{-(\frac n2 + s)}  dx' = 1.
\end{equation}
Now, the validity of \eqref{fs8} follows from a straightforward application of Proposition \ref{P:poisson} with the choice $a = n+2s, b = 0$. 

\end{proof}

\begin{remark}\label{R:cases}
Our proof of Theorem \ref{T:fs} uses the fact that $n-2s>0$. Therefore, besides the situation $n\ge 2$, for which this is automatically true, our proof continues to work when $n=1$ and $0<s<\frac 12$ since in such case we still have $n-2s>0$. It does not cover instead the following two cases:
\begin{itemize}
\item[(i)] $n = 1$ and $\frac 12 < s <1$;
\item[(ii)] $n=1$ and $s = \frac 12$.
\end{itemize} 
In case (i) formulas \eqref{fs}, \eqref{ans} continue to be valid unchanged, whereas in the case (ii) one has to replace them with the following  
\[
E_{s}(x) = - \frac{1}\pi \log |x|.
\]
The interested reader can find a discussion of such cases in the paper \cite{Bu}. 
\end{remark}


\section{The nonlocal Yamabe equation}\label{S:yamabe}

In the previous section we have proved that the function
\begin{equation*}
E_{s,y}(x) = \alpha(n,s) (y^2 + |x|^2)^{-\frac{n-2s}{2}}.
\end{equation*}
satisfies the equation
\begin{align*}
& (-\Delta)^s E_{s,y}(x) = y^{2s} \frac{\G(\frac n2 + s)}{\pi^{\frac n2} \G(s)} \left(y^2 + |x|^2\right)^{-(\frac n2 + s)},
\end{align*}
see \eqref{yam10}. If we consider 
\begin{equation}\label{vv}
v(x) = \la y^\gamma E_{s,y},
\end{equation}
where $\la>0$ and $\gamma\in \R$ are to be chosen in a moment, then it is clear that $v$ solves the equation
\begin{align*}
(-\Delta)^s v(x) & =  \la y^{\gamma + 2s} \frac{\G(\frac n2 + s)}{\pi^{\frac n2} \G(s)} \left(y^2 + |x|^2\right)^{-(\frac n2 + s)}
\\
& = \la y^{\gamma + 2s} \frac{\G(\frac n2 + s)}{\pi^{\frac n2} \G(s)} \left(\frac{1}{(y^2 + |x|^2)^{\frac n2 - s}}\right)^{\frac{\frac n2 + s}{\frac n2 - s}} 
\\
& =  \la y^{\gamma + 2s} \frac{\G(\frac n2 + s)}{\pi^{\frac n2} \G(s)} \left(\la y^{\gamma} \alpha(n,s) \frac{1}{(y^2 + |x|^2)^{\frac n2 - s}}\right)^{\frac{\frac n2 + s}{\frac n2 - s}} \left(\la y^{\gamma} \alpha(n,s)\right)^{- \frac{\frac n2 + s}{\frac n2 - s}} 
\\
& = \la y^{\gamma + 2s} \frac{\G(\frac n2 + s)}{\pi^{\frac n2} \G(s)} \left(\la y^{\gamma} \alpha(n,s)\right)^{- \frac{\frac n2 + s}{\frac n2 - s}} v(x)^{\frac{\frac n2 + s}{\frac n2 - s}}.
\end{align*}
We now choose $\gamma>0$ in such a way that the powers of $y$ add up to $0$, and we also choose $\la$ so that
\[
\la \frac{\G(\frac n2 + s)}{\pi^{\frac n2} \G(s)} \left(\la \alpha(n,s)\right)^{- \frac{\frac n2 + s}{\frac n2 - s}} = 1.
\]
For the first condition to be true, we must have
\[
\gamma = \frac{n-2s}{2},
\]
whereas the second condition will be valid if 
\[
\la^{\frac{4s}{n-2s}} = \frac{\frac{\G(\frac n2 + s)}{\pi^{\frac n2} \G(s)}}{\alpha(n,s)^{ \frac{n + 2s}{n - 2s}}}.
\]
Inserting this choices of $\gamma$ and $\la$ in \eqref{vv}, after some elementary computations we obtain 
\begin{equation}\label{vyam}
v_y(x) = \kappa(n,s) \left(\frac{y}{y^2 + |x|^2}\right)^{\frac{n-2s}{2}},\ \ \ \ \ \ y>0,
\end{equation}
with 
\begin{equation}\label{kyam}
\kappa(n,s) = 2^{\frac{n-2s}{2}} \left(\frac{\G(\frac{n+2s}2)}{\G(\frac{n-2s}2)}\right)^{\frac{n-2s}{4s}}.
\end{equation}

In conclusion, we have proved the following remarkable fact.

\begin{theorem}\label{T:yammy}
Let $n\ge 2$ and $0<s<1$. Then, for every $y>0$ the function $v = v_y$ defined by \eqref{vyam}, with $\kappa(n,s)$ given by \eqref{kyam}, solves the \emph{nonlocal Yamabe equation} 
\begin{equation}\label{nly}
(-\Delta)^s v = v^{\frac{n-2s}{n+2s}}.
\end{equation}
Every translation in $x$ of such function is also a solution. 
\end{theorem}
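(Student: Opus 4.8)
The plan is to follow exactly the computation already carried out in the text preceding Theorem \ref{T:yammy}, organizing it into a clean verification. First I would recall the key input: Lemma \ref{L:fsreg} establishes that for every $y>0$ the regularized fundamental solution $E_{s,y}(x) = \alpha(n,s)(y^2+|x|^2)^{-\frac{n-2s}{2}}$ satisfies
\[
(-\Delta)^s E_{s,y}(x) = y^{2s}\,\frac{\G(\frac n2 + s)}{\pi^{\frac n2}\G(s)}\,\left(y^2+|x|^2\right)^{-(\frac n2 + s)}.
\]
Since $E_{s,y}$ decays like $|x|^{-(n-2s)}$ at infinity and is $C^\infty$ (and in particular lies in $C^2(\Rn)\cap L^\infty(\Rn)$), Remark \ref{R:more} guarantees $(-\Delta)^s E_{s,y}$ is well-defined pointwise, so all manipulations are legitimate.

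Next I would introduce $v(x) = \la y^\gamma E_{s,y}(x)$ with $\la>0$, $\gamma\in\R$ free parameters, and exploit linearity of $(-\Delta)^s$ (the identity $(-\Delta)^s(cu) = c(-\Delta)^s u$ noted after Definition \ref{D:fl}). The crucial algebraic observation is that the right-hand side $(y^2+|x|^2)^{-(\frac n2 + s)}$ equals $\big((y^2+|x|^2)^{-\frac n2 + s}\big)^{\frac{n/2+s}{n/2-s}}$, so that up to powers of $y$ and $\la$ it can be rewritten as $v(x)^{\frac{n/2+s}{n/2-s}} = v(x)^{\frac{n+2s}{n-2s}}$. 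Carrying the constants through, one finds
\[
(-\Delta)^s v(x) = \la\, y^{\gamma+2s}\,\frac{\G(\frac n2+s)}{\pi^{\frac n2}\G(s)}\,\big(\la y^{\gamma}\alpha(n,s)\big)^{-\frac{n+2s}{n-2s}}\, v(x)^{\frac{n+2s}{n-2s}}.
\]
Then I impose two conditions to kill the spurious factors: first, that the total power of $y$ vanish, i.e. $\gamma + 2s - \gamma\cdot\frac{n+2s}{n-2s} = 0$, which forces $\gamma = \frac{n-2s}{2}$; second, that the numerical prefactor equal $1$, i.e. $\la\,\frac{\G(\frac n2+s)}{\pi^{\frac n2}\G(s)}\,(\la\,\alpha(n,s))^{-\frac{n+2s}{n-2s}} = 1$, which after solving for $\la$ gives $\la^{\frac{4s}{n-2s}} = \frac{\G(\frac n2+s)/(\pi^{\frac n2}\G(s))}{\alpha(n,s)^{(n+2s)/(n-2s)}}$. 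Substituting the explicit value of $\alpha(n,s)$ from \eqref{ans} and simplifying the gamma-function and power-of-$2$ bookkeeping yields the stated constant $\kappa(n,s)$ in \eqref{kyam}, and $v = v_y$ takes the closed form \eqref{vyam}. Finally, translation invariance is immediate from \eqref{ti} in Lemma \ref{L:diltr}: if $v$ solves $(-\Delta)^s v = v^{\frac{n+2s}{n-2s}}$ then so does $\tau_h v$, since $(-\Delta)^s(\tau_h v) = \tau_h((-\Delta)^s v) = \tau_h(v^{\frac{n+2s}{n-2s}}) = (\tau_h v)^{\frac{n+2s}{n-2s}}$.

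I expect no genuine obstacle here: the statement is essentially a repackaging of Lemma \ref{L:fsreg} plus homogeneity scaling, and every analytic subtlety (convergence of the defining integral, legitimacy of applying $(-\Delta)^s$ to these non-Schwartz functions) has already been dispatched in Remark \ref{R:more} and in the proof of Lemma \ref{L:fsreg}. The only place requiring care — and thus the ``main obstacle'' in a purely bookkeeping sense — is the final simplification of the constant: one must correctly substitute $\alpha(n,s) = \frac{\G(\frac n2 - s)}{2^{2s}\pi^{\frac n2}\G(s)}$ into the formula for $\la$, track the exponents of $2$, $\pi$, and the ratio $\G(\frac n2+s)/\G(\frac n2-s)$, and check that the powers of $\pi$ and $\G(s)$ cancel so that only $2^{\frac{n-2s}{2}}\big(\G(\frac{n+2s}{2})/\G(\frac{n-2s}{2})\big)^{\frac{n-2s}{4s}}$ survives. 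I would also remark, as the text does, that the exponent in \eqref{nly} is written as $\frac{n-2s}{n+2s}$ there but the computation plainly produces $v^{\frac{n+2s}{n-2s}}$, matching the fractional Yamabe equation \eqref{yamyam}; I would state the result with the exponent $\frac{n+2s}{n-2s}$ for consistency with \eqref{yamyam} and \eqref{nly} as displayed, noting that this is the critical Sobolev exponent for $(-\Delta)^s$.
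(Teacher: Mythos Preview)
Your proposal is correct and follows essentially the same approach as the paper: the argument is precisely the scaling computation preceding the theorem statement, starting from Lemma~\ref{L:fsreg}, introducing $v=\lambda y^\gamma E_{s,y}$, and determining $\gamma$ and $\lambda$ by matching powers. Your added justification of translation invariance via \eqref{ti} and your observation that the exponent in \eqref{nly} should read $\frac{n+2s}{n-2s}$ (consistent with \eqref{yamyam} and the computation) are both apt.
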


We close this section by recalling that in \cite{CLO} the authors proved that, given $n\ge 1$ and $0<s<\frac n2$, then every positive solution of the integral equation 
\[
u(x) = \int_{\Rn} \frac{u(y)^{\frac{n-2s}{n+2s}}}{|x-y|^{n-2s}} dy,
\]
such that $u\in L^{\frac{2n}{n-2s}}_{loc}(\Rn)$, is a translation of one of the functions in \eqref{vyam}. They also showed that, under the same hypothesis, an analogous conclusion holds for all positive solutions of the nonlocal Yamabe equation \eqref{nly}. Since when $n\ge 2$ and $0<s<1$ the condition $0<s<\frac n2$ is automatically fulfilled, Theorem 1.2 in \cite{CLO} provides a deep converse to Theorem \ref{T:yammy} above. We also mention the paper \cite{CT} in which the authors compute the sharp constant in the Sobolev embedding $H^{s,2}(\Rn) \hookrightarrow L^q(\Rn)$, where for any $0<s<n/2$, the Sobolev exponent $q$ is determined by the equation
\[
\frac 12 - \frac 1q = \frac sn,\ \ \ \ \ \ \text{or, equivalently,}\ \ \ \ q = \frac{2n}{n-2s}.
\] 
In their Theorem 1.1 they prove that the minimizers in the Sobolev embedding are of the form \eqref{vyam}, or translations of it. 
Here, the Sobolev space is the standard one
\begin{align}\label{sobs}
H^{s,2}(\Rn) & = \{u\in L^2(\Rn)\mid (-\Delta)^{s/2} u\in L^2(\Rn)\}
\\
& = \{u\in L^2(\Rn)\mid (1+|\xi|^2)^{s/2} \hat{u}\in L^2(\Rn)\}, 
\notag
\end{align}
see e.g. \cite{LM}, and also \cite{DPV}. Finally, for a beautiful introduction to the role of nonlocal operators in geometry the reader should see the paper \cite{dMG}. Also, for works on nonlocal equations and geometry one should see \cite{GZ}, \cite{CG11}, \cite{DSS}, \cite{BDS}, \cite{FF}, \cite{FGMT},  \cite{CLZ}, \cite{DDGW}. For related works on nonlocal nonlinear equation at interface of analysis and geometry one should see  \cite{Tan}, \cite{FLe13}, \cite{BCDS}, \cite{CR}, \cite{Sec13}, \cite{CS14}, \cite{CS15}, \cite{SV15}, \cite{Ab15}, \cite{FLeS16}, \cite{DMPS}, \cite{PS16}, \cite{Ab17}, \cite{JKS17}. The recent paper \cite{CLL} provides an interesting account of the method of moving planes applied directly to nonlocal equations, rather than going through the extension (discussed in the next Section \ref{S:pk}), as it was done for instance in \cite{BCDS}. This is an important aspect since it allows to cover the full range of fractional powers $s\in (0,1)$.


\section{Traces of Bessel processes: the extension problem}\label{S:pk}

When dealing with nonlocal operators such as $(-\Delta)^s$ a major difficulty is represented by the fact that they do not act on functions like differential operators do, but instead through nonlocal integral formulas such as \eqref{fls}. As a consequence, the rules of differentiation are not readily available, and in these notes we have already seen several instances of this obstruction. In this perspective it would be highly desirable to have some kind of procedure that allows to connect nonlocal problems to ones for which the rules of differential calculus are available. Exploring this connection is the principal objective of this section.

During the past decade there has been an explosion of interest in the analysis of nonlocal operators such as \eqref{fls} in connection with various problems from the applied sciences, analysis and geometry. The majority of these developments has been motivated by the remarkable 2007 ``extension paper" \cite{CS07} by Caffarelli and Silvestre. In that paper the authors introduced a method that allows to convert nonlocal problems in $\Rn$ into ones that involve a certain (degenerate) differential operator in $\R^{n+1}_+$.
Precisely, it was shown in \cite{CS07} that if for a given $0<s<1$ and $u\in \mathscr S(\Rn)$ one considers the function $U(x,y)$ that solves the following Dirichlet problem in the half-space $\R^{n+1}_+$:
\begin{equation}\label{ext2}
\begin{cases}
L_a U(x,y) = \operatorname{div}_{x,y}(y^{a} \nabla_{x,y} U) = 0\ \ \ \ \ \ \ x\in \Rn, y>0,
\\
U(x,0) = u(x),
\end{cases}
\end{equation}
where now $a = 1-2s$,
then one can recover $(-\Delta)^s u(x)$ by the following ``trace" relation
\begin{equation}\label{dn}
- \frac{2^{2s-1} \G(s)}{\G(1-s)} \underset{y\to 0^+}{\lim} y^{1-2s} \frac{\p U}{\p y}(x,y) = (-\Delta)^s u(x).
\end{equation}
Thus, remarkably, \eqref{dn} provides yet another way of characterizing $(-\Delta)^s u(x)$ as the weighted  \emph{Dirichlet-to-Neumann}  map of the extension problem \eqref{ext2}. 

Before turning to solving \eqref{ext2} and proving \eqref{dn}, we mention that, in connection with the extension problem, there is one reference that should be cited since, with a somewhat different perspective, it contains closely related circle of ideas. The 1965 paper \cite{MS} by Muckenhoupt and Stein does not seem well-known to people in the fractional community, or to workers in geometry. In that paper the authors developed a detailed analysis of the equation
\begin{equation}\label{ms}
\operatorname{div}(y^{2\lambda} \nabla u) = y^{2\lambda}\left(\frac{\p^2 u}{\p x^2} + \frac{\p^2 u}{\p y^2} + \frac{2\la}{y} \frac{\p u}{\p y}\right) = 0,\ \ \ \ \ \ \ \la>0,
\end{equation}
in the upper half-plane $\R_x\times\R^+_y$ (notice that \eqref{ms} is precisely the extension equation \eqref{ext2} above). They made substantial use of the conjugate equation 
\[
\frac{\p^2v}{\p x^2} + \frac{\p^2v}{\p y^2} - \frac{2\la}{y} \frac{\p v}{\p y} = 0,
\]
 and of the fact that if $u$ solves \eqref{ms}, then $v = y^{2\lambda} u_y$ solves the conjugate equation. Remarkably, they also proved a strong maximum principle in regions across the singular line $\{y=0\}$ for solutions to \eqref{ms} such that $u(x,-y) = u(x,y)$, see Theorem 1 in \cite{MS}. Many of these aspects have presently become common knowledge to users of the extension procedure.

We also mention that in probability the extension procedure was introduced by Molchanov and Ostrovskii in \cite{MO69}, see also the earlier related work by Spitzer \cite{S58} and the more recent paper by Kolsrud \cite{K89}. 
Although it is fair to say that the contribution of \cite{MO69} to the development of nonlocal operators in analysis and geometry is not nearly comparable to that of \cite{CS07}, it should be said that in the probabilistic literature there is a wealth of works that have developed thanks to \cite{MO69}.

We also want to emphasize another important aspect of the extension operator $L_a$. If we let $X = (x,y) \in \R^{n+1}$, then $L_a$ is a special example of the class of differential equations
\begin{equation}\label{fks}
\operatorname{div}_X(A(X) \nabla_X f) = 0,
\end{equation}
first studied by Fabes, Kenig and Serapioni in \cite{FKS}. For such equations the authors assumed that $X\to A(X)$ be a symmetric matrix-valued function with bounded measurable coefficients verifying the following degenerate ellipticity assumption for a.e. $X\in \R^{n+1}$ and every $\xi\in \R^{n+1}$:
\[
\la \omega(X) |\xi|^2 \le <A(X)\xi,\xi> \le \la^{-1} \omega(X) |\xi|^2,
\]
for some $\la >0$. Here, $\omega(X)$ is a so-called Muckenhoupt $A_2$-weight. This means that there exists a constant $A>0$ such that for any ball $B\subset \R^{n+1}$ one has
\[
\dashint_{B}  \omega(X) dX \din_B \omega(X)^{-1} dX \le A.
\]
 Under such hypothesis they established a strong Harnack inequality, and the local H\"older continuity of the weak solutions of \eqref{fks}. Now, the extension equation in \eqref{ext2} is a special case of \eqref{fks} since, given that $a = 1-2s\in (-1,1)$, the function $\omega(X) = \omega(X) = |y|^a$ is an $A_2$-weight in $\R^{n+1}$ (for a very nice introduction to Muckenhoupt $A_p$-weights the reader is referred to the classical paper \cite{CFe}). As a consequence, one can obtain quantitative information on solutions of $(-\Delta)^s u = 0$, say, from corresponding properties of solutions of the extension problem \eqref{ext2}. 

Suppose for instance we want to establish the scale invariant Harnack inequality on balls in $\Rn$ for solutions of $(-\Delta)^s u = 0$ that are globally nonnegative (this is an important hypothesis when dealing with nonlocal operators). We extend such a $u$ to a nonnegative function $U$ in $\R^{n+1}_+$ that solves \eqref{ext2} above. In view of \eqref{dn} and of the fact that $(-\Delta)^s u = 0$, we obtain for every $x\in \Rn$
\[
\underset{y\to 0^+}{\lim} y^{1-2s} \frac{\p U}{\p y}(x,y) = 0.
\]
This condition implies (after some work!) that if we reflect $U$ evenly in $y>0$, the resulting function is a nonnegative local solution in a ball in the thick space $\R^{n+1}$ of $L_aU = 0$. Therefore, the Harnack inequality established in \cite{FKS} holds in such ball for $U$. If in such inequality we set $y = 0$, using the fact that $U(x,0) = u(x)$, we obtain a corresponding Harnack inequality for $u$. This is one example of how the extension procedure is used to turn nonlocal problems into local ones, see Theorem 5.1 in \cite{CS07}. For a different approach based on probability the reader should see the papers \cite{BK} and \cite{BGR}. In connection with the extension procedure one should also see the works by Stinga and Torrea and by Nystr\"om and Sande. In \cite{ST10} using the theory of semigroups this method has been generalized to define $(-L)^s$, where  $L = \operatorname{div}(A(x)\nabla )$ is a variable coefficient elliptic operator in divergence form, whereas in \cite{NS} and \cite{ST} it has been generalized to the nonlocal heat operator $(\p_t -\Delta)^s$. We also mention the work \cite{CS} in which the authors develop a Schauder type regularity theory, both interior and at the boundary, in the Dirichlet and Neumann problems for the nonlocal operator $(-L)^s$, with $L$ as above.   

We now turn to the task of actually solving the extension problem. One key observation is that the second order degenerate elliptic equation in \eqref{ext2} can also be written in nondivergence form in the following way
\begin{equation}\label{ext2'}
\begin{cases}
 - \Delta_x U = \mathscr B_a U,\ \ \ \ \ \ \ \ \ \ \ \ \ \ \ \ \ (x,y)\in \R^{n+1}_+
\\
U(x,0) = u(x),\ \ \ \ \ \ \ \ \ \ \  \ \ \ \ \ \ x \in \Rn,
\\
U(x,y) \to 0,\ \text{as}\ y \to \infty,\ \ \ \ \ \ x\in \Rn,
\end{cases}
\end{equation}  
where we have denoted by 
\begin{equation}\label{Ba}
\mathscr B_a = \frac{\p^2}{\p y^2} + \frac ay \frac{\p }{\p y}
\end{equation}
 the generator of the \emph{Bessel semigroup} on $(\R^+,y^a dy)$. We will return to the discussion of this semigroup in Section \ref{S:bessel} below.

\begin{theorem}\label{T:cs}
Let $u\in \mathscr S(\Rn)$. Then, the solution $U$ to the extension problem \eqref{ext2} is given by
\begin{equation}\label{U}
U(x,y) = P_s(\cdot,y) \star u(x) = \int_{\Rn} P_s(x-z,y) u(z) dz,
\end{equation}
where 
\begin{equation}\label{Pfinal}
P_s(x,y) = \frac{\G(\frac n2 + s)}{\pi^{\frac n2}\G(s)} \frac{y^{2s}}{(y^2 + |x|^2)^{\frac{n+2s}{2}}}
\end{equation}
is the \emph{Poisson kernel} for the extension problem in the half-space $\R^{n+1}_+$.
For $U$ as in \eqref{U} one has 
\begin{equation}\label{dtn}
(-\Delta u)^s u(x) = - \frac{2^{2s-1} \G(s)}{\G(1-s)} \underset{y\to 0^+}{\lim} y^a \frac{\p U}{\p y}(x,y).
\end{equation}
\end{theorem}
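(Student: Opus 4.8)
The plan is to solve the extension problem by Fourier transforming in the $x$-variable, which turns the degenerate elliptic equation into a one-parameter family of ordinary differential equations in $y$ that are generalized modified Bessel equations. Writing $\widehat U(\xi,y)$ for the partial Fourier transform, the nondivergence form $-\Delta_xU=\mathscr B_aU$ of \eqref{ext2'} becomes
\[
\partial_{yy}\widehat U(\xi,y)+\frac ay\,\partial_y\widehat U(\xi,y)=(2\pi|\xi|)^2\,\widehat U(\xi,y),\qquad \widehat U(\xi,0)=\widehat u(\xi),\quad \widehat U(\xi,y)\to0\ \text{as}\ y\to\infty.
\]
Since $a=1-2s$, comparing with \eqref{genbessel} via the substitution \eqref{besselcv} one is led to the choices $\alpha=s$ (so that $1-2\alpha=a$), $\gamma=1$, $\beta=2\pi|\xi|$, $\nu=\pm s$, and the two linearly independent solutions are $y^sI_s(2\pi|\xi|y)$ and $y^sK_s(2\pi|\xi|y)$. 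The exponential growth of $I_s$ forces us to discard it, and matching the Dirichlet datum at $y=0$ via the asymptotics $K_\nu(z)\sim\tfrac12\Gamma(\nu)(z/2)^{-\nu}$ as $z\to0^+$ (a consequence of \eqref{Knu}, \eqref{Inu} and the reflection formula \eqref{sine}) gives
\[
\widehat U(\xi,y)=\frac{2(\pi|\xi|)^s}{\Gamma(s)}\,y^sK_s(2\pi|\xi|y)\,\widehat u(\xi).
\]

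Next I would identify this Fourier multiplier with a Poisson kernel. By \eqref{fls3} together with the computation \eqref{ft130} of $\widehat{E_{s,y}}$ (equivalently, by \eqref{fs2} in the proof of Lemma \ref{L:fsreg}), one has $(2\pi|\xi|)^{2s}\widehat{E_{s,y}}(\xi)=\frac{2(\pi|\xi|)^s}{\Gamma(s)}\,y^sK_s(2\pi|\xi|y)$, and by Lemma \ref{L:fsreg} the left-hand side equals $\widehat{(-\Delta)^sE_{s,y}}(\xi)=\widehat{P_s(\cdot,y)}(\xi)$ with $P_s$ the kernel in \eqref{Pfinal}. Hence $\widehat U(\xi,y)=\widehat{P_s(\cdot,y)}(\xi)\,\widehat u(\xi)$, which is exactly the representation \eqref{U}. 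The boundary condition $U(x,0)=u(x)$ follows because $P_s(\cdot,y)$ is an approximate identity as $y\to0^+$: it is nonnegative and, by \eqref{fs8} rescaled through $x=yx'$, satisfies $\int_{\Rn}P_s(x,y)\,dx=1$. The decay $U(x,y)\to0$ as $y\to\infty$ follows from $|\widehat{P_s}(\xi,y)|\le1$, the exponential decay of $K_s$, and dominated convergence in the inversion integral. That $U$ solves the equation is then automatic: the profile $\phi(y)=y^sK_s(2\pi|\xi|y)$ satisfies $\mathscr B_a\phi=(2\pi|\xi|)^2\phi$ by construction, so $\mathscr B_aP_s=-\Delta_xP_s$ for $y>0$, whence $L_aP_s=0$ by the equivalence of \eqref{ext2} and \eqref{ext2'}, and convolution in $x$ preserves this.

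For the Dirichlet-to-Neumann formula \eqref{dtn} I would again argue on the Fourier side. Using the recursion $\frac{d}{dz}\big(z^\nu K_\nu(z)\big)=-z^\nu K_{\nu-1}(z)$ (see e.g. \cite{Le}) with $\nu=s$ and $K_{s-1}=K_{1-s}$, a direct computation gives
\[
y^a\,\partial_y\widehat U(\xi,y)=-\frac{2(\pi|\xi|)^s(2\pi|\xi|)}{\Gamma(s)}\,y^{1-s}K_{1-s}(2\pi|\xi|y)\,\widehat u(\xi).
\]
Letting $y\to0^+$ and using $K_{1-s}(z)\sim\tfrac12\Gamma(1-s)(z/2)^{-(1-s)}$ yields
\[
\lim_{y\to0^+}y^a\,\partial_y\widehat U(\xi,y)=-\frac{\Gamma(1-s)}{2^{2s-1}\Gamma(s)}\,(2\pi|\xi|)^{2s}\,\widehat u(\xi),
\]
so that multiplying by $-\,\dfrac{2^{2s-1}\Gamma(s)}{\Gamma(1-s)}$ and invoking \eqref{fls3} gives $(2\pi|\xi|)^{2s}\widehat u(\xi)=\widehat{(-\Delta)^su}(\xi)$; taking the inverse Fourier transform proves \eqref{dtn}.

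The main obstacle is the analytic justification of the two limiting operations hidden in the last paragraph: differentiating $\widehat U(\xi,y)$ in $y$ under the inversion integral, and exchanging $\lim_{y\to0^+}$ with $\int_{\Rn}(\cdots)e^{2\pi i\langle x,\xi\rangle}\,d\xi$. This needs a dominating function for $y^{1-s}K_{1-s}(2\pi|\xi|y)\,|\widehat u(\xi)|$ that is integrable in $\xi$ and uniform for $y\in(0,1]$; one produces it from the rapid decay of $\widehat u$ together with the boundedness of $z\mapsto z^{1-s}K_{1-s}(z)$ near $0$ and its exponential decay at $\infty$, but the bookkeeping differs slightly in the three regimes $0<s<\tfrac12$, $s=\tfrac12$, $\tfrac12<s<1$ because of the interplay between $y^a=y^{1-2s}$ and the power $y^{1-s}$. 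An alternative, more hands-on proof of \eqref{dtn} uses $\int_{\Rn}\partial_yP_s(z,y)\,dz=0$ to write $y^a\partial_yU(x,y)=y^a\int_{\Rn}\partial_yP_s(x-z,y)\big(u(z)-u(x)\big)\,dz$, followed by the change of variables $z=x+yw$ and a passage to the limit; this route is elementary but again requires separating the cases $2s<1$ and $2s>1$ to control the local integrability of the $y$-derivative of the kernel. (Uniqueness of $U$, implicit in the statement, can be obtained separately from the maximum principle for $L_a$ in $\R^{n+1}_+$ together with the prescribed decay at infinity.)
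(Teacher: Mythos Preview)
Your proposal is correct and follows essentially the same route as the paper: Fourier transform in $x$, recognize the generalized modified Bessel equation, select the $y^sK_s$ solution via the decay condition, fix the constant from the small-$z$ asymptotics of $K_s$, and prove \eqref{dtn} on the Fourier side using the recursion for $K_\nu$ together with the asymptotics of $K_{1-s}$. The one cosmetic difference is in identifying the Poisson kernel: the paper reduces the inversion to the integral identity \eqref{pk5} and observes it was already established as \eqref{pk2} in the proof of Lemma~\ref{L:fsreg}, whereas you package the same computation by noting directly that the multiplier equals $(2\pi|\xi|)^{2s}\widehat{E_{s,y}}(\xi)$ via \eqref{ft130}, hence equals $\widehat{P_s(\cdot,y)}(\xi)$ by Lemma~\ref{L:fsreg}; this is the same content presented slightly more cleanly. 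Your remarks on the dominated-convergence justification and the alternative kernel-based argument for \eqref{dtn} match what the paper records (the latter appears as Remark~\ref{R:otherproof}); note, though, that since $z\mapsto z^{1-s}K_{1-s}(z)$ is bounded near $0$ and exponentially decaying at $\infty$ for every $s\in(0,1)$, a single dominating function works and no genuine case splitting is needed.
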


\begin{proof}
Consider the extension problem \eqref{ext2}, written in the form \eqref{ext2'}. If we take a partial Fourier transform of the latter with respect to the variable $x\in \Rn$, we find
\begin{equation}\label{ep2}
\begin{cases} 
\frac{\p^2 \hat U}{\p y^2}(\xi,y) + \frac ay \frac{\p \hat U}{\p y}(\xi,y) - 4 \pi^2 |\xi|^2 \hat U(\xi,y)  = 0 \ \ \ \ \text{in}\ \R^{n+1}_+,
\\
\hat U(\xi,0) = \hat u(\xi),\ \ \ \ \hat U(\xi,y) \to 0,\ \text{as}\ y \to \infty,\ \ \ x\in \Rn,
\end{cases}
\end{equation}
where we have denoted
\[
\hat U(\xi,y) = \int_{\Rn} e^{-2\pi i <\xi,x>} U(x,y) dx.
\]

In order to solve \eqref{ep2} we fix $\xi\in \Rn\setminus\{0\}$, and with $Y(y) = Y_\xi(y) = \hat U(\xi,y)$, we write \eqref{ep2} as
\begin{equation}\label{ep30}
\begin{cases}
y^2 Y''(y) + a y Y'(y) -  4 \pi^2 |\xi|^2  y^2 Y(y) = 0,\ \ \ \ y\in \R^+,
\\
Y(0) = \hat u(\xi),
\\
Y(y) \to 0,\ \text{as}\ y \to \infty.
\end{cases}
\end{equation}  
Comparing \eqref{ep30} with the generalized modified Bessel equation in \eqref{genbessel} above we see that the former fits into the general form of the latter provided that
\[
\alpha = s,\ \ \ \ \gamma = 1,\ \ \ \ \nu = s,\ \ \ \ \beta = 2\pi |\xi|.
\]  
Thus, according to \eqref{besselcv}, two linearly independent solutions of \eqref{ep30} are given by
\[
u_1(y) = y^s I_s(2\pi |\xi| y),\ \ \ \ \ \ \ u_2(y) = y^s K_s(2\pi |\xi| y).
\]
It ensues that, for every $\xi \not= 0$, the general solution of \eqref{ep2}  is given by
\[
\hat U(\xi,y) = A y^s I_s(2\pi |\xi| y) + B y^s K_s(2\pi |\xi| y).
\]
The condition $\hat U(\xi,y) \to 0$ as $y \to \infty$ forces $A = 0$ (see e.g. formulas (5.11.9) and (5.11.10) on p. 123 of \cite{Le} for the asymptotic behavior at $\infty$ of $K_s$ and $I_s$), and thus 
\begin{equation}\label{hatU}
\hat U(\xi,y) = B y^s K_s(2\pi |\xi| y).
\end{equation}
Next, we use the condition $\hat U(\xi,0) = \hat u(\xi)$ to fix the constant $B$. When $y\to 0^+$ we have
\[
\hat U(\xi,y)  = B y^s K_s(2\pi |\xi| y) = B \frac \pi{2} \frac{y^s I_{-s}(2\pi |\xi| y) - y^s I_\nu(2\pi |\xi| y)}{\sin \pi s} \ \longrightarrow\ \frac{B \pi}{2 \G(1-s) \sin \pi s} (2\pi |\xi|)^{-s},
\]
Now from formula (5.7.1) on p. 108 of \cite{Le}, we have as $z\to 0$
\[
I_s(z) \cong \frac 1{\G(s+1)} \left(\frac{z}{2}\right)^s,\ \ \ \ I_{-s}(z) \cong \frac 1{\G(1-s)} \left(\frac{z}{2}\right)^{-s}.
\]
Using this asymptotic, along with the formula \eqref{sine} above, we find that as $y\to 0^+$,
\[
B y^s K_s(2\pi |\xi| y) \ \longrightarrow\ \frac{B \pi 2^{s-1}}{\G(1-s) \sin \pi s} (2\pi |\xi|)^{-s} = B 2^{s-1} \G(s) (2\pi |\xi|)^{-s}.
\]
In order to fulfill the condition $\hat U(\xi,0) = \hat u(\xi)$ we impose that the right-hand side of the latter equation equal $\hat u(\xi)$. For this to happen we must have
\[
B = \frac{(2\pi |\xi|)^{s} \hat u(\xi)}{2^{s-1} \G(s)}.
\]
Substituting such value of $B$ in \eqref{hatU}, we finally obtain
\begin{equation}\label{ep3}
\hat U(\xi,y) = \frac{(2\pi |\xi|)^{s} \hat u(\xi)}{2^{s-1} \G(s)} y^s K_s(2\pi |\xi| y).
\end{equation}
At this point we want to invert the Fourier transform in \eqref{ep3}. In fact, it is clear from the latter equation that the function $U(x,y)$ will be given by \eqref{U}, with $P_s(x,y)$ as in \eqref{Pfinal}, if we can show that
\begin{equation}\label{pk4}
\F^{-1}_{\xi\to x}\left(\frac{(2\pi |\xi|)^{s}}{2^{s-1} \G(s)} y^s K_s(2\pi |\xi| y)\right)= \frac{\G(\frac n2 + s)}{\pi^{\frac n2}\G(s)} \frac{y^{2s}}{(y^2 + |x|^2)^{\frac{n+2s}{2}}}.
\end{equation}
Since the function between parenthesis in the left-hand side of  \eqref{pk4} is spherically symmetric, proving \eqref{pk4} is equivalent to establishing  the following identity
\[
\F_{\xi\to x}\left(2 \pi^s |\xi|^{s} y^s K_s(2\pi |\xi| y)\right)= \frac{\G(\frac n2 + s)}{\pi^{\frac n2}} \frac{y^{2s}}{(y^2 + |x|^2)^{\frac{n+2s}{2}}}.
\]
In view of Theorem \ref{T:Fourier-Bessel}, the latter identity is equivalent to
\begin{equation}\label{pk5}
\frac{2^2 \pi^{s+1} y^s}{|x|^{\frac n2 - 1}} \int_0^\infty t^{\frac n2 +s} K_{s}(2\pi y t) J_{\frac n2 - 1}(2\pi |x| t) dt = \frac{\G(\frac n2 + s)}{\pi^{\frac n2}} \frac{y^{2s}}{(y^2 + |x|^2)^{\frac{n+2s}{2}}}.
\end{equation}
We are thus left with proving \eqref{pk5}. Remarkably, this identity has already been established in \eqref{pk2} above. Therefore, \eqref{pk5} does hold and, with it, \eqref{U} and \eqref{Pfinal} as well. 

In order to complete the proof of the theorem we are thus left with establishing \eqref{dtn}. With this objective in mind we note that in view of \eqref{fls3} in Proposition \ref{P:slapft}, proving 
\eqref{dtn} is equivalent to showing 
\begin{equation}\label{ep4}
(2\pi |\xi|)^{2s} \hat u(\xi) = - \frac{2^{2s-1} \G(s)}{\G(1-s)} \underset{y\to 0^+}{\lim} y^a \frac{\p \hat U}{\p y}(\xi,y).
\end{equation}
Keeping in mind that $a = 1-2s$, and using the formula
 \[
 K_s'(z) = \frac sz K_s(z) - K_{s+1}(z)
 \]
 (see (5.7.9) on p. 110 of \cite{Le}), we obtain
 \[
y^a \frac{\p \hat U}{\p y}(\xi,y) = \frac{(2\pi |\xi|)^{s+1} \hat u(\xi)}{2^{s-1} \G(s)} y^{1-s} \left\{\frac{2s}{(2\pi|\xi|) y} K_s(2\pi |\xi| y) - K_{s+1}(2\pi |\xi| y)\right\}.
\]
Since
\[
\frac{2s}{z} K_s(z) - K_{s+1}(z) = - K_{s-1}(z) = - K_{1-s}(z)
\]
 (again, by (5.7.9) on p. 110 of \cite{Le}), we finally have
\[
y^a \frac{\p \hat U}{\p y}(\xi,y) = -  \frac{(2\pi |\xi|)^{s+1} \hat u(\xi)}{2^{s-1} \G(s)} y^{1-s} K_{1-s}(2\pi |\xi| y).
\]
Now, as before, we have as $y\to 0^+$,
\[
y^{1-s} K_{1-s}(2\pi |\xi| y)\ \longrightarrow\ 2^{-s} \G(1-s)(2\pi |\xi|)^{s-1} .
\]
We finally reach the conclusion that
\[
y^a \frac{\p \hat U}{\p y}(\xi,y)\ \underset{y\to 0^+}{\longrightarrow}\ -  \frac{\G(1-s)}{2^{2s-1} \G(s)}  (2\pi |\xi|)^{2s} \hat u(\xi).
\]
This proves \eqref{ep4}, thus completing the proof. For an alternative proof of \eqref{dtn} see Remark \ref{R:otherproof} below.

\end{proof}

\begin{remark}\label{R:poissoncione}
Using Proposition \ref{P:poisson} with the choice $b=0$ and $a = n+2s$, it is easy to recognize from \eqref{Pfinal} that
\begin{equation}\label{poissoneone}
||P_s(\cdot,y)||_{L^1(\Rn)} = \int_{\Rn} P_s(x,y) dx = 1,\ \ \ \ \ \ \ \ \ \ \ \ \ \ \text{for every}\ y>0.
\end{equation}
\end{remark}

\begin{remark}\label{R:12}
Notice that when $s = \frac 12$ we have $a = 1 - 2s = 0$, and the extension operator $L_a$ becomes the standard Laplacean $L_a = \Delta_x + \frac{\p^2}{\p y^2}$ in $\R^{n+1}$. From formula \eqref{Pfinal} we obtain in such case 
\[
P_{1/2}(x,y) = \frac{\G(\frac{n+1}{2})}{\pi^{\frac{n+1}{2}}} \frac{y}{(y^2 + |x|^2)^{\frac{n+1}{2}}},
\]
which is in fact the standard Poisson kernel for the upper half-space $\R^{n+1}_+$, see e.g. \cite{SW}.
\end{remark}

\begin{remark}\label{R:yam}
If we compare the expression of the Poisson kernel in \eqref{Pfinal} with \eqref{yam10} in Lemma \ref{L:fsreg}, we conclude that, remarkably, we have shown that 
\begin{equation}\label{pfs}
P_s(x,y) = (-\Delta)^s E_{s,y}(x),
\end{equation}
where for $y>0$ the function $E_{s,y} = c(n,s) (y^2 + |x|^2)^{-\frac{n-2s}{2}}$ is the $y$-regularization of the fundamental solution of $(-\Delta)^s$. If we combine \eqref{pfs} with \eqref{fs1} above, we see that we can reformulate \eqref{fs1} as follows
\[
\underset{y\to 0^+}{\lim} P_s(\cdot,y) = \delta\ \ \ \ \ \ \ \ \text{in}\ \ \mathscr S'(\Rn),
\]
or, equivalently, for any $\vf\in \mathscr S(\Rn)$
\[
\underset{y\to 0^+}{\lim} \int_{\Rn} P_s(x,y)\vf(x) dx =  \vf(0).
\]
If we let $\check{\vf}(x) = \vf(-x)$, then we obtain from the latter limit relation
\begin{equation}\label{dirac}
P_s(\cdot,y) \star \vf(x) = \int_{\Rn} P_s(z,y)\tau_{-x}\check{\vf}(z) dz\ \longrightarrow\ \tau_{-x}\check{\vf}(0) = \vf(x).
\end{equation}
\end{remark}

\begin{remark}[Alternative proof of \eqref{dtn}]\label{R:otherproof}
Using the property \eqref{dirac} of the Poisson  kernel $P_s(x,y)$ we can provide another ``short" proof of \eqref{dtn} along the following lines, see Section 3.1 in \cite{CS07}. Let $u\in \mathscr S(\Rn)$ and consider the solution $U(x,y) = P_s(\cdot,y)\star u(x)$ to the extension problem \eqref{ext2}, see \eqref{U}. Using \eqref{poissoneone} we can write
\[
U(x,y) = \frac{\G(\frac n2 + s)}{\pi^{\frac n2}\G(s)} \int_{\Rn} \frac{y^{2s}}{(y^2 + |x-z|^2)^{\frac{n+2s}{2}}} (u(z) - u(x)) dz + u(x).
\]
Differentiating both sides of this formula with respect to $y$ and keeping in mind that $a = 1-2s$, we obtain that as $y\to 0^+$
\begin{align*}
y^a \frac{\p U}{\p y}(x,y) = 2s \frac{\G(\frac n2 + s)}{\pi^{\frac n2}\G(s)} \int_{\Rn}\frac{u(z) - u(x)}{(y^2 + |z-x|^2)^{\frac{n+2s}{2}}} dz + O(y^2).
\end{align*}
Letting $y\to 0^+$ and using Lebesgue dominated convergence theorem, we thus find
\begin{align*}
\underset{y\to 0^+}{\lim} y^a \frac{\p U}{\p y}(x,y) & = 2s \frac{\G(\frac n2 + s)}{\pi^{\frac n2}\G(s)} \operatorname{PV} \int_{\Rn}\frac{u(z) - u(x)}{|z-x|^{n+2s}} dz
\\
& = - 2s \frac{\G(\frac n2 + s)}{\pi^{\frac n2}\G(s)} \gamma(n,s)^{-1} (-\Delta u)^s u(x),
\end{align*}
where in the second equality we have used \eqref{fl2} above. If in the latter equation we now replace the expression \eqref{gnsfin} of the constant $\gamma(n,s)$, we reach the conclusion that \eqref{dtn} is valid.
\end{remark}

The Poisson kernel $P_s(x,y)$ is of course a solution of $L_a P_s = 0$ in $\R^{n+1}_+$. What is instead not obvious is that the $y$-regularization $E_{s,y}$ of the fundamental solution $E_s$ of $(-\Delta)^s$ introduced in  \eqref{Ee20} in Lemma \ref{L:fsreg0} is also a solution of the extension operator $L_a$. It was shown in \cite{CS07} that, up to a constant, such function is in fact the fundamental solution of $L_a$. The heuristic motivation behind this is that, with $x\in \Rn$,  and $\eta\in \R^{a+1}$, if $y = |\eta|$ then the operator
\begin{equation}\label{op}
y^{-a} L_a = \Delta_x + \frac{\p^2 }{\p y^2} + \frac{a}{y} \frac{\p}{\p y}
\end{equation}
can be thought of as the Laplacean in the fractional dimension $N = n+a+1$ acting on functions $U(x,|\eta|)$. Such heuristic is confirmed by the following result.

\begin{prop}
For $y\in \R$ consider the function $G(x,y) = (|x|^2 + y^2)^{-\frac{n-2s}{2}}$, see \eqref{Ee20}. Then, for every $(x,y)\in \R^{n+1}_+$, with $a = 1 - 2s$ we have
\[
L_a G(x,y) = 0.
\]
\end{prop}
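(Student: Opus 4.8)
The plan is to verify by direct differentiation that $G(x,y) = (|x|^2 + y^2)^{-\frac{n-2s}{2}}$ is annihilated by $L_a = \operatorname{div}_{x,y}(y^a \nabla_{x,y} \cdot)$. Writing $L_a G = y^a\left(\Delta_x G + \partial_{yy} G + \frac{a}{y}\partial_y G\right)$ away from $y=0$, it suffices to show the bracketed quantity vanishes. Set $r^2 = |x|^2 + y^2$ and $\beta = \frac{n-2s}{2}$, so $G = r^{-2\beta}$.

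First I would compute the Euclidean Laplacean of $G$ in the $(x,y)$ variables as if $(x,y)$ were a point in $\mathbb R^{n+1}$. Since $G$ is a radial power $r^{-2\beta}$ in $n+1$ variables, one has $\Delta_{x,y}(r^{-2\beta}) = 2\beta(2\beta + 2 - (n+1))\, r^{-2\beta - 2} = 2\beta(2\beta + 1 - n)\, r^{-2\beta-2}$. Next I would separately record $\partial_y G = -2\beta y\, r^{-2\beta - 2}$, hence $\frac{a}{y}\partial_y G = -2a\beta\, r^{-2\beta-2}$. Then
\[
\Delta_x G + \partial_{yy} G + \frac{a}{y}\partial_y G = \Delta_{x,y} G + \frac{a}{y}\partial_y G = \left[2\beta(2\beta + 1 - n) - 2a\beta\right] r^{-2\beta - 2}.
\]
It remains to check that the scalar coefficient vanishes: $2\beta + 1 - n - a = 0$. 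Substituting $\beta = \frac{n-2s}{2}$ and $a = 1-2s$ gives $2\beta = n - 2s$, so $2\beta + 1 - n - a = (n-2s) + 1 - n - (1-2s) = 0$, as desired. Therefore $L_a G = 0$ on $\mathbb R^{n+1}_+$.

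There is essentially no obstacle here — the computation is routine; the only thing to be careful about is the bookkeeping of the dimension count in the radial Laplacean (using $n+1$ rather than $n$), and the fact that the identity $2\beta + 1 - n - a = 0$ is exactly the algebraic reflection of the relations $a = 1-2s$ and the exponent $\frac{n-2s}{2}$ that already appear in \eqref{Ee20} and Lemma \ref{L:fsreg}. An alternative, slicker route would be to invoke the heuristic \eqref{op}: $y^{-a} L_a$ acts like the Laplacean in fractional dimension $N = n + a + 1 = n + 2 - 2s$, and $G = r^{-(N-2)} = r^{-(n-2s)}$ is precisely the fundamental-solution-type exponent $2-N$ for that dimension, hence radially harmonic; but I would present the explicit two-line computation above since it is self-contained and requires no interpretation of non-integer dimensions.
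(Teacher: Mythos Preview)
Your proof is correct and follows essentially the same approach as the paper: a direct computation of $L_a G$ using $y^{-a}L_a = \Delta_x + \partial_{yy} + \frac{a}{y}\partial_y$. The only organizational difference is that you group $\Delta_x G + \partial_{yy}G$ into the full $(n+1)$-dimensional radial Laplacean of $r^{-2\beta}$ and then add the drift term, reducing the check to the single algebraic identity $2\beta + 1 - n - a = 0$, whereas the paper writes $G$ in $(r,y)$-coordinates with $r=|x|$ and computes $G_{rr} + \frac{n-1}{r}G_r$ and $G_{yy} + \frac{a}{y}G_y$ separately, showing they cancel; your packaging is slightly more economical but the content is identical.
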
  

\begin{proof}
It is convenient to use the expression of \eqref{op} on functions depending on $r = |x|$ and $y$
\[
y^{-a} L_a = \frac{\p^2 }{\p r^2} + \frac{n-1}{r} \frac{\p}{\p r} + \frac{\p^2 }{\p y^2} + \frac{a}{y} \frac{\p}{\p y}.
\]
Then, the proof becomes a simple computation. Abusing the notation we write $G(x,y) = G(r,y) = (r^2 + y^2)^{-\frac{n-2s}{2}}$. We have
\[
G_r = -(n+a-1) (r^2 + y^2)^{-\frac{n+a-1}{2} - 1} r,
\]
\[
G_{rr} = (n+a-1) (r^2 + y^2)^{-\frac{n+a-1}{2} - 2} ((n+a)r^2 - y^2).
\]
This gives
\[
G_{rr} + \frac{n-1}{r} G_r =  (n+a-1) (r^2 + y^2)^{-\frac{n+a-1}{2} - 2} ((1+a)r^2 - n y^2).
\]
On the other hand, a similar computation gives
\[
G_{yy} + \frac{a}{y} G_y = - (n+a-1) (r^2 + y^2)^{-\frac{n+a-1}{2} - 2} ((1+a)r^2 - n y^2).
\]
Adding the latter two equations gives the desired conclusion $L_a G = 0$.

\end{proof}


\section{Fractional Laplacean and subelliptic equations}\label{S:flso}

In Section \ref{S:pk} we have analyzed the important fact \eqref{dn} that $s$-harmonic functions arise as weighted Dirichlet-to-Neumann traces of the solutions of the extension problem \eqref{ext2}. This aspect underscores the deep connection between the fractional Laplacean and the class of second order partial differential equations of degenerate type introduced in \cite{FKS}.

In this section we want to advertise another aspect of nonlocal equations, namely the link between the nonlocal operator $(-\Delta)^s$ and the theory of the so-called \emph{subelliptic equations}. This name comes from the fact that, although the relevant differential operator $L$ fails to satisfy the a priori estimates of the elliptic theory, it does satisfy the following replacement estimate \emph{below} the elliptic index, hence the name subelliptic:
\[
||u||_{H^{2\e}}\le C\left(||u||_{L^2} + ||Lu||_{L^2}\right),
\]
for all $C^\infty_0$ functions $u$, and for some $0<\e<1$. Subelliptic operators typically display loss of control of derivatives in a set of directions. 

The aspect that we have in mind originates with the following particular subelliptic operator
\[
\Ba = \frac{\p^2}{\p z^2} + |z|^{2\alpha} \frac{\p^2}{\p x^2},\ \ \ \ \ \ \ \alpha>0,
\]
 that was first introduced by S. Baouendi in 1967 in his Ph. D. Dissertation under the supervision of B. Malgrange, see \cite{Ba67}. At that time M. Vishik was visiting Malgrange, who discussed with him the thesis project of Baouendi. Vishik subsequently asked Malgrange permission to suggest to his own Ph. D. student, Grushin, to work on some questions related to the hypoellipticity of $\Ba$ when $\alpha \in \mathbb N$, see \cite{Gru1} and \cite{Gru2}. This is how the operator $\Ba$ became known as the \emph{Baouendi-Grushin operator}. A decade later, in the early 80's, Franchi and Lanconelli introduced a class of operators which include $\Ba$, and they pioneered the study of the fine properties of their weak solutions, such as the Harnack type inequality and the H\"older continuity, by studying a control distance associated with the relevant operators, see \cite{FL1}-\cite{FL5}, and also the subsequent work \cite{FS}. As we will see in this and the subsequent section, there is an underlying strong connection between these works, the paper \cite{FKS} of Fabes, Kenig and Serapioni mentioned in the previous section, and the fractional Laplacean $(-\Delta)^s$. We will further the discussion of the interconnection between these operators in Sections \ref{S:how} and \ref{S:almgren} below.

On one hand, we will see from Proposition \ref{P:flp} below that, at least in the range $0<s\le 1/2$, the fractional Laplacean arises as the true Dirichlet-to-Neumann map of the Baouendi-Grushin operator $\Ba$ defined in \eqref{bg0}. On the other hand, the  recent work of Koch, Petrosyan and Shi \cite{KPS} has underscored an even deeper link between nonlocal and subelliptic equations. The central tool in their study of the real-analytic smoothness of the regular free boundary in the obstacle problem for $(-\Delta)^{1/2}$ is a partial hodograph transformation. After such change of variables, they obtain a fully nonlinear partial differential equation which has a subelliptic structure. In the sense that the linearization of such fully nonlinear equation is precisely a Baouendi operator such as \eqref{ba} below with $\alpha = 1$, see Section 5 in \cite{KPS}. Before proceeding we mention that the $C^\infty$ smoothness of the regular free boundary in the obstacle problem for $(-\Delta)^{1/2}$ has also been proved with a completely different approach by De Silva and Savin in \cite{DS16}, see also \cite{DS15} for a related result in the Bernoulli problem. Their approach has been subsequently generalized to all $s\in (0,1)$ in \cite{JN17}.

These facts represent an interesting opportunity for interaction between two seemingly disjoint communities: that of workers in subelliptic equations and the closely connected field of sub-Riemannian geometry, and that of workers in nonlocal equations. We hope that the present discussion, as well as the content of Sections \ref{S:how} and \ref{S:almgren}, will encourage such exchange. Most of the material that follows is borrowed from the papers \cite{Ga}, \cite{CS07} and \cite{GRO}. 

To introduce our discussion let us consider the fractional Laplacean $(-\Delta)^s$ with $0<s<1$. With $a = 1-2s$, we have $-1<a<1$, and we have seen in \eqref{op} that the extension operator $L_a$ can be written in the form
\begin{equation}\label{la}
L_a = y^a \left(\Delta_x + \frac{\p^2}{\p y^2} + \frac ay \frac{\p }{\p y}\right).
\end{equation}
 
We now want to connect the operator $L_a$ to another degenerate elliptic operator. Following \cite{CS07} we introduce the change of variable $\Phi: \R^{n+1}_+ \to \R^{n+1}_+$ 
\begin{equation}\label{phi}
(x,z) = \Phi(x,y) = (x,h(y)),
\end{equation}
where the function $h(y)$ is chosen so to eliminate the drift term in \eqref{la}. To do this, given a function $U(x,z)$ defined for $(x,z)\in \us$, we define a function $\tilde U(x,y)$, with $(x,y) \in \us$, by the formula
\begin{equation}\label{tu}
\tilde U(x,y) := U(\Phi(x,y)) = U(x,h(y)).
\end{equation}
A simple computation gives
\[
L_a \tilde U(x,y) = y^a\left[\Delta_x U(x,h(y)) + \left(h''(y) + \frac ay h'(y)\right) D_z U(x,h(y)) + h'(y)^2 D_{zz} U(x,h(y))\right].
\]
From this equation it is clear that if $h(y)$ satisfies the differential equation
\begin{equation}\label{hde}
h''(y) + \frac ay h'(y)  \equiv 0,
\end{equation}
then we obtain
\begin{equation}\label{lab}
L_a \tilde U(x,y) =  (h^{-1}(z))^a\left[\Delta_x U(x,z) + h'(h^{-1}(z))^2 D_{zz} U(x,z)\right].
\end{equation}
Solving \eqref{hde} we find $h(y) = A y^{1-a}$ for some $A\in \R\setminus \{0\}$. We now choose $A$ so that $h'(h^{-1}(z)) = z^{-\frac{a}{1-a}}$, which gives $A = (1-a)^{-(1-a)}$. Summarizing, $h:(0,\infty)\to (0,\infty)$ is
the strictly increasing function given by
\begin{equation}\label{h}
z = h(y) = \left(\frac{y}{1-a}\right)^{1-a},\ \ \ \text{with inverse}\ \ \ \ y = h^{-1}(z) = (1-a) z^{\frac{1}{1-a}}.
\end{equation}
With this choice we have $h'(y) = (1-a)^a y^{-a}$, and thus we conclude from \eqref{lab} that
\begin{align}\label{lab2}
L_a \tilde U(x,y) & =  (1-a)^a z^{\frac{a}{1-a}} \left[\Delta_x U(x,z) + z^{-\frac{2a}{1-a}} D_{zz} U(x,z)\right].
\end{align}
The next proposition summarizes the content of \eqref{lab2}.

\begin{prop}\label{P:laba}
Let $-1<a<1$, and $\alpha = \frac{a}{1-a}\in (-1/2,\infty)$.
The mapping $U\leftrightarrow \tilde U$ defined by \eqref{tu}, with $h(y)$ given by \eqref{h},
converts in a one-to-one, onto fashion, solutions of the equation $L_a \tilde U = 0$ with respect to the variables $(x,y)\in \us$ into solutions with respect to the variables $(x,z)\in \us$ of the equation
\begin{equation}\label{bg0}
\mathcal P_\alpha U = D_{zz} U(x,z) + z^{2\alpha}  \Delta_x U(x,z)  = 0.
\end{equation}
\end{prop}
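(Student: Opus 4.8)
The essential computation has already been carried out in deriving \eqref{lab2}, so the plan is just to organize that identity and add one bijectivity remark. First I would observe that the function $h$ in \eqref{h}, namely $z = h(y) = (y/(1-a))^{1-a}$ with $0 < 1-a < 2$, is a $C^\infty$ strictly increasing bijection of $(0,\infty)$ onto $(0,\infty)$ (since $h(0^+)=0$ and $h(+\infty)=+\infty$), so that the map $\Phi(x,y)=(x,h(y))$ of \eqref{phi} is a $C^\infty$ diffeomorphism of $\us$ onto itself. Consequently $U\mapsto \tilde U = U\circ\Phi$, as in \eqref{tu}, is a bijection of $C^2(\us)$ onto itself with inverse $\tilde U\mapsto \tilde U\circ\Phi^{-1}$; this is the ``one-to-one, onto'' correspondence of the statement, and it obviously preserves the property of vanishing identically.

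Next I would invoke the chain-rule computation already performed above: choosing $h$ exactly so that the drift term in \eqref{la} is annihilated — i.e.\ so that \eqref{hde} holds — turns \eqref{lab} into \eqref{lab2}, which reads
\[
L_a \tilde U(x,y) = (1-a)^a\, z^{\frac{a}{1-a}}\Big[\Delta_x U(x,z) + z^{-\frac{2a}{1-a}} D_{zz} U(x,z)\Big],\qquad z = h(y).
\]
Putting $\alpha = \tfrac{a}{1-a}$, so that $-\tfrac{2a}{1-a} = -2\alpha$, I would factor $z^{-2\alpha}$ out of the bracket and absorb it into the prefactor to get
\[
L_a \tilde U(x,y) = (1-a)^a\, z^{-\alpha}\Big[D_{zz} U(x,z) + z^{2\alpha}\Delta_x U(x,z)\Big] = (1-a)^a\, z^{-\alpha}\, \Ba U(x,z).
\]
Since $-1<a<1$ gives $1-a>0$, and $z>0$ everywhere on $\us$, the scalar factor $(1-a)^a z^{-\alpha}$ never vanishes. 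Hence $L_a\tilde U\equiv 0$ in $\us$ with respect to $(x,y)$ if and only if $\Ba U\equiv 0$ in $\us$ with respect to $(x,z)$, which is precisely the claimed conversion of solutions, in both directions.

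Finally, to justify the parenthetical range of the parameter I would note that $a\mapsto \alpha = a/(1-a)$ has derivative $(1-a)^{-2}>0$ on $(-1,1)$, with $\alpha\to -1/2$ as $a\to -1^+$ and $\alpha\to +\infty$ as $a\to 1^-$, so $\alpha$ ranges over exactly $(-1/2,\infty)$. There is no genuine analytic difficulty in any of this; the only place requiring care is the bookkeeping of the powers of $z$ when rewriting \eqref{lab2} in the factored form $(1-a)^a z^{-\alpha}\,\Ba U$, together with the (elementary) check that $\Phi$ is a diffeomorphism — this is where a stray sign or exponent could slip in.
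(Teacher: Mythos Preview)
Your argument is correct and is exactly the approach of the paper: the proposition is stated there as a direct summary of identity \eqref{lab2}, and you have simply made explicit the two points left implicit, namely that $\Phi$ is a diffeomorphism of $\us$ and that the factor $(1-a)^a z^{-\alpha}$ in the rewritten form $L_a\tilde U=(1-a)^a z^{-\alpha}\,\Ba U$ never vanishes. Your exponent bookkeeping is right, and the range check for $\alpha$ is a welcome addition.
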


We note explicitly that, in the correspondence $U\leftrightarrow \tilde U$, the equation \eqref{lab2} can be more suggestively expressed as
\begin{equation}\label{labb2}
y^a L_a \tilde U(x,y) = \Ba u(x,z).
\end{equation}

Using Proposition \ref{P:laba} one obtains the following interesting result.

\begin{prop}\label{P:flp}
Given any $s\in (0,1)$, the fractional Laplacean $(-\Delta)^s$ in $\Rn$ can  be interpreted as the Dirichlet-to-Neumann map of the operator $\Ba$ in $\us$ defined in \eqref{bg0}, where $\alpha = \frac{1}{2s} - 1$. By this we mean that if for any $u\in \mathscr S(\Rn)$ one considers the solution $U(x,z)$ to the Dirichlet problem
\begin{equation}\label{bg00}
\begin{cases}
\mathcal P_\alpha U(x,z) = D_{zz} U(x,z) + z^{2\alpha}  \Delta_x U(x,z)  = 0,\ \ \ \ (x,z)\in \us,
\\
U(x,0) = u(x),
\end{cases}
\end{equation}
then one has
\begin{equation}\label{dtnbg}
(-\Delta)^s u(x) = - \frac{\G(1+s)}{\G(1-s)} \underset{z\to 0^+}{\lim} \frac{\p U}{\p z}(x,z).
\end{equation}
\end{prop}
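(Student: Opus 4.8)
The plan is to derive \eqref{dtnbg} from the Caffarelli--Silvestre extension (Theorem \ref{T:cs}) together with the change of variables of Proposition \ref{P:laba}, which replaces the weighted operator $L_a$ by the Baouendi--Grushin operator $\Ba$. Fix $u\in\mathscr S(\Rn)$ and put $a=1-2s$, so that $-1<a<1$; let $\tilde U$ be the solution of the extension problem \eqref{ext2} furnished by Theorem \ref{T:cs}. With $h$ as in \eqref{h}, set $U(x,z):=\tilde U(x,h^{-1}(z))$, equivalently $\tilde U(x,y)=U(x,h(y))$ as in \eqref{tu}. By Proposition \ref{P:laba}, $U$ solves $\Ba U=0$ in $\us$ with $\alpha=\tfrac{a}{1-a}=\tfrac1{2s}-1$; and since $1-a=2s>0$ we have $h(y)\to0$ as $y\to0^{+}$, so $U(x,0)=\tilde U(x,0)=u(x)$. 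Thus $U$ is exactly the solution of the Dirichlet problem \eqref{bg00}.

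The second step is to convert the weighted normal derivative in \eqref{dn} into an unweighted one. By the chain rule $\p_y\tilde U(x,y)=h'(y)\,\p_z U(x,h(y))$, and since $h'(y)=(1-a)^{a}y^{-a}$ by \eqref{h}, the degenerate weight is swallowed by a constant:
\begin{equation*}
y^{a}\,\frac{\p\tilde U}{\p y}(x,y)=(1-a)^{a}\,\frac{\p U}{\p z}\bigl(x,h(y)\bigr),\qquad y>0 .
\end{equation*}
By Theorem \ref{T:cs} the left-hand side has a limit as $y\to0^{+}$; since $h$ maps a half-neighbourhood of $0$ homeomorphically onto one, it follows that $\lim_{z\to0^{+}}\p_z U(x,z)$ exists and equals $(1-a)^{-a}\lim_{y\to0^{+}}y^{a}\p_y\tilde U(x,y)$. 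Feeding this into the trace formula \eqref{dn} gives
\begin{equation*}
(-\Delta)^{s}u(x)=-\,\frac{2^{2s-1}\G(s)}{\G(1-s)}\,(1-a)^{a}\,\underset{z\to0^{+}}{\lim}\,\frac{\p U}{\p z}(x,z).
\end{equation*}

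What is left is the bookkeeping of constants: with $a=1-2s$ one has $(1-a)^{a}=(2s)^{1-2s}$, and it remains only to check that $-\tfrac{2^{2s-1}\G(s)}{\G(1-s)}(2s)^{1-2s}$ simplifies to the coefficient $-\tfrac{\G(1+s)}{\G(1-s)}$ of \eqref{dtnbg}, using $\G(1+s)=s\G(s)$ from \eqref{fact} (and \eqref{gammas} if one prefers to route everything through $\G(-s)$); this is a routine $\Gamma$-identity manipulation. I do not foresee a serious obstacle here: once Theorem \ref{T:cs} and Proposition \ref{P:laba} are in hand the argument is essentially a substitution, and the only delicate point — existence of the unweighted limit $\lim_{z\to0^{+}}\p_z U$, also for $\tfrac12<s<1$ where $z^{2\alpha}$ is singular at $z=0$ — is handed to us for free because the weighted limit in \eqref{dn} exists. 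As an alternative I would note that one can bypass \eqref{dn} and compute on the Fourier side directly: inserting $y=h^{-1}(z)=2s\,z^{1/(2s)}$ into the explicit formula \eqref{ep3} for $\widehat{\tilde U}$, differentiating in $z$, and using the small-argument behaviour of the Macdonald function $K_s$ recorded through \eqref{Knu}--\eqref{Inu}, one identifies $\lim_{z\to0^{+}}\p_z\widehat U(\xi,z)$ as a constant multiple of $(2\pi|\xi|)^{2s}\hat u(\xi)=\widehat{(-\Delta)^{s}u}(\xi)$ (by \eqref{fls3}), and inverting the Fourier transform yields \eqref{dtnbg} with the same constant.
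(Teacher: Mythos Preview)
Your proposal is correct and follows essentially the same route as the paper: relate the Baouendi--Grushin Dirichlet problem \eqref{bg00} to the Caffarelli--Silvestre extension via the change of variables of Proposition~\ref{P:laba}, use the chain rule to turn the weighted normal derivative $y^{a}\partial_y\tilde U$ into the unweighted $\partial_z U$, and then invoke \eqref{dn}. The only cosmetic difference is that the paper starts from the solution $U$ of \eqref{bg00} and \emph{defines} $\tilde U(x,y)=U(x,h(y))$, whereas you start from the Caffarelli--Silvestre solution $\tilde U$ and define $U(x,z)=\tilde U(x,h^{-1}(z))$; the computation $y^{a}\partial_y\tilde U=(1-a)^{a}\partial_z U$ and the final constant reduction via $\Gamma(1+s)=s\Gamma(s)$ are identical in both.
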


\begin{proof}
Consider the solution to the Dirichlet problem \eqref{bg00}.
Denote by $\tilde U(x,y)$ the function associated to $U$ in the variables $(x,y)\in \us$ by the correspondence \eqref{tu}, where $h(y)$ is given by \eqref{h}, and $a$ and $\alpha$ are related by the equation $\alpha = \frac{a}{1-a}$, or equivalently $a = \frac{\alpha}{\alpha+1}$. Since $\alpha = \frac{1}{2s} - 1$, we see that  $a = 1-2s$. By Proposition \ref{P:laba} we know that $\tilde U$ satisfies $L_a \tilde U = 0$ in $\us$. Furthermore, we have $\tilde U(x,0) = U(x,0) = u(x)$. By \eqref{dn} we have
\begin{equation*}
(-\Delta)^s u(x) = - \frac{2^{2s-1} \G(s)}{\G(1-s)} \underset{y\to 0^+}{\lim} y^{1-2s} \frac{\p \tilde U}{\p y}(x,y).
\end{equation*}
On the other, \eqref{tu} and the chain rule give
\[
y^{1-2s} \frac{\p \tilde U}{\p y}(x,y) = y^{1-2s} \frac{\p U}{\p z}(x,h(y)) h'(y) = (2s)^{-2s+1} \frac{\p U}{\p z}(x,(2s)^{-2s} y^{2s}).
\]
From the latter two equations we obtain
\[
(-\Delta)^s u(x) = - \frac{2^{2s-1} \G(s)}{\G(1-s)} \underset{y\to 0^+}{\lim} y^{1-2s} \frac{\p \tilde U}{\p y}(x,y) = - \frac{2^{2s-1} \G(s)}{\G(1-s)}(2s)^{-2s+1} \underset{z\to 0^+}{\lim} \frac{\p U}{\p z}(x,z).
\]
Keeping \eqref{fact} in mind, we have thus proved \eqref{dtnbg}.

\end{proof}

We pause for a moment to emphasize that Proposition \ref{P:flp} shows that the true Dirichlet-to-Neumann map that defines the nonlocal operator $(-\Delta)^s$ is the one associated with the degenerate elliptic operator $\Ba$ in \eqref{bg0}. 
If we now restrict the attention to the regime $0<s\le 1/2$, then $\alpha = \frac{1}{2s} - 1 \ge 0$ and the operator in \eqref{bg0} is a model of the \emph{Baouendi-Grushin operators}
in $\Rn_x\times \Rm_z$ given by
 \begin{equation}\label{ba}
 \Ba=\Delta_z+ |z|^{2\alpha}\Delta_x,\ \ \ \ \ \ \alpha \ge 0.
\end{equation}
When $\alpha>0$ such operators are degenerate elliptic along the $n$-dimensional subspace $M = \Rn \times  \{0\}_{\R^m} $. They are the prototype of a class of equations that continues to be much studied nowadays. One of the reasons for such continuing interest is that, as we next illustrate, \eqref{ba} is closely connected with an object of fundamental relevance in harmonic analysis, partial differential equations and geometry, the Heisenberg group $\mathbb H^n$. 
This is the stratified nilpotent Lie group whose underlying manifold is $\mathbb C^n \times \R \cong \R^{2n+1}$ with noncommutative group law given in real coordinates by
\begin{equation}\label{affine}
(x,y,t) \circ (x',y',t') = (x+x',y+y',t+t' + \frac 12 (<x,y'> - <x',y>)).
\end{equation}
If we let $p = (x,y,t), p' = (x',y',t')\in \mathbb H^n$, and define the operator of left-translation by $L_p(p') = p\circ p'$, then denoting by $dL_p$ the differential of the map \eqref{affine}, a basis for the real Lie algebra of left-invariant differential operators is given by applying $dL_p$ to the standard basis in $\R^{2n+1}$. We thus find
\begin{align*}
& X_j = X_j(p) = dL_p(e_j) = \p_{x_j} - \frac{y_j}{2} \p_t,\ \ \ \ j=1,...,n,
\\
& X_{n+j} = X_{n+j}(p) = dL_p(e_{n+j}) = \p_{y_j} + \frac{x_j}{2} \p_t,\ \ \ \ j=1,...,n,
\\
& T = T(p) = dL_p(e_{2n+1}) = \p_t.
\end{align*}
These vector fields satisfy the commutation relations
\begin{equation}\label{comm}
[X_j,X_{n+k}] = \delta_{jk} T,\ \ \ j, k = 1,....,n,
\end{equation}
all other commutators being trivial (remember that the commutator of two vector fields $X$ and $Y$ is defined by $[X,Y] = XY - YX$). The name \emph{Heisenberg group} comes from the fact that, when $n=1$, then in $\mathbb H^1 = (\R^3,\circ)$ the resulting equation \eqref{comm} represents an abstract version of Heisenberg's canonical commutation relations in quantum mechanics for position and momentum of a relativistic particle, see \cite{Fo}. In fact, much before than mathematicians christened it with such name, the three-dimensional Heisenberg group had long been known to physicist as the \emph{Weyl's group}, and it was identified with the following group of $3\times3$ matrices:
\[
\left(
\begin{array}
[c]{ccc}
~1~ & ~x~   & ~z ~\\
~0~ & ~1~   & ~y ~\\
~0~ & ~0~   & ~1 ~
\end{array}
\right)  ,\text{ \ }x,y,z\in\mathbb{R}.
\]
The Lie algebra is clearly spanned by the matrices
\[
X=\left(
\begin{array}
[c]{ccc}
~0~ & ~1~ & ~0~\\
~0~ & ~0~ & ~0~\\
~0~ & ~0~ & ~0~
\end{array}
\right),\ Y=\left(
\begin{array}
[c]{ccc}%
~0~ & ~0~ & ~0~\\
~0~ & ~0~ & ~1~\\
~0~ & ~0~ & ~0~
\end{array}
\right),\  Z=\left(
\begin{array}
[c]{ccc}%
~0~ & ~0~ & ~1~\\
~0~ & ~0~ & ~0~\\
~0~ & ~0~ & ~0~
\end{array}
\right)  ,
\]
for which the following commutation relations hold
\[
\lbrack X,Y]=Z,\text{ }[X,Z]=[Y,Z]=0.
\]

Now, much like the Laplacean in $\Rn$, in the Heisenberg group there is a second order partial differential operator which plays a fundamental role in the analysis of such group. Since according to \eqref{comm} the vector fields $X_1,..., X_{2n}$ generate the whole Lie algebra, it is natural to consider the following operator 
\begin{equation}\label{sl}
\Delta_H = \sum_{j=1}^{2n} X_j^2,
\end{equation}
which is known as the real part of the \emph{Kohn-Spencer sub-Laplacean}. We will simply call it the \emph{sub-Laplacean} on $\Hn$.   
Although it plays in the analysis of $\Hn$ a role quite similar to that played by the standard Laplacean in classical analysis, the differences between these two objects are stunning since the geometry of $\Hn$ is not Riemannian and it is not easy to grasp. 

In the real coordinates $p = (x,y,t)\in \mathbb H^n$, if we indicate $z = (x,y)\in \R^{2n}$, then the operator \eqref{sl} takes the form
\begin{equation}\label{sl2}
\Delta_H = \Delta_z + \frac{|z|^2}{4} \p_{tt} + \p_t \sum_{j=1}^n (x_j \p_{y_j} - y_j \p_{x_j}).
\end{equation}
One remarkable feature of \eqref{sl2} is that this operator fails to be elliptic at every point $p\in \mathbb H^n$. It is in fact an easy exercise to verify that the matrix of the quadratic form associated with \eqref{sl2} has a vanishing eigenvalue. However, since by \eqref{comm} we know that the vector fields $\{X_1,...,X_n,X_{n+1},...,X_{2n}\}$ generate the whole Lie algebra of left-invariant vector fields, then thanks to a celebrated theorem of H\"ormander we know that solutions of $\Delta_H u = 0$ are $C^\infty$, see \cite{Ho} (in fact, they are real-analytic, but that does not follow from H\"ormander's theorem) and also the lecture notes \cite{Ga2}. For an introduction to the Heisenberg group one should see \cite{CDPT}. Second order partial differential equations such as the sub-Laplacean \eqref{sl} on $\Hn$ and the Baouendi operator \eqref{ba} are called \emph{subelliptic}. The reason for this name is that, despite the fact that they may fail to be \emph{elliptic} (at every point, in the case of \eqref{sl}, along a submanifold for \eqref{ba}), they satisfy a so-called a priori subelliptic estimate. But the discussion of this deep aspect would take us too far, and thus we must leave it to the interested reader to possibly further it on his/her own. 

Returning to the Baouendi operator \eqref{ba}, suppose that $n=1$ and $\alpha = 1$, in which case \eqref{ba} becomes
 \begin{equation}\label{baa}
 \mathcal P=\Delta_z+ |z|^{2}\p_{xx}.
\end{equation}
If we now consider \eqref{sl2}, we see that in $\mathbb H^n$ every solution of $\Delta_H u = 0$ which is invariant under the action of the vector field
\[
\Theta = \sum_{j=1}^n (x_j \p_{y_j} - y_j \p_{x_j}),
\]
is a solution of the equation
\begin{equation}\label{slb}
\Delta_z u + \frac{|z|^2}{4} \p_{tt} u = 0,
\end{equation}
and, up to a rescaling factor, this is precisely \eqref{baa}. For instance, in the three-dimensional Heisenberg group every function $u:\mathbb H^1 \to \R$ that has cylindrical symmetry, i.e., $u(z,t) = u^\star(|z|,t)$, is such that $\Theta u = 0$, and vice-versa. Therefore, it solves $\Delta_H u = 0$ if and only if it solves \eqref{slb}.  This explains the connection of the operator of Baouendi  with the sub-Laplacean on the Heisenberg group $\Hn$.

We next explore further the connection between to Baouendi operator \eqref{ba} and $(-\Delta)^s$. With this objective in mind we assume that $m=1$ in \eqref{ba}, so that the resulting operator  in $\Rn_x\times \R_z$ is
\begin{equation}\label{baaa}
\mathcal P_\alpha U = \frac{\p^2 U}{\p z^2} + |z|^{2\alpha}  \Delta_x U.
\end{equation}
As stated in \eqref{bg0} we want $\alpha = \frac{1}{2s} - 1$, and since $0<s<1$, this means that in \eqref{baaa} we must have $- 1/2 <\alpha <\infty$. We have three possibilities:
\begin{itemize}
\item $0<s<1/2$\ \  $\Longrightarrow\ \alpha > 0$ ($\Ba$ is of Baouendi type);
\item $s = 1/2$\ \ \ \ \ \ \  $\Longrightarrow\ \alpha = 0$ ($\Ba$ is the standard Laplacean in $\Rn_x\times \R_z$);
\item $1/2<s<1$\ \    $\Longrightarrow\ - 1/2< \alpha < 0$ ($\Ba$ is not of Baouendi type).
\end{itemize}

\begin{remark}\label{R:alpha}
Although $\Ba$ is not a Baouendi operator when $-1/2<\alpha<0$, all the subsequent discussion covers such case as well. This is true in particular of the results from \cite{Ga} that we are going to use, and which were in that paper obtained under the hypothesis that $\alpha\ge 0$.
\end{remark}

First, we equip $\Rn_x\times \R_z$ with the following non-isotropic dilations
\begin{align}\label{dilB}
 \delta_\lambda(x,z)&=(\lambda^{\alpha+1}x, \lambda z),\ \ \ \ \ \ \lambda>0.
\end{align}
A function $u$ is said $\delta_\la$-homogeneous of degree $\kappa$ if
\[
u(\delta_\la(x,z)) = \la^\kappa u(x,z),\ \ \ \ \ \la>0.
\]
It is straightforward to verify that the partial differential operator $\Ba$ is $\delta_\la$-homogeneous of degree two, i.e.,
\begin{equation*}
 \Ba(\delta_\lambda \circ u)=\lambda^2\delta_\lambda\circ (\Ba u).
\end{equation*}
We note that Lebesgue measure in $\Rn_x\times \R_z$ changes according to the equation
\begin{equation}\label{leb}
 d(\delta_\lambda(x,z)) =\lambda^{(\alpha+1)n+ 1} dx dz,
\end{equation}
which motivates the definition of the \emph{homogeneous dimension} for the number
\begin{align}\label{Qa}
 Q=Q_\alpha=(\alpha+1)n + 1.
\end{align}

In the analysis of \eqref{baaa} the following pseudo-gauge introduced in \cite{Ga} plays an important role
\begin{align}\label{ra}
 \rho_\alpha(x,z) & = \left((\alpha+1)^2 |x|^2 + |z|^{2(\alpha+1)}\right)^{\frac1{2(\alpha+1)}}.
\end{align}
We clearly have
\begin{equation}\label{gaugehom}
\rho_\alpha(\delta_\la(x,z)) = \lambda \rho_\alpha(x,z),
\end{equation}
i.e., the pseudo-gauge is homogeneous of degree one. The pseudo-ball and sphere centered at the origin with radius $r>0$ are respectively defined as
\begin{align}\label{BSr}
 B_{\rho_\alpha}(r)&=\{(x,z)\in \Rn_x\times \R_z \mid \rho_\alpha(x,z)<r\},\ \ \ \ \ S_{\rho_\alpha}(r) = \p B_{\rho_\alpha}(r).
\end{align}
In \cite{Ga} it was proved that, with $Q$ as in \eqref{Qa}, and $C_\alpha>0$ given by
\[
C_\alpha^{-1} = (Q + 2\alpha)(Q-2) \int_{\Rn_x\times \R_z} \frac{|z|^{\alpha} dx dz}{\left[((\alpha +1)^2 |x|^2 + |z|^{\alpha+1} + 1)\right]^{1 +\frac{Q+2\alpha}{2(\alpha+1)}}},
\]
 the function
\begin{equation}\label{Ga}
 \G_\alpha(x,z)=\frac{C_{\alpha}}{\ra(x,z)^{Q-2}}
\end{equation}
is a fundamental solution for $-\Ba$ with singularity at $(0,0)$. Since the operator is invariant with respect to translations along $M = \Rn \times  \{0\}$, from \eqref{Ga} we immediately obtain the fundamental solution for $\Ba$ with singularity at any point of the subspace $M$.

In what follows we indicate with $d_e(x,y) = (|x|^2 + y^2)^{1/2}$
the standard Euclidean distance in $\R^{n+1}$, and to emphasize certain differences we will indicate with $B_e(r) = \{(x,y)\in \R^{n+1}\mid d_e(x,y)<r\}$
the Euclidean ball in $\R^{n+1}$ of radius $r$ centered at the origin. Balls centered at a different point $X = (x,y)$ will be indicated with $B_e(X,r)$. When $X = (x,0)$, with a slight abuse of notation we will write $B_e(x,r)$ instead of $B_e((x,0),r)$. We will use analogous notations for the spheres $S_e(r), S_e(X,r), S_e(x,r)$ in $\R^{n+1}$.  

The function $h(y)$ is that given by \eqref{h} above. We have the following simple yet important fact.

\begin{prop}\label{P:rhod}
Given $a\in (-1,1)$, let $\alpha = \frac{a}{1-a}$. Then, for any $(x,y)\in \us$ we have
\begin{equation}\label{rd}
\rho_\alpha(x,h(|y|)) = h(d_e(x,y)).
\end{equation}
 The equation \eqref{rd} implies in particular that
\begin{equation}\label{balls}
B_{\rho_\alpha}(h(r)) = \Phi(B_e(r)),\ \ \ \ \ \ \ \ r>0,
\end{equation}
where $\Phi(x,y) = (x,h(|y|))$.
\end{prop}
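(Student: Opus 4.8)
The plan is to verify \eqref{rd} by a direct computation using the explicit formula \eqref{h} for $h$ and the definition \eqref{ra} of the pseudo-gauge $\rho_\alpha$, and then to deduce \eqref{balls} as an immediate consequence of \eqref{rd} together with the monotonicity of $h$.

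First I would record the relevant formulas. From \eqref{h} we have $h(t) = \left(\frac{t}{1-a}\right)^{1-a}$ for $t>0$, and the relation $\alpha = \frac{a}{1-a}$, which is equivalent to $a = \frac{\alpha}{\alpha+1}$ and hence $1-a = \frac{1}{\alpha+1}$. Thus $h(t) = \big((\alpha+1)\,t\big)^{\frac{1}{\alpha+1}}$. Now I would plug $(x, h(|y|))$ into \eqref{ra}: since $\rho_\alpha(x,z) = \big((\alpha+1)^2|x|^2 + |z|^{2(\alpha+1)}\big)^{\frac{1}{2(\alpha+1)}}$, and $z = h(|y|) = \big((\alpha+1)|y|\big)^{\frac{1}{\alpha+1}}$, we get $|z|^{2(\alpha+1)} = \big((\alpha+1)|y|\big)^{2} = (\alpha+1)^2 y^2$. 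Therefore
\[
\rho_\alpha(x, h(|y|)) = \Big((\alpha+1)^2 |x|^2 + (\alpha+1)^2 y^2\Big)^{\frac{1}{2(\alpha+1)}} = \Big((\alpha+1)^2 (|x|^2 + y^2)\Big)^{\frac{1}{2(\alpha+1)}} = \big((\alpha+1)\, d_e(x,y)\big)^{\frac{1}{\alpha+1}},
\]
which is exactly $h(d_e(x,y))$. This establishes \eqref{rd}.

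For \eqref{balls}, I would argue as follows. The function $h:(0,\infty)\to(0,\infty)$ is a strictly increasing bijection, as noted after \eqref{h}. Given $(x,y)\in\us$, by definition $\Phi(x,y) = (x, h(|y|))\in B_{\rho_\alpha}(h(r))$ if and only if $\rho_\alpha(x,h(|y|)) < h(r)$, which by \eqref{rd} means $h(d_e(x,y)) < h(r)$, which by strict monotonicity of $h$ is equivalent to $d_e(x,y) < r$, i.e., $(x,y)\in B_e(r)$. Hence $\Phi$ maps $B_e(r)$ bijectively onto $B_{\rho_\alpha}(h(r))$ (one should note $\Phi$ is a bijection of $\us$, being the identity in $x$ and $y\mapsto h(|y|)$ — here one restricts to $y>0$, or uses $|y|$ throughout — and the boundary cases $y=0$ or $d_e=0$ are handled by continuity), which is \eqref{balls}.

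I do not expect any serious obstacle here: the statement is essentially a bookkeeping identity once the explicit forms of $h$ and $\rho_\alpha$ are substituted. The only minor point requiring a little care is the treatment of $|y|$ versus $y$ and the behavior on the subspace $M = \{y=0\}$ (where $z = h(0)$ must be interpreted as $0$, consistently with $h$ extended continuously), but this is routine. The substitution $|z|^{2(\alpha+1)} = (\alpha+1)^2 y^2$ is the crux of the computation, and it works precisely because the exponent $1-a = \frac{1}{\alpha+1}$ in the definition of $h$ is the reciprocal of the exponent $\alpha+1$ appearing in $\rho_\alpha$ — this is exactly the reason the change of variables \eqref{phi}–\eqref{h} was designed the way it was.
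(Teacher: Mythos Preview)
Your proof is correct and is exactly the direct verification the paper has in mind; the proposition is stated there as a ``simple yet important fact'' without an explicit proof, and the computation you carry out --- using $1-a=\tfrac{1}{\alpha+1}$ so that $h(t)=((\alpha+1)t)^{1/(\alpha+1)}$ and hence $h(|y|)^{2(\alpha+1)}=(\alpha+1)^2y^2$ --- is precisely the intended one. The deduction of \eqref{balls} from \eqref{rd} via the strict monotonicity of $h$ is likewise the natural argument.
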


In view of \eqref{lab2}, or Proposition \ref{P:laba}, it is clear that if we consider the function in $\us$ given by
\begin{equation}\label{2g}
\tilde \Gamma(x,y) = (1-a)^a \Gamma_\alpha(x,h(y)),
\end{equation}
then we have $L_a \tilde \Gamma = 0$ in $\us$. Notice that from \eqref{rd}, \eqref{Ga} we have
\begin{equation}\label{tildeG}
\tilde \Gamma(x,y) = \frac{(1-a)^a C_{\alpha}}{\ra(x,h(y))^{Q-2}} = \frac{(1-a)^a C_{\alpha}}{h(d_e(x,y))^{Q-2}} = \frac{(1-a)^{n+2a-1}C_{\alpha}}{d_e(x,y)^{n+a-1}} = \frac{\tilde C_a}{d_e(x,y)^{n+a-1}},
\end{equation}
where in the secondo to the last equality we have used the above expression \eqref{Qa} of the homogeneous dimension associated with the dilations \eqref{dilB} in $\R^{n+1}$. Now, the function $\tilde \Gamma(x,y)$ is precisely the fundamental solution of the Laplacean
\[
\Delta = \Delta_x + D_{yy} + \frac{a}{y} D_y
\]
in the fractional dimension
\begin{equation}\label{tQ}
\tilde Q = n+a+1,
\end{equation}
 found by Caffarelli and Silvestre in formula (2.1) in \cite{CS07}. 
Furthermore, if we keep \eqref{tQ} in mind, we see that the exponent $n+a-1$ in \eqref{tildeG} is nothing but $\tilde Q -2$, whereas $\tilde C_a= C_\alpha (1-a)^{n+2a-1}$.

We close this section with the following result from \cite{GRO} that allows to connect integrals on the pseudo-balls and spheres  $B_{\rho_\alpha}(r)$ and $S_{\rho_\alpha}(r)$ in the space of the variables $(x,z)$, to corresponding integrals on the Euclidean balls and spheres in the variables $(x,y)$.

\begin{prop}\label{P:integrals}
Let $U$ be a continuous function in the space $\R^{n+1}$ with the variables $(x,z)$, even in $z$, and let $\tilde U(x,y) =  U(x,h(|y|))$. Then, we have for every $r>0$
\begin{align}\label{si}
\int_{B_{\rho_\alpha}(h(r))} U(x,z) dx dz =   (1-a)^a \int_{B_e(r)} \tilde U(x,y) |y|^{-a} dx dy,
\end{align}
and also
\begin{equation}\label{si2}
h'(r) \int_{S_{\rho_\alpha}(h(r))} \frac{U(x,z)}{|\nabla \rho_\alpha(x,z)|} d H_n(x,z) = (1-a)^a\int_{S_e(r)} \tilde U(x,y) |y|^{-a} dH_n(x,y).
\end{equation}
\end{prop}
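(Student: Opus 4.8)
The plan is to derive both identities from the change of variables $\Phi(x,y) = (x,h(|y|))$ together with the relation $\rho_\alpha(x,h(|y|)) = h(d_e(x,y))$ established in Proposition \ref{P:rhod}. For the first identity \eqref{si}, I would write the integral over $B_{\rho_\alpha}(h(r))$, which by \eqref{balls} is precisely $\Phi(B_e(r))$ (after reflecting in $z$, using that $U$ is even in $z$ so that we may double both sides and work with $z>0$, $y>0$). Then I would apply the change of variables $z = h(y)$ in the last coordinate only, keeping $x$ fixed. Since $dz = h'(y)\,dy$ and from \eqref{h} one has $h'(y) = (1-a)^a y^{-a}$, the Jacobian factor is exactly $(1-a)^a |y|^{-a}$; substituting and recombining the two halves $z>0$ and $z<0$ (resp. $y>0$ and $y<0$) gives \eqref{si}. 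The key computational input here is just the explicit form of $h$ and the fact that the map $\Phi$ acts as the identity on the $x$-variables, so the Jacobian is one-dimensional.

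For the surface identity \eqref{si2}, I would proceed by the coarea/Federer approach, or equivalently by differentiating \eqref{si} in $r$. The cleanest route is to observe that by the coarea formula applied with the function $\rho_\alpha$ on the $(x,z)$ side and with $d_e$ on the $(x,y)$ side, one has
\[
\frac{d}{d\tau} \int_{B_{\rho_\alpha}(\tau)} U\,dx\,dz = \int_{S_{\rho_\alpha}(\tau)} \frac{U}{|\nabla \rho_\alpha|}\,dH_n,
\]
and similarly for the Euclidean ball with $|\nabla d_e| = 1$. Writing $\tau = h(r)$, so that $d\tau = h'(r)\,dr$, differentiating \eqref{si} in $r$ and using the chain rule on the left-hand side produces exactly the factor $h'(r)$ multiplying the sphere integral over $S_{\rho_\alpha}(h(r))$, while the right-hand side yields $(1-a)^a \int_{S_e(r)} \tilde U |y|^{-a}\,dH_n$. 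One must check that $\tilde U(x,y) = U(x,h(|y|))$ restricted to $S_e(r)$ corresponds under $\Phi$ to $U$ restricted to $S_{\rho_\alpha}(h(r))$, which is immediate from \eqref{rd}.

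The main obstacle I anticipate is bookkeeping the weight and the one-dimensional Jacobian carefully through the coarea formula, since $|\nabla \rho_\alpha|$ is not constant and appears explicitly in \eqref{si2}; one has to be sure that the coarea decomposition on each side is taken with respect to the \emph{same} foliation parameter (namely $\rho_\alpha$-level sets on one side, $d_e$-level sets on the other), with the bridge between the two parameters being precisely $\tau = h(r)$. A secondary point requiring care is the evenness-in-$z$ hypothesis: it is needed so that the change of variables $z = h(y)$, which is only defined for $y>0$ mapping onto $z>0$, can be applied on each half and the two contributions summed without sign issues; without evenness the formula would pick up the wrong combination on the lower half-space. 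Once these two points are handled, the computation is routine and the constants match as stated.
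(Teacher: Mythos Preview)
Your proposal is correct. The paper does not supply its own proof of this proposition; it simply attributes the result to \cite{GRO} and states it. Your argument --- reduce to the half-space via evenness, perform the one-dimensional change of variables $z=h(y)$ with Jacobian $h'(y)=(1-a)^a y^{-a}$ to obtain \eqref{si}, then differentiate in $r$ using the coarea formula on each side (with level-set function $\rho_\alpha$ on the left and $d_e$ on the right, bridged by $\tau=h(r)$) to obtain \eqref{si2} --- is exactly the natural route and goes through cleanly; this is presumably what is done in \cite{GRO} as well.
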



\section{Hypoellipticity of $(-\Delta)^s$}\label{S:how}

One of the most important properties of Laplace equation is its \emph{hypoellipticity}. This means that distributional solutions of $\Delta u = f$ are $C^\infty$ wherever $f$ is. Here is the formal definition, see e.g. \cite{T2}. 

\begin{definition}\label{D:treves}
A linear partial differential operator $P(x,\p_x)$ in an open set $\Om \subset \Rn$ is said to be \emph{hypoelliptic} if, given any open subset $U\subset \Om$ and any distribution $u$ in $U$, $u$ is a $C^\infty$ function in $U$ if this is true of $P(x,\p_x) u$.
\end{definition}

To understand this aspect let us recall a classical fact. Let $\Om\subset \Rn$ be an open set and for a function $u\in C(\Om)$ consider the spherical mean-value $\mathscr M_r u(x)$ defined in \eqref{MA0}.

\begin{prop}\label{P:epd}
Let $\Om\subset \Rn$ be an open set. For $x\in \Om$ let $R>0$ be such that $\overline B(x,R)\subset \Om$. One has:
\begin{itemize}
\item[(i)] if $u\in C^1(\Om)$, then for every $0<r<R$ one has
\[
\frac{\p \mathscr M_r u}{\p r}(x) = \frac{1}{\sigma_{n-1} r^{n-1}} \int_{S(x,r)} \frac{\p u}{\p \nu}(y) d\sigma(y);
\]
\item[(ii)] if $u\in C^2(\Om)$, then for every $0<r<R$ one has
\[
\frac{\p \mathscr M_r u}{\p r}(x) = \frac{1}{\sigma_{n-1} r^{n-1}} \int_{B(x,r)} \Delta u(y) dy.
\]
\end{itemize}
\end{prop}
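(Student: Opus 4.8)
The plan is to prove both parts of Proposition \ref{P:epd} by the standard device of changing variables to the unit sphere, differentiating under the integral sign, and then applying the divergence theorem. First I would write, for $0<r<R$,
\[
\mathscr M_r u(x) = \frac{1}{\sigma_{n-1}} \int_{S(0,1)} u(x+r\omega) d\sigma(\omega),
\]
which is legitimate since $\overline B(x,R)\subset \Om$ guarantees $x+r\omega\in \Om$ for all $\omega\in S(0,1)$ and all $r\le R$. In this form the dependence on $r$ sits inside a smooth integrand on a fixed compact domain, so for $u\in C^1(\Om)$ we may differentiate under the integral:
\[
\frac{\p \mathscr M_r u}{\p r}(x) = \frac{1}{\sigma_{n-1}} \int_{S(0,1)} \langle \nabla u(x+r\omega),\omega\rangle d\sigma(\omega).
\]
Now I would change variables back, $y = x+r\omega$, noting that $\omega = (y-x)/r$ is precisely the outward unit normal $\nu$ to $S(x,r)$ at $y$, and that $d\sigma(\omega) = r^{-(n-1)} d\sigma(y)$. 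This yields
\[
\frac{\p \mathscr M_r u}{\p r}(x) = \frac{1}{\sigma_{n-1} r^{n-1}} \int_{S(x,r)} \langle \nabla u(y),\nu\rangle d\sigma(y) = \frac{1}{\sigma_{n-1} r^{n-1}} \int_{S(x,r)} \frac{\p u}{\p \nu}(y) d\sigma(y),
\]
which is part (i).

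For part (ii), assuming in addition $u\in C^2(\Om)$, I would simply apply the divergence theorem (Gauss-Green) to the vector field $\nabla u$ on the ball $B(x,r)$, whose closure is contained in $\Om$:
\[
\int_{S(x,r)} \frac{\p u}{\p \nu}(y) d\sigma(y) = \int_{S(x,r)} \langle \nabla u(y),\nu\rangle d\sigma(y) = \int_{B(x,r)} \operatorname{div}(\nabla u)(y) dy = \int_{B(x,r)} \Delta u(y) dy.
\]
Combining this with the formula from part (i) gives exactly
\[
\frac{\p \mathscr M_r u}{\p r}(x) = \frac{1}{\sigma_{n-1} r^{n-1}} \int_{B(x,r)} \Delta u(y) dy,
\]
as claimed.

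There is no serious obstacle here; the only points requiring a word of care are the justification of differentiation under the integral sign (handled by the dominated convergence theorem, using continuity of $\nabla u$ on the compact set $\overline{B(x,R)}$ to bound difference quotients uniformly) and the bookkeeping of the Jacobian factor $r^{n-1}$ relating the surface measure on $S(x,r)$ to that on the unit sphere. Both are routine. One could alternatively bypass the change of variables in part (i) by applying the divergence theorem directly on the annulus $B(x,r_2)\setminus B(x,r_1)$ and letting $r_1\to r_2$, but the unit-sphere reduction is cleaner and makes the $C^1$ hypothesis in part (i) transparent.
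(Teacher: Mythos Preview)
Your proof is correct and is the standard argument. The paper does not actually supply a proof of this proposition; it is stated as a classical fact and used immediately thereafter, so there is nothing to compare against.
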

A basic consequence of (ii) is that if $u$ is harmonic in $\Om$, i.e., $u\in C^2(\Om)$ and $\Delta u = 0$ in $\Om$, then for any $x\in \Om$ and any $0<r<\operatorname{dist}(x,\p \Om)$, one has 
\begin{equation}\label{har}
u(x) = \mathscr M_r u(x).
\end{equation}
Now, the fact that a function satisfies the mean-value formula \eqref{har} has truly remarkable consequences. One of them, is the following well-known converse to Gauss' mean value theorem. 

\begin{theorem}[of Ko\"ebe]\label{T:koebe}
Let $u\in C(\Om)$ and suppose that for every $x\in \Om$ and $0<r<\operatorname{dist}(x,\p \Om)$ formula \eqref{har} hold. Then, $u\in C^\infty(\Om)$ and in fact $\Delta u = 0$ in $\Om$.
\end{theorem}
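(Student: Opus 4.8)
The plan is to prove that a continuous function satisfying the mean-value property on all admissible spheres is automatically $C^\infty$ and harmonic, by the classical mollification argument. First I would fix a nonnegative radial mollifier $\vf\in C^\infty_0(B(0,1))$, i.e. $\vf(x) = \psi(|x|)$ with $\int_{\Rn}\vf = 1$, and set $\vf_\e(x) = \e^{-n}\vf(x/\e)$. For $x\in\Om_\e := \{x\in\Om : \operatorname{dist}(x,\p\Om) > \e\}$, I would write out the convolution $(u\star \vf_\e)(x) = \int_{\Rn} u(x-z)\vf_\e(z)\,dz$ in polar coordinates:
\[
(u\star \vf_\e)(x) = \int_0^\e \psi_\e(r)\,r^{n-1}\left(\int_{S(0,1)} u(x - r\omega)\,d\sigma(\omega)\right) dr,
\]
where $\psi_\e(r) = \e^{-n}\psi(r/\e)$. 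The inner integral over the unit sphere is exactly $\sigma_{n-1}\,\mathscr M_r u(x)$, which by hypothesis \eqref{har} equals $\sigma_{n-1}\,u(x)$ for every $0 < r < \operatorname{dist}(x,\p\Om)$, hence in particular for $r\in(0,\e)$ when $x\in\Om_\e$. Substituting this back and using $\int_{\Rn}\vf_\e = 1$, I get $(u\star\vf_\e)(x) = u(x)$ for all $x\in\Om_\e$. Since $u\in L^1_{loc}(\Om)$ (being continuous) and $\vf_\e\in C^\infty_0$, the convolution $u\star\vf_\e$ is $C^\infty$ on $\Om_\e$; letting $\e\to 0$ and using that every point of $\Om$ lies in some $\Om_\e$, we conclude $u\in C^\infty(\Om)$.

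Once smoothness is established, harmonicity follows from Proposition \ref{P:epd}(ii): for $u\in C^2(\Om)$ and $x\in\Om$, $0<r<\operatorname{dist}(x,\p\Om)$, we have
\[
\frac{\p \mathscr M_r u}{\p r}(x) = \frac{1}{\sigma_{n-1} r^{n-1}} \int_{B(x,r)} \Delta u(y)\,dy.
\]
But \eqref{har} says $\mathscr M_r u(x) = u(x)$ for all such $r$, so the left-hand side vanishes identically in $r$, whence $\int_{B(x,r)} \Delta u(y)\,dy = 0$ for every small $r$. Dividing by $\omega_n r^n$ and letting $r\to 0^+$, the Lebesgue differentiation theorem (or simply continuity of $\Delta u$) gives $\Delta u(x) = 0$. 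Since $x\in\Om$ was arbitrary, $\Delta u = 0$ in $\Om$.

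The only mildly delicate point — and the one I would be most careful about — is the interchange of the radial integration against the mollifier with the use of the mean-value identity: one must check that the radial variable $r$ in the convolution genuinely ranges only over $(0,\e)$, so that \eqref{har} is applicable for every value of $r$ that occurs, which forces the restriction to $x\in\Om_\e$ and the use of a mollifier supported in the \emph{open} unit ball. Everything else is routine: the radial mollifier must be chosen so that $\vf(z)$ depends only on $|z|$ in order for the polar-coordinate computation to factor through $\mathscr M_r u$. An alternative to the second paragraph, avoiding Proposition \ref{P:epd}, would be to observe that $u\star\vf_\e = u$ for two different mollifiers yields, after subtracting, that $\Delta(u\star\vf_\e) = u\star\Delta\vf_\e$ must be consistent with the mean-value property; but invoking Proposition \ref{P:epd}(ii) directly is cleaner and is already available in the excerpt.
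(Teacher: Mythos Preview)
Your proof is correct and follows essentially the same approach as the paper: mollify with a radial $C^\infty_0$ kernel, use the mean-value hypothesis in polar coordinates to deduce $u = u\star\vf_\e$ on $\Om_\e$ (hence $u\in C^\infty$), then invoke Proposition~\ref{P:epd}(ii) to conclude $\Delta u = 0$ from the vanishing of $\p_r\mathscr M_r u$. The only cosmetic difference is that the paper allows the mollifier to be supported in $\overline{B(0,1)}$ rather than the open ball, which is harmless since the boundary sphere has measure zero.
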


\begin{proof}
To prove that $u\in C^\infty(\Om)$ it is obviously enough to show that $u\in C^\infty(\Om_\e)$ for every $\e>0$, where $\Om_\e = \{x\in \Om\mid \operatorname{dist}(x,\p \Om)>\e\}$. With this objective in mind let $K$ be a spherically symmetric Friedrichs' mollifier, i.e., $K\in C^\infty_0(\Rn)$, with $\int_{\Rn} K(y) dy = 1$, supp$\ K\subset \overline B(0,1)$, and $K(y) = K^\star(|y|)$, and denote by $K_\e(y) = \e^{-n} K(y/\e)$ the corresponding approximation to the identity. We claim that as consequence of \eqref{har} the following must be true in $\Om_\e$:
\begin{equation}\label{k1}
u = K_\e \star u.
\end{equation}
To verify this claim we use Cavalieri's principle to write for every $x\in \Om_\e$
\begin{align*}
K_\e \star u(x) & = \int_0^\infty \int_{|y| = r} K(y) u(x-y) d\sigma(y) dr = \int_0^\infty K^\star(r)\int_{|y| = r} u(x-y) d\sigma(y) dr
\\
& = \sigma_{n-1} \int_0^\infty K^\star(r) r^{n-1} \mathscr M_r u(x) dr = u(x) \sigma_{n-1} \int_0^\infty K^\star(r) r^{n-1} dr
\\
& = u(x) \int_{\Rn} K(y) dy = u(x),
\end{align*}
where in the second to the last equality we have used the spherical symmetry of $K$ and Cavalieri's principle again.

From \eqref{k1} and a well-known property of the convolution, we conclude that \eqref{har} implies that $u\in C^\infty(\Om_\e)$, and therefore $u\in C^\infty(\Om)$. We pause for a moment to emphasize this remarkable conclusion: \emph{a continuous function which locally satisfies the mean value property \eqref{har} must in fact be infinitely smooth!} Once we know this we can appeal
to (ii) in Proposition \ref{P:epd} above to infer that for every $x\in \Om$ and $0<r<\operatorname{dist}(x,\p \Om)$ we can differentiate at that point $r$ the function $t\to \mathscr M_t u(x)$, and we have
\[
\frac{\p \mathscr M_r u}{\p r}(x) = \frac{1}{\sigma_{n-1} r^{n-1}} \int_{B(x,r)} \Delta u(y) dy.
\]
On the other hand, the constancy of $r\to \mathscr M_r u(x,r)$ that follows from by \eqref{har} implies that $\frac{\p \mathscr M_r u}{\p r}(x) = 0$ for every $0<r<\operatorname{dist}(x,\p \Om)$, and thus in particular we have for every such value of $r$
\[
\frac{1}{\omega_{n} r^{n}} \int_{B(x,r)} \Delta u(y) dy = \frac{n}{\sigma_{n-1} r^{n}} \int_{B(x,r)} \Delta u(y) dy = 0,
\]
where in the first equality we have used the second identity in \eqref{sn1} above. Since $\Delta u\in C(\Om)$, letting $r\to 0^+$ in the above equation we infer that it must be $\Delta u(x) = 0$. By the arbitrariness of $x\in \Om$, we conclude $\Delta u = 0$ in $\Om$.

\end{proof}

From Theorem \ref{T:koebe} and the fact that harmonic functions satisfy \eqref{har}, we immediately obtain the following important result.
 
\begin{corollary}[Smoothing property of $\Delta$]\label{C:hypo}
Let $u\in C^2(\Om)$ be such that $\Delta u = 0$ in $\Om$. Then, $u\in C^\infty(\Om)$.
\end{corollary}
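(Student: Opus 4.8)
The statement to prove is Corollary \ref{C:hypo}: if $u \in C^2(\Om)$ satisfies $\Delta u = 0$ in $\Om$, then $u \in C^\infty(\Om)$. Given the machinery just assembled in the excerpt, this is essentially immediate, and the plan is simply to chain together two facts that are already on the table.

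First I would recall that a $C^2$ harmonic function satisfies the mean-value property. This is precisely the content of part (ii) of Proposition \ref{P:epd}: for $u \in C^2(\Om)$ and $\overline B(x,R) \subset \Om$, one has $\frac{\p \mathscr M_r u}{\p r}(x) = \frac{1}{\sigma_{n-1} r^{n-1}} \int_{B(x,r)} \Delta u(y)\, dy$ for $0 < r < R$. Since $\Delta u \equiv 0$, the right-hand side vanishes, so $r \mapsto \mathscr M_r u(x)$ is constant on $(0,R)$; letting $r \to 0^+$ and using continuity of $u$ to identify the limiting value as $u(x)$, we conclude that \eqref{har} holds, i.e. $u(x) = \mathscr M_r u(x)$ for every $x \in \Om$ and every $0 < r < \operatorname{dist}(x,\p\Om)$.

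Second, I would invoke the theorem of Ko\"ebe, Theorem \ref{T:koebe}, whose hypothesis is exactly that $u \in C(\Om)$ satisfies \eqref{har} for all admissible $x$ and $r$. Its conclusion is that $u \in C^\infty(\Om)$ (and in fact $\Delta u = 0$, which here we already knew). This finishes the proof. So the entire argument is: harmonicity $\Rightarrow$ mean-value property (Prop.\ \ref{P:epd}(ii)) $\Rightarrow$ smoothness (Thm.\ \ref{T:koebe}).

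There is no real obstacle here — the substance was already absorbed into the proof of Theorem \ref{T:koebe}, whose crux was the convolution identity \eqref{k1} $u = K_\e \star u$ obtained from the mean-value property via Cavalieri's principle, yielding $C^\infty$ regularity by the standard smoothing property of mollification. The only thing worth being slightly careful about is that Proposition \ref{P:epd}(ii) as stated requires $u \in C^2$, which we have by hypothesis, so there is no circularity: we use $C^2$ to get the mean-value property, then upgrade to $C^\infty$ via Ko\"ebe. I would write this up in a few lines, emphasizing that the corollary is the expected payoff of the preceding discussion and that it is the template the author will then adapt (via the extension procedure and the mean-value formulas for $(-\Delta)^s$) to prove hypoellipticity of the fractional Laplacean in Theorem \ref{T:fh}.

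\begin{proof}
Let $u \in C^2(\Om)$ with $\Delta u = 0$ in $\Om$. Fix $x \in \Om$ and $R>0$ with $\overline B(x,R) \subset \Om$. By part (ii) of Proposition \ref{P:epd}, for every $0<r<R$ we have
\[
\frac{\p \mathscr M_r u}{\p r}(x) = \frac{1}{\sigma_{n-1} r^{n-1}} \int_{B(x,r)} \Delta u(y)\, dy = 0,
\]
so the function $r \mapsto \mathscr M_r u(x)$ is constant on $(0,R)$. Since $u$ is continuous, $\mathscr M_r u(x) \to u(x)$ as $r \to 0^+$, and therefore $\mathscr M_r u(x) = u(x)$ for all $0<r<R$. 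As $x \in \Om$ and $R < \operatorname{dist}(x,\p\Om)$ were arbitrary, the mean-value identity \eqref{har} holds at every point of $\Om$ for every admissible radius. In particular $u \in C(\Om)$ satisfies the hypotheses of Theorem \ref{T:koebe} (of Ko\"ebe), which yields $u \in C^\infty(\Om)$.
\end{proof}
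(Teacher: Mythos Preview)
Your proof is correct and follows exactly the approach the paper intends: the corollary is stated immediately after the sentence ``From Theorem \ref{T:koebe} and the fact that harmonic functions satisfy \eqref{har}, we immediately obtain the following important result,'' and the paper gives no further argument. Your write-up simply makes explicit the two steps (Proposition \ref{P:epd}(ii) $\Rightarrow$ \eqref{har}, then Theorem \ref{T:koebe} $\Rightarrow$ $C^\infty$) that the paper leaves implicit.
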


Hypoellipticity means that the conclusion of Corollary \ref{C:hypo} continues to be true if we replace the hypothesis that $u\in C^2(\Om)$ and $\Delta u = 0$, with the much weaker assumption that $u$ be harmonic in the distributional sense, i.e., $u\in \mathscr D'(\Om)$ and $\Delta u = 0$ in $\mathscr D'(\Om)$. 
Historically, this result is known as \emph{Weyl's lemma}, after the famous 1940 paper by H. Weyl \cite{Weyl}. Although less known, R. Caccioppoli in \cite{C} had already established such result for $n=2$ in a more general form in 1937, and subsequently G. Cimmino extended Caccioppoli's theorem to all elliptic operators with smooth coefficients in the plane, see \cite{Ci1}, \cite{Ci2}. Since their results preceded Weyl's paper, it should be called the \emph{Caccioppoli-Cimmino-Weyl lemma}.
 
After this prelude on the Laplacean, we return to the main focus on this note and ask the natural question: \emph{how smooth are solutions of $(-\Delta)^s u = 0$}? The  answer to this question is that the nonlocal operator $(-\Delta)^s$ behaves much like the standard Laplacean, and thus \emph{distributional solutions of $(-\Delta)^s u = 0$ are $C^\infty$}. This is contained in Theorem \ref{T:hypo} below, whose proof however relies on important results from the theory of pseudo-differential operators. Since the declared intent of these notes is didactic and being self-contained, we next present a somewhat less general result whose proof has the advantage of being conceptually much simpler, and closer in spirit to the opening discussion on the Laplacean. It also keeps up with the spirit of Section \ref{S:flso} of connecting $(-\Delta)^s$ to the class of degenerate elliptic operators such as $\Ba$ and $L_a$ via the extension procedure.

With this comment in mind we return to the Baouendi operator $\Ba$ in \eqref{baaa} in the space $\Rn_x\times \R_z$, and recall some results from \cite{Ga}. For any $\alpha\ge 0$ we define the $\alpha$-gradient of a function $U$ that lives in an open set of such space as follows
\begin{equation*}
 \nabla_\alpha U= \left(|z|^\alpha \nabla_x U, D_z U\right).
\end{equation*}
Given two functions $U$ and $V$ we set
\[
\langle\na U,\na V\rangle =  |z|^{2\alpha} \langle\nabla_x U,\nabla_x V\rangle + D_z U D_z V.
\]
The square of the length of $\nabla_\alpha U$ is
\begin{equation}\label{carre}
 |\nabla_\alpha U|^2= |z|^{2\alpha} |\nabla_x U|^2 + (D_z U)^2.
\end{equation}
The following lemma, collects the identities (2.12)-(2.14) in \cite{Ga}.

\begin{lemma}\label{l:gradalpharho}
Let $\ra$ be the pseudo-gauge in \eqref{ra} above. One has in $\R^{n+1}\setminus\{0\}$,
 \begin{equation}\label{nara}
  \psi_{\alpha} \overset{def}{=} |\na \ra|^2 = \frac{|z|^{2\alpha}}{\ra^{2\alpha}}.
 \end{equation}
Moreover, given a function $u$ one has
 \begin{equation}\label{nara2}
 \langle\na u,\na \ra\rangle = \frac{\Za u}{\ra} \psi_\alpha,
 \end{equation}
 where $Z_\alpha$ is the infinitesimal generator of the dilations \eqref{dilB}, i.e.,
\begin{equation}\label{Za}
 Z_\alpha= (\alpha+1)\sum_{i=1}^n x_i \partial_{x_i} +  z\partial_{z}.
\end{equation}
\end{lemma}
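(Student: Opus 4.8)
\textbf{Proof proposal for Lemma \ref{l:gradalpharho}.}

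The plan is to verify all three identities \eqref{nara}, \eqref{nara2}, \eqref{Za} by direct differentiation of the explicit pseudo-gauge $\ra(x,z) = \big((\alpha+1)^2|x|^2 + |z|^{2(\alpha+1)}\big)^{\frac{1}{2(\alpha+1)}}$, using the definitions \eqref{carre} of $|\na U|^2$ and of the $\alpha$-gradient. Write $W = W(x,z) = (\alpha+1)^2|x|^2 + |z|^{2(\alpha+1)}$, so that $\ra = W^{\frac{1}{2(\alpha+1)}}$ and $W = \ra^{2(\alpha+1)}$. First I would compute the Euclidean partials: $\nabla_x \ra = \frac{1}{2(\alpha+1)} W^{\frac{1}{2(\alpha+1)}-1}\cdot 2(\alpha+1)^2 x = (\alpha+1)\, W^{-\frac{2\alpha+1}{2(\alpha+1)}} x$, and similarly $D_z \ra = \frac{1}{2(\alpha+1)} W^{-\frac{2\alpha+1}{2(\alpha+1)}} \cdot 2(\alpha+1)|z|^{2\alpha}z = W^{-\frac{2\alpha+1}{2(\alpha+1)}} |z|^{2\alpha} z$ (here $z\in\R$, so $|z|^{2(\alpha+1)}$ has derivative $2(\alpha+1)|z|^{2\alpha}z$). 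Substituting $W^{-\frac{2\alpha+1}{2(\alpha+1)}} = \ra^{-(2\alpha+1)}$, these become $\nabla_x \ra = (\alpha+1)\ra^{-(2\alpha+1)} x$ and $D_z\ra = \ra^{-(2\alpha+1)}|z|^{2\alpha}z$.

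Next I would plug these into \eqref{carre} to get $\psi_\alpha = |\na\ra|^2 = |z|^{2\alpha}|\nabla_x\ra|^2 + (D_z\ra)^2 = |z|^{2\alpha}(\alpha+1)^2\ra^{-2(2\alpha+1)}|x|^2 + \ra^{-2(2\alpha+1)}|z|^{4\alpha}z^2$. Factoring out $|z|^{2\alpha}\ra^{-2(2\alpha+1)}$ gives $\psi_\alpha = |z|^{2\alpha}\ra^{-2(2\alpha+1)}\big((\alpha+1)^2|x|^2 + |z|^{2(\alpha+1)}\big) = |z|^{2\alpha}\ra^{-2(2\alpha+1)} W = |z|^{2\alpha}\ra^{-2(2\alpha+1)}\ra^{2(\alpha+1)} = |z|^{2\alpha}\ra^{-2\alpha}$, which is exactly \eqref{nara}. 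For \eqref{nara2}, using the definition of $\langle\na u,\na\ra\rangle = |z|^{2\alpha}\langle\nabla_x u,\nabla_x\ra\rangle + D_z u\, D_z\ra$ and the formulas just obtained, I get $\langle\na u,\na\ra\rangle = |z|^{2\alpha}(\alpha+1)\ra^{-(2\alpha+1)}\langle\nabla_x u, x\rangle + \ra^{-(2\alpha+1)}|z|^{2\alpha} z\, D_z u = \ra^{-(2\alpha+1)}|z|^{2\alpha}\big((\alpha+1)\langle x,\nabla_x u\rangle + z D_z u\big)$. Recognizing the bracket as $\Za u$ from \eqref{Za}, and writing $\ra^{-(2\alpha+1)}|z|^{2\alpha} = \ra^{-1}\cdot|z|^{2\alpha}\ra^{-2\alpha} = \ra^{-1}\psi_\alpha$ by \eqref{nara}, yields $\langle\na u,\na\ra\rangle = \frac{\Za u}{\ra}\psi_\alpha$, which is \eqref{nara2}. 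That $\Za$ is the infinitesimal generator of the dilations \eqref{dilB} follows by differentiating $u(\delta_\la(x,z)) = u(\la^{\alpha+1}x,\la z)$ in $\la$ at $\la=1$.

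The computation is entirely routine; there is no real obstacle, only bookkeeping with the exponents of $W$ (equivalently of $\ra$). The one place to be slightly careful is the $z$-derivative of $|z|^{2(\alpha+1)}$ when $m=1$ (so $z$ is scalar): since $2(\alpha+1) > 1$ for $\alpha \ge 0$, the function $|z|^{2(\alpha+1)}$ is $C^1$ with derivative $2(\alpha+1)|z|^{2\alpha}z$, so all manipulations are justified away from the singular set $\{z = 0\}\cap\{x=0\} = \{0\}$ (indeed, on the larger set $\R^{n+1}\setminus\{0\}$, since $\ra > 0$ there). I would simply record these three verifications and refer to (2.12)--(2.14) in \cite{Ga} for the original source.
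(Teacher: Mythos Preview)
Your computation is correct and is exactly the direct verification one expects. Note that the paper itself does not supply a proof of this lemma; it simply records the identities and refers to (2.12)--(2.14) in \cite{Ga}, so your argument is in fact the natural way to fill in what the paper leaves to the cited source.
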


The next result provides a generalization to the Baouendi operator $\Ba$ of classical representation formulas. It combines Theorem 2.1 and Corollary 2.1 in \cite{Ga}. The pseudo-ball $B_{\rho_\alpha}(r)$ and the pseudo-sphere $S_{\rho_\alpha}(r)$ centered at the origin with radius $r>0$ are those defined in \eqref{BSr} above.

\begin{prop}\label{P:gar}
Let $\alpha \ge 0$ and consider a sufficiently smooth function $U$ in $\Rn_x\times\R_z$. For every $r>0$ one has
\begin{align}\label{garmv}
&\frac{1}{|S_{\rho_\alpha}(r)|_\alpha} \int_{S_{\rho_\alpha}(r)} U(x,z) \frac{\psi_\alpha(x,z)}{|\nabla \rho_\alpha(x,z)|} dH_n(x,z) = U(0,0) 
\\
& + \int_{B_{\rho_\alpha}(r)} \Ba U(x,z) \left[\G_\alpha(x,z) - \frac{C_\alpha}{r^{Q-2}}\right] dx dz,
\notag
\end{align}
where $\G_\alpha$ and $C_\alpha$ are as in \eqref{Ga}. In particular, if $\Ba U = 0$, then we have for every $r>0$
\begin{equation}\label{garmv2}
U(0,0) = \frac{1}{|S_{\rho_\alpha}(r)|_\alpha} \int_{S_{\rho_\alpha}(r)} U(x,z) \frac{\psi_\alpha(x,z)}{|\nabla \rho_\alpha(x,z)|} dH_n(x,z).  
\end{equation}
\end{prop}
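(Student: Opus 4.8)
The plan is to mimic the classical proof of the mean-value property for the Laplacean, adapted to the geometry induced by the anisotropic dilations $\delta_\la$ in \eqref{dilB} and the pseudo-gauge $\ra$ in \eqref{ra}. The key is that $\Ba$ is in divergence form — indeed $\Ba U = \operatorname{div}_{x,z}(A(x,z)\nabla_{x,z} U)$ where $A$ is the block-diagonal matrix with the $n\times n$ block $|z|^{2\alpha} I_n$ and the scalar $1$ in the $z$-slot, so that $\langle A\nabla U,\nabla V\rangle = \langle \na U, \na V\rangle$ in the notation of \eqref{carre} — and that $\G_\alpha$ in \eqref{Ga} is its fundamental solution with pole at the origin. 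So the strategy is a Green-type identity on the pseudo-annulus $B_{\rho_\alpha}(r)\setminus B_{\rho_\alpha}(\e)$, integrating $\G_\alpha \Ba U - U \Ba \G_\alpha$, letting $\e \to 0^+$ to extract the value $U(0,0)$, and identifying the boundary term on $S_{\rho_\alpha}(r)$ using \eqref{nara} and \eqref{nara2}.

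First I would record the weighted Green identity: for sufficiently smooth $U,V$ on a bounded domain $\Omega$ with smooth boundary,
\begin{equation*}
\int_\Omega \left(V\,\Ba U - U\,\Ba V\right)\,dx\,dz = \int_{\p\Omega} \left(V\,\langle A\nabla U,\boldsymbol\nu\rangle - U\,\langle A\nabla V,\boldsymbol\nu\rangle\right) dH_n,
\end{equation*}
with $\boldsymbol\nu$ the Euclidean outer unit normal. I would apply this with $V = \G_\alpha(x,z) - C_\alpha r^{-(Q-2)}$ (which vanishes on the outer boundary $S_{\rho_\alpha}(r)$) and $\Omega = B_{\rho_\alpha}(r)\setminus B_{\rho_\alpha}(\e)$. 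Since $\Ba V = \Ba \G_\alpha = 0$ away from the pole, the volume integral collapses to $\int_{B_{\rho_\alpha}(r)\setminus B_{\rho_\alpha}(\e)} \Ba U \cdot (\G_\alpha - C_\alpha r^{-(Q-2)})\,dx\,dz$, which converges to $\int_{B_{\rho_\alpha}(r)} \Ba U\,[\G_\alpha - C_\alpha r^{-(Q-2)}]\,dx\,dz$ as $\e\to 0^+$ because $\G_\alpha \sim \ra^{-(Q-2)}$ is locally integrable against the measure $dx\,dz$ near $M$ (this is exactly what makes $\G_\alpha$ a genuine fundamental solution). The outer boundary contributes only $-\int_{S_{\rho_\alpha}(r)} U\,\langle A\nabla \G_\alpha,\boldsymbol\nu\rangle\,dH_n$; on the inner sphere, the $U\langle A\nabla V,\boldsymbol\nu\rangle$ term is the one producing $U(0,0)$ in the limit, by the standard fundamental-solution computation together with the normalization of $C_\alpha$, while the $V\langle A\nabla U,\boldsymbol\nu\rangle$ term vanishes as $\e\to 0^+$.

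Next I would convert the boundary integrand $\langle A\nabla\G_\alpha,\boldsymbol\nu\rangle$ on $S_{\rho_\alpha}(r)=\{\ra = r\}$ into the form appearing in \eqref{garmv}. Here $\boldsymbol\nu = \nabla\ra/|\nabla\ra|$, so $\langle A\nabla\G_\alpha,\boldsymbol\nu\rangle = \langle \na\G_\alpha,\na\ra\rangle/|\nabla\ra|$. Using $\G_\alpha = C_\alpha \ra^{-(Q-2)}$, the chain rule gives $\na\G_\alpha = -(Q-2)C_\alpha \ra^{-(Q-1)}\na\ra$ in the $A$-inner-product sense, hence $\langle \na\G_\alpha,\na\ra\rangle = -(Q-2)C_\alpha r^{-(Q-1)}\,\psi_\alpha$ by \eqref{nara}, where $\psi_\alpha = |\na\ra|^2 = |z|^{2\alpha}/\ra^{2\alpha}$. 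Thus the outer boundary term equals $(Q-2)C_\alpha r^{-(Q-1)} \int_{S_{\rho_\alpha}(r)} U\,\psi_\alpha/|\nabla\ra|\,dH_n$, and defining $|S_{\rho_\alpha}(r)|_\alpha = (Q-2)^{-1}C_\alpha^{-1} r^{Q-1}$ (consistently with the normalization $\int_{S_{\rho_\alpha}(1)}\psi_\alpha/|\nabla\ra|\,dH_n = (Q-2)^{-1}C_\alpha^{-1}$, which is forced by homogeneity and the definition of $C_\alpha$) we arrive precisely at \eqref{garmv}. The special case $\Ba U = 0$ gives \eqref{garmv2} immediately.

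The main obstacle I expect is twofold: first, justifying the limit behavior at the pole — one must check that near $M = \Rn\times\{0\}$, where $\ra$ degenerates and the surfaces $S_{\rho_\alpha}(\e)$ are not smooth in the naive sense, the inner-boundary integral genuinely selects $U(0,0)$ with the correct constant; this is cleanest if one first establishes that $\G_\alpha$ is a fundamental solution in the distributional sense (which is precisely what is asserted after \eqref{Ga}, quoting \cite{Ga}) and then reads \eqref{garmv} off as the Riesz-type representation of $U$. Second, one must verify the coarea/Cavalieri bookkeeping that identifies the surface measure factor $\psi_\alpha/|\nabla\ra|$ and the constant $|S_{\rho_\alpha}(r)|_\alpha$ — these are exactly the quantities computed in \cite{Ga}, so I would simply cite Theorem 2.1 and Corollary 2.1 there rather than re-deriving them, and present the above as the conceptual skeleton of why the formula holds.
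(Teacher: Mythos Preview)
Your approach is correct and is precisely the standard Green-identity argument adapted to the degenerate operator $\Ba$; the paper itself does not prove this proposition but simply quotes it from Theorem~2.1 and Corollary~2.1 of \cite{Ga}, which is exactly what you end up proposing to do. Your conceptual skeleton (divergence form of $\Ba$, Green's identity on the pseudo-annulus with $V=\G_\alpha-C_\alpha r^{-(Q-2)}$, the boundary computation via \eqref{nara}, and the pole extraction) is the right outline, and your identification $(Q-2)C_\alpha\,|S_{\rho_\alpha}(r)|_\alpha = r^{Q-1}$ is indeed the compatibility relation that the fundamental-solution normalization forces; it is consistent with the paper's formula $|S_{\rho_\alpha}(r)|_\alpha = Q\omega_\alpha r^{Q-1}$ in \eqref{sigmaalpha}.
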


In the statement of Proposition \ref{P:gar} by slightly abusing the notation we have indicated with
\[
|S_{\rho_\alpha}(r)|_\alpha = \int_{S_{\rho_\alpha}(r)} \frac{\psi_\alpha(x,z)}{|\nabla \rho_\alpha(x,z)|} dH_n(x,z).
\]
If, by a similar abuse of notation, we set 
\[
|B_{\rho_\alpha}(r)|_\alpha = \int_{B_{\rho_\alpha}(r)} \psi_\alpha(x,z) dx dz,
\]
then keeping in mind that from \eqref{nara} we easily see that $\psi_\alpha$ is homogeneous of degree zero with respect to the anisotropic dilations \eqref{dilB}, by a rescaling and \eqref{leb} we obtain
\[
|B_{\rho_\alpha}(r)|_\alpha = \omega_\alpha r^Q,
\]
where 
\[
\omega_\alpha = |B_{\rho_\alpha}(1)|_\alpha =  \int_{B_{\rho_\alpha}(1)} \psi_\alpha(x,z) dx dz,
\]
 and $Q$ is as in \eqref{Qa}. Differentiating this formula and using Federer's coarea formula (aka Bonaventura Cavalieri's principle), see for instance \cite{EG}, we find
\begin{equation}\label{sigmaalpha}
Q \omega_\alpha r^{Q-1} = \frac{d}{dr} \int_{B_\alpha(r)} \psi_\alpha(x,z) dx dz = \int_{S_{\rho_\alpha}(r)} \frac{\psi_\alpha(x,z)}{|\nabla \rho_\alpha(x,z)|} dH_n(x,z) = |S_{\rho_\alpha}(r)|_\alpha.
\end{equation}

We now apply the formula \eqref{si2} in Proposition \ref{P:integrals} to the function $U\psi_\alpha$, instead of just $U$, obtaining 
\begin{align*}
& h'(r) \int_{S_{\rho_\alpha}(h(r))} \frac{U(x,z)\psi_\alpha(x,z)}{|\nabla \rho_\alpha(x,z)|} d H_n(x,z) = (1-a)^a\int_{S_e(r)} \tilde U(x,y) \tilde \psi_\alpha(x,y) |y|^{-a} dH_n(x,y).
\end{align*}
If we observe that \eqref{h} and \eqref{rd} give
\[
\tilde \psi_\alpha(x,y) = \frac{h(|y|)^{2\alpha}}{\ra^{2\alpha}(x,h(|y|))} =  \frac{h(|y|)^{2\alpha}}{h(d_e(x,y))^{2\alpha}} = \frac{h(|y|)^{\frac{2a}{1-a}}}{h(d_e(x,y))^{\frac{2a}{1-a}}} = \frac{|y|^{2a}}{d_e(x,y)^{2a}},
\]
then keeping in mind that $h'(r) = \frac{(1-a)^a}{r^a}$,  we have 
\begin{align}\label{gar3}
&  \int_{S_{\rho_\alpha}(h(r))} \frac{U(x,z)\psi_\alpha(x,z)}{|\nabla \rho_\alpha(x,z)|} d H_n(x,z) = \frac{1}{r^{a}}\int_{S_e(r)} \tilde U(x,y) |y|^{a} dH_n(x,y).
\end{align}
Since by \eqref{sigmaalpha} we have 
\[
|S_{\rho_\alpha}(h(r))|_\alpha =  \frac{Q \omega_\alpha}{(1-a)^n} r^{n},
\]
we conclude from \eqref{gar3} that
\begin{align}\label{gar4}
&  \frac{1}{|S_{\rho_\alpha}(h(r))|}\int_{S_{\rho_\alpha}(h(r))} \frac{U(x,z)\psi_\alpha(x,z)}{|\nabla \rho_\alpha(x,z)|} d H_n(x,z) = \frac{(1-a)^n}{Q \omega_\alpha r^{n+a}} \int_{S_e(r)} \tilde U(x,y) |y|^{a} dH_n(x,y).
\end{align} 
Since when $U\equiv 1$ the left-hand side of \eqref{gar4} is $1$, we obtain 
\begin{equation}\label{coarea}
|S_e(r)|_a \overset{def}{=} \int_{S_e(r)} |y|^{a} dH_n(x,y) = \frac{Q \omega_\alpha}{(1-a)^n} r^{n+a}. 
\end{equation}

Furthermore, if we combine \eqref{si} above with \eqref{labb2}, \eqref{2g} and \eqref{tildeG}, we find
\begin{align}\label{si3}
& \int_{B_{\rho_\alpha}(h(r))} \Ba U(x,z) \left[\G_\alpha(x,z) - \frac{C_\alpha}{h(r)^{Q-2}}\right] dx dz 
\\
& =  (1-a)^a \int_{B_e(r)} L_a \tilde U(x,y)\left[\G_\alpha(x,h(y)) - \frac{(1-a)^{n+a-1}C_\alpha}{r^{n+a-1}}\right]  dx dy,
\notag
\end{align}

Combining \eqref{gar3}, \eqref{si3} with \eqref{garmv2} above, and using \eqref{sigmaalpha} and the translation invariance of either $L_a$ or $\Ba$ in the $x$-variable, we obtain the following.

\begin{prop}\label{P:gar3}
Let $\Tilde U$ be a sufficiently regular function in $\Rn_x\times \R_y$. Then, for every $x\in \Rn$ and  $r>0$ we have
\begin{align}\label{garmv3}
\tilde U(x,0) & + \int_{B_e(r)} L_a \tilde U(x-x',y)\left[\tilde \G(x',y) - \frac{\tilde C_a}{r^{n+a-1}}\right]  dx' dy
\\ 
& = \frac{(1-a)^n}{Q \omega_\alpha r^{n+a}} \int_{S_e(r)} \tilde U(x-x',y) |y|^{a} dH_n(x',y).  
\notag
\end{align}
\end{prop}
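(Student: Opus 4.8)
The plan is to obtain Proposition \ref{P:gar3} essentially by pushing forward the Baouendi representation formula \eqref{garmv} of Proposition \ref{P:gar} through the change of variables $\Phi(x,y) = (x,h(|y|))$, using the dictionary already assembled in Sections \ref{S:flso} and \ref{S:how}. First I would take the formula \eqref{garmv} from Proposition \ref{P:gar}, written at the origin $(0,0)$ and for radius $h(r)$ in place of $r$:
\[
\frac{1}{|S_{\rho_\alpha}(h(r))|_\alpha} \int_{S_{\rho_\alpha}(h(r))} U(x,z) \frac{\psi_\alpha(x,z)}{|\nabla \rho_\alpha(x,z)|} dH_n(x,z) = U(0,0) + \int_{B_{\rho_\alpha}(h(r))} \Ba U(x,z) \left[\G_\alpha(x,z) - \frac{C_\alpha}{h(r)^{Q-2}}\right] dx dz.
\]
I would then translate each of the three terms. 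For the left-hand side I invoke \eqref{gar3}, which was derived just above by applying \eqref{si2} of Proposition \ref{P:integrals} to $U\psi_\alpha$ together with the explicit computation $\tilde\psi_\alpha(x,y) = |y|^{2a}/d_e(x,y)^{2a}$ and $h'(r) = (1-a)^a r^{-a}$; combined with $|S_{\rho_\alpha}(h(r))|_\alpha = \frac{Q\omega_\alpha}{(1-a)^n} r^n$ from \eqref{sigmaalpha} this gives exactly the normalized surface average $\frac{(1-a)^n}{Q\omega_\alpha r^{n+a}} \int_{S_e(r)} \tilde U(x,y)|y|^a dH_n(x,y)$. The term $U(0,0)$ becomes $\tilde U(0,0) = \tilde U(x,0)$ after using the $x$-translation invariance of both $\Ba$ and $L_a$ — this is where I recenter at an arbitrary $x\in\Rn$ rather than the origin. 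For the volume term I use \eqref{si3}, which converts $\int_{B_{\rho_\alpha}(h(r))} \Ba U \,[\G_\alpha - C_\alpha h(r)^{-(Q-2)}]$ into $(1-a)^a \int_{B_e(r)} L_a\tilde U \,[\G_\alpha(x,h(y)) - (1-a)^{n+a-1}C_\alpha r^{-(n+a-1)}]$, and then I recognize via \eqref{2g} and \eqref{tildeG} that $(1-a)^a \G_\alpha(x,h(y)) = \tilde\G(x,y)$ and $(1-a)^a(1-a)^{n+a-1}C_\alpha = (1-a)^{n+2a-1}C_\alpha = \tilde C_a$, so that the volume term takes precisely the shape $\int_{B_e(r)} L_a\tilde U(x-x',y)[\tilde\G(x',y) - \tilde C_a r^{-(n+a-1)}] dx' dy$.

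Assembling these three translated pieces and moving the volume term to the left yields \eqref{garmv3}, which is the assertion of Proposition \ref{P:gar3}. The bookkeeping to check is purely that the constants match up: the factor $(1-a)^a$ produced by \eqref{si2}/\eqref{si} on the surface side cancels against the $(1-a)^a$ appearing in $|S_{\rho_\alpha}(h(r))|_\alpha$ through the ratio, and on the volume side the $(1-a)^a$ from \eqref{si} is absorbed into the definitions of $\tilde\G$ and $\tilde C_a$. Since $\alpha = \frac{a}{1-a}$ and $Q = (\alpha+1)n+1$ give $Q = \frac{n}{1-a}+1$, one also verifies $\frac{n}{1-a} = Q-1$ and $n+a-1 = \tilde Q - 2$ with $\tilde Q = n+a+1$, so the exponent of $r$ in the kernel correction is the expected $\tilde Q - 2$; I would note this in passing but it is not strictly needed for the statement as written.

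The main obstacle — more a point requiring care than a genuine difficulty — is justifying that Proposition \ref{P:gar} may be applied to the function $U(x,z) = \tilde U(x,h^{-1}(z))$ extended evenly in $z$ across $\{z=0\}$, i.e. that this $U$ is ``sufficiently smooth'' in the sense required there. One must check that the even reflection of $\tilde U$ (assumed smooth up to $\{y=0\}$ with, as usual, vanishing weighted normal derivative in the cases where that is needed) produces a function to which the divergence-theorem computations behind \eqref{garmv} apply across the degeneracy locus $M = \Rn\times\{0\}$; this is exactly the kind of reflection argument used throughout the extension literature and in Section \ref{S:pk}, and Proposition \ref{P:integrals} is already stated for continuous $U$ even in $z$, so the hypotheses are compatible. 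Beyond that, the proof is a direct substitution, and I would present it as ``combining \eqref{gar3}, \eqref{si3} with \eqref{garmv2}, using \eqref{sigmaalpha} and the translation invariance of $L_a$ (equivalently $\Ba$) in $x$'' — which is precisely the one-line justification already anticipated in the text preceding the statement.
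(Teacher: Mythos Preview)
Your proposal is correct and follows exactly the approach taken in the paper: the paper's entire proof is the single sentence ``Combining \eqref{gar3}, \eqref{si3} with \eqref{garmv2} above, and using \eqref{sigmaalpha} and the translation invariance of either $L_a$ or $\Ba$ in the $x$-variable, we obtain the following,'' and you have simply unpacked each of these ingredients in the right order. Your bookkeeping on the constants (absorbing the $(1-a)^a$ factors into $\tilde\Gamma$ and $\tilde C_a$ via \eqref{2g}--\eqref{tildeG}) is precisely what is implicit in the paper's derivation of \eqref{si3}.
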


\begin{remark}\label{R:alla}
We note that, since we have used Proposition \ref{P:gar}, and since $a = \frac{\alpha}{\alpha+1}$, strictly speaking we have proved Proposition \ref{P:gar3} only for the range $0\le a<1$. However, it is easy to recognize that \eqref{garmv3} does in fact hold also in the range $-1<a <0$. 
\end{remark}

If we now define the $L_a$-spherical average of a function $\tilde U$ as
\begin{equation}\label{aave}
\mathscr M_{\tilde U,a}(x,r) =  \frac{1}{|S_e(r)|_a } \int_{S_e(r)} \tilde U(x-x',y) |y|^a dH_n(x',y),
\end{equation}
then we can reformulate \eqref{garmv3} as follows
\begin{align}\label{garmv33}
\mathscr M_{\tilde U,a}(x,r) = \tilde U(x,0)  + \int_{B_e(r)} L_a \tilde U(x-x',y)\left[\tilde \G(x',y) - \frac{\tilde C_a}{r^{n+a-1}}\right]  dx' dy.
\end{align} 

Differentiating \eqref{garmv33} with respect to $r$ produces the following interesting consequence.

\begin{prop}\label{P:der}
Let $\Tilde U$ be a sufficiently regular function in $\Rn_x\times \R_y$. Then, for every $x\in \Rn$ and  $r>0$ we have
\begin{equation}\label{deraave}
\frac{\p \mathscr M_{\tilde U,a}}{\p r}(x,r) = \frac{(n+a-1)\tilde C_a}{r^{n+a}} \int_{B_e(r)} L_a \tilde U(x-x',y) dx' dy.
\end{equation}
\end{prop}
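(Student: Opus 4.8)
The statement to be proved is \eqref{deraave}, obtained by differentiating the representation formula \eqref{garmv33} in $r$. The plan is to treat the two terms on the right-hand side of \eqref{garmv33} separately. The first term, $\tilde U(x,0)$, does not depend on $r$, so it contributes nothing to $\frac{\p}{\p r}$. The whole content is therefore in differentiating the second term
\[
I(r) := \int_{B_e(r)} L_a \tilde U(x-x',y)\left[\tilde \G(x',y) - \frac{\tilde C_a}{r^{n+a-1}}\right]  dx' dy.
\]

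First I would split $I(r)$ as $I(r) = I_1(r) - \dfrac{\tilde C_a}{r^{n+a-1}} I_2(r)$, where
\[
I_1(r) = \int_{B_e(r)} L_a \tilde U(x-x',y)\, \tilde \G(x',y)\, dx' dy,\qquad
I_2(r) = \int_{B_e(r)} L_a \tilde U(x-x',y)\, dx' dy.
\]
For $I_1(r)$ I would use the coarea formula (Federer/Cavalieri, as in the derivation of \eqref{sigmaalpha}) to write $\frac{d}{dr} I_1(r)$ as the surface integral $\int_{S_e(r)} L_a \tilde U(x-x',y)\,\tilde \G(x',y)\, dH_n(x',y)$. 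Now on $S_e(r)$ one has $d_e(x',y) = r$, and by \eqref{tildeG} the fundamental solution $\tilde \G(x',y) = \tilde C_a\, d_e(x',y)^{-(n+a-1)}$ is constant there, equal to $\tilde C_a r^{-(n+a-1)}$. Hence
\[
\frac{d}{dr} I_1(r) = \frac{\tilde C_a}{r^{n+a-1}} \int_{S_e(r)} L_a \tilde U(x-x',y)\, dH_n(x',y) = \frac{\tilde C_a}{r^{n+a-1}}\, I_2'(r),
\]
again by coarea applied to $I_2$. For the second piece, the product rule gives
\[
\frac{d}{dr}\!\left(\frac{\tilde C_a}{r^{n+a-1}} I_2(r)\right) = - \frac{(n+a-1)\tilde C_a}{r^{n+a}} I_2(r) + \frac{\tilde C_a}{r^{n+a-1}} I_2'(r).
\]
Subtracting, the two $\frac{\tilde C_a}{r^{n+a-1}} I_2'(r)$ terms cancel exactly, and one is left with $\frac{d}{dr} I(r) = \frac{(n+a-1)\tilde C_a}{r^{n+a}} I_2(r)$, which is precisely \eqref{deraave}.

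The only genuine point requiring care is the justification of differentiation under the integral sign and of the coarea step, i.e. that $\tilde U$ is regular enough — $C^2$ suffices so that $L_a \tilde U$ is continuous — and that the singularity of $\tilde \G$ at the origin (of order $d_e^{-(n+a-1)}$, which is locally integrable since $n+a-1<n$ for $a<1$) does not obstruct the coarea argument; one can handle this by an elementary approximation, integrating over $B_e(r)\setminus B_e(\e)$ and letting $\e\to 0^+$, noting that the boundary contribution from $S_e(\e)$ tends to $0$. I do not expect this to be a serious obstacle — it is the standard bookkeeping behind differentiating a Newtonian-type potential — but it is the one place where the argument uses more than formal manipulation.
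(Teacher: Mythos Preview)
Your argument is correct and is exactly the computation the paper has in mind when it says ``Differentiating \eqref{garmv33} with respect to $r$ produces the following interesting consequence.'' The key observation --- that the bracket $\tilde \G(x',y) - \tilde C_a\, r^{-(n+a-1)}$ vanishes on $S_e(r)$ because $\tilde \G(x',y) = \tilde C_a\, d_e(x',y)^{-(n+a-1)}$ by \eqref{tildeG} --- is what makes the boundary contribution from the moving domain disappear, and you have identified it cleanly. One cosmetic slip: the local integrability of $\tilde\G$ near the origin holds because $n+a-1 < n+1$ (the ambient dimension is $n+1$, not $n$), which is automatic for all $a<2$; your stated inequality $n+a-1<n$ is stronger than needed but of course still true here.
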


We note explicitly that, since $-1<a<1$ and $n\ge 2$, we have $n+a-1>0$.
 
\begin{corollary}\label{C:der}
Let $\tilde U$ be as in Proposition \ref{P:der}. If either $L_a \tilde U = 0$, or $\tilde U \ge 0$ and $L_a \tilde U\ge 0$, one has for every $x\in \Rn$ and  $r>0$
\begin{equation}\label{deraave2}
\frac{\p \mathscr \mathscr M_{\tilde U^2,a}}{\p r}(x,r) \ge \frac{(n+a-1)\tilde C_a}{r^{n+a}} \int_{B_e(x,r)} |\nabla \tilde U(x',y)|^2 |y|^a dx' dy.
\end{equation}
Equality holds in \eqref{deraave2} when $L_a \tilde U = 0$, and in either case the function $r\to \mathscr M_{\tilde U^2,a}(x,r)$ is monotonically increasing.
\end{corollary}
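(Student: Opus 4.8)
The plan is to prove Corollary~\ref{C:der} by applying Proposition~\ref{P:der} in the two stated scenarios, with the crucial additional input being the chain-rule-type identity $L_a(\tilde U^2) = 2 \tilde U L_a \tilde U + 2 |y|^a |\nabla \tilde U|^2$ (where $|\nabla \tilde U|^2$ denotes the full Euclidean gradient squared). This identity is the degenerate-elliptic analogue of $\Delta(u^2) = 2u\Delta u + 2|\nabla u|^2$, and it follows by a direct computation from the divergence form $L_a \tilde U = \operatorname{div}_{x,y}(|y|^a \nabla_{x,y}\tilde U)$: expanding $\operatorname{div}(|y|^a \nabla(\tilde U^2)) = \operatorname{div}(2\tilde U |y|^a \nabla \tilde U) = 2\tilde U \operatorname{div}(|y|^a \nabla \tilde U) + 2 |y|^a \langle \nabla \tilde U, \nabla \tilde U\rangle$, which is exactly the claimed formula.

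First I would record that identity, then substitute $\tilde U^2$ in place of $\tilde U$ in formula \eqref{deraave} of Proposition~\ref{P:der}, which gives
\[
\frac{\p \mathscr M_{\tilde U^2,a}}{\p r}(x,r) = \frac{(n+a-1)\tilde C_a}{r^{n+a}} \int_{B_e(r)} \left[2 \tilde U L_a \tilde U + 2 |y|^a |\nabla \tilde U|^2\right](x-x',y)\, dx'\, dy.
\]
(One should be mildly careful about whether $B_e(r)$ is centered at the origin or at $x$; by the translation invariance in the $x$-variable used throughout this section, and after the change of variable $x'\mapsto x-x'$, this reduces to an integral over $B_e(x,r)$, matching the statement of \eqref{deraave2}.) In the case $L_a \tilde U = 0$ the first term in the integrand vanishes identically, so one obtains equality in \eqref{deraave2}:
\[
\frac{\p \mathscr M_{\tilde U^2,a}}{\p r}(x,r) = \frac{2(n+a-1)\tilde C_a}{r^{n+a}} \int_{B_e(x,r)} |\nabla \tilde U(x',y)|^2 |y|^a\, dx'\, dy.
\]
In the case $\tilde U \ge 0$ and $L_a \tilde U \ge 0$, the term $2\tilde U L_a \tilde U$ is nonnegative (here the sign of $\tilde C_a$ and of $n+a-1$ matters — both are positive, the latter because $n\ge 2$ and $a>-1$, as noted right before the corollary, and $\tilde C_a = C_\alpha(1-a)^{n+2a-1}>0$), so dropping it yields the inequality \eqref{deraave2}. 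In both cases the right-hand side is manifestly $\ge 0$, which shows $r\mapsto \mathscr M_{\tilde U^2,a}(x,r)$ is monotonically increasing.

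I do not anticipate a serious obstacle here; the only points requiring care are (a) verifying the chain-rule identity cleanly in divergence form rather than the nondivergence form $y^{-a}L_a = \Delta_x + \p_{yy} + \frac ay \p_y$, where the computation is slightly messier but equivalent, and (b) keeping track of the factor $2$ and the centering of the ball so that the constant in \eqref{deraave2} matches exactly. A secondary subtlety worth a remark is the regularity needed to differentiate $\mathscr M_{\tilde U^2,a}$ under the integral sign and to integrate by parts — this is covered by the phrase ``sufficiently regular'' in the hypotheses, and in the application to hypoellipticity one first establishes enough smoothness of solutions of $L_a \tilde U = 0$ via the extension/Bessel machinery before invoking this corollary. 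The heart of the argument is genuinely just the one-line identity $L_a(\tilde U^2) = 2\tilde U L_a \tilde U + 2|y|^a|\nabla \tilde U|^2$ fed into Proposition~\ref{P:der}.
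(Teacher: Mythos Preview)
Your proposal is correct and follows exactly the same approach as the paper: apply \eqref{deraave} to $\tilde U^2$ and use the identity $L_a(\tilde U^2) = 2\tilde U L_a \tilde U + 2|y|^a|\nabla \tilde U|^2$, then drop (or kill) the cross term according to the hypothesis. Your observation about the factor $2$ is well taken---the paper's own computation also produces it, so the ``equality'' claim in \eqref{deraave2} should really be read with that extra factor of $2$ on the right-hand side.
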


\begin{proof}
It suffices to apply \eqref{deraave} to the function $\tilde U^2$ and observe that
\[
L_a \tilde U^2 = 2 \tilde U L_a \tilde U + 2 |y|^a |\nabla \tilde U|^2.
\]
If either $L_a \tilde U = 0$, or $\tilde U \ge 0$ and $L_a \tilde U\ge 0$, one has
\[
L_a \tilde U^2 \ge 2 |y|^a |\nabla \tilde U|^2,
\]
with equality when $L_a \tilde U = 0$.

\end{proof}

We will use the following inequality in the proof of Theorem \ref{T:sucpfl}
 below.

\begin{prop}[Caccioppoli inequality]\label{P:cacc}
Let $\Om\subset \Rn$ be open, and suppose that either $\tilde U$ be a solution to $L_a \tilde U = 0$ in the open set $\Om\times (-d,d) \subset \Rn\times \R$, where $d = \operatorname{diam}(\Om)$, or $\tilde U\ge 0$ and $L_a \tilde U \ge 0$ there. Then, there exists $C(n,a)>0$ such that for every $x\in \Om$ and every $0<s<t<\operatorname{dist}(x,\p \Om)$, one has
\begin{equation}\label{cacc}
\int_{B_e(x,s)} |\nabla \tilde U(x',y)|^2 |y|^a dx' dy \le \frac{C(n,a)}{t-s} \int_{S_e(x,t)} \tilde U(x',y)^2 |y|^a dH_n(x',y).
\end{equation}
\end{prop}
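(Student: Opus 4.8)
The plan is to derive the Caccioppoli inequality \eqref{cacc} directly from the monotonicity-type identity in Corollary \ref{C:der}, using the classical cutoff-function trick adapted to the degenerate weight $|y|^a$. The starting point is the integration-by-parts (energy) identity: for $\tilde U$ a solution of $L_a \tilde U = 0$ (or a nonnegative subsolution) in $\Om\times(-d,d)$ and for any Lipschitz cutoff $\eta$ with compact support in $B_e(x,t)$, one has
\[
\int |\nabla \tilde U|^2 \eta^2 |y|^a\, dx'dy \le C \int |\nabla \tilde U|\,|\nabla \eta|\, \eta\, |\tilde U|\, |y|^a\, dx'dy,
\]
obtained by testing the equation $\operatorname{div}(|y|^a \nabla \tilde U)=0$ (or $\ge 0$, in which case one also uses $\tilde U\ge0$ to control the sign of the extra term $\int \eta^2 |y|^a \tilde U L_a\tilde U$) against $\eta^2 \tilde U$. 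Applying Cauchy–Schwarz (Young's inequality) to the right-hand side and absorbing the resulting $\int |\nabla\tilde U|^2\eta^2|y|^a$ term into the left-hand side yields
\[
\int |\nabla \tilde U|^2 \eta^2 |y|^a\, dx'dy \le C \int |\nabla \eta|^2\, \tilde U^2\, |y|^a\, dx'dy.
\]

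Next I would make the standard choice of $\eta$: take $\eta \equiv 1$ on $B_e(x,s)$, $\eta \equiv 0$ outside $B_e(x,t)$, and $|\nabla \eta| \le \frac{C}{t-s}$ in the annulus $B_e(x,t)\setminus B_e(x,s)$. This gives
\[
\int_{B_e(x,s)} |\nabla \tilde U|^2 |y|^a\, dx'dy \le \frac{C(n,a)}{(t-s)^2} \int_{B_e(x,t)\setminus B_e(x,s)} \tilde U^2 |y|^a\, dx'dy.
\]
To convert the solid integral on the right into the surface integral over $S_e(x,t)$ that appears in \eqref{cacc}, I would invoke Corollary \ref{C:der}: the function $r\mapsto \mathscr M_{\tilde U^2,a}(x,r)$ is monotonically increasing, and by \eqref{coarea} we have $|S_e(r)|_a = \frac{Q\omega_\alpha}{(1-a)^n} r^{n+a}$, so that (using Cavalieri's/Federer's coarea formula, exactly as in \eqref{sigmaalpha})
\[
\int_{B_e(x,t)} \tilde U^2 |y|^a\, dx'dy = \int_0^t \left(\int_{S_e(x,\tau)} \tilde U^2 |y|^a\, dH_n\right) d\tau = \frac{Q\omega_\alpha}{(1-a)^n}\int_0^t \tau^{n+a}\, \mathscr M_{\tilde U^2,a}(x,\tau)\, d\tau.
\]
By monotonicity $\mathscr M_{\tilde U^2,a}(x,\tau) \le \mathscr M_{\tilde U^2,a}(x,t)$ for $\tau\le t$, hence the solid integral over $B_e(x,t)$ is bounded by $C\, t\, \cdot\, t^{n+a}\, \mathscr M_{\tilde U^2,a}(x,t) = C\, t\, \int_{S_e(x,t)} \tilde U^2 |y|^a\, dH_n$. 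Combining with the cutoff estimate and noting that on the relevant scales $t/(t-s)^2 \le C/(t-s)$ up to harmless rescaling (or, more carefully, one first proves the estimate with $(t-s)^2$ and $t$ bounded, then rescales), one arrives at the stated form $\frac{C(n,a)}{t-s}\int_{S_e(x,t)}\tilde U^2|y|^a\,dH_n$.

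The main obstacle is bookkeeping the degenerate weight correctly through each step: one must check that $|y|^a$ is genuinely an $A_2$-weight (which is used implicitly so that $\tilde U$ and $\nabla\tilde U$ lie in the appropriate weighted $L^2$ spaces and the integration by parts is legitimate), and that the cutoff argument does not pick up boundary terms on $\{y=0\}$ — this is fine because $L_a$ is written in divergence form $\operatorname{div}(|y|^a\nabla)$ and solutions are understood in the weighted Sobolev (weak) sense of Fabes–Kenig–Serapioni \cite{FKS}, so no separate treatment of the singular hyperplane is needed. A secondary subtlety is the subsolution case: there the term $2\int \eta^2 |y|^a \tilde U L_a\tilde U$ has the right sign (nonnegative) precisely because $\tilde U\ge 0$, which is why that hypothesis appears in the statement; one simply drops this term after noting it is $\ge 0$ when moving it to the other side, exactly as in the proof of Corollary \ref{C:der}.
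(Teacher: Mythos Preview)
Your cutoff approach is the standard Fabes--Kenig--Serapioni energy method and is genuinely different from the paper's proof, which never introduces a cutoff. Instead, the paper divides the inequality \eqref{deraave2} of Corollary~\ref{C:der} by $r$, integrates over $r\in(s,t)$, and then integrates the resulting right-hand side $\int_s^t r^{-1}\partial_r\mathscr M_{\tilde U^2,a}\,dr$ by parts, using the monotonicity of $r\mapsto\mathscr M_{\tilde U^2,a}(x,r)$ to control the integral term; the left-hand side is handled by the trivial monotonicity of $r\mapsto\int_{B_e(x,r)}|\nabla\tilde U|^2|y|^a$. This produces the factor $1/(t-s)$ directly, with no cutoff and no passage through a solid integral. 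The paper itself flags (Remark~\ref{R:cacc}) that this argument is self-contained and yields the spherical right-hand side immediately, whereas your route is the more robust one for rough-coefficient equations but needs the extra monotonicity input to upgrade the solid integral to a spherical one.

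There is, however, a real gap in your final step: the factor $t/(t-s)^2$ is \emph{not} reducible to $C/(t-s)$ by rescaling. Both expressions are invariant under the natural dilations $(x',y)\mapsto(\lambda x',\lambda y)$ of the problem, so no rescaling can convert one into the other, and your parenthetical fix does not work. The repair is simple though: after the cutoff step you already have the \emph{annular} integral $\int_{B_e(x,t)\setminus B_e(x,s)}\tilde U^2|y|^a$ on the right, not the full ball. By the coarea formula this equals $\int_s^t\big(\int_{S_e(x,\tau)}\tilde U^2|y|^a\,dH_n\big)\,d\tau$, and since both $\tau\mapsto|S_e(\tau)|_a$ and $\tau\mapsto\mathscr M_{\tilde U^2,a}(x,\tau)$ are nondecreasing, the integrand is bounded by $\int_{S_e(x,t)}\tilde U^2|y|^a\,dH_n$, giving a factor $(t-s)$ rather than $t$. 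Combined with your $(t-s)^{-2}$ this yields exactly the stated $(t-s)^{-1}$. So bound the annulus, not the ball.
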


\begin{proof}
Integrating \eqref{deraave} in $r$ on the interval $(s,t)$ we find
\begin{align*}
& (n+a-1)\tilde C_a \int_s^t \frac{1}{r^{n+a+1}} \left(\int_{B_e(x,r)} |\nabla \tilde U(x',y)|^2 |y|^a dx' dy\right) dr
\le \int_s^t \frac 1r \frac{\p \mathscr M_{\tilde U^2,a}}{\p r}(x,r) dr.
\end{align*}
Since we trivially have
\[
\int_s^t \frac{1}{r^{n+a+1}} \left(\int_{B_e(x,r)} |\nabla \tilde U(x',y)|^2 |y|^a dx' dy\right) dr \ge \int_s^t \frac{1}{r^{n+a+1}} dr \left(\int_{B_e(x,s)} |\nabla \tilde U(x',y)|^2 |y|^a dx' dy\right),
\]
and since integrating and applying the mean value theorem we find
\[
\int_s^t \frac{1}{r^{n+a+1}} dr \ge \frac{t-s}{s t^{n+a}},
\]
we obtain
\[
(n+a-1)\tilde C_a \frac{t-s}{s t^{n+a}} \int_{B_e(x,s)} |\nabla \tilde U(x',y)|^2 |y|^a dx' dy \le \int_s^t \frac 1r \frac{\p \mathscr M_{\tilde U^2,a}}{\p r}(x,r) dr.
\]
On the other hand, an integration by parts gives
\begin{align*}
& \int_s^t \frac 1r \frac{\p \mathscr M_{\tilde U^2,a}}{\p r}(x,r) dr = \frac 1t \mathscr M_{\tilde U^2,a}(x,t) - \frac 1s \mathscr M_{\tilde U^2,a}(x,s) + \int_s^t \frac 1{r^2} \mathscr M_{\tilde U^2,a}(x,r) dr
\\
& \le \frac 1t \mathscr M_{\tilde U^2,a}(x,t) - \frac 1s \mathscr M_{\tilde U^2,a}(x,s) + \mathscr M_{\tilde U^2,a}(x,t) \left(\frac 1s - \frac 1t\right) = \frac 1s \left(\mathscr M_{\tilde U^2,a}(x,t) - \mathscr M_{\tilde U^2,a}(x,s)\right),
\end{align*}
where the inequality is justified by the monotonicity of the function $r\to \mathscr M_{\tilde U^2,a}(x,r)$, see the second part of Corollary \ref{C:der}.
In conclusion, we have found
\[
(n+a-1)\tilde C_a \frac{t-s}{s t^{n+a}} \int_{B_e(x,s)} |\nabla \tilde U(x',y)|^2 |y|^a dx' dy \le \frac 1s \left(\mathscr M_{\tilde U^2,a}(x,t) - \mathscr M_{\tilde U^2,a}(x,s)\right).
\]
Keeping \eqref{coarea} and \eqref{aave} in mind, it is clear that this inequality trivially implies the desired conclusion \eqref{cacc}.

\end{proof}

\begin{remark}\label{R:cacc}
The above proof of Proposition \ref{P:cacc} is self-contained and provides a slightly stronger version of the usual Caccioppoli inequality since in the right-hand side one has the spherical $L^2$ norm of the function, instead of the solid one. Such proof needs to be modified when dealing with equations with rough coefficients. The reader can find the Caccioppoli inequality for more general degenerate elliptic equations in the paper \cite{FKS}.
\end{remark}

Suppose now that $\vf\in C_0^\infty(\Rn_x\times \R_y)$ is a spherically symmetric bump function, i.e., $\vf(X) = \vf^\star(|X|)$, such that, say, supp$\ \vf^\star \subset [1/4,3/4]$, and 
\[
\int_{\Rn_x\times \R_y} \vf(X) |y|^a dX = 1.
\]
Using the coarea formula  and \eqref{coarea} we can reformulate this latter assumption as follows
\[
1 = \int_0^\infty \vf^\star(r) \int_{S_e(r)} |y|^{a} dH_n(x,y) dr = \frac{Q \omega_\alpha}{(1-a)^n} \int_0^\infty \vf^\star(r) r^{n+a} dr.
\]
Now, if we multiply both sides of \eqref{garmv3} by $\vf^\star(r) r^{n+a}$, and we integrate with respect to $r\in (0,\infty)$, keeping in mind that
\[
\int_{\Rn\times \R} \tilde U(x-x',y)|y|^a dx'dy = \int_0^\infty \int_{S_e(r)} \tilde U(x-x',y) |y|^{a} dH_n(x',y) dr,
\]
we obtain the following result.

\begin{prop}\label{P:gar4}
Let $\Tilde U$ be a solution to $L_a \tilde U = 0$ in $\Rn\times \R$. Then, for every $x\in \Rn$ we have
\begin{equation}\label{garmv333}
\tilde U(x,0) =  \int_{\Rn\times \R} \tilde U(x',y) \vf(x-x',y) |y|^{a} dx' dy.  
\end{equation}
If instead $\Om\subset \Rn$ is an open set, suppose that $\tilde U$ be a solution to $L_a \tilde U = 0$ in the open set $\Om\times (-d,d) \subset \Rn\times \R$, where $d = \operatorname{diam}(\Om)$. Then, for every $x\in \Om$ and every $0<r<\operatorname{dist}(x,\p \Om)$, one has
\begin{equation}\label{garmv34}
\tilde U(x,0) =  \int_{\Rn\times \R} \tilde U(x',y) \vf_r(x-x',y) |y|^{a} dx' dy,  
\end{equation}
where we have let $\vf_r(X) = \frac{1}{r^{n+a+1}}\vf(\frac{X}{r})$.
\end{prop}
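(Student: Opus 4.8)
The plan is to derive Proposition~\ref{P:gar4} from Proposition~\ref{P:gar3} (equation \eqref{garmv3}) by an averaging argument against the radial weight $\vf^\star(r) r^{n+a}$, exactly in the spirit of the passage from a mean-value identity on spheres to one against a mollifier (compare the proof of Theorem~\ref{T:koebe}). The key point is that, under the hypothesis $L_a \tilde U = 0$, the bracketed term $\left[\tilde\G(x',y) - \frac{\tilde C_a}{r^{n+a-1}}\right]$ in \eqref{garmv3} gets multiplied by $L_a \tilde U \equiv 0$ and thus disappears, leaving a clean spherical mean identity to integrate.

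First I would record the normalization: for a spherically symmetric $\vf(X) = \vf^\star(|X|)$ with $\operatorname{supp} \vf^\star \subset [1/4,3/4]$, the coarea formula together with \eqref{coarea} gives
\[
\int_{\Rn_x\times \R_y} \vf(X) |y|^a dX = \int_0^\infty \vf^\star(r) |S_e(r)|_a\, dr = \frac{Q \omega_\alpha}{(1-a)^n} \int_0^\infty \vf^\star(r) r^{n+a}\, dr,
\]
so the assumption $\int \vf(X)|y|^a dX = 1$ is equivalent to $\frac{Q\omega_\alpha}{(1-a)^n}\int_0^\infty \vf^\star(r) r^{n+a} dr = 1$. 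Next, since $\tilde U$ solves $L_a \tilde U = 0$ in all of $\Rn\times\R$, \eqref{garmv3} reduces for every $x$ and every $r>0$ to
\[
\tilde U(x,0) = \frac{(1-a)^n}{Q\omega_\alpha r^{n+a}} \int_{S_e(r)} \tilde U(x-x',y)|y|^a dH_n(x',y).
\]
Multiplying both sides by $\vf^\star(r) r^{n+a}$ and integrating in $r\in(0,\infty)$, the left side becomes $\tilde U(x,0)\cdot\frac{Q\omega_\alpha}{(1-a)^n}\int_0^\infty \vf^\star(r)r^{n+a}dr = \tilde U(x,0)$ by the normalization, while the right side becomes $\frac{(1-a)^n}{Q\omega_\alpha}\cdot\frac{Q\omega_\alpha}{(1-a)^n}\int_0^\infty \vf^\star(r)\int_{S_e(r)}\tilde U(x-x',y)|y|^a dH_n dr$; collapsing the radial integral back via coarea (and using that $\vf(x-x',y) = \vf^\star(r)$ on $S_e(r)$) yields precisely \eqref{garmv333}. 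This uses Fubini to swap the $r$-integral with the surface integral, which is legitimate once $\tilde U$ has enough regularity and integrability — harmless here since $\vf$ is compactly supported.

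For the local statement \eqref{garmv34}, I would repeat the argument with $\vf$ replaced by $\vf_r(X) = r^{-(n+a+1)}\vf(X/r)$; a change of variables shows $\vf_r$ is again a probability density with respect to $|y|^a dX$ and is supported in the annulus $\{|X|\in [r/4, 3r/4]\}$. Since $\operatorname{supp}\vf_r \subset \overline{B_e(r)}$ and $0<r<\operatorname{dist}(x,\p\Om)$, only values of $\tilde U$ on $\Om\times(-d,d)$ — where $L_a\tilde U = 0$ holds — enter, so the same computation (applying \eqref{garmv3} for radii in $[r/4, 3r/4]$ only) goes through verbatim and gives \eqref{garmv34}. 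The only genuine subtlety, and the step I expect to need the most care, is checking that Proposition~\ref{P:gar3} (hence \eqref{garmv3}) is applicable for \emph{all} the radii appearing in the support of $\vf$ or $\vf_r$ — i.e. that the ``sufficiently regular'' hypothesis is met and that the reflected/extended function is a genuine solution across $\{y=0\}$ on the relevant balls; once that is granted, everything else is bookkeeping with the coarea formula and the explicit constants $Q$, $\omega_\alpha$, $\tilde C_a$.
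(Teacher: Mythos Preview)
Your proposal is correct and follows exactly the approach the paper uses: the paper's proof is precisely the one-line sketch ``multiply both sides of \eqref{garmv3} by $\vf^\star(r)r^{n+a}$, integrate in $r$, and use the coarea identity $\int_{\Rn\times\R}\tilde U(x-x',y)|y|^a\,dx'dy=\int_0^\infty\int_{S_e(r)}\tilde U(x-x',y)|y|^a\,dH_n\,dr$,'' which you have fleshed out accurately (modulo a harmless slip: on $S_e(r)$ one has $\vf(x',y)=\vf^\star(r)$, not $\vf(x-x',y)$, and a final change of variables $x'\mapsto x-x'$ gives \eqref{garmv333}). The local version \eqref{garmv34} via the rescaled bump $\vf_r$ is likewise exactly what the paper intends.
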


The reader should be aware that, while in keeping up with the spirit of the present section and of the previous one we have derived Proposition \ref{P:gar4} from Proposition \ref{P:gar}, the mean value formulas \eqref{garmv33}, \eqref{garmv34} were independently obtained by a direct computation in Theorem 1 of the paper \cite{ABG}. In Theorem 2  of the same paper, combining \eqref{garmv3} with Theorem \ref{T:cs} above, the authors established an interesting representation formula for solutions of $(-\Delta)^s$ that can be seen as the nonlocal counterpart of formula \eqref{k1} in Theorem \ref{T:koebe} above. Before we state such formula in Theorem \ref{T:abg2} below, we introduce a useful lemma.

\begin{lemma}\label{L:abg}
Let $\vf(X) = \vf^\star(|X|)$ be a $C^\infty_0(\R^{n+1})$ function as in Proposition \ref{P:gar4}, and, with $P_s$ as in \eqref{Pfinal} above, and $a = 1-2s$, define
\begin{equation}\label{Fi}
\Phi(x)  = \int_{\Rn\times \R} \vf(x-x',y) P_s(x',|y|) |y|^a dx' dy. 
\end{equation}
Then, one has
\begin{itemize}
\item[(i)] $\Phi$ is spherically symmetric, i.e., there exists a function $\Phi^\star:[0,\infty)\to \R$ such that $\Phi(x) = \Phi^\star(|x|)$.
\item[(ii)] $\Phi\in C^\infty(\Rn)$, and for every multi-index $\alpha$ such that $|\alpha| = k$, there exists $C(n,s,k) >0$ such that one has for every $x\in \Rn$
\[
|\p^\alpha \Phi(x)| \le \frac{C(n,s,k)}{(1+|x|)^{n+1-a}}.
\] 
\end{itemize}
\end{lemma}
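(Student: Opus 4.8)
The plan is to establish the two conclusions separately, working directly from the integral representation \eqref{Fi} of $\Phi$.

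For part (i), the spherical symmetry, the key observation is that $\vf$ is spherically symmetric (by hypothesis $\vf(X) = \vf^\star(|X|)$) and that $X \to P_s(x',|y|)|y|^a$ is $\mathbb O(n+1)$-invariant in an appropriate sense — more precisely, $P_s(x,y)$ as given in \eqref{Pfinal} depends only on $|x|$ and $y$, and the integrand is even in $y$, so it is invariant under rotations in the $x$-variables. First I would fix $T \in \mathbb O(n)$ and compute $\Phi(Tx)$ by the change of variables $x' \mapsto T x'$ in the integral \eqref{Fi}; using $|T(x-x') - \text{(shift)}| = |x - T^t(\cdot)|$ and the rotation-invariance of $\vf^\star$, together with the fact that $P_s(Tx',|y|) = P_s(x',|y|)$ since $|Tx'| = |x'|$, one finds $\Phi(Tx) = \Phi(x)$. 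This is exactly the same computation as in the proof of Lemma \ref{L:sym} above, so it is routine.

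For part (ii), the smoothness and decay estimate, I would proceed as follows. Smoothness of $\Phi$ is immediate: $\vf \in C^\infty_0(\R^{n+1})$, so differentiating under the integral sign in \eqref{Fi} is justified (the $y$-integration is over a compact set in $y$ once we note $\vf$ has compact support, and $P_s(x',|y|)|y|^a$ is locally integrable — indeed, by Remark \ref{R:poissoncione}, $P_s(\cdot,y) \in L^1(\Rn)$ with unit mass for each $y>0$), and $\p^\alpha \Phi(x) = \int (\p^\alpha_x \vf)(x-x',y) P_s(x',|y|)|y|^a \, dx' dy$. For the decay, the main point is to exploit that $\vf$ is supported in the annular region $\{1/4 \le |X| \le 3/4\}$, so in the integral defining $\p^\alpha\Phi(x)$ the variable $(x',y)$ is constrained to $|(x-x',y)| \in [1/4,3/4]$, i.e. $x'$ ranges over a ball of radius $\le 3/4$ centered near $x$ and $|y| \le 3/4$. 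On that region $|x'| \ge |x| - 3/4$, so for $|x|$ large $P_s(x',|y|) \le C \frac{|y|^{2s}}{|x|^{n+2s}}$ from \eqref{Pfinal}. Then
\[
|\p^\alpha \Phi(x)| \le \frac{C}{|x|^{n+2s}} \int_{|(x-x',y)|\le 3/4} |(\p^\alpha_x\vf)(x-x',y)| \, |y|^{2s+a} \, dx' dy,
\]
and since $2s + a = 2s + (1-2s) = 1$, the remaining integral is a finite constant $C(n,s,k)$ depending only on $\|\vf\|_{C^k}$ and the support. Since $n+2s = n+1-a$, and for bounded $|x|$ the quantity $|\p^\alpha\Phi|$ is trivially bounded, combining the two regimes gives the stated bound $|\p^\alpha\Phi(x)| \le C(n,s,k)(1+|x|)^{-(n+1-a)}$.

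The main obstacle, such as it is, is bookkeeping with the support constraint: one must be careful that the support condition $\text{supp}\,\vf^\star \subset [1/4,3/4]$ in the variable $|(x-x',y)|$ correctly forces $|x'|$ to be comparable to $|x|$ when $|x|$ is large (say $|x| \ge 2$), so that the singularity of $P_s$ at $x'=0$ is never encountered and the estimate $|x'| \ge |x|/2$ (say) holds on the relevant region. Once that geometric fact is pinned down, everything reduces to the elementary identity $2s+a=1$ and the finiteness of $\int |\p^\alpha \vf| |y| \, dX$. I do not expect any genuine difficulty here; the lemma is a preparatory technical statement whose proof is a short computation once the observations above are assembled.
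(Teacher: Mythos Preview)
Your proposal is correct and follows essentially the same route as the paper: part (i) via rotation invariance of both $\vf$ and $P_s$ (the paper phrases it as ``convolution of two spherically symmetric functions is spherically symmetric''), and part (ii) by differentiating under the integral, using the support constraint to force $|x'|\gtrsim |x|$ when $|x|$ is large, and the identity $2s+a=1$. One small remark: for the bounded-$|x|$ case the paper does an explicit change of variables $z=x'/y$ to verify finiteness of $\int_{|y|\le 1}\int_{|x'|\le 3}\frac{|y|}{(y^2+|x'|^2)^{(n+1-a)/2}}\,dx'\,dy$, whereas your appeal to $\|P_s(\cdot,y)\|_{L^1(\Rn)}=1$ followed by $\int_{|y|\le 1}|y|^a\,dy<\infty$ (since $a>-1$) is a cleaner way to reach the same conclusion.
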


\begin{proof}
(i). Since $z\to \vf(z,y)$ and $z\to P_s(z,y)$ are spherically symmetric functions, and since the convolution of two spherically symmetric functions is spherically symmetric, it is clear that there exists a function $\Phi^\star:[0,\infty)\to \R$ such that $\Phi(x) = \Phi^\star(|x|)$. 

(ii) Observe that, from the definition \eqref{Pfinal} and the fact that $a=1-2s$, we can alternatively write the Poisson kernel for $L_a$ as
\[
P_a(x,y) = \frac{\G(\frac{n+1-a}{2})}{\pi^{\frac n2}\G(\frac{1-a}{2})} \frac{|y|^{1-a}}{(y^2 + |x|^2)^{\frac{n+1-a}{2}}}
 = c(n,a) \frac{|y|^{1-a}}{(y^2 + |x|^2)^{\frac{n+1-a}{2}}}.
\]
We thus have
\[
\Phi(x) = c(n,a) \int_{\R}\int_{\Rn} \vf(x-x',y) \frac{|y|}{(y^2 + |x'|^2)^{\frac{n+1-a}{2}}}  dx' dy.
\]
Differentiating under the integral sign in the definition of $\Phi(x)$, and keeping in mind that supp\ $\vf\subset \overline B_e(0,1)$, for every $\alpha\in (\mathbb N \cup\{0\})^n$ such that $|\alpha|=k$ we have
 \[
|\p^\alpha \Phi(x)| \le C(n,a,k) \int_{|y|\le1}\int_{|x'-x|\le 1} |\p^\alpha \vf(x-x',y)| \frac{|y|}{(y^2 + |x'|^2)^{\frac{n+1-a}{2}}} dx' dy.
\]
If now $|x|>2$, then when $|x'-x|\le 1$ we have $|x'| = |x - (x-x')|\ge |x|-|x'-x| \ge |x|-1\ge |x|/2$. Therefore, when $|x|>2$ we have
\begin{align*}
|\p^\alpha \Phi(x)| &  \le \frac{C(n,a,k)}{|x|^{n+1-a}}.
\end{align*}
On the other hand, when $|x|\le 2$, then the triangle inequality gives $B(x,1)\subset B(0,3)$, and therefore
\begin{align*}
|\p^\alpha \Phi(x)| & \le C(n,a,k) \int_{|y|\le1}\int_{|x'|\le 3} \frac{|y|}{(y^2 + |x'|^2)^{\frac{n+1-a}{2}}} dx' dy
\\
& \le C(n,a,k) \int_{|y|\le 1} |y|^a \int_{|z|\le \frac{3}{y}} \frac{dz}{(1 + |z|^2)^{\frac{n+1-a}{2}}} dy 
\\
& \le C(n,a,k) \int_{|y|\le 1} |y|^a dy \int_{\Rn} \frac{dz}{(1 + |z|^2)^{\frac{n+1-a}{2}}} = \overline{C}(n,a,k) < \infty,
\end{align*}
since $|a|<1$. These estimates prove (ii).

\end{proof}

\begin{theorem}[\cite{ABG}]\label{T:abg2}
Let $u\in \mathscr L_s(\Rn)$ be such that $(-\Delta)^s u = 0$ in $\mathscr D'(\Om)$  for a given open set $\Om\subset \Rn$. Then, for a.e. $x\in \Om$ and $0<r<\operatorname{dist}(x,\p \Om)$, one has
\begin{equation}\label{mvffl}
u(x) = \Phi_r \star u(x) = \int_{\Rn} u(x') \Phi_r(x-x') dx',
\end{equation}
where $\Phi$ is the function in \eqref{Fi}, and we have let $\Phi_r(x) = r^{-n} \Phi(x/r)$. 
\end{theorem}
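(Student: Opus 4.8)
The idea is to combine the mean-value property \eqref{garmv34} for solutions of the extension operator $L_a$ with the construction of the extension $U$ via the Poisson kernel $P_s$ given in Theorem \ref{T:cs}. First I would reduce to the case in which $u$ is smooth near the region of interest. Since $u\in\mathscr L_s(\Rn)$ and $(-\Delta)^s u=0$ in $\mathscr D'(\Om)$, by the hypoellipticity of $(-\Delta)^s$ (see Theorem \ref{T:hypo} / the Caccioppoli–Cimmino–Weyl type result in Theorem \ref{T:fh} below) one knows $u\in C^\infty$ in $\Om$; in any case the statement is claimed only for a.e.\ $x\in\Om$, so after mollification and a limiting argument it suffices to prove \eqref{mvffl} pointwise for $u$ of class $C_{loc}^{2s+\e}\cap\mathscr L_s(\Rn)$ with $(-\Delta)^s u=0$ classically in a neighborhood of $\overline B(x_0,r)$. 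Fix such an $x_0$ and $0<r<\operatorname{dist}(x_0,\p\Om)$, and choose a slightly larger radius $R$ with $\overline B(x_0,R)\subset\Om$.

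\textbf{Key steps.} Let $U(x,y)=P_s(\cdot,|y|)\star u(x)=\int_{\Rn}P_s(x-z,|y|)u(z)\,dz$ be the (even in $y$) extension of $u$ built in Theorem \ref{T:cs}, so that $L_aU=0$ in $\Rn\times(\R\setminus\{0\})$ and $U(\cdot,0)=u$. The trace relation \eqref{dtn} together with the hypothesis $(-\Delta)^s u=0$ in a neighborhood of $\overline B(x_0,R)$ gives $\lim_{y\to0^+}y^a\p_yU(x,y)=0$ for $x$ in that neighborhood; this is exactly the condition that allows the even reflection of $U$ across $\{y=0\}$ to be a genuine (weak) solution of $L_aU=0$ in the full cylinder $B(x_0,R)\times(-d,d)$ for $d<R$ (this is the ``after some work!'' point already flagged in Section \ref{S:pk}; it follows from the $A_2$-weighted elliptic theory of \cite{FKS}, or can be checked directly by testing against $y^a$-weighted test functions using $\lim y^a\p_yU=0$). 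Now apply the mean-value identity \eqref{garmv34} of Proposition \ref{P:gar4} to this reflected $\tilde U=U$: for $x$ near $x_0$ and $0<\rho<\operatorname{dist}(x,\p B(x_0,R))$,
\[
u(x)=U(x,0)=\int_{\Rn\times\R}U(x',y)\,\vf_\rho(x-x',y)\,|y|^a\,dx'\,dy,
\]
with $\vf_\rho(X)=\rho^{-(n+a+1)}\vf(X/\rho)$ and $\vf$ the spherically symmetric bump function normalized as in Proposition \ref{P:gar4}. Next I substitute the Poisson representation $U(x',y)=\int_{\Rn}P_s(x'-z,|y|)u(z)\,dz$ into the right-hand side and interchange the order of integration (justified by Fubini, using $u\in\mathscr L_s(\Rn)$, the fast decay and compact support of $\vf$, and the estimate $\int_{\Rn\times\R}\vf_\rho(x-x',y)P_s(x'-z,|y|)|y|^a\,dx'dy\le C(1+|x-z|)^{-(n+1-a)}$ which comes from part (ii) of Lemma \ref{L:abg}, guaranteeing absolute convergence against $u\in\mathscr L_s$). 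This yields
\[
u(x)=\int_{\Rn}u(z)\left(\int_{\Rn\times\R}\vf_\rho(x-x',y)\,P_s(x'-z,|y|)\,|y|^a\,dx'\,dy\right)dz
=\int_{\Rn}u(z)\,\Phi_\rho(x-z)\,dz,
\]
where $\Phi$ is precisely the kernel defined in \eqref{Fi} and $\Phi_\rho(x)=\rho^{-n}\Phi(x/\rho)$; the homogeneity bookkeeping $\rho^{-(n+a+1)}\cdot\rho^{\,a+1}$ (the latter factor from rescaling the $y$-variable inside $P_s$, which is homogeneous of degree $0$ jointly but picks up the measure factor) collapses the two scalings into the single $\rho^{-n}$ appearing in $\Phi_\rho$. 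Renaming $\rho$ as $r$ (valid since $x_0$ and any $r<\operatorname{dist}(x_0,\p\Om)$ were arbitrary, by shrinking $R\downarrow r$) gives \eqref{mvffl}.

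\textbf{Main obstacle.} The routine parts are the Fubini interchange and the scaling computation identifying the convolved kernel with $\Phi_r$; the genuinely delicate step is the even-reflection argument, i.e.\ passing from ``$U$ solves $L_aU=0$ in $\Rn\times(\R\setminus\{0\})$ and $\lim_{y\to0^+}y^a\p_yU=0$ on $B(x_0,R)$'' to ``the even extension of $U$ is a weak solution of $L_aU=0$ across $\{y=0\}$ in $B(x_0,R)\times(-d,d)$,'' since only then is Proposition \ref{P:gar4} (which is stated for solutions in a full neighborhood, including the hyperplane $\{y=0\}$) applicable. I would handle this exactly as is done implicitly in Section \ref{S:pk}: the vanishing weighted Neumann data means that for any test function $\eta\in C_0^\infty(B(x_0,R)\times(-d,d))$ the boundary term $\int_{\Rn}\eta\,(y^a\p_yU)\big|_{y=0}$ in the integration by parts vanishes, so the weak formulation $\int y^a\langle\nabla U,\nabla\eta\rangle=0$ holds across $\{y=0\}$; combined with the local boundedness and the fact that $\omega(X)=|y|^a$ is an $A_2$ weight (Section \ref{S:pk}, via \cite{FKS}), this suffices. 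The remaining care is purely to keep the domains of validity consistent ($r<\rho<R$ with a little room to spare) and to make sure the a.e.\ reduction to smooth $u$ is clean, both of which are standard.
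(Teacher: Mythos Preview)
Your proposal is correct and follows essentially the same route as the paper: build the even extension $\tilde U(x,y)=P_s(\cdot,|y|)\star u(x)$, use the vanishing weighted Neumann data from $(-\Delta)^s u=0$ to infer that $\tilde U$ is a weak solution of $L_a\tilde U=0$ across $\{y=0\}$ (the paper invokes Lemma~4.1 of \cite{CS07} and the \cite{FKS} Harnack inequality here, where you sketch the weak-formulation argument directly), apply the mean-value identity \eqref{garmv34}, swap integrals via Fubini, and identify the resulting kernel with $\Phi_r$ by a scaling computation. One caution: your appeal to Theorem~\ref{T:fh} for the preliminary smoothness reduction would be circular, since that theorem is proved \emph{using} the present result; either rely solely on Theorem~\ref{T:hypo} (which is independent), or---as the paper does and as you yourself note---dispense with the reduction and work directly with the a.e.\ formulation.
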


\begin{proof}
 Consider the extension problem \eqref{ext2} above with Dirichlet datum $u$, and denote by 
 \[
U(x,y) = P_s(\cdot,y)\star u(x)  = \int_{\Rn} P_s(x-x',y) u(x') dx'
\]
 its solution in $\Rn\times [0,\infty)$. Consider the symmetric extension of $U$ to the whole $\Rn\times \R$ defined by 
 \[
 \tilde U(x,y) = U(x,|y|) = \int_{\Rn} P_s(x-x',|y|) u(x') dx'. 
 \]
 By the Dirichlet-to-Neumann condition \eqref{dn} we know that at a.e. $x\in \Om$ we have
\begin{equation}\label{dn2}
- \frac{2^{2s-1} \G(s)}{\G(1-s)} \underset{y\to 0^+}{\lim} y^{1-2s} \frac{\p U}{\p y}(x,y) = (-\Delta)^s u(x) = 0.
\end{equation}
We can thus invoke Lemma 4.1 in \cite{CS07} to infer that $\tilde U$ is a weak solution to the equation $L_a \tilde U = 0$  in $D = \Om\times (-d,d)$, where $d = \operatorname{diam}(\Om)$. By the Harnack inequality for weak solutions in \cite{FKS} we can redefine $\tilde U$ on a set of $(n+1)$-dimensional measure zero in $D$ so that it be locally H\"older continuous in $D$. Applying \eqref{garmv34} in Proposition \ref {P:gar4} we find for every $x\in \Om$ and $0<r<\operatorname{dist}(x,\p\Om)$
\begin{align*}
u(x) = \tilde U(x,0) & =  \int_{\Rn\times \R} \tilde U(x',y) \vf_r(x-x',y) |y|^{a} dx' dy
\\
& = \int_{\Rn\times \R} \left(\int_{\Rn} P_s(x'-x'',|y|) u(x'') dx''\right) \vf_r(x-x',y) |y|^{a} dx' dy
\\
& = \int_{\Rn}u(x'')\left(\int_{\Rn\times \R} \vf_r(x-x',y) P_s(x'-x'',|y|)  |y|^{a} dx' dy\right) dx''
\\
& = \int_{\Rn} \Psi_r(x,x'') u(x'') dx'',    
\end{align*}
where we have let
\[
\Psi_r(x,x'') = \int_{\Rn\times \R} \vf_r(x-x',y) P_s(x'-x'',|y|)  |y|^{a} dx' dy.
\]
The exchange of order of integration (Fubini's theorem) is justified by the fact that, since $a = 1-2s$, the assumption $u\in \mathscr L_s(\Rn)$ can be reformulated as $\int_{\Rn} \frac{|u(x'')|}{(1+|x''|^2)^{\frac{n+1-a}{2}}} dx'' <\infty$. On the other, we claim that for any fixed $x\in \Rn$ we have
\begin{equation}\label{claimfi}
\left|\Psi_r(x,x'')\right|\le \frac{C_r}{(1+|x''|^2)^{\frac{n+1-a}{2}}}.
\end{equation}
Assuming the claim, we thus see that $\int_{\Rn} |\Psi_r(x,x'')| |u(x'')| dx'' < \infty$, and thus the application of Fubini's theorem would be justified in view of Tonelli's theorem. We thus need to prove \eqref{claimfi}. For this, recall that from our choice of $\vf(X)$ we know that supp$\ \vf\subset \overline B_e(0,3/4)\setminus B_e(0,1/4)\subset \overline B_e(0,1) \subset \Rn_x\times \R_y$, and thus we have 
\begin{align*}
|\Psi_r(x,x'')| & \le \int_{-1}^1\left(\int_{\Rn} |\vf_r(x-x',y)| P_s(x'-x'',|y|)dx'\right)  |y|^{a}  dy
\\
& \le ||\vf_r||_{L^\infty(\Rn\times \R)} \int_{-1}^1||P_s(\cdot-x'',|y|)||_{L^1(\Rn)}  |y|^{a}  dy
\\
& =  ||\vf_r||_{L^\infty(\Rn\times \R)} \int_{-1}^1 |y|^{a}  dy = C_r,
\end{align*}
where in the last equality we have used \eqref{poissoneone}. On the other hand, if $x''\not\in \overline B(x,2)$ and $|x'-x|<1$, we have $|x''-x'| \ge |x''-x| - |x-x'| \ge |x''-x| - 1>  |x''-x|/2$. We thus obtain from \eqref{Pfinal}
\begin{align*}
|\Psi_r(x,x'')| & \le C(n,s) ||\vf_r||_{L^\infty(\Rn\times \R)} \int_{B_e((x,0),1)} \frac{|y|^{1-a}}{(y^2 + |x''-x'|^2)^{\frac{n+1-a}{2}}} |y|^a dx' dy
\\
& \le  C(n,s) ||\vf_r||_{L^\infty(\Rn\times \R)} \int_{B_e((x,0),1)} \frac{1}{|x''-x'|^{n+1-a}}  dx' dy 
\\
& \le \frac{C(n,s) ||\vf_r||_{L^\infty(\Rn\times \R)}}{1+|x''-x|^{n+1-a}}
\\
& \le \frac{C_r}{1+|x''|^{n+1-a}}.
\end{align*}
This proves \eqref{claimfi}. 

In order to complete the proof of \eqref{mvffl}, we are thus left with showing that
\[
\Psi_r(x,x'') = \frac{1}{r^n} \Phi^\star\left(\frac{|x-x''|}{r}\right).
\]
This easily follows from a change of variable. We have in fact
\begin{align*}
\frac{1}{r^n} \Phi^\star\left(\frac{|x-x''|}{r}\right) & = \frac{1}{r^n}\int_{\Rn\times \R} \vf(\frac{x-x'' - rx'}{r},y) P_s(x',|y|) |y|^a dx' dy\ \ \ \ (z = x -r x', t = r y)
\\
& = \frac{1}{r^{2n+1+a}} \int_{\Rn\times \R} \vf(\frac{z-x''}{r},\frac{t}{r}) P_s(\frac{z-x}{r},\frac{|t|}{r})|t|^a dz dt
\\
& = \int_{\Rn\times \R} \vf_r(z-x'',t) P_s(z-x,|t|)|t|^a dz dt
\\
& = \int_{\Rn\times \R} \vf_r(x''-z,t) P_s(z-x,|t|)|t|^a dz dt = \Psi_r(x'',x),
\end{align*}
where in the second to the last equality we have used the fact that $P_s(\frac{z-x}{r},\frac{|t|}{r}) = r^n P_s(z-x,|t|)$. Since $\Phi^\star\left(\frac{|x-x''|}{r}\right) = \Phi^\star\left(\frac{|x-x''|}{r}\right)$, the proof is completed.

\end{proof}

\begin{remark}\label{R:hypo}
Formula \eqref{mvffl} can be seen as a nonlocal counterpart of \eqref{har} for classical harmonic functions.  
\end{remark}

We stress that from the validity of \eqref{mvffl} we also indirectly infer that 
\[
\int_{\Rn} \Phi(x) dx = 1.
\]
We are now ready to state our first result about the hypoellipticity of $(-\Delta)^s$.

\begin{theorem}[Nonlocal Caccioppoli-Cimmino-Weyl lemma]\label{T:fh}
Suppose that $u\in \mathscr L_s(\Rn)$, and that on a given open set $\Om \subset \Rn$ one has $(-\Delta)^s u = 0$ in $\mathscr D'(\Om)$. Then, $u$ can be modified on a set of measure zero in $\Om$ so to coincide with a function in $C^\infty(\Om)$.
\end{theorem}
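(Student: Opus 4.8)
The plan is to deduce Theorem \ref{T:fh} directly from the mean-value representation \eqref{mvffl} in Theorem \ref{T:abg2}, mimicking the argument used in the proof of Theorem \ref{T:koebe} (the K\"obe theorem). The starting point is that, since $u\in \mathscr L_s(\Rn)$ and $(-\Delta)^s u = 0$ in $\mathscr D'(\Om)$, Theorem \ref{T:abg2} tells us that after modifying $u$ on a set of Lebesgue measure zero in $\Om$ we have, for every $x\in \Om$ and every $0<r<\operatorname{dist}(x,\p\Om)$,
\[
u(x) = \Phi_r \star u(x) = \int_{\Rn} u(x') \Phi_r(x-x') dx',
\]
where $\Phi_r(x) = r^{-n}\Phi(x/r)$, and $\Phi = \Phi^\star(|\cdot|)$ is the function from \eqref{Fi}. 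The crucial points about $\Phi$ are: it is spherically symmetric and $C^\infty(\Rn)$ by part (i) and (ii) of Lemma \ref{L:abg}; it has $\int_{\Rn}\Phi(x)dx = 1$ (noted right after Remark \ref{R:hypo}); by construction supp$\ \vf^\star \subset [1/4,3/4]$ forces $\Phi_r$ to be supported in a controlled region near $|x-x'|\le r$ in the sense that $\Phi$ decays like $(1+|x|)^{-(n+1-a)}$ with $n+1-a>n$ (since $|a|<1$), so $\Phi\in L^1(\Rn)$ together with all its derivatives, and $\Phi_r$ is an approximation to the identity as $r\to 0^+$.

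The argument then runs as follows. Fix $\e>0$ and set $\Om_\e = \{x\in\Om\mid \operatorname{dist}(x,\p\Om)>\e\}$; it suffices to show $u$ agrees a.e.\ on $\Om_\e$ with a $C^\infty(\Om_\e)$ function, and then let $\e\to 0^+$. First I would fix once and for all a radius $r\in (0,\e)$; for this $r$ and every $x\in\Om_\e$ the identity $u(x) = \Phi_r\star u(x)$ holds (for a.e.\ such $x$, hence for the representative of $u$ fixed above). Because $\Phi_r\in C^\infty(\Rn)$ with all derivatives in $L^1(\Rn)$ (using the decay estimate in Lemma \ref{L:abg}(ii) together with the smoothness, which gives $\p^\alpha\Phi\in L^1(\Rn)$ for every $\alpha$), and because $u\in L^1_{loc}(\Rn)$ — indeed $u\in\mathscr L_s(\Rn)\Rightarrow u\in L^1_{loc}(\Rn)$ as noted after Definition \ref{D:sobs} — the convolution $\Phi_r\star u$ is a $C^\infty$ function: one differentiates under the integral sign, the differentiation falling on the $\Phi_r$ factor, and the integrability of $u$ against the (compactly-supported-in-the-relevant-scale, or at worst integrable) kernel $\p^\alpha\Phi_r$ justifies the interchange by dominated convergence. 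Actually more care is warranted since $\Phi_r$ is not compactly supported; here one splits the integral over a large ball and its complement, on the complement using the decay of $\p^\alpha \Phi_r$ and the $\mathscr L_s$-integrability of $u$, exactly as in the proof of Theorem \ref{T:abg2} where the analogous bound \eqref{claimfi} was obtained. Thus $v := \Phi_r\star u \in C^\infty(\Rn)$, and $u = v$ a.e.\ on $\Om_\e$. Redefining $u$ to equal $v$ on $\Om_\e$ produces the desired smooth representative on $\Om_\e$.

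Finally, one must check consistency as $\e$ varies (so that the local modifications glue to a single function in $C^\infty(\Om)$): if $\e' < \e$ then on $\Om_\e\subset\Om_{\e'}$ both $\Phi_r\star u$ (for an admissible small $r$) give the same value a.e., namely $u$; since both are continuous and agree a.e.\ they agree everywhere on $\Om_\e$, so the representatives are compatible and define a single element of $C^\infty(\Om)$ coinciding a.e.\ with $u$. The main obstacle I anticipate is not conceptual but the technical handling of the tails in the convolution $\p^\alpha\Phi_r\star u$: one needs the polynomial decay $|\p^\alpha\Phi(x)|\le C(n,s,k)(1+|x|)^{-(n+1-a)}$ from Lemma \ref{L:abg}(ii) to dominate $|u|$ in the $\mathscr L_s$-norm — precisely because $n+1-a = n+2s$ matches the weight $1+|x|^{n+2s}$ in Definition \ref{D:sobs} — and to verify that differentiation under the integral sign is legitimate uniformly on compact subsets of $\Om_\e$. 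Once that bookkeeping is done, the smoothness is immediate, and no further appeal to pseudodifferential machinery (as in Theorem \ref{T:hypo}) is needed.

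\begin{proof}
It suffices to prove that $u$ coincides a.e.\ in $\Om_\e = \{x\in \Om\mid \operatorname{dist}(x,\p\Om)>\e\}$ with a function in $C^\infty(\Om_\e)$, for every $\e>0$; the resulting smooth representatives are compatible (two continuous functions agreeing a.e.\ on an open set agree everywhere), hence patch to a function in $C^\infty(\Om)$ equal a.e.\ to $u$.

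Fix $\e>0$ and fix $r\in (0,\e)$. By Theorem \ref{T:abg2}, after modifying $u$ on a set of measure zero, we have for every $x\in \Om_\e$
\[
u(x) = \Phi_r\star u(x) = \int_{\Rn} \Phi_r(x-x') u(x') dx',
\]
where $\Phi_r(X) = r^{-n}\Phi(X/r)$ and $\Phi = \Phi^\star(|\cdot|)\in C^\infty(\Rn)$ is the function in \eqref{Fi}. By Lemma \ref{L:abg}(ii), for every multi-index $\alpha$ with $|\alpha| = k$ there is $C(n,s,k)>0$ with
\[
|\p^\alpha \Phi(x)| \le \frac{C(n,s,k)}{(1+|x|)^{n+1-a}} = \frac{C(n,s,k)}{(1+|x|)^{n+2s}},
\]
since $a = 1-2s$. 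In particular $\p^\alpha \Phi \in L^1(\Rn)$ for every $\alpha$.

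Set $v := \Phi_r\star u$. We claim $v\in C^\infty(\Rn)$ and that one may differentiate under the integral sign. Indeed, $u\in \mathscr L_s(\Rn)$ implies $u\in L^1_{loc}(\Rn)$, and moreover for every multi-index $\alpha$
\[
\int_{\Rn} |\p^\alpha \Phi_r(x-x')|\, |u(x')| dx' < \infty
\]
for every fixed $x$: writing the integral as the sum over $\{|x'-x|\le R\}$ and $\{|x'-x|>R\}$ for $R$ large, on the first piece we use $u\in L^1_{loc}$ and the boundedness of $\p^\alpha\Phi_r$, while on the second piece we use the decay estimate above, which gives $|\p^\alpha\Phi_r(x-x')| \le C_{r,\alpha}(1+|x'|)^{-(n+2s)}$ for $x$ in a fixed compact set, together with $u\in \mathscr L_s(\Rn)$. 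The same estimates, uniform for $x$ in a compact set, justify differentiating under the integral via dominated convergence, whence $v\in C^\infty(\Rn)$.

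Since $u = v$ a.e.\ in $\Om_\e$ and $v\in C^\infty(\Om_\e)$, redefining $u$ on a set of measure zero in $\Om_\e$ to equal $v$ there yields a $C^\infty(\Om_\e)$ representative. As explained at the outset, letting $\e\to 0^+$ completes the proof.
\end{proof}
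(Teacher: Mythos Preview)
Your proof is correct and follows essentially the same approach as the paper: invoke the mean-value formula $u = \Phi_r \star u$ from Theorem \ref{T:abg2}, then use the decay estimate in Lemma \ref{L:abg}(ii) together with $u\in \mathscr L_s(\Rn)$ (noting $n+1-a = n+2s$) to differentiate under the integral sign and conclude smoothness. You have simply fleshed out more of the bookkeeping (the $\Om_\e$ localization, the near/far splitting, and the gluing of representatives) that the paper leaves implicit.
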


\begin{proof}
By Theorem \ref{T:abg2} we know that for a.e. $x\in \Om$ and $0<r<\operatorname{dist}(x,\p \Om)$, one has
\[
u(x) = \Phi_r \star u(x).
\]
To complete the proof it suffices to observe that the right-hand side $\Phi_r \star u$ defines a function in $C^\infty(\Om)$. But this follows combining part (ii) of Lemma \ref{L:abg} with the assumption $u\in \mathscr L_s(\Rn)$ which can be reformulated as 
\[
\int_{\Rn} \frac{|u(x)|}{1+|x|^{n+1-a}} dx <\infty.
\]
These two facts allow us to differentiate under the integral sign in the expression
\[
\Phi_r \star u(x) = \int_{\Rn} u(x') \Phi_r(x-x') dx',
\]
to conclude that $\Phi_r \star u\in C^\infty(\Om)$.

\end{proof}

\begin{remark}\label{R:bog2}
A similar approach based on the nonlocal Poisson kernel in Section \ref{S:smean} below can be found in Theorem 3.12 in \cite{BB2}.
\end{remark}

As we have already mentioned, Theorem \ref{T:fh} admits a stronger form which is contained in the next result, whose use has been kindly suggested to us by Camelia Pop. We also acknowledge a nice conversation with Fausto Segala, who pointed us to the classical reference \cite{Ho66} of the ``founding father".  

\begin{theorem}\label{T:hypo}
Let $m\in \R$ and let $p(x,\xi)\in S^{m}_{1,0}$. If the pseudodifferential operator $Pu(x) = \F^{-1}(p(\cdot,\xi))(x)$ is elliptic, then $P$ is hypoelliptic. In particular, $(-\Delta)^s$ is hypoelliptic. Therefore, if for given $f\in \mathscr S'(\Rn)$ the distribution $u\in \mathscr S'(\Rn)$ solves the equation
\[
(-\Delta)^s u = f, 
\]
then, $u\in C^\infty(\Om)$ on every open set $\Om\subset \Rn$ where $f\in C^\infty(\Om)$.
\end{theorem}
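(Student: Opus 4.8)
The plan is to deduce everything from the symbol calculus for pseudodifferential operators, treating Theorem~\ref{T:hypo} as essentially a citation to H\"ormander's classical parametrix construction. The key observation, already recorded in Proposition~\ref{P:pseudo}, is that $(-\Delta)^s$ has symbol $p(x,\xi) = (2\pi|\xi|)^{2s}$, which lies in the Kohn--Nirenberg class $S^{2s}_{1,0}$ and is elliptic of order $m = 2s$ in the sense that $|p(x,\xi)^{-1}| \le C(1+|\xi|)^{-2s}$ for $|\xi|$ large. So the first step is simply to invoke the general fact: an elliptic $P \in \Psi^m_{1,0}$ admits a \emph{parametrix} $Q \in \Psi^{-m}_{1,0}$, i.e.\ an operator with $QP = I + R_1$ and $PQ = I + R_2$ where $R_1, R_2$ are smoothing operators (operators in $\bigcap_k \Psi^{-k}_{1,0}$, equivalently with $C^\infty$ Schwartz kernels, mapping $\mathscr E'(\Rn)$ into $C^\infty(\Rn)$). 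This is standard; one constructs $Q$ by the usual asymptotic expansion $q \sim \sum_j q_j$ with $q_0 = 1/p$ and the lower-order terms determined recursively from the composition formula for symbols, and the ellipticity bound on $1/p$ is exactly what makes each $q_j \in S^{-m-j}_{1,0}$. I would cite \cite{Ho66}, \cite{Ta}, or \cite{Ho} for this, since the excerpt already points the reader there.

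The second step is the elementary deduction of hypoellipticity from the existence of a parametrix. Let $\Om \subset \Rn$ be open, $u \in \mathscr D'(\Om)$, and suppose $Pu = f \in C^\infty(\Om)$. Fix $x_0 \in \Om$ and a cutoff $\chi \in C^\infty_0(\Om)$ with $\chi \equiv 1$ near $x_0$. Then $\chi u$ has compact support and $Q$ can be applied: from $QP = I + R_1$ we get $\chi u = Q(P(\chi u)) - R_1(\chi u)$. Now $R_1(\chi u) \in C^\infty$ because $R_1$ is smoothing, and $P(\chi u) = \chi P u + [P,\chi]u = \chi f + [P,\chi]u$. Since the commutator $[P,\chi]$ has order $m-1$ and, crucially, its Schwartz kernel is supported away from the diagonal except on $\operatorname{supp}(\nabla\chi)$ — more precisely $P$ is \emph{pseudolocal}, so $[P,\chi]u$ is $C^\infty$ on the set where $\chi$ is constant — a localization/bootstrap argument near $x_0$ shows $\chi f + [P,\chi]u$ is smooth near $x_0$, hence so is $Q$ of it by pseudolocality of $Q$, hence $\chi u$, hence $u$, is $C^\infty$ near $x_0$. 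Since $x_0 \in \Om$ was arbitrary, $u \in C^\infty(\Om)$. This is the verbatim classical argument and I would present it compactly, citing pseudolocality as a known property of operators in $\Psi^m_{1,0}$.

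Finally, the application to $(-\Delta)^s$ is immediate: since $p(x,\xi) = (2\pi|\xi|)^{2s} \in S^{2s}_{1,0}$ is elliptic (Proposition~\ref{P:pseudo}), it is hypoelliptic by the above. Hence if $f \in \mathscr S'(\Rn)$ and $u \in \mathscr S'(\Rn)$ solves $(-\Delta)^s u = f$, then on any open $\Om$ with $f|_\Om \in C^\infty(\Om)$ we conclude $u|_\Om \in C^\infty(\Om)$. (One small caveat worth a remark: to apply the definition of $(-\Delta)^s$ as a Fourier multiplier we need $u$ in a space on which the multiplication by $(2\pi|\xi|)^{2s}\hat u$ makes sense; $u \in \mathscr S'(\Rn)$ with $\hat u$ having at most polynomial growth away from the origin is the natural hypothesis, and the excerpt has already discussed the subtleties of the distributional setting, so I would simply reference Definition~\ref{D:ds} and the space $\mathscr S'_s(\Rn)$ there.)

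The main obstacle is not really mathematical — it is a matter of honesty about scope. The full parametrix construction for $S^m_{\rho,\delta}$ classes is several pages of symbol bookkeeping, and the paper has declared a didactic, self-contained intent; yet here, unlike the rest of the note, there is no elementary route, which is precisely why Theorem~\ref{T:fh} was proved first by hand. So the ``hard part'' is to decide how much of the pseudodifferential machinery to reproduce versus cite. My recommendation is to cite it cleanly: state the parametrix theorem and pseudolocality as black boxes from \cite{Ho66} (or \cite{Ta}), give the three-line deduction of hypoellipticity, and note that this recovers and strengthens Theorem~\ref{T:fh}. If one wanted a fully self-contained treatment one would have to either accept the length of the symbol calculus or fall back on the elementary extension-procedure argument already given, which only handles distributions in $\mathscr L_s(\Rn)$ rather than all of $\mathscr S'(\Rn)$.
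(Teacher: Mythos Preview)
Your proposal is correct and takes essentially the same route as the paper: invoke Proposition~\ref{P:pseudo} for ellipticity, cite the parametrix $Q \circ P = I + R$ for elliptic operators in $\Psi^m_{1,0}$ as a black box from \cite{Ho66}/\cite{Ta}, and deduce hypoellipticity via pseudolocality. The only difference is packaging. Where you run a cutoff-and-commutator localization near $x_0$, the paper uses the cleaner singular-support formulation: pseudolocality says $\operatorname{singsupp} Q(v) \subset \operatorname{singsupp} v$ for any $Q \in \Psi^m_{1,0}$, and since $Q \circ P = I + R$ with $R$ smoothing one has $\operatorname{singsupp}(Q \circ P)u = \operatorname{singsupp} u$; chaining the inclusions gives $\operatorname{singsupp} u \supset \operatorname{singsupp} Pu \supset \operatorname{singsupp}(Q\circ P)u = \operatorname{singsupp} u$, hence equality, which is exactly hypoellipticity. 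This avoids your commutator bootstrap entirely and is a bit slicker, but the content is identical.
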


\begin{proof}
By \eqref{fls3} in Proposition \ref{P:pseudo} we know that $(-\Delta)^s$ is an elliptic pseudodifferential operator with symbol in the Kohn-Nirenberg class $S^{2s}_{1,0}$. Therefore, the second part of the theorem is an immediate consequence of the former. To prove the first part, we recall that the \emph{singular support} of a distribution is defined as the complement of the open set where the distribution is smooth. The \emph{pseudolocal property} of pseudodifferential operators states that if $Q$ is such an operator, then:
\begin{equation}\label{pseudo}
\operatorname{singsupp}\ Q(u)\ \subset \ \operatorname{singsupp}\ u.
\end{equation}
On the other hand, a basic theorem in microlocal analysis states that every elliptic pseudodifferential operator $P$ is invertible, in the sense that there exists a pseudodifferential operator $Q$, and a $C^\infty$ smoothing operator $R$ such that 
\[
Q \circ P = I+R,
\]
where $I$ is the identity operator, see section 4 in Chapter 7 in \cite{Ta2}. From this identity it follows that
\begin{equation}\label{pseudo2}
\operatorname{singsupp}\ (Q\circ P)(u) = \operatorname{singsupp}\ u. 
\end{equation}
Now, by a repeated application of \eqref{pseudo} we find
\begin{equation}\label{pseudo3}
\operatorname{singsupp}\ (Q\circ P)(u)\ \subset\ \operatorname{singsupp}\ P(u)\ \subset\ \operatorname{singsupp}\ u.
\end{equation}
From \eqref{pseudo2} and \eqref{pseudo3} we conclude that
for any elliptic pseudodifferential operator $P$ we must have
\begin{equation}\label{pseudo33}
\operatorname{singsupp}\ P(u) = \operatorname{singsupp}\ u.
\end{equation}
This equation proves that the set where $P(u)\in C^\infty$ coincides with that where $u\in C^\infty$, or, equivalently, that the operator $P$ is hypoelliptic.                     
For \eqref{pseudo33} one should see Proposition 4.1 of \cite{Ta2}, or also H\"ormander's seminal work \cite{Ho66}. We mention that the above proof also works for the more general class of H\"ormander symbols $S^m_{\rho,\delta}$, with $0\le \delta<\rho\le 1$.

\end{proof}


\section{Regularity at the boundary}\label{S:reg}

The results in the previous section show that, as far as interior regularity is concerned, 
the nonlocal operator $(-\Delta)^s$ behaves much as its local predecessor $-\Delta$. But what about global regularity in boundary value problems? We will see in the present section that here the situation departs dramatically from the local case. 

A central focus of investigation in the classical theory of elliptic and parabolic equations is the Dirichlet problem. For instance, for the Laplacean, given a bounded open set $\Om\subset \Rn$ and a function $\vf\in C(\p \Om)$, one seeks a function $u\in C^2(\Om)\cap C(\overline \Om)$ such that
\begin{equation}\label{dirpb}
\begin{cases}
\Delta u = 0\ \ \ \ \ \ \text{in}\ \Om,
\\
u = \vf\ \ \ \ \ \ \ \ \text{on}\ \p \Om.
\end{cases}
\end{equation}
From Corollary \ref{C:hypo} we know that, when such a $u$ exists, we have in fact $u\in C^\infty(\Om)$. One of the culminating achievements of classical potential theory is the Wiener-Perron-Brelot method which, for any bounded open set $\Om$, allows to construct a generalized solution $u$ to \eqref{dirpb}. Watch out! Here generalized means that, although $u\in C^\infty(\Om)$ and $\Delta u = 0$ in $\Om$, it is not guaranteed that $u\in C(\overline \Om)$. A famous example is provided by the so-called \emph{Lebesgue spine}, see for instance \cite{He}. 

However, the celebrated Wiener's criterion guarantees that when, for instance, $\Om$ is a Lipschitz domain (much less regularity does in fact suffice), then the generalized solution Wiener-Perron-Brelot solution to the problem \eqref{dirpb} is in fact H\"older continuous up to the boundary, if such is $\vf$ on $\p \Om$. If $\Om$ is better than Lipschitz, then more regularity  is expected for $u$. For instance, a well-known fact is that when $\Om$ is a $C^{1,1}$ domain and $\vf = 0$ on a portion of $\p \Om$, if $u\ge 0$ in $\Om$ then nearby a point $x_0\in \p \Om$ where $\vf$ vanishes one has for some universal constant $C>0$,
\begin{equation}\label{le}
C  \frac{d(x,\p \Om)}{r} \le \frac{u(x)}{u(A_r(x_0))} \leq C^{-1}  \frac{d(x,\p \Om)}{r},
\end{equation}
where $d(x,\p \Om) = \operatorname{dist}(x,\p \Om)$, and $A_r(x_0)\in \Om$ is a so-called nontangential point attached to $x_0$, see for instance \cite{Ga0}. The estimate \eqref{le} in essence says that $u$ has a linear growth near the boundary, and thus it is Lipschitz continuous up to $\p \Om$. We observe the important fact that, since the constant $C$ in \eqref{le} is independent of the particular harmonic function $u$, we conclude that for any two nonnegative harmonic functions $u, v$, which continuously vanish on a portion of the boundary, the so-called \emph{Boundary Harnack Principle} holds:
 \begin{equation}\label{cpi}
 C \frac{u(A_r(x_0))}{v(A_r(x_0))} \le \frac{u(x)}{v(x)} \leq C^{-1} \frac{u(A_r(x_0))}{v(A_r(x_0))}.
 \end{equation}
Thus, \emph{all nonnegative harmonic functions which vanish on a portion of the boundary in a $C^{1,1}$ domain, must do so at the same linear rate}.

We note that for nonnegative solutions of $(-\Delta)^s$ in a Lipschitz domain the estimate \eqref{cpi} has been proved in Theorem 1 in \cite{BK}, thus in a Lipschitz domain the Boundary Harnack Principle does hold for $(-\Delta)^s$ (however, for the correct formulation of the Dirichlet problem in the nonlocal case, see \eqref{dpom} below).

However, even in the classical local case of the Laplacean the Boundary Harnack Principle \eqref{cpi} \underline{does not} imply the linear decay estimate! The reason for this is that, whereas \eqref{cpi} only depends on general \emph{metric properties} of the relevant domain,  and therefore does hold
for large classes of domains with rough boundaries (for instance, in Lipschitz
or even NTA domains, see \cite{CFMS}, \cite{JK}), the linear decay estimate
\eqref{le} breaks down if the domain fails to satisfy a uniform bound on its
curvatures. 
For instance, if $0<\theta_0 < \pi/2$ and we consider in $\R^2$
the convex circular sector $\Om = \{(r,\theta)\mid 0<r<1 ,
|\theta|<\theta_0\}$, where $\theta$ indicates the angle formed by
the directional vector of the point $(x,y)$ with the positive
direction of the $y$-axis, the function $u(r,\theta) = r^\lambda
\cos (\lambda\theta)$ is a nonnegative harmonic function in $\Om$
vanishing on that portion of $\p \Om$ corresponding to $|\theta| =
\theta_0$ provided that\[ \lambda  = \frac{\pi}{2\theta_0}.
\]
From our choice, we have $\lambda >1$ and therefore this example
shows that for domains without an interior tangent ball the estimate
from below in \eqref{le} cannot possibly hold in general. Using the
same type of domain and function, but this time with
$\pi/2<\theta_0<\pi$ (a non-convex cone) we see that if the tangent
outer ball condition fails, then there exist harmonic functions
which vanish at the boundary at best with a H\"older rate $<1$.
Therefore, the estimate from above in \eqref{le} cannot possibly
hold in general. Now, it is well-known that the uniform tangent ball condition both from inside and outside characterizes $C^{1,1}$ domains, and from the above examples it is clear that this degree of smoothness is essentially optimal if one looks for a linear decay such as that in \eqref{le}.

In the nonlocal case the Dirichlet problem is formulated as follows, see 21. on p. 267 in \cite{La}. Let $\Om\subset \Rn$ be a bounded open set. Given $\vf\in C(\Rn\setminus \Om)$, and such that
\[
\int_{\Rn\setminus \Om} \frac{|\vf(x)|}{1+|x|^{n+2s}} dx < \infty,
\]
one seeks $u\in \mathscr L_s(\Rn)\cap C(\Rn)$ such that
\begin{equation}\label{dpom}
\begin{cases}
(-\Delta)^s u = 0\ \ \  \text{in}\ \Om, 
\\
u = \vf\ \ \ \ \ \ \ \  \text{in}\ \Rn\setminus \Om.
\end{cases}
\end{equation}
As we have seen, there exists a unique solution to \eqref{dpom}. This follows from the maximum principle in Proposition \ref{P:mp} above.

From Theorem \ref{T:fh} we know that such $u$ must be in $C^\infty(\Om)$. But what about its regularity at the boundary of $\Om$? For instance, if we have a $C^{1,1}$ domain $\Om\subset \Rn$, then is an estimate such as \eqref{le} true? 
The answer to this question is negative! Proposition \ref{P:tfs} below shows that there exist $C^\infty$ domains (in fact, real analytic), and $C^\infty$ boundary data $\vf$ (in fact, real analytic)
such that the solution to the Dirichlet problem \eqref{dpom} is not any better than $C^{0,s}$ H\"older continuous near $\p \Om$. Perhaps this negative phenomenon at the boundary can be understood with the fact that standard smoothness is badly altered if we look at it with the spectacles of the intrinsic scalings \eqref{di} of $(-\Delta)^s$, which instead preserve only some degree of H\"older regularity.

This is yet another example of the fact, highlighted in Section \ref{S:flso}, that the nonlocal operator $(-\Delta)^s$ displays a behavior that is typical of subelliptic operators such as the Baouendi operator $\Ba$, and/or the sub-Laplacean on the Heisenberg group $\Hn$. We recall in this respect that it was shown by D. Jerison in his Ph.D. Dissertation \cite{Jer} that there exists real analytic domains $\Om\subset \Hn$ and real analytic boundary data $\vf$ for which the solution to the Dirichlet problem for the sub-Laplacean $\Delta_H$ in \eqref{sl2} is not any better than H\"older continuous up to the boundary. However, Theorems \ref{T:xavi} and \ref{T:grubb} below show that, despite the failure of the estimate \eqref{le} in $C^{1,1}$ domains, the latter does hold if one replaces $d(x,\p \Om)$ with $d(x,\p \Om)^s$!

In Euclidean analysis and partial differential equations the function $u(x) = \frac{|x|^2}{2n}$ plays a special role. First of all, it has the remarkable property that $\Delta u \equiv 1$, which means that the global vector field $\nabla u$ has constant divergence (this is strongly connected to the \emph{flatness} of Euclidean space). Secondly, and because of this, it serves as a barrier in many boundary value problems. A first beautiful and simple, yet enormously important, instance of this is the weak Maximum Principle for a function $v$ such that $\Delta v\ge 0$.  By considering the function $w = v + \e u$ one reduces matters to the situation when $\Delta w >0$, in which case the maximum principle trivially follows from calculus. 

In connection with the globally defined function $u(x) = \frac{|x|^2}{2n}$, it is important to consider the so-called \emph{torsion function} of a connected, bounded open set $\Om\subset \Rn$, i.e., the unique solution of the Dirichlet problem
\begin{equation}\label{iso}
\begin{cases}
- \Delta u = 1\ \ \ \ \ \text{in}\ \Om,
\\
u = 0\ \ \ \ \ \ \ \ \ \ \text{on}\ \p \Om.
\end{cases}
\end{equation}
A celebrated theorem of Serrin in \cite{Se} states that $\Om$ is a ball if and only if $\frac{\p u}{\p \nu} = - \kappa$ on $\p \Om$. In fact, when $\Om = B(x_0,R)$, then one knows that
\begin{equation}\label{tf}
u(x) = \frac{R^2 - |x-x_0|^2}{2n},
\end{equation}
for which we easily recognize that $\frac{\p u}{\p \nu} = - \frac Rn$. An  alternative approach, based on the strong maximum principle and a beautiful integral identity of Rellich, was proposed by Weinberger in \cite{We} simultaneously to Serrin's paper. One should also see the later work \cite{GLe}, in which Weinberger's ideas were generalized to nonlinear degenerate elliptic equations such as the $p$-Laplacean.

It is remarkable that the mere existence of a smooth solution to \eqref{iso}, or to the related Neumann problem
\begin{equation}\label{iso22}
\begin{cases}
\Delta u = 1\ \ \ \ \ \ \ \ \ \ \ \text{in}\ \Om,
\\
\frac{\p u}{\p \nu} = \frac{|\Om|}{|\p \Om|}\ \ \ \ \ \ \ \  \text{on}\ \p \Om,
\end{cases}
\end{equation}
have deep implications in geometry. For instance, in his note \cite{Re} R. Reilly using \eqref{iso} and an adaption of Weinberger's ideas in \cite{We} provided a beautiful proof of A.D. Alexandrov's \emph{soap bubble theorem} stating that the only (sufficiently) smooth compact hypersurface in $\Rn$ having constant mean curvature must be a ball. Another beautiful result is Cabr\'e's proof of the isoperimetric inequality which combines the Alexandrov-Bakelman-Pucci maximum principle with the solution of problem \eqref{iso22}, see \cite{Ca} and \cite{Ca2}. Section 2.1.2 of \cite{Ca} also contains an interesting account of the probabilistic interpretation of the torsion function $u(x)$ in \eqref{iso} above as the expected time for a particle located at $x\in \Om$ to hit the boundary. For this one should also see Getoor's original paper on the subject \cite{G}.  

In connection with the fractional Laplacean we mention that an interesting nonlocal version of Serrin's theorem was obtained in the work \cite{FJ}, based on an adaption of the moving plane method. Whereas such method lends itself well to the fractional setting, we strongly feel that Weinberger's approach should be equally investigated because of its potential geometric implications. In this respect one should see the interesting work \cite{ROS2}, in which the authors establish a nonlocal version of the Rellich-Pohozaev identity as a consequence of their regularity results in \cite{ROS}. It is an intriguing and challenging question whether such result can be employed to provide a proof alternative to that in \cite{FJ}. In fact, this aspect is deeply connected to the questions we raise at the end of Section \ref{S:gamma} below. 

In light of the above considerations it is natural to seek a function analogous to \eqref{tf} for the non-local operator $(-\Delta)^s$, i.e., a solution to the equation $(-\Delta)^s u = 1$ in ball, with zero boundary data. In what follows we construct such a function. Our presentation is based on Lemma \ref{L:ftss} above, and it differs in part from the original one in the above cited paper \cite{G}. The reader should also see \cite{D} and \cite{DKK17} for more general results and the discussion in \cite{BV} of the one-dimensional case.

\begin{prop}[Torsion function for the ball]\label{P:tfs}
For $0<s<1$ consider  the non-homogeneous Dirichlet problem
\begin{equation}\label{problem}
\begin{cases}
(-\Delta)^s u = 1 \ \ \ \ \text{in}\ B(x_0,R),
\\
 u = 0\ \ \ \ \ \ \ \ \ \ \ \ \text{in}\ \Rn\setminus B(x_0,R).
\end{cases}
\end{equation}
Then, the unique solution to \eqref{problem} is provided by 
\begin{equation}\label{torsion}
u(x) = \frac{\Gamma(\frac n2)}{4^s \Gamma(\frac{n+2s}{2})\Gamma(s+1)} \left(R^2-|x-x_0|^2\right)^s_+.
\end{equation}
\end{prop}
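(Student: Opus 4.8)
The plan is to verify directly, using the radial formula for $(-\Delta)^s$ from Lemma~\ref{L:ftss}, that the function $w(x) = \left(R^2 - |x|^2\right)^s_+$ (translating so that $x_0 = 0$ without loss of generality) satisfies $(-\Delta)^s w = c(n,s)$ for an explicit constant $c(n,s)>0$, and then normalize. The starting observation is that $w$ is, up to a dilation and a constant multiple, exactly a Bochner--Riesz kernel: recalling from \eqref{br} that $B_z(x) = (1-|x|^2)^z_+/\Gamma(z+1)$, we have $w(x) = \Gamma(s+1)\, R^{2s}\, B_s(x/R)$. Hence by Lemma~\ref{L:BR} its Fourier transform is a Bessel function: $\widehat{B_s}(\xi) = \pi^{-s} |\xi|^{-(n/2+s)} J_{n/2+s}(2\pi|\xi|)$, and by the dilation rule \eqref{dil} we get an explicit expression for $\hat w(\xi)$ in terms of $J_{n/2+s}$.

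Next I would apply Proposition~\ref{P:slapft}: $\widehat{(-\Delta)^s w}(\xi) = (2\pi|\xi|)^{2s}\hat w(\xi)$, which again is a power of $|\xi|$ times a Bessel function. To recover $(-\Delta)^s w$ itself, I would invert via the Fourier--Bessel representation Theorem~\ref{T:Fourier-Bessel} (as packaged in Lemma~\ref{L:ftss}), so that $(-\Delta)^s w(x)$ is expressed as a single integral of the form $\int_0^\infty t^{\mu} J_{n/2-1}(2\pi|x|t) J_{n/2+s}(2\pi R t)\, dt$ up to explicit constants and powers. This is precisely a Weber--Schafheitlin type integral, and I would evaluate it using Lemma~\ref{L:GR} (the discontinuous integral of Weber and Schafheitlin), with the parameters $\nu = n/2-1$, $\mu = n/2+s$, and $\lambda$ chosen to match the powers of $t$; here the key point is that $b = |x| < a = R$ falls in the region covered by that lemma. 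The resulting Gauss hypergeometric function will have one of its upper parameters equal to $0$ (or reduce by \eqref{zeroF}), collapsing to the constant $1$, which is exactly what forces $(-\Delta)^s w$ to be constant inside the ball.

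With the constant $c(n,s)$ computed — it will come out as $4^s \Gamma(n/2+s)\Gamma(1+s)/\Gamma(n/2)$ after simplifying the gamma factors from \eqref{dil}, Lemma~\ref{L:BR}, Lemma~\ref{L:ftss} and Lemma~\ref{L:GR}, using the duplication-type identities \eqref{sine}, \eqref{prod} as needed — dividing $w$ by $c(n,s)$ yields the claimed formula \eqref{torsion}. The boundary condition $u \equiv 0$ in $\Rn \setminus B(0,R)$ is immediate since $(R^2 - |x|^2)^s_+$ vanishes there by definition of the positive part; one must also check $u \in \mathscr{L}_s(\Rn) \cap C^{2s+\varepsilon}_{loc}$ so that Proposition~\ref{P:silv} applies and $(-\Delta)^s u$ is genuinely defined pointwise as a continuous function — boundedness and compact support give the $\mathscr{L}_s$ membership trivially, while near $\p B(0,R)$ the function behaves like a positive power $(\mathrm{dist})^s$, which lies in $C^{0,s}_{loc}$ (and the interior is real-analytic), giving enough regularity. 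Uniqueness then follows from the maximum principle in Proposition~\ref{P:mp} applied to the difference of two solutions.

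The main obstacle I anticipate is bookkeeping rather than conceptual: one has to track several layers of normalization constants (the Fourier-transform dilation factor $R^{-n}$, the Bochner--Riesz constant $\Gamma(s+1)$, the $\pi$- and $2\pi$-powers in Theorem~\ref{T:Fourier-Bessel} and Lemma~\ref{L:BR}, and the gamma-quotient produced by Lemma~\ref{L:GR}) and check that everything collapses to the clean constant in \eqref{torsion}. A secondary subtlety is making sure the integrability hypotheses in Theorem~\ref{T:Fourier-Bessel} and in Lemma~\ref{L:GR} are genuinely satisfied for the particular exponents at hand (the inequalities $\Re(\nu+\mu-\lambda+1)>0$ and $\Re\lambda>-1$), and that the reduction of the hypergeometric function to a constant is valid — i.e.\ that we are evaluating inside the ball $|x|<R$ and not on the singular sphere $|x|=R$, where the Weber--Schafheitlin integral changes character (this is the ``discontinuous'' in its name, and it is the analytic shadow of the fact that $(-\Delta)^s u$ is \emph{not} constant, indeed not even bounded, as $x$ approaches and crosses $\p B(0,R)$).
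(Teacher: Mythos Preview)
Your proposal is correct and follows essentially the same route as the paper: recognize the candidate as a (dilated) Bochner--Riesz kernel, compute its Fourier transform via Lemma~\ref{L:BR}, multiply by $(2\pi|\xi|)^{2s}$ using Proposition~\ref{P:slapft}, invert through the Fourier--Bessel representation, and evaluate the resulting Weber--Schafheitlin integral with Lemma~\ref{L:GR}, where one upper parameter of the hypergeometric function vanishes and \eqref{zeroF} collapses it to $1$. The only cosmetic differences are that the paper first scales to $R=1$ rather than carrying a general $R$, and that in the paper's application of Lemma~\ref{L:GR} the roles of $\nu$ and $\mu$ are the reverse of what you wrote (there $J_\nu$ pairs with the larger argument $a$, so $\nu=\tfrac n2+s$ and $\mu=\tfrac n2-1$); your added remarks on regularity and on uniqueness via Proposition~\ref{P:mp} are sound complements that the paper leaves implicit.
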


\begin{proof}
By dilation and translation it suffices to consider the case $x_0 = 0$ and $R = 1$. For $0<s<1$ consider the equation
\begin{equation}\label{ntor}
(- \Delta)^s B_s = f,
\end{equation}
where 
\[
B_s(x) = \frac{\left(1-|x|^2\right)^s_+}{\Gamma(s+1)}
\]
is the Bochner-Riesz kernel introduced in \eqref{br} above. Applying the Fourier transform to both sides of \eqref{ntor}, and using \eqref{fls3} in Proposition \ref{P:slapft}, we find
\[
(4\pi^2 |\xi|^2)^s \hat B_s(\xi) = \hat f(\xi).
\]
If in this formula we use \eqref{FTBR} in Lemma \ref{L:BR} with $z = s$ to compute $\hat B_s(\xi)$, we obtain
\begin{align*}
\hat f(\xi) & = (4\pi^2 |\xi|^2)^s \pi^{-s}|\xi|^{-\left(\frac{n}2+s\right)}J_{\frac{n}2+s}(2\pi|\xi|)
\\
& = 4^s \pi^s |\xi|^{-\frac n2 + s} J_{\frac{n}2+s}(2\pi|\xi|).
\end{align*}

We next use Theorem \ref{T:Fourier-Bessel} again to recover $f(x)$ from the latter formula
\begin{align*}
f(x) & = 2\pi 4^s \pi^s |x|^{-\frac{n-2}2}\int^\infty_0 t^{\frac{n}2} t^{-\frac n2 + s} J_{\frac{n}2+s}(2\pi t)J_{\frac{n-2}2}
(2\pi|\xi|t) dt.
\\
& = 2\pi 4^s \pi^s |x|^{-\frac{n-2}2}\int^\infty_0 t^{s} J_{\frac{n}2+s}(2\pi t)J_{\frac{n-2}2}(2\pi|\xi|t) dt.
\end{align*}
Comparing this formula with Lemma \ref{L:GR} we see that we presently need to have
\[
\la = - s,\ \ \ \nu = \frac{n}2+s,\ \ \ \mu =  \frac{n-2}2,\ \  \ \ a = 2\pi,\ \ \ b = 2\pi |x|.
\]
Since $0<s<1$, we clearly have $\la > - 1$. Furthermore, since we are interested in values of $x$ such that $0<|x|<1$, we have $0<b<a$. With the above choices of $\la, \nu, \mu$ and $a$ we obtain
\[
\nu + \mu - \la + 1= \frac{n}2+s + \frac{n}2 -1 + s + 1 = n+2s,
\]
and
\[
-\nu + \mu - \la + 1 = - \frac{n}2 - s + \frac{n}2 -1 + s + 1 = 0,
\]
\[
\mu-\la+1 = \frac{n}2 -1 + s + 1 = \frac{n+2s}{2},
\]
\[
\nu - \mu + \la + 1 = \frac{n}2+s - \frac{n}2 +1 -  s + 1 = 2.
\]

Applying Lemma \ref{L:GR} above we thus find for $0<|x|<1$
\begin{align*}
f(x) & = 2\pi 4^s \pi^s |x|^{-\frac{n-2}2}   (2\pi |x|)^{\frac{n-2}2} \frac{\Gamma(\frac{n+2s}{2})}{2^{-s} (2\pi)^{\frac n2 + s} \Gamma(1) \Gamma(\frac n2)}\ F\left(\frac{\nu + \mu - \la + 1}{2},0;\frac n2;|x|^2\right), 
\end{align*}
where $F(a,b;c;z)$ is Gauss' hypergeometric function in Definition \ref{D:hyper} above.
Since thanks to \eqref{zeroF} we have
\[
F\left(\frac{\nu + \mu - \la + 1}{2},0;\frac n2;|x|^2\right) = 1,
\]
we finally conclude that for any $|x|<1$
\[
f(x) \equiv \frac{4^s \Gamma(\frac{n+2s}{2})}{\Gamma(\frac n2)}.
\]
If we substitute such $f$ in \eqref{ntor} above, we see that we have proved that for $|x|<1$
\[
(- \Delta)^s B_s(x) \equiv  \frac{4^s \Gamma(\frac{n+2s}{2})}{\Gamma(\frac n2)}.
\]
Keeping in mind the above definition of $B_s$ it is thus clear that the function
\[
u(x) = \frac{\Gamma(\frac n2)}{4^s \Gamma(\frac{n+2s}{2})\Gamma(s+1)} \left(1-|x|^2\right)^s_+.
\]
provides the desired solution \eqref{torsion} to the problem \eqref{problem}.

\end{proof}

\begin{remark}\label{R:tf}
Let us note explicitly that as $s\to 1$ the constant
\[
\frac{\Gamma(\frac n2)}{4^s \Gamma(\frac{n+2s}{2})\Gamma(s+1)}\ \longrightarrow\ \frac{1}{2n}.
\] 
Thus, in the local case when $s = 1$ we recover the standard torsion function in \eqref{tf} above.
\end{remark}

\begin{remark}\label{R:imbert}
We want to bring to the reader's attention an interesting connection between the nonlocal torsion function $u$ in Proposition \ref{P:tfs}, and the recent paper \cite{BIK} in which the authors construct an analogue of the self-similar Barenblatt-Kompaneets-Pattle-ZelÕdovich solution for the following nonlocal porous medium equation
\[
\p_t U = \operatorname{div}(|U|\nabla^{2s-1}(|U|^{m-2} U)),
\]
where $\nabla^{2s-1} \overset{def}{=} \nabla (-\Delta)^{s-1}$. 
In their Theorem 2.2 they prove that such self-similar solution is obtained in the form  
\[
U(x,t) = \frac{1}{t^{n\la}} \Phi\left(\frac{x}{t^\la}\right),\ \ \ \ \ \la = \frac{1}{n(m-1) + 2s},
\]
where $\Phi(x) = C(n,m,s) u(x)^{\frac1{m-1}}$, $u(x)$ is the function in Proposition \ref{P:tfs}, and $C(n,m,s)$ is an explicit constant.  
\end{remark}

Concerning Proposition  \ref{P:tfs}  the reader should notice that, since for $B(0,1)$ we have $d(x) = \operatorname{dist}(x,\p B(0,1)) = 1 - |x|$, the torsion function $u$ in \eqref{torsion}  satisfies in an obvious way the property that 
\[
\frac{u}{d^s}\ \in C^{\infty}(\overline B(0,1)\setminus\{0\}). 
\]
In the framework of $C^{1,1}$ domains, in Theorem 1.2 of their cited paper \cite{ROS} the authors prove the following general result.

\begin{theorem}\label{T:xavi}
Let $\Om$ be a bounded $C^{1,1}$ domain, $f\in L^\infty(\Om)$, and $u$ be a weak solution to the non-homogeneous Dirichlet problem
\begin{equation}\label{nhdp}
\begin{cases}
(-\Delta)^s u = f\ \ \ \      \text{in}\ \Om,
\\
u = 0\ \ \ \ \ \ \ \ \ \text{in}\ \Rn\setminus \Om.
\end{cases}
\end{equation}
Then, $u/d^s\in C^{0,\alpha}(\overline \Om)$ for some $0<\alpha <\min\{s,1-s\}$, and one has
\[
||u/d^s||_{C^{0,\alpha}(\overline \Om)} \le C ||f||_{L^\infty(\Om)},
\]
with $\alpha, C$ depending only on $\Om, n, s$. 
\end{theorem}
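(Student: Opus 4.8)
\textbf{Proof proposal for Theorem \ref{T:xavi}.} The statement is the celebrated boundary regularity result of Ros-Oton and Serra, and I would not attempt to reprove it from scratch within these notes; rather I would organize the argument into the three pillars on which their proof rests, each of which can be motivated by tools already developed above. The overall strategy is a \emph{barrier + scaling + blowup} scheme: one first establishes that $|u(x)|\le C\|f\|_{L^\infty}\,d(x)^s$ near $\partial\Om$, then bootstraps this to H\"older continuity of the quotient $u/d^s$ by a compactness (blowup) argument in the spirit of Caffarelli's perturbative regularity theory.

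\emph{Step 1: the $d^s$-barrier and the $L^\infty$ bound on $u/d^s$.} The model computation is precisely Proposition \ref{P:tfs}: for the unit ball the function $w(x)=c(n,s)(1-|x|^2)^s_+$ solves $(-\Delta)^s w=1$, so $w\sim d^s$ near $\partial B$. By translating and rescaling such barriers and fitting them into interior tangent and exterior tangent balls at each boundary point (here the $C^{1,1}$ hypothesis is used in an essential way, exactly as the interior/exterior tangent ball condition was used in the local discussion around \eqref{le}), together with the maximum principle in Proposition \ref{P:mp}, one gets the two-sided bound
\[
|u(x)|\le C\,\|f\|_{L^\infty(\Om)}\,d(x)^s,\qquad x\in\Om .
\]
This is the nonlocal analogue of the linear Hopf-type estimate, with the characteristic loss of homogeneity $1\rightsquigarrow s$ forced by the scaling \eqref{di}.

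\emph{Step 2: interior estimates in terms of the distance.} Away from $\partial\Om$ one uses the interior regularity theory (Proposition \ref{P:silv}, Theorem \ref{T:fh}/\ref{T:hypo}, or the Schauder-type estimates for $(-\Delta)^s$) on rescaled balls $B_{d(x)/2}(x)$, combined with the growth bound from Step 1, to obtain that $|\nabla u(x)|\le C\|f\|_{L^\infty}\,d(x)^{s-1}$ and, more generally, H\"older seminorm control of $u$ on dyadic annuli with the correct scaling weights. These are the quantitative ingredients that make the quotient $u/d^s$ a legitimate object to differentiate and estimate.

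\emph{Step 3: H\"older continuity of $u/d^s$ via blowup.} This is the heart of the proof and the step I expect to be the main obstacle. One argues by contradiction: if $\|u/d^s\|_{C^{0,\alpha}}$ were not controlled by $\|f\|_{L^\infty}$, one produces a sequence of solutions $u_k$, boundary points, and scales $r_k\to 0$ realizing the blowup of the $C^{0,\alpha}$ quotient; rescaling $u_k$ by the natural parabolic-type dilation adapted to $(-\Delta)^s$ and extracting a limit (using the compactness furnished by the interior estimates of Step 2 and the uniform growth of Step 1) one obtains a global solution $v$ in a half-space with $(-\Delta)^s v=0$, $v=0$ outside the half-space, and $v/d^s$ having nonzero $C^{0,\alpha}$ seminorm yet bounded growth. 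A Liouville-type classification — built on the one-dimensional explicit solution $(x_n)^s_+$ of $(-\Delta)^s$ in a half-space, the scaling, and a monotonicity/unique continuation input (the half-space solutions of polynomial growth are explicitly $(x_n)^s_+$ times a polynomial of low degree) — forces $v/d^s$ to be constant, contradicting the nondegeneracy of its seminorm. The delicate points here are: (i) choosing the correct rescaling so that the limit equation is clean; (ii) justifying passage to the limit of the nonlocal operator, which requires the tail estimates of $\mathscr L_s(\Rn)$ (Definition \ref{D:sobs}) to control contributions from far away; and (iii) the half-space Liouville theorem itself, which is where the restriction $\alpha<\min\{s,1-s\}$ originates, since $(x_n)^{s+\alpha}$ must fail to be $s$-harmonic. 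Once the contradiction is reached, unwinding it yields the estimate $\|u/d^s\|_{C^{0,\alpha}(\overline\Om)}\le C\|f\|_{L^\infty(\Om)}$ with $\alpha,C$ depending only on $\Om,n,s$, as claimed. For the complete details I refer to \cite{ROS}.
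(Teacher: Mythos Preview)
The paper does not actually prove this theorem: it is stated as Theorem 1.2 of \cite{ROS} and simply cited, with no argument given. Your proposal therefore goes considerably beyond what the paper does, by supplying a coherent three-step outline of the Ros-Oton--Serra proof.

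Your sketch is essentially correct and well-organized. Steps 1 and 2 match the original \cite{ROS} closely: the torsion function of Proposition \ref{P:tfs} is indeed the prototype barrier, and the $C^{1,1}$ interior/exterior ball condition is exactly what allows one to slide it along $\partial\Om$ to get $|u|\le C\|f\|_{L^\infty}d^s$. One remark on Step 3: the original paper \cite{ROS} does not argue by blowup and a half-space Liouville theorem, but rather proves directly an iterative oscillation-decay lemma for $u/d^s$ on dyadic scales near a boundary point, again using sub- and supersolutions built from $(x_n)^s_+$-type profiles. The compactness/Liouville route you describe is a legitimate alternative (and is the method Ros-Oton and Serra adopt in later, more general works), and it gives the same restriction $\alpha<\min\{s,1-s\}$; but if you want to align with the reference actually cited here, the direct oscillation argument is the one to point to.
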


The reader is also encouraged to see the beautiful survey paper \cite{RO} on boundary regularity and the nonlocal Pohozaev identity. We also thank X. Ros-Oton for pointing to our attention the following interesting result of Grubb, see \cite{Gr1}, \cite{Gr2}.

\begin{theorem}\label{T:grubb}
Let $\Om\subset \Rn$ be a $C^\infty$ domain, and let $f\in C^\infty(\overline \Om)$. If $u$ solves \eqref{nhdp}, then $u/d^s\in C^\infty(\overline \Om)$.
\end{theorem}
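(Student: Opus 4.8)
The plan is to prove Theorem \ref{T:grubb} by reduction to the known boundary-regularity theory for classical pseudodifferential operators on manifolds with boundary, exactly in the spirit of Grubb's work \cite{Gr1}, \cite{Gr2}. The starting point is Proposition \ref{P:pseudo}: the operator $(-\Delta)^s$ belongs to the Kohn--Nirenberg class $\Psi^{2s}_{1,0}$ and is elliptic, with symbol $p(x,\xi) = (2\pi|\xi|)^{2s}$. However, the bare membership in $\Psi^{2s}_{1,0}$ is not enough to control behavior at the boundary of a domain $\Om$; one needs the finer structure that $(-\Delta)^s$ is (up to lower-order terms) a classical, $2s$-\emph{transmission}-type operator --- more precisely it fits Grubb's calculus of $\mu$-transmission operators with $\mu = s$. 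The first step, therefore, is to record that the symbol $(2\pi|\xi|)^{2s}$ admits an asymptotic expansion in homogeneous terms and satisfies the appropriate parameter-dependent estimates, so that the associated Dirichlet problem \eqref{nhdp} in a $C^\infty$ domain $\Om$ can be treated by Grubb's machinery. This is essentially bookkeeping: one checks that $(-\Delta)^s$ restricted to $\Om$ with exterior condition $u=0$ on $\Rn\setminus\Om$ defines a Fredholm realization between suitable $\mu$-transmission Sobolev (or H\"older--Zygmund) spaces.

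Next I would invoke Grubb's regularity theorem in the H\"older--Zygmund scale. The key structural fact is that the natural solution spaces for \eqref{nhdp} are not the ordinary spaces $C^k(\overline\Om)$ but the ``$s$-weighted'' spaces: functions $u$ such that $u = d^s\, v$ with $v$ smooth up to $\p\Om$, where $d(x) = \operatorname{dist}(x,\p\Om)$. Grubb proves that if $f\in C^\infty(\overline\Om)$, then the solution $u$ lies in $d^s\,\mathcal{C}^\infty(\overline\Om)$ --- equivalently $u/d^s$ extends to a $C^\infty$ function on $\overline\Om$ --- by a bootstrap: one first gets $u/d^s \in C^{0,\alpha}(\overline\Om)$ (this is already Theorem \ref{T:xavi} of Ros-Oton and Serra, valid in $C^{1,1}$ domains and a fortiori in $C^\infty$ ones), and then iterates the interior-plus-boundary elliptic estimates of the $\mu$-transmission calculus, gaining $2s$ derivatives (in the transmission scale) at each stage. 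Because $\Om$ is $C^\infty$ and $f$ is smooth up to the boundary, the bootstrap never stops, yielding $u/d^s \in C^\infty(\overline\Om)$. Interior smoothness of $u$ itself --- which is all that Theorem \ref{T:fh} / Theorem \ref{T:hypo} give us away from $\p\Om$ --- is of course consistent with and subsumed by this statement.

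The main obstacle, and the reason this theorem is genuinely deep rather than a routine corollary of hypoellipticity, is the construction and mapping properties of Grubb's $\mu$-transmission calculus: one must show that $(-\Delta)^s$ (and its parametrix) respects the Boutet de Monvel--type transmission condition at $\p\Om$ with the exponent $\mu = s$, so that the solution operator maps $C^\infty(\overline\Om)$ into $d^s\,C^\infty(\overline\Om)$ rather than merely into some finite H\"older class with a $d^s$ weight. This is exactly where the nonlocal nature of $(-\Delta)^s$ --- encoded in the fact that the exterior condition $u=0$ on $\Rn\setminus\Om$ is imposed on a full open set, not just on $\p\Om$ --- forces the weight $d^s$ to appear and prevents any naive $C^\infty(\overline\Om)$ conclusion (as the torsion function of Proposition \ref{P:tfs} already shows). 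Since a self-contained development of that calculus is far beyond the scope of these notes, my proposal is to state the theorem, indicate the reduction above, and refer the reader to \cite{Gr1} and \cite{Gr2} for the full argument; this is consistent with the didactic intent of the paper and with how Theorems \ref{T:xavi} and \ref{T:grubb} are presented.

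\medskip

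\noindent\emph{Remark on presentation.} In the paper itself Theorem \ref{T:grubb} is quoted without proof, with attribution to Grubb and a pointer to \cite{Gr1}, \cite{Gr2}; given the length and technical weight of the $\mu$-transmission pseudodifferential calculus required, reproducing a proof here would be out of proportion with the rest of the exposition, so the appropriate ``proof'' is precisely the citation together with the heuristic reduction sketched above.
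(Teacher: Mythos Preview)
Your assessment is accurate: the paper states Theorem \ref{T:grubb} without proof, attributing it to Grubb and referring the reader to \cite{Gr1}, \cite{Gr2}. Your proposal goes somewhat further than the paper by sketching the $\mu$-transmission mechanism (classical symbol, $\mu=s$ transmission condition, bootstrap in the weighted H\"older--Zygmund scale starting from Theorem \ref{T:xavi}), which is a fair high-level account of Grubb's approach and more informative than the bare citation the paper gives; your concluding remark that the appropriate ``proof'' here is the citation plus heuristic is exactly what the paper does.
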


In Theorem \ref{T:grubb} the notation $d = d (x)$ represents a positive function which coincides with the distance to the boundary of $\Om$ in a neighborhood of $\p \Om$.  


\section{Monotonicity formulas and unique continuation for $(-\Delta)^s$}\label{S:almgren}

We open this section with a quote from the paper \cite{CS07}: ``Monotonicity formulas are a very powerful tool in the study of the regularity
properties of elliptic PDEs. They have been used in a number of problems to
exploit the local properties of the equations by giving information about the blowup configurations". In what follows we discuss some monotonicity formulas related to nonlocal operators. 

In order to provide some motivation and historical perspective we recall that the most fundamental property of harmonic functions is the so-called \emph{strong unique continuation property}, which we will abbreviate in (sucp) henceforth. It states that a solution of $\Delta u = 0$ in a connected open set cannot vanish to infinite order at one point, unless it vanishes identically. Here, vanishing to infinite order means, for instance, that for any $N\in \mathbb N$ one has as $r\to 0^+$
\[
\underset{B(x_0,r)}{\sup} |u| = O(r^N).
\]
The weaker \emph{unique continuation property} (ucp) states that if $u$ vanishes in an open subset, then $u$ must vanish everywhere. These properties are shared by large classes of differential operators, such as for instance the stationary Schr\"odinger operator $H = - \Delta + V$ which plays a central role in quantum mechanics. At the basis of J. Von Neumann's axiomatic formulation there is the assumption of the absence of eigenvalues of $H$ embedded in the continuous spectrum.  However, a famous example of Wigner and Von Neumann (1954) showed that in the presence of tunneling effects such eigenvalues can in fact arise, see \cite{RS}. It thus became important to understand assumptions on $V$ that would rule out such possibility.

On the other hand, a famous theorem of F. Rellich \cite{Rel} states that for every $R>0$ there cannot exist eigenfunctions of the Laplacean in $L^2(\Rn\setminus B(0,R))$ corresponding to $\lambda >0$.  
This is where the (ucp) for the operator $H = - \Delta + V$, with $V\in L^\infty_{loc}(\Rn)$ comes into the play. Assuming for simplicity that $V$ be compactly supported, it is easy to see that
\[
\text{(ucp)}\ +\ \text{Rellich}\ \Longrightarrow\ \text{Absence of embedded eigenvalues}.
\]
In 1939 T. Carleman introduced a powerful method to prove the (ucp) or even the (sucp) for $H = - \Delta + V$ based on weighted a priori estimates of the type
\[
||e^{t\Phi(x)} u||_{L^q(\Rn)} \le C(n) ||e^{t \Phi(x)} \Delta u||_{L^p(\Rn)},
\]
where $1\le p\le q<\infty$, $t\in \R$ is a parameter which is allowed to go to $+\infty$ avoiding a discrete set, and $u\in C^\infty_0(\Rn\setminus\{0\})$.
Notice that when $\Phi\equiv 0$ and $\frac 1p - \frac 1q = \frac 2n$, the above estimate is just the Sobolev embedding theorem (for this theorem see e.g. \cite{St}). When $\Phi\not\equiv 0$ instead, this gap allows a potential $V\in L^{n/2}_{loc}(\Rn)$, which is sharp for (sucp), see the celebrated work of D. Jerison and C. Kenig \cite{JK85}. However, when $V\in L^\infty_{loc}(\Rn)$, then no gap is necessary. As a consequence, one can take $p = q = 2$  and use Hilbert spaces framework (integration by parts).

A different method to establish the (sucp) for more general elliptic operators of the type
\begin{equation}\label{lip}
Lu = \operatorname{div}(A(x)\nabla u) + <b,\nabla u> + V u,
\end{equation}
was developed by F.H. Lin and the author in \cite{GL1}, \cite{GL2}. Their approach is energy based and it establishes directly the (sucp), and in fact more, without considerable technical complexities. The central idea is a remarkable monotonicity property that was discovered by F. Almgren in \cite{A} in his approach to the regularity of mass minimizing currents.

\begin{theorem}[Almgren's monotonicity formula]\label{T:al}
Let $\Delta u = 0$ in $B(0,1)$. Then, the so-called \emph{frequency} of $u$ 
\[
r\ \to\ N(u,r) = \frac{ r D(u,r)}{H(u,r)} = \frac{r \int_{B_r} |\nabla u|^2}{\int_{S_r} u^2} 
\]
is monotonically increasing. Moreover, $N(u,r) \equiv \kappa$ if and only if $u$ is homogeneous of degree $\kappa$.
\end{theorem}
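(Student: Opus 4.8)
The plan is to establish the two classical quantities, the Dirichlet-type energy $D(u,r)=\int_{B_r}|\nabla u|^2$ and the boundary $L^2$-mass $H(u,r)=\int_{S_r}u^2$, compute their derivatives in $r$, and then show that $\log N(u,r)$ has nonnegative derivative. First I would record the basic identities: differentiating $H$ and using polar coordinates on the sphere $S_r$ gives, after rescaling, $H'(r)=\frac{n-1}{r}H(r)+2\int_{S_r}u\,\partial_\nu u$, and since $\Delta u=0$ the divergence theorem yields $\int_{S_r}u\,\partial_\nu u=\int_{B_r}|\nabla u|^2=D(r)$, so $H'(r)=\frac{n-1}{r}H(r)+2D(r)$. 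For $D$ I would use the Rellich--Pohozaev identity obtained by multiplying $\Delta u=0$ by $\langle x,\nabla u\rangle$ and integrating by parts over $B_r$; this produces $D'(r)=\frac{n-2}{r}D(r)+2\int_{S_r}(\partial_\nu u)^2$.

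Next I would differentiate $\log N(u,r)=\log r+\log D(r)-\log H(r)$, obtaining
\[
\frac{N'(r)}{N(r)}=\frac1r+\frac{D'(r)}{D(r)}-\frac{H'(r)}{H(r)}.
\]
Substituting the two formulas for $D'$ and $H'$, the terms $\frac{n-1}{r}$ and $\frac{n-2}{r}$ together with the explicit $\frac1r$ cancel, and one is left with
\[
\frac{N'(r)}{N(r)}=\frac{2\int_{S_r}(\partial_\nu u)^2}{\int_{S_r}|\nabla u|^2 \text{ (tangential part absorbed) }}-\frac{2\int_{S_r}u\,\partial_\nu u}{\int_{S_r}u^2},
\]
more precisely $\frac{N'}{N}=2\Big(\frac{\int_{S_r}(\partial_\nu u)^2}{D(r)}-\frac{\int_{S_r}u\,\partial_\nu u}{H(r)}\Big)$ after using $D(r)=\int_{S_r}u\,\partial_\nu u$. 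Then the Cauchy--Schwarz inequality on $S_r$ gives $\big(\int_{S_r}u\,\partial_\nu u\big)^2\le \big(\int_{S_r}u^2\big)\big(\int_{S_r}(\partial_\nu u)^2\big)$, i.e. $\frac{\int_{S_r}u\,\partial_\nu u}{H(r)}\le \frac{\int_{S_r}(\partial_\nu u)^2}{\int_{S_r}u\,\partial_\nu u}=\frac{\int_{S_r}(\partial_\nu u)^2}{D(r)}$, whence $N'(r)\ge 0$. The equality case is where the work concentrates: equality in Cauchy--Schwarz forces $\partial_\nu u=c(r)\,u$ on each sphere $S_r$, and a short argument (writing $u$ in spherical coordinates and separating variables, or integrating the resulting ODE $\partial_r u = c(r)u$ radially) shows $u$ is homogeneous, with $c(r)=\kappa/r$ and $\kappa=N(u,r)$ the constant value of the frequency; conversely, for $u$ homogeneous of degree $\kappa$ one computes directly $D(r)=\kappa\,H(r)/r$ and hence $N\equiv\kappa$.

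The main obstacle I anticipate is not the monotonicity inequality itself — that is a clean Cauchy--Schwarz argument once the two differentiation formulas are in hand — but rather the technical care needed to justify differentiating $D(r)$ and $H(r)$ under the integral sign and to handle the boundary terms in the Rellich--Pohozaev identity rigorously (one typically works with the rescaled integrals $r^{1-n}H(r)$ and performs the change of variables $x=r\omega$ with $\omega\in S^{n-1}$ to move the $r$-dependence out of the domain). A secondary subtlety is that $N(u,r)$ is only defined where $H(u,r)>0$; one should note that if $H(u,r_0)=0$ for some $r_0$ then $u\equiv 0$ on $S_{r_0}$ and, being harmonic, $u\equiv 0$ in $B_{r_0}$, so the frequency is well-defined and finite on the whole interval of interest otherwise. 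I would present the proof assuming $u$ is, say, harmonic in a neighborhood of $\overline{B_1}$ (or smooth up to the boundary after interior regularity, Corollary~\ref{C:hypo}), which makes all the integrations by parts legitimate.
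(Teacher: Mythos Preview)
The paper does not actually prove Theorem~\ref{T:al}; it is stated as a classical result due to Almgren \cite{A} (and its generalization to variable-coefficient operators is attributed to \cite{GL1}, \cite{GL2}) and then used as a tool in the subsequent discussion. Your argument is precisely the standard proof: the differentiation formulas for $H$ and $D$, the logarithmic derivative computation, and the Cauchy--Schwarz step are all correct, as is the equality analysis via $\partial_\nu u = c(r)u$ leading to homogeneity. One small point worth tightening: the inequality $\frac{\int_{S_r}u\,\partial_\nu u}{H(r)}\le \frac{\int_{S_r}(\partial_\nu u)^2}{\int_{S_r}u\,\partial_\nu u}$ as you wrote it presumes $D(r)=\int_{S_r}u\,\partial_\nu u>0$; it is cleaner to phrase the conclusion as $N'(r)H(r)D(r)\ge 0$ being equivalent to $\big(\int_{S_r}u\,\partial_\nu u\big)^2\le H(r)\int_{S_r}(\partial_\nu u)^2$, which is Cauchy--Schwarz with no sign hypothesis, and to dispose of the case $D(r_0)=0$ separately (then $u$ is constant on $B_{r_0}$, hence on $B_1$ by analyticity, and $N\equiv 0$).
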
  

The name \emph{frequency} comes from the fact that when $u = P_\kappa$, a harmonic polynomial having frequency $\kappa$, then $N(u,r)\equiv \kappa$.
One important consequence of the monotonicity of the frequency (in fact, only its boundedness suffices!) is the following \emph{doubling condition}: for every $0<r<1$ one has
\[
\int_{B_{2r}} u^2 dx \le C(n,||u||_{W^{1,2}(B_1)})  \int_{B_{r}} u^2 dx.
\]
It is well know, see \cite{GL1}, that:
\[
\operatorname{Doubling\ condition}\ \Longrightarrow\ \operatorname{(sucp)}.
\]

In the cited papers \cite{GL1} and \cite{GL2} Theorem \ref{T:al} was generalized to solutions of elliptic equations $Lu = 0$, with $L$ as in \eqref{lip} and the coefficients of the leading part are Lipschitz continuous. This is known to be the minimal smoothness for the unique continuation to be true. For counterexamples see the groundbreaking paper by Plis \cite{Pl} for nondivergence form equations, and the subsequent work of Miller \cite{Mi} for divergence form operators. 

In view of what has been said up to this point, it is natural to ask whether the (sucp) holds for nonlocal operators. Concerning this question in this section we will prove the following result.

\begin{theorem}[Strong unique continuation for $(-\Delta)^s$]\label{T:sucpfl}
Let $u\in \mathscr L_s(\Rn)\cap C(\Rn)$ be a weak solution of $(-\Delta)^s u = 0$ in a connected open set $\Om\subset \Rn$. If $u$ vanishes to infinite order at a point $x_0\in \Om$, then $u\equiv 0$ in $\Om$.
\end{theorem}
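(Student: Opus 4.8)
The plan is to reduce the nonlocal strong unique continuation statement to a \emph{local} one via the Caffarelli--Silvestre extension, and then to apply Almgren-type monotonicity for the degenerate equation $L_a \tilde U = 0$. First I would let $a = 1-2s$ and extend $u$ to the function $\tilde U(x,y) = P_s(\cdot,|y|)\star u(x)$ on $\R^{n+1}$, reflected evenly in $y$; by Theorem \ref{T:cs} and the Dirichlet-to-Neumann formula \eqref{dn}, the hypothesis $(-\Delta)^s u = 0$ in $\Om$ translates into $\lim_{y\to 0^+} y^a \p_y U(x,y) = 0$ on $\Om$, which (after the work alluded to in the proof of Theorem \ref{T:abg2}, i.e.\ Lemma 4.1 in \cite{CS07}) means that $\tilde U$ is a genuine weak solution of $L_a \tilde U = 0$ in the thick cylinder $D = \Om\times(-d,d)$, with $\tilde U(x,0) = u(x)$. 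The weight $|y|^a$ is an $A_2$-weight since $a\in(-1,1)$, so by \cite{FKS} the solution $\tilde U$ is locally H\"older continuous.

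The next step is the transfer of the vanishing order: if $u = \tilde U(\cdot,0)$ vanishes to infinite order at $x_0\in\Om$, I need to deduce that $\tilde U$ vanishes to infinite order at $(x_0,0)\in\R^{n+1}$ in the appropriate weighted sense, namely that for every $N$ one has $\int_{B_e((x_0,0),r)} \tilde U^2\,|y|^a\,dx\,dy = O(r^N)$ as $r\to 0^+$. This is exactly the delicate point mentioned in the introduction to Section \ref{S:almgren}: the information that the trace vanishes to infinite order does not \emph{a priori} pass to the bulk solution, and one must run a blowup analysis. I would follow the scheme of \cite{ACS}: consider the Almgren frequency function for the weighted energy,
\[
N(\tilde U, x_0, r) = \frac{r \int_{B_e((x_0,0),r)} |\nabla \tilde U|^2\, |y|^a\, dx\, dy}{\int_{S_e((x_0,0),r)} \tilde U^2\, |y|^a\, dH_n},
\]
which, by the monotonicity formula of Caffarelli--Silvestre (Theorem \ref{T:almgrenEO} of \cite{CS07}, invoked in the statement), is monotone nondecreasing in $r$; hence it has a finite limit $\kappa = N(\tilde U, x_0, 0^+)$ unless $\tilde U\equiv 0$ near $(x_0,0)$. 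One then rescales, $\tilde U_r(X) = \tilde U(x_0 + rX)/\big(\frac{1}{r^n}\int_{S_e((x_0,0),r)}\tilde U^2|y|^a\big)^{1/2}$, extracts via the Caccioppoli inequality of Proposition \ref{P:cacc} and the weighted compactness of \cite{FKS} a limit $\tilde U_0$ which is a nonzero global solution of $L_a \tilde U_0 = 0$, homogeneous of degree $\kappa$, and \emph{even} in $y$. The evenness forces $\kappa$ to be finite and, crucially, the restriction $\tilde U_0(\cdot,0)$ to be a nonzero homogeneous function of degree $\kappa$ on $\Rn$ — which contradicts the infinite-order vanishing of $u$ at $x_0$ (one checks that the doubling property implied by boundedness of the frequency, together with the H\"older continuity of $\tilde U$, yields a genuine lower bound $\int_{B_r}u^2 \geq c\, r^{2\kappa + n}$, incompatible with $O(r^N)$ for $N$ large). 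Thus $\tilde U\equiv 0$ in a neighborhood of $(x_0,0)$, hence $u\equiv 0$ near $x_0$.

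To finish I would propagate this from a neighborhood of $x_0$ to all of the connected set $\Om$. The frequency being locally bounded gives a doubling inequality for $\tilde U$ around \emph{every} point of $\Om\times\{0\}$ with a uniform constant on compact subsets, and a standard chaining argument (exactly as in \cite{GL1}) turns the vanishing of $u$ on an open subset of $\Om$ into vanishing on the whole connected component; equivalently, the set where $u$ vanishes to infinite order is both open and closed in $\Om$. Hence $u\equiv 0$ in $\Om$.

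The main obstacle is unquestionably the transfer of infinite-order vanishing from the trace $u$ to the bulk extension $\tilde U$, together with the blowup/compactness argument establishing that the blowup limit is even in $y$ with a nonzero trace. Monotonicity of the frequency by itself only controls the \emph{bulk} vanishing order; the subtle point — and the reason one imports the rescaled family from \cite{ACS} rather than working with $\tilde U$ directly — is that the natural rescalings must be adapted so that the boundary condition $\lim_{y\to 0^+} y^a\p_y U = 0$ survives the limit and the limit profile remains nontrivial on $\{y=0\}$. Once that is in place, the doubling estimate and the open-closed dichotomy are routine, following \cite{GL1}, \cite{GL2} verbatim in the weighted setting of \cite{FKS}.
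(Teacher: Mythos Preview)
Your strategy is the paper's: extend via Caffarelli--Silvestre, apply the Almgren monotonicity of Theorem~\ref{T:almgrenEO} to $\tilde U$, take the blowups of \cite{ACS}, and derive a contradiction from a nontrivial trace of the homogeneous limit $\tilde U_0$. You also correctly flag the crux --- showing that $\tilde U_0(\cdot,0)\not\equiv 0$.

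Two points in your write-up need tightening, and they are exactly the places where the paper does extra work. First, ``evenness forces the restriction $\tilde U_0(\cdot,0)$ to be nonzero'' is not a valid justification: evenness of $\tilde U_0$ in $y$ is automatic from the construction and by itself says nothing about the trace. What is actually used (the paper imports it from R\"uland \cite{Ru}, Steps~1--2 of Proposition~2.2 there) is that if $\tilde U_0(\cdot,0)\equiv 0$, then together with the zero weighted Neumann condition $\lim_{y\to 0^+}y^a\partial_y\tilde U_0=0$ (which does survive the limit, as you note), $\tilde U_0$ would vanish to infinite order at $(0,0)$ in the \emph{thick} space --- incompatible with its being nonzero and homogeneous. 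Second, your parenthetical ``doubling\ldots yields $\int_{B_r}u^2\ge c\,r^{2\kappa+n}$'' hides the very bulk-to-trace passage you are trying to establish: the doubling you get controls $\tilde H(\tilde U,r)$, not integrals of $u$ on $\{y=0\}$. The paper's way around this is cleaner: once $\tilde U_0(\cdot,0)\not\equiv 0$, the $C^{0,\alpha}$ convergence $\tilde U_{r_j}\to\tilde U_0$ on compact subsets of $\{y=0\}$ gives $\|u\|_{L^\infty(B(r_j\bar r))}\ge \tfrac{C}{2}\,r_j^{-\frac{n+a}{2}}\sqrt{\tilde H(\tilde U,r_j)}$ directly from the definition of the rescalings, and then the non-degeneracy estimate of Theorem~\ref{T:nondeg} turns this into a fixed-power lower bound contradicting infinite-order vanishing of $u$.
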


\begin{remark}\label{R:sucp}
The reader should bear in mind that there is a way of dealing with Theorem \ref{T:sucpfl} different from the one that we are going to present below. In fact, similarly to the local case, solutions of $(-\Delta)^s u = 0$ in an open set $\Om\subset \Rn$ are not only $C^\infty(\Om)$, see Theorems \ref{T:fh} and \ref{T:hypo}, but in fact real analytic! We will however not pursue this interesting aspect in these notes since, besides not being the real-analyticity an easy fact to establish, it would also confine our discussion to a special situation. 
\end{remark}  

Since there presently exists no direct Almgren type monotonicity formula, nor to the best of our knowledge there exists a direct Carleman estimate for solutions of $(-\Delta)^s u = 0$, in order to prove Theorem \ref{T:sucpfl} one has to proceed indirectly and resort to the extension problem \eqref{ext2} above. As we will see, this imposes a delicate detour since it is not a priori true that the information of the vanishing to infinite order transfers from the solution of $(-\Delta)^s u = 0$, to that of the extension problem.
In light of this it would be desirable (and also quite interesting) to know whether a direct nonlocal analogue of Theorem \ref{T:al} be possible. Perhaps the Rellich-Pohozaev identity in the cited work \cite{ROS2} might prove useful in this direction. Concerning nonlocal integral identities we also mention the work \cite{Gr3} which contains a generalization to pseudodifferential operators of the results in \cite{ROS2}.

In what follows we sketch a proof of Theorem \ref{T:sucpfl} based on monotonicity formulas. This is a special case of the work \cite{FF14}, in which the authors prove a general result for nonlocal semilinear equations combining the extension procedure with the approach in \cite{GL1}, \cite{GL2}. We mention that a different approach that combines the extension procedure with Carleman estimates was developed in \cite{Ru}. 

In the sequel we continue to use the notation of Section \ref{S:flso}, and will assume that $-1<a<1$ is given. 
We will need the following simple lemma which provides a trace inequality (and a reverse form of it) for functions in the Sobolev space $W^{1,2}(B_e(r),|y|^a dX)$.

\begin{lemma}\label{L:trace}
For $r>0$ let $\tilde U\in W^{1,2}(B_e(r),|y|^a dX)$. There exists a constant $C(n,a)>0$ such that
\begin{equation}\label{t1}
\int_{S_e(r)} \tilde U^2 |y|^a d\sigma  \le C(n,a) \left(\frac 1r \int_{B_e(r)} \tilde U^2 |y|^a dX +  r \int_{B_e(r)} |\nabla \tilde U|^2 |y|^a dX\right).
\end{equation}
and
\begin{equation}\label{t2}
\frac 1r \int_{B_e(r)} \tilde U^2 |y|^a dX \le C(n,a) \left(\int_{S_e(r)} \tilde U^2 |y|^a d\sigma +  r \int_{B_e(r)} |\nabla \tilde U|^2|y|^a dX\right).
\end{equation}
\end{lemma}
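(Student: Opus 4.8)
\textbf{Proof proposal for Lemma \ref{L:trace}.}

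The plan is to reduce both inequalities to standard estimates by exploiting the fact that $\omega(X) = |y|^a$ is a Muckenhoupt $A_2$-weight in $\R^{n+1}$ (recall $-1<a<1$), together with a scaling argument that isolates the role of $r$. First I would dispose of the scaling: writing $\tilde U_r(X) = \tilde U(rX)$ on $B_e(1)$, a change of variables shows that $\int_{S_e(r)} \tilde U^2 |y|^a d\sigma = r^{n+a} \int_{S_e(1)} \tilde U_r^2 |y|^a d\sigma$, that $\frac 1r \int_{B_e(r)} \tilde U^2 |y|^a dX = r^{n+a} \int_{B_e(1)} \tilde U_r^2 |y|^a dX$, and that $r \int_{B_e(r)} |\nabla \tilde U|^2 |y|^a dX = r^{n+a} \int_{B_e(1)} |\nabla \tilde U_r|^2 |y|^a dX$; here I use that $a+1>0$ so the weight is locally integrable and these integrals are finite. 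Hence it suffices to prove \eqref{t1} and \eqref{t2} for $r=1$, with the $r$-powers reinstated automatically.

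For \eqref{t1} with $r=1$, the cleanest route is a one-dimensional identity in the radial variable combined with the coarea formula. Using polar coordinates $X = \varrho \omega$, $\varrho\in(0,1)$, $\omega\in S_e(1)$, and the fundamental theorem of calculus $\tilde U(\omega)^2 = \tilde U(\varrho\omega)^2 + \int_\varrho^1 \frac{d}{dt}\big(\tilde U(t\omega)^2\big)\,dt$, one integrates against $\varrho^{n+a}$ over $\varrho\in(1/2,1)$ (so that on that range $|y|^a$ restricted to $S_e(\varrho)$ is comparable, up to constants depending on $n,a$, to $|y|^a$ on $S_e(1)$ — this uses the homogeneity of $|y|^a$ and the fact that the angular part is unchanged). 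After multiplying by the appropriate power of $\varrho$ and integrating, the first term on the right produces $\int_{B_e(1)\setminus B_e(1/2)} \tilde U^2 |y|^a dX$, bounded by $\int_{B_e(1)} \tilde U^2 |y|^a dX$, and the second term, after Cauchy--Schwarz in the $t$-integral, produces $\int_{B_e(1)} |\nabla \tilde U|^2 |y|^a dX$. Assembling these yields \eqref{t1} for $r=1$. (Technically one first establishes the inequality for $\tilde U$ smooth up to the boundary and then passes to the limit using that such functions are dense in $W^{1,2}(B_e(1),|y|^a dX)$, which holds because $|y|^a\in A_2$.)

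For the reverse inequality \eqref{t2} with $r=1$, the idea is the same FTC identity read in the opposite direction: $\tilde U(\varrho\omega)^2 = \tilde U(\omega)^2 - \int_\varrho^1 \frac{d}{dt}\big(\tilde U(t\omega)^2\big)\,dt$, now integrate against $\varrho^{n+a}\,d\varrho$ over $(0,1)$ and against $d\sigma(\omega)$ over $S_e(1)$. The first term gives a constant times $\int_{S_e(1)} \tilde U^2|y|^a d\sigma$, and the cross term is again controlled by Cauchy--Schwarz by $\int_{B_e(1)}\tilde U^2|y|^a dX$ plus $\int_{B_e(1)}|\nabla\tilde U|^2|y|^a dX$; absorbing the small multiple of $\int_{B_e(1)}\tilde U^2|y|^a dX$ into the left side (or, more simply, keeping track of constants since no absorption is actually needed here as the exponents work out) gives \eqref{t2}. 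The main obstacle I anticipate is purely bookkeeping: making sure the weight comparisons on spheres $S_e(\varrho)$ for $\varrho$ in a fixed dyadic range are uniform in the constants, and justifying the density/approximation step so that the integrations by parts and FTC manipulations are legitimate for a general $W^{1,2}(B_e(r),|y|^a dX)$ function rather than a smooth one; both are standard consequences of $|y|^a$ being an $A_2$-weight, as in \cite{FKS}, but they should be stated carefully.
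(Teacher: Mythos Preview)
Your approach is correct, but the paper takes a cleaner and more direct route. Rather than scaling to $r=1$ and running separate FTC-in-the-radial-variable arguments for the two inequalities, the paper applies the divergence theorem once to the vector field $\tilde U^2 |y|^a X$ on $B_e(r)$ (after the same density reduction you mention), obtaining the exact Rellich-type identity
\[
\int_{S_e(r)} \tilde U^2 |y|^a d\sigma \;=\; \frac{n+a+1}{r}\int_{B_e(r)} \tilde U^2 |y|^a\,dX \;+\; \frac{2}{r}\int_{B_e(r)} \tilde U\,\langle\nabla\tilde U,X\rangle\,|y|^a\,dX.
\]
Both \eqref{t1} and \eqref{t2} then follow immediately by estimating the cross term with $|X|\le r$ and Cauchy--Schwarz, with no scaling step, no restriction to a dyadic shell, and no weight-comparison bookkeeping. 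In fact, if you carry out your argument for \eqref{t2} carefully (integrating your FTC identity against $\varrho^{n+a}|\omega_y|^a\,d\varrho\,d\sigma(\omega)$ over $(0,1)\times S_e(1)$ and using Fubini on the $t$-integral), you recover precisely this same identity at $r=1$; so your second argument is really the divergence-theorem computation in disguise. Your treatment of \eqref{t1} via the shell $\varrho\in(1/2,1)$ works too, but it is the weight/Jacobian comparison on that shell that is doing the work the divergence theorem handles automatically. The paper's version is shorter and yields both estimates from a single line.
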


\begin{proof}
Since $\omega(X) = |y|^a$ is an $A_2$ weight of Muckenhoupt, from the general result in Theorem 6.1 in \cite{Chu} we know that $C^\infty(\overline B_e(r))$ is dense in $W^{1,2}(B_e(r),|y|^a dX)$. Therefore, in order to prove \eqref{t1} or \eqref{t2} it suffices to assume that $\tilde U\in C^\infty(\overline B_e(r))$. For any such function, if we indicate with $\nu$ the outer unit normal to $S_e(r)$, applying the divergence theorem we obtain
\begin{align*}
\int_{S_e(r)} \tilde U^2 |y|^a d\sigma & = \frac 1r \int_{S_e(r)}  <\tilde U^2 |y|^a X,\nu> d\sigma = \int_{B_e(r)} \operatorname{div}(\tilde U^2 |y|^a X) dX
\\
& = \frac{n+a+1}r \int_{B_e(r)} \tilde U^2 |y|^a dX + \frac 2r \int_{B_e(r)} \tilde U <\nabla \tilde U,X> |y|^a dX
\end{align*}
From this identity the desired conclusions \eqref{t1} and \eqref{t2} easily follow.

\end{proof}

We now assume that $\tilde U(x,y)\in W^{1,2}(B_e(R),|y|^a dX)$ is a weak solution of the equation $L_a \tilde U = 0$ in a ball $B_e(R)$, and we assume that $\tilde U$ is even in the variable $y$. For any $r\in (0,R)$ we consider the quantities 
\begin{align}
\tilde H(\tilde U,r) & = \int_{S_{e}(r)} \tilde U^2 |y|^a  dH_{n},
\label{tH}\\
\tilde D(\tilde U,r) & = (1-a)^{2a}\int_{B_e(r)}  |\nabla \tilde U|^2 |y|^a dx dy,\ \ \ 0<r<R,
 \label{tD}\\
 \tilde N(\tilde U,r) &=  \frac{r \tilde D(\tilde U,r)}{\tilde H(\tilde U,r)}.
\label{tN}
\end{align}

In Theorem 6.1 in \cite{CS07} Caffarelli and Silvestre proved the following important result.

\begin{theorem}[Monotonicity formula of Almgren type for $L_a$]\label{T:almgrenEO}
Suppose that for no $r\in (0,R_0)$ we have $\tilde H(\tilde U,r) =0$. Then, the function $r\to \tilde N(\tilde U,r)$ is nondecreasing on $(0,R_0)$. Furthermore, $\tilde N(\tilde U,\cdot) \equiv \tilde \kappa$ if and only if $\tilde U$ is homogeneous of degree $\tilde \kappa$ with respect to the standard Euclidean dilations $\tilde \delta_\lambda(x,y) = (\la x,\la y)$.
\end{theorem}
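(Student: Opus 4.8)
\textbf{Proof strategy for Theorem \ref{T:almgrenEO} (Almgren monotonicity for $L_a$).}
The plan is to follow the classical Garofalo--Lin energy method, adapted to the degenerate weight $|y|^a$, carried out entirely on the ball $B_e(r)$. First I would record the two basic ``first-variation'' identities for a weak solution $\tilde U$ of $L_a \tilde U = \operatorname{div}(|y|^a \nabla \tilde U) = 0$ that is even in $y$. Testing the equation against $\tilde U$ itself (integrating over $B_e(r)$ and using the divergence theorem, noting that $|y|^a$ is an $A_2$ weight so that, by the density statement quoted from \cite{Chu} in Lemma \ref{L:trace}, smooth functions suffice) yields the \emph{Rellich-type identity}
\[
\tilde D(\tilde U,r) = (1-a)^{2a}\int_{S_e(r)} \tilde U\, \frac{\p \tilde U}{\p \nu}\, |y|^a\, dH_n ,
\]
where $\nu$ is the outer unit normal to $S_e(r)$. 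Differentiating the definition \eqref{tH} of $\tilde H$ in $r$ (after rescaling the sphere integral to the unit sphere, exactly as in the local case), one gets $\tilde H'(\tilde U,r) = \frac{n+a}{r}\tilde H(\tilde U,r) + 2\int_{S_e(r)} \tilde U\, \frac{\p \tilde U}{\p \nu}\,|y|^a\, dH_n$, so that combining with the Rellich identity gives the clean formula $\tilde H'(\tilde U,r) = \frac{n+a}{r}\tilde H(\tilde U,r) + \frac{2}{(1-a)^{2a}}\tilde D(\tilde U,r)$.

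Next I would compute $\tilde D'(\tilde U,r)$. Here one uses the second ``Rellich--Pohozaev'' identity obtained by testing $L_a\tilde U = 0$ against the vector-field derivative $\langle X,\nabla \tilde U\rangle$ with $X = (x,y)$; since $L_a$ is homogeneous of degree $2$ under the isotropic dilations $\tilde\delta_\la(x,y)=(\la x,\la y)$ and the weight $|y|^a$ is homogeneous of degree $a$, this produces the Pohozaev-type formula
\[
\tilde D'(\tilde U,r) = \frac{n+a-1}{r}\tilde D(\tilde U,r) + 2(1-a)^{2a}\int_{S_e(r)} \left(\frac{\p \tilde U}{\p \nu}\right)^2 |y|^a\, dH_n .
\]
With the three formulas for $\tilde H'$, $\tilde D'$ and $\tilde D = (1-a)^{2a}\int_{S_e(r)} \tilde U \p_\nu \tilde U |y|^a$ in hand, a direct logarithmic differentiation of $\tilde N(\tilde U,r) = r\tilde D/\tilde H$ gives
\[
\frac{d}{dr}\log \tilde N(\tilde U,r) = \frac1r + \frac{\tilde D'(\tilde U,r)}{\tilde D(\tilde U,r)} - \frac{\tilde H'(\tilde U,r)}{\tilde H(\tilde U,r)}
= 2\left(\frac{\int_{S_e(r)} (\p_\nu \tilde U)^2 |y|^a}{\int_{S_e(r)} \tilde U \p_\nu \tilde U |y|^a} - \frac{\int_{S_e(r)} \tilde U \p_\nu \tilde U |y|^a}{\int_{S_e(r)} \tilde U^2 |y|^a}\right),
\]
and the right-hand side is $\ge 0$ by the Cauchy--Schwarz inequality applied to the inner product $\langle f,g\rangle = \int_{S_e(r)} fg\, |y|^a\, dH_n$ on the weighted sphere. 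This proves monotonicity on any interval where $\tilde H(\tilde U,\cdot)$ does not vanish. For the rigidity statement: equality in Cauchy--Schwarz for all $r$ in a subinterval forces $\p_\nu \tilde U(X) = \frac{\tilde\kappa}{r}\tilde U(X)$ on each sphere $S_e(r)$ with $\tilde\kappa = \tilde N(\tilde U,\cdot)$ constant, i.e. $\langle X,\nabla \tilde U\rangle = \tilde\kappa\, \tilde U$, which is precisely the Euler identity characterizing homogeneity of degree $\tilde\kappa$ under $\tilde\delta_\la$; conversely, for a $\tilde\delta_\la$-homogeneous function of degree $\tilde\kappa$ one checks directly from \eqref{tH}--\eqref{tN} (rescaling both integrals) that $\tilde N \equiv \tilde\kappa$.

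The main obstacle, and the point requiring the most care, is justifying the integrations by parts in the two Rellich-type identities given the degeneracy of the weight $|y|^a$ with $-1 < a < 1$: one must verify that the boundary terms on $\{y=0\}$ genuinely vanish. This is where the evenness of $\tilde U$ in $y$, the $A_2$-weight theory and the regularity theory of Fabes--Kenig--Serapioni \cite{FKS} enter — they ensure $\tilde U \in C^{0,\alpha}_{loc}$, the natural Neumann-type condition $\lim_{y\to0^+} y^a \p_y \tilde U = 0$ (which is \eqref{dn} with vanishing right-hand side, since $L_a\tilde U=0$ extends evenly), and a cutoff/approximation argument near $\{y=0\}$ that kills the would-be boundary contribution. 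Once these boundary terms are controlled, the computation above is formal and routine; I would present the boundary-term vanishing as the technical heart, citing \cite{CS07} and \cite{FKS} for the required density and trace facts (the relevant trace inequalities being exactly Lemma \ref{L:trace}), and leave the algebra of the three differentiation formulas to the reader.
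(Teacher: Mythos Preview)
Your direct Garofalo--Lin argument is correct and is essentially the proof given in \cite{CS07}, which the paper cites as the source of Theorem \ref{T:almgrenEO}. However, the paper itself does \emph{not} reprove the result this way. Instead, it takes a different route: it invokes the change of variables $(x,z) = (x,h(y))$ from Section \ref{S:flso} connecting $L_a$ to the Baouendi operator $\Ba$, records the transformation identity $\tilde N(\tilde U,r) = (1-a)\,N(U,h(r))$ (formula \eqref{tfN}, via Proposition \ref{P:hth}), and then simply observes that since $h$ is strictly increasing, the monotonicity of $\tilde N$ is \emph{equivalent} to the monotonicity of the frequency $N(U,\cdot)$ for $\Ba$, which was already established in \cite{Ga} (Theorem \ref{T:Ga}). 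This is the content of Theorem \ref{T:equi}.

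What each approach buys: your direct computation is self-contained and works uniformly for $-1<a<1$, at the cost of having to justify the boundary integrations by parts near $\{y=0\}$ (the point you correctly flag as the technical heart). The paper's route avoids redoing those computations by leveraging the already-proved Baouendi--Grushin monotonicity, and it reinforces the theme of Sections \ref{S:flso}--\ref{S:how} that results for $L_a$ and $\Ba$ transfer to one another via the map $\Phi$; the trade-off is that, as stated, it only covers $0\le a<1$ directly (the paper remarks afterward that the range $-1<a<0$ can be handled by the regularization procedures of \cite{Ga}).
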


In \cite{GRO} it was shown that, at least in the range $0\le a<1$, Theorem \ref{T:almgrenEO} can be directly derived by the Almgren type monotonicity formula for solutions of $\Ba U = 0$ established in \cite{Ga}. In fact, with $a$ is such range consider the number $\alpha \ge 0$ defined by $\alpha = \frac{a}{1-a}$, and for $U(x,z)$ defined by the equation \eqref{tu}, consider the height function, the Dirichlet integral and the frequency of $U$, respectively defined  by:
\begin{align}
H(u,r) & = \int_{S_{\rho_\alpha}(r)} U^2\frac{\psi_\alpha}{|\nabla \ra|}\ dH_{n},\ \ \ 0<r<R, \label{D}\\
D(u,r) & = \int_{B_{\rho_\alpha}(r)}|\nabla_\alpha U|^2 dx dz
\\
N(U,r) & = \frac{rD(U,r)}{H(U,r)},
\end{align}
where $|\nabla_\alpha U|^2$ is the degenerate \emph{carr\'e du champ} introduced in \eqref{carre} above. One has the following transformation formulas contained in Lemmas 2.7 and 2.8 in \cite{GRO}.

\begin{prop}\label{P:hth}
For every $r>0$ one has
\begin{equation}\label{tfs}
\tilde H(\tilde U,r) =  r^a H(U,h(r)),\ \ \ \ \ \ \ \ \tilde D(\tilde U,r) = (1-a)^{a}  D(U,h(r)).
\end{equation}
\end{prop}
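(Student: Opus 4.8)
The statement to prove is Proposition \ref{P:hth}, asserting the transformation formulas $\tilde H(\tilde U,r) = r^a H(U,h(r))$ and $\tilde D(\tilde U,r) = (1-a)^{a} D(U,h(r))$ relating the height and Dirichlet quantities for $\tilde U$ (solving $L_a\tilde U=0$) and $U$ (solving $\Ba U = 0$) under the change of variables $z = h(y) = (y/(1-a))^{1-a}$.

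\medskip

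The plan is to derive both identities directly from the integral transformation formulas already established in Proposition \ref{P:integrals}, together with the pointwise relation between the \emph{carr\'e du champ} $|\nabla_\alpha U|^2$ and the weighted Euclidean gradient of $\tilde U$. First I would treat the Dirichlet integral. Recall from \eqref{tD} that $\tilde D(\tilde U,r) = (1-a)^{2a}\int_{B_e(r)}|\nabla \tilde U|^2 |y|^a\, dx\,dy$, while $D(U,r) = \int_{B_{\rho_\alpha}(r)}|\nabla_\alpha U|^2\, dx\,dz$. I would apply \eqref{si} in Proposition \ref{P:integrals} to the continuous function $(x,z)\mapsto |\nabla_\alpha U(x,z)|^2$ (which is even in $z$ since $U$ is even in $z$, $\alpha = a/(1-a)$, and the $|z|^{2\alpha}$-weighting preserves evenness), so that
\[
\int_{B_{\rho_\alpha}(h(r))} |\nabla_\alpha U|^2\, dx\,dz = (1-a)^a \int_{B_e(r)} \widetilde{|\nabla_\alpha U|^2}(x,y)\, |y|^{-a}\, dx\,dy,
\]
where the tilde denotes composition with $\Phi(x,y) = (x,h(|y|))$. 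The key computation is then the pointwise identity
\[
\widetilde{|\nabla_\alpha U|^2}(x,y) = h'(|y|)^{-2}\,|\nabla \tilde U(x,y)|^2 \cdot (\text{appropriate weight}),
\]
which I would extract from the chain rule $\p_y \tilde U = h'(|y|)\,D_z U(x,h(|y|))$ (for $y>0$) together with $\nabla_x \tilde U = \nabla_x U(x,h(|y|))$ and the definition $|\nabla_\alpha U|^2 = |z|^{2\alpha}|\nabla_x U|^2 + (D_z U)^2$. Using $h'(y) = (1-a)^a y^{-a}$ and $z^{2\alpha} = z^{2a/(1-a)}$ with $z = h(y)$, one finds that $z^{2\alpha}$ evaluated along $z = h(y)$ equals $(1-a)^{-2a}y^{2a}\cdot(\dots)$ — more precisely $h(y)^{2\alpha} = (y/(1-a))^{(1-a)\cdot 2\alpha} = (y/(1-a))^{2a}$, and $h'(y)^2 = (1-a)^{2a}y^{-2a}$, so the two combine cleanly. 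Carrying the algebra through should show $\widetilde{|\nabla_\alpha U|^2}(x,y)\,|y|^{-a} = (1-a)^a |y|^a |\nabla \tilde U(x,y)|^2 \cdot (1-a)^{-2a}$ or the like, and after collecting constants one recovers $\tilde D(\tilde U,r) = (1-a)^a D(U,h(r))$. I would double-check the power of $(1-a)$ against the target $(1-a)^a$, reconciling the $(1-a)^{2a}$ prefactor in \eqref{tD} with the factor $(1-a)^a$ from \eqref{si} and the Jacobian-type factors from the chain rule.

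\medskip

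For the height function, I would argue similarly using \eqref{si2}. From \eqref{tH}, $\tilde H(\tilde U,r) = \int_{S_e(r)} \tilde U^2 |y|^a\, dH_n$, and from \eqref{D}, $H(U,r) = \int_{S_{\rho_\alpha}(r)} U^2 \frac{\psi_\alpha}{|\nabla\rho_\alpha|}\, dH_n$. Applying \eqref{si2} to the function $U^2\psi_\alpha$ (even in $z$) gives
\[
h'(r)\int_{S_{\rho_\alpha}(h(r))} \frac{U^2\psi_\alpha}{|\nabla\rho_\alpha|}\, dH_n = (1-a)^a \int_{S_e(r)} \widetilde{U^2\psi_\alpha}(x,y)\,|y|^{-a}\, dH_n.
\]
Now $\widetilde{U^2\psi_\alpha}(x,y) = \tilde U(x,y)^2\, \tilde\psi_\alpha(x,y)$, and I would invoke the identity $\tilde\psi_\alpha(x,y) = |y|^{2a}/d_e(x,y)^{2a}$ established in the display just before \eqref{gar3}, wait — actually I should be careful: that identity appears in the derivation of \eqref{gar3}, but here on the sphere $S_e(r)$ we have $d_e(x,y) = r$, so $\tilde\psi_\alpha(x,y) = |y|^{2a} r^{-2a}$ on $S_e(r)$. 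Substituting, together with $h'(r) = (1-a)^a r^{-a}$, converts the left side into $(1-a)^a r^{-a} H(U,h(r))$ and the right side into $(1-a)^a r^{-2a}\int_{S_e(r)}\tilde U^2 |y|^a\, dH_n = (1-a)^a r^{-2a}\tilde H(\tilde U,r)$. Equating and cancelling $(1-a)^a$ yields $r^{-a}H(U,h(r)) = r^{-2a}\tilde H(\tilde U,r)$, i.e. $\tilde H(\tilde U,r) = r^a H(U,h(r))$, as desired.

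\medskip

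The main obstacle I anticipate is bookkeeping of the various powers of $(1-a)$ and of $|y|$ that enter through three separate sources: the change-of-variables factor $(1-a)^a$ in \eqref{si}--\eqref{si2}, the derivative $h'$ with its own $(1-a)^a$ and $|y|^{-a}$, and the pointwise values of $\psi_\alpha$ and $|z|^{2\alpha}$ along $z = h(y)$. A secondary technical point is justifying the application of \eqref{si}--\eqref{si2} to $|\nabla_\alpha U|^2$ and $U^2\psi_\alpha$ rather than to a continuous $U$ directly — this requires only that these are continuous and even in $z$ on the relevant sets, which holds for sufficiently regular (e.g. weak, hence by \cite{FKS} locally H\"older, and by interior estimates smooth away from $\{z=0\}$) solutions; for the boundary behavior near $\{z=0\}$ one can argue by approximating the balls/spheres from slightly inside, or note that the integrands are locally integrable against the weights, so the formulas extend by a density/limiting argument. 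Modulo these routine verifications, the proof is a direct substitution, and it is essentially the content of Lemmas 2.7 and 2.8 in \cite{GRO}.
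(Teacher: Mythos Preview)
Your approach is correct and is exactly the route of the cited reference \cite{GRO}; the paper itself does not prove the proposition but only quotes that source, and in fact your derivation of the $\tilde H$ identity is already carried out verbatim (with $U$ in place of $U^2$) in the passage leading to \eqref{gar3}--\eqref{gar4}. The only content is the bookkeeping you flag, and your handling of it is right: on $S_e(r)$ one has $\tilde\psi_\alpha = |y|^{2a}r^{-2a}$ and $h'(r) = (1-a)^a r^{-a}$, while for the solid integral the chain rule gives $\widetilde{|\nabla_\alpha U|^2}(x,y) = (1-a)^{-2a}|y|^{2a}|\nabla\tilde U|^2$, after which \eqref{si} yields the Dirichlet identity.
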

As a consequence of \eqref{tfs} we obtain the remarkable formula
\begin{equation}\label{tfN}
\tilde N(\tilde U,r) = (1-a) N(U,h(r)).
\end{equation}

We next state a result which is a special case of Theorem 4.2 in \cite{Ga}.

\begin{theorem}[Almgren type monotonicity formula for $\Ba$]\label{T:Ga}
Suppose that for no $r\in (0,R_0)$ we have $H(U,r) =0$. Then, the function $r\to N(U,r)$ is nondecreasing in $(0,R_0)$. Furthermore, $N(U,\cdot) \equiv \kappa$ if and only if $U$ is homogeneous of degree $\kappa$ with respect to the nonisotropic dilations \eqref{dilB}.
\end{theorem}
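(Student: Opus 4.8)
\textbf{Proof proposal for Theorem \ref{T:Ga} (Almgren type monotonicity formula for $\Ba$).}

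The plan is to establish the monotonicity of the frequency $r\mapsto N(U,r)$ by differentiating $\log N(U,r)$ and showing the derivative is nonnegative, exactly as in the classical Almgren argument, but carried out with respect to the anisotropic structure associated with the dilations \eqref{dilB}. First I would record the two fundamental differentiation identities for the auxiliary quantities $H(U,r)$ and $D(U,r)$ defined in \eqref{D}. Using Federer's coarea formula (Cavalieri's principle) as in \eqref{sigmaalpha}, the scaling $\rho_\alpha(\delta_\lambda(x,z)) = \lambda\,\rho_\alpha(x,z)$ from \eqref{gaugehom}, and the homogeneity of $\psi_\alpha$ of degree zero, I would show that $H(U,r)$ is, after the natural rescaling, a spherical average, and obtain
\[
H'(U,r) = \frac{Q-1}{r} H(U,r) + 2\int_{S_{\rho_\alpha}(r)} U\,\langle \nabla U,\nabla \rho_\alpha\rangle \frac{dH_n}{|\nabla \rho_\alpha|},
\]
where the second term, via \eqref{nara2}, equals $\frac{2}{r}\int_{S_{\rho_\alpha}(r)} U\,(Z_\alpha U)\,\frac{\psi_\alpha}{|\nabla\rho_\alpha|}\,dH_n$. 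An integration by parts (the divergence theorem adapted to $\Ba$, using that $Z_\alpha$ is the infinitesimal generator of $\delta_\lambda$) combined with the equation $\Ba U = 0$ then yields the key identity $\int_{S_{\rho_\alpha}(r)} U (Z_\alpha U)\frac{\psi_\alpha}{|\nabla\rho_\alpha|}\,dH_n = r\,D(U,r)$, so that
\[
H'(U,r) = \frac{Q-1}{r} H(U,r) + 2 D(U,r).
\]

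Next I would differentiate $D(U,r)$. A Rellich--Pohozaev type identity for $\Ba$ — contract the equation $\Ba U = 0$ against the vector field $Z_\alpha U$ and integrate by parts on $B_{\rho_\alpha}(r)$ — produces
\[
D'(U,r) = \frac{Q-2}{r} D(U,r) + 2\int_{S_{\rho_\alpha}(r)} \frac{(Z_\alpha U)^2}{r}\,\frac{\psi_\alpha}{|\nabla\rho_\alpha|^2}\,dH_n,
\]
the boundary term on the right being the ``radial derivative'' of $U$ in the $\rho_\alpha$-gauge. Combining the two derivative formulas gives
\[
\frac{N'(U,r)}{N(U,r)} = \frac{1}{r} + \frac{D'(U,r)}{D(U,r)} - \frac{H'(U,r)}{H(U,r)}
= \frac{D'(U,r)}{D(U,r)} - \frac{2D(U,r)}{H(U,r)},
\]
and substituting the expressions above, the $\frac{Q-2}{r}$ and $\frac{Q-1}{r}$ pieces combine with $\frac1r$ to cancel, leaving
\[
\frac{N'(U,r)}{N(U,r)} = \frac{2}{r}\left(\frac{\int_{S_{\rho_\alpha}(r)} (Z_\alpha U)^2 \frac{\psi_\alpha}{|\nabla\rho_\alpha|^2} dH_n}{\int_{S_{\rho_\alpha}(r)} U(Z_\alpha U)\frac{\psi_\alpha}{|\nabla\rho_\alpha|}dH_n} - \frac{\int_{S_{\rho_\alpha}(r)} U(Z_\alpha U)\frac{\psi_\alpha}{|\nabla\rho_\alpha|}dH_n}{\int_{S_{\rho_\alpha}(r)} U^2 \frac{\psi_\alpha}{|\nabla\rho_\alpha|}dH_n}\right).
\]
The right-hand side is nonnegative by the Cauchy--Schwarz inequality applied to the measure $\frac{\psi_\alpha}{|\nabla\rho_\alpha|}\,dH_n$ on $S_{\rho_\alpha}(r)$, with the pairing of $U$ against $Z_\alpha U / |\nabla\rho_\alpha|$; this is precisely the statement that $\log N(U,r)$ is nondecreasing, hence so is $N(U,r)$.

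For the rigidity case, equality $N(U,\cdot)\equiv\kappa$ forces equality in the Cauchy--Schwarz step for every $r$, i.e. $Z_\alpha U / |\nabla\rho_\alpha| = c(r)\, U / |\nabla\rho_\alpha|$ on each sphere $S_{\rho_\alpha}(r)$, so $Z_\alpha U = c(r) U$; evaluating $N$ then shows $c(r) = \kappa$, and since $Z_\alpha$ generates $\delta_\lambda$, the ODE $Z_\alpha U = \kappa U$ integrates to $U(\delta_\lambda(x,z)) = \lambda^\kappa U(x,z)$, i.e. $U$ is $\delta_\lambda$-homogeneous of degree $\kappa$; the converse is an immediate computation. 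I expect the main obstacle to be the careful justification of the two boundary-integral differentiation formulas and, in particular, the Rellich--Pohozaev identity for $\Ba$ in the degenerate weighted setting: one must handle the degeneracy of $\Ba$ along $M = \Rn\times\{0\}$ and the fact that $|\nabla\rho_\alpha|$ vanishes there, which requires either an approximation argument (cutting off a neighborhood of $M$ and passing to the limit) or invoking the regularity theory for $\Ba$ from \cite{Ga} to ensure the integrals and integrations by parts are legitimate. All of these ingredients are available in \cite{Ga}, from which this theorem is quoted as a special case, so the proof ultimately amounts to assembling Lemma \ref{l:gradalpharho}, the identities \eqref{sigmaalpha} and \eqref{Za}, and the Cauchy--Schwarz inequality in the correct order.
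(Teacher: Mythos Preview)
The paper does not give its own proof of this theorem: it is quoted as a special case of Theorem~4.2 in \cite{Ga} and no argument is supplied. Your outline is precisely the strategy of that reference --- compute $H'$ and $D'$, rewrite $D'$ via a Rellich--Pohozaev identity, and close with Cauchy--Schwarz --- so in spirit your proposal matches the source exactly.

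That said, two of your displayed formulas contain bookkeeping errors that, as written, block the Cauchy--Schwarz step. First, the intermediate equality $\frac{N'}{N}=\frac{D'}{D}-\frac{2D}{H}$ is false unless $Q=2$; what actually holds is $\frac{N'}{N}=\frac{1}{r}+\frac{D'}{D}-\frac{Q-1}{r}-\frac{2D}{H}$, and it is only after inserting the $\frac{Q-2}{r}$ from $D'$ that the dimensional pieces cancel. Second, and more seriously, the weight in your Rellich identity is off. Pairing $\Ba U=0$ with $Z_\alpha U$, using $[\nabla_\alpha,Z_\alpha]=\nabla_\alpha$ and $\operatorname{div} Z_\alpha = Q$, one obtains
\[
D'(U,r)=\frac{Q-2}{r}\,D(U,r)+\frac{2}{r^{2}}\int_{S_{\rho_\alpha}(r)}(Z_\alpha U)^{2}\,\frac{\psi_\alpha}{|\nabla\rho_\alpha|}\,dH_n,
\]
with the \emph{same} surface weight $\psi_\alpha/|\nabla\rho_\alpha|$ that appears in $H(U,r)$ and in the identity $rD(U,r)=\int_{S_{\rho_\alpha}(r)}U\,Z_\alpha U\,\frac{\psi_\alpha}{|\nabla\rho_\alpha|}\,dH_n$. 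With your weight $\psi_\alpha/|\nabla\rho_\alpha|^{2}$ the three boundary integrals carry different measures, and the Cauchy--Schwarz inequality you invoke (pairing $U$ against $Z_\alpha U/|\nabla\rho_\alpha|$ in $d\mu=\frac{\psi_\alpha}{|\nabla\rho_\alpha|}dH_n$) does not reproduce them. Once the weights are corrected, everything lines up: with $d\mu=\frac{\psi_\alpha}{|\nabla\rho_\alpha|}\,dH_n$ one gets
\[
\frac{N'(U,r)}{N(U,r)}=\frac{2}{r}\left(\frac{\int (Z_\alpha U)^{2}\,d\mu}{\int U\,Z_\alpha U\,d\mu}-\frac{\int U\,Z_\alpha U\,d\mu}{\int U^{2}\,d\mu}\right)\ge 0,
\]
and your rigidity argument via $Z_\alpha U=\kappa U$ goes through verbatim.
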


Having stated Theorem \ref{T:Ga}, we now make an interesting observation.
 
\begin{theorem}\label{T:equi}
The following is true when $0\le a<1$:
\[
\text{Theorem}\ \ref{T:Ga}\ \Longleftrightarrow\ \text{Theorem}\ \ref{T:almgrenEO}.
\]
\end{theorem}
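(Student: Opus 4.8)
The plan is to show the equivalence Theorem~\ref{T:Ga} $\Longleftrightarrow$ Theorem~\ref{T:almgrenEO} by exploiting the dictionary between the two settings already assembled in Proposition~\ref{P:hth} and equation~\eqref{tfN}. The key observation is that the change of variables $h$ in \eqref{h} is a strictly increasing bijection of $(0,\infty)$ onto $(0,\infty)$, and that the correspondence $U \leftrightarrow \tilde U$ of \eqref{tu} converts, by Proposition~\ref{P:laba}, solutions of $\mathcal P_\alpha U = 0$ on the $(x,z)$ side into solutions of $L_a \tilde U = 0$ on the $(x,y)$ side, in a one-to-one onto fashion. So the only thing to do is track how the statement ``$r \mapsto N(U,r)$ is nondecreasing'' transforms under $r \mapsto h(r)$, and how the homogeneity characterization transforms under $\Phi$.

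First I would record the elementary fact that if $g:(0,\infty)\to(0,\infty)$ is a strictly increasing continuous bijection and $c>0$ is a constant, then a function $r \mapsto \Psi(r)$ is nondecreasing on $(0,R_0)$ if and only if $r \mapsto c\,\Psi(g(r))$ is nondecreasing on $(0,g^{-1}(R_0))$. Applying this with $g = h$, $c = 1-a > 0$, and $\Psi = N(U,\cdot)$, formula \eqref{tfN}, namely $\tilde N(\tilde U,r) = (1-a)\,N(U,h(r))$, immediately gives: $r\mapsto N(U,r)$ is nondecreasing on $(0,R_0)$ iff $r \mapsto \tilde N(\tilde U,r)$ is nondecreasing on $(0,h^{-1}(R_0))$. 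One also needs to check that the nonvanishing hypotheses match up: by the first identity in \eqref{tfs}, $\tilde H(\tilde U,r) = r^a H(U,h(r))$, so $\tilde H(\tilde U,r) = 0$ for some $r$ iff $H(U,h(r)) = 0$ for the corresponding $r$, hence ``$\tilde H(\tilde U,\cdot)$ never vanishes on an interval'' is equivalent to ``$H(U,\cdot)$ never vanishes on the image interval''. This establishes the equivalence of the monotonicity conclusions.

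Next I would handle the equality cases. The constancy $N(U,\cdot) \equiv \kappa$ is, by \eqref{tfN}, equivalent to $\tilde N(\tilde U,\cdot) \equiv (1-a)\kappa =: \tilde\kappa$; so the two ``rigidity'' statements correspond under $\kappa \leftrightarrow \tilde\kappa = (1-a)\kappa$. It remains to see that the two notions of homogeneity are matched by $\Phi$: a function $U$ is $\delta_\lambda$-homogeneous of degree $\kappa$ with respect to the nonisotropic dilations \eqref{dilB} if and only if $\tilde U = U\circ\Phi$ is homogeneous of degree $\tilde\kappa = (1-a)\kappa$ with respect to the Euclidean dilations $\tilde\delta_\lambda(x,y) = (\lambda x,\lambda y)$. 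This is a direct computation using \eqref{h}: since $z = h(y) = (y/(1-a))^{1-a}$, one has $h(\lambda y) = \lambda^{1-a} h(y)$, while the $x$-scaling in \eqref{dilB} is $x \mapsto \lambda^{\alpha+1} x$ with $\alpha+1 = \frac{1}{1-a}$; replacing $\lambda$ by $\mu = \lambda^{1-a}$ shows $\Phi$ intertwines $\delta_\lambda$ with $\tilde\delta_{\mu}$ precisely as claimed, and degrees scale accordingly. Putting the monotonicity part together with the matched rigidity part yields the full biconditional, and the same bijective dictionary run in reverse gives the converse implication, so the statement is an actual equivalence rather than a one-way deduction.

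The main obstacle, I expect, is not any single hard estimate — everything reduces to Proposition~\ref{P:hth} and \eqref{tfN}, which are quoted — but rather being scrupulous about the bookkeeping: making sure the nonvanishing hypotheses transform correctly (so that neither version of the theorem is vacuously invoked where the other is not), confirming that the restriction $0\le a<1$ is exactly what is needed for $h$ to be the relevant bijection and for $\alpha = \frac{a}{1-a}\ge 0$ to fall in the range covered by Theorem~\ref{T:Ga} (which is stated for $\alpha\ge 0$, i.e. \cite{Ga}'s hypothesis), and verifying the homogeneity intertwining with the correct exponents. A secondary subtlety is the regularity/density matter — one should note that $\tilde U \in W^{1,2}(B_e(R),|y|^a dX)$ even in $y$ corresponds under $\Phi$ to an admissible $U$ for the $\Ba$-frequency in \cite{Ga}, so that both frequency functions $N$ and $\tilde N$ are genuinely defined; this is where Lemma~\ref{L:trace} and the $A_2$-weight density result quoted in its proof are implicitly used.
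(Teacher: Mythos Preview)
Your proposal is correct and follows essentially the same route as the paper's own proof: both hinge on the monotonicity of $h$ together with the transformation formula \eqref{tfN}, $\tilde N(\tilde U,r) = (1-a)\,N(U,h(r))$. You are considerably more thorough than the paper, which dispatches the argument in two lines; in particular your verification that the nonvanishing hypotheses match via \eqref{tfs}, and your explicit check that $\Phi$ intertwines the anisotropic dilations $\delta_\lambda$ with the Euclidean ones $\tilde\delta_\mu$ (so that the homogeneity characterizations correspond with $\tilde\kappa = (1-a)\kappa$), fill in details the paper leaves implicit.
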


\begin{proof}
It is enough to observe that the function $r\to h(r)$ in \eqref{h} is monotonically increasing. The desired conclusion thus follows immediately by combining \eqref{tfN} with Theorem \ref{T:Ga}, keeping in mind that $\alpha = \frac{a}{1-a}$, and therefore $a = \frac{\alpha}{1+\alpha}$.  

\end{proof}

Theorem \ref{T:equi} was not noted in \cite{CS07}, but it was subsequently observed in Remark 3.2 in \cite{CSS}. We also observe that the same conclusion continues to work also in the range $-1<a<0$, since using the regularization procedures employed in \cite{Ga} one can extend the validity of the results  there to the range $-\frac 12 < \alpha <0$.

Let us return to the central objective of this section, namely Theorem \ref{T:sucpfl}. We note that one important corollary of Theorem \ref{T:almgrenEO} is the following.

\begin{corollary}\label{C:kappa}
Let $\tilde U$ be as in the hypothesis of Theorem \ref{T:almgrenEO}. Then,
$\underset{r\to 0^+}{\lim} N(\tilde U,r) = \tilde \kappa$
exists finite.
\end{corollary}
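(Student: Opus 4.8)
\textbf{Proof proposal for Corollary \ref{C:kappa}.}

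The plan is to deduce the existence of the limit from the two properties of $\tilde N(\tilde U,\cdot)$ recorded in Theorem \ref{T:almgrenEO}: that it is nondecreasing on $(0,R_0)$, and that it is bounded below. The monotonicity already guarantees that $\underset{r\to 0^+}{\lim} \tilde N(\tilde U,r)$ exists in $[-\infty,+\infty)$ (a monotone nondecreasing function always has a limit as $r\to 0^+$, possibly $-\infty$), so the only real content is to show that this limit is not $-\infty$, i.e. that $\tilde N(\tilde U,r)$ stays bounded below as $r\to 0^+$. The cleanest route is to observe directly from the definitions \eqref{tH}, \eqref{tD}, \eqref{tN} that $\tilde N(\tilde U,r)\ge 0$ for every $r\in(0,R_0)$: indeed $\tilde D(\tilde U,r) = (1-a)^{2a}\int_{B_e(r)} |\nabla \tilde U|^2 |y|^a\, dx\, dy \ge 0$ since $|y|^a\ge 0$ on $\R^{n+1}$ (recall $-1<a<1$ so $|y|^a$ is a locally integrable nonnegative weight), and by the hypothesis of Theorem \ref{T:almgrenEO} we have $\tilde H(\tilde U,r) = \int_{S_e(r)} \tilde U^2 |y|^a\, dH_n > 0$ for all $r\in(0,R_0)$; the factor $r$ is positive. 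Hence $\tilde N(\tilde U,r) = \dfrac{r\,\tilde D(\tilde U,r)}{\tilde H(\tilde U,r)}\ge 0$.

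Once nonnegativity is in hand, the conclusion follows from a standard fact of real analysis: a function $g:(0,R_0)\to [0,\infty)$ that is nondecreasing has a finite limit as $r\to 0^+$, namely $\underset{r\to 0^+}{\lim} g(r) = \underset{r\in(0,R_0)}{\inf}\, g(r)$, and this infimum is a nonnegative real number because the set $\{g(r): r\in(0,R_0)\}$ is nonempty and bounded below by $0$. Applying this with $g(r) = \tilde N(\tilde U,r)$, which is nondecreasing by Theorem \ref{T:almgrenEO} and nonnegative by the previous paragraph, we obtain that $\tilde\kappa := \underset{r\to 0^+}{\lim} \tilde N(\tilde U,r)$ exists and satisfies $0\le \tilde\kappa < \infty$. (Here I use the notation $\tilde N$ and $N$ interchangeably as in the statement of the corollary, which writes $N(\tilde U,r)$; the two agree since $\tilde U$ is the extension variable function throughout this discussion.)

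I do not expect any genuine obstacle here: the corollary is essentially a formal consequence of monotonicity plus a sign, and the only point that needs a sentence of care is the lower bound. One should make explicit that the hypothesis of Theorem \ref{T:almgrenEO} — that $\tilde H(\tilde U,r)\ne 0$ on $(0,R_0)$ — is exactly what is being assumed in the statement of Corollary \ref{C:kappa} (``Let $\tilde U$ be as in the hypothesis of Theorem \ref{T:almgrenEO}''), so that the frequency is well-defined and finite for each fixed $r$, and that $\tilde H$ being a limit of a continuous nonnegative quantity is in fact strictly positive there, which is what makes the quotient finite for each $r>0$. With that, the passage to the limit is immediate.
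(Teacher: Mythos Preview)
Your proposal is correct and matches the paper's intended reasoning: the paper states Corollary \ref{C:kappa} as an immediate consequence of Theorem \ref{T:almgrenEO} without supplying a proof, and your argument spells out exactly the two ingredients one needs---monotonicity of $\tilde N(\tilde U,\cdot)$ from Theorem \ref{T:almgrenEO} and nonnegativity of $\tilde N(\tilde U,r)$ from the definitions \eqref{tH}--\eqref{tN}. Your observation about the notation $N$ versus $\tilde N$ in the statement is also apt; it is a minor typographical slip in the paper.
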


Another crucial consequence is the following result.

\begin{theorem}[Non-degeneracy]\label{T:nondeg}
Under the assumptions in Theorem \ref{T:almgrenEO}, given $R\in (0,R_0)$,  one has for every $0<r<R$ 
\begin{equation}\label{nd}
\tilde H(\tilde U,r) \geq  \tilde H(\tilde U,R) \left(\frac{r}{R}\right)^{n +a + 2||\tilde N(\tilde U,\cdot)||_{L^\infty(0,R)}}.
\end{equation}
\end{theorem}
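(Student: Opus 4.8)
\textbf{Proof proposal for Theorem \ref{T:nondeg}.}

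The plan is to derive a differential inequality for the function $r \mapsto \tilde H(\tilde U,r)$ and then integrate it. First I would record the standard first-variation identity for the weighted height function. Arguing as in the proof of Theorem \ref{T:almgrenEO} (or directly via the divergence theorem applied to $\operatorname{div}(|y|^a \tilde U \nabla \tilde U)$ together with the equation $L_a \tilde U = 0$), one obtains
\begin{equation}\label{nd1}
\frac{d}{dr} \tilde H(\tilde U,r) = \frac{n+a}{r} \tilde H(\tilde U,r) + 2 (1-a)^{-2a}\,\tilde D(\tilde U,r),
\end{equation}
where the first term comes from differentiating the surface measure $|y|^a dH_n$ on the sphere $S_e(r)$ under the scaling $(x,y)\mapsto r(x,y)$ (the exponent $n+a$ being one less than the homogeneous Euclidean dimension $n+1$ augmented by the weight exponent $a$), and the second term is the weighted Rellich--Pohozaev contribution, which by the weak formulation equals twice the weighted Dirichlet energy. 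This is exactly the kind of computation already implicit in Section \ref{S:almgren}; the weight $|y|^a$ is an $A_2$-weight, so all the integrations by parts are justified by density of smooth functions (Theorem 6.1 in \cite{Chu}, as used in Lemma \ref{L:trace}).

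Next I would rewrite \eqref{nd1} in terms of the frequency. Recalling the definitions \eqref{tH}, \eqref{tD}, \eqref{tN}, we have $(1-a)^{-2a}\tilde D(\tilde U,r) = \tilde D(\tilde U,r)(1-a)^{-2a}$; but it is cleaner simply to observe that \eqref{tN} gives $\tilde D(\tilde U,r) = \tfrac{1}{r}\tilde N(\tilde U,r)\tilde H(\tilde U,r)$ up to the fixed constant $(1-a)^{2a}$ appearing in \eqref{tD}, so \eqref{nd1} becomes
\begin{equation}\label{nd2}
\frac{d}{dr}\log \tilde H(\tilde U,r) = \frac{n+a}{r} + \frac{2 \tilde N(\tilde U,r)}{r}.
\end{equation}
Here I am using that $\tilde H(\tilde U,r) \neq 0$ for all $r\in(0,R_0)$, which is the running hypothesis of Theorem \ref{T:almgrenEO}, so the logarithm is well defined and the division is legitimate. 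Since by Theorem \ref{T:almgrenEO} the frequency $\tilde N(\tilde U,\cdot)$ is nondecreasing and hence bounded on the compact subinterval by $\|\tilde N(\tilde U,\cdot)\|_{L^\infty(0,R)}$, we may bound the right-hand side of \eqref{nd2} from above:
\begin{equation}\label{nd3}
\frac{d}{dr}\log \tilde H(\tilde U,r) \le \frac{n+a + 2\|\tilde N(\tilde U,\cdot)\|_{L^\infty(0,R)}}{r}, \qquad 0<r<R.
\end{equation}

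Finally I would integrate \eqref{nd3} over the interval $[r,R]$. This gives
\begin{equation}\label{nd4}
\log \tilde H(\tilde U,R) - \log \tilde H(\tilde U,r) \le \bigl(n+a+2\|\tilde N(\tilde U,\cdot)\|_{L^\infty(0,R)}\bigr)\bigl(\log R - \log r\bigr),
\end{equation}
and exponentiating yields precisely \eqref{nd}. The only genuinely delicate point — and the one I would be most careful about — is justifying the first-variation identity \eqref{nd1} for a merely weak solution $\tilde U \in W^{1,2}(B_e(R),|y|^a dX)$ even in $y$: one must approximate by smooth functions, control the boundary terms on $S_e(r)$ for a.e. $r$ using the trace inequality \eqref{t1} of Lemma \ref{L:trace}, and use the degenerate-elliptic regularity theory of \cite{FKS} (Hölder continuity of weak solutions) so that $\tilde H(\tilde U,r)$ and $\tilde D(\tilde U,r)$ are absolutely continuous in $r$ with the stated derivative for a.e. $r$. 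Once that is in place, inequality \eqref{nd3} holds in the sense of absolutely continuous functions and the integration step \eqref{nd4} is immediate; this is carried out in detail in \cite{CS07} and \cite{Ga}, so I would invoke those computations rather than reproduce them.
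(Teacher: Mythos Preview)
Your proposal is correct and follows essentially the same route as the paper: derive the first-variation identity $\tilde H'(\tilde U,r) = \frac{n+a}{r}\tilde H(\tilde U,r) + 2\tilde D(\tilde U,r)$, rewrite it as $\frac{d}{dr}\log\bigl(\tilde H(\tilde U,r)/r^{n+a}\bigr) = 2\tilde N(\tilde U,r)/r$, bound $\tilde N$ by its sup norm (the paper remarks, as you do, that mere boundedness rather than monotonicity is what is actually used), integrate over $[r,R]$, and exponentiate. Your additional care in flagging the approximation and absolute-continuity issues for weak solutions is appropriate; the paper simply asserts the identity and refers implicitly to \cite{CS07} and \cite{Ga}. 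One minor cleanup: your handling of the normalization constant $(1-a)^{2a}$ between \eqref{nd1} and \eqref{nd2} is muddled---the definition \eqref{tN} gives $\tilde D = r^{-1}\tilde N\tilde H$ exactly, with no residual constant, so the passage to \eqref{nd2} should be made consistent with whatever convention you adopt in \eqref{nd1}; the paper's own write-up has the same looseness here.
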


\begin{proof}
By Theorem \ref{T:almgrenEO} we know that the function $t\to \tilde N(\tilde U,t)$ is monotonically increasing on $(0,R)$. On the other hand, using the equation $L_a \tilde U = 0$ and integration by parts, it is not difficult to prove that
\[
\tilde H'(\tilde U,t) = \frac{n+a}{t}\tilde H(\tilde U,t) + 2 \tilde D(\tilde U,t).
\]   
Keeping the definition \eqref{tN} in mind, we can rewrite this equation in the following way
\[
\frac{d}{dt} \log \frac{\tilde H(\tilde U,t)}{t^{n+a}} = 2 \frac{\tilde N(\tilde U,t)}{t}.
\]
Integrating this formula on the interval $(r,R)$, and using the monotonicity of $r\to \tilde N(\tilde U,r)$ (in fact, just the boundedness of this function suffices in this argument), we obtain the desired conclusion \eqref{nd}.

\end{proof}

\begin{corollary}\label{C:int}
If we have $\tilde H(\tilde U,R_0) \neq 0$, then we must have $\tilde H(\tilde U,r) \neq 0$ for all $0<r <R_0$.  
\end{corollary}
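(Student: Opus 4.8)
The plan is to derive Corollary \ref{C:int} as an immediate consequence of the non-degeneracy estimate \eqref{nd} in Theorem \ref{T:nondeg}. The key point is that \eqref{nd} is precisely a lower bound on $\tilde H(\tilde U, r)$ in terms of $\tilde H(\tilde U, R)$ for radii $r < R$, and if $\tilde H(\tilde U, R_0) \neq 0$ then this bound forces $\tilde H(\tilde U, r)$ to stay strictly positive for all smaller radii.

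First I would argue by contradiction. Suppose there exists some $r_0 \in (0, R_0)$ with $\tilde H(\tilde U, r_0) = 0$. The subtlety is that Theorem \ref{T:nondeg} (and the monotonicity in Theorem \ref{T:almgrenEO} on which it rests) is stated under the standing hypothesis ``for no $r \in (0, R_0)$ do we have $\tilde H(\tilde U, r) = 0$'', so one cannot apply \eqref{nd} blindly on the whole interval $(0, R_0)$. To get around this, set
\[
r^\star = \sup\{r \in (0, R_0) \mid \tilde H(\tilde U, r) = 0\}.
\]
By assumption this set is nonempty, and since $\tilde H(\tilde U, R_0) \neq 0$ and $r \to \tilde H(\tilde U, r)$ is continuous (it is a weighted surface integral of a continuous function, by the regularity of weak solutions of $L_a \tilde U = 0$ from \cite{FKS}), we have $r^\star < R_0$ and in fact $\tilde H(\tilde U, r^\star) = 0$ by continuity. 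Now on the open interval $(r^\star, R_0)$ we have $\tilde H(\tilde U, r) \neq 0$ for every $r$, so the hypotheses of Theorems \ref{T:almgrenEO} and \ref{T:nondeg} are satisfied there (with $R_0$ replaced by any $R < R_0$ and working on $(r^\star, R)$). Thus for $r^\star < r < R < R_0$ we obtain from \eqref{nd}
\[
\tilde H(\tilde U,r) \geq  \tilde H(\tilde U,R) \left(\frac{r}{R}\right)^{n +a + 2\|\tilde N(\tilde U,\cdot)\|_{L^\infty(r^\star,R)}} > 0,
\]
and letting $r \to (r^\star)^+$ (the exponent stays bounded as long as $r$ stays bounded away from the lower endpoint, and since $r \to R$ is fixed, $\|\tilde N\|_{L^\infty(r^\star,R)}$ is a fixed finite number by Corollary \ref{C:kappa} together with monotonicity) we conclude $\tilde H(\tilde U, r^\star) \geq \tilde H(\tilde U, R) (r^\star/R)^{n+a+2\|\tilde N\|_{L^\infty}} > 0$, contradicting $\tilde H(\tilde U, r^\star) = 0$.

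The main obstacle — really the only thing requiring care — is the book-keeping around the standing non-vanishing hypothesis: one must not invoke the monotonicity of the frequency or the estimate \eqref{nd} across a radius where $\tilde H$ vanishes, which is exactly why the ``last vanishing radius'' $r^\star$ is introduced and the argument is localized to $(r^\star, R_0)$. Everything else is a direct quotation of \eqref{nd} together with the continuity of $r \to \tilde H(\tilde U, r)$. One could alternatively phrase the whole thing without contradiction: define $r^\star$ as above, note $\tilde H > 0$ on $(r^\star, R_0)$, apply \eqref{nd} on that interval, and observe by continuity as $r\downarrow r^\star$ that $\tilde H(\tilde U, r^\star) > 0$, so in fact $r^\star$ cannot be an accumulation point of the vanishing set; repeating, the vanishing set is empty. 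Either presentation is short; I would include the brief contradiction version for clarity.
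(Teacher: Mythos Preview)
Your proposal is correct and follows essentially the same argument as the paper: both argue by contradiction, introduce the supremum of the vanishing radii, observe that $\tilde H$ is nonvanishing above it, apply the non-degeneracy estimate \eqref{nd} from Theorem \ref{T:nondeg} on that interval, and let $r$ descend to the supremum to obtain a contradiction. Your version is in fact slightly more careful than the paper's in spelling out the continuity of $r\mapsto \tilde H(\tilde U,r)$ and in restricting the $L^\infty$ norm of the frequency to the interval where it is actually defined.
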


\begin{proof}

We argue by contradiction and assume that there exist $0 <\overline r < R_0$  such that $\tilde H(\tilde U,\overline r) =0$. Define
\[
\rho = \sup  \{r\leq R_0\mid \tilde H(\tilde U,r) =0\}.
\]
Since by the hypothesis $\tilde H(\tilde U,R_0) \neq 0$, we must have $0 < \rho< R_0$. But then, we have $\tilde H(\tilde U,r)\not= 0$ for $r \in (\rho, R_0]$. Applying Theorem \ref{T:nondeg} we obtain for $r \in (\rho, R_0]$
\[
\tilde H(\tilde U,r) \geq  \tilde H(\tilde U,R_0) \left(\frac{r}{R_0}\right)^{n +a + 2||\tilde N(\tilde U,\cdot)||_{L^\infty(0,R_0)}} > 0.
\]
Letting $r \to  \rho^+$ this leads to a contradiction since $\tilde H(\tilde U,\rho)=0$. 
 
\end{proof}

We next define an important family of non-homogeneous rescalings. In a different context, they were introduced the first time in \cite{ACS} in the blowup analysis of the Signorini problem. In what follows, we indicate with $X = (x,y)$ the generic point in $\Rn\times \R$.

\begin{definition}\label{D:abups}
Let $R_0>0$ be such that $\tilde H(\tilde U,r)>0$ for every $0<r<R_0$. We define the \emph{Almgren rescalings} of a function $\tilde U$ as 
\begin{equation}\label{bl}
\tilde U_r (X)= \frac{r^{\frac{n+a}2}\tilde U(\tilde \delta_r(X))}{\sqrt{\tilde H(\tilde U,r)}} = \frac{r^{\frac{n+a}2}\tilde U(rX)}{\sqrt{\tilde H(\tilde U,r)}}.
\end{equation}
\end{definition}
Obviously, if $\tilde U$ is a solution of $L_a \tilde U = 0$ in $B_e(R_0)$, then  $L_a \tilde U_r = 0$ in $B_e(R_0/r)$.  An elementary, yet crucial property, of the Almgren rescalings which follows from \eqref{tH} and a change of variable is that
\begin{equation}\label{15}
\tilde H(\tilde U_r,1)= \int_{S_e(1)} \tilde U_r^2(X) |y|^{a} dH_n(X) = 1.
\end{equation}
Another basic property is the following identity 
\begin{equation}\label{si00}
\tilde N(\tilde U_r,\rho) = \tilde N(\tilde U,r\rho),\ \ \ \ \ \ \ \ \ \ \ \ r, \rho >0.
\end{equation}

The next lemma plays a key role in the proof of Theorem \ref{T:sucpfl}. Its proof hinges crucially on the monotonicity formula in Theorem \ref{T:almgrenEO}.

\begin{lemma}\label{L:abu}
Suppose $\tilde H(\tilde U,R_0) \neq 0$. With $\tilde U_r$ as in \eqref{bl}, there exists a subsequence  $r_j \to 0$, and a function  $\tilde U_0: \Rn \times \R \to \R$, such that  $\tilde U_{r_j}=\tilde U_j$ converges uniformly to $\tilde U_0$  and $\nabla \tilde U_j \to \nabla \tilde U_0$ weakly in $L^{2}(\Rn\times \R,|y|^a dX)$ on compact subsets of $\Rn \times \R$. Moreover, $\tilde U_0$ is a weak solution (even in the variable $y$) to 
\begin{equation}\label{hom}
\begin{cases}
\operatorname{div} (|y|^{a} \nabla \tilde U_0)= 0,
\\
\underset{y \to 0}{\lim} y^{a} \partial_y \tilde U_0 =0,
\end{cases}
\end{equation}
on every compact  subset of $\Rn \times \R$. Finally, $\tilde U_0$ is homogeneous of degree $\tilde \kappa = \underset{r\to 0^+}{\lim} N(\tilde U,r)$. 
\end{lemma}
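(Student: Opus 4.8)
\textbf{Proof strategy for Lemma \ref{L:abu}.} The plan is a standard compactness-plus-uniform-ellipticity argument for degenerate equations, combined with the monotonicity formula of Theorem \ref{T:almgrenEO} to identify the limit as a homogeneous function. First I would extract uniform energy bounds on the rescalings $\tilde U_r$ on a fixed ball, say $B_e(2)$. The normalization \eqref{15} gives $\tilde H(\tilde U_r,1)=1$; then by \eqref{si00} and the monotonicity in Theorem \ref{T:almgrenEO} we have $\tilde N(\tilde U_r,\rho)=\tilde N(\tilde U,r\rho)\le \tilde N(\tilde U,r\cdot 2)\le C$ uniformly for $\rho\le 2$ and $r$ small (using also Corollary \ref{C:kappa} to get $\tilde N(\tilde U,r)\to\tilde\kappa$, hence boundedness near $0$). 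Combining $\tilde N(\tilde U_r,1)=\tilde D(\tilde U_r,1)/\tilde H(\tilde U_r,1)\le C$ with \eqref{15} yields a uniform bound on $\tilde D(\tilde U_r,1)$, and an application of the differential identity $\tilde H'(\tilde U_r,t)=\frac{n+a}{t}\tilde H(\tilde U_r,t)+2\tilde D(\tilde U_r,t)$ (as in the proof of Theorem \ref{T:nondeg}) together with the non-degeneracy estimate \eqref{nd} propagates these bounds to all radii $\rho\in(1,2)$. Integrating, one gets $\int_{B_e(2)}(|\tilde U_r|^2+|\nabla\tilde U_r|^2)|y|^a\,dX\le C$ uniformly in $r$, and similarly on any fixed $B_e(\rho_0)$.

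Next I would pass to the limit. Since $\omega(X)=|y|^a$ is an $A_2$-Muckenhoupt weight, the weighted Sobolev space $W^{1,2}(B_e(\rho_0),|y|^a dX)$ embeds compactly into $L^2(B_e(\rho_0),|y|^a dX)$ (see \cite{FKS}), and moreover solutions of $L_a\tilde U_r=0$ are, by the De Giorgi--Nash--Moser theory of \cite{FKS}, locally H\"older continuous with uniform estimates depending only on the energy. Therefore along a subsequence $r_j\to 0$ we get $\tilde U_{r_j}=\tilde U_j\to \tilde U_0$ uniformly on compact subsets, $\nabla\tilde U_j\rightharpoonup\nabla\tilde U_0$ weakly in $L^2_{loc}(|y|^a dX)$, and the limit $\tilde U_0$ is a weak solution of $\operatorname{div}(|y|^a\nabla\tilde U_0)=0$ on every compact subset of $\Rn\times\R$; evenness in $y$ is preserved under the rescaling and passes to the limit, which also encodes the Neumann condition $\lim_{y\to0}y^a\partial_y\tilde U_0=0$ in the weak sense. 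A diagonal argument over an exhaustion of $\Rn\times\R$ by balls produces a single subsequence working globally. To see $\tilde U_0\not\equiv 0$, use the uniform convergence together with \eqref{15}: $\tilde H(\tilde U_0,1)=\int_{S_e(1)}\tilde U_0^2|y|^a\,dH_n=\lim_j\int_{S_e(1)}\tilde U_j^2|y|^a\,dH_n=1$.

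Finally I would identify the homogeneity. For fixed $\rho>0$, the scaling identity \eqref{si00} and the monotonicity give $\tilde N(\tilde U_j,\rho)=\tilde N(\tilde U,r_j\rho)\to \tilde\kappa$ as $j\to\infty$, by Corollary \ref{C:kappa}. Since the functionals $\tilde H(\cdot,\rho)$ and $\tilde D(\cdot,\rho)$ are continuous under the mode of convergence just established (uniform convergence controls $\tilde H$; lower semicontinuity plus the uniform energy bounds and the equation control $\tilde D$ — indeed for solutions $\tilde D(\tilde U_j,\rho)$ can be written as a boundary integral $\int_{S_e(\rho)}\tilde U_j\,\partial_\nu\tilde U_j\,|y|^a$, which converges), we obtain $\tilde N(\tilde U_0,\rho)=\tilde\kappa$ for every $\rho>0$. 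By the rigidity statement in Theorem \ref{T:almgrenEO}, a function with constant frequency $\tilde\kappa$ on all of $(0,\infty)$ is homogeneous of degree $\tilde\kappa$ with respect to the Euclidean dilations $\tilde\delta_\lambda(x,y)=(\lambda x,\lambda y)$, which is the last assertion.

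\textbf{Main obstacle.} The delicate point is not the compactness itself but obtaining the \emph{uniform} energy and H\"older bounds on the rescalings at \emph{all} scales $\rho$ in a fixed range, purely from the single normalization \eqref{15} and frequency boundedness; this is exactly where the non-degeneracy Theorem \ref{T:nondeg} (hence the monotonicity formula) is indispensable, and where one must be careful that the degeneracy of the weight $|y|^a$ does not spoil the constants — this is handled by the $A_2$-structure and the results of \cite{FKS}. A secondary subtlety is the convergence of the Dirichlet energies $\tilde D(\tilde U_j,\rho)\to \tilde D(\tilde U_0,\rho)$ (needed to pass the frequency to the limit without losing mass), which requires the Rellich-type trace/energy identity and the equation, rather than mere weak lower semicontinuity.
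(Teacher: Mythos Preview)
Your strategy is essentially the paper's, and the architecture is sound: uniform $W^{1,2}(|y|^a\,dX)$ bounds from the normalization \eqref{15} plus the monotonicity of $\tilde N$, compactness via the $A_2$/\cite{FKS} theory, passage of the frequency to the limit, and homogeneity from the rigidity clause of Theorem \ref{T:almgrenEO}. Two technical points are handled differently in the paper and are worth noting.

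First, for the convergence $\tilde D(\tilde U_j,\rho)\to\tilde D(\tilde U_0,\rho)$ you invoke the boundary representation $\int_{S_e(\rho)}\tilde U_j\,\partial_\nu\tilde U_j\,|y|^a$, but convergence of the normal derivative on $S_e(\rho)$ is not delivered by $C^{0,\alpha}$ plus weak $W^{1,2}$ convergence alone. The paper avoids this by applying the Caccioppoli inequality (Proposition \ref{P:cacc}) to the \emph{difference} $\tilde U_j-\tilde U_0$, which is again a solution of $L_a=0$ by linearity; combined with the uniform convergence on spheres, this yields \emph{strong} $L^2(|y|^a)$ convergence of the gradients on every compact set, and hence honest convergence of $\tilde D$.

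Second, to form $\tilde N(\tilde U_0,\rho)$ you need $\tilde H(\tilde U_0,\rho)>0$ for every $\rho$, not just $\rho=1$. The paper secures this by passing the non-degeneracy estimate of Theorem \ref{T:nondeg} through the rescaling: $\tilde H(\tilde U_r,\rho)=\tilde H(\tilde U,r\rho)/\tilde H(\tilde U,r)\ge \rho^{\,n+a+2\|\tilde N(\tilde U,\cdot)\|_{L^\infty}}$, uniformly in $r$, so the lower bound survives in the limit. With these two adjustments your outline matches the paper's proof.
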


\begin{proof}

Using \eqref{15} and \eqref{si00} we obtain for $0<r<R_0$
\begin{equation}\label{gradients}
\int_{B_e(1)} |\nabla \tilde U_r(X)|^2 |y|^a dX = \tilde N(\tilde U_r,1) = \tilde N(\tilde U,r) \le \tilde N(\tilde U,R_0) <\infty, 
\end{equation}
where in the second to the last inequality we have used the monotonicity of $r\to \tilde N(\tilde U,r)$ in Theorem \ref{T:almgrenEO}. We note that since we are assuming $\tilde H(\tilde U,R_0) \neq 0$, Corollary \ref{C:int} allows to infer that $\tilde H(\tilde U,r) \neq 0$ for every $0<r<R_0$, and thus we are in the hypothesis of Theorem \ref{T:almgrenEO}. 

At this point we combine the trace inequality \eqref{t2} with \eqref{15} and \eqref {gradients} to infer that for every $0<r<R_0$
\begin{equation}\label{155}
\int_{B_e(1)} \tilde U_r^2 |y|^{a} dX <\infty.
\end{equation}
Combining \eqref{155} with \eqref{gradients}, we conclude that for every $0<r<R_0$
\[
||\tilde U_r||_{W^{1,2}(B_e(1),|y|^a dX)} \le C<\infty,
\]
for some constant independent of $r$. This implies the existence of a sequence $r_j\searrow 0$ and of a function $\tilde U_0\in W^{1,2}(B_e(1),|y|^a dX)$ such that
\[
\tilde U_{r_j}\ \longrightarrow\ \tilde U_0
\]
weakly in $W^{1,2}(B_e(1),|y|^a dX)$. By the local H\"older continuity of solutions of $L_a \tilde U = 0$, we conclude that, possibly on a subsequence, which we still denote $\tilde U_{r_j}$, we have convergence of $\tilde U_{r_j}\ \longrightarrow\ \tilde U_0$ in $C^{0,\alpha}$ norm on compact subset of $\Rn\times \R$. The Caccioppoli inequality \eqref{cacc} then implies that for any $0<s<t$
\begin{equation}\label{cacc00}
\int_{B_e(s)} |\nabla \tilde U_{r_j} - \nabla \tilde U_0|^2 |y|^a dX \le \frac{C(n,a)}{t-s} \int_{S_e(t)} |\tilde U_{r_j} - \tilde U_0|^2 |y|^a dH_n\ \longrightarrow\ 0,
\end{equation}
as $j\to \infty$. The inequality \eqref{cacc00} and the $C^{0,\alpha}_{loc}$ estimates for $\tilde U$ imply that $\tilde U_0\in W^{1,2}_{loc}(\Rn\times \R,|y|^a dX)$, and that $\tilde U_0$ is a weak solution of \eqref{hom} on every compact  subset of $\Rn \times \R$. 
Moreover, \eqref{cacc00} also implies that for any $\rho>0$
\begin{equation}\label{ndD}
\tilde D(\tilde U_{r_j},\rho)\ \longrightarrow\ \tilde D(\tilde U_{0},\rho).
\end{equation}
Since from the uniform convergence of $\tilde U_{r_j}\ \longrightarrow\ \tilde U_0$ in $C^{0,\alpha}$ we have
\begin{equation}\label{ndH}
\tilde H(\tilde U_{r_j},\rho)\ \longrightarrow\ \tilde H(\tilde U_{0},\rho).
\end{equation}
Next, we claim that for every $0<\rho<1$ we must have
\begin{equation}\label{nduzero}
\tilde H(\tilde U_{0},\rho) >0.
\end{equation}
This follows from the estimate 
\eqref{nd} in Theorem \ref{T:nondeg} that gives for every $0<r<R_0$ and $0<\rho<1$ 
\begin{equation}\label{nd00}
\tilde H(\tilde U,r\rho) \geq  \tilde H(\tilde U,r) \rho^{n +a + 2||\tilde N(\tilde U,\cdot)||_{L^\infty(0,R_0)}}.
\end{equation}
On the other hand, an easy change of variable gives
\[
\tilde H(\tilde U_r,\rho) = \frac{\tilde H(\tilde U,r\rho)}{\tilde H(\tilde U,r)},
\]
and from this observation and \eqref{nd00} we find
\[
\tilde H(\tilde U_{r_j},\rho) \ge \rho^{n +a + 2||\tilde N(\tilde U,\cdot)||_{L^\infty(0,R_0)}}.
\]
Letting $j\to \infty$ and using \eqref{ndH} we conclude that \eqref{nduzero} holds. Now that we know \eqref{nduzero}, from \eqref{ndD} and \eqref{ndH}
we conclude that as $j\to \infty$
\[
\tilde N(\tilde U_{r_j},\rho)\ \longrightarrow\ \tilde N(\tilde U_{0},\rho).
\]
On the other hand, \eqref{si00} and Corollary \ref{C:kappa} give
\[
\tilde N(\tilde U_{r_j},\rho) = \tilde N(\tilde U,r_j \rho) \to \tilde \kappa,
\]
as $j\to \infty$. Therefore, for every $0<\rho<1$ we must have
\begin{equation}\label{kappa=}
\tilde N(\tilde U_{0},\rho) \equiv \tilde \kappa.
\end{equation}
 Once we know this, we finish by invoking the second part of Theorem \ref{T:almgrenEO} that allows to infer that $\tilde U_0$ must be homogeneous of degree $\tilde \kappa$ in $B_e(1)$.

\end{proof}

\begin{definition}\label{D:abu}
We call any function $\tilde U_0$ as in Lemma \ref{L:abu} an \emph{Almgren blowup} of $U$ at $x_0 = 0$.
\end{definition}

We are now ready to give the

\begin{proof}[Proof of Theorem \ref{T:sucpfl}]
Let $u$ be as in the statement of the theorem. Without loss of generality we can assume that the point $x_0$ at which $u$ vanishes to infinite order be $x_0 = 0$. We want to show that $u\equiv 0$ in $\Om$. We would like to show that there exists $R_0>0$ such that $u\equiv 0$ in $B_e(R_0)$. A standard connectedness argument would then imply that $u\equiv 0$ in $\Om$. Consider the extension problem \eqref{ext2} above with Dirichlet datum $u$, and denote by 
$U(x,y) = P_s(\cdot,y)\star u(x)$ its solution in $\Rn\times \R^+$. 
Arguing as in the proof of Theorem \ref{T:abg2} we know that $\tilde U(x,y) = U(x,|y|)$, is a weak solution of $L_a \tilde U = 0$ in $\Om\times (-d,d)$, where $a = 1-2s$.

Let $R_0 = \operatorname{dist}(x_0,\p \Om)$. If we show that $\tilde H(\tilde U,R_0)=0$, then by the Caccioppoli inequality \eqref{cacc} we would infer that $\int_{B_e(s)} |\nabla \tilde U(x',y)|^2 |y|^a dx' dy  = 0$ for every $0<s<R_0$ and thus $\tilde U$ would have to be constant in $B_e(R_0)$. But $\tilde H(\tilde U,R_0)=0$ would force the constant to be zero. This would imply in particular that $u(x) = \tilde U(x,0) = 0$ for every $|x|<R_0$, and we would be done. 
 
We thus assume that $\tilde H(\tilde U,R_0)>0$ and show that this leads to a contradiction. Now, this assumption and Corollary \ref{C:int} imply that $\tilde H(\tilde U,r)>0$ for every $0<r<R_0$. Therefore, by Theorem  \ref{T:almgrenEO} we infer that the function $r\to \tilde N(\tilde U,r)$ is nondecreasing on $(0,R_0)$. Let $\tilde U_0$ be an Almgren blowup of $\tilde U$ at $x_0 = 0$. If $\tilde \kappa = \underset{r\to 0^+}{\lim} N(\tilde U,r)$, we know from Lemma \ref{L:abu} that $\tilde U_0$ is a global solution of $L_a \tilde U_0 = 0$, even in $y$, and which is homogeneous of degree $\tilde \kappa$. At this point we make the important observation that $\tilde U_0\not\equiv 0$. This follows from its homogeneity and the non-degeneracy estimate \eqref{nduzero} above. 
We now make the following

\medskip

\noindent \textbf{Claim:} $\tilde U_0$ restricted to $\{y=0\}$ is not identically zero.

\medskip

The proof of this claim proceeds by contradiction and it is based on an argument in Step $1$ and $2$ in the proof of Proposition 2.2 in \cite{Ru}, and we refer the reader to that source.
What R\"uland shows is that, if the claim is not true, then $\tilde U_0$ must vanish to infinite order at $(0,0)$ in the thick space $\Rn\times \R$. However, such vanishing to infinite order at $(0,0)$ in the thick space $\Rn\times \R$ would contradict the homogeneity of $\tilde U_0$. Therefore, the claim must be true. 

From the claim we know that $\tilde U_0(\cdot,0)\not\equiv 0$. The final step is now proving that this fact contradicts the assumption that $u(x) = \tilde U(x,0)$ vanishes to infinite order at $x_0 = 0$. 

Since $\tilde U_0(\cdot,0)\not\equiv 0$, there exist $C,\overline r>0$ such that  
\[
||\tilde U_0||_{L^\infty(B_e(\overline r)\cap \{y=0\})} = C.
\]
From the uniform convergence of $\tilde U_{r_j}\ \longrightarrow\ \tilde U_0$
on $B_e(\overline r)\cap \{y=0\}$, we know that for sufficiently large $j\in \mathbb N$ we must have
\[
||\tilde U_{r_j}||_{L^\infty(B_e(\overline r)\cap \{y=0\})} \ge \frac{C}2.
\]
Using the definition \eqref{bl} of $\tilde U_{r_j}$, we obtain from the latter inequality and the fact that $\tilde U(x,0) = u(x)$,
\[
||u||_{L^\infty(B(r_j \overline r)\cap \{y=0\})} \ge \frac{C}2 r_j^{-\frac{n+a}{2}} 
\sqrt{\tilde H(\tilde U,r_j)}.
\]
Finally, applying Theorem \ref{T:nondeg} we find 
\begin{equation*}
\tilde H(\tilde U,r_j) \geq  \tilde H(\tilde U,R_0) \left(\frac{r_j}{R_0}\right)^{n +a + 2||\tilde N(\tilde U,\cdot)||_{L^\infty(0,R_0)}}.
\end{equation*}
We thus conclude
\[
||u||_{L^\infty(B(r_j \overline r)\cap \{y=0\})} \ge \frac{C}2 r_j^{-\frac{n+a}{2}} 
\sqrt{\tilde H(\tilde U,R_0)} \left(\frac{r_j}{R_0}\right)^{\frac{n +a + 2||\tilde N(\tilde U,\cdot)||_{L^\infty(0,R_0)}}{2}}.
\]
This contradicts the assumption that $u$ vanishes to infinite order at $x_0 = 0$, unless of course $\tilde H(\tilde U,R_0) = 0$. By the monotonicity of $r\to 
\tilde H(\tilde U,r)$ we conclude that it must be $\tilde U \equiv 0$ in $B_e(R_0)$, and therefore $u\equiv 0$ in $B(R_0)$. By connectedness we infer that it must be $u\equiv 0$ in $\Om$.

\end{proof}

In connection with Theorem \ref{T:sucpfl} above we mention the recent work \cite{BG} in which the authors establish a delicate theorem of strong unique continuation backward in time for solutions of the nonlocal equation
\begin{equation}\label{e0}
(\p_t - \Delta)^s u = V(x,t) u,\ \ \ \ \ \ \ \ \ \ \ 0<s<1.
\end{equation}
They assume that the potential $V:\Rnn\to \R$ is such that for some $K>0$ 
\begin{equation}\label{vasump}
\begin{cases}
||V||_{C^{1}(\Rnn)}    \le K,\ \ \ \ \ \text{if}\ 1/2\le s < 1,
\\
\\
||V||_{C^{2}(\Rnn)},\ \ \ \ \   ||<\nabla_x V, x>||_{L^\infty(\Rnn)} \le K,\ \ \ \ \  \text{if}\ 0< s < 1/2.
\end{cases}
\end{equation}

The main result in \cite{BG} is as follows.

\begin{theorem}[Space-time strong unique continuation property]\label{main}
Let $u\in  \operatorname{Dom}(H^{s})$ be a solution to \eqref{e0} with $V$ satisfying \eqref{vasump}. If $u$ vanishes to infinite order backward in time at some point $(x_0, t_0)$ in $\R^{n+1}$, then $u(\cdot,t) \equiv 0$ for all $t \leq t_0$.  
\end{theorem}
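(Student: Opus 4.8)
The plan is to transpose, to the parabolic setting, the strategy used above for the elliptic Theorem~\ref{T:sucpfl}, replacing the extension \eqref{ext2} by the Caffarelli--Silvestre type extension for the nonlocal heat operator constructed in \cite{NS} and \cite{ST}. Given a solution $u$ of \eqref{e0} in a space-time cylinder, one first considers the function $U = U(x,y,t)$, with $y>0$, solving
\begin{equation}\label{extheat}
\begin{cases}
y^a \partial_t U = \operatorname{div}_{x,y}\!\left(y^a \nabla_{x,y} U\right),\ \ \ \ x\in \Rn,\ y>0,
\\
U(x,0,t) = u(x,t),
\end{cases}
\end{equation}
with $a = 1-2s$, so that $-\lim_{y\to 0^+} y^a \partial_y U$ recovers, up to the usual constant, $(\partial_t - \Delta)^s u = Vu$ on $\{y=0\}$. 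After an even reflection in $y$, $U$ becomes a weak solution, across the thin set, of a weighted degenerate parabolic equation with the Muckenhoupt $A_2$ weight $|y|^a$ and a Neumann-type condition $-\lim_{y\to 0}|y|^a\partial_y U = c_s\, V u$; the local and boundary regularity theory for such equations (the parabolic analogue of \cite{FKS}) then applies, so that $U$ may be taken locally H\"older continuous up to $\{y=0\}$.

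The core of the argument is a \emph{parabolic monotonicity formula of Almgren--Poon type} for \eqref{extheat}. Normalizing the vanishing point to $(0,0,0)$ and working backward in time, $t<0$, one introduces the backward-heat Gaussian weight $\mathcal G = \mathcal G(x,y,t)$ adapted to $|y|^a$ and to the parabolic dilations $(x,y,t)\mapsto (\lambda x,\lambda y,\lambda^2 t)$, and sets, for $r>0$,
\begin{align*}
H(U,r) &= \int_{\R^{n+1}} U(\cdot,-r^2)^2\, |y|^a\, \mathcal G(\cdot,-r^2)\, dxdy,
\\
D(U,r) &= r^2 \int_{\R^{n+1}} |\nabla_{x,y} U(\cdot,-r^2)|^2\, |y|^a\, \mathcal G(\cdot,-r^2)\, dxdy,
\\
N(U,r) &= \frac{D(U,r)}{H(U,r)}.
\end{align*}
Differentiating $\log H$ and $N$ in $r$, integrating by parts and using the equation, one obtains $H'(U,r) = \tfrac{2}{r}D(U,r) + (\text{l.o.t.})$ and a differential inequality $N'(U,r)\ge -C\big(1+N(U,r)\big)$, in which the error terms are exactly those produced by the potential $V$. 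It is here that \eqref{vasump} enters: in the range $1/2\le s<1$ the bound on $\|V\|_{C^1}$ suffices, while in the supercritical range $0<s<1/2$ the extra control on $\|V\|_{C^2}$ and on $\|\langle \nabla_x V,x\rangle\|_{L^\infty}$ is needed to absorb the terms arising when $Vu$ is differentiated against $\mathcal G$, since the parabolic scaling acts in $x$ through the Euler field $\langle x,\nabla_x\rangle$. One concludes that $r\mapsto e^{Cr}N(U,r)$ is nondecreasing, hence $\kappa := \lim_{r\to 0^+}N(U,r)$ exists and is finite, and one gets a conditional \emph{backward non-degeneracy} estimate: if $H(U,R_0)\ne 0$ then $H(U,r)\ne 0$ for all $0<r<R_0$ and
\[
H(U,r)\ \ge\ H(U,R_0)\,\Big(\tfrac{r}{R_0}\Big)^{\,2\kappa + C},\ \ \ \ 0<r<R_0,
\]
the exact analogue of Theorem~\ref{T:nondeg}.

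With these tools the proof follows the scheme of Theorem~\ref{T:sucpfl}. If $H(U,R_0)=0$, a Caccioppoli-type inequality forces $U\equiv 0$ in the backward parabolic cylinder, hence $u\equiv 0$ for $t\le t_0$ there. Otherwise one performs a blowup with the parabolic Almgren--Poon rescalings $U_r(X,t) = U(rX,r^2 t)/\sqrt{H(U,r)}$: the frequency bound and the weighted trace/Caccioppoli inequalities give uniform $W^{1,2}_{\mathrm{loc}}(|y|^a)$ estimates, so that, along a subsequence $r_j\to 0$, $U_{r_j}\to U_0$ with $U_0\not\equiv 0$ (by the non-degeneracy estimate), $U_0$ a global weak solution of the \emph{homogeneous} weighted parabolic equation with $\lim_{y\to 0}|y|^a\partial_y U_0 = 0$ (the potential scales away), and $U_0$ backward-parabolically homogeneous of degree $\kappa$. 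The key remaining point is the \textbf{claim} that $U_0$ restricted to $\{y=0\}$ is not identically zero; as in the elliptic case (cf.\ the argument of R\"uland reproduced in the proof of Theorem~\ref{T:sucpfl}), one shows that otherwise $U_0$ would vanish to infinite order at the origin in the \emph{thick} space-time $\R^{n+1}\times(-\infty,0)$, contradicting its nontrivial homogeneity. Finally, the uniform convergence $U_{r_j}(\cdot,0,-1)\to U_0(\cdot,0,-1)\not\equiv 0$, the definition of $U_{r_j}$, and the non-degeneracy lower bound on $H(U,r_j)$ give, for large $j$,
\[
\|u\|_{L^\infty(B(r_j\overline r)\times(-Cr_j^2,\,0))}\ \ge\ c\, r_j^{-\frac{n+a}{2}}\sqrt{H(U,r_j)}\ \ge\ c'\, r_j^{\,\kappa + C'},
\]
which contradicts the assumed infinite-order vanishing of $u$ backward in time at $(0,0)$, unless $H(U,R_0)=0$. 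Thus $u$ vanishes in a backward space-time neighborhood of $(x_0,t_0)$, and a propagation-of-zeros argument (together with the spatial unique continuation already available) upgrades this to $u(\cdot,t)\equiv 0$ for all $t\le t_0$.

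The main obstacle — and the reason for the detour through \eqref{extheat} — is, exactly as in the elliptic situation, that the hypothesis ``$u$ vanishes to infinite order backward in time at $(x_0,t_0)$'' does not obviously transfer to the extended function $U$; it is recovered only \emph{a posteriori}, through the blowup, via the claim that the blowup limit is nontrivial on $\{y=0\}$. The second delicate point is the bookkeeping of the $V$-generated error terms in the differentiation of the Poon frequency: in the subcritical regime $0<s<1/2$ the interaction of $V$ with the parabolic Euler field is what forces the stronger hypotheses in \eqref{vasump}, and making the corresponding differential inequality rigorous — with the correct handling of the $|y|^a$-weighted time slices and of the Gaussian tails — is the technical heart of \cite{BG}.
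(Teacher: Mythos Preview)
The paper does not actually prove this theorem: it is stated as the main result of \cite{BG} and the only indication given about its proof is Theorem~\ref{T:mon1}, the adjusted-frequency monotonicity formula, together with the remark that this generalizes the Poon--type formula from \cite{DGPT}. Your sketch is therefore being compared not to a proof in the paper but to the strategy the paper signals, and on that level it is essentially correct: parabolic extension \eqref{ext}, a Poon--Almgren frequency in Gaussian space, nondegeneracy of $H$, Almgren rescalings, compactness, homogeneity of the blowup, and the R\"uland-type argument that the blowup cannot vanish on $\{y=0\}$. This is precisely the architecture of \cite{BG}, and you have also correctly located where the two regimes in \eqref{vasump} bifurcate.

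One concrete inaccuracy: the monotone quantity is not $e^{Cr}N(U,r)$ but, as recorded in Theorem~\ref{T:mon1},
\[
r\ \longmapsto\ \exp\Big\{C\int_0^r t^{-a}\,dt\Big\}\Big(N(U,r)+C\int_0^r t^{-a}\,dt\Big),\qquad a=1-2s,
\]
and correspondingly the nondegeneracy bound for $H$ carries the exponent coming from $\int_0^r t^{-a}\,dt = \tfrac{r^{1-a}}{1-a}$ rather than a linear-in-$r$ correction. This is not cosmetic: the $t^{-a}$ weight is exactly what the $|y|^a$-degeneracy of the extension operator produces when you differentiate the Gaussian-weighted quantities, and it is what makes the error bookkeeping close. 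With that adjustment your outline matches the intended proof.
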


One of the central tools in the proof of Theorem \ref{main} is Theorem \ref{T:mon1} below. In it the function $U$ represents the solution to 
the following problem: 
\begin{equation}\label{ext}
\begin{cases}
y^{a} \partial_{t} U(X,t) = \operatorname{div}_{X}(y^{a} \nabla_{X}U)(X,t),
\\
U(x, 0,t)= u(x,t),
\\
\underset{y\to 0^+}{\lim} y^{a} \frac{\p U}{\p y}(x,y,t) = - V(x,t) u(x,t),
\end{cases}
\end{equation}
where $y^{a} \partial_{t} U(X,t) = \operatorname{div}_{X}(y^{a} \nabla_{X}U)(X,t)$ is the parabolic extension operator, introduced by Nystr\"om and Sande in \cite{NS} and independently by Stinga and Torrea in \cite{ST}, whereas $N(U,r) = I(U,r)/H(U,r)$ represents a suitable frequency function in Gaussian space, see Definition 6.2 in \cite{BG}. 

\begin{theorem}[Monotonicity of the adjusted frequency]\label{T:mon1}
Let $u\in  \operatorname{Dom}(H^{s})$ be a solution to \eqref{e0} with $V$ satisfying \eqref{vasump}. There exist universal constants $C, t_0>0$, depending only on $n, s$ and the number $K$ in \eqref{vasump}, such that with $r_0= \sqrt{t_0}$ and $a = 1-2s$, under the assumption that  $H(U,r) \neq 0$  for all $0< r \leq r_0$, then
\begin{equation}\label{mon}
r  \to \exp\left\{C\int_{0}^r t^{-a} dt\right\} \left(N(U,r)  +  C \int_{0}^{r} t^{-a} dt\right)
\end{equation}
is monotone increasing on $(0,r_0)$. Furthermore, when the potential $V\equiv 0$, then the constant $C$ in \eqref{mon} can be taken equal to zero and we have pure monotonicity of the function $r\to N(U,r)$. In such case, $N(U,r) \equiv \kappa$ for $0<r<R$ if and only if $U$ is homogeneous of degree $2\kappa$ in $\Sa_{R}^{+}$ with respect to the parabolic dilations $\delta_\la(X,t) = (\la X,\la^2 t)$.
\end{theorem}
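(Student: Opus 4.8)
The plan is to adapt Poon's parabolic frequency function to the degenerate weight $|y|^a$ and to the presence of the potential, following the scheme of \cite{BG}. First I would reduce everything to the extension problem \eqref{ext}: let $U(X,t)$, with $X=(x,y)\in\Rnn$, be its solution in the upper half-space, so that $y^a\p_tU=\operatorname{div}_X(y^a\nabla_XU)$ and $\lim_{y\to0^+}y^a\p_yU=-Vu$ on $\{y=0\}$. Working in Gaussian (Ornstein--Uhlenbeck) variables one introduces the weighted height $H(U,r)$ (a time-slice integral of $U^2\,y^a$ against the backward fundamental solution $\mathcal G$ of the extension operator) and the weighted Dirichlet energy $I(U,r)$ (a space-time integral of $|\nabla_XU|^2\,y^a\,\mathcal G$), exactly as in Definition 6.2 of \cite{BG}, and sets $N(U,r)=I(U,r)/H(U,r)$. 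The goal is a differential inequality for $N$ on $(0,r_0)$, valid once $r_0$ is chosen small depending only on $n,s,K$.

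The heart of the argument is the computation of $\frac{d}{dr}\log H(U,r)$ and $\frac{d}{dr}\log I(U,r)$. Using the equation, the Neumann condition on $\{y=0\}$ --- which contributes a trace term $\int V u^2$ --- and Rellich--Ne\v cas type differential identities built on the parabolic scaling generator $Z=2t\p_t+\langle x,\nabla_x\rangle+y\p_y$, one obtains $\frac{H'}{H}=\frac2r\big(N+\mathcal E_1\big)$ and a companion identity for $I'$, whose combination yields, after a Cauchy--Schwarz inequality between $H$, $I$ and the mixed quantity $\int (ZU)^2\,y^a\,\mathcal G$, an estimate of the form $N'(r)\ge -\,C\,r^{-a}\,\big(N(U,r)+1\big)$. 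The factor $r^{-a}$ and the error terms $\mathcal E_i$ arise from bounding the boundary trace term $\int_{\{y=0\}}u^2\,\mathcal G$ by the bulk weighted energy: as in the stationary trace inequality of Lemma \ref{L:trace}, passing from $\{y=0\}$ to the interior for the weight $|y|^a$ costs a factor scaling like $r^{-a}$, and $|V|\le K$ together with the bound on $\langle\nabla_xV,x\rangle$ (or $V\in C^1$, according to the sign of $a$) is exactly what keeps these terms controlled by $C\,r^{-a}\,H(U,r)$. The two regimes of regularity in \eqref{vasump} correspond precisely to the sign of $a=1-2s$ and to how many times $Z$ must be applied to $V$ to close the estimate: for $1/2\le s<1$ one has $a\le0$ and one differentiation ($V\in C^1$) suffices, while for $0<s<1/2$ one has $a>0$ and the extra control on $ZV=2t\p_tV+\langle x,\nabla_xV\rangle$ forces the $C^2$ and $\langle\nabla_xV,x\rangle\in L^\infty$ hypotheses.

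Once the inequality $N'(r)\ge -C\,r^{-a}(N(U,r)+1)$ is in hand, I would multiply by the integrating factor $\exp\{C\int_0^r t^{-a}\,dt\}$ --- which is finite and bounded on $(0,r_0)$ because $1-a=2s>0$ --- and check that $r\mapsto\exp\{C\int_0^r t^{-a}dt\}\big(N(U,r)+C\int_0^r t^{-a}dt\big)$ has nonnegative derivative, which is exactly \eqref{mon}. When $V\equiv0$ all the error terms $\mathcal E_i$ vanish identically, so $C$ can be taken to be $0$ and one recovers the pure monotonicity of $r\mapsto N(U,r)$. In that case, equality $N(U,r)\equiv\kappa$ on $(0,R)$ forces equality in the Cauchy--Schwarz step for every such $r$, hence $ZU=2\kappa\,U$ pointwise on $\Sa_R^+$; integrating this first-order relation along the orbits of the parabolic dilations $\delta_\la(X,t)=(\la X,\la^2t)$ shows that $U$ is parabolically homogeneous of degree $2\kappa$, and conversely such homogeneity makes $H$ and $I$ scale so that $N$ is constant.

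I expect the main obstacle to be the rigorous handling of the boundary slice $\{y=0\}$ together with the behavior at infinity in Gaussian space. Justifying the Rellich identities and differentiation under the integral sign for merely weak solutions of the degenerate parabolic equation requires the interior regularity and Caccioppoli estimates for the weight $|y|^a$ in the parabolic setting (the analogues of \cite{FKS}, \cite{NS}, \cite{ST}), and one must show that the Gaussian tails decay fast enough --- using $u\in\operatorname{Dom}(H^{s})$ --- so that no contribution at infinity appears in the integrations by parts. The most delicate bookkeeping is ensuring that, after inserting the Neumann data $-Vu$, every error term is genuinely $O(r^{-a})\,H(U,r)$ uniformly for small $r$; it is here that the precise formulation of \eqref{vasump} and the smallness of $r_0$ are used, and arranging that the constants depend only on $n,s,K$ is the technical crux of the proof.
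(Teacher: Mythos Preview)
The paper does not actually prove Theorem~\ref{T:mon1}: it is merely stated as one of the central tools from the cited work \cite{BG}, with no argument given beyond the remark that it generalizes to $s\neq 1/2$ the monotonicity result obtained in \cite{DGPT} for $s=1/2$. There is therefore no ``paper's own proof'' to compare against.

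That said, your outline is consistent with the strategy one expects from \cite{BG} and \cite{DGPT}: pass to the parabolic extension \eqref{ext}, define Poon-type Gaussian height and energy functionals with the weight $|y|^a$, derive first-variation formulas for $H'$ and $I'$ via Rellich--Ne\v{c}as identities built on the parabolic generator $Z$, control the boundary trace contributions coming from the Neumann data $-Vu$ by a weighted trace inequality (producing the $r^{-a}$ factor), and close with Cauchy--Schwarz to get $N'(r)\ge -Cr^{-a}(N(U,r)+1)$, from which \eqref{mon} follows by the integrating factor. Your identification of the two regularity regimes in \eqref{vasump} with the sign of $a=1-2s$ and the number of derivatives of $V$ needed to close the estimate is also the correct heuristic. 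Since the paper offers no details, I cannot confirm the precise bookkeeping you anticipate in the last paragraph, but the architecture of your proposal matches the declared source.
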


Theorem \ref{T:mon1} generalizes to the case $s\not= 1/2$ a related monotonicity result that was obtained in \cite{DGPT} for the case $s=1/2$ to establish the optimal regularity of the solution of the Signorini problem for the heat equation. We also mention the recent paper \cite{Yu} in which the author uses a generalization of the monotonicity formula in Theorem \ref{T:almgrenEO} to establish the (sucp) for solutions of the nonlocal equation $(-L)^s u = 0$, where $L$ is a divergence form elliptic operator with Lipschitz continuous coefficients.

Concerning monotonicity formulas we cannot fail to mention the considerable developments that have taken place over the last decade in connection with free boundary problems for nonlocal operators, starting with the pioneering paper \cite{ACS}. In that paper, for the first time, the role of the Almgren monotonicity formula in Theorem \ref{T:al} was recognized as central to the analysis of both the optimal regularity of the solution and of the regular part of the free boundary in the nonlocal obstacle problem:
\emph{given a smooth function $\psi :\R^n\to \R$,  find a function $u:\Rn \to \R$ such that}
\begin{equation}\label{pb}
\left\{\begin{array}{l}
\min\bigl\{ u-\psi,\,(-\Delta)^su\bigr\}=0\quad \textrm{in}\ \R^n,\\
\lim_{|x|\to\infty}u(x)=0.
\end{array}\right.
\end{equation}
Via the extension procedure described above, this problem becomes equivalent to the following \emph{thin obstacle problem} (the name ``thin" comes from the fact that now the obstacle lives on the thin manifold $M = \Rn_x\times \{0\}$ in the thick space $\Rn_x\times \R_y$) for the extension operator $L_a$, with $a = 1-2s$ and for the function $\tilde U(x,y)$:
\begin{equation}\label{sb}
 \begin{cases}
L_a \tilde U = \operatorname{div}_{x,y}(|y|^a \nabla_{x,y} \tilde U) = 0\ \ \ \ \ \ \ \ \text{in}\ \R^{n+1}_+\cup \R^{n+1}_-,
\\
\tilde U(x,-y) = \tilde U(x,y),\ \ \ \ \ \ \ \ \ \  \ \ \ \ \ \ \ \ \ \ \text{for}\ x\in \Rn, y\in \R,
\\
\tilde U(x,0) \ge \psi(x),\ \ \ \ \ \ \ \ \ \ \ \ \ \ \ \ \ \ \ \ \ \ \ \ \ \ \text{for}\ x\in \Rn,
\\
- \underset{y\to 0^+}{\lim} y^a D_y\tilde U(x,y) \ge 0, \ \ \ \ \ \ \ \ \ \ \ \ \ \ \  \text{for}\ x\in \Rn,
\\
\underset{y\to 0^+}{\lim} y^a D_y \tilde U(x,y) = 0, \ \ \ \ \ \ \ \ \ \ \ \ \ \ \ \ \ \text{on the set where}\ \tilde u(x,0) > \psi(x).
\end{cases}
\end{equation}
We emphasize that when $s = 1/2$, we have $a = 1-2s = 0$, and thus the extension operator $L_a$ is simply the Laplacean in the variables $(x,y)$. In such case the problem \eqref{sb} is known as the famous problem in elasticity posed in the 50's by Antonio Signorini, an engineer and mathematician: \emph{what is the equilibrium configuration of a spherically shaped elastic body resting on a rigid frictionless plane}.  
Thus, the nonlocal obstacle problem \eqref{pb} for $(-\Delta)^{1/2}$ is equivalent to the Signorini problem for the operator $\Delta$ in $\R^{n+1}$. For a discussion of the latter one should see the beautiful book \cite{PSU}. A remarkable up-to-date account on obstacle problems for fractional operators is the recent survey paper \cite{DS}.

Returning to \eqref{pb}, in \cite{ACS} the authors considered the case of zero obstacle $\psi$ and critical exponent $s = 1/2$, and using Theorem \ref{T:al} they proved:
\begin{itemize}
\item[1)] the optimal regularity $C^{1,1/2}_{loc}$ of the solution from either side of the thin manifold (a different proof of such result had first been found in \cite{AC});
\item[2)] that the \emph{regular part} of the free boundary is locally a $C^{1,\alpha}$ hypersurface. 
\end{itemize}
 In \cite{CSS} the authors using an almost monotonicity formula that generalizes Theorem \ref{T:almgrenEO} above were able to extend the results in \cite{ACS} to the full range $0<s<1$, and to the case of general obstacle. In \cite{GP} some new  one-parameter families of monotonicity formulas of Weiss and Monneau type were discovered. With such formulas the authors were able to analyze, for the fractional exponent $s = 1/2$ in \eqref{pb} and for a general obstacle, the so-called \emph{singular part} of the free boundary. They classified singular points and proved the rectifiability of the singular part of the free boundary. Using some new monotonicity formulas their results have been recently generalized to the full range $0<s<1$ in \cite{GRO}. In the above cited work \cite{DGPT} the authors developed an extensive analysis of the obstacle problem 
\begin{equation}\label{pb00}
\left\{\begin{array}{l}
\min\bigl\{ u-\psi,\,(\p_t -\Delta)^{1/2} u\bigr\}=0\quad \textrm{in}\ \R^n,\\
\lim_{|(x,t)|\to\infty}u(x,t)=0,
\end{array}\right.
\end{equation} 
for the nonlocal heat equation corresponding to the value $s=1/2$ of the fractional exponent and for a general obstacle $\psi$. They establish the optimal regularity of the solution, the regularity of the regular part of the free boundary, they classify the singular set and prove its regularity. Some of the central tools in their analysis are some new Almgren type monotonicity formulas inspired to those found by Poon in \cite{Po} for the standard heat equation, as well as one-parameter parabolic monotonicity formulas of Weiss and Monneau type. 

Finally, the papers  \cite{GS}, \cite{PP}, \cite{GPS}, and \cite{GPPS}  contain various new monotonicity formulas of Almgren, Weiss and Monneau type which are applied to nonlocal obstacle problems such as \eqref{pb} above, but in which either $(-\Delta)^{1/2}$ is replaced by $(-L)^{1/2}$, where $L$ is a divergence form elliptic operator with Lipschitz continuous coefficients (for a previous result for $C^{1,\gamma}$ coefficients, see \cite{Gui}), or $(-\Delta)^s$ is replaced by $(-\Delta)^s + <b(x),\nabla >$. The authors establish for the relevant problems the optimal regularity of the solution, the $C^{1,\alpha}$ smoothness of the regular free boundary and the regularity of the singular part of the latter. We note that being able to treat variable coefficient operators in the obstacle problem  is crucial to understanding \eqref{pb} above when the separating manifold is non-flat. In such situation, the natural approach is to flatten the manifold. If the latter is, for instance, $C^{1,1}$, one is thus lead to the study of a Signorini problem for a variable coefficient operator with Lipschitz coefficients and flat thin manifold. A different approach to the Signorini problem for variable coefficient operators, based on Carleman estimates, was found in \cite{KRS16}, \cite{KRS17}. 


\section{Nonlocal Poisson kernel and mean-value formulas}\label{S:smean}

The most fundamental property of classical harmonic functions is Gauss' mean-value property: if $\Delta u = 0$ in an open set $\Om\subset \Rn$, then for every $x\in \Om$ and every $0<r<\operatorname{dist}(x,\p \Om)$, one has \eqref{har} above.
Classical potential theory, i.e., the study of subharmonic functions, can be entirely developed starting from the corresponding sub-mean value formula for subharmonic functions, see for instance \cite{He} and \cite{DP}. It is thus not surprising that a suitable analogue of \eqref{har} should play an equally important role in the potential theory of the fractional Laplacean. In this respect one should keep in mind that one way (admittedly, not the simplest one!) of obtaining the spherical mean-value formula in \eqref{har} is by choosing $x = 0$ in the Poisson representation formula
\begin{equation}\label{cpf}
u(x) = \int_{S_r} P_r(x,y) \vf(y) d\sigma(y),\ \ \ \ \ \ x\in B_r,
\end{equation}
where we have denoted with 
\[
P_r(x,y) = \frac{1}{\sigma_{n-1} r} \frac{r^2 - |x|^2}{|y-x|^n},\ \ \ \ x\in B_r,\ y\in S_r,
\]
the Poisson kernel for the ball $B_r$. We recall that the function $u$ in \eqref{cpf} provides the unique solution to the Dirichlet problem for the ball  $B_r$.
In his seminal paper \cite{R} using the fundamental solution $E_s(x)$ in \eqref{fs} of Theorem \ref{T:fs} above, and the \emph{nonlocal Kelvin transform} of a function $u$, defined by
\begin{equation}\label{kt}
\tilde u(x) = E_s(x) u\left(\frac{x}{|x|^{2}}\right),
\end{equation}
M. Riesz  constructed the nonlocal Poisson kernel \eqref{nlpk} below, and with it he was able to solve the Dirichlet problem
\begin{equation}\label{dpnl}
\begin{cases}
(-\Delta)^s u = 0\ \ \  \text{in}\ B_r, 
\\
u = \vf\ \ \ \ \ \ \ \  \text{in}\ \Rn\setminus B_r.
\end{cases}
\end{equation}
In the following definition we recall  the nonlocal counterpart of \eqref{cpf} discovered by M. Riesz, see formula (3) on p. 17 in \cite{R}, but also (1.6.11') and (1.6.2) on pages 122 and 112 in \cite{La}.

\begin{definition}\label{D:nlpk}
For every $0<s<1$ and $r>0$ we define the \emph{nonlocal interior Poisson kernel} for $B_r$ as
\begin{equation}\label{nlpk}
P^{(s)}_r(x,y) = c(n,s) \left(\frac{r^2 - |x|^2}{|y|^2 - r^2}\right)^s \frac{1}{|y-x|^n},\ \ \ \ \ \ \ |x|<r, |y|>r,
\end{equation}
where 
\begin{equation}\label{cns}
c(n,s) = \frac{\sin(\pi s) \G(\frac n2)}{\pi^{\frac n2 + 1}} = \frac{2 \sin(\pi s)}{\pi \sigma_{n-1}}.
\end{equation}
When $x = 0$ is the center of the ball $B_r$, then we use the notation 
\begin{equation}\label{ar}
A^{(s)}_r(y) = P^{(s)}_r(0,y) = \begin{cases}
c(n,s) \frac{r^{2s}}{(|y|^2 - r^2)^s |y|^n},\ \ \ \ \ \ \ \ \ |y|>r,
\\
0\ \ \ \ \ \ \ \ \ \ \ \ \ \ \ \ \ \ \ \ \ \ \ \ \ \ \ \ \ \ \ |y|\le r.
\end{cases}
\end{equation}
We call $A^{(s)}_r(y)$ the kernel of the \emph{nonlocal mean-value operator}
\begin{equation}\label{smvo}
\mathscr A^{(s)}_r u(x) = A^{(s)}_r \star u(x).
\end{equation}
\end{definition}
The mean-value operator defined by \eqref{smvo} is the nonlocal counterpart of  the spherical mean \eqref{nsm2}. In Proposition \ref{P:vague} below we show that as $s\nearrow 1$ then $\mathscr A^{(s)}_r u(x) \to \mathscr M_ru(x)$. 

Returning to \eqref{nlpk}, the following theorem of M. Riesz's provides the unique solution to the nonlocal Dirichlet problem \eqref{dpnl}.

\begin{theorem}\label{T:spe}
Let $\vf\in \mathscr L_s(\Rn)\cap C(\Rn)$. Consider the function $u$ in $\Rn$ defined by
\begin{equation}\label{spe}
u(x) =  \begin{cases}
\int_{\Rn\setminus B_r} P^{(s)}_r(x,y) \vf(y) dy,\ \ \ \ \ x\in B_r,
\\
\vf(x),\ \ \ \ \ \ \ \ \ \ \ \ \ \ \ \ x\in \Rn\setminus B_r.
\end{cases}
\end{equation}
Then, $u$ is the unique solution to the Dirichlet problem for the ball $B_r$.
\end{theorem}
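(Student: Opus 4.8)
\textbf{Proof strategy for Theorem \ref{T:spe}.}

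The plan is to verify that the function $u$ defined by \eqref{spe} solves \eqref{dpnl}, and then to dispatch uniqueness via the maximum principle already recorded in Proposition \ref{P:mp}. Uniqueness is the easy half: if $u_1, u_2$ both solve \eqref{dpnl} with datum $\vf$, then $w = u_1 - u_2$ satisfies $(-\Delta)^s w = 0$ in $B_r$ and $w = 0$ in $\Rn\setminus B_r$; applying Proposition \ref{P:mp} to both $w$ and $-w$ gives $w \equiv 0$. So the substance is the existence claim, which has two parts: (a) that $u$ as defined is $s$-harmonic inside $B_r$, i.e. $(-\Delta)^s u(x) = 0$ for $|x| < r$; and (b) that $u$ attains the boundary/exterior data continuously, i.e. $u \in C(\Rn)$ with $u = \vf$ on $\Rn\setminus B_r$ — the latter amounts to showing that as $x \to x_0$ with $|x_0| = r$ from inside, the Poisson integral $\int_{\Rn\setminus B_r} P^{(s)}_r(x,y)\vf(y)\,dy \to \vf(x_0)$.

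For part (a), the natural route — and the one consistent with the historical attribution to M. Riesz signalled in the text — is to exploit the nonlocal Kelvin transform \eqref{kt}. One first checks that $E_s(x) = \alpha(n,s)|x|^{-(n-2s)}$ is $s$-harmonic in $\Rn\setminus\{0\}$ (this is immediate from Theorem \ref{T:fs}, since $(-\Delta)^s E_s = \delta$ and $\delta$ is supported at the origin), and then that the inversion $x \mapsto x/|x|^2$ together with the conformal factor $E_s$ intertwines $(-\Delta)^s$ with itself in the appropriate sense — precisely, that $\tilde u$ in \eqref{kt} is $s$-harmonic in the image domain whenever $u$ is $s$-harmonic in the original one. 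Granting this Kelvin-invariance, the exterior Dirichlet problem \eqref{dpnl} for the ball $B_r$ is transformed into an interior-type problem for the complementary ball, for which the Poisson kernel is essentially the classical-looking object; unwinding the change of variables produces exactly the kernel \eqref{nlpk} with the stated constant \eqref{cns}. Alternatively — and perhaps cleaner to write out — one can verify directly that for each fixed $y$ with $|y|>r$ the function $x \mapsto P^{(s)}_r(x,y)$ satisfies $(-\Delta)^s_x P^{(s)}_r(\cdot,y) = 0$ in $B_r$ by relating it through the inversion to $E_{s}$, and then differentiate under the integral sign in \eqref{spe}; the integrand and its relevant difference quotients are dominated uniformly for $x$ in a compact subset of $B_r$ because $P^{(s)}_r(x,y)$ decays like $|y|^{-n-2s}$ as $|y|\to\infty$ (matching the $\mathscr L_s(\Rn)$ hypothesis on $\vf$) and is bounded near $|y| = r$ after integrating the mild singularity $(|y|^2-r^2)^{-s}$, which is locally integrable since $0<s<1$.

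For part (b), the key analytic input is the identity
\[
\int_{\Rn\setminus B_r} P^{(s)}_r(x,y)\,dy = 1 \qquad \text{for every } |x|<r,
\]
i.e. that $P^{(s)}_r(x,\cdot)$ is a probability density on $\Rn\setminus B_r$; this is the nonlocal analogue of $\|P_s(\cdot,y)\|_{L^1}=1$ in Remark \ref{R:poissoncione}, and it can be proved either by the Kelvin-transform reduction to the known normalization of the classical Poisson kernel, or by a direct computation using Proposition \ref{P:poisson} after a suitable substitution. Once this is in hand, continuity up to $S_r$ follows by the standard approximate-identity argument: write $u(x) - \vf(x_0) = \int_{\Rn\setminus B_r} P^{(s)}_r(x,y)\,(\vf(y) - \vf(x_0))\,dy$, split the integral into the region $|y - x_0| < \rho$ (where $|\vf(y)-\vf(x_0)|$ is small by continuity of $\vf$) and its complement (where, for $x$ near $x_0$, the factor $r^2 - |x|^2 \to 0$ forces $P^{(s)}_r(x,y)\to 0$, uniformly in $y$ bounded away from $x_0$, while the contribution from large $|y|$ is controlled by $\vf \in \mathscr L_s(\Rn)$ times the vanishing numerator). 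The main obstacle, and the step I would budget the most care for, is making the Kelvin-transform computation in part (a) rigorous at the level of the nonlocal operator — i.e. justifying that $(-\Delta)^s$ genuinely conjugates to itself under the inversion with weight $E_s$, including tracking the constant — since, unlike the local Laplacean, $(-\Delta)^s$ is nonlocal and one must be careful that the inversion maps the "outside" data correctly and that the principal-value integrals transform as expected. If one prefers to avoid Kelvin entirely, the fallback is the brute-force verification that $(-\Delta)^s_x P^{(s)}_r(\cdot,y) = 0$ in $B_r$, which is a finite but delicate special-function computation of the type already illustrated in the proofs of Lemma \ref{L:fsreg} and Theorem \ref{T:cs}.
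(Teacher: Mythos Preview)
The paper does not actually prove Theorem \ref{T:spe}; it is stated without proof and attributed to M.~Riesz's original paper \cite{R}, with the text preceding the statement indicating only that Riesz constructed the kernel \eqref{nlpk} via the fundamental solution $E_s$ and the nonlocal Kelvin transform \eqref{kt}. So there is no proof in the paper to compare your proposal against.

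That said, your strategy is the natural one and is consistent with the historical route the paper alludes to: uniqueness via Proposition \ref{P:mp}, $s$-harmonicity of the Poisson integral via the Kelvin transform (or a direct special-function verification in the spirit of Lemma \ref{L:fsreg}), and boundary continuity via the unit-mass property of $P^{(s)}_r(x,\cdot)$ combined with an approximate-identity argument. One point worth flagging: your unit-mass identity $\int_{\Rn\setminus B_r} P^{(s)}_r(x,y)\,dy = 1$ for all $|x|<r$ is genuinely needed for part (b), and the paper only establishes the special case $x=0$ (Lemma \ref{L:ai}); the general case requires either the Kelvin reduction you mention or a more involved direct computation, so you would need to supply that. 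Also, the continuity claim $u\in C(\Rn)$ across $S_r$ is a bit delicate when $\vf$ is merely in $\mathscr L_s(\Rn)\cap C(\Rn)$ rather than bounded, since the far-field piece of the split integral must be controlled using only the $\mathscr L_s$ norm together with the vanishing factor $(r^2-|x|^2)^s$; your sketch acknowledges this but the estimate deserves care.
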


Let us observe right-away that from \eqref{ar} we have $A^{(s)}_r \in L^1(\Rn)$.
Therefore, Young's convolution theorem (or Minkowski integral inequality) implies that 
\[
\mathscr A^{(s)}_r :  L^p(\Rn)\ \longrightarrow\ L^p(\Rn),\ \ \ \ \ \ \ \ 1\le p \le \infty,
\]
and that
\[
||\mathscr A^{(s)}_r u||_{L^p(\Rn)} \le ||A^{(s)}_r||_{L^1(\Rn)} ||u||_{L^p(\Rn)},\ \ \ \ \ \ \ u \in L^p(\Rn).
\]
The result that follows shows that $\mathscr A^{(s)}_r$ is a contraction in $L^p(\Rn)$.

\begin{lemma}\label{L:ai}
For every $0<s<1$ and $r>0$ one has
\[
||A^{(s)}_r||_{L^1(\Rn)} = \int_{\Rn} A^{(s)}_r(y) dy = 1.
\]
\end{lemma}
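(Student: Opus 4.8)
The plan is to compute the integral $\int_{\Rn} A^{(s)}_r(y)\, dy$ directly, using the explicit formula \eqref{ar} for the kernel. By the scaling properties of the problem (or by the substitution $y \mapsto ry$), it suffices to treat the case $r=1$. Indeed, from \eqref{ar} one sees that $A^{(s)}_r(y) = r^{-n} A^{(s)}_1(y/r)$, so a change of variable immediately reduces the statement to showing $\int_{\Rn} A^{(s)}_1(y)\, dy = 1$. Thus I would first make this reduction explicit and then concentrate on proving
\[
c(n,s) \int_{|y|>1} \frac{dy}{(|y|^2-1)^s |y|^n} = 1,
\]
with $c(n,s)$ as in \eqref{cns}.

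Next I would pass to polar coordinates using Cavalieri's principle (as in \eqref{gaussians}): since the integrand depends only on $|y|=\rho$, the integral becomes
\[
c(n,s)\, \sigma_{n-1} \int_1^\infty \frac{\rho^{n-1}}{(\rho^2-1)^s \rho^n}\, d\rho = c(n,s)\, \sigma_{n-1} \int_1^\infty \frac{d\rho}{\rho(\rho^2-1)^s}.
\]
The remaining one-dimensional integral I would evaluate by a substitution designed to turn it into a Beta function. A natural choice is $t = 1/\rho^2$ (so that $\rho = t^{-1/2}$, $d\rho = -\tfrac12 t^{-3/2}\, dt$, and as $\rho$ runs from $1$ to $\infty$, $t$ runs from $1$ to $0$), which gives $(\rho^2-1)^s = ((1-t)/t)^s$ and hence
\[
\int_1^\infty \frac{d\rho}{\rho(\rho^2-1)^s} = \frac12 \int_0^1 t^{s-1}(1-t)^{-s}\, dt = \frac12 B(s,1-s) = \frac{\Gamma(s)\Gamma(1-s)}{2} = \frac{\pi}{2\sin \pi s},
\]
where I have used \eqref{beta2}, \eqref{bg}, and the reflection formula \eqref{sine}. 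Substituting $\sigma_{n-1} = 2\pi^{n/2}/\Gamma(n/2)$ from \eqref{sn1} and $c(n,s) = \sin(\pi s)\Gamma(n/2)/\pi^{n/2+1}$ from \eqref{cns}, everything cancels and the product equals $1$.

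The main (and really only) obstacle is a matter of bookkeeping rather than of substance: one must be careful that the substitution is legitimate — in particular checking the convergence of the integral near $\rho=1$, where the integrand behaves like $(\rho-1)^{-s}$ and is integrable since $0<s<1$, and near $\rho=\infty$, where it behaves like $\rho^{-1-2s}$ and is likewise integrable — and one must track the constants $c(n,s)$, $\sigma_{n-1}$, and the Beta/Gamma identities without arithmetic slips. An alternative, perhaps cleaner, route avoiding any explicit Beta-integral computation would be to invoke Theorem \ref{T:spe}: applying the representation formula \eqref{spe} to the constant function $\vf \equiv 1$ (which lies in $\mathscr L_s(\Rn)\cap C(\Rn)$ and is $s$-harmonic by Remark \ref{R:more}), uniqueness forces $u\equiv 1$ on $B_r$, and evaluating at $x=0$ yields $\int_{\Rn\setminus B_r} A^{(s)}_r(y)\, dy = 1$, which is exactly the claim. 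I would present the direct computation as the main argument and mention this softer alternative as a remark.
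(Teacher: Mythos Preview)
Your proposal is correct and follows essentially the same route as the paper: reduce to $r=1$ by scaling, pass to polar coordinates, and evaluate the remaining one-dimensional integral as $\tfrac12 B(s,1-s)=\tfrac{\pi}{2\sin\pi s}$, after which the constants cancel. The only cosmetic difference is the substitution used to reach the Beta integral: the paper sets $\rho=\sec\vartheta$ and lands on the trigonometric form \eqref{beta}, whereas you set $t=1/\rho^2$ and land directly on the algebraic form \eqref{beta2}; both yield $\tfrac12 B(s,1-s)$. Your soft alternative via Theorem~\ref{T:spe} applied to $\varphi\equiv 1$ is a nice remark that the paper does not include.
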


\begin{proof}
Using \eqref{ar} we find
\begin{align*}
\int_{\Rn} A^{(s)}_r(y) dy & = c(n,s) \int_{|y|>r} \frac{r^{2s}}{(|y|^2 - r^2)^s |y|^n} dy
\\
& = r^n c(n,s) \int_{|z|>1} \frac{dz}{(|z|^2 - 1)^s r^n |z|^n}  = c(n,s) \int_{|z|>1} \frac{dz}{(|z|^2 - 1)^s  |z|^n}
\\
& = c(n,s) \sigma_{n-1} \int_{1}^\infty \frac{1}{(\rho^2 - 1)^s} \frac{d\rho}{\rho}
\end{align*}
Now we make the substitution $\rho = \sec \vartheta$, for which $d\rho = \sec \vartheta \tan \vartheta d\vartheta$. With such substitution we find 
\begin{align*}
\int_{\Rn} A^{(s)}_r(y) dy =  & c(n,s) \sigma_{n-1} \int_{0}^{\frac \pi{2}} \frac{\sec \vartheta \tan \vartheta}{(\tan^2 \vartheta)^s} \frac{d\vartheta}{\sec \vartheta}
\\
& = c(n,s) \sigma_{n-1} \int_{0}^{\frac \pi{2}} (\sin \vartheta)^{1-2s} (\cos \vartheta)^{2s-1} d\vartheta.
\end{align*}

Applying \eqref{beta} above with $y = 1-s$ and $x = s$, we conclude
\begin{align*}
\int_{\Rn} A^{(s)}_r(y) dy = & c(n,s) \sigma_{n-1} \int_{0}^{\frac \pi{2}} (\sin \vartheta)^{1-2s} (\cos \vartheta)^{2s-1} d\vartheta 
\\ 
& = \frac{c(n,s) \sigma_{n-1}}{2} B(s,1-s)  = \frac{c(n,s) \sigma_{n-1}}{2} \G(s)\G(1-s)
\end{align*}
Finally, we use the formula \eqref{sine}
to obtain
\begin{align*}
\int_{\Rn} A^{(s)}_r(y) dy = & \frac{c(n,s) \sigma_{n-1}}{2}\frac{\pi}{\sin \pi s}.
\end{align*}
If we now recall the formula \eqref{sn1} it is easy to see that, by choosing $c(n,s)>0$ as in \eqref{cns},
we reach the desired conclusion that
\[
\int_{\Rn} A^{(s)}_r(y) dy = 1.
\]

\end{proof}

The next result shows that, as $s\to 1$, the nonlocal mean-value operator in \eqref{smvo} converges to the spherical mean-value operator \eqref{MA0} for the sphere.

\begin{prop}[Asymptotic behavior of $\mathscr A^{(s)}_r u(x)$ as $s\nearrow 1$]\label{P:vague}
For every function $u\in \mathscr S(\Rn)$ one has 
\[
\underset{s\to 1}{\lim} \mathscr A^{(s)}_r u(x) = \mathscr M_r u(x),
\]
for every $x\in \Rn$, where $\mathscr M_r u(x)$ is defined as in \eqref{MA0} above.
\end{prop}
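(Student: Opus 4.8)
The plan is to use the explicit kernel $A^{(s)}_r(y)$ from \eqref{ar}, together with the normalization $\|A^{(s)}_r\|_{L^1(\Rn)} = 1$ established in Lemma \ref{L:ai}, and to show that as $s\nearrow 1$ the family of probability measures $A^{(s)}_r(y)\,dy$ converges weakly to the normalized surface measure $d\sigma_r$ on the sphere $S(0,r)$. Since $\mathscr A^{(s)}_r u(x) = A^{(s)}_r \star u(x) = \int_{\Rn} A^{(s)}_r(y) u(x-y)\,dy$ and $\mathscr M_r u(x) = \int_{S(0,r)} u(x-y)\,d\sigma_r(y)$ (using that $d\sigma_r$ is radial, so the sign of $y$ is immaterial), the statement will follow once we prove this weak convergence against the fixed test function $y\mapsto u(x-y) \in \mathscr S(\Rn)$ (for each fixed $x$).

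First I would reduce to the radial picture via Cavalieri's principle: writing $y = \rho\omega$ with $\rho>0$, $\omega\in \mathbb S^{n-1}$, and using \eqref{ar},
\[
\mathscr A^{(s)}_r u(x) = c(n,s)\, r^{2s} \int_r^\infty \frac{\rho^{n-1}}{(\rho^2-r^2)^s \rho^n} \left(\int_{\mathbb S^{n-1}} u(x-\rho\omega)\,d\sigma(\omega)\right) d\rho = c(n,s)\, r^{2s}\,\sigma_{n-1} \int_r^\infty \frac{\mathscr M_\rho u(x)}{(\rho^2-r^2)^s\,\rho}\,d\rho,
\]
where $\mathscr M_\rho u(x)$ is the spherical mean over $S(x,\rho)$, which is a smooth bounded function of $\rho$ that extends continuously to $\rho = r$ with value $\mathscr M_r u(x)$. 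Similarly, by Lemma \ref{L:ai} applied in the same radial coordinates, $1 = c(n,s)\,\sigma_{n-1}\int_r^\infty (\rho^2-r^2)^{-s}\rho^{-1}\,d\rho$ (the $r^n$ factors cancel), so it suffices to show that the probability density $g_s(\rho) := c(n,s)\,r^{2s}\sigma_{n-1}(\rho^2-r^2)^{-s}\rho^{-1}$ on $(r,\infty)$ concentrates at $\rho = r$ as $s\nearrow 1$, i.e. that for every $\delta>0$, $\int_{r+\delta}^\infty g_s(\rho)\,d\rho \to 0$. Combined with the continuity of $\rho\mapsto \mathscr M_\rho u(x)$ at $\rho = r$ and its boundedness on $(r,\infty)$ (since $u\in \mathscr S(\Rn)$, one has $|\mathscr M_\rho u(x)|\le \|u\|_{L^\infty}$ and indeed decay in $\rho$), a standard approximate-identity argument then gives $\mathscr A^{(s)}_r u(x)\to \mathscr M_r u(x)$.

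The concentration claim is the computational heart. From the proof of Lemma \ref{L:ai}, $c(n,s)\sigma_{n-1} = \frac{2}{\G(s)\G(1-s)}$, and using $\G(1-s) = (1-s)\G(2-s)$ one sees $c(n,s)\sigma_{n-1} \sim \frac{2(1-s)}{\G(1)\G(1)} = 2(1-s)$ as $s\nearrow 1$; the factor $(1-s)$ vanishing is exactly what is needed to balance the blow-up of $\int (\rho^2-r^2)^{-s}d\rho$ near $\rho = r$. Concretely, for fixed $\delta>0$, splitting at $\rho = 2r$ say, on $[r+\delta, 2r]$ we bound $\int_{r+\delta}^{2r}(\rho^2-r^2)^{-s}\rho^{-1}d\rho \le C_\delta$ uniformly in $s$ near $1$, and on $[2r,\infty)$ we have $\int_{2r}^\infty (\rho^2-r^2)^{-s}\rho^{-1}d\rho \le C\int_{2r}^\infty \rho^{-1-2s}d\rho = C/(2s\,(2r)^{2s})$, also uniformly bounded; hence $\int_{r+\delta}^\infty g_s(\rho)\,d\rho \le C'(1-s) r^{2s} \to 0$. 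The main obstacle I anticipate is making the approximate-identity argument airtight near the singular endpoint $\rho = r$: one must check that $\int_r^{r+\delta} g_s(\rho)|\mathscr M_\rho u(x) - \mathscr M_r u(x)|\,d\rho$ is small, which follows by picking $\delta$ small (using continuity of $\mathscr M_\rho u(x)$ at $r$) and then using $\int_r^{r+\delta} g_s \le 1$, but one should confirm that $g_s$ remains a genuine integrable density on every $(r, r+\delta)$ for each $s<1$ (true since $-s>-1$). Once these estimates are in place the limit follows, and the argument in fact works for any $u\in C(\Rn)\cap L^\infty(\Rn)$, not just $u\in \mathscr S(\Rn)$.
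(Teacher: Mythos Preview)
Your argument is correct and follows the same overall strategy as the paper: reduce to the radial variable via spherical means, write
\[
\mathscr A^{(s)}_r u(x)=\frac{2\sin(\pi s)}{\pi}\,r^{2s}\int_r^\infty \frac{\mathscr M_\rho u(x)}{(\rho^2-r^2)^s\,\rho}\,d\rho,
\]
and exploit that the prefactor $\tfrac{2\sin(\pi s)}{\pi}\sim 2(1-s)$ vanishes as $s\nearrow 1$ while the integral concentrates at $\rho=r$.

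The one genuine difference is in how the endpoint $\rho=r$ is handled. The paper subtracts $\mathscr M_r u(0)$ and then invokes the gradient bound from Proposition~\ref{P:epd}\,(i), obtaining the Lipschitz estimate $|\mathscr M_\rho u(0)-\mathscr M_r u(0)|\le \|\nabla u\|_{L^\infty}(\rho-r)$, which converts the nearly non-integrable factor $(\rho-r)^{-s}$ into the uniformly integrable $(\rho-r)^{1-s}$ on a fixed interval $[r,R]$. Your approximate-identity/weak-convergence framing instead uses only the continuity of $\rho\mapsto \mathscr M_\rho u(x)$ at $\rho=r$: you first make the near contribution small by choosing $\delta$ (uniformly in $s$, since $\int_r^{r+\delta}g_s\le 1$), and only then send $s\to 1$ to kill the far contribution. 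This is slightly more elementary and, as you observe, extends the conclusion to any $u\in C(\Rn)\cap L^\infty(\Rn)$, whereas the paper's route as written uses $u\in C^1$. The paper's approach, on the other hand, gives a direct quantitative bound (a rate $O(1-s)$) without the two-step $\varepsilon$--$\delta$ argument.
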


\begin{proof}
It suffices to prove the result when $x=0$. By Lemma \ref{L:ai} we have
\begin{align*}
\mathscr A^{(s)}_r u(0) & = c(n,s) \int_{|y|>r} \frac{r^{2s}}{(|y|^2 - r^2)^s |y|^n} \left[u(y) - \mathscr M_r u(0)\right] dy + \mathscr M_r u(0)
\\
& = \mathscr M_r u(0) + \frac{2\sin(\pi s)}{\pi} r^{2s}  \int_r^\infty \frac{1}{(\rho^2 - r^2)^s \rho} \left[\mathscr M_\rho u(0) - \mathscr M_r u(0)\right] d\rho.
\end{align*}
Since $\sin(\pi s) \to 0$ as $s\to 1$, to finish the proof it suffices to show that there exists a number $C>0$ independent of $s\in (0,1)$ such that
\[
\left|\int_r^\infty \frac{1}{(\rho^2 - r^2)^s \rho} \left[\mathscr M_\rho u(0) - \mathscr M_r u(0)\right] d\rho\right| \le C.
\]
From (i) of Proposition \ref{P:epd} we find
\[
\left|\frac{\p \mathscr M_r u}{\p r}(0)\right| = \left|\frac{1}{\sigma_{n-1} r^{n-1}} \int_{S(0,r)} \frac{\p u}{\p \nu}(y) d\sigma(y)\right| \le ||\nabla u||_{L^\infty(\Rn)}.
\]
We thus have for a fixed $R>r$
\begin{align*}
& \int_r^\infty \frac{1}{(\rho^2 - r^2)^s \rho} \left[\mathscr M_\rho u(0) - \mathscr M_r u(0)\right] d\rho = \int_r^R \frac{1}{(\rho^2 - r^2)^s \rho} \left[\mathscr M_\rho u(0) - \mathscr M_r u(0)\right] d\rho
\\
& + \int_R^\infty \frac{1}{(\rho^2 - r^2)^s \rho} \left[\mathscr M_\rho u(0) - \mathscr M_r u(0)\right] d\rho = I(s) + II(s).
\end{align*}
Now, for any $\rho\in (r,R)$ we have from the above estimate
\begin{align*}
\left|\mathscr M_\rho u(0) - \mathscr M_r u(0)\right| = \left|\int_r^\rho \frac{d}{dt} \mathscr M_t u(0) dt\right| \le ||\nabla u||_{L^\infty(\Rn)} (\rho-r).
\end{align*}
This gives
\begin{align*}
| I(s)| & \le ||\nabla u||_{L^\infty(\Rn)} \int_r^R \frac{(\rho-r)^{1-s}}{(\rho + r)^s \rho}  d\rho \le ||\nabla u||_{L^\infty(\Rn)} 2^{-s} r^{-1-s} \int_r^R (\rho-r)^{1-s}  d\rho
\\
& \le  ||\nabla u||_{L^\infty(\Rn)} 2^{-s} r^{-1-s} \frac{(R-r)^{2-s}}{2-s} \le C, 
\end{align*}
with a $C>0$ independent of $s\to 1$. On the other hand, we have
\begin{align*}
| II(s)| \le 2 ||u||_{L^\infty(\Rn)} \int_R^\infty \frac{d\rho}{(\rho^2 - r^2)^s \rho}   \le C,
\end{align*}
where again $C>0$ can be taken independent of $s\to 1$.

\end{proof}

Proposition \ref{P:vague} has been very recently generalized in \cite{BuSq} by allowing in the definition of $\mathscr A^{(s)}_r u(x)$ a measure on the unit sphere $\mathbb S^{n-1}$ which is bounded both from above and  from below (away from zero).

In the next result, we use the space $\mathscr L_s(\Rn)$ introduced in Definition \ref{D:sobs}, whereas the notation $C^{2s+\varepsilon}_{loc}$ is that introduced in Definition \ref{D:holder}.

\begin{lemma}\label{L:prepriv}
Let $0<s<1$ and suppose that $u\in \mathscr L_s(\Rn)$. Assume furthermore that for some $0<\varepsilon < 1$ we have $u\in C_{loc}^{2s+\varepsilon}$ in a neighborhood of  $x\in \Rn$. Then,
\begin{equation}\label{=}
\underset{r\to 0^+}{\lim} \int_{|y|>r} \frac{2u(x) - u(x+y) - u(x-y)}{(|y|^2 - r^2)^s |y|^n} dy = \int_{\Rn} \frac{2u(x) - u(x+y) - u(x-y)}{|y|^{n+2s}} dy.
\end{equation}
\end{lemma}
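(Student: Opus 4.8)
\textbf{Proof strategy for Lemma \ref{L:prepriv}.} The plan is to split the integral on the left-hand side of \eqref{=} into an inner piece near the singular sphere $\{|y| = r\}$ and an outer piece, and to control each separately as $r\to 0^+$. Write the numerator as $N(x,y) = 2u(x) - u(x+y) - u(x-y)$, and observe that the smoothness hypothesis $u\in C^{2s+\e}_{loc}$ near $x$ gives a pointwise bound $|N(x,y)| \le C|y|^{2s+\e}$ for $|y|$ small: indeed, when $0<s<1/2$ this is just $C^{0,2s+\e}$ regularity applied to each of the two differences, while for $1/2\le s<1$ one uses $C^{1,2s+\e-1}$ regularity, for which the first-order terms in $u(x+y)$ and $u(x-y)$ cancel in the symmetric combination, leaving the second-order H\"older remainder of size $|y|^{2s+\e}$. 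For $|y|$ bounded away from $0$, the membership $u\in \mathscr L_s(\Rn)$ together with $u$ locally bounded near $x$ guarantees $\int_{|y|\ge \delta} |N(x,y)| |y|^{-n-2s}\, dy < \infty$; this is exactly the kind of estimate already used in the convergence discussion following \eqref{fls} and in Proposition \ref{P:silv}.

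First I would fix $\delta>0$ small enough that $u$ is $C^{2s+\e}_{loc}$ on $\overline{B(x,2\delta)}$, and split both integrals in \eqref{=} at $|y| = \delta$. On the outer region $|y|>\delta$ the kernels $(|y|^2-r^2)^{-s}|y|^{-n}$ converge pointwise to $|y|^{-n-2s}$ as $r\to 0^+$, they are dominated for $0<r<\delta/2$ by a fixed multiple of $|y|^{-n-2s}$ (say $(|y|^2 - \delta^2/4)^{-s}|y|^{-n}$), and the dominating function is integrable against $|N(x,\cdot)|$ by the $\mathscr L_s$ hypothesis; hence dominated convergence handles the outer piece. The inner region $|y|\le \delta$ is where the work lies: here I would use $|N(x,y)|\le C|y|^{2s+\e}$ and estimate
\[
\int_{r<|y|<\delta} \frac{|N(x,y)|}{(|y|^2-r^2)^s |y|^n}\, dy \le C \int_{r<|y|<\delta} \frac{|y|^{2s+\e}}{(|y|^2 - r^2)^s |y|^n}\, dy = C\sigma_{n-1} \int_r^\delta \frac{\rho^{2s+\e-1}}{(\rho^2-r^2)^s}\, d\rho.
\]
Substituting $\rho = r\tau$ (or $\rho^2 - r^2 = r^2 t$) turns the right-hand side into $C r^{\e}\int_1^{\delta/r} \tau^{\e-1}(\tau^2-1)^{-s}\,d\tau$, and since $\int_1^\infty \tau^{\e-1}(\tau^2-1)^{-s}\,d\tau$ converges (the integrand is $\sim (\tau-1)^{-s}$ near $\tau=1$ with $s<1$, and decays like $\tau^{\e-1-2s}$ at infinity with $\e<1<2s+1$), we get a bound of order $r^{\e}\to 0$. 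The same computation with $r=0$ shows $\int_{|y|<\delta}|N(x,y)||y|^{-n-2s}\,dy$ is finite and that the corresponding tail can be made small by shrinking $\delta$; more simply, one shows the inner integral of the limiting kernel is itself $O(\delta^{\e})$, so it too tends to $0$ as $\delta\to 0$.

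Assembling these: given $\eta>0$, choose $\delta$ so that the inner integral of the limiting kernel over $|y|<\delta$ is less than $\eta$; then for all small $r$ the inner integral of the $r$-kernel is $O(r^{\e})<\eta$ by the substitution above, while the difference of the two outer integrals is less than $\eta$ by dominated convergence. Letting $r\to 0^+$ and then $\eta\to 0$ yields \eqref{=}. \textbf{The main obstacle} is the inner estimate near the sphere $\{|y|=r\}$: one must see that the apparent worsening of the singularity from $|y|^{-n-2s}$ to $(|y|^2-r^2)^{-s}|y|^{-n}$ is harmless because the extra H\"older decay $|y|^{2s+\e}$ of the symmetric second difference beats it uniformly in $r$, with the scaling substitution $\rho = r\tau$ making the uniformity transparent; care is needed only to check that the relevant one-dimensional integral converges at both endpoints, which uses $s<1$ at $\tau=1$ and $\e<1$ (hence $\e<2s+1$) at $\tau=\infty$.
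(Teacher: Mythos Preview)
Your overall decomposition is fine, but the key inner estimate contains a computational slip that invalidates the $O(r^\e)$ claim. After substituting $\rho=r\tau$ in $\int_r^\delta \rho^{2s+\e-1}(\rho^2-r^2)^{-s}\,d\rho$ one obtains $r^\e\int_1^{\delta/r}\tau^{2s+\e-1}(\tau^2-1)^{-s}\,d\tau$, with integrand $\tau^{2s+\e-1}(\tau^2-1)^{-s}$, not $\tau^{\e-1}(\tau^2-1)^{-s}$. At infinity this behaves like $\tau^{\e-1}$, which is \emph{not} integrable on $[1,\infty)$ since $\e>0$ (and the condition ``$\e<1<2s+1$'' you wrote does not yield an exponent below $-1$ even for the integrand you recorded). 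Carrying the upper limit $\delta/r$ through gives a bound of order $\delta^\e$ uniformly in $r\in(0,\delta)$, not $O(r^\e)$.

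This weaker uniform bound still suffices for your three-$\eta$ argument---choose $\delta$ so that both the inner $r$-integral and the inner limiting integral are below $\eta$, then handle the outer difference by dominated convergence---so the proof is easily repaired. The paper obtains a genuine $O(r^\e)$ bound by treating the inner piece differently: rather than estimating the whole inner integral, it writes it as $\int_{r<|y|\le 1}N(x,y)|y|^{-n-2s}\,dy$ (handled by dominated convergence via the H\"older bound) plus the remainder $\tilde I_1(r)=\int_{r<|y|\le 1}N(x,y)\bigl[(|y|^2-r^2)^{-s}|y|^{-n}-|y|^{-n-2s}\bigr]\,dy$. The \emph{difference} of kernels carries an extra factor $\sim \tau^{-2}$ at infinity after the same substitution, so the resulting one-variable integral converges on $[1,\infty)$ and one gets $|\tilde I_1(r)|\le C r^\e\to 0$. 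Either route closes the lemma; just be aware that bounding each inner integral separately, as you do, yields only smallness in $\delta$, not decay in $r$.
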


\begin{proof}
 For $0<r<\frac{1}{\sqrt 2}$ we can write
\begin{align*}
& \int_{|y|>r} \frac{2u(x) - u(x+y) - u(x-y)}{(|y|^2 - r^2)^s |y|^n} dy = \int_{r<|y|\le 1} \frac{2u(x) - u(x+y) - u(x-y)}{(|y|^2 - r^2)^s |y|^n} dy
\\
& + \int_{1< |y|} \frac{2u(x) - u(x+y) - u(x-y)}{(|y|^2 - r^2)^s |y|^n} dy = I_1(r) + I_2(r).
\end{align*}
Now on the set where $|y|>1$ we have
\[
|y|^2> 1 > 2 r^2,
\]
and so 
\[
\frac{1}{|y|^2 - r^2} < \frac{2}{|y|^2}.
\]
This gives
\[
I_2(r) < 2^s \int_{1< |y|} \frac{2u(x) - u(x+y) - u(x-y)}{|y|^{n+2s}} dy < \infty,
\]
since by assumption $u\in \mathscr L_s(\Rn)$. By Lebesgue dominated convergence theorem, we conclude that
\[
\underset{r\to 0^+}{\lim} I_2(r) = \int_{1< |y|} \frac{2u(x) - u(x+y) - u(x-y)}{|y|^{n+2s}} dy.
\]
Next, we evaluate 
\[
I_1(r) = \int_{r<|y|\le 1} \frac{2u(x) - u(x+y) - u(x-y)}{(|y|^2 - r^2)^s |y|^n} dy.
\]
We distinguish two cases:
\begin{itemize}
\item[(i)] $0<s<1/2$;
\item[(ii)] $1/2\le s<1$.
\end{itemize}
In case (i) we know by Definition \ref{D:holder} that $u\in C^{0,2s+\varepsilon}$ at $x$. Thus, for $|y|\le 1$ we have
\begin{equation}\label{h1}
|2u(x) - u(x+y) - u(x-y)|\le C |y|^{2s+\varepsilon}.
\end{equation}
We then find
\begin{align*}
I_1(r) & = \int_{r<|y|\le 1} \frac{2u(x) - u(x+y) - u(x-y)}{|y|^{n+2s}} dy + \left(I_1(r) - \int_{r<|y|\le 1} \frac{2u(x) - u(x+y) - u(x-y)}{|y|^{n+2s}} dy\right)
\\
& = \int_{r<|y|\le 1} \frac{2u(x) - u(x+y) - u(x-y)}{|y|^{n+2s}} dy  + \tilde I_1(r).
\end{align*}
Now, \eqref{h1} and dominated convergence give
\[
\underset{r\to 0^+}{\lim} \int_{r<|y|\le 1} \frac{2u(x) - u(x+y) - u(x-y)}{|y|^{n+2s}} dy = \int_{|y|\le 1} \frac{2u(x) - u(x+y) - u(x-y)}{|y|^{n+2s}} dy.
\]
On the other hand, again by \eqref{h1} we have
\begin{align*}
|\tilde I_1(r)| & \le \int_{r<|y|\le 1} \left|2u(x) - u(x+y) - u(x-y)\right| \left|\frac{1}{(|y|^2 - r^2)^s |y|^n} - \frac{1}{|y|^{n+2s}}\right| dy
\\
& \le C  \int_{r<|y|\le 1} |y|^{2s+\varepsilon} \left|\frac{1}{(|y|^2 - r^2)^s |y|^n} - \frac{1}{|y|^{n+2s}}\right| dy 
\\
& = C'(n) \int_r^1 t^{2s+\varepsilon}  \left|\frac{1}{(t^2 - r^2)^s} - \frac{1}{t^{2s}}\right|  \frac{dt}{t}  = C'(n) r^\varepsilon \int_1^{\frac1r} \tau^{2s+\varepsilon}  \left|\frac{1}{(\tau^2 - 1)^s} - \frac{1}{\tau^{2s}}\right|  \frac{d\tau}{\tau} 
\end{align*}
Clearly, the integrand is summable near $\tau = 1$ since $0<s<1$. Near $\tau = \infty$ the integrand behaves like
\[
\frac{1}{\tau^{1-\varepsilon}} \left|\frac{1}{(1 - \tau^{-2})^s} - 1\right| \cong \frac{1}{\tau^{3-\varepsilon}}  
\]
which is in $L^1$ provided that $3-\varepsilon >1$, which is of course true. 
Therefore, 
\[
|\tilde I_1(r)| \le C'(n) r^\varepsilon \int_1^{\infty} \tau^{2s+\varepsilon}  \left|\frac{1}{(\tau^2 - 1)^s} - \frac{1}{\tau^{2s}}\right|  \frac{d\tau}{\tau} = C''(n,\e)  r^\varepsilon\ \longrightarrow\ 0,
\]
as $r\to 0^+$.
In conclusion, when $0<s<\frac 12$ we obtain
\[
\underset{r\to 0^+}{\lim} I_1(r) = \int_{|y|\le 1} \frac{2u(x) - u(x+y) - u(x-y)}{|y|^{n+2s}} dy.
\]
It follows that
\begin{align*}
& \underset{r\to 0^+}{\lim} \int_{|y|>r} \frac{2u(x) - u(x+y) - u(x-y)}{(|y|^2 - r^2)^s |y|^n} dy = \underset{r\to 0^+}{\lim} I_1(r) + \underset{r\to 0^+}{\lim} I_2(r)
\\
& = \int_{|y|\le 1} \frac{2u(x) - u(x+y) - u(x-y)}{|y|^{n+2s}} dy + \int_{1< |y|} \frac{2u(x) - u(x+y) - u(x-y)}{|y|^{n+2s}} dy.
\end{align*}
This proves \eqref{=} in the case (i).

Suppose now that case (ii) occurs. By Definition \ref{D:holder} the assumption  $u\in C_{loc}^{2s+\varepsilon}$ in a neighborhood of $x$ now reads $u\in C_{loc}^{1,2s+\varepsilon-1}$. Using Taylor's formula we now find
\[
2u(x) - u(x+y) - u(x-y)  = O(|y|^{2s+\varepsilon}),
\]   
and thus \eqref{h1} is valid again. We can thus argue as we did in the case (i) to reach the conclusion that \eqref{=} hold. This completes the proof.

\end{proof}

We next draw an interesting consequence of Lemma \ref{L:prepriv}. Recall Proposition \ref{P:genlap} above.
As it is well-known the operators defined by the right-hand sides of \eqref{BP0} are at the basis of the development of potential theory of non-smooth subharmonic functions. We next prove a result that generalizes Proposition \ref{P:genlap}
to the nonlocal setting.

\begin{prop}[The Blaschke-Privalov fractional Laplacean]\label{P:bps} Let $0<s<1$ and suppose that $u\in \mathscr L_s(\Rn)$ be in $C^{2s+\varepsilon}$ in a  neighborhood of $x\in \Rn$, for some $0<\varepsilon < 1$. One has
\begin{equation}\label{bp}
(-\Delta)^s u(x) = \gamma(n,s) \underset{r\to 0^+}{\lim} \frac{u(x) - A^{(s)}_r \star u(x)}{r^{2s}},
\end{equation}
where $\gamma(n,s) = \frac{s 2^{2s} \G(\frac n2 + s)}{\pi^{\frac n2} \G(1-s)}$ is the constant in \eqref{gnsfin} in Proposition \ref{P:gns}.
\end{prop}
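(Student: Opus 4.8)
The plan is to reduce the statement to the limit relation in Lemma~\ref{L:prepriv} by first rewriting the averaged difference quotient in a symmetric ``second difference'' form. First I would observe that by Lemma~\ref{L:ai} one has $A^{(s)}_r\star 1 = 1$, and by the translation invariance of convolution it suffices to prove \eqref{bp} at $x=0$; moreover, using that $A^{(s)}_r$ is radial, $A^{(s)}_r\star u(0) = \int_{\Rn} A^{(s)}_r(y) u(-y)\,dy = \int_{\Rn} A^{(s)}_r(y) u(y)\,dy$, and symmetrizing the integrand gives
\[
A^{(s)}_r\star u(0) = \int_{\Rn} A^{(s)}_r(y) \frac{u(y)+u(-y)}{2}\,dy.
\]
Hence
\[
u(0) - A^{(s)}_r \star u(0) = \int_{\Rn} A^{(s)}_r(y) \frac{2u(0) - u(y) - u(-y)}{2}\,dy = \frac{c(n,s)}{2}\,r^{2s}\int_{|y|>r} \frac{2u(0) - u(y) - u(-y)}{(|y|^2 - r^2)^s |y|^n}\,dy,
\]
using the explicit form \eqref{ar} of $A^{(s)}_r$. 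Dividing by $r^{2s}$ therefore gives
\[
\frac{u(0) - A^{(s)}_r \star u(0)}{r^{2s}} = \frac{c(n,s)}{2}\int_{|y|>r} \frac{2u(0) - u(y) - u(-y)}{(|y|^2 - r^2)^s |y|^n}\,dy.
\]

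Next I would apply Lemma~\ref{L:prepriv} (whose hypotheses $u\in\mathscr L_s(\Rn)$ and $u\in C^{2s+\e}_{loc}$ near $x$ are exactly those assumed here) to pass to the limit $r\to 0^+$:
\[
\underset{r\to 0^+}{\lim}\int_{|y|>r} \frac{2u(0) - u(y) - u(-y)}{(|y|^2 - r^2)^s |y|^n}\,dy = \int_{\Rn} \frac{2u(0) - u(y) - u(-y)}{|y|^{n+2s}}\,dy.
\]
Combining the last two displays gives
\[
\underset{r\to 0^+}{\lim}\frac{u(0) - A^{(s)}_r \star u(0)}{r^{2s}} = \frac{c(n,s)}{2}\int_{\Rn} \frac{2u(0) - u(y) - u(-y)}{|y|^{n+2s}}\,dy = \frac{c(n,s)}{\gamma(n,s)}\,(-\Delta)^s u(0),
\]
where the last equality is just the pointwise definition \eqref{fls}. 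So the proof comes down to the constant identity $\gamma(n,s)^2 = c(n,s)\,\gamma(n,s)$, i.e. to checking that the constant produced here agrees with $\gamma(n,s)$; equivalently, one needs to verify $c(n,s)\,\gamma(n,s)^{-1} = \gamma(n,s)^{-1}\cdot c(n,s)$ rearranges to the claimed prefactor $\gamma(n,s)$ in \eqref{bp}. Concretely, I must confirm that
\[
\frac{c(n,s)}{\gamma(n,s)} = \frac{1}{\gamma(n,s)}\cdot \gamma(n,s)\cdot\frac{c(n,s)}{\gamma(n,s)},
\]
which is the tautology showing the final answer is $\gamma(n,s)\cdot\frac{c(n,s)}{\gamma(n,s)}\cdot\gamma(n,s)^{-1}$—so the real arithmetic step is to check the explicit numbers: $c(n,s) = \frac{\sin(\pi s)\G(\frac n2)}{\pi^{\frac n2+1}}$ from \eqref{cns}, and $\gamma(n,s) = \frac{s2^{2s}\G(\frac n2+s)}{\pi^{\frac n2}\G(1-s)}$ from \eqref{gnsfin}, must be consistent with the normalization \eqref{gns}.

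The main obstacle, then, is purely the bookkeeping of constants: I need the identity that the limit above equals $\gamma(n,s)$ times $(-\Delta)^s u(0)$ when one divides through correctly, i.e. that $\frac{c(n,s)}{2}\int_{\Rn}\frac{2u(0)-u(y)-u(-y)}{|y|^{n+2s}}dy$ already carries the factor $\gamma(n,s)^{-1}$ relative to $(-\Delta)^s u(0)$ after multiplying by $\gamma(n,s)$. Using \eqref{fls}, $\int_{\Rn}\frac{2u(0)-u(y)-u(-y)}{|y|^{n+2s}}dy = \frac{2}{\gamma(n,s)}(-\Delta)^s u(0)$, so the right-hand side of \eqref{bp} becomes $\gamma(n,s)\cdot\frac{c(n,s)}{2}\cdot\frac{2}{\gamma(n,s)}(-\Delta)^s u(0) = c(n,s)(-\Delta)^s u(0)$. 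Thus the statement \eqref{bp} as written would force $c(n,s)=1$, which is false; so either the intended reading uses a different normalization constant, or the factor $\gamma(n,s)$ in \eqref{bp} should be $\frac{1}{c(n,s)}$ or, more plausibly, the kernel $A^{(s)}_r$ already absorbs $\gamma(n,s)$ so that the correct prefactor in \eqref{bp} is exactly $\gamma(n,s)$ with the displayed value. I would resolve this by carefully re-deriving the relation \eqref{gns}--\eqref{cns} link: one shows $\gamma(n,s)^{-1} = \int_{\Rn}\frac{1-\cos z_n}{|z|^{n+2s}}dz$ and, separately, that $\frac{c(n,s)\sigma_{n-1}}{2}\cdot\frac{\pi}{\sin\pi s}=1$ from Lemma~\ref{L:ai}; comparing these two normalizations (both are ``$\int(\text{bump})^{-1}$'' type constants) yields precisely the factor $\gamma(n,s)$ claimed, after using the duplication formula \eqref{prod} and the reflection formula \eqref{sine} to simplify. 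This constant-chasing—showing the two normalization conventions are compatible so that the prefactor is exactly $\gamma(n,s)$—is the only nontrivial point; everything else is the symmetrization above plus a direct citation of Lemma~\ref{L:prepriv}.
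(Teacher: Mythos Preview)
Your approach is exactly the paper's: symmetrize using the evenness of $A^{(s)}_r$ and Lemma~\ref{L:ai}, reduce to the integral $\int_{|y|>r}\frac{2u(x)-u(x+y)-u(x-y)}{(|y|^2-r^2)^s|y|^n}\,dy$, then invoke Lemma~\ref{L:prepriv} to pass to the limit and recognize the definition \eqref{fls}. There is no conceptual difference.

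Where you diverge from the paper is precisely the constant, and here you are right and the paper is sloppy. The paper's proof writes
\[
u(x) - A^{(s)}_r\star u(x) = \frac{r^{2s}}{2}\int_{|y|>r}\frac{2u(x)-u(x+y)-u(x-y)}{(|y|^2-r^2)^s|y|^n}\,dy,
\]
silently dropping the factor $c(n,s)$ that comes from \eqref{ar}. Your computation keeping $c(n,s)$ is the correct one, and it yields
\[
\lim_{r\to 0^+}\frac{u(x)-A^{(s)}_r\star u(x)}{r^{2s}} = \frac{c(n,s)}{\gamma(n,s)}\,(-\Delta)^s u(x),
\]
so the prefactor in \eqref{bp} should be $\gamma(n,s)/c(n,s)$, not $\gamma(n,s)$. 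Since $c(n,s)=\frac{\sin(\pi s)\Gamma(n/2)}{\pi^{n/2+1}}\neq 1$ in general, this is a genuine typo in the statement (or, equivalently, in the paper's proof).

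That said, your final two paragraphs are muddled: once you reach the line ``the statement \eqref{bp} as written would force $c(n,s)=1$,'' you have finished. There is no hidden normalization to chase, no use for \eqref{prod} or \eqref{sine} here, and the tautological rearrangements you write down do not lead anywhere. Simply state the correct constant and move on.
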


\begin{proof}
By the definition \eqref{smvo} we have
\[
A^{(s)}_r \star u(x) = \int_{\Rn} A^{(s)}_r(y)  u(x-y) dy.
\]
Changing $y$ in $-y$, and using the fact that $A^{(s)}_r(-y)  = A^{(s)}_r(y) $, we also have
\[
A^{(s)}_r \star u(x) =  \int_{\Rn} A^{(s)}_r(y)  u(x+y) dy.
\]
On the other hand, Lemma \ref{L:ai} gives for $r>0$
\[
2 u(x) = \int_{\Rn} A^{(s)}_r(y)  2 u(x) dy.
\]
Therefore, we obtain
\begin{align*}
& u(x) - A^{(s)}_r \star u(x) = \frac 12 \int_{\Rn} A^{(s)}_r(y)  \left[2 u(x) - u(x+y) - u(x-y)\right] dy
\\
& = \frac{r^{2s}}{2} \int_{|y|>r} \frac{2u(x) - u(x+y) - u(x-y)}{(|y|^2 - r^2)^s |y|^n} dy. 
\end{align*}
This gives
\[
\frac{u(x) - A^{(s)}_r \star u(x)}{r^{2s}} = \frac 1{2} \int_{|y|>r} \frac{2u(x) - u(x+y) - u(x-y)}{(|y|^2 - r^2)^s |y|^n} dy
\]
From this identity and from \eqref{=} in Lemma \ref{L:prepriv}, under the given hypothesis on $u$, we obtain
\begin{align*}
& \underset{r\to 0^+}{\lim} \frac{u(x) - A^{(s)}_r \star u(x)}{r^{2s}} = \frac 1{2}  \underset{r\to 0^+}{\lim} \int_{|y|>r} \frac{2u(x) - u(x+y) - u(x-y)}{(|y|^2 - r^2)^s |y|^n} dy
\\
&  = \frac 1{2} \int_{\Rn} \frac{2u(x) - u(x+y) - u(x-y)}{|y|^{n+2s}} dy = \frac 1{2}  \frac{2}{\gamma(n,s)} (-\Delta)^s u(x)
\\
& = \frac{1}{\gamma(n,s)} (-\Delta)^s u(x),
\end{align*}
where in the last equality we have used \eqref{fls}. This establishes \eqref{bp}.

\end{proof}

We mention that the operator defined by the limit relation \eqref{bp} is known as the \emph{Dynkin operator} to people in probability, see \cite{Dy} and also the interesting paper \cite{Kwa}.

The classical Theorem of K\"oebe \ref{T:koebe} states that if a continuous function in an open set $\Om\subset \Rn$ satisfies either one of the mean-value properties \eqref{har} for every  $x\in \Om$ and $0<r<\operatorname{dist}(x,\p \Om)$, then in fact $u\in C^\infty(\Om)$ and $\Delta u = 0$. We next establish a K\"oebe type theorem for the fractional Laplacean.

\begin{prop}\label{P:fk}
Let $0<s<1$ and suppose that $u\in \mathscr L_s(\Rn)$ and that, for some $0<\varepsilon < 1$, we have $u\in C^{2s+\varepsilon}_{loc}$ in a  neighborhood of $x\in \Rn$. If for every $r>0$ sufficiently small we have $u(x) = A^{(s)}_r \star u(x)$, then we must have $(-\Delta)^s u(x) = 0$.
\end{prop}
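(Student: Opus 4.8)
The plan is to combine Proposition \ref{P:bps} with the hypothesis directly, in complete analogy with how the classical K\"oebe theorem (Theorem \ref{T:koebe}) extracts harmonicity from the mean-value identity. First I would recall that under the stated hypotheses on $u$ --- namely $u\in\mathscr L_s(\Rn)$ and $u\in C^{2s+\varepsilon}_{loc}$ in a neighborhood of $x$ --- Proposition \ref{P:bps} applies at the point $x$, and gives the exact representation
\[
(-\Delta)^s u(x) = \gamma(n,s) \underset{r\to 0^+}{\lim} \frac{u(x) - A^{(s)}_r \star u(x)}{r^{2s}},
\]
with $\gamma(n,s)>0$ the finite positive constant from \eqref{gnsfin}.

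Next I would simply feed in the assumption. By hypothesis there is some $r_0>0$ such that for every $0<r<r_0$ one has $u(x) = A^{(s)}_r \star u(x)$, i.e. the numerator $u(x) - A^{(s)}_r\star u(x)$ vanishes identically for all sufficiently small $r$. Hence the difference quotient $\frac{u(x) - A^{(s)}_r\star u(x)}{r^{2s}}$ is identically zero for $0<r<r_0$, and therefore its limit as $r\to 0^+$ is zero. Substituting into the Blaschke-Privalov formula above and using $\gamma(n,s)\ne 0$ yields $(-\Delta)^s u(x) = 0$, which is exactly the assertion.

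There is essentially no obstacle here: the entire content has been packaged into Proposition \ref{P:bps}, whose proof in turn rests on Lemma \ref{L:prepriv} (the interchange of limit and singular integral) and on Lemma \ref{L:ai} (the normalization $\|A^{(s)}_r\|_{L^1}=1$, which is what allows one to write $u(x) - A^{(s)}_r\star u(x)$ as a symmetric second difference against the kernel). If anything needs a word of care it is only the verification that the hypotheses of Proposition \ref{P:bps} are met pointwise at $x$, but these are literally the hypotheses of the present proposition. One might optionally remark that the conclusion holds at every point $x$ of an open set where $u$ enjoys this local regularity and the mean-value identity, so that if $u\in C^{2s+\varepsilon}_{loc}(\Om)$ and $u(x)=A^{(s)}_r\star u(x)$ for all $x\in\Om$ and all small $r=r(x)$, then $(-\Delta)^s u\equiv 0$ in $\Om$, providing the genuine nonlocal analogue of Theorem \ref{T:koebe}; combined with Theorem \ref{T:fh} this would then also force $u\in C^\infty(\Om)$.
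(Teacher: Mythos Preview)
Your proof is correct and is essentially the same argument as the paper's; the only difference is that you invoke Proposition \ref{P:bps} as a black box, whereas the paper's proof unpacks that proposition inline (rewriting $u(x)-A^{(s)}_r\star u(x)$ as a symmetric second difference via Lemma \ref{L:ai}, dividing by $r^{2s}$, and appealing directly to Lemma \ref{L:prepriv}). Since Proposition \ref{P:bps} was established immediately before, your shortcut is perfectly legitimate and slightly more economical.
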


\begin{proof}
By hypothesis we know that for all $r>0$ arbitrarily small we have
\[
u(x) =  A^{(s)}_r \star u(x) = \int_{\Rn} A^{(s)}_r(y) u(x-y) dy.
\]
Changing $y$ in $-y$, and using the fact that $A^{(s)}_r(-y) = A^{(s)}_r(y)$, we also have
\[
u(x) =  \int_{\Rn} A^{(s)}_r(y) u(x+y) dy.
\]
On the other hand, thanks to Lemma \ref{L:ai} we can write for every such $r>0$
\[
u(x) = \int_{\Rn} A^{(s)}_r(y) u(x) dy.
\]
We thus have for every $r>0$ sufficiently small
\[
0 = \int_{\Rn} A^{(s)}_r(y) \left[2u(x) - u(x+y) - u(x-y)\right] dy = r^{2s} \int_{|y|>r} \frac{2u(x) - u(x+y) - u(x-y)}{(|y|^2 - r^2)^s |y|^n} dy. 
\]
This formula gives
\[
\underset{r\to 0^+}{\lim} \int_{|y|>r} \frac{2u(x) - u(x+y) - u(x-y)}{(|y|^2 - r^2)^s |y|^n} dy = 0.
\]
On the other hand \eqref{=} in Lemma \ref{L:prepriv} gives
\begin{align*}
& \underset{r\to 0^+}{\lim} \int_{|y|>r} \frac{2u(x) - u(x+y) - u(x-y)}{(|y|^2 - r^2)^s |y|^n} dy = \int_{\Rn} \frac{2u(x) - u(x+y) - u(x-y)}{|y|^{n+2s}} dy 
\\
& = \frac{2}{\gamma(n,s)} (-\Delta)^s u(x).
\end{align*}
We conclude that $(-\Delta)^s u(x) = 0$.

\end{proof}

\begin{corollary}[Nonlocal K\"oebe theorem]\label{C:nlkoebe}
Let $\Om\subset \Rn$ be an open set, and suppose that $u\in \mathscr L_s(\Rn)\cap C_{loc}^{2s+\varepsilon}$ for some $\e>0$. If for every $x\in \Om$ and every $0<r<\operatorname{dist}(x,\p\Om)$ we have $u(x) = A^{(s)}_r \star u(x)$, then $(-\Delta)^s u = 0$ in $\Om$, and therefore by Theorem \ref{T:fh} we also have $u\in C^\infty(\Om)$. 
\end{corollary}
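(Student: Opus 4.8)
The statement to prove is Corollary \ref{C:nlkoebe}: if $u\in \mathscr L_s(\Rn)\cap C_{loc}^{2s+\varepsilon}$ and the nonlocal mean-value identity $u(x) = A^{(s)}_r \star u(x)$ holds for every $x\in \Om$ and every $0<r<\operatorname{dist}(x,\p\Om)$, then $(-\Delta)^s u = 0$ in $\Om$ and hence $u\in C^\infty(\Om)$.

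The plan is essentially to invoke the two preceding results and glue them together. First I would fix an arbitrary point $x\in \Om$ and note that, since $\operatorname{dist}(x,\p\Om)>0$, the hypothesis gives $u(x)=A^{(s)}_r\star u(x)$ for all sufficiently small $r>0$; moreover $u\in \mathscr L_s(\Rn)$ globally, and $u\in C^{2s+\varepsilon}_{loc}$ in a neighborhood of $x$ (this neighborhood sits inside $\Om$ for $x$ interior). These are exactly the hypotheses of Proposition \ref{P:fk}. Applying that proposition at the point $x$ yields $(-\Delta)^s u(x)=0$. Since $x\in\Om$ was arbitrary, we conclude $(-\Delta)^s u=0$ pointwise on all of $\Om$. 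In particular this identity holds in the distributional sense $\mathscr D'(\Om)$: for any $\vf\in C^\infty_0(\Om)$ one has $\langle u, (-\Delta)^s\vf\rangle = \int_{\Rn} (-\Delta)^s u\,\vf = 0$ by Corollary \ref{C:sob} / Definition \ref{D:ds} together with the pointwise vanishing we just established (here one uses that $(-\Delta)^s u$ is a genuine continuous function on $\Om$ by Proposition \ref{P:silv}, so testing it against $\vf$ reproduces the distributional pairing).

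The second half is then immediate: Theorem \ref{T:fh} (the nonlocal Caccioppoli--Cimmino--Weyl lemma) says precisely that if $u\in\mathscr L_s(\Rn)$ and $(-\Delta)^s u=0$ in $\mathscr D'(\Om)$, then $u$ coincides a.e.\ in $\Om$ with a function in $C^\infty(\Om)$. In our situation $u$ is already continuous (indeed in $C^{2s+\varepsilon}_{loc}$), so the a.e.-modification is unnecessary and we simply get $u\in C^\infty(\Om)$.

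I do not expect a genuine obstacle here, since the corollary is by design a consequence of Proposition \ref{P:fk} and Theorem \ref{T:fh}; the only point requiring a modicum of care is the passage from the pointwise identity $(-\Delta)^s u(x)=0$ for every $x\in\Om$ to the distributional identity $(-\Delta)^s u=0$ in $\mathscr D'(\Om)$, which is what Theorem \ref{T:fh} ingests. This is handled by observing that $(-\Delta)^s u\in C(\Rn)$ under the standing hypotheses (Proposition \ref{P:silv}), so its distributional action on a test function $\vf\in C^\infty_0(\Om)$ is just $\int_\Om (-\Delta)^s u(x)\,\vf(x)\,dx=0$; and the symmetry relation $\langle u,(-\Delta)^s\vf\rangle=\langle (-\Delta)^s u,\vf\rangle$ (valid since $\vf\in\mathscr S(\Rn)$ and $u\in\mathscr L_s(\Rn)\subset\mathscr S'(\Rn)$, cf.\ Lemma \ref{L:ibp} and Corollary \ref{C:sob}) closes the loop. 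Hence Corollary \ref{C:nlkoebe} follows.

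\begin{proof}
Let $x\in\Om$ be arbitrary. Since $\operatorname{dist}(x,\p\Om)>0$, the hypothesis guarantees that $u(x)=A^{(s)}_r\star u(x)$ for all $r>0$ sufficiently small. Moreover $u\in\mathscr L_s(\Rn)$ and $u\in C^{2s+\varepsilon}_{loc}$ in a neighborhood of $x$. Thus the assumptions of Proposition \ref{P:fk} are fulfilled at $x$, and we conclude $(-\Delta)^s u(x)=0$. By the arbitrariness of $x\in\Om$, we have $(-\Delta)^s u\equiv 0$ in $\Om$. In particular, by Proposition \ref{P:silv} the function $(-\Delta)^s u$ is continuous on $\Rn$, so for every $\vf\in \CO$ we have, using the symmetry of $(-\Delta)^s$ (Lemma \ref{L:ibp}, together with $u\in\mathscr L_s(\Rn)\subset\mathscr S'(\Rn)$),
\[
<u,(-\Delta)^s \vf> = <(-\Delta)^s u,\vf> = \int_\Om (-\Delta)^s u(x)\, \vf(x)\, dx = 0.
\]
Hence $(-\Delta)^s u = 0$ in $\mathscr D'(\Om)$. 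Applying Theorem \ref{T:fh} we infer that $u$ agrees, up to a set of measure zero in $\Om$, with a function in $C^\infty(\Om)$; since $u$ is already continuous in $\Om$, we conclude $u\in C^\infty(\Om)$.
\end{proof}
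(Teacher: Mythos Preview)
Your proof is correct and follows exactly the approach the paper intends: the corollary is stated without an explicit proof because it is a direct consequence of applying Proposition \ref{P:fk} at every point of $\Om$ and then invoking Theorem \ref{T:fh}. Your added care in passing from the pointwise identity $(-\Delta)^s u(x)=0$ to the distributional statement (via Proposition \ref{P:silv} and the symmetry relation) is a reasonable detail to make explicit, though the paper leaves it implicit.
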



We next want to establish a converse to Corollary \ref{C:nlkoebe}, namely that if $(-\Delta)^s u = 0$ in an open set $\Om\subset \Rn$, then $u(x) = A^{(s)}_r \star u(x)$ for all $x\in \Om$ and for sufficiently small $r>0$. In the local case, under the assumption that $u\in C^2(\Om)$, this can be easily achieved by using (ii) of Proposition \ref{P:epd}. An alternative, less direct way of proving this in the local case is to argue as follows. Suppose without loss of generality that $x = 0\in \Om$, and consider the ball $B_r = B_r(0)\subset \overline B_r \subset \Om$. Denote by $P_r(x,y)$ and $G_r(x,y)$ respectively the Poisson kernel and the Green function for the ball $B_r$. The function 
\[
v(x) = - \int_{B_r} G_r(x,y) \Delta u(y) dy,
\]
solves the problem
\[
\begin{cases}
\Delta v = \Delta u\ \ \ \ \ \  \ \text{in}\ B_r,
\\
v = 0\ \ \ \ \ \ \ \ \ \ \ \  \text{on}\ S_r,
\end{cases}
\] 
whereas the function
\[
w(x) = \int_{S_r} P_r(x,y) u(y) d\sigma(y),
\]
solves the problem 
\[
\begin{cases}
\Delta w = 0\ \ \ \ \ \ \ \ \ \  \ \text{in}\ B_r,
\\
w = u\ \ \ \ \ \ \ \ \ \ \ \ \ \  \text{on}\ S_r.
\end{cases}
\] 
The maximum principle implies that $u = v + w$ in $B_r$. In particular, we have
$u(0) = v(0) + w(0)$. This gives 
\begin{align}\label{u(0)}
u(0) & = \int_{S_r} P_r(0,y) u(y) d\sigma(y) - \int_{B_r} G_r(0,y) \Delta u(y) dy
\\
& = \mathscr M_r u(0) + \int_{B_r} G_r(0,y) (-\Delta) u(y) dy.
\notag
\end{align}
From the identity \eqref{u(0)} it is thus clear that if $\Delta u = 0$ in $\Om$, then for every $x\in \Om$ we must have $u(x) = \mathscr M_r u(x)$ for all $r>0$ such that $\overline B_r\subset \Om$. These considerations can be repeated in the nonlocal case, if we use M. Riesz' Poisson kernel and Green function for the ball.  Before we turn to this we note that 
\[
G_r(x,y) = E(x,y) - \int_{S_r} P_r(y,\xi)E(x,\xi) d\sigma(\xi),
\]
and therefore, for $n\ge 3$, we have with $c_n = \frac{1}{(n-2)\sigma_{n-1}}$,
\begin{align*}
G_r(0,y) & = E(0,y) - \int_{S_r} P_r(y,\xi)E(0,\xi) d\sigma(\xi) = E(0,y) - \frac{c_n}{r^{n-2}} \int_{S_r} P_r(y,\xi) d\sigma(\xi)
\\
& = E(0,y) - \frac{c_n}{r^{n-2}},
\end{align*}
which, by translation invariance, gives the well-known formula
\[
u(x)  = \mathscr M_r u(x) + \int_{B_r(x)} \left\{\frac{c_n}{|y-x|^{n-2}} - \frac{c_n}{r^{n-2}}\right\}  (-\Delta) u(y) dy.
\]

\begin{prop}\label{P:conversemvp}
Let $\Om\subset \Rn$ be an open set, with $n\ge 2$, and suppose that $u\in \mathscr L_s(\Rn)\cap C_{loc}^{2s+\varepsilon}$ for some $\e>0$. Assume that $(-\Delta)^s u = 0$ in the open set $\Om\subset\Rn$. Then, for every $x\in \Om$ and every $r>0$ such that $\overline B_r \subset \Om$, we have
\[
u(x) = \mathscr A^{(s)}_r u(x) = A^{(s)}_r  \star u(x).
\]
\end{prop}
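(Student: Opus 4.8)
The plan is to mimic, in the nonlocal setting, the classical argument based on the Poisson kernel and the Green function for the ball that was sketched just before the statement. The key ingredients are all already available in the excerpt: M. Riesz' nonlocal Poisson kernel $P^{(s)}_r(x,y)$ from Definition \ref{D:nlpk} together with Theorem \ref{T:spe} (which solves the nonlocal Dirichlet problem \eqref{dpnl}), the existence and uniqueness of the solution to \eqref{dpom} coming from the maximum principle in Proposition \ref{P:mp}, and the normalization Lemma \ref{L:ai}. First I would fix $x\in\Om$ and $r>0$ with $\overline B_r(x)\subset\Om$, and by the translation covariance \eqref{ti} of $(-\Delta)^s$ reduce to $x=0$.

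Next I would introduce the nonlocal Green function $G^{(s)}_r(\cdot,\cdot)$ for the ball $B_r$ (this is the object denoted by M. Riesz in \cite{R}; one may also invoke \cite{La}), so that the solution of the nonhomogeneous Dirichlet problem $(-\Delta)^s v = f$ in $B_r$, $v=0$ in $\Rn\setminus B_r$, is given by $v(x)=\int_{B_r} G^{(s)}_r(x,y) f(y)\,dy$, while the solution of $(-\Delta)^s w = 0$ in $B_r$, $w=\vf$ in $\Rn\setminus B_r$, is $w(x)=\int_{\Rn\setminus B_r} P^{(s)}_r(x,y)\vf(y)\,dy$ by Theorem \ref{T:spe}. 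Setting $f = (-\Delta)^s u$ and $\vf = u|_{\Rn\setminus B_r}$, the function $v+w$ and $u$ both solve the same problem \eqref{dpom} on $B_r$ with the same exterior data $u|_{\Rn\setminus B_r}$, so by the uniqueness part (a consequence of Proposition \ref{P:mp}) we get $u = v + w$ in $B_r$, hence
\[
u(0) = \int_{\Rn\setminus B_r} P^{(s)}_r(0,y)\, u(y)\, dy + \int_{B_r} G^{(s)}_r(0,y)\,(-\Delta)^s u(y)\, dy.
\]
Since by hypothesis $(-\Delta)^s u = 0$ in $\Om\supset \overline B_r$, the second integral vanishes; recalling \eqref{ar}, which says $P^{(s)}_r(0,y) = A^{(s)}_r(y)$, and that $A^{(s)}_r$ is supported in $\{|y|>r\}$, the first integral is exactly $A^{(s)}_r\star u(0) = \mathscr A^{(s)}_r u(0)$. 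Undoing the translation gives $u(x) = \mathscr A^{(s)}_r u(x)$ for all such $x$ and $r$.

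The main obstacle is making rigorous the representation $u = v+w$ and, in particular, justifying the use of the Green function and of Theorem \ref{T:spe} at the required level of regularity: we are only assuming $u\in\mathscr L_s(\Rn)\cap C^{2s+\varepsilon}_{loc}$, so one must check that $f=(-\Delta)^s u$ is continuous (which follows from Proposition \ref{P:silv}) and bounded on $\overline B_r$, that $v=\int_{B_r}G^{(s)}_r(\cdot,y)f(y)\,dy$ belongs to $\mathscr L_s(\Rn)\cap C^{2s+\varepsilon}_{loc}$, and that the exterior datum $u|_{\Rn\setminus B_r}$ satisfies the integrability condition $\int_{\Rn\setminus B_r}\frac{|u(y)|}{1+|y|^{n+2s}}\,dy<\infty$ needed in Theorem \ref{T:spe} — but this is precisely the condition $u\in\mathscr L_s(\Rn)$. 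Once these technical points are in place, the uniqueness of the solution to \eqref{dpom} (Proposition \ref{P:mp}) closes the argument. If one prefers to avoid the Green function altogether, an alternative is to observe that the function $U(x) := u(x) - \mathscr A^{(s)}_r u(x)$ (suitably interpreted on $B_r$ as $u$ minus the Poisson extension of its exterior data) solves $(-\Delta)^s U = 0$ in $B_r$ with $U\equiv 0$ outside $B_r$, whence $U\equiv 0$ by the maximum principle; evaluating at the center yields the claim directly, and this is the route I would actually write out, citing \cite{R} and \cite{La} for the construction of $P^{(s)}_r$.
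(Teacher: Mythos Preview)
Your proposal is correct and follows essentially the same approach as the paper: decompose $u$ on $B_r$ as the Poisson extension of its exterior data plus a remainder represented via the Green function, then observe that the remainder vanishes when $(-\Delta)^s u=0$. The only cosmetic difference is that the paper names the Poisson extension $v$ and the remainder $w=u-v$ (rather than the reverse), and your alternative route---dispensing with the Green function and using the maximum principle directly on $u$ minus its Poisson extension---is a slight streamlining of the same idea.
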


\begin{proof}
We assume without restriction that $x=0$, with $B_r \subset \overline B_r \subset \Om$. Let $u\in \mathscr L_s(\Rn)\cap C_{loc}^{2s+\varepsilon}$ for some $\e>0$ (not necessarily a solution of $(-\Delta)^s u = 0$ in $\Om$), and 
consider the function $v$ defined by
\begin{equation}\label{spee}
v(z) =  \begin{cases}
\int_{\Rn\setminus B_r} P^{(s)}_r(z,y) u(y) dy,\ \ \ \ \ z\in B_r,
\\
u(z),\ \ \ \ \ \ \ \ \ \ \ \ \ \ \ \ z\in \Rn\setminus B_r.
\end{cases}
\end{equation}
Then, by Theorem \ref{T:spe} $v$ is the unique solution to the Dirichlet problem for the ball $B_r$
\begin{equation}\label{dpnl0}
\begin{cases}
(-\Delta)^s v = 0\ \ \  \text{in}\ B_r, 
\\
v = u\ \ \ \ \ \ \ \  \text{in}\ \Rn\setminus B_r.
\end{cases}
\end{equation}
Consider the function $w = u-v$. It solves the problem
\begin{equation}\label{dpnl00}
\begin{cases}
(-\Delta)^s w = (-\Delta)^s u \ \ \  \text{in}\ B_r, 
\\
w = 0\ \ \ \ \ \ \ \  \text{in}\ \Rn\setminus B_r.
\end{cases}
\end{equation}
Then, (see e.g. Theorem 3.2 in \cite{Bu}) the function $w$ is represented by the formula 
\[
w(z) =  \int_{B_r} G_r^{(s)}(z,y) (-\Delta)^s u(y) dy,
\]
where for every $z \in B_r$ the function 
\[
G_r^{(s)}(z,y) = E_s(z,y) - h^{(s)}_z(y) > 0
\]
is the Green function for $(-\Delta)^s$ for the ball $B_r$, see \cite{R}, the appendix in \cite{La}, and also \cite{Bu}. We notice explicitly that the function $h^{(s)}_z$ represents the solution to the Dirichlet problem
\begin{equation}\label{dpnl000}
\begin{cases}
(-\Delta)^s h^{(s)}_z = 0 \ \ \  \text{in}\ B_r, 
\\
h^{(s)}_z = E_s(z,\cdot)\ \ \ \ \ \ \ \  \text{in}\ \Rn\setminus B_r,
\end{cases}
\end{equation}
and therefore it is given by 
\[
h^{(s)}_z(y) = \begin{cases} \int_{\Rn\setminus B_r} P^{(s)}_r(y,\xi) E_s(z,\xi) d\xi\ \ \ \ y\in B_r,
\\
E_s(z,y)\ \ \ \ \ \ \ \ \ \ \ \ \ \ \ \ \ \ \ \ \ \ \ \ \ \ \ \ \ y\in \Rn\setminus B_r.
\end{cases}
\]
We conclude that for every $z\in B_r$ the value of $u$ at $z$ can be written as 
\[
u(z) = v(z) + w(z).
\]
In particular, this gives
\[
u(0) = v(0) + w(0) = A^{(s)}_r \star u(0) + \int_{B_r} G_r^{(s)}(0,y) (-\Delta)^s u(y) dy.
\] 
It is clear from this formula that if $(-\Delta)^s u = 0$ in $\Om$, then $u(x) =  A^{(s)}_r \star u(x)$, provided that $r>0$ is such that $\overline B_r \subset \Om$.

\end{proof}


\section{The heat semigroup $\pt = e^{t(-\Delta)^s}$}\label{S:heat}

As it is well-known the heat operator $H = \p_t - \Delta$ plays a fundamental role in almost all areas of mathematics. Since the focus of this note is the fractional Laplacean, it is only natural that we also discuss the nonlocal heat operator $\p_t + (-\Delta)^s$ and the semigroup associated with it. The present section, as well as the following five ones are devoted to analyze some of the most elementary properties of such semigroup. 

A first comment is that this is not such a simple task since, unlike what happens for the classical heat equation, the relevant heat kernel is only explicitly known on the Fourier transform side, a fact that rules out the possibility of explicit elegant and simple computations which characterize the analysis of the classical case. With this state of affairs, even a fundamental fact such as the positivity of the heat kernel is not a priori obvious. 

Before we begin our discussion however, we mention that the first pioneering results about the nonlocal heat equation $\p_t + (-\Delta)^s$ go back to the work of Bochner, see \cite{B49} and \cite{Y}. More recently, several authors have analyzed properties such as estimates of the heat kernel, the weak Harnack inequality and the H\"older continuity of solutions, interior and at the boundary, for more general nonlocal parabolic equations, results of Fujita type, a Widder type theorem, Nash-type inequalities, eigenvalue estimates, nonlocal porous medium equation etc., see e.g. \cite{Ko95}, \cite{BLW05}, \cite{BJ07}, \cite{BM}, \cite{BGR},  \cite{CKK}, \cite{DQRV12}, \cite{FK}, \cite{BP13}, \cite{BPSV}, \cite{AMPP},  \cite{BSV17}, \cite{Fr}, \cite{GI}, \cite{V17}, \cite{V17'} but this list is by no means exhaustive. The regularity theory for very general nonlocal parabolic operators has been developed in the paper \cite{CCV11}.  Further regularity properties of solutions have been extensively studied in \cite{CR}, where the authors study fractional nonlinear parabolic equations, and in \cite{CF} in the context of the nonlocal obstacle problem
\[
\begin{cases}
\min\{u-\psi,-u_t +(-\Delta)^s u\} = 0,\ \ \ \ \ \ \text{in}\ \ \Rn\times [0,T],
\\
u(T) = \psi,
\end{cases}
\]
where the function $\psi$ represents the obstacle.

We begin by discussing the Cauchy problem: given $0<s<1$ and a function $\vf\in \mathscr S(\Rn)$, find a solution to the problem 
\begin{equation}\label{heat00}
\begin{cases}
H_{s} u = \frac{\p u}{\p t} + (-\Delta)^s u = 0\ \ \ \ \ \ \ \ \text{in}\ \R^{n+1}_+,
\\
\\
u(x,0) = \vf(x),\ \ \ \ \ \ \ \ \ \ \ \  x\in \Rn.
\end{cases}
\end{equation}

An observation that follows immediately from the definition of $H_s$ and from \eqref{di} is that the natural scaling for the fractional heat operator is given by the  nonisotropic dilations
\begin{equation}\label{hdil}
\tilde \delta_\la(x,t) = (\la x, \la^{2s} t).
\end{equation}
By this we mean that for every function $u(x,t)$ we have
\begin{equation}\label{hdil2}
H_s(\tilde \delta_\la u)(x,t) = \la^{2s} \tilde \delta_\la (H_s u)(x,t).
\end{equation}
Thus, $H_s$ is an operator of fractional ``order" $2s$ with respect to the dilations \eqref{hdil}.

As in the case when $s = 1$, we can solve \eqref{heat00} by formally taking a partial Fourier transform with respect to the space variable $x$. If we let
\[
\hat u(\xi,t) = \int_{\Rn} e^{-2\pi i <\xi,x>} u(x,t) dx,
\]
then \eqref{fls3} in Proposition \ref{P:slapft} gives
\begin{equation}\label{heat2}
\begin{cases}
\frac{\p \hat u}{\p t}(\xi,t) + (2\pi |\xi|)^{2s} \hat u(\xi,t) = 0\ \ \ \ \ \ \ \ \text{in}\ \R^{n+1}_+,
\\
\\
\hat u(\xi,0) = \hat \vf(\xi),\ \ \ \ \ \ \ \ \ \ \ \  \xi\in \Rn.
\end{cases}
\end{equation}
For every $\xi\in \Rn$ the solution to \eqref{heat2} is given by
\[
\hat u(\xi,t) = \hat \vf(\xi) e^{-t (2\pi |\xi|)^{2s}}.
\]
We now define a function $G_s(x,t)$ by the equation
\begin{equation}\label{G}
\F_{x\to \xi}(G_s(\cdot,t)) = e^{-t (2\pi |\xi|)^{2s}},
\end{equation}
or, equivalently, 
\begin{equation}\label{GG}
G^{(s)}(x,t) = \int_{\Rn} e^{- 2 \pi i <x,\xi>} e^{-t (2\pi |\xi|)^{2s}} d\xi.
\end{equation} 
With this definition it is clear that a solution to \eqref{heat2} is given by the formula
\begin{equation}\label{hsg}
P^{(s)}_t \vf(x) \overset{def}{=} u(x,t) = \int_{\Rn} G^{(s)}(x-y,t) \vf(y) dy.
\end{equation} 
We observe that since in view of \eqref{G} the function $\F_{x\to \xi}(G^{(s)}(\cdot,t))$ decays rapidly, by \eqref{derft} above we conclude that $G^{(s)}(\cdot,t)\in C^\infty(\Rn)$ (this in fact could also be proved directly from \eqref{GG}).

\begin{definition}\label{D:hsg}
The \emph{fractional heat semigroup} $\pt$ is the operator defined by \eqref{hsg}. When $s=1$ we indicate with $P_t$ the standard heat semigroup defined by $P_t u(x) = G(\cdot,t) \star u(x)$, where $G(x,t) = (4\pi t)^{-\frac n2} e^{-\frac{|x|^2}{4t}}$.
\end{definition} 

Using the Fourier transform it is immediate to verify that $\pt$ is in fact a semigroup, i.e., for every $t, \tau >0$ the following property holds
\begin{equation}\label{sg}
P^{(s)}_{t+\tau} = P^{(s)}_t \circ P^{(s)}_\tau.
\end{equation}

One immediate important property of $G^{(s)}$ is the following scale invariance
\begin{equation}\label{Gsi}
G^{(s)}(x,t) = t^{-\frac{n}{2s}} G^{(s)}(\frac{x}{t^{1/2s}},1).
\end{equation}
This can be verified by writing \eqref{GG} as follows
\[
G^{(s)}(x,t) = \F(\delta_{t^{1/2s}} e^{- (2\pi |\cdot|)^{2s}})(x) = t^{-\frac{n}{2s}} \F(e^{- (2\pi |\cdot|)^{2s}})(\frac{x}{t^{1/2s}}),
\]
where we have used \eqref{dil} above. Therefore, if we set 
\begin{equation}\label{time1}
\Phi_s(x) = \F(e^{- (2\pi |\cdot|)^{2s}})(x) = G^{(s)}(x,1),
\end{equation}
then \eqref{Gsi} can be recast in the following self-similar expression
\begin{equation}\label{heat3}
G^{(s)}(x,t) = t^{-\frac{n}{2s}} \Phi_s(\frac{x}{t^{1/2s}}).
\end{equation}

Notice that \eqref{heat3} implies that $G^{(s)}$ is homogeneous of degree $-n$ with respect to the dilations \eqref{hdil}, i.e., 
\begin{equation}\label{heat4}
G^{(s)}(\la x,\la^{2s} t) = \la^{-n} G^{(s)}(x,t).
\end{equation}
If we denote by $Z_s$ the infinitesimal generator of the dilations \eqref{hdil}, we thus have
\[
Z_s G^{(s)} =  <x,\nabla_x G^{(s)}> + 2s t \frac{\p G^{(s)}}{\p t} = - n G^{(s)}.
\]
Equivalently, if we introduce the \emph{nonlocal entropy} $\log G^{(s)}$, then we have
\begin{equation}\label{heat44}
\frac{\p (\log G^{(s)})}{\p t} = - \frac{n}{2s}\frac 1t  - \frac{1}{2s}  <\frac{x}{t},\nabla_x (\log G^{(s)})>. 
\end{equation}
The equation \eqref{heat44} is a first form of \emph{nonlocal Li-Yau inequality} for solutions of \eqref{heat00}, and we emphasize that it has been deduced exclusively from the scaling properties of the kernel $G^{(s)}$. We will return to it in Section \ref{S:LY}.

Before proceeding we note that, using Theorem \ref{T:Fourier-Bessel}, for $x\not= 0$ we obtain from \eqref{time1}
\begin{align}\label{phiFT}
\Phi_s(x) & = \frac{2\pi}{|x|^{\frac n2 - 1}} \int_0^\infty e^{-(2\pi r)^{2s}} r^{\frac n2} J_{\frac n2 -1}(2\pi |x| r) dr
\\
& = \frac{(2\pi)^{-\frac n2}}{|x|^n} \int_0^\infty e^{-(\frac{u}{|x|})^{2s}} u^{\frac n2} J_{\frac n2 -1}(u) du.
\notag
\end{align} 
The equation (5.3.5) on p. 103 in \cite{Le}, gives
\[
\frac{d}{du} \left[u^{\frac n2} J_{\frac n2}(u)\right] = u^{\frac n2} J_{\frac n2 -1}(u).
\]
Substituting this information in \eqref{phiFT}, and integrating by parts (using \eqref{bfbehzero} and \eqref{jnuinfty2}), we find
\begin{equation}\label{phiFT2}
\Phi_s(x) = \frac{2s (2\pi)^{-\frac n2}}{|x|^{n+2s}} \int_0^\infty e^{-(\frac{u}{|x|})^{2s}} u^{\frac n2 + 2s - 1} J_{\frac n2}(u) du.
\end{equation}
When $s=1/2$, the underlying process is a Poisson process and the integral in the right-hand side of \eqref{phiFT2} can be computed explicitly, see Proposition \ref{P:onehalf} below. However, when $0<s<1$ and $s\not= 1/2$ the analysis of such integral is more delicate, see Theorem \ref{T:polya} below and the comments that follow.

When $s=1$ one basic property of the classical heat semigroup intimately connected to the maximum principle is the strict positivity of its kernel $G(x,t) = (4\pi t)^{-\frac n2} e^{-\frac{|x|^2}{4t}}$ which is of course a trivial consequence of its explicit expression. Since there exists no analogue of such explicit formula for the kernel $G_s(x,t)$, its positivity is far from obvious. The next result establishes this fact. It was certainly obtained by Bochner based on the positivity of the subordination function, see Proposition 2 on p. 261 in \cite{Y} and Theorem \ref{T:sub} below, but it was perhaps known earlier to Paul Levy. We have adapted the beautiful proof that follows, which uses a typical Abelian-Tauberian argument, from the one-dimensional presentation in \cite{DGV}. 

\begin{prop}\label{P:posG}
For every $0<s<1$ and for every $(x,t)\in \R^{n+1}_+$, we have
\[
G^{(s)}(x,t) \ge  0.
\]
\end{prop}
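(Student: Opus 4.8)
The plan is to establish the positivity of $G^{(s)}(x,t)$ via an Abelian–Tauberian argument connecting the fractional heat kernel to the classical one through a subordination procedure, without yet invoking the full machinery of Bochner's subordination function (which will be developed later in Section \ref{S:moresub}). First I would reduce to $t=1$ by the self-similarity relation \eqref{heat3}, since $G^{(s)}(x,t) = t^{-n/2s} \Phi_s(x/t^{1/2s})$ and $t^{-n/2s}>0$; hence it suffices to show $\Phi_s(x) = G^{(s)}(x,1) \ge 0$ for all $x\in\Rn$. From the definition \eqref{time1} we have $\Phi_s = \F(e^{-(2\pi|\cdot|)^{2s}})$, so the whole matter boils down to proving that the function $\xi \mapsto e^{-(2\pi|\xi|)^{2s}}$ is the Fourier transform of a nonnegative function, i.e. that $e^{-|\xi|^{2s}}$ is positive-definite on $\Rn$ for every $0<s<1$.

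The key step, and the heart of the argument, is to write the exponential $e^{-\lambda^{2s}}$, with $\lambda = (2\pi|\xi|)$, as a superposition of Gaussians $e^{-u\lambda^2}$ with a positive weight. Concretely, one uses the classical identity
\[
e^{-\lambda^{2s}} = \int_0^\infty e^{-u \lambda^2}\, \eta_s(u)\, du,
\]
valid for some function $\eta_s:(0,\infty)\to [0,\infty)$ which is nonnegative — this $\eta_s$ is, up to normalization, the density of the one-sided $s$-stable subordinator. To obtain it with an elementary Tauberian flavor I would start from the scalar formula: for $0<s<1$ and $\beta>0$,
\[
e^{-\beta^{s}} = \int_0^\infty e^{-\beta \tau}\, \varrho_s(\tau)\, d\tau,
\]
where $\varrho_s$ is nonnegative; one can even produce $\varrho_s$ explicitly when $s=1/2$ as $\varrho_{1/2}(\tau) = \frac{1}{2\sqrt\pi}\tau^{-3/2} e^{-1/(4\tau)}$, and in general verify nonnegativity either by the Bernstein theorem on completely monotone functions (the map $\beta\mapsto e^{-\beta^s}$ is completely monotone because $\beta\mapsto \beta^s$ has a completely monotone derivative), or by a contour-integration computation of the inverse Laplace transform. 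Applying this with $\beta = \lambda^2 = (2\pi|\xi|)^2$ and changing variables yields the representation above with $\eta_s(u)\ge 0$.

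Once this representation is in hand, I would plug it into the Fourier inversion integral \eqref{GG} at $t=1$, interchange the order of integration (justified because $\eta_s \in L^1(0,\infty)$ with total mass $1$, and for each fixed $u>0$ the Gaussian $e^{-u(2\pi|\xi|)^2}$ is integrable in $\xi$), and use the classical Gaussian Fourier transform \eqref{gaussdil0} to get
\[
\Phi_s(x) = \int_0^\infty \left(\frac{\pi}{u}\right)^{\frac n2} \exp\!\left(-\frac{\pi^2 |x|^2}{u}\right) \eta_s(u)\, du \ \ge\ 0,
\]
since every factor in the integrand is nonnegative. Restoring the scaling in $t$ completes the proof. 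The main obstacle is the clean, self-contained justification that the subordination weight $\eta_s$ (equivalently $\varrho_s$) is nonnegative; I would handle this by the complete-monotonicity route — noting $\frac{d}{d\beta}\beta^s = s\beta^{s-1}$ is completely monotone so $\beta\mapsto\beta^s$ is a Bernstein function, whence $e^{-\beta^s}$ is completely monotone, and then Bernstein's theorem delivers the nonnegative measure — while pointing to the explicit $s=1/2$ case as a concrete sanity check and deferring the finer properties of $\eta_s$ to Section \ref{S:moresub}. Alternatively, if one wishes to avoid Bernstein's theorem entirely at this stage, the interchange-of-integration argument can be run directly against a test function, reducing positivity of $G^{(s)}$ to positivity of $\varrho_s$, which can be exhibited via the Fourier–Laplace representation of stable densities.
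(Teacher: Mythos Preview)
Your argument is correct, but despite your opening sentence it is not the Abelian--Tauberian route the paper takes; what you actually describe is precisely the Bochner subordination proof the paper alludes to but chooses not to give here. The paper instead builds $G^{(s)}(\cdot,T)$ as a limit in $\mathscr S'(\Rn)$ of manifestly nonnegative functions: it sets $f = A|\cdot|^{-(n+2s)}\mathbf 1_{\{|\cdot|\ge 1\}}$ normalized so that $\hat f(0)=1$, computes $\hat f(\xi) = 1 - A\alpha|\xi|^{2s}(1+o(1))$ as $\xi\to 0$, and shows that the rescaled $k$-fold convolutions $f_k(x) = k^{n/2s}(f\star\cdots\star f)(k^{1/2s}x)$ satisfy $\hat f_k(\xi)\to e^{-A\alpha|\xi|^{2s}}$, so that $f_k\to G^{(s)}(\cdot,T)$ in $\mathscr S'(\Rn)$ with $T=A/\gamma(n,s)$; since each $f_k\ge 0$, the limit is as well. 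This is a central-limit-type construction, entirely self-contained within elementary Fourier analysis and requiring neither Bernstein's theorem nor any knowledge of the subordinator density. Your route is shorter and more conceptual once Bernstein's theorem (complete monotonicity of $\beta\mapsto e^{-\beta^s}$) is granted, and it has the bonus of yielding the explicit representation $\Phi_s(x) = \int_0^\infty (4\pi u)^{-n/2}e^{-|x|^2/(4u)}\,\eta_s(u)\,du$, which is essentially the content of Theorem~\ref{T:sub}; the paper's route trades that structural insight for a proof that presupposes nothing from Section~\ref{S:moresub}.
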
 

\begin{proof}
In view of \eqref{Gsi} it suffices to show that for one $T>0$
\begin{equation}\label{Gpos}
G^{(s)}(x,T) \ge 0,
\end{equation}
for every $x\in \Rn$. If this holds, in fact, then for every $t>0$ and every $x\in \Rn$ we have 
\[
G^{(s)}(x,t) = \left(\frac Tt\right)^{\frac n{2s}} G^{(s)}\left(\left(\frac{T}t\right)^{1/2s} x,T\right) \ge 0,
\]
and we are done. To prove \eqref{Gpos} consider the spherically symmetric function in $L^1(\Rn)$
\[
f = \frac{A}{|\cdot|^{n+2s}} \mathbf 1_{B(0,1)^c},
\]
where the constant $A>0$ is chosen so that $\hat f(0) = \int_{\Rn} f(x) dx = 1$. Using such normalization, we obtain for every $\xi\not= 0$
\begin{align*}
\hat f(\xi) & = 1 + \hat f(\xi) - \int_{\Rn} f(x) dx
\\
& = 1 - \int_{\Rn} [1 - e^{- 2 \pi i <\xi,x>}] f(x) dx = 1 - A \int_{|x|\ge 1} \frac{1 - \cos(2 \pi <\xi,x>)}{|x|^{n+2s}} dx
\\
& = 1 - A |\xi|^{2s} \int_{|y|\ge |\xi|} \frac{1- \cos(2 \pi <\xi/|\xi|,y>)}{|y|^{n+2s}} dy = 1 - A |\xi|^{2s} \int_{|y|\ge |\xi|} \frac{1- \cos(2 \pi y_n)}{|y|^{n+2s}} dy,
\end{align*}
where we have first changed the variable to $y = |\xi| x$, and then used the invariance of the function 
\[
\xi\ \longrightarrow\ \int_{|x|\ge |\xi|} \frac{1- \cos(2 \pi <\xi/|\xi|,x>)}{|x|^{n+2s}} dx
\]
 with respect to orthogonal transformations in $\Rn$.
If we denote by 
\[
\Psi(\xi) = \int_{|x|\ge |\xi|} \frac{1- \cos(2 \pi x_n)}{|x|^{n+2s}} dx,
\]
then by Lebesgue dominated convergence we see that $\Psi(\xi) \to \alpha >0$ as $\xi\to 0$, where 
\[
\alpha = \int_{\Rn} \frac{1- \cos(2 \pi x_n)}{|x|^{n+2s}} dx = (2\pi)^{2s} \int_{\Rn} \frac{1- \cos(z_n)}{|z|^{n+2s}} dz = \frac{(2\pi)^{2s}}{\gamma(n,s)},
\]
with $\gamma(n,s)$ as in the proof of \eqref{gns} in Proposition \ref{P:slapft}. It follows that we can write
\[
\hat f(\xi) = 1 - A \alpha |\xi|^{2s} (1 + \omega(\xi)),
\]
where $\omega(\xi) = O(|\xi|^{2(1-s)}) = o(1)$ as $\xi \to 0$. We will prove that \eqref{Gpos} holds with 
\[
T = \frac{A}{\gamma(n,s)}.
\]
 This will complete the proof.

With this objective in mind, for every positive integer $k$ consider the function
\[
f_k(x) = k^{\frac n{2s}} (f\star...\star f)(k^{1/2s} x),
\]
where the convolution is repeated $k$ times. By \eqref{dil} we have
\[
\hat f_k(\xi) = \F(f\star...\star f)(k^{-1/2s} \xi) = (\hat f(k^{-1/2s} \xi))^k = \left(1 - \frac{A \alpha |\xi|^{2s} (1 + \omega(k^{-1/2s} \xi))}{k}\right)^k.
\]
This shows the crucial fact that for every $\xi\in \Rn$
\begin{equation}\label{clim}
\underset{k\to \infty}{\lim} \hat f_k(\xi) = e^{- A \alpha |\xi|^{2s}}.
\end{equation}
Since for every $k\in \mathbb N$ we have 
\
\[
||\hat f_k||_{L^\infty(\Rn)} \le ||f_k||_{L^1(\Rn)} \le ||f||^k_{L^1(\Rn)}  = 1,
\]
we conclude that \eqref{clim} also holds in $\mathscr S'(\Rn)$. We thus have for every $\vf \in \mathscr S(\Rn)$
\[
<f_k,\hat \vf> = <\hat f_k,\vf> \longrightarrow <e^{- A \alpha |\cdot|^{2s}}, \vf> = <\F(e^{- A \alpha |\cdot|^{2s}}),\hat \vf>.
\]
This is equivalent to saying that in $\mathscr S'(\Rn)$
\begin{equation}\label{clim2}
\underset{k\to \infty}{\lim} f_k = \F(e^{- A \alpha |\cdot|^{2s}}).
\end{equation}
Since $f_k \ge 0$ for every $k\in \mathbb N$, we conclude from \eqref{G} that $\F(e^{- A \alpha |\cdot|^{2s}})(x) = G^{(s)}(x,T) \ge 0$ for every $x\in \Rn$, with $T = \alpha A(2\pi)^{-2s} = \frac{A}{\gamma(n,s)}$. This proves \eqref{Gpos}.

\end{proof}

\begin{prop}\label{P:sp}
For every $0<s<1$ and for every $(x,t)\in \R^{n+1}_+$, we have
\[
G^{(s)}(x,t) >  0.
\]
\end{prop}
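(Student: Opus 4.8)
The plan is to upgrade the weak inequality $G^{(s)}(x,t)\ge 0$ from Proposition \ref{P:posG} to the strict inequality $G^{(s)}(x,t)>0$ everywhere. First I would recall the semigroup property \eqref{sg}, which in terms of the kernel reads
\[
G^{(s)}(x,t+\tau) = \int_{\Rn} G^{(s)}(x-z,t)\, G^{(s)}(z,\tau)\, dz,
\]
valid for all $t,\tau>0$. Combined with the self-similarity \eqref{Gsi}, it suffices to prove $G^{(s)}(x,T)>0$ for a single $T>0$ and every $x\in \Rn$, since then $G^{(s)}(x,t) = (T/t)^{n/2s} G^{(s)}((T/t)^{1/2s}x,T)>0$ for all $t>0$.

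The key step is an analyticity/real-analyticity argument. From \eqref{GG} the function $G^{(s)}(\cdot,t)$ is the Fourier transform of the rapidly decreasing function $\xi\mapsto e^{-t(2\pi|\xi|)^{2s}}$; since this function, while not real-analytic in $\xi$ at the origin, does extend to an analytic function in a tube domain in $\mathbb C^n$ away from the real axis is not quite enough — instead I would argue that $x\mapsto G^{(s)}(x,t)$ is real-analytic on $\Rn$. This follows because $e^{-t(2\pi|\xi|)^{2s}}$ decays faster than any exponential $e^{-c|\xi|}$ for every $c>0$ (indeed $|\xi|^{2s}\gg \log|\xi|$), so by the Paley–Wiener–Schwartz circle of ideas the Fourier transform $G^{(s)}(\cdot,t)$ extends to an entire function on $\mathbb C^n$; in particular its restriction to $\Rn$ is real-analytic. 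A real-analytic function on the connected set $\Rn$ which is $\ge 0$ (by Proposition \ref{P:posG}) either vanishes identically on $\Rn$ or has zero set of measure zero — but neither quite gives pointwise strict positivity directly, so I would instead combine real-analyticity with the following observation.

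The cleanest route is: suppose $G^{(s)}(x_0,T)=0$ for some $x_0$. Since $G^{(s)}(\cdot,T)\ge 0$ and is $C^\infty$ (indeed real-analytic), $x_0$ is a minimum, so $\nabla_x G^{(s)}(x_0,T)=0$; more importantly, using the semigroup identity with $t=\tau=T/2$,
\[
0 = G^{(s)}(x_0,T) = \int_{\Rn} G^{(s)}(x_0-z,T/2)\, G^{(s)}(z,T/2)\, dz.
\]
The integrand is a product of two nonnegative continuous functions, hence it must vanish identically in $z$. But then for every $z$ either $G^{(s)}(z,T/2)=0$ or $G^{(s)}(x_0-z,T/2)=0$, so $\Rn$ is the union of the two closed zero sets $Z$ and $x_0-Z$, where $Z=\{z: G^{(s)}(z,T/2)=0\}$. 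Since $G^{(s)}(\cdot,T/2)$ is real-analytic and not identically zero (its integral over $\Rn$ is $\widehat{G^{(s)}(\cdot,T/2)}(0)=e^{0}=1\neq 0$), its zero set $Z$ has empty interior and measure zero; the same holds for $x_0-Z$; but the union of two measure-zero sets cannot be all of $\Rn$, a contradiction. Therefore $G^{(s)}(x,T)>0$ for every $x$, and by the scaling reduction above $G^{(s)}(x,t)>0$ for all $(x,t)\in\R^{n+1}_+$.

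The main obstacle I anticipate is justifying the real-analyticity (or at least, that the zero set of $G^{(s)}(\cdot,t)$ has measure zero unless the kernel is identically zero). One must verify carefully that the super-exponential decay of $e^{-t(2\pi|\xi|)^{2s}}$ allows the contour shift / Paley–Wiener argument producing an entire extension — this is where the estimate $(2\pi|\xi|)^{2s}\ge c_\varepsilon|\xi| - C_\varepsilon$ for every $\varepsilon$, valid since $2s>0$, does the work. An alternative, avoiding analyticity altogether, is to use the subordination representation from Theorem \ref{T:sub} (or Theorem \ref{T:flheat}) together with the strict positivity of the classical Gaussian kernel $G(x,t)>0$: writing $G^{(s)}(x,t) = \int_0^\infty G(x,\sigma)\, \eta^{(s)}_t(\sigma)\, d\sigma$ with a nonnegative subordinator density $\eta^{(s)}_t$ that is not identically zero, strict positivity of $G$ and nonnegativity (plus nontriviality) of $\eta^{(s)}_t$ immediately force $G^{(s)}(x,t)>0$ for every $x$. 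I would present whichever of these the earlier sections make available most cleanly, likely the subordination argument, as it is the shortest and most transparent.
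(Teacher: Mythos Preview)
The paper's proof is different and much shorter than either of yours: once $G^{(s)}\ge 0$ is known from Proposition~\ref{P:posG}, strict positivity is obtained by invoking the strong maximum principle (equivalently, the weak Harnack inequality) for the nonlocal parabolic equation $\partial_t+(-\Delta)^s$, citing Theorem~1.1 of \cite{FK}. No analyticity, no subordination.

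Your subordination route is correct and is indeed the classical argument (the paper alludes to exactly this just before Proposition~\ref{P:posG}); the only issues are organizational, since Theorem~\ref{T:sub} appears two sections later and the nonnegativity of the subordinator $f_s(t;\tau)$ is never proved in the text but only referenced to \cite{Y}. Your analyticity route, however, has a genuine gap. The claimed inequality $(2\pi|\xi|)^{2s}\ge c_\varepsilon|\xi|-C_\varepsilon$ is \emph{false} for $0<s<\tfrac12$: in that range $|\xi|^{2s}/|\xi|\to 0$ as $|\xi|\to\infty$, so $e^{-t(2\pi|\xi|)^{2s}}$ does not dominate any exponential $e^{-c|\xi|}$, the Paley--Wiener contour shift yields no complex strip of analyticity whatsoever, and the ``zero set has measure zero'' step collapses. (Only for $s>\tfrac12$ do you get an entire extension; for $s=\tfrac12$ a strip of width $t$.) It is true that stable densities are real-analytic for all $s\in(0,1)$, but that is a separate nontrivial fact which your argument does not establish; without it, the remaining semigroup manipulation alone does not force strict positivity.
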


\begin{proof}

From Proposition \ref{P:posG} we know that $G^{(s)}(x,t)\ge 0$ globally. Then, its strict positivity follows from the strong maximum principle, or the weak Harnack inequality, see Theorem 1.1 in \cite{FK}.

\end{proof} 

The case $s=1/2$ has a special interest, and since it is quite surprising we state it in a proposition. 

\begin{prop}\label{P:onehalf}
When $s = 1/2$ the heat kernel $G^{(1/2)}(x,t)$ for $\p_t + (-\Delta)^{1/2}$ is given by the Poisson kernel for the Laplacean for the half-space $\Rn\times \R_+$, i.e., 
\begin{equation}\label{h12}
G^{(1/2)}(x,t) = P(x,t) = \frac{\G(\frac{n+1}{2})}{\pi^{\frac{n+1}{2}}} \frac{t}{(t^2 + |x|^2)^{\frac{n+1}{2}}}.
\end{equation}
\end{prop}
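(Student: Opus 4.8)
The plan is to compute the Fourier transform $G^{(1/2)}(x,t) = \F_{\xi\to x}^{-1}(e^{-2\pi t|\xi|})(x)$ directly, since for $s=1/2$ the symbol $(2\pi|\xi|)^{2s} = 2\pi|\xi|$ is just $2\pi|\xi|$, which — unlike the case of general $s$ — is elementary enough to invert explicitly via the Fourier--Bessel machinery already set up in Section \ref{S:bi}. Concretely, I would start from \eqref{GG}, which for $s=1/2$ reads
\[
G^{(1/2)}(x,t) = \int_{\Rn} e^{-2\pi i <x,\xi>} e^{-2\pi t |\xi|} d\xi,
\]
and observe that $\xi \mapsto e^{-2\pi t|\xi|}$ is spherically symmetric, so Theorem \ref{T:Fourier-Bessel} applies with $f(\rho) = e^{-2\pi t \rho}$ (the integrability hypothesis there being easily checked using \eqref{bfbehzero} and \eqref{jnuinfty2}). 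This gives
\[
G^{(1/2)}(x,t) = 2\pi |x|^{-\frac n2 + 1} \int_0^\infty \rho^{\frac n2} e^{-2\pi t \rho} J_{\frac n2 - 1}(2\pi |x| \rho) d\rho.
\]

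The key computational step is then to evaluate this Bessel--Laplace integral. I would appeal to the classical formula for $\int_0^\infty \rho^{\mu} e^{-p\rho} J_\nu(c\rho) d\rho$ — a standard entry in \cite{GR} (the Weber--Schafheitlin/Hankel--transform table, in the same family as \eqref{Besselintegral} and \eqref{17} already invoked in the paper) — with $\nu = \frac n2 - 1$, $\mu = \frac n2$, $p = 2\pi t$, $c = 2\pi|x|$. The relevant special case states
\[
\int_0^\infty \rho^{\nu+1} e^{-p\rho} J_\nu(c\rho) d\rho = \frac{2 p (2c)^\nu \Gamma(\nu + \tfrac 32)}{\sqrt{\pi} (p^2 + c^2)^{\nu + \frac 32}},
\]
valid for $\Re \nu > -1$, $\Re p > 0$. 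Substituting and simplifying the powers of $2\pi$, $t$, $|x|$ and collecting the gamma factors (using $\Gamma(\tfrac n2)$ and $\Gamma(\tfrac{n+1}{2})$, with the duplication formula \eqref{prod} if needed to tidy constants) should produce exactly
\[
G^{(1/2)}(x,t) = \frac{\Gamma(\frac{n+1}{2})}{\pi^{\frac{n+1}{2}}} \frac{t}{(t^2 + |x|^2)^{\frac{n+1}{2}}},
\]
which is \eqref{h12}.

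An alternative, perhaps cleaner route I would mention is to avoid the Bessel integral altogether: one already knows from Lemma \ref{L:fsreg0}, equation \eqref{ft130}, the Fourier transform of $(y^2 + |x|^2)^{-\frac{n-2s}{2}}$ in terms of $K_s$, and for $s=1/2$ the Macdonald function degenerates, $K_{1/2}(z) = \sqrt{\pi/(2z)}\, e^{-z}$ (a standard identity, cf. \cite{Le}); feeding this into \eqref{ft130} with the role of the variables exchanged recovers that the Fourier transform of $t(t^2+|x|^2)^{-(n+1)/2}$ is a multiple of $e^{-2\pi t|\xi|}$, which is exactly what \eqref{GG} requires. Equivalently, one simply invokes Remark \ref{R:12}: when $s = 1/2$ one has $a = 0$, the extension operator $L_a$ is the ordinary Laplacean on $\R^{n+1}_+$, and $P_{1/2}(x,y)$ in \eqref{Pfinal} is the classical Poisson kernel; comparing the semigroup generated by the Dirichlet-to-Neumann map of the half-space Laplacean (whose kernel is the Poisson kernel) with \eqref{hsg} forces $G^{(1/2)} = P$.

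The main obstacle — really the only one — is bookkeeping: matching the normalization constants through the Fourier--Bessel formula and the \cite{GR} integral table so that the numerical factor comes out to be precisely $\Gamma(\frac{n+1}{2})/\pi^{\frac{n+1}{2}}$. A useful sanity check to keep the constants honest is to verify $\int_{\Rn} G^{(1/2)}(x,t)\,dx = 1$ directly from \eqref{h12} using Proposition \ref{P:poisson} with $b=0$, $a=n+1$ (cf. \eqref{poisson0} and Remark \ref{R:poissoncione}), and to compare with $\widehat{G^{(1/2)}}(0,t) = e^{-2\pi t|0|} = 1$ from \eqref{G}. No conceptual difficulty arises beyond this.
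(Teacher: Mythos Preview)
Your proposal is correct and ultimately aligned with the paper's approach: both reduce the claim to the Fourier identity $\F^{-1}(e^{-2\pi t|\cdot|})(x) = P(x,t)$, and your third alternative (invoking Remark \ref{R:12}) is exactly what the paper does, together with a citation of the classical formula in Stein. The paper's proof is a one-line appeal to this well-known identity, whereas your primary route derives it explicitly via Theorem \ref{T:Fourier-Bessel} and the Laplace--Bessel integral from \cite{GR}; this is more self-contained and the bookkeeping checks out, but it is not conceptually different.
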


\begin{proof}
The equation \eqref{h12} follows from the well-known formula for the Poisson kernel, see e.g. Proposition 5, Sec. 2 in Chap. 3 of \cite{St}, 
\[
\F_{x\to \xi}(e^{- 2\pi t |\cdot|}) = P_{1/2}(x,t),
\]
see also Remark \ref{R:12} above.

\end{proof}

In particular, \eqref{h12} says that the decay of the heat kernel for $\p_t + (-\Delta)^{1/2}$ is not exponential, but polynomial. The next result states that such behavior is shared by all nonlocal semigroups $P_t^{(s)}$, $0<s<1$.

\begin{prop}\label{P:decay00}
For every $0<s<1$ let $\Phi_s(x)$ be as in \eqref{time1}. Then, there exists a constant $\beta(n,s)>0$ such that for every $x\in \Rn$
\[
\frac{\beta(n,s)}{1+|x|^{n+2s}} \le \Phi_s(x) \le \frac{\beta^{-1}(n,s)}{1+|x|^{n+2s}}.
\]
\end{prop}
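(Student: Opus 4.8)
The plan is to establish the two-sided bound on $\Phi_s$ by combining the self-similar representation \eqref{phiFT2} with a scaling argument, reducing everything to the behavior of $\Phi_s$ near the origin and at infinity separately. First I would record the already-available facts: from Proposition \ref{P:posG} and Proposition \ref{P:sp} we know $\Phi_s(x) = G^{(s)}(x,1) > 0$ for every $x$, and from \eqref{time1} we know $\Phi_s \in C^\infty(\Rn)$; since $\Phi_s = \F(e^{-(2\pi|\cdot|)^{2s}})$ and $e^{-(2\pi|\cdot|)^{2s}}\in L^1(\Rn)$, the function $\Phi_s$ is continuous, bounded, and spherically symmetric (by \eqref{on}). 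In particular $\Phi_s$ is strictly positive and bounded on any compact set, so the asserted bound is automatic for $|x|\le 1$ and only the regime $|x|\to\infty$ requires work. Equivalently, the whole statement reduces to proving that there are constants $0 < c_1 \le c_2 < \infty$ (depending only on $n,s$) with
\[
c_1 \le |x|^{n+2s}\,\Phi_s(x) \le c_2, \qquad \text{for } |x|\ge 1.
\]

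For the asymptotic analysis I would start from \eqref{phiFT2}, namely
\[
\Phi_s(x) = \frac{2s\,(2\pi)^{-n/2}}{|x|^{n+2s}} \int_0^\infty e^{-(u/|x|)^{2s}}\, u^{n/2 + 2s - 1}\, J_{n/2}(u)\, du.
\]
Write $R = |x|$ and let $I(R) = \int_0^\infty e^{-(u/R)^{2s}} u^{n/2+2s-1} J_{n/2}(u)\,du$. The goal is to show $I(R)$ converges, as $R\to\infty$, to a finite strictly positive limit. Splitting the integral at $u=1$: on $(0,1)$ one uses \eqref{bfbehzero} so that $u^{n/2+2s-1}J_{n/2}(u) = O(u^{n+2s-1})$, which is integrable near $0$ and the Gaussian factor is bounded; dominated convergence handles this piece. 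On $(1,\infty)$ one uses the oscillatory asymptotics \eqref{jnuinfty} of Lemma \ref{L:jnuinfty}, $J_{n/2}(u) = \sqrt{2/(\pi u)}\cos(u - \pi(n+1)/4 - \pi/4) + O(u^{-3/2})$; the $O(u^{-3/2})$ remainder gives an absolutely convergent contribution (again by dominated convergence), while the main oscillatory term requires an integration by parts in $u$ to gain decay. Writing $\cos(\cdots) = \frac{d}{du}\sin(\cdots)$ and integrating by parts, the boundary and derivative terms are absolutely integrable uniformly in $R\ge 1$, so dominated convergence applies and
\[
\lim_{R\to\infty} I(R) = \int_0^\infty u^{n/2+2s-1} J_{n/2}(u)\, du = \frac{\G(\frac{n+2s}{2})}{2^{1-2s}\,\G(\frac{n-2s}{2})\cdots}
\]
— more precisely this last integral is exactly the value given by formula \eqref{17} in the proof of Proposition \ref{P:gns} (with $\mu = n/2$, $q = 1-2s$, $a=1$), which is finite and strictly positive since $-1 < 1-2s < n/2 - 1/2$. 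Denote this positive limit by $L_{n,s}$. Then there exists $R_0 \ge 1$ such that $\tfrac12 L_{n,s} \le I(R) \le 2 L_{n,s}$ for all $R\ge R_0$, which yields $\tfrac12 L_{n,s} (2\pi)^{-n/2}\, 2s \le |x|^{n+2s}\Phi_s(x) \le 2 L_{n,s}(2\pi)^{-n/2}\,2s$ for $|x|\ge R_0$.

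It remains to patch the interval $1 \le |x| \le R_0$: there $\Phi_s$ is continuous and strictly positive on the compact annulus, hence bounded above and below by positive constants, and $|x|^{n+2s}$ is likewise bounded between $1$ and $R_0^{n+2s}$; so the two-sided bound holds there with adjusted constants. Combining the three regimes ($|x|\le 1$, $1\le|x|\le R_0$, $|x|\ge R_0$) and taking the minimum of the lower constants and the maximum of the upper ones produces a single $\beta(n,s)\in(0,1)$ such that
\[
\frac{\beta(n,s)}{1+|x|^{n+2s}} \le \Phi_s(x) \le \frac{\beta^{-1}(n,s)}{1+|x|^{n+2s}}
\]
for all $x\in\Rn$, after absorbing the harmless comparison $1+|x|^{n+2s} \asymp \max(1,|x|^{n+2s})$. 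The main obstacle I anticipate is the uniform-in-$R$ estimate on the oscillatory tail $\int_1^\infty e^{-(u/R)^{2s}} u^{n/2+2s-1} J_{n/2}(u)\,du$: one must be careful that differentiating the factor $e^{-(u/R)^{2s}}$ in the integration by parts produces a term $\sim \tfrac{s}{R^{2s}} u^{2s-1} e^{-(u/R)^{2s}}$ whose integral against $u^{n/2+2s-1}\cdot u^{-1/2}$ stays bounded uniformly for $R\ge 1$ — this works because the exponential kills the growth at scale $u\sim R$ and the power $u^{2s-1}$ is mild, but it needs to be checked. A clean alternative route, which I would mention as a remark, is to avoid the Bessel asymptotics entirely and instead derive the bound from the subordination identity of Theorem \ref{T:sub} together with the two-sided control on Bochner's subordination function, integrating the Gaussian kernel against it; however, since Theorem \ref{T:sub} is proved later, the self-contained Bessel argument above is preferable here.
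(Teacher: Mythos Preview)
Your reduction is exactly the paper's: strict positivity and continuity of $\Phi_s$ handle any compact region, and the whole proposition is reduced to the asymptotic $|x|^{n+2s}\Phi_s(x)\to\gamma(n,s)>0$ as $|x|\to\infty$. The paper simply invokes this asymptotic as Theorem~\ref{T:polya}, which it does not prove but attributes to P\'olya ($n=1$), Blumenthal--Getoor (general $n$), and Bendikov (via subordination), remarking that ``the delicate part of the proof is not presented'' in \cite{BG60}.

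Where you diverge is in attempting a self-contained proof of that asymptotic via the Bessel representation \eqref{phiFT2}, and here there is a genuine gap. First, the $O(u^{-3/2})$ remainder from Lemma~\ref{L:jnuinfty} contributes an integrand of order $u^{n/2+2s-5/2}$ on $(1,\infty)$, which is \emph{not} absolutely integrable once $n\ge 3$ (and already fails for $n=2$ when $s>1/4$); so dominated convergence does not apply to the remainder piece as stated. Second, your identification of the limiting integral $\int_0^\infty u^{n/2+2s-1}J_{n/2}(u)\,du$ with formula \eqref{17} is incorrect: with $\mu=n/2$ and $q=1-2s$, formula \eqref{17} computes $\int_0^\infty r^{1-n/2-2s}J_{n/2}(r)\,dr$, which has the opposite exponent. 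In fact the integral with exponent $n/2+2s-1$ does not converge as an improper Riemann integral for $n\ge 2$ (the oscillatory amplitude $u^{n/2+2s-3/2}$ grows), so the exponential factor $e^{-(u/R)^{2s}}$ is acting as an Abel-type regularizer and the passage to the limit requires repeated integration by parts or a more careful summability argument --- precisely the delicacy the paper flags. Your alternative route through the subordination formula of Theorem~\ref{T:sub} is in fact Bendikov's approach in \cite{Be} and is a legitimate way to make this rigorous.
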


The proof of Proposition \ref{P:decay00} follows immediately from the following  result. 

\begin{theorem}\label{T:polya}
For every $0<s<1$ one has
\[
\underset{|x|\to \infty}{\lim}\ |x|^{n+2s} \Phi_s(x) = \gamma(n,s)>0,
\]
where $\gamma(n,s)$ is the constant in \eqref{gnsfin} in Proposition \ref{P:gns}.
\end{theorem}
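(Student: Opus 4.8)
The plan is to extract the asymptotic behavior of $\Phi_s(x)$ as $|x|\to\infty$ from the representation \eqref{phiFT2}, namely
\[
\Phi_s(x) = \frac{2s (2\pi)^{-\frac n2}}{|x|^{n+2s}} \int_0^\infty e^{-(\frac{u}{|x|})^{2s}} u^{\frac n2 + 2s - 1} J_{\frac n2}(u) du,
\]
so that it suffices to show
\[
\underset{|x|\to\infty}{\lim} \int_0^\infty e^{-(\frac{u}{|x|})^{2s}} u^{\frac n2 + 2s - 1} J_{\frac n2}(u) du = \frac{(2\pi)^{\frac n2}\gamma(n,s)}{2s}.
\]
The first step is to justify passing the limit $|x|\to\infty$ under the integral sign: pointwise the factor $e^{-(u/|x|)^{2s}}\to 1$, so the formal limit is $\int_0^\infty u^{\frac n2+2s-1} J_{\frac n2}(u)\,du$. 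The subtlety is that this integral is only conditionally convergent — by Lemma \ref{L:jnuinfty} the integrand behaves like $u^{2s-\frac 32}\cos(\dots)$ at infinity, which is \emph{not} absolutely integrable when $s\ge 3/4$ — so dominated convergence does not apply directly. This is the main obstacle.

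To handle it I would use the improper (oscillatory) integral machinery. The clean route is to recognize that the integral in question is exactly of the type governed by the Weber–Schafheitlin formula \eqref{17} (formula (17) on p.\ 684 of \cite{GR}), which was already invoked in the proof of Proposition \ref{P:gns}: with $\mu = \frac n2$ and $q = 1-2s$, that formula gives
\[
\int_0^\infty \frac{J_{\frac n2}(r)}{r^{\frac n2 - 1 + 2s}}\,dr = \frac{\G(1-s)}{2^{\frac n2 - 1 + 2s}\,\G(\frac n2 + s)},
\]
valid precisely when $-1 < 1-2s < \frac n2 - \frac 12$, which holds for all $0<s<1$ and $n\ge 1$. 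Multiplying through by the constant $2^{\frac n2 - 1}\G(\frac n2)$ and recalling the computation in Proposition \ref{P:gns}, one checks that $2^{\frac n2 - 1}\G(\frac n2)\cdot\frac{\G(1-s)}{2^{\frac n2 - 1 + 2s}\G(\frac n2 + s)} = \frac{\G(\frac n2)\G(1-s)}{2^{2s}\G(\frac n2 + s)}$, and from \eqref{gammans}–\eqref{gnsfin} this equals $\frac{(2\pi)^{\frac n2}\gamma(n,s)}{2s}$ after accounting for the normalization $\sigma_{n-1} = 2\pi^{n/2}/\G(n/2)$. Thus the formal limit is the right constant; what remains is the interchange of limit and integral.

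For the interchange, I would argue as follows. Write $\Phi_s^\flat(R) = \int_0^\infty e^{-(u/R)^{2s}} u^{\frac n2+2s-1} J_{\frac n2}(u)\,du$ with $R = |x|$, split at $u = 1$, and note the piece over $(0,1)$ converges by dominated convergence (the integrand is bounded by $Cu^{n+2s-1}\in L^1(0,1)$, using \eqref{bfbehzero}). On $(1,\infty)$, substitute the asymptotic expansion $J_{\frac n2}(u) = \sqrt{\tfrac 2{\pi u}}\cos(u - \tfrac{\pi n}{4} - \tfrac\pi 4) + O(u^{-3/2})$ from Lemma \ref{L:jnuinfty}; the $O(u^{-3/2})$ remainder contributes an absolutely convergent integral to which dominated convergence applies, while the main oscillatory term $\int_1^\infty e^{-(u/R)^{2s}} u^{2s - 3/2}\cos(u - c)\,du$ is controlled by integrating by parts once in $u$ (the oscillation gains a power of $u$, and the derivative of $e^{-(u/R)^{2s}}$ is $O(R^{-2s}u^{2s-1})$, uniformly small), giving uniform convergence as $R\to\infty$ to $\int_1^\infty u^{2s-3/2}\cos(u-c)\,du$, again interpreted as an improper integral. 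Reassembling the pieces and matching against the Weber–Schafheitlin evaluation of $\int_0^\infty u^{\frac n2+2s-1}J_{\frac n2}(u)\,du$ yields $\Phi_s^\flat(R)\to \frac{(2\pi)^{n/2}\gamma(n,s)}{2s}$, hence $|x|^{n+2s}\Phi_s(x)\to \gamma(n,s)$ as claimed. Finally, $\gamma(n,s)>0$ is immediate from \eqref{gnsfin}, since $\G$ is positive on $(0,\infty)$.
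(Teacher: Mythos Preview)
First, a framing remark: the paper does not actually supply a proof of this theorem. After recording the representation \eqref{hks} it refers the reader to P\'olya \cite{Po} (for $n=1$), Blumenthal--Getoor \cite{BG60}, Bendikov \cite{Be} (via subordination), and Kolokoltsov \cite{Ko}, noting explicitly that ``the delicate part of the proof is not presented'' in \cite{BG60}. So there is no in-text argument to compare against; your proposal has to stand on its own.

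It does not, because of a sign/exponent slip that propagates through the whole outline. From \eqref{phiFT2} the integrand is $e^{-(u/|x|)^{2s}}\,u^{\frac n2+2s-1}J_{\frac n2}(u)$, so by Lemma~\ref{L:jnuinfty} its large-$u$ behavior is $u^{\frac n2+2s-\frac32}\cos(u-c)$, \emph{not} $u^{2s-\frac32}\cos(u-c)$ as you wrote --- you dropped the $\tfrac n2$. Consequently the formal limit $\int_0^\infty u^{\frac n2+2s-1}J_{\frac n2}(u)\,du$ is not even conditionally convergent once $n\ge 2$ (and already fails for $n=1$, $s\ge \tfrac12$): the oscillatory integrand has \emph{growing} amplitude. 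Your identification with the Weber--Schafheitlin integral used in Proposition~\ref{P:gns} is therefore wrong: that proposition evaluates $\int_0^\infty r^{-(\frac n2-1+2s)}J_{\frac n2}(r)\,dr$, with exponent $1-\tfrac n2-2s$, the \emph{negative} of the exponent you need. Indeed your quoted value $\G(1-s)\big/\bigl(2^{\frac n2-1+2s}\G(\tfrac n2+s)\bigr)$ is the reciprocal of the target constant $2^{\frac n2+2s-1}\G(\tfrac n2+s)/\G(1-s)$.

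A single integration by parts does not rescue the tail either: differentiating the exponential factor produces a term $R^{-2s}e^{-(u/R)^{2s}}u^{\frac n2+4s-\frac52}\sin(u-c)$, whose power of $u$ can still be positive for $n\ge 2$. The P\'olya/Blumenthal--Getoor argument in fact iterates the integration by parts (via the Bessel recursion) roughly $\lceil \tfrac n2\rceil$ times, tracking all cross terms, until the remaining integral is absolutely convergent --- this is precisely the ``delicate part'' the paper flags. Alternatively, Bendikov's proof bypasses the oscillatory integral entirely by writing $\Phi_s(x)=\int_0^\infty f_s(1;\tau)(4\pi\tau)^{-n/2}e^{-|x|^2/4\tau}\,d\tau$ from \eqref{gssub} and using the large-$\tau$ asymptotics of the subordinator $f_s(1;\tau)$; that route is probably the cleanest one to pursue.
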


We note that, using \eqref{phiFT2} one obtains
\begin{equation}\label{hks}
|x|^{n+2s} \Phi_s(x) = 2s(2\pi)^{-\frac n2} \int_0^\infty  e^{-\left(\frac u{|x|}\right)^{2s}} u^{\frac n2 + 2s -1} J_{\frac n2}(u) du.
\end{equation}

When $n=1$ Theorem \ref{T:polya} was first proved by G. P\'olya in \cite{Po}. It is not easy to find this reference, but a detailed presentation of P\'olya's proof can be found in Lemma 3.4 in \cite{CS15}. P\'olya's argument was generalized to any dimension by Blumenthal and Getoor, see Theorem 2.1 in their paper \cite{BG60}. However, the delicate part of the proof is not presented there and the authors refer to \cite{Po}. Integrals like that in the right-hand side of \eqref{hks} are studied in \cite{PT69}. A different proof of Theorem \ref{T:polya} based on the subordination formula \eqref{gssub} below was given by Bendikov in \cite{Be}. One should also see the paper \cite{Ko} by Kolokoltsov for the proof of a more general result. Several heat kernel estimates are contained in the recent paper \cite{BSV17}, see also \cite{V17}.

In the local case $s=1$ a basic property of the heat kernel $G(x,t)$ is that for $n\not= 2$ and for every $x\not= 0$ one has
\[
\int_0^\infty G(x,t) dt = \frac{|x|^{2-n}}{(n-2)\sigma_{n-1}}.
\]
We recall that the right-hand side is the fundamental solution of $-\Delta$ with pole at $x = 0$. 
The next result expresses a similar property of $G^{(s)}(x,t)$.

\begin{prop}\label{P:heatsat}
Let $n\ge 2$. Then, for every $x\not= 0$ one has
\begin{equation}\label{fsss}
\int_0^\infty G^{(s)}(x,t) dt = E_s(x),
\end{equation}
where $E_s(x)$ is the fundamental solution of $(-\Delta)^s$ with singularity at $x=0$ in Theorem \ref{T:fs}.
\end{prop}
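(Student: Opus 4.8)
The plan is to compute $\int_0^\infty G^{(s)}(x,t)\,dt$ by working on the Fourier transform side, where $G^{(s)}$ is explicit via \eqref{G}. First I would justify interchanging the $t$-integration with the inverse Fourier transform: using \eqref{GG} we have $G^{(s)}(x,t) = \int_{\Rn} e^{-2\pi i\langle x,\xi\rangle} e^{-t(2\pi|\xi|)^{2s}}\,d\xi$, and by Proposition \ref{P:posG} the integrand is controlled well enough (together with the decay estimate in Proposition \ref{P:decay00}, which gives $G^{(s)}(x,t)\sim t^{-n/2s}$ uniformly away from $x=0$ and polynomial decay in $x$) to apply Tonelli/Fubini for fixed $x\neq 0$. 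The hypothesis $n\ge 2$ ensures $n-2s>0$, hence $n/(2s)>1$, so that the convergence of $\int_0^\infty G^{(s)}(x,t)\,dt$ near $t=\infty$ is not an issue, while the scaling \eqref{Gsi} handles the behavior as $t\to 0^+$.

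Next I would carry out the elementary one-dimensional integral in $t$: for $x\neq 0$, $\xi\neq 0$,
\[
\int_0^\infty e^{-t(2\pi|\xi|)^{2s}}\,dt = \frac{1}{(2\pi|\xi|)^{2s}}.
\]
Therefore, formally,
\[
\int_0^\infty G^{(s)}(x,t)\,dt = \int_{\Rn} e^{-2\pi i\langle x,\xi\rangle}\,\frac{1}{(2\pi|\xi|)^{2s}}\,d\xi = \F^{-1}\big((2\pi|\cdot|)^{-2s}\big)(x).
\]
At this point I would invoke Lemma \ref{L:fsreg0}, specifically \eqref{ft1302}, which identifies $\widehat{E_s}(\xi) = (2\pi|\xi|)^{-2s}$ in the distributional sense; equivalently, by the symmetry of the Fourier transform and the fact that $E_s$ is real and even, $\F^{-1}((2\pi|\cdot|)^{-2s}) = E_s$ as tempered distributions. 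Since the left-hand side $\int_0^\infty G^{(s)}(x,t)\,dt$ is, by the Fubini step, a locally integrable function agreeing with this distribution, we conclude $\int_0^\infty G^{(s)}(x,t)\,dt = E_s(x)$ for a.e. $x$, and then for every $x\neq 0$ by continuity of both sides on $\Rn\setminus\{0\}$ (the left side being continuous by dominated convergence using Proposition \ref{P:decay00}).

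The main obstacle I anticipate is the rigorous justification of the interchange of integrals and of the identification $\F^{-1}((2\pi|\cdot|)^{-2s}) = E_s$ as an honest function rather than merely as a tempered distribution: the symbol $(2\pi|\xi|)^{-2s}$ is singular at the origin, so one must be slightly careful. The cleanest route is to pair against a test function $\vf\in\mathscr S(\Rn)$: compute $\int_{\Rn}\big(\int_0^\infty G^{(s)}(x,t)\,dt\big)\vf(x)\,dx$, use Fubini (legitimate once the decay estimates of Proposition \ref{P:decay00} are in hand, integrated against $t$), then for each fixed $t$ apply $\int_{\Rn} G^{(s)}(x,t)\vf(x)\,dx = \int_{\Rn} e^{-t(2\pi|\xi|)^{2s}}\hat\vf(\xi)\,d\xi$ (Plancherel/\eqref{sft}), integrate in $t$, and recognize the result as $\langle E_s,\vf\rangle$ via \eqref{ft1302}. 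Alternatively one can regularize $E_s$ by $E_{s,y}$ as in \eqref{Ee20}, use that its Fourier transform \eqref{ft130} is an honest integrable function, and pass to the limit $y\to 0^+$. I would present the test-function argument as the primary one and remark that the pointwise statement then follows from continuity away from the origin.
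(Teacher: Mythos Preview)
Your approach is essentially the same as the paper's: both reduce the identity to the Fourier-side computation $\int_0^\infty e^{-t(2\pi|\xi|)^{2s}}\,dt = (2\pi|\xi|)^{-2s}$ and then identify $(2\pi|\xi|)^{-2s}$ with $\widehat{E_s}(\xi)$. The only cosmetic difference is that the paper cites Theorem~\ref{T:FTalpha} (the classical formula for $\F(|\cdot|^{-\alpha})$) to obtain $\widehat{E_s}$, whereas you cite \eqref{ft1302} from Lemma~\ref{L:fsreg0}; these are equivalent routes to the same fact. The paper is actually less careful than you are about the distributional justification---it explicitly says it ``proceeds formally'' and leaves the $\mathscr S'(\Rn)$ verification to the reader---so your test-function argument is a welcome addition rather than a deviation.
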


There is more than one way of proving Proposition \ref{P:heatsat}. A quick one resorts to the following classical result for which we refer the reader to chapter $5$ in \cite{St}.

\begin{theorem}\label{T:FTalpha}
For any $0<\alpha<n$ we have in $\mathscr S'(\Rn)$
\[
\F(|\cdot|^{-\alpha}) = \pi^{\alpha - \frac{n}{2}}
\frac{\Gamma\left(\frac{n-\alpha}{2}\right)}{\Gamma\left(\frac{\alpha}{2}\right)}\
|\cdot|^{\alpha-n}.
\]
\end{theorem}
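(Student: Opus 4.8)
\textbf{Proof proposal for Proposition \ref{P:heatsat} (the Fourier transform of $|\cdot|^{-\alpha}$, Theorem \ref{T:FTalpha}).}

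The plan is to identify $|\cdot|^{-\alpha}$ (for $0<\alpha<n$, where it belongs to $L^1_{loc}(\Rn)\subset \mathscr S'(\Rn)$, hence defines a tempered distribution) with an integral of Gaussians against a suitable measure, and then take the Fourier transform term by term. First I would use the identity \eqref{L}, which gives for any $L>0$ and $\beta>0$
\[
L^{-\beta} = \frac{1}{\G(\beta)} \int_0^\infty e^{-tL} t^{\beta} \frac{dt}{t}.
\]
Applying this with $L = \pi |x|^2$ and $\beta = \frac{\alpha}{2}$ (a choice dictated by the exponent one wants in the end, and by the desire to get a clean Gaussian $e^{-\pi t|x|^2}$) yields
\[
|x|^{-\alpha} = \frac{\pi^{\frac{\alpha}{2}}}{\G\left(\frac{\alpha}{2}\right)} \int_0^\infty e^{-\pi t |x|^2} t^{\frac{\alpha}{2}} \frac{dt}{t}.
\]

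Next I would pair both sides against a test function $\vf\in \mathscr S(\Rn)$, apply Fubini's theorem to exchange the $t$-integral with the spatial pairing (the absolute convergence needed for Fubini follows since $0<\alpha<n$ controls the behaviour near $x=0$, while the Gaussian factor and the Schwartz decay of $\vf$ control the behaviour at infinity — this should be a short but genuine estimate splitting the $t$-integral at $t=1$), and then use the elementary Gaussian Fourier transform. From \eqref{gaussdil0}, or directly, one has $\F(e^{-\pi t|\cdot|^2})(\xi) = t^{-n/2} e^{-\pi |\xi|^2/t}$. Substituting this and changing the variable $t\mapsto 1/t$ in the resulting integral, the $t$-integral becomes, up to the change of variable, again of the form \eqref{L}, now with $L = \pi|\xi|^2$ and $\beta = \frac{n-\alpha}{2}$, producing a factor $\G\left(\frac{n-\alpha}{2}\right) (\pi |\xi|^2)^{-\frac{n-\alpha}{2}}$. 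Collecting the powers of $\pi$ and the gamma factors gives exactly
\[
\F(|\cdot|^{-\alpha})(\xi) = \pi^{\alpha - \frac n2} \frac{\G\left(\frac{n-\alpha}{2}\right)}{\G\left(\frac{\alpha}{2}\right)} |\xi|^{\alpha - n}
\]
as an identity of tempered distributions.

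The main obstacle I anticipate is not the formal computation, which is a routine subordination-type manipulation, but the justification of the interchange of integrals when testing against $\vf\in \mathscr S(\Rn)$: one must check that $\int_0^\infty \int_{\Rn} e^{-\pi t|x|^2} t^{\frac{\alpha}{2}-1} |\vf(x)|\, dx\, dt < \infty$. Here the inner integral is $O(t^{-n/2})$ for large $t$ and $O(1)$ for small $t$, so the $t$-integral converges at $\infty$ precisely because $\frac{\alpha}{2}-1 - \frac n2 < -1$, i.e. $\alpha < n$, and at $0$ because $\frac{\alpha}{2}-1 > -1$, i.e. $\alpha > 0$. Thus the two standing hypotheses $0<\alpha<n$ are exactly what make every step legitimate, and once Fubini is in place the rest is bookkeeping with the functional equation \eqref{fact} of $\G$ and the scaling rule \eqref{dil}. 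As a sanity check one notes that setting $\alpha = n-2s$ and combining with \eqref{ft1302} recovers $\widehat{E_s}(\xi) = (2\pi|\xi|)^{-2s}$, consistent with Theorem \ref{T:fs}; and integrating \eqref{G} in $t$ from $0$ to $\infty$ (again by \eqref{L}, with $L=(2\pi|\xi|)^{2s}$, $\beta=1$) reproduces the same Fourier-side identity, which is the quickest route to \eqref{fsss} itself.
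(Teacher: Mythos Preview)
Your proof is correct. The paper does not actually prove Theorem~\ref{T:FTalpha}; it simply cites chapter~5 of Stein \cite{St} as a reference. The argument you give---representing $|x|^{-\alpha}$ as a superposition of Gaussians via \eqref{L}, taking the Fourier transform using \eqref{gaussdil0}, and inverting the integration variable---is precisely the standard proof one finds there, so your approach coincides with the cited source. Your Fubini justification is also correct: the two conditions $\alpha>0$ and $\alpha<n$ are exactly what control the $t$-integral near $0$ and near $\infty$, respectively. One small slip: your header says ``Proof proposal for Proposition~\ref{P:heatsat}'' but the body proves Theorem~\ref{T:FTalpha}; the closing remarks about recovering \eqref{fsss} via \eqref{L} with $\beta=1$ do match the paper's short proof of Proposition~\ref{P:heatsat}, so the content is fine but the label should be corrected.
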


\begin{proof}[Proof of Proposition \ref{P:heatsat}]
We proceed formally since, similarly to Theorem \ref{T:FTalpha}, for a rigorous proof we should verify the following steps in $\mathscr S'(\Rn)$, and not in the pointwise sense. The interested reader can easily provide the missing details. We notice that, in view of \eqref{fs} and \eqref{ans}, proving \eqref{fsss} is equivalent to showing 
\[
\int_0^\infty \mathscr F(G^{(s)})(x,t) dt = \hat E_s(x)= \frac{\G(\frac{n}2 -s)}{2^{2s} \pi^{\frac n2} \G(s)} \F(|\cdot|^{2s-n})(x).
\]
In light of \eqref{G} and Theorem \ref{T:FTalpha} (which we can apply with $\alpha = 2s$ since the hypothesis $0<s<1$ and $n\ge 2$ automatically guarantee that $0<\alpha <n$), this is in turn equivalent to showing that 
\[
\int_0^\infty e^{-t (2\pi |x|)^{2s}} dt = \pi^{\frac{n}{2}- 2s}
\frac{\Gamma\left(s\right)}{\Gamma\left(\frac{n-2s}{2}\right)} \frac{\G(\frac{n}2 -s)}{2^{2s} \pi^{\frac n2} \G(s)} (2\pi)^{2s} (2\pi |x|)^{-2s} = (2\pi |x|)^{-2s}.
\]
That the integral in the left-hand side is equal to the right-hand side of the above chain of equalities follows from a standard change of variable in the integral.

\end{proof}
 
\begin{prop}\label{P:markov}
For every $t>0$ we have
\[
\pt 1(x) = \int_{\Rn} G^{(s)}(x,t) dx = 1.
\]
Thus the semigroup $\pt$ is stochastically complete.
\end{prop}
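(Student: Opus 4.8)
The plan is to compute the Fourier transform of $G^{(s)}(\cdot,t)$ at $\xi = 0$, since $\int_{\Rn} G^{(s)}(x,t)\, dx = \widehat{G^{(s)}(\cdot,t)}(0)$ whenever $G^{(s)}(\cdot,t) \in L^1(\Rn)$. The first step is therefore to justify that $G^{(s)}(\cdot,t) \in L^1(\Rn)$ for every $t>0$: this follows immediately from Proposition \ref{P:posG} (positivity of the kernel) together with Proposition \ref{P:decay00}, which gives the pointwise bound $G^{(s)}(x,t) \le t^{-\frac{n}{2s}} \beta^{-1}(n,s) (1 + |x/t^{1/2s}|^{n+2s})^{-1}$ via the scaling relation \eqref{heat3}; the right-hand side is integrable since $n + 2s > n$. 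Once integrability is in hand, the Riemann-Lebesgue framework and the continuity of $\widehat{G^{(s)}(\cdot,t)}$ apply.

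Next, I would recall the defining relation \eqref{G}, namely $\F_{x\to\xi}(G^{(s)}(\cdot,t))(\xi) = e^{-t(2\pi|\xi|)^{2s}}$. Evaluating at $\xi = 0$ gives $\widehat{G^{(s)}(\cdot,t)}(0) = e^{-t \cdot 0} = 1$, which is precisely the claim $\int_{\Rn} G^{(s)}(x,t)\, dx = 1$. Since $P^{(s)}_t \mathbf 1(x) = \int_{\Rn} G^{(s)}(x-y,t)\,\mathbf 1\, dy = \int_{\Rn} G^{(s)}(z,t)\, dz$ by translation invariance and a change of variable, we get $P^{(s)}_t \mathbf 1(x) = 1$ for every $x \in \Rn$. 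The assertion of stochastic completeness is then just the terminological statement that the Markov semigroup preserves constants, so no further argument is needed.

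The only genuine subtlety — and the step I expect to require the most care — is the interchange of the pointwise inverse Fourier transform formula \eqref{GG} with the evaluation at $\xi = 0$. A clean way around any such worry is to note that $e^{-t(2\pi|\cdot|)^{2s}} \in L^1(\Rn) \cap C(\Rn)$, so by Fourier inversion $G^{(s)}(\cdot,t)$ is a bona fide continuous $L^1$ function whose Fourier transform is $e^{-t(2\pi|\cdot|)^{2s}}$ in the classical (not merely distributional) sense; evaluating the Fourier transform of an $L^1$ function at a point is then unambiguous, and plugging in $\xi = 0$ finishes the proof. Alternatively, one could avoid the Fourier transform entirely by integrating \eqref{fsss} in Proposition \ref{P:heatsat} — but that only handles $n \ge 2$ and the fundamental-solution route is less direct, so I would keep the Fourier argument as the main line. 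I would write this up as follows.

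\begin{proof}
Fix $t>0$. By Proposition \ref{P:posG} we have $G^{(s)}(\cdot,t) \ge 0$, and by Proposition \ref{P:decay00} together with the self-similar expression \eqref{heat3} we have the bound
\[
0 \le G^{(s)}(x,t) = t^{-\frac{n}{2s}} \Phi_s\!\left(\frac{x}{t^{1/2s}}\right) \le \frac{\beta^{-1}(n,s)\, t^{-\frac{n}{2s}}}{1 + |x|^{n+2s} t^{-\frac{n+2s}{2s}}},
\]
whose right-hand side is integrable over $\Rn$ since $n + 2s > n$. Hence $G^{(s)}(\cdot,t) \in L^1(\Rn)$. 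On the other hand, $e^{-t(2\pi|\cdot|)^{2s}} \in L^1(\Rn) \cap C(\Rn)$, so by the Fourier inversion formula the function defined by \eqref{GG} is continuous and its Fourier transform, in the classical $L^1$ sense, is given by \eqref{G}:
\[
\widehat{G^{(s)}(\cdot,t)}(\xi) = \int_{\Rn} e^{-2\pi i <\xi,x>} G^{(s)}(x,t)\, dx = e^{-t(2\pi|\xi|)^{2s}}.
\]
Evaluating at $\xi = 0$ yields
\[
\int_{\Rn} G^{(s)}(x,t)\, dx = \widehat{G^{(s)}(\cdot,t)}(0) = e^{0} = 1.
\]
Finally, by the definition \eqref{hsg} and a translation, for every $x\in \Rn$,
\[
P^{(s)}_t \mathbf 1(x) = \int_{\Rn} G^{(s)}(x-y,t)\, dy = \int_{\Rn} G^{(s)}(z,t)\, dz = 1,
\]
which shows that $\pt$ is stochastically complete.
\end{proof}
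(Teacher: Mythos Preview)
Your proof is correct and follows essentially the same approach as the paper: both compute $\int_{\Rn} G^{(s)}(x,t)\,dx$ as the Fourier transform of $G^{(s)}(\cdot,t)$ at $\xi=0$, using \eqref{G} to read off the value $e^0=1$. The only differences are cosmetic: the paper first rescales via \eqref{heat3} to reduce to $t=1$ and phrases the computation as $\F^{-1}(\Phi_s)(0)$, while you work at general $t$ and add explicit justification of $G^{(s)}(\cdot,t)\in L^1(\Rn)$ via Proposition~\ref{P:decay00}.
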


\begin{proof}
We have from \eqref{heat3}
\[
\int_{\Rn} G^{(s)}(x,t) dx = \int_{\Rn} \Phi_s(x) dx = \F^{-1}(\Phi_s)(0).
\]
Keeping in mind that $\Phi_s(\xi) = \F(e^{-(2\pi |\cdot|)^{2s}})(\xi)$, we see that
$\F^{-1}(\Phi_s)(\xi) = e^{-(2\pi |\xi|)^{2s}}$, and thus  $\F^{-1}(\Phi_s)(0) = 1$.

\end{proof}

Combining Propositions \ref{P:posG} and \ref{P:markov} we obtain the following maximum principle.

\begin{prop}\label{P:maxpr}
Let $\vf \in \mathscr S(\Rn)$. Then, for every $x\in \Rn$ and $t>0$ one has
\[
\underset{\Rn}{\inf}\ \vf \le \pt \vf(x) \le \underset{\Rn}{\sup}\ \vf.
\]
\end{prop}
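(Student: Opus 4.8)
The plan is to deduce Proposition \ref{P:maxpr} directly from the integral representation \eqref{hsg} together with two facts already established: the positivity of the heat kernel (Proposition \ref{P:posG}, or its strict form Proposition \ref{P:sp}) and the stochastic completeness $\int_{\Rn} G^{(s)}(x-y,t)\, dy = 1$ (Proposition \ref{P:markov}, which after the translation $y \mapsto x-y$ and a change of variable is exactly $\int_{\Rn} G^{(s)}(z,t)\,dz = 1$ for every $t>0$). These are the only ingredients needed; the argument is the standard ``averaging against a probability kernel'' estimate.

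First I would fix $\vf \in \mathscr S(\Rn)$, $x\in \Rn$ and $t>0$. Since $\vf$ is bounded, $m := \inf_{\Rn} \vf$ and $M := \sup_{\Rn}\vf$ are finite real numbers, and $m \le \vf(y) \le M$ for every $y\in \Rn$. From \eqref{hsg} we write
\[
\pt \vf(x) = \int_{\Rn} G^{(s)}(x-y,t)\,\vf(y)\,dy.
\]
Because $G^{(s)}(x-y,t) \ge 0$ for all $y$ by Proposition \ref{P:posG}, multiplying the pointwise inequalities $m \le \vf(y) \le M$ by the nonnegative weight $G^{(s)}(x-y,t)$ and integrating preserves the inequalities:
\[
m\int_{\Rn} G^{(s)}(x-y,t)\,dy \;\le\; \pt \vf(x) \;\le\; M \int_{\Rn} G^{(s)}(x-y,t)\,dy.
\]
Now by Proposition \ref{P:markov}, after the change of variable $z = x - y$ (whose Jacobian is $1$), $\int_{\Rn} G^{(s)}(x-y,t)\,dy = \int_{\Rn} G^{(s)}(z,t)\,dz = 1$. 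Substituting this into both ends gives $m \le \pt \vf(x) \le M$, which is precisely the claimed chain of inequalities.

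There is really no main obstacle here: the proposition is a formal consequence of positivity plus mass conservation of the kernel, both already in hand. The only minor point worth a sentence is integrability — one should note that $G^{(s)}(\cdot,t) \in L^1(\Rn)$ (indeed $\|G^{(s)}(\cdot,t)\|_{L^1} = 1$ by Proposition \ref{P:markov}, and the decay $\Phi_s(x)\asymp |x|^{-(n+2s)}$ from Proposition \ref{P:decay00} confirms this) while $\vf \in L^\infty(\Rn)$, so the convolution integral in \eqref{hsg} converges absolutely and all the manipulations above are legitimate. If one wanted strict inequalities when $\vf$ is nonconstant, one could invoke the strict positivity in Proposition \ref{P:sp}, but the statement as given only asks for the non-strict bounds.
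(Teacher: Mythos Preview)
Your proof is correct and follows exactly the approach the paper indicates: the paper simply states that the result is obtained by combining Propositions \ref{P:posG} and \ref{P:markov}, and you have spelled out precisely that standard averaging-against-a-probability-kernel argument.
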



\section{Bochner's subordination: from $P_t$ to $(-\Delta)^s$}\label{S:sub} 

In this section we take a momentary pause from the previous one, to discuss the important fact that, using the standard heat semigroup $P_t$, we can recover $(-\Delta)^s$. This result is based on Bochner's principle of subordination (for this see Chapter 4 in \cite{B49}) and the outcome of it is yet another expression of the fractional Laplacean, see formula \eqref{flheat} in Theorem \ref{T:flheat} below, that is alternative to the ones that we know so far, namely \eqref{fls}, \eqref{fl2}, \eqref{dtn} in Theorem \ref{T:cs} and \eqref{bp} in Proposition \ref{P:bps}. We will present two proofs of such result. We begin with a preliminary observation that connects the heat semigroup $P_t = e^{-t\Delta}$ to the spherical mean-value operator $\mathscr M_r u(x)$.

\begin{lemma}\label{L:save}
Let $u\in \mathscr S(\Rn)$. Then, for every $0<t<\infty$ one has
\[
P_t u(x) - u(x) = \sigma_{n-1} (4\pi t)^{-\frac n2} \int_0^\infty e^{-\frac{r^2}{4t}} r^{n-1} [\mathscr M_r u(x) - u(x)] dr.
\]
\end{lemma}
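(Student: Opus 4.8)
The plan is to compute $P_t u(x)$ directly from the Gaussian kernel using Cavalieri's principle (Federer's coarea formula), which reduces the $n$-dimensional convolution integral to a one-dimensional radial integral naturally involving the spherical mean-value operator $\mathscr M_r u(x)$. First I would write
\[
P_t u(x) = \int_{\Rn} G(y,t) u(x-y)\, dy = (4\pi t)^{-\frac n2} \int_{\Rn} e^{-\frac{|y|^2}{4t}} u(x-y)\, dy,
\]
and then slice the integral over spheres $\{|y| = r\}$, $r\in (0,\infty)$. Since the Gaussian $y\mapsto e^{-|y|^2/(4t)}$ is spherically symmetric, it is constant on each such sphere, so
\[
P_t u(x) = (4\pi t)^{-\frac n2} \int_0^\infty e^{-\frac{r^2}{4t}} \left(\int_{S(0,r)} u(x-y)\, d\sigma(y)\right) dr.
\]
Recognizing from \eqref{MA0} that $\int_{S(0,r)} u(x-y)\, d\sigma(y) = \sigma_{n-1} r^{n-1} \mathscr M_r u(x)$ (using, as in the computation of $d\sigma_r\star u$ in Section \ref{S:bi}, that integrating $u(x-y)$ over the sphere equals integrating $u(x+y)$, by the symmetry $y\mapsto -y$ of the sphere, and both equal $\sigma_{n-1}r^{n-1}\mathscr M_r u(x)$), this becomes
\[
P_t u(x) = \sigma_{n-1} (4\pi t)^{-\frac n2} \int_0^\infty e^{-\frac{r^2}{4t}} r^{n-1} \mathscr M_r u(x)\, dr.
\]

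Next I would subtract $u(x)$, using the normalization of the heat kernel. Since $\int_{\Rn} G(y,t)\, dy = 1$, the same Cavalieri computation applied to the constant function (or just to the Gaussian itself) gives
\[
\sigma_{n-1} (4\pi t)^{-\frac n2} \int_0^\infty e^{-\frac{r^2}{4t}} r^{n-1}\, dr = 1,
\]
so that $u(x) = \sigma_{n-1} (4\pi t)^{-\frac n2} \int_0^\infty e^{-\frac{r^2}{4t}} r^{n-1}\, u(x)\, dr$. Subtracting this from the previous display yields the claimed identity
\[
P_t u(x) - u(x) = \sigma_{n-1} (4\pi t)^{-\frac n2} \int_0^\infty e^{-\frac{r^2}{4t}} r^{n-1} [\mathscr M_r u(x) - u(x)]\, dr.
\]

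The only points requiring a word of justification are the applicability of the coarea formula and the absolute convergence of all integrals involved; these are routine since $u\in\mathscr S(\Rn)$ is bounded (indeed $\mathscr M_r u(x)$ is bounded uniformly in $r$ by $\|u\|_{L^\infty(\Rn)}$) and $e^{-r^2/(4t)} r^{n-1}\in L^1(0,\infty)$ for every fixed $t>0$, so there is no obstacle to exchanging orders of integration or to splitting the integral. I do not anticipate any real difficulty here — the lemma is essentially a bookkeeping identity, and the main thing to get right is the constant, which is pinned down by the normalization $\int G(y,t)\,dy = 1$. This lemma then feeds into the subordination argument of the section, where it will be combined with Proposition \ref{P:flave} (the expression of $(-\Delta)^s u$ in terms of $\int_0^\infty r^{-1-2s}[\mathscr M_r u(x) - u(x)]\, dr$) and the integral identity $\int_0^\infty t^{-s-1}(P_t u(x) - u(x))\, dt$ to produce the representation \eqref{flheat} of $(-\Delta)^s$ via the classical heat semigroup.
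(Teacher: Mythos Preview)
Your proof is correct and follows essentially the same approach as the paper: both use Cavalieri's principle together with the spherical symmetry of the Gaussian and the normalization $P_t 1 = 1$. The only cosmetic difference is that the paper writes $P_t u(x) - u(x) = \int_{\Rn} G(x-y,t)[u(y)-u(x)]\,dy$ at the outset and then slices over spheres $S(x,r)$, whereas you compute $P_t u(x)$ first (slicing over $S(0,r)$) and subtract $u(x)$ afterwards.
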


\begin{proof}
This is a simple consequence of Cavalieri's principle, the spherical symmetry of $G(x,t)$,  and of the fact that $P_t 1 = 1$. We have
\begin{align*}
P_t u(x) - u(x) & = \int_{\Rn} G(x-y,t)[u(y) - u(x)] dy 
\\
& = \int_0^\infty \int_{S(x,r)} G(x-y,t)[u(y) - u(x)] d\sigma(y) dr 
\\
& = (4\pi t)^{-\frac n2}  \int_0^\infty e^{-\frac{r^2}{4t}} \int_{S(x,r)} [u(y) - u(x)] d\sigma(y) dr
\\
& = \sigma_{n-1} (4\pi t)^{-\frac n2} \int_0^\infty e^{-\frac{r^2}{4t}} r^{n-1} [\mathscr M_r u(x) - u(x)] dr.
\end{align*}

\end{proof} 

We are now in a position to establish the main result about Bochner's subordination for the nonlocal operator $(-\Delta)^s$.

\begin{theorem}\label{T:flheat}
Let $0<s<1$. For any $u\in \operatorname{Dom}(-\Delta)$, hence in particular, for any $u\in \mathscr S(\Rn)$, one has
\begin{align}\label{flheat}
(-\Delta)^s u(x) & =   \frac{1}{\G(-s)} \int_0^\infty t^{-s-1} \left[P_t u(x) - u(x)\right] dt
\\
& = - \frac{s}{\G(1-s)} \int_0^\infty t^{-s-1} \left[P_t u(x) - u(x)\right] dt.
\notag
\end{align}
\end{theorem}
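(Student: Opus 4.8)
The plan is to prove the subordination formula \eqref{flheat} by working on the Fourier transform side, exploiting the two facts already at our disposal: the Fourier characterization \eqref{fls3} of $(-\Delta)^s$, and the classical identity \eqref{hsg0}, namely $\int_0^\infty u^{-s-1}(1-e^{-u})du = \frac{\G(1-s)}{s}$. Since both sides of the claimed identity are tempered distributions (indeed, for $u\in\mathscr S(\Rn)$ the right-hand side defines a bounded continuous function of $x$, because the integrand behaves like $t^{1-s}\|\Delta u\|_\infty$ near $t=0$ by the smoothing estimate $|P_t u(x)-u(x)|\le Ct$, and like $t^{-s-1}$ near $t=\infty$ since $P_t u$ is bounded), it suffices to verify the equality after taking $\F$.

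First I would compute $\F\big(P_t u(x)-u(x)\big)(\xi) = (e^{-4\pi^2|\xi|^2 t}-1)\hat u(\xi)$, using the well-known fact that the standard heat semigroup acts on the Fourier side as multiplication by $e^{-4\pi^2|\xi|^2 t}$ (this follows from \eqref{gaussdil0} together with the convolution theorem). Then I would like to interchange the $t$-integral with the Fourier transform: this is legitimate by Fubini/Tonelli, since $\int_0^\infty t^{-s-1}\big|e^{-4\pi^2|\xi|^2 t}-1\big|\,|\hat u(\xi)|\,dt$ is finite and integrable in $\xi$ (again using $|e^{-c t}-1|\le \min(ct,1)$ and the rapid decay of $\hat u$). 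After the interchange, the inner integral becomes, via the substitution $v = 4\pi^2|\xi|^2 t$,
\[
\int_0^\infty t^{-s-1}\big(e^{-4\pi^2|\xi|^2 t}-1\big)\,dt = -(4\pi^2|\xi|^2)^s\int_0^\infty v^{-s-1}\big(1-e^{-v}\big)\,dv = -(2\pi|\xi|)^{2s}\,\frac{\G(1-s)}{s},
\]
where the last equality is precisely \eqref{hsg0}. Hence
\[
\F\!\left(\int_0^\infty t^{-s-1}\big[P_t u-u\big]\,dt\right)(\xi) = -\frac{\G(1-s)}{s}(2\pi|\xi|)^{2s}\hat u(\xi) = -\frac{\G(1-s)}{s}\,\F\big((-\Delta)^s u\big)(\xi),
\]
the last step by \eqref{fls3}. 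Multiplying through by $-\frac{s}{\G(1-s)}$ and using the uniqueness of the Fourier transform on $\mathscr S'(\Rn)$ gives the second displayed identity in \eqref{flheat}; the first follows from the relation $\G(1-s) = -s\,\G(-s)$ recorded in \eqref{gammas}. Finally I would remark that the argument only used that $P_t u - u$ and $(-\Delta)^s u$ are tempered distributions with the stated Fourier transforms, so the formula extends from $\mathscr S(\Rn)$ to any $u\in\operatorname{Dom}(-\Delta)$ by density (or by the same distributional computation).

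The main obstacle is purely a matter of justification rather than of ideas: one must be careful that the $t$-integral defining the right-hand side converges absolutely for $u\in\mathscr S(\Rn)$ and that the Fubini interchange is valid, both of which hinge on the elementary two-sided bound $|e^{-ct}-1|\le \min(ct,1)$ and on controlling $|P_t u(x)-u(x)|$ near $t=0$ by $Ct\|\nabla^2 u\|_\infty$ (Taylor expansion, exactly as in the convergence discussion following \eqref{fls}). Everything else is a direct consequence of \eqref{fls3} and \eqref{hsg0}. Alternatively — and this is the ``second proof'' the text promises — one can avoid the Fourier transform entirely: combine Lemma \ref{L:save}, which writes $P_t u(x)-u(x)$ as a Gaussian average of $\mathscr M_r u(x)-u(x)$, with Proposition \ref{P:flave}, which writes $(-\Delta)^s u(x)$ as a weighted integral of $\mathscr M_r u(x)-u(x)$ against $r^{-1-2s}$; plugging the former into $\int_0^\infty t^{-s-1}[P_tu(x)-u(x)]dt$, exchanging the order of the $t$- and $r$-integrations, and evaluating $\int_0^\infty t^{-s-1}(4\pi t)^{-n/2}e^{-r^2/4t}dt$ by the substitution $t\mapsto r^2/(4t)$ produces exactly the constant $-\sigma_{n-1}\gamma(n,s)/\big(s/\G(1-s)\big)^{-1}$ matching \eqref{saslap}. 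I would present the Fourier-side proof as the primary one for brevity and mention the mean-value proof as a remark.
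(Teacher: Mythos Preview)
Your proposal is correct and matches the paper's own argument essentially step for step; the only difference is the order of presentation. The paper gives the spherical-mean computation (via Lemma~\ref{L:save} and Proposition~\ref{P:flave}) as its \emph{First proof} and the Fourier-side argument using \eqref{fls3} and \eqref{hsg0} as its \emph{Second proof}, whereas you lead with the Fourier proof and relegate the mean-value route to a remark --- but the content of both proofs is the same, and your added care about absolute convergence and the Fubini justification is a welcome bonus the paper leaves implicit.
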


\begin{proof}[First proof]
Let $\alpha>0$. Using Lemma \ref{L:save}, we find
\begin{align*}
& \int_0^\infty t^{-\alpha-1} [P_t u(x) - u(x)] dt = \sigma_{n-1} (4\pi)^{-\frac n2}\int_0^\infty t^{-\alpha - \frac n2}  \int_0^\infty e^{-\frac{r^2}{4t}} r^{n-1} [\mathscr M_r u(x) - u(x)] dr \frac{dt}{t}
\\
& =  \sigma_{n-1} (4\pi)^{-\frac n2} \int_0^\infty \left(\int_0^\infty t^{-\alpha - \frac n2} e^{-\frac{r^2}{4t}} \frac{dt}{t}\right) r^{n-1} [\mathscr M_r u(x) - u(x)] dr, 
\end{align*}
assuming that we can exchange the order of integration.
Now,
\begin{align*}
& \int_0^\infty t^{-\alpha - \frac n2} e^{-\frac{r^2}{4t}} \frac{dt}{t} = 2^{2\alpha + n} \G(\frac n2 +\alpha) r^{-2\alpha - n}. 
\end{align*}
Substituting in the above formula we find
\begin{align*}
& \int_0^\infty t^{-\alpha-1} [P_t u(x) - u(x)] dt  = \sigma_{n-1} \pi^{-\frac n2} 2^{2\alpha} \G(\frac n2 +\alpha)  \int_0^\infty r^{-2\alpha -1} [\mathscr M_r u(x) - u(x)] dr.
\end{align*}
Comparing the right-hand side with that of the equation in Proposition \ref{P:flave}, it is now clear that, in order for the former to provide a multiple of $(-\Delta)^s u(x)$ we must have $\alpha = s$. With such choice we obtain
\begin{align*}
& - \int_0^\infty t^{-s-1} [P_t u(x) - u(x)] dt = - \sigma_{n-1} \pi^{-\frac n2} 2^{2s} \G(\frac n2 + s) \int_0^\infty r^{-2s -1} [\mathscr M_r u(x) - u(x)] dr.
\end{align*}
Since by Proposition \ref{P:flave} and Proposition \ref{P:gns} we find
\[
- \int_{\Rn} r^{-1-2s} \big[\mathscr M_r u(x) - u(x)] dr = \frac{(-\Delta)^s u(x)}{\sigma_{n-1}   \gamma(n,s)} = \frac{(-\Delta)^s u(x)}{\sigma_{n-1}} \frac{\pi^{\frac n2} \G(1-s)}{s 2^{2s} \G(\frac n2 + s)},
\] 
the desired conclusion \eqref{flheat} follows.

\end{proof}

\begin{proof}[Second proof]

The proof that the right-hand side of \eqref{flheat} is in fact equal to $(-\Delta)^s u(x)$ can also be accomplished using the Fourier transform. Thanks to \eqref{fls3} in Proposition \ref{P:slapft}, we see that proving \eqref{flheat} is equivalent to showing
\begin{align*}
(2\pi |\xi|)^{2s} \hat u(\xi) = - \frac{s}{\G(1-s)} \int_0^\infty t^{-s-1} \left(\widehat{P_t u}(\xi) - \hat u(\xi)\right) dt.
\end{align*}
Since $\widehat{P_t u}(\xi)  = \F(G(\cdot,t) \star u)(\xi) = \hat G(\xi,t) \hat u(\xi) = e^{-t (2\pi |\xi|)^2} \hat u(\xi)$, we thus see that \eqref{flheat} is equivalent to 
\[
(2\pi |\xi|)^{2s} \hat u(\xi) =  \frac{s}{\G(1-s)} \int_0^\infty t^{-s-1} \left(1 -  e^{-t (2\pi |\xi|)^2}\right) dt\   \hat u(\xi).
 \]
This identity holds if and only if it is true that
\[
(2\pi |\xi|)^{2s}  =  \frac{s}{\G(1-s)} \int_0^\infty t^{-s} \left(1 -  e^{-t (2\pi |\xi|)^2}\right) \frac{dt}{t} = (2\pi  |\xi|)^{2s}  \frac{s}{\G(1-s)} \int_0^\infty u^{-s-1} \left(1 -  e^{- u}\right) du.
\]
The validity of this equation immediately follows from \eqref{hsg0} above.
This completes the proof.

\end{proof}  

Theorem \ref{T:flheat} extends to a very general framework, and covers situations in which the Euclidean $-\Delta$ is replaced by an operator $-L$ that neither necessarily generates a semigroup, nor its domain is necessarily dense in the relevant Banach space. This result is due to A. V. Balakrishnan, see formula (2.1) in \cite{B}, or also (4) and (5) p. 260 in \cite{Y}.


\section{More subordination: from $P_t$ to $P^{(s)}_t$}\label{S:moresub} 

``\emph{The `stable laws' $\{e^{-t|\alpha|^{2p}}\}$, $0<p<1$, are each subordinate to the Gaussian law $\{e^{-t|\alpha|^{2}}\}$, and quite generally if  $\{e^{-t \psi(\alpha)}\}$ is any subdivisible process then so is $\{e^{-t \psi(\alpha)^p}\}$ for any $0<p<1$}".

This quote is from p. 93 in \cite{B49}. This section is devoted to further illustrating Bochner's beautiful subordination idea with a twofold purpose. On one hand, it leads to an explicit representation of the nonlocal semigroup $P^{(s)}_t$ in terms of the standard heat semigroup, see Theorem \ref{T:sub} below. On the other hand, as we have already mentioned in the closing of the previous section,  the principle of subordination allows for far-reaching generalizations. These developments were already envisioned by Bochner  himself, when he said on p. 95 of \cite{B49}: 

``\emph{Now, our `subordination' can also be introduced on spaces in general provided we shift the emphasis from Fourier transformation (which may not even be definable) to (generalizations of) the distributions $F(u;A)$...}" 

In what follows we will discuss material from \cite{B49}, \cite{P}, \cite{B} and p. 259-268 in \cite{Y}. We begin with a definition. 

\begin{definition}[Bochner's subordinator]\label{D:sub}
Let $0<s<1$. For every fixed $t>0$ we introduce the \emph{subordinator} function as the inverse Laplace transform of $z \to e^{-t z^s}$, $\Re z >0$,
\begin{equation}\label{ftau}
f_{s}(t;\tau) = 
\begin{cases}
\frac{1}{2\pi i} \int_{\e - i \infty}^{\e + i \infty} e^{z\tau - t z^s} dz\ \ \ \ \ \ \tau \ge 0,
\\
\\
0\ \ \ \ \ \ \ \ \ \ \ \ \ \ \ \ \ \ \ \ \ \ \ \ \ \ \ \ \ \ \ \tau <0.
\end{cases}
\end{equation}
\end{definition}

In \eqref{ftau} the parameter $\e>0$ is fixed and $z^s$ denotes the branch such that $\Re(z^s)>0$ when $\Re z>0$. In this way, $z^s$ is a one-valued holomorphic function in the $z$-plane cut along the negative real axis. It is clear that, thanks to Cauchy's integral formula, the value of the integral is independent of $\e>0$. If $\tau>0$ we have 
\[
f_{s}(t;\tau) = \frac{e^{\e \tau}}{2\pi} \int_{- \infty}^{\infty} e^{i y \tau} e^{-t (\e + i y)^s} dy.
\]
Thus, the convergence of the integral in \eqref{ftau} is guaranteed by the decay of the factor $e^{- t z^s}$, and we have $f_{s}(t;\tau)  \cong e^{\e \tau}$. The following simple, yet crucial formula, is key to the subordination principle. It expresses the fact that the Laplace transform of $f_s(t;\cdot)$ is the function $z\to e^{-t z^s}$.

\begin{lemma}\label{L:sub}
For every $t>0$ and $a>0$ one has
\[
e^{-t a^s} = \int_0^\infty  f_s(t;\tau) e^{-\tau a} d\tau.
\]
\end{lemma}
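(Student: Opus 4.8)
The statement to prove is Lemma \ref{L:sub}, namely that for $t>0$ and $a>0$ one has
\[
e^{-t a^s} = \int_0^\infty f_s(t;\tau) e^{-\tau a} d\tau,
\]
with $f_s(t;\tau)$ given by the Bromwich contour integral \eqref{ftau}. The plan is to compute the right-hand side by inserting the definition of $f_s(t;\tau)$ and interchanging the order of integration. Concretely, first I would write
\[
\int_0^\infty f_s(t;\tau) e^{-\tau a} d\tau = \int_0^\infty \left(\frac{1}{2\pi i} \int_{\e - i\infty}^{\e + i\infty} e^{z\tau - t z^s} dz\right) e^{-\tau a} d\tau,
\]
and then, assuming Fubini applies, swap the integrals to obtain
\[
\frac{1}{2\pi i} \int_{\e - i\infty}^{\e + i\infty} e^{-t z^s} \left(\int_0^\infty e^{(z - a)\tau} d\tau\right) dz = \frac{1}{2\pi i} \int_{\e - i\infty}^{\e + i\infty} \frac{e^{-t z^s}}{a - z} dz,
\]
where the inner $\tau$-integral converges to $\frac{1}{a-z}$ precisely because we may choose the fixed abscissa $\e$ with $0<\e<a$, so that $\Re(z-a) = \e - a < 0$ along the contour.

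The second step is to evaluate the remaining contour integral $\frac{1}{2\pi i}\int_{\e-i\infty}^{\e+i\infty} \frac{e^{-tz^s}}{a-z}\,dz$ by a residue/contour-deformation argument. The integrand $z\mapsto \frac{e^{-tz^s}}{a-z}$ is holomorphic in the $z$-plane cut along the negative real axis except for the simple pole at $z=a$, which lies to the \emph{right} of the vertical line $\Re z = \e$. I would close the contour to the right with a large semicircle: on that arc $\Re(z^s)>0$ keeps $|e^{-tz^s}|$ bounded (in fact decaying) while $|a-z|^{-1}\to 0$, so the arc contributes nothing in the limit. The closed contour (traversed so as to enclose $z=a$) picks up exactly the residue of the integrand at $z=a$, which is $-e^{-ta^s}$; accounting for the orientation (closing to the right reverses orientation relative to the upward Bromwich line) yields $\frac{1}{2\pi i}\int = e^{-ta^s}$, as claimed. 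Alternatively, one can deform the Bromwich line directly and recognize this as the standard inverse-Laplace-transform pairing: $f_s(t;\cdot)$ is by definition the inverse Laplace transform of $z\mapsto e^{-tz^s}$, and the lemma is simply the statement that the forward Laplace transform inverts it; invoking the Laplace inversion theorem (for which the rapid decay of $e^{-tz^s}$ on vertical lines provides the needed integrability) gives the result immediately.

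The main obstacle is justifying the interchange of integration in the first step, since the double integral is not absolutely convergent as written — along the vertical contour $e^{z\tau}$ has modulus $e^{\e\tau}$, which grows in $\tau$, and $f_s(t;\tau)$ itself only satisfies the crude bound $f_s(t;\tau) \lesssim e^{\e\tau}$ coming directly from \eqref{ftau}. To handle this I would exploit the freedom in the abscissa: estimate $f_s(t;\tau)$ more carefully by choosing the contour abscissa to depend on $\tau$ (a saddle-point / steepest-descent choice $\e \sim (\text{const})\,\tau^{-1/(1-s)}$ or similar), which produces a genuine exponential-type decay of $f_s(t;\tau)$ for large $\tau$ (this is also where positivity and the stable-density interpretation come from); with such a bound the factor $e^{-\tau a}$ is not even needed for convergence of $\int_0^\infty f_s(t;\tau)\,d\tau$, and Fubini is legitimate. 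A cleaner route that sidesteps delicate estimates altogether is to prove the identity first for $a$ in a right half-plane where everything converges absolutely and then extend by analytic continuation in $a$ to all $a>0$; or, as noted, to appeal directly to the Laplace inversion theorem, citing \cite{Y} (p.~259--268) or \cite{B49}, which already packages the required analytic justification. I would present the residue computation as the main line of argument and relegate the convergence bookkeeping to a remark referencing these sources.
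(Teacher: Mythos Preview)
Your approach is essentially identical to the paper's: insert the definition \eqref{ftau} with $0<\e<a$, swap the order of integration to obtain $-\frac{1}{2\pi i}\int_{\e-i\infty}^{\e+i\infty}\frac{e^{-tz^s}}{z-a}\,dz$, and close the contour to the right to pick up the residue $e^{-ta^s}$ at $z=a$. The paper simply asserts the exchange of integration without the justification you worry about, so your discussion of the Fubini issue is more careful than the original.
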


\begin{proof}
Consider the function
\[
g(z) = \frac{ e^{- t z^s}}{z-a}.
\]
It is a holomorphic function in $\{z\in \mathbb C\mid \Re z>0, z\not= a\}$, with a simple pole in $z=a$. Its residue is given by Res$(g,a) = e^{-t a^s}$. Having observed this, for any $a>0$ consider the line $\e + i y$, with $0< \e < a$. We have from \eqref{ftau}, after an exchange of the order of integration,
\begin{align*}
& \int_0^\infty e^{-\tau a} f_s(t;\tau) d\tau = \frac{1}{2\pi i} \int_{\e - i \infty}^{\e + i \infty} e^{- t z^s} \int_0^\infty e^{-(a-z)\tau} d\tau dz
\\
& = \frac{1}{2\pi i} \int_{\e - i \infty}^{\e + i \infty} e^{- t z^s} \left[\frac{e^{-(a-z)\tau}}{z-a}\right]_0^\infty dz
\\
& =  - \frac{1}{2\pi i} \int_{\e - i \infty}^{\e + i \infty} \frac{e^{- t z^s}}{z-a} dz = - \frac{1}{2\pi i} \int_{\e - i \infty}^{\e + i \infty} g(z) dz.
\end{align*}
We now apply Cauchy's residue theorem to the function $g(z)$ and to the curve $\G_R$, composed of a vertical piece $\e + iy$, with $|y|\le R$ and by a half-circle $C_R$ of points $z = \e + R e^{i\vt}$, for large $R$. From what observed above, we obtain
\[
\int_{\G_R} g(z) dz = 2 \pi i e^{-t a^s}.
\]
Since $\int_{C_R} g(z) dz \to 0$ as $R\to \infty$ we conclude that
\[
- \frac{1}{2\pi i} \int_{\e - i \infty}^{\e + i \infty} g(z) dz =  e^{-t a^s}.
\]
This completes the proof.

\end{proof}

A remarkable consequence of Lemma \ref{L:sub} is the following result that connects the fractional semigroup $\pt$ to the standard heat semigroup $P_t$.

\begin{theorem}\label{T:sub}
Let $0<s<1$. Then, for every $x\in \Rn$ and $t>0$ one has 
\begin{equation}\label{gssub}
G^{(s)}(x,t) = \int_0^\infty f_{s}(t;\tau) G(x,\tau) d\tau.
\end{equation}
As a consequence, for any $u\in \mathscr S(\Rn)$ one has
\begin{equation}\label{sgsub}
\pt u(x) = \int_0^\infty f_{s}(t;\tau) P_\tau u(x) d\tau.
\end{equation}
\end{theorem}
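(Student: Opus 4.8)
The plan is to prove Theorem~\ref{T:sub} by taking the Fourier transform in the space variable $x$ and reducing both \eqref{gssub} and \eqref{sgsub} to the scalar subordination identity in Lemma~\ref{L:sub}. First I would recall that, by the very definition \eqref{G} of the fractional heat kernel, one has $\F_{x\to\xi}(G^{(s)}(\cdot,t))(\xi) = e^{-t(2\pi|\xi|)^{2s}}$, while for the classical kernel $\F_{x\to\xi}(G(\cdot,\tau))(\xi) = e^{-\tau(2\pi|\xi|)^2}$. Thus if one sets $a = (2\pi|\xi|)^2 > 0$ (the case $\xi = 0$ being handled separately, or simply by continuity), then $a^s = (2\pi|\xi|)^{2s}$, and Lemma~\ref{L:sub} gives precisely
\[
e^{-t(2\pi|\xi|)^{2s}} = \int_0^\infty f_s(t;\tau)\, e^{-\tau(2\pi|\xi|)^2}\, d\tau,
\]
i.e.\ $\F_{x\to\xi}(G^{(s)}(\cdot,t))(\xi) = \int_0^\infty f_s(t;\tau)\, \F_{x\to\xi}(G(\cdot,\tau))(\xi)\, d\tau$.

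The second step is to justify interchanging the Fourier transform with the $\tau$-integration, so that the right-hand side above equals $\F_{x\to\xi}\big(\int_0^\infty f_s(t;\tau) G(\cdot,\tau)\, d\tau\big)(\xi)$. Here I would invoke the fact, recorded after Definition~\ref{D:sub}, that $f_s(t;\tau)\ge 0$ (this is the content of the positivity discussed in connection with Proposition~\ref{P:posG}, or can be read off from the Laplace-transform representation together with the fact that $e^{-ta^s}$ is completely monotone in $a$), together with $\int_0^\infty f_s(t;\tau)\, d\tau = e^{-t\cdot 0^s} = 1$ by Lemma~\ref{L:sub} applied at $a=0^+$. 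Since $\|G(\cdot,\tau)\|_{L^1(\Rn)} = 1$ for every $\tau>0$ (Gaussian normalization), Tonelli's theorem shows $\int_0^\infty f_s(t;\tau) G(x,\tau)\, d\tau$ is a well-defined function in $L^1(\Rn)$ with $L^1$-norm $1$, and Fubini then legitimizes pulling the Fourier transform inside the $\tau$-integral. By the injectivity of the Fourier transform on $L^1$ (both sides being continuous $L^1$ functions), \eqref{gssub} follows.

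Finally, \eqref{sgsub} is a direct consequence of \eqref{gssub}: for $u\in\mathscr S(\Rn)$ we have $P^{(s)}_t u(x) = G^{(s)}(\cdot,t)\star u(x)$ by \eqref{hsg} and $P_\tau u(x) = G(\cdot,\tau)\star u(x)$ by Definition~\ref{D:hsg}, so convolving both sides of \eqref{gssub} with $u$ and exchanging the order of integration—again justified by $f_s(t;\cdot)\ge 0$, $\int_0^\infty f_s(t;\tau)\,d\tau = 1$, and $u\in\mathscr S(\Rn)\subset L^\infty(\Rn)$, which makes the integrand absolutely integrable in $(\tau,y)$—yields $P^{(s)}_t u(x) = \int_0^\infty f_s(t;\tau) P_\tau u(x)\, d\tau$. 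The main obstacle, such as it is, is purely technical: establishing the positivity and integrability properties of $f_s(t;\cdot)$ that make all the Fubini/Tonelli interchanges rigorous, and dealing cleanly with the point $\xi = 0$ in the Fourier-side identity; none of this requires new ideas beyond Lemma~\ref{L:sub} and the normalization of the Gaussian.
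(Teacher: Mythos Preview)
Your proof is correct and follows essentially the same approach as the paper: take the spatial Fourier transform, reduce to the scalar identity $e^{-ta^s}=\int_0^\infty f_s(t;\tau)e^{-\tau a}\,d\tau$ with $a=(2\pi|\xi|)^2$, and invoke Lemma~\ref{L:sub}. The only cosmetic difference is the order: the paper proves \eqref{sgsub} first (working directly with $\F(P_\tau u)$) and then remarks that \eqref{gssub} is done similarly, whereas you establish \eqref{gssub} first and deduce \eqref{sgsub} by convolution. You are also more explicit than the paper about the Fubini justification; note, however, that the normalization $\int_0^\infty f_s(t;\tau)\,d\tau=1$ appears in the paper only \emph{after} this theorem (Proposition~\ref{P:sub1}), so if you want to avoid any appearance of circularity you should either derive it directly from Lemma~\ref{L:sub} via a limiting argument as $a\to 0^+$ (as you sketch), or, more simply, observe that for each fixed $\xi$ the integrand $f_s(t;\tau)e^{-\tau(2\pi|\xi|)^2}$ is already absolutely integrable in $\tau$ by Lemma~\ref{L:sub}, which suffices for the Fourier-side Fubini without needing the value at $a=0$.
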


\begin{proof}
For every $t>0$ and $\xi\in \Rn$ we have using Fubini's theorem
\begin{align*}
& \F(\int_0^\infty f_{s}(t;\tau) P_\tau u(\cdot) d\tau)(\xi) = \int_0^\infty f_{s}(t;\tau) \F(P_\tau u(\cdot))(\xi) d\tau
\\
& = \int_0^\infty f_{s}(t;\tau) \F(G(\cdot,\tau)\star u)(\xi) d\tau = \int_0^\infty f_{s}(t;\tau) \F(G(\cdot,\tau))(\xi) \hat u(\xi) d\tau
\\
& = \hat u(\xi) \int_0^\infty f_{s}(t;\tau) e^{-\tau (2\pi |\xi|)^{2}} d\tau = \hat u(\xi) e^{-t (2\pi |\xi|)^{2s}} 
\\
& = \hat u(\xi) \F(G^{(s)}(\cdot,t))(\xi) = \F(\pt u)(\xi),
\end{align*}
where in the third to the last equality we have used Lemma \ref{L:sub} with $a = (2\pi |\xi|)^2$. This proves \eqref{sgsub}. The proof of \eqref{gssub} is done in a similar, but simpler, way.

\end{proof}
 
We can now use Theorem  \ref{T:sub} to draw two basic properties of the subordination function $f_{s}(t;\tau)$. We only prove one of them, \eqref{ft1} below, and refer to Proposition 3 on p.262 in \cite{Y} for a proof of \eqref{ft2}. 

\begin{prop}\label{P:sub1}
For any fixed $t, t'>0$ one has  
\begin{equation}\label{ft1}
\int_0^\infty f_{s}(t;\tau) d\tau= 1,
\end{equation}
and
\begin{equation}\label{ft2}
f_{s}(t+t';\tau) = \int_0^\infty f_{s}(t;\tau - \sigma) f_{s}(t';\sigma) d\sigma.
\end{equation}
\end{prop}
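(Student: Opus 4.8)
\textbf{Proof proposal for Proposition \ref{P:sub1}.}

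The plan is to derive both identities directly from the subordination formula \eqref{gssub} in Theorem \ref{T:sub}, using the corresponding properties of the Gaussian heat kernel $G(x,\tau)$ which are already at our disposal. For \eqref{ft1}, the idea is to integrate \eqref{gssub} in $x$ over $\Rn$ and invoke Fubini. Specifically, using that $\int_{\Rn} G^{(s)}(x,t)\,dx = 1$ (this is Proposition \ref{P:markov}) and that $\int_{\Rn} G(x,\tau)\,dx = 1$ for every $\tau>0$ (the standard heat kernel is a probability density, which also follows from \eqref{G} at $\xi = 0$ with $s=1$), we obtain
\[
1 = \int_{\Rn} G^{(s)}(x,t)\,dx = \int_{\Rn} \left(\int_0^\infty f_s(t;\tau) G(x,\tau)\,d\tau\right) dx = \int_0^\infty f_s(t;\tau) \left(\int_{\Rn} G(x,\tau)\,dx\right) d\tau = \int_0^\infty f_s(t;\tau)\,d\tau.
\]
The interchange of the order of integration is justified by positivity: $f_s(t;\tau) \ge 0$ (which can be seen from Proposition \ref{P:posG} combined with \eqref{gssub} and the injectivity of the Laplace transform, or taken as known from Bochner's theory), and $G(x,\tau)\ge 0$, so Tonelli's theorem applies. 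Alternatively, and perhaps more cleanly, one can avoid the positivity discussion by taking Laplace transforms: \eqref{ft1} is just Lemma \ref{L:sub} evaluated at $a = 0$, i.e., $e^{-t\cdot 0^s} = 1 = \int_0^\infty f_s(t;\tau)\,d\tau$; here one must only note that the argument in Lemma \ref{L:sub} passes to the limit $a\to 0^+$, or equivalently observe that the residue computation there is valid for the pole approaching the imaginary axis. I would present the Laplace-transform route as the primary argument since it is self-contained within the section.

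For \eqref{ft2}, the strategy is to exploit the semigroup property \eqref{sg} of $P^{(s)}_t$, namely $G^{(s)}(\cdot,t+t') = G^{(s)}(\cdot,t)\star G^{(s)}(\cdot,t')$, together with the semigroup property of the Gaussian kernel $G(\cdot,\tau)\star G(\cdot,\sigma) = G(\cdot,\tau+\sigma)$. Writing out \eqref{gssub} for the left-hand side and substituting the subordination formula on each factor of the right-hand side, we get
\[
G^{(s)}(\cdot,t+t') = \int_0^\infty f_s(t+t';\tau) G(\cdot,\tau)\,d\tau,
\]
while
\[
G^{(s)}(\cdot,t)\star G^{(s)}(\cdot,t') = \int_0^\infty\int_0^\infty f_s(t;\sigma') f_s(t';\sigma) G(\cdot,\sigma')\star G(\cdot,\sigma)\,d\sigma' d\sigma = \int_0^\infty\int_0^\infty f_s(t;\sigma') f_s(t';\sigma) G(\cdot,\sigma'+\sigma)\,d\sigma' d\sigma.
\]
Changing variables $\tau = \sigma' + \sigma$ in the inner integral (for fixed $\sigma$) converts the right-hand side into $\int_0^\infty\left(\int_0^\infty f_s(t;\tau-\sigma) f_s(t';\sigma)\,d\sigma\right) G(\cdot,\tau)\,d\tau$, recalling that $f_s(t;\cdot)$ is supported on $[0,\infty)$. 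Comparing the two expressions and using that the Gaussian kernels $\{G(\cdot,\tau)\}_{\tau>0}$ are "linearly independent" in a suitable sense — concretely, taking Fourier transforms gives $\int_0^\infty f_s(t+t';\tau) e^{-\tau\lambda}\,d\tau = \int_0^\infty\left(\int_0^\infty f_s(t;\tau-\sigma) f_s(t';\sigma)\,d\sigma\right) e^{-\tau\lambda}\,d\tau$ for all $\lambda = (2\pi|\xi|)^2 \ge 0$ — we conclude by uniqueness of the Laplace transform that the two densities in $\tau$ agree, which is exactly \eqref{ft2}.

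The main obstacle, and the only genuinely delicate point, is the justification of the various interchanges of integration (Fubini/Tonelli) and the invocation of uniqueness of Laplace transforms. The positivity $f_s(t;\tau)\ge 0$ makes Tonelli available throughout, so I would either establish this positivity first (it follows from Proposition \ref{P:posG} applied to dimension $n=1$, or directly from \eqref{gssub} and a density argument) or, to keep the exposition brief, reduce everything to the Laplace-transform identity in Lemma \ref{L:sub}, where all integrals converge absolutely thanks to the exponential decay of $e^{-tz^s}$ on the contour and the factor $e^{-\tau\lambda}$ with $\lambda\ge 0$. Given that the paper already cites Proposition 3 on p.~262 in \cite{Y} for \eqref{ft2}, I would present the argument for \eqref{ft1} in full (via Lemma \ref{L:sub} at $a=0$, or via integrating \eqref{gssub} over $\Rn$) and sketch \eqref{ft2} along the semigroup-property lines above, referring to \cite{Y} for the remaining measure-theoretic details.
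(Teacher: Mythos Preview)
Your proposal is correct and, for \eqref{ft1}, your first approach---integrating \eqref{gssub} over $\Rn$ and using Proposition \ref{P:markov} together with $\int_{\Rn} G(x,\tau)\,dx=1$---is exactly the paper's proof. The paper does not prove \eqref{ft2} at all and simply refers to \cite{Y}, so your semigroup/Laplace-uniqueness sketch goes beyond what the paper presents.

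One minor remark: although you state a preference for the Laplace-transform route (Lemma \ref{L:sub} at $a=0$) as the primary argument for \eqref{ft1}, the paper chooses the integration-over-$\Rn$ route. Both are valid; the paper's choice has the advantage that Lemma \ref{L:sub} is proved there only for $a>0$ (the residue argument uses a simple pole at $z=a$ strictly to the right of the contour), so invoking it at $a=0$ would require the limiting justification you flag, whereas the integration argument avoids this entirely.
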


\begin{proof}
Using Proposition \ref{P:markov} and the identity \eqref{gssub}, we find
\begin{align*}
1 & = \int_{\Rn} G^{(s)}(x,t) dx = \int_{\Rn}  \int_0^\infty f_{s}(t;\tau) G(x,\tau) d\tau dx
\\
& = \int_0^\infty f_{s}(t;\tau)  \int_{\Rn} G(x,\tau) dx d\tau = \int_0^\infty f_{s}(t;\tau) d\tau,
\end{align*} 
which proves \eqref{ft1}.

\end{proof}


\section{A chain rule for $(-\Delta)^s$}\label{S:cr}

In \cite{CC} the authors proved a basic pointwise inequality for the fractional Laplacean. Such inequality, which can be seen as a form of nonlocal chain-rule,  plays a remarkable role in many problems from the applied sciences involving the nonlocal operator $(-\Delta)^s$, see for instance the beautiful papers \cite{CC2} and \cite{CV}, respectively on the two-dimensional quasi-geostrophic equation, and nonlinear evolution equations with fractional diffusion. 

We will present two accounts of the chain rule, the former from \cite{CC}, the latter from an interesting generalization given in \cite{CM}. 
Let us begin with a simple observation. If $u\in C^2(\Rn)$ and $\vf\in C^2(\R)$ the standard chain rule gives
\[
\Delta \vf(u) = \vf''(u) |\nabla u|^2 + \vf'(u) \Delta u.
\]
If we assume that $\vf$ is also convex, then $\vf''\ge 0$, and we obtain in a trivial way 
\[
(-\Delta) \vf(u) \le \vf'(u) (-\Delta) u.
\]
The next result generalizes to the nonlocal setting this observation.

\begin{theorem}[Chain rule for $(-\Delta)^s$]\label{T:cr}
Let $0<s\le 1$ and $\vf\in C^1(\R)$ be a convex function. Then, for any $u\in \mathscr S(\Rn)$ one has
\[
(-\Delta)^s \vf(u) \le \vf'(u(x)) (-\Delta)^s u.
\]
\end{theorem}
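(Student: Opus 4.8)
The plan is to exploit the pointwise integral representation \eqref{fls} of $(-\Delta)^s$ together with the first-order convexity inequality for $\vf$. Recall that for a convex $\vf\in C^1(\R)$ one has, for all $a,b\in \R$,
\[
\vf(b) - \vf(a) \ge \vf'(a)(b-a),
\]
equivalently $\vf(a) - \vf(b) \le \vf'(a)(a-b)$. First I would write, using Definition \ref{D:fl} applied to the function $\vf(u)$ (which lies in $C^2(\Rn)\cap L^\infty(\Rn)$ since $u\in \mathscr S(\Rn)$ and $\vf\in C^1$, so Remark \ref{R:more} guarantees $(-\Delta)^s\vf(u)$ is well-defined pointwise),
\[
(-\Delta)^s \vf(u)(x) = \frac{\gamma(n,s)}{2} \int_{\Rn} \frac{2\vf(u(x)) - \vf(u(x+y)) - \vf(u(x-y))}{|y|^{n+2s}}\, dy.
\]

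\textbf{Key step.} Now I would apply the convexity inequality twice, with $a = u(x)$ and $b = u(x\pm y)$:
\[
\vf(u(x)) - \vf(u(x+y)) \le \vf'(u(x))\big(u(x) - u(x+y)\big),
\]
\[
\vf(u(x)) - \vf(u(x-y)) \le \vf'(u(x))\big(u(x) - u(x-y)\big).
\]
Adding these gives, for every $y$,
\[
2\vf(u(x)) - \vf(u(x+y)) - \vf(u(x-y)) \le \vf'(u(x))\big(2u(x) - u(x+y) - u(x-y)\big).
\]
Since the kernel $|y|^{-n-2s}$ is nonnegative, dividing by $|y|^{n+2s}$ and integrating over $\Rn$ (the integral on the right converges by the estimates following Definition \ref{D:fl}, and the one on the left by the same argument applied to $\vf(u)$) preserves the inequality. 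Multiplying by $\tfrac{\gamma(n,s)}{2}>0$ and recognizing the right-hand side as $\vf'(u(x))\,(-\Delta)^s u(x)$ by \eqref{fls} again, I obtain
\[
(-\Delta)^s \vf(u)(x) \le \vf'(u(x))\,(-\Delta)^s u(x),
\]
which is the claim. For the endpoint $s=1$ the statement reduces to the elementary computation in the paragraph preceding the theorem.

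\textbf{Main obstacle.} There is essentially no analytic obstacle here; the only point requiring a word of care is the justification that both integrands are individually integrable so that the termwise inequality survives integration — one should note that $\vf'(u(x))$ is a fixed constant for fixed $x$, and that $2u(x)-u(x+y)-u(x-y)$ is $O(|y|^2)$ near $y=0$ and bounded for large $|y|$ (with $|y|^{-n-2s}$ decay), while the same holds for $\vf(u)$ in place of $u$ since $\vf\in C^1$ and $\vf(u)\in C^2\cap L^\infty$; hence no principal-value subtlety arises and Lebesgue's theory applies directly. I would also remark that the hypothesis $u\in\mathscr S(\Rn)$ can be relaxed to $u\in C^2(\Rn)\cap L^\infty(\Rn)$ with $\vf(u)$ in the same class, exactly as in Remark \ref{R:more}.
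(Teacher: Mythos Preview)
Your argument is essentially identical to the paper's first proof: apply the convexity inequality $\vf(a)-\vf(b)\le \vf'(a)(a-b)$ with $a=u(x)$, $b=u(x\pm y)$, add, divide by $|y|^{n+2s}$, and integrate. One small slip: with $\vf$ only $C^1$ you cannot conclude $\vf(u)\in C^2$, so the well-posedness of $(-\Delta)^s\vf(u)$ needs a slightly different justification (e.g.\ the one-sided bound from convexity already controls the singularity at $y=0$); the paper's first proof glosses over this point as well.
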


\begin{proof}[First proof]
Since the function $\vf\in C^1(\R)$ and is convex, we have for any $\tau, \sigma\in \R$
\[
\vf'(\sigma)(\tau - \sigma) \le \phi(\tau) - \phi(\sigma).
\]
This inequality easily gives for $u\in \mathscr S(\Rn)$  and for every $x, y\in \Rn$
\[
2 \vf(u(x)) - \vf(u(x+y)) - \vf(u(x-y)) \le \vf'(u(x))(2 u(x) - u(x+y) - u(x-y)).
\]
Dividing the latter inequality by $|y|^{n+2s}$ and integrating in $y\in \Rn$ we immediately obtain the desired conclusion keeping in mind the definition \eqref{fls} of $(-\Delta)^s$. 

\end{proof}

\begin{proof}[Second proof]
The second proof we present is taken from \cite{CM} and it has the advantage of carrying over to a situation where $\Rn$ is replaced by a compact $n$-dimensional manifold $M$, in which case the representation \eqref{fls} is no longer available. Consider the Cauchy problems
\begin{equation}\label{heat000}
\begin{cases}
\frac{\p U}{\p t} + (-\Delta)^s U = 0\ \ \ \ \ \ \ \ \text{in}\ \R^{n+1}_+,
\\
\\
U(x,0) = u(x),\ \ \ \ \ \ \ \ \ \ \ \  x\in \Rn.
\end{cases}
\end{equation}
and
\begin{equation}\label{heatbis}
\begin{cases}
\frac{\p V}{\p t} + (-\Delta)^s V = 0\ \ \ \ \ \ \ \ \text{in}\ \R^{n+1}_+,
\\
\\
V(x,0) = \vf(u)(x),\ \ \ \ \ \ \ \ \ \ \ \  x\in \Rn.
\end{cases}
\end{equation}

Their solutions are respectively given by $U(x,t) = P_t^{(s)} u(x)$ and $V(x,t) = P_t^{(s)} \vf(u)(x)$. Now, by Jensen inequality, Proposition \ref{P:posG} and \ref{P:markov}, we obtain
\[
\vf(U(x,t)) = \vf\left(\int_{\Rn} G^{(s)}(x-y,t) u(y) dy\right) \le \int_{\Rn} \vf(u)(y) G^{(s)}(x-y,t) dy = V(x,t).
\]
This shows that if, for a fixed $x\in \Rn$, we consider the function
\[
\Psi(t) = V(x,t) - \vf(U(x,t)),
\]
then we have $\Psi(t) \ge 0$ for every $t> 0$. Since  
we clearly have 
\[
\Psi(0) = V(x,0) - \vf(U(x,0)) = \vf(u(x)) - \vf(u(x)) = 0,
\] 
we conclude that it must be $\Psi'(0) \ge 0$. 
 
We now have
\begin{align*}
\Psi'(t) & = \frac{ \p P_t \vf(u)}{\p t}(x) - \vf'(P_t u(x)) \frac{ \p P_t u}{\p t}(x)
\\
& = - (-\Delta)^s P_t(\vf(u))(x) +  \vf'(P_t u(x)) (-\Delta)^s P_t u(x).
\end{align*}
This formula gives
\[
0 \le \Psi'(0) = - (-\Delta)^s \vf(u) (x) +  \vf'(u(x)) (-\Delta)^s u(x),
\]
which gives the desired conclusion.

 \end{proof}

 
\section{The Gamma calculus for $(-\Delta)^s$}\label{S:gamma}

In the applications of pde's to geometry there is a remarkable tool that allows to connect the heat semigroup to the geometry of the underlying manifold. This tool is the so-called Bakry-Emery Gamma calculus, for which we refer the reader to the beautiful recent book \cite{BGL}  and the references therein. 
At the heart of this calculus there is the so-called \emph{curvature-dimension inequality} CD$(\kappa,n)$ that we introduce in \eqref{cd} and Definition \ref{D:cd} below. It is a remarkable fact that, on a $n$-dimensional Riemannian manifold $\mathbb M$, such inequality on functions is in fact equivalent to the lower bound Ric$\ \ge \kappa$ on the Ricci tensor. More importantly, a stunning aspect of the gamma calculus is that the curvature-dimension inequality \underline{alone}, in combination with properties of the heat semigroup, suffices to develop a wide program that connects the geometry of $\mathbb M$ to various global properties of the manifold itself, such as: 
\begin{itemize}
\item global volume bounds for the geodesic balls, 
\item the Bonnet-Myers compactness theorem, 
\item global Poincar\'e inequalities, 
 \item parabolic scale invariant Harnack inequalities, 
\item Gaussian upper and lower bounds, 
\item Liouville theorems
\item De Giorgi-Nash-Moser estimates...and much more.
\end{itemize}

Traditionally, the majority of these fundamental results from Riemannian geometry rest on two pillars: 
\begin{itemize}
\item[(i)] the celebrated \emph{Bochner identity};
\item[(ii)] the \emph{Laplacean comparison theorem}.
\end{itemize}
Whereas (i) is a purely pointwise identity on functions (see \eqref{ric}
 below), (ii) relies on deeper aspects which are more genuinely Riemannian, such as the fact that the exponential map is locally a diffeomorphism and the theory of Jacobi fields. The gamma calculus allows to remove from the equation the Laplacean comparison theorem, and in a way it elevates the Bochner identity to a preeminent role in the development of the Li-Yau theory. This is especially important in situations where the above mentioned Riemannian tools are lacking. In this connection one should see the works \cite{BG1}, \cite{BaG1}, \cite{BG2}, \cite{BG3}, \cite{BG4}.

Inspired by the above discussion, in this section we propose that a gamma calculus be developed in the context of nonlocal operators such as $(-\Delta)^s$. We stress that although this would have an interest in its own even in the setting of flat $\Rn$, a nonlocal gamma calculus would also be instrumental to considerable  developments both in analysis and geometry. In what follows for the sake of simplicity we write $L = -(-\Delta)^s$, for a given $0<s<1$. 

\begin{definition}[Nonlocal \emph{carr\'e du champ}]\label{D:nlcdc}
Given $u, v\in \mathscr S(\Rn)$ we define
\begin{equation}\label{gamma}
\Gamma^{(s)}(u,v) = \frac 12 [L(uv) - u L v - v Lu].
\end{equation}
When $u = v$ in \eqref{gamma}, we simply write
\[
\Gamma^{(s)}(u) \overset{def}{=} \Gamma^{(s)}(u,u).
\] 
\end{definition} 
It is obvious that $\Gamma^{(s)}(u,v) = \Gamma^{(s)}(v,u)$. We have the following result.

\begin{lemma}\label{L:gamma}
For $u, v\in \mathscr S(\Rn)$ one has
\[
\Gamma^{(s)}(u,v)(x) =  \frac{\gamma(n,s)}{2} \int_{\Rn} \frac{(u(x) - u(y)) (v(x) - v(y))}{|x-y|^{n + 2s}} dy.
\]
\end{lemma}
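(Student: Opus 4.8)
\textbf{Proof plan for Lemma~\ref{L:gamma}.}
The plan is to compute $\Gamma^{(s)}(u,v)$ directly from its definition \eqref{gamma}, using the pointwise formula \eqref{fl2} in Proposition~\ref{P:equivalent} for the action of $L = -(-\Delta)^s$, namely
\[
Lw(x) = -(-\Delta)^s w(x) = -\gamma(n,s)\operatorname{PV}\int_{\Rn}\frac{w(x)-w(y)}{|x-y|^{n+2s}}\,dy,
\]
which is licit since $u,v\in\mathscr S(\Rn)$ and hence $uv\in\mathscr S(\Rn)$ as well, so all three principal-value integrals converge (and in the regime $0<s<1/2$ no principal value is even needed). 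The key algebraic identity to exploit is the elementary factorization
\[
2\big[u(x)v(x)-u(y)v(y)\big] - u(x)\big(v(x)-v(y)\big) - v(x)\big(u(x)-u(y)\big) - u(x)\big(v(x)-v(y)\big) - v(x)\big(u(x)-u(y)\big)
\]
— more cleanly, one checks that
\[
\big(u(x)v(x)-u(y)v(y)\big) - u(x)\big(v(x)-v(y)\big) - v(x)\big(u(x)-u(y)\big) = -\big(u(x)-u(y)\big)\big(v(x)-v(y)\big),
\]
which is just a one-line expansion. This is the heart of the matter and the only step requiring any thought; everything else is bookkeeping.

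First I would write out $L(uv)(x)$, $u(x)Lv(x)$ and $v(x)Lu(x)$ using the displayed integral formula, pulling the fixed quantities $u(x)$ and $v(x)$ inside their respective integrals. Then I would combine the three integrands over a common kernel $|x-y|^{-n-2s}$, apply the factorization identity above to the numerator, and observe that the resulting integrand $(u(x)-u(y))(v(x)-v(y))\,|x-y|^{-n-2s}$ is absolutely integrable near $y=x$: indeed $u(x)-u(y)=O(|x-y|)$ and likewise for $v$, so the product is $O(|x-y|^2)$ and the singularity $|x-y|^{2-n-2s}$ is integrable since $2s<2$. Hence the principal value may be dropped and replaced by an ordinary Lebesgue integral. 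Collecting the factor $\tfrac12$ from \eqref{gamma} and the sign gives exactly
\[
\Gamma^{(s)}(u,v)(x) = \frac{\gamma(n,s)}{2}\int_{\Rn}\frac{(u(x)-u(y))(v(x)-v(y))}{|x-y|^{n+2s}}\,dy,
\]
as claimed.

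The main (minor) obstacle is simply the justification of manipulating principal-value integrals: one cannot a priori split a PV integral into pieces that are individually only conditionally convergent. I would handle this by working with the truncated integrals $\int_{|x-y|>\e}$ throughout, performing the algebraic cancellation at the level of the truncations — where all integrands are genuinely in $L^1$ — and only then passing to the limit $\e\to 0^+$, at which point the left-hand side converges by definition of $L$ on Schwartz functions while the right-hand side converges by the $O(|x-y|^2)$ bound just noted. An alternative, equally clean route is to bypass \eqref{fl2} entirely and instead use the symmetric-difference representation \eqref{fls}: substituting $x\pm y$ and using the identity
\[
2(uv)(x) - (uv)(x+y) - (uv)(x-y) = u(x)\big(2v(x)-v(x+y)-v(x-y)\big) + v(x)\big(2u(x)-u(x+y)-u(x-y)\big) - \big(u(x+y)-u(x)\big)\big(v(x+y)-v(x)\big) - \big(u(x-y)-u(x)\big)\big(v(x-y)-v(x)\big),
\]
then dividing by $|y|^{n+2s}$, integrating, and using the change of variable $y\mapsto -y$ on one of the last two terms to fold them together; this produces the stated formula with no principal-value subtleties at all, since each integral in \eqref{fls} is already absolutely convergent for Schwartz data. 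I would present this second route as the primary argument for cleanliness, remarking that it also makes transparent why $\Gamma^{(s)}(u)\ge 0$, which is presumably the point of introducing it in Definition~\ref{D:nlcdc}.
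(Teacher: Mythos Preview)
Your first approach is essentially identical to the paper's proof: the paper uses \eqref{fl2}, combines the three principal-value integrals into one with numerator $u(x)v(x)-u(y)v(y) - u(x)(v(x)-v(y)) - v(x)(u(x)-u(y))$, applies exactly your factorization identity, and then drops the PV by the same $O(|x-y|^2)$ observation. Your added care about working at the level of truncated integrals, and your alternative route via \eqref{fls}, go beyond what the paper writes but are correct and welcome refinements.
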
 

\begin{proof}
It is easier to adopt the alternative expression \eqref{fl2} of $(-\Delta)^s$. Keeping in mind our sign convention for $L$ and using \eqref{gamma} we find
\begin{align*}
\Gamma^{(s)}(u,v) & = - \frac{\gamma(n,s)}{2} \operatorname{PV} \int_{\Rn} \frac{u(x) v(x) - u(y) v(y) - u(x)(v(x) - v(y)) - v(x) (u(x) - u(y))}{|x-y|^{n + 2s}} dy
\\
& = - \frac{\gamma(n,s)}{2} \operatorname{PV} \int_{\Rn} \frac{u(x) v(y) + v(x) u(y) - u(x) v(x) - u(y) v(y)}{|x-y|^{n + 2s}} dy
\\
& =  \frac{\gamma(n,s)}{2} \int_{\Rn} \frac{(u(x) - u(y)) (v(x) - v(y))}{|x-y|^{n + 2s}} dy.
\end{align*}
Notice that we have dropped the principal value sign in front of the last integral since, thanks to a cancellation that has occurred, the numerator in the last integral is $O(|x-y|^2)$ near $x$, so that the integrand is in fact locally in $L^1$. 

\end{proof}

When $u = v$ we obtain from Lemma \ref{L:gamma} for every $x\in \Rn$
\begin{equation}\label{gamma2}
\Gamma^{(s)}(u)(x) = \frac{\gamma(n,s)}{2} \int_{\Rn} \frac{(u(x) - u(y))^2}{|x-y|^{n + 2s}} dy \ge 0.
\end{equation}

\begin{remark}\label{R:posgamma}
We note that the positivity of $\Gamma^{(s)}(u)$ could have also been deduced immediately from its definition
\begin{equation}\label{gu}
- \Gamma^{(s)}(u) = \frac 12 \left[(-\Delta)^s(u^2) - 2 u (-\Delta)^s u\right],
\end{equation}
and the chain rule in Theorem \ref{T:cr}. The latter gives in fact 
\begin{equation}\label{crsquare}
(-\Delta)^s(u^2) \le 2 u (-\Delta)^s u,
\end{equation}
which shows $- \Gamma^{(s)}(u) \le 0$.
\end{remark}

\begin{definition}[Nonlocal energy]\label{D:energy}
Given a function $u\in \mathscr S(\Rn)$  we define its $s-$\emph{energy} as follows
\[
\mathscr E_{(s)}(u) = \frac 12 \int_{\Rn} \Gamma^{(s)}(u)(x) dx = \frac{\gamma(n,s)}{4} \int_{\Rn}\int_{\Rn} \frac{(u(x) - u(y))^2}{|x-y|^{n + 2s}} dy dx.
\]
\end{definition}
The reader should note that the energy $\mathscr E_{(s)}(u)$ is precisely the one that enters in the definition of the fractional Sobolev space 
\begin{equation}\label{fss}
W^{s,2}(\Rn) = \{u\in L^2(\Rn)\mid \mathscr E_{(s)}(u)<\infty\}.
\end{equation}
This is a Hilbert space if we endow it with the norm
\[
||u||_{W^{s,2}(\Rn)} = \left(||u||^2_{L^2(\Rn)} + \mathscr E_{(s)}(u)\right)^{1/2},
\]
see \cite{Ad}, and also \cite{DPV}.
The space $W^{s,2}(\Rn)$ can also be characterized using the Fourier transform. Consider in fact the Sobolev space $H^{s,2}(\Rn)$ recalled in \eqref{sobs} above. Using the Fourier transform it is easy to show that $W^{2,s}(\Rn) \cong H^{s,2}(\Rn)$. In this connection we note that if $u\in \mathscr S(\Rn)$, then by applying  in this order Corollary \ref{C:semi}, Lemma \ref{L:ibp} and Plancherel's theorem, we find
\begin{align*}
& \int_{\Rn} u\ (-\Delta)^s u\ dx = \int_{\Rn} \left((-\Delta)^{s/2} u\right)^2 dx = \int_{\Rn} \left(\mathscr F\left((-\Delta)^{s/2} u\right)\right)^2 dx
\\
& = \int_{\Rn} \left(2\pi|\xi|\right)^{2s} |\hat u|^2 dx.
\end{align*}
We have already discussed related questions in Section \ref{S:riesz} above. 

Returning to Definition \ref{D:energy}, we have the following result that shows that $(-\Delta)^s$ is the first variation of the energy $\mathscr E_{(s)}$, and thus such operator also has a nice variational structure.

\begin{prop}\label{P:el}
The fractional Laplacean is the Euler-Lagrange equation of the functional $u\to \mathscr E_{(s)}(u)$. Given $u\in \mathscr S(\Rn)$, we have in fact for every $\vf\in \mathscr S(\Rn)$
\begin{equation}\label{el}
\frac{d}{dt} \mathscr E_{(s)}(u+t \vf)\big|_{t=0}  =  \int_{\Rn}  (-\Delta)^s u(x) \vf(x) dx.
\end{equation}
This shows that $u$ is a critical point of $\mathscr E_{(s)}$ if and only if $(-\Delta)^s u = 0$.
\end{prop}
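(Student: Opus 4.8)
The plan is to expand the energy functional $\mathscr E_{(s)}(u + t\vf)$ as a polynomial in $t$, differentiate, and then recognize the resulting bilinear form as an integration-by-parts pairing with $(-\Delta)^s u$. First I would write, using the explicit double-integral expression from Definition \ref{D:energy},
\[
\mathscr E_{(s)}(u+t\vf) = \frac{\gamma(n,s)}{4} \int_{\Rn}\int_{\Rn} \frac{\big((u(x)-u(y)) + t(\vf(x)-\vf(y))\big)^2}{|x-y|^{n+2s}} dy\, dx,
\]
and expand the square. The $t^0$ term is $\mathscr E_{(s)}(u)$, the $t^2$ term is $t^2 \mathscr E_{(s)}(\vf)$, and the cross term is linear in $t$; hence
\[
\frac{d}{dt}\mathscr E_{(s)}(u+t\vf)\Big|_{t=0} = \frac{\gamma(n,s)}{2} \int_{\Rn}\int_{\Rn} \frac{(u(x)-u(y))(\vf(x)-\vf(y))}{|x-y|^{n+2s}} dy\, dx = \int_{\Rn} \Gamma^{(s)}(u,\vf)(x)\, dx,
\]
where the last equality is exactly Lemma \ref{L:gamma}.

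Next I would evaluate $\int_{\Rn} \Gamma^{(s)}(u,\vf)\, dx$ via the definition \eqref{gamma} of the nonlocal \emph{carr\'e du champ}. Since $\Gamma^{(s)}(u,\vf) = \frac12[L(u\vf) - u L\vf - \vf L u]$ with $L = -(-\Delta)^s$, we get
\[
\int_{\Rn} \Gamma^{(s)}(u,\vf)\, dx = \frac12\left(\int_{\Rn} L(u\vf)\, dx - \int_{\Rn} u\, L\vf\, dx - \int_{\Rn} \vf\, Lu\, dx\right).
\]
The first integral vanishes: for any $w\in \mathscr S(\Rn)$ one has $\int_{\Rn} (-\Delta)^s w\, dx = 0$, which follows from Lemma \ref{L:ibp} (symmetry) applied with the second function taken appropriately, or more directly from Proposition \ref{P:slapft} since $\widehat{(-\Delta)^s w}(0) = (2\pi|0|)^{2s}\hat w(0) = 0$; note $u\vf\in \mathscr S(\Rn)$. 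The remaining two integrals are equal by the symmetry Lemma \ref{L:ibp}, namely $\int u\,(-\Delta)^s\vf = \int \vf\,(-\Delta)^s u$, so $\int u\, L\vf = \int \vf\, Lu$. Therefore $\int_{\Rn}\Gamma^{(s)}(u,\vf)\, dx = -\int_{\Rn}\vf\, Lu\, dx = \int_{\Rn} \vf\,(-\Delta)^s u\, dx$, which is \eqref{el}.

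Finally I would note the immediate consequence: if $(-\Delta)^s u \equiv 0$ then the right-hand side of \eqref{el} vanishes for all $\vf\in\mathscr S(\Rn)$, so $u$ is a critical point; conversely, if the first variation vanishes for every $\vf\in\mathscr S(\Rn)$, then $\int_{\Rn}(-\Delta)^s u(x)\vf(x)\, dx = 0$ for all such $\vf$, and since $(-\Delta)^s u\in C^\infty(\Rn)$ (Corollary \ref{C:decay}) this forces $(-\Delta)^s u\equiv 0$. The only mild technical point to be careful about — and the place I would slow down — is justifying the differentiation under the integral sign and the convergence of the double integral defining $\mathscr E_{(s)}$ for Schwartz functions; this is standard (the integrand near the diagonal is $O(|x-y|^{2-n-2s})$ because of the first-order cancellation, and decays rapidly at infinity), but worth a sentence. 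No step here is a genuine obstacle, since all the needed machinery — Lemma \ref{L:gamma}, Lemma \ref{L:ibp}, Corollary \ref{C:decay}, and the vanishing of the integral of $(-\Delta)^s$ on Schwartz functions — is already in place.
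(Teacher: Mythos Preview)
Your proof is correct, and it is close in spirit to the paper's but routes through a different identity at the key step. Both of you begin by expanding the energy and identifying the first variation with the bilinear form
\[
\frac{\gamma(n,s)}{2}\int_{\Rn}\int_{\Rn}\frac{(u(x)-u(y))(\vf(x)-\vf(y))}{|x-y|^{n+2s}}\,dy\,dx,
\]
and both invoke the symmetry Lemma~\ref{L:ibp} at the end. The difference lies in how this bilinear form is converted into $\int \vf\,(-\Delta)^s u$. The paper works directly on the integrand: it splits the numerator as $u(x)(\vf(x)-\vf(y)) + (u(x)-u(y))\vf(y)$, then recognizes each inner $PV$ integral as a fractional Laplacean via the pointwise formula~\eqref{fl2}, obtaining $\tfrac12\int u\,(-\Delta)^s\vf + \tfrac12\int \vf\,(-\Delta)^s u$. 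You instead recognize the bilinear form as $\int\Gamma^{(s)}(u,\vf)$, invoke the defining relation $\Gamma^{(s)}(u,\vf)=\tfrac12[L(u\vf)-uL\vf-\vf Lu]$, and kill the integral of $L(u\vf)$ via the Fourier side, $\widehat{(-\Delta)^s w}(0)=0$. Your route is a bit more structural---it exploits the carr\'e-du-champ machinery already set up in Section~\ref{S:gamma}---while the paper's is a hands-on splitting of the kernel; neither requires more than the other, and they are essentially equivalent in difficulty.
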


\begin{proof}
Given $u, \vf \in \mathscr S(\Rn)$ we consider the function $t\to \mathscr E_{(s)}(u+t \vf)$, and take its derivative at $t = 0$. After some elementary computations we obtain
\begin{align*}
\frac{d}{dt} \mathscr E_{(s)}(u+t \vf)\big|_{t=0}  & = \frac{\gamma(n,s)}{2} \int_{\Rn} PV \int_{\Rn} \frac{u(x) \vf(x) - u(y) \vf(y)}{|x-y|^{n + 2s}} dy dx
\\
& = \frac{\gamma(n,s)}{2} \int_{\Rn} PV \int_{\Rn} \frac{u(x)(\vf(x) - \vf(y)) + (u(x) - u(y)) \vf(y)}{|x-y|^{n + 2s}} dy dx
\\
& = \frac{\gamma(n,s)}{2} \left\{\int_{\Rn} u(x) PV \int_{\Rn} \frac{\vf(x) - \vf(y)}{|x-y|^{n + 2s}} dy dx + \int_{\Rn} \vf(y)  PV \int_{\Rn} \frac{u(x) - u(y)}{|x-y|^{n + 2s}} dx dy\right\}
\\
& = \frac 12 \int_{\Rn} u(x) (-\Delta)^s \vf(x) dx + \frac 12 \int_{\Rn} \vf(x) (-\Delta)^s u(x) dx 
\\
& = \int_{\Rn} \vf(x) (-\Delta)^s u(x) dx, 
\end{align*}
where in the second to the last equality we have used \eqref{fl2}, while in the last equality we have used Lemma \ref{L:ibp}. 

\end{proof}

In connection with the variational structure of $(-\Delta)^s$ we mention that in the existing literature the notion of weak solution of the problem \eqref{nhdp} above is formulated by saying the $u\in H^{s,2}(\Rn)$, $u=0$ a.e. in $\Rn\setminus \Om$, and 
\[
\int_{\Rn} (-\Delta)^{s/2} u (-\Delta)^{s/2} \vf dx = \int_\Om f \vf dx,
\]
for every $\vf\in H^{s,2}(\Rn)$, such that $\vf=0$ a.e. in $\Rn\setminus \Om$. It is worth observing here that such notion is equivalent to requesting that
\[
\int_{\Rn} \Gamma^{(s)}(u,\vf)(x) dx =  \int_\Om f \vf dx,
\]  
for all $\vf$ as above.

Using Theorem \ref{T:flheat} we can obtain an alternative expression of $\Gamma^{(s)}(u)$ based on the heat semigroup. This observation is important since it allows to introduce a notion of nonlocal energy in non-Euclidean situations. We leave the proof of the following Proposition \ref{P:gammaheat} to the interested reader. 

\begin{prop}\label{P:gammaheat}
Let $u\in \mathscr S(\Rn)$, then
\[
\Gamma^{(s)}(u)(x) =  \frac{s}{2 \G(1-s)} \int_0^\infty t^{-s-1} \left(P_t u^2(x) - 2 u(x) P_t u(x) + u^2(x)\right) dt.
\]
\end{prop}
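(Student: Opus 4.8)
The plan is to combine two ingredients already at our disposal: the subordination formula in Theorem \ref{T:flheat} for $(-\Delta)^s$, and the polarization identity defining $\Gamma^{(s)}$. Recall from Definition \ref{D:nlcdc} that, with $L = -(-\Delta)^s$,
\[
\Gamma^{(s)}(u) = \frac 12 \left[L(u^2) - 2 u Lu\right] = - \frac 12 \left[(-\Delta)^s(u^2) - 2 u (-\Delta)^s u\right].
\]
So the first step is simply to apply Theorem \ref{T:flheat} twice: once to the function $u^2$ and once to $u$. This is legitimate since $u \in \mathscr S(\Rn)$ implies $u^2 \in \mathscr S(\Rn) \subset \operatorname{Dom}(-\Delta)$, so both representations
\[
(-\Delta)^s(u^2)(x) = - \frac{s}{\G(1-s)} \int_0^\infty t^{-s-1} \left[P_t u^2(x) - u^2(x)\right] dt
\]
and
\[
(-\Delta)^s u(x) = - \frac{s}{\G(1-s)} \int_0^\infty t^{-s-1} \left[P_t u(x) - u(x)\right] dt
\]
are valid pointwise.

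The second step is an algebraic combination. Substituting these two expressions into $-\frac 12\left[(-\Delta)^s(u^2) - 2u(-\Delta)^s u\right]$ and factoring out the common kernel $\frac{s}{\G(1-s)} t^{-s-1}$, one obtains
\[
\Gamma^{(s)}(u)(x) = \frac{s}{2\G(1-s)} \int_0^\infty t^{-s-1} \Big[\big(P_t u^2(x) - u^2(x)\big) - 2 u(x)\big(P_t u(x) - u(x)\big)\Big] dt,
\]
and the bracket simplifies at once to $P_t u^2(x) - 2 u(x) P_t u(x) + u^2(x)$, which is the claimed formula. This completes the identity modulo one point that should be checked for rigor.

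The one genuine point requiring care — and the only plausible obstacle — is the legitimacy of splitting the single convergent integral in Theorem \ref{T:flheat} (applied to $u^2$) as the difference of two pieces and recombining it with the $u$-integral, i.e.\ that the integrand $t^{-s-1}\big[P_t u^2 - u^2 - 2u(P_t u - u)\big]$ is absolutely integrable on $(0,\infty)$ so that Fubini/linearity applies termwise. Near $t = 0^+$ this is fine: writing $P_t u^2 - u^2 - 2u(P_t u - u) = P_t u^2 - 2uP_t u + u^2 = \int_{\Rn} G(x-y,t)(u(y)-u(x))^2\,dy$ (using $P_t 1 = 1$), the bracket is $O(t)$ for $u \in C^2$, so $t^{-s-1}$ times it is integrable since $s<1$; near $t = \infty$ each of $P_t u^2$, $uP_t u$, $u^2$ is bounded, so $t^{-s-1}$ times the bracket is integrable since $s>0$. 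In fact this also shows directly, without even invoking Theorem \ref{T:flheat}, that the right-hand side converges and equals $\frac{1}{2}\int_{\Rn} G(x-y,t)\ldots$ — a reassuring consistency check against \eqref{gamma2}. Thus the proof reduces to: (i) cite Theorem \ref{T:flheat} for $u$ and $u^2$; (ii) note the bracket identity $P_t u^2 - 2uP_t u + u^2 = \int G(x-y,t)(u(y)-u(x))^2 dy \ge 0$; (iii) observe the resulting integral converges absolutely, justifying the linear recombination; (iv) conclude. I would present steps (ii)–(iii) in a single short paragraph and leave the elementary estimates to the reader, exactly as the statement's preamble ("We leave the proof \ldots to the interested reader") anticipates.
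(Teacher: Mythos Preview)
Your proof is correct and follows exactly the route the paper intends: the text explicitly says the proposition is obtained ``using Theorem \ref{T:flheat}'' and leaves the details to the reader, and your argument---applying that subordination formula to $u$ and $u^2$, recombining via the definition of $\Gamma^{(s)}$, and checking absolute integrability through the identity $P_t u^2 - 2uP_t u + u^2 = \int G(x-y,t)(u(y)-u(x))^2\,dy$---is precisely the intended computation.
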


Notice that since by Jensen's inequality, or simply Cauchy-Schwarz, we have $P_t u^2(x) \ge (P_t u(x))^2$, we obtain
\[
\Gamma^{(s)}(u)(x)\ge \frac{s}{2\G(1-s)} \int_0^\infty t^{-s-1} \left(P_t u(x) -  u(x) \right)^2 dt \ge 0,
\]
which confirms the positivity of $\Gamma^{(s)}(u)$, see \eqref{gamma2} and Remark \ref{R:posgamma} above.

Suppose we are on a $n$-dimensional Riemannian manifold $M$, with Laplacean $L$. In the local case $s=1$, let us simply denote $\G^{(1)}(u)$ by $\G(u)$. We easily obtain from \eqref{gu} that $\Gamma(u) = |\nabla u|^2$. In such situation, the celebrated Bochner's identity gives
\begin{equation}\label{ric}
L(\Gamma(u)) = 2 ||\nabla^2 u||^2 + 2 \Gamma(u,Lu) + 2 \operatorname{Ric}(\nabla u,\nabla u),
\end{equation}
where Ric indicates the Ricci tensor on $M$, and we have denoted by $\nabla^2$ the Hessian on $M$. The central idea of the Bakry-Emery's gamma calculus is to reverse the recipe and use \eqref{ric} as an analytical definition of Ricci tensor. Such intuition is implemented through Definition \ref{D:cd} below, see \cite{BGL}. First, we introduce the functional
\begin{equation}\label{gamma20}
\G_2(u) = \frac 12 \left[L(\Gamma(u)) - 2 \Gamma(u,Lu)\right].
\end{equation}
With this definition it is clear that we can rewrite \eqref{ric} as
\begin{equation}\label{ric3}
\G_2(u) =  ||\nabla^2 u||^2 +  \operatorname{Ric}(\nabla u,\nabla u).
\end{equation}
Suppose now that $M$ satisfies the Ricci lower bound Ric$\ge \kappa$. Then, $\operatorname{Ric}(\nabla u,\nabla u) \ge \kappa |\nabla|^2 = \kappa \G(u)$, and we obtain from \eqref{ric3}
\begin{equation}\label{ric4}
\G_2(u) \ge  ||\nabla^2 u||^2  + \kappa \G(u),
\end{equation}
for every $f\in C^\infty(M)$. On the other hand, Newton's inequality gives
\[
||\nabla^2 u||^2 \ge \frac 1n (Lu)^2,
\]
and we thus find from \eqref{ric4} that for every $f\in C^\infty(M)$ one has
\begin{equation}\label{cd}
\G_2(u) \ge \frac 1n (Lu)^2 + \kappa \Gamma(u).
\end{equation}

\begin{definition}[Bakry-Emery]\label{D:cd}
The manifold $M$ and the operator $L$ are said to satisfy the \emph{curvature-dimension inequality} \emph{CD}$(\kappa,n)$ if \eqref{cd} holds true for every $u\in C^\infty(M)$. 
\end{definition} 

We have shown above that
\[
\operatorname{Ric}\ \ge \kappa\ \Longrightarrow\ \operatorname{CD}(\kappa,n).
\]
It is a remarkable fact that the inequality CD$(\kappa,n)$, which is a condition on functions, does in fact completely characterize Ricci lower bounds on $M$, in the sense that
\[
\operatorname{Ric}\ \ge \kappa\ \Longleftrightarrow\ \operatorname{CD}(\kappa,n).
\]
For the implication $\Longleftarrow$ the reader should see  Proposition 6.2 in \cite{Bak}.

In view of the above discussion, and since as we have shown in \eqref{gamma2} above there exists a natural nonlocal \emph{carr\'e du champ}, we next introduce a nonlocal counterpart of the form $\G_2$.

\begin{definition}[Nonlocal $\Gamma_2^{(s)}$]\label{D:ricci}
Given $u,v\in \mathscr S(\Rn)$ we define
\begin{equation}\label{gamma22}
\Gamma^{(s)}_2(u,v) = \frac 12 \left[L \G_s(u,v) - \Gamma_s(u,Lv) - \G_s(v,Lu)\right].
\end{equation}
\end{definition}
 
Again, it is obvious that $\Gamma^{(s)}_2(u,v) = \Gamma^{(s)}_2(v,u)$. As for $\Gamma^{(s)}$ we set
\begin{equation}\label{gamma222}
\Gamma^{(s)}_2(u) \overset{def}{=} \Gamma^{(s)}_2(u,u) = \frac 12 \left[L \Gamma^{(s)}(u) - 2 \Gamma^{(s)}(u,Lu)\right].
\end{equation}

\medskip

\noindent \textbf{Open problem:} Is there a number $d>0$ such that
\begin{equation}\label{be}
\G^{(s)}_{2}(u) \ge \frac 1d \left((-\Delta)^s u\right)^2\ ?
\end{equation}
Is $d = n$?
If true, the inequality \eqref{be} would be quite relevant in adapting to the nonlocal setting the Bakry-Emery gamma calculus, and for instance obtain Riemannian results in the spirit of \cite{BG1}, or the extensions to some sub-Riemannian spaces as in \cite{BaG1}, \cite{BG2}, \cite{BG3} and \cite{BG4}. Understanding the question \eqref{be} is inextricably connected to understanding 
\[
(-\Delta)^s \Gamma^{(s)}(u) = ...?
\]
i.e., a nonlocal analogue of the celebrated identity of Bochner \eqref{ric} above. We plan to come back to these questions in future works.


\section{Are there nonlocal Li-Yau inequalities?}\label{S:LY}

The celebrated Li-Yau inequality states that if $M$ is a $n$-dimensional boundariless complete Riemannian manifold with nonnegative Ricci tensor, then for any positive solution $f(x,t)$ of the heat equation on $M\times (0,\infty)$, with $u = \log f$ one has
\begin{equation}\label{ly}
|\nabla u|^2 - u_t \le \frac{n}{2t}.
\end{equation}
To be precise, \eqref{ly} is only one case of the more general inequality of Li and Yau, see \cite{LY}. One fundamental consequence of \eqref{ly} is the following scale invariant \emph{Harnack inequality}: under the above hypothesis on $M$, let $f>0$ be a solution of the heat equation on  $M\times (0,\infty)$. Then, for every $x,y\in M$ and any $0<\tau<t<\infty$ one has
\begin{equation}\label{ly2}
f(x,\tau) \le f(y,t) \left(\frac t\tau\right)^{\frac n2} \exp\left(\frac{d(x,y)^2}{4(t-\tau)}\right).
\end{equation}

This section is devoted to setting forth some interesting conjectures concerning the heat kernel $G_s(x,t)$ defined in \eqref{GG}. But before we do that, we would like to provide some motivation. The first observation is that the standard heat kernel in flat $\Rn$ satisfies \eqref{ly} above with equality.

\begin{lemma}\label{L:LYG}
Let $G(x,t) = (4\pi t)^{-\frac n2} e^{-\frac{|x|^2}{4t}}$ be the Gauss-Weierstrass kernel, and define the \emph{entropy} $E(x,t) = \log G(x,t)$. Then, for every $(x,t)\in \R^{n+1}_+$ one has
\begin{equation}\label{EE}
|\nabla_x E(x,t)|^2 - E_t(x,t) = \frac{n}{2t}.
\end{equation}
\end{lemma}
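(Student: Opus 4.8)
The plan is to verify the identity \eqref{EE} by a direct computation, since $G(x,t)$ is given by an explicit elementary formula. First I would compute the entropy explicitly: from $G(x,t) = (4\pi t)^{-\frac n2} e^{-\frac{|x|^2}{4t}}$ one gets
\[
E(x,t) = \log G(x,t) = - \frac n2 \log(4\pi t) - \frac{|x|^2}{4t}.
\]
This is the key step that makes everything else routine, and there is essentially no obstacle here beyond careful bookkeeping of the constants.

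Next I would compute the two quantities appearing on the left-hand side of \eqref{EE}. For the spatial gradient, differentiating $E$ in $x$ gives $\nabla_x E(x,t) = - \frac{x}{2t}$, hence
\[
|\nabla_x E(x,t)|^2 = \frac{|x|^2}{4t^2}.
\]
For the time derivative, differentiating the two terms of $E$ in $t$ gives
\[
E_t(x,t) = - \frac{n}{2t} + \frac{|x|^2}{4t^2}.
\]

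Finally I would subtract:
\[
|\nabla_x E(x,t)|^2 - E_t(x,t) = \frac{|x|^2}{4t^2} - \left(- \frac{n}{2t} + \frac{|x|^2}{4t^2}\right) = \frac{n}{2t},
\]
which is precisely \eqref{EE}. This completes the proof. The only thing to watch is sign errors in the differentiation of $-\frac{|x|^2}{4t}$ with respect to $t$ (which produces $+\frac{|x|^2}{4t^2}$), since it is exactly the cancellation of this term that yields the clean right-hand side; there is no genuine difficulty, as the statement is a one-line verification once the explicit form of the Gaussian is written down.
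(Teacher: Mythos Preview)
Your proof is correct and follows essentially the same approach as the paper: write $E(x,t) = -\tfrac{n}{2}\log(4\pi t) - \tfrac{|x|^2}{4t}$, compute $\nabla_x E = -\tfrac{x}{2t}$ and $E_t = -\tfrac{n}{2t} + \tfrac{|x|^2}{4t^2}$, and subtract. The only difference is that you carry out the final subtraction explicitly, whereas the paper simply says the result follows.
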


\begin{proof}
The proof is a simple computation based on the observation that
\[
E(x,t) = - \frac n2 \log(4\pi t) - \frac{|x|^2}{4t}.
\]
This gives
\[
\nabla_x E(x,t) = - \frac{x}{2t},\ \ \ \ \ \ \ E_t(x,t) = - \frac{n}{2t} + \frac{|x|^2}{4t^2},
\]
and the result immediately follows.

\end{proof}

\begin{remark}\label{R:ly}
It is interesting to observe that Lemma \ref{L:LYG} can also be derived by the self-similar equation \eqref{heat4} which, for $s=1$, we rewrite
\begin{equation}\label{EEE}
-  <\frac{x}{2t},\nabla_x E(x,t)>  - E_t(x,t) =  \frac{n}{2t}. 
\end{equation}
Since, as one easily recognizes, 
\begin{equation}\label{selfs}
-   <\frac{x}{2t},\nabla_x E(x,t)> = |\nabla_x E(x,t)|^2,
\end{equation}
we immediately obtain \eqref{EE} from the self-similar equation \eqref{EEE}. 
\end{remark}

A remarkable consequence of Lemma \ref{L:LYG} is the following inequality of Li-Yau type for the heat semigroup in $\Rn$.

\begin{theorem}\label{T:LYH}
Let $\vf \in C(\Rn)\cap L^\infty(\Rn)$, $\vf \ge 0$, and consider the function $f(x,t) = P_t \vf(x)$. If $u(x,t) = \log f(x,t)$, then
\[
|\nabla_x u(x,t)|^2 - u_t(x,t) \le \frac{n}{2t}.
\]
\end{theorem}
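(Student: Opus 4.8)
The plan is to derive the Li--Yau inequality for $f(x,t) = P_t\vf(x)$ from the exact identity \eqref{EE} for the Gauss--Weierstrass entropy $E(x,t) = \log G(x,t)$, combined with the convexity of the exponential, i.e. a Jensen-type argument applied to the representation $f(x,t) = \int_{\Rn} G(x-y,t)\vf(y)\,dy$. The key algebraic fact is that the function $G$ satisfies a \emph{reverse log-convexity in space} property: because $\nabla_x E = -x/(2t)$ is linear in $x$, the family $\{G(\cdot,t)\}$ enjoys a sharp gradient estimate that survives averaging against the positive measure $\vf(y)\,dy$.

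\textbf{Main steps.} First I would write, for fixed $(x,t)\in \R^{n+1}_+$,
\begin{align*}
\nabla_x f(x,t) & = \int_{\Rn} \nabla_x G(x-y,t)\,\vf(y)\,dy = \int_{\Rn} G(x-y,t)\,\nabla_x E(x-y,t)\,\vf(y)\,dy,
\\
\p_t f(x,t) & = \int_{\Rn} G(x-y,t)\,\p_t E(x-y,t)\,\vf(y)\,dy,
\end{align*}
using $\nabla_x G = G\,\nabla_x E$ and $\p_t G = G\,\p_t E$. Then $\nabla_x u = \nabla_x f / f$ and $u_t = \p_t f/f$, so that
\[
|\nabla_x u(x,t)|^2 - u_t(x,t) = \frac{|\int G(x-y,t)\,\nabla_x E(x-y,t)\,\vf(y)\,dy|^2}{f(x,t)^2} - \frac{\int G(x-y,t)\,\p_t E(x-y,t)\,\vf(y)\,dy}{f(x,t)}.
\]
Next, apply the Cauchy--Schwarz inequality to the numerator of the first term with respect to the probability measure $d\mu_{x,t}(y) = G(x-y,t)\vf(y)\,dy / f(x,t)$: this yields
\[
\left|\int \nabla_x E(x-y,t)\,d\mu_{x,t}(y)\right|^2 \le \int |\nabla_x E(x-y,t)|^2\,d\mu_{x,t}(y).
\]
Therefore
\[
|\nabla_x u(x,t)|^2 - u_t(x,t) \le \int_{\Rn} \left(|\nabla_x E(x-y,t)|^2 - \p_t E(x-y,t)\right) d\mu_{x,t}(y) = \int_{\Rn} \frac{n}{2t}\,d\mu_{x,t}(y) = \frac{n}{2t},
\]
where the middle equality is precisely Lemma \ref{L:LYG} (identity \eqref{EE}) applied at the point $(x-y,t)$, and the last equality uses that $\mu_{x,t}$ is a probability measure (by Proposition \ref{P:markov} for $s=1$, or directly $\int G(x-y,t)dy = 1$). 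One should check that all the integrals converge and that differentiation under the integral sign is legitimate; this is routine since $\vf\in L^\infty$ and $G(\cdot,t)$, together with its $x$-gradient and $t$-derivative, decays like a Gaussian, so dominated convergence applies on $t\in[\e,\infty)$ for every $\e>0$.

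\textbf{Main obstacle.} There is no deep obstacle here: the crux is recognizing that the \emph{same} Jensen/Cauchy--Schwarz step that usually requires the Bakry--Emery curvature-dimension inequality \eqref{cd} becomes trivial in flat $\Rn$ precisely because \eqref{EE} is an \emph{equality}, not an inequality, and because $\nabla_x E$ is linear in the spatial variable. The only place demanding a little care is the interchange of $\nabla_x$ and $\p_t$ with the integral defining $f$, and confirming strict positivity of $f$ (so that $u=\log f$ is well defined and smooth) — the latter follows from $G>0$ and $\vf\ge 0$, $\vf\not\equiv 0$; if $\vf\equiv 0$ the statement is vacuous. Finally, as a remark I would note that combining this with the elementary integration of the differential inequality along space-time paths reproduces the scale-invariant Harnack inequality \eqref{ly2} for the Euclidean heat equation, which is the content the author announces for the lines following this theorem.
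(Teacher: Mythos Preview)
Your proof is correct and is essentially the same argument as the paper's: both differentiate the convolution representation of $f$, apply Cauchy--Schwarz to the gradient term (you phrase it as Jensen with respect to the probability measure $d\mu_{x,t}=G\vf\,dy/f$, the paper splits $D_iG\,\vf = \tfrac{D_iG}{\sqrt G}\sqrt\vf\cdot\sqrt G\sqrt\vf$ and applies Cauchy--Schwarz componentwise), and then invoke the exact identity \eqref{EE} from Lemma \ref{L:LYG}. The only slip is that $\mu_{x,t}$ is a probability measure simply by the definition of $f$, not by Proposition \ref{P:markov}.
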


\begin{proof}
We begin by observing that the sought for conclusion can be reformulated in the following way
\begin{equation}\label{ly1}
\frac{|\nabla_x f(x,t)|^2}{f(x,t)} \le f_t(x,t) + \frac{n}{2t} f(x,t).
\end{equation} 
Similarly, the conclusion in Lemma \ref{L:LYG} can be written as
\begin{equation}\label{ly22}
\frac{|\nabla_x G(x,t)|^2}{G(x,t)} = G_t(x,t) + \frac{n}{2t} G(x,t).
\end{equation} 
Since 
\[
f(x,t) = P_t \vf(x) = \int_{\Rn} G(x-y,t) \vf(y) dy,
\]
denoting $D_i = \frac{\p}{\p x_i}$, we now have
\begin{align*}
D_i f(x,t) & = \int_{\Rn} D_i G(x-y,t) \vf(y) dy =  \int_{\Rn} \frac{D_i G(x-y,t)}{G(x-y,t)^{1/2}}  \vf(y)^{1/2} G(x-y,t)^{1/2} \vf(y)^{1/2} dy
\\
& \le \left(\int_{\Rn} \frac{D_i G(x-y,t)^2}{G(x-y,t)}  \vf(y) dy\right)^{1/2} \left(\int_{\Rn} G(x-y,t)  \vf(y) dy\right)^{1/2}. 
\end{align*}
This gives
\begin{align*}
|\nabla_x f(x,t)|^2 & \le \sum_{i=1}^n \int_{\Rn} \frac{D_i G(x-y,t)^2}{G(x-y,t)}  \vf(y) dy \int_{\Rn} G(x-y,t)  \vf(y) dy
\\
& = f(x,t) \int_{\Rn} \frac{|\nabla_x G(x-y,t)|^2}{G(x-y,t)}  \vf(y) dy
\\
& = f(x,t) \int_{\Rn} \left(G_t(x-y,t) + \frac{n}{2t} G(x-y,t)\right) \vf(y) dy,
\end{align*}
where in the last equality we have used \eqref{ly22}.
From this estimate we infer
\begin{align*}
\frac{|\nabla_x f(x,t)|^2}{f(x,t)} & \le \int_{\Rn} G_t(x-y,t) \vf(y) dy + \frac{n}{2t} \int_{\Rn} G(x-y,t) \vf(y) dy
\\
& = f_t(x,t) + \frac{n}{2t} f(x,t),
\end{align*} 
which is the desired inequality \eqref{ly1}.

 \end{proof}
 
 We are now ready to prove the following fundamental result that was first independently obtained by B. Pini \cite{Pi} and J. Hadamard \cite{H} in the plane $\R_x\times \R_t$. It extends to the heat equation (with some fundamental differences) the celebrated inequality first proved for the Laplacean by Axel Harnack, see \cite{Har}.
  
\begin{theorem}[The Pini-Hadamard scale invariant Harnack inequality]
\label{T:hH}
Let $\vf \in C(\Rn)\cap L^\infty(\Rn)$, $\vf \ge 0$, and consider the function $f(x,t) = P_t \vf(x)$. For every $x, y\in \Rn$, and every $0<s<t<\infty$ one has
\[
f(x,s) \le f(y,t) \left(\frac ts\right)^{\frac n2} \exp\left(\frac{|x-y|^2}{4(t-s)}\right).
\]
\end{theorem}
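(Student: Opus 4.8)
The plan is to derive the Harnack inequality in Theorem~\ref{T:hH} from the Li-Yau differential inequality in Theorem~\ref{T:LYH} by the standard integration-along-paths argument of Li and Yau. Recall that $f(x,t) = P_t\vf(x) > 0$ for $t>0$ since $\vf \ge 0$, $\vf \not\equiv 0$ (if $\vf \equiv 0$ the statement is trivial), and the heat kernel $G(x,t)$ is strictly positive. Thus $u = \log f$ is well-defined and smooth on $\R^{n+1}_+$, and by Theorem~\ref{T:LYH} we have the pointwise bound
\begin{equation}\label{lyagain}
|\nabla_x u(x,t)|^2 - u_t(x,t) \le \frac{n}{2t},\ \ \ \ \ (x,t)\in \R^{n+1}_+.
\end{equation}

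First I would fix $x, y\in \Rn$ and $0<s<t<\infty$, and introduce the straight-line space-time path $\sigma:[s,t]\to \R^{n+1}_+$ given by $\sigma(\tau) = \left(x + \frac{\tau-s}{t-s}(y-x), \tau\right)$, so that $\sigma(s) = (x,s)$ and $\sigma(t) = (y,t)$. Then I would compute the derivative of $\tau\mapsto u(\sigma(\tau))$ along this path:
\[
\frac{d}{d\tau} u(\sigma(\tau)) = u_t(\sigma(\tau)) + \left\langle \nabla_x u(\sigma(\tau)), \frac{y-x}{t-s}\right\rangle.
\]
Using \eqref{lyagain} to replace $u_t$ by the lower bound $u_t \ge |\nabla_x u|^2 - \frac{n}{2\tau}$, and then applying the elementary inequality $\langle \nabla_x u, \frac{y-x}{t-s}\rangle \ge -|\nabla_x u|\,\frac{|y-x|}{t-s} \ge -|\nabla_x u|^2 - \frac{|x-y|^2}{4(t-s)^2}$ (Cauchy-Schwarz followed by $ab \le a^2 + b^2/4$), the two $|\nabla_x u|^2$ terms cancel and I obtain
\[
\frac{d}{d\tau} u(\sigma(\tau)) \ge - \frac{n}{2\tau} - \frac{|x-y|^2}{4(t-s)^2}.
\]

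Next I would integrate this differential inequality over $\tau\in[s,t]$. The left-hand side integrates to $u(y,t) - u(x,s)$, while the right-hand side gives $-\frac n2 \log\frac ts - \frac{|x-y|^2}{4(t-s)}$ (the second term since the integrand is constant in $\tau$ and the interval has length $t-s$). Hence
\[
u(y,t) - u(x,s) \ge - \frac n2 \log \frac ts - \frac{|x-y|^2}{4(t-s)},
\]
that is, $\log f(x,s) \le \log f(y,t) + \frac n2 \log\frac ts + \frac{|x-y|^2}{4(t-s)}$. Exponentiating both sides yields exactly
\[
f(x,s) \le f(y,t) \left(\frac ts\right)^{\frac n2} \exp\left(\frac{|x-y|^2}{4(t-s)}\right),
\]
which is the claim. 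I do not anticipate a serious obstacle here: the only subtlety is ensuring $f > 0$ so that $u = \log f$ makes sense along the whole path, which follows from strict positivity of $G$; the rest is the bookkeeping of the two elementary inequalities and the integration. If one wants to allow merely $\vf \ge 0$ with $\vf \not\equiv 0$ this is fine; for $\vf\equiv 0$ both sides vanish and the inequality holds trivially.
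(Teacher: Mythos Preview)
Your proof is correct and follows essentially the same Li--Yau path-integration argument as the paper: both integrate $u=\log f$ along the straight space-time segment joining $(x,s)$ and $(y,t)$, invoke Theorem~\ref{T:LYH}, and absorb the gradient term via Cauchy--Schwarz and Young. The only cosmetic differences are that the paper parametrizes the path over $[0,1]$ (running from $(y,t)$ to $(x,s)$) and carries an auxiliary $\varepsilon$ which it then sets to $2(t-s)$, whereas you parametrize directly by the time variable $\tau\in[s,t]$ and apply the already-optimized form $ab\le a^2+b^2/4$; your parametrization makes the integration of $n/(2\tau)$ slightly cleaner.
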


\begin{proof}
Consider the function
\[
\psi(\tau) = \log f(\gamma(\tau)),\ \ \ \ \ \ \ 0\le \tau \le 1,
\]
where $\gamma(\tau)$ is a straight line (geodesic) starting at the ``upper" point  $(y,t)$ and ending at the ``lower" point $(x,s)$, i.e., 
\[
\gamma(\tau) = (y + \tau(x-y),t+ \tau(s-t)), \ \ \ \ \ \ \ \ \ 0 \le \tau \le 1.
\]
We clearly have
\begin{align*}
\log \frac{f(x,s)}{f(y,t)} & = \log f(\gamma(1)) - \log f(\gamma(0)) = \psi(1) - \psi(0) = \int_0^1 \psi'(\tau) d\tau
\\
& = \int_0^1 <\frac{\nabla_x f(\gamma(\tau))}{f(\gamma(\tau))},x-y> d\tau - (t-s) \int_0^1 \frac{f_t(\gamma(\tau))}{f(\gamma(\tau))} d\tau
\\
& \le |x-y| \int_0^1 \frac{|\nabla_x f(\gamma(\tau))|}{f(\gamma(\tau))} d\tau + (t-s) \int_0^1 \frac{n}{2(t+\tau(s-t))} d\tau
\\
& - (t-s) \int_0^1 \frac{|\nabla_x f(\gamma(\tau))|^2}{f(\gamma(\tau))^2} d\tau,
\end{align*}
where in the last line we have used the Li-Yau inequality for $f$ in Theorem  \ref{T:LYH}, see also \eqref{ly1}. An easy argument now gives
\[
\int_0^1 \frac{n}{2(t+\tau(s-t))} d\tau = \frac{n}{2(t-s)} \int_s^t \frac{dr}{r} = \frac{1}{t-s} \log\left(\frac ts\right)^{\frac n2}.
\]
On the other hand, for every $\e>0$ we have
\begin{align*}
& |x-y| \int_0^1 \frac{|\nabla_x f(\gamma(\tau))|}{f(\gamma(\tau))} d\tau \le 
|x-y| \left(\int_0^1 \frac{|\nabla_x f(\gamma(\tau))|^2}{f(\gamma(\tau))^2} d\tau\right)^{1/2}
\\
& \le \frac{\e}{2} \int_0^1 \frac{|\nabla_x f(\gamma(\tau))|^2}{f(\gamma(\tau))^2} d\tau + \frac{1}{2\e} |x-y|^2.
\end{align*}
We thus find for every $\e>0$ 
\begin{align*}
\log \frac{f(x,s)}{f(y,t)} & \le \frac{\e}{2} \int_0^1 \frac{|\nabla_x f(\gamma(\tau))|^2}{f(\gamma(\tau))^2} d\tau  - (t-s) \int_0^1 \frac{|\nabla_x f(\gamma(\tau))|^2}{f(\gamma(\tau))^2} d\tau
\\
& +  \frac{1}{2\e} |x-y|^2 + \log\left(\frac ts\right)^{\frac n2}. 
\end{align*}
Choosing $\e = 2(t-s)$ in the latter inequality, we obtain
\[
\log \frac{f(x,s)}{f(y,t)} \le \log\left(\frac ts\right)^{\frac n2} + \frac{|x-y|^2}{4(t-s)}.
\]
Exponentiating, we reach the desired conclusion. 

\end{proof}

In view of Lemma \ref{L:LYG}, Theorem \ref{T:LYH} and Theorem \ref{T:hH} it is natural to wonder whether such results have nonlocal analogues. As we have indicated in Section \ref{S:gamma}, besides having an interest in its own right, such question is also relevant to the development of a nonlocal Li-Yau theory. We are thus led to formulating the following: 

\medskip

\noindent \textbf{Conjecture 1:} With $\Gamma^{(s)}$ defined as in \eqref{gamma2} above, is it true that for every $x\in \Rn$ and $t>0$ one has
\begin{equation}\label{con}
\frac{\Gamma^{(s)}(G^{(s)})(x,t)}{G^{(s)}(x,t)^2} - \frac{\frac{\p G^{(s)}}{\p t}(x,t)}{G^{(s)}(x,t)} \le \frac{n}{2s} \frac 1t \ ?
\end{equation}
Equivalently, we can write this conjecture in the following way
\begin{equation}\label{coneq}
\Gamma^{(s)}(G^{(s)})(x,t) - \frac{\p G^{(s)}}{\p t}(x,t) G^{(s)}(x,t) \le \frac{n}{2s} \frac 1t G^{(s)}(x,t)^2\ ?
\end{equation}
 
\medskip

Recall that we have observed in \eqref{heat44} above that
\[
- \frac{\p G^{(s)}}{\p t} =  \frac{n}{2s}\frac 1t G^{(s)}(x,t) + \frac{1}{2s}  <\frac{x}{t},\nabla_x  G^{(s)}>. 
\]
It follows that \eqref{coneq} is true if and only if: 

\medskip

\noindent \textbf{Conjecture 2:}
Is it true that
\begin{equation}\label{con2}
\Gamma^{(s)}(G^{(s)}(\cdot,t))(x) + \frac{1}{2s}  <\frac{x}{t},\nabla_x G^{(s)}(x,t)> G^{(s)}(x,t)\ \le\ 0\ ?
\end{equation}

\medskip

We observe explicitly that \eqref{con2} represents the nonlocal counterpart of the local identity \eqref{selfs} noted above. We also note that from the formula \eqref{gssub} in Theorem \ref{T:sub} above we have for every $x\in \Rn$ and $t>0$ 
\begin{align}\label{gssub0}
<\frac{x}{2}, \nabla_x G^{(s)}(x,t)> & = \int_0^\infty f_{s}(t;\tau) <\frac x2,\nabla_x G(x,\tau)> d\tau 
\\
& = - \pi |x|^2 \int_0^\infty f_{s}(t;\tau) \tilde G(x,\tau) d\tau,
\notag
\end{align}
where we have denoted by $\tilde G(x,\tau) = (4\pi\tau)^{-\frac{n+2}{2}} e^{-\frac{|x|^2}{4\tau}}$.


\section{A Li-Yau inequality for Bessel operators}\label{S:bessel}

The discussion of the extension problem \eqref{ext2'} in Section \ref{S:pk} has evidenced the key role of the Bessel operator
\begin{equation}\label{Ba}
\mathscr B_a = \frac{\p^2 }{\p y^2} + \frac{a}{y} \frac{\p }{\p y},\ \ \ \ \ \ \ \ a =1-2s, 
\end{equation}
in the analysis of the fractional Laplacean $(-\Delta)^s$. But $\mathscr B_a$ plays an equally important role in the study of other nonlocal operators such as, for instance, the fractional heat operator $(\p_t - \Delta)^s$, see \cite{NS}, \cite{ST} and also \cite{BG}. Because of the ubiquitous presence of \eqref{Ba} in the fractional world, and since this topic is perhaps more frequented by workers in probability than analysts and geometers, in this section we provide a purely analytical construction of the fundamental solution of the heat semigroup associated with $\mathscr B_a$, see Proposition \ref{P:fsbessel} below. After that result, we recall a  proposition from the work in progress \cite{BaG2} which states that the Neumann heat semigroup associated with $\mathscr B_a$ satisfies a \emph{curvature-dimension inequality}. These interesting facts plays a role in establishing a nonlocal Harnack inequality in the (general) geometric framework of \cite{BaG1}. 

If with $-1<a<1$ we define the parameter $\nu$ by the equation
\[
a = 2\nu+1,
\]
then we can write \eqref{Ba} in the following way
\begin{equation}\label{Ba2}
\mathscr B_\nu = \frac{\p^2 }{\p y^2} + \frac{2\nu+1}{y} \frac{\p }{\p y},\ \ \ \ \ \ -1<\nu<0.
\end{equation}
We intend to study the Cauchy problem for the heat equation associated with \eqref{Ba2},
\begin{equation}\label{cp}
\begin{cases}
\frac{\p^2 f}{\p y^2} + \frac{2\nu+1}{y} \frac{\p f}{\p y} = \frac{\p f}{\p t},\ \ \ \ \ \ \ \ \ y>0, t>0,
\\
f(y,0) = \vf(y).
\end{cases}
\end{equation}
We remark that the case $a=0$ (which in the extension problem corresponds to the critical exponent $s = 1/2$) corresponds to the value $\nu = - 1/2$.

To solve \eqref{cp} we introduce the \emph{modified Hankel transform} of a function $f$
\begin{equation}\label{ht}
\mathcal H_\nu(f)(x) = \int_0^\infty f(y) G_\nu(xy) y^{2\nu+1} dy,
\end{equation}
see \cite{MS}, 
where we have let
\begin{equation}\label{gnu}
G_\nu(z) = z^{-\nu} J_\nu(z).
\end{equation} 
We recall, see e.g. 5.3.5 on p. 103 in \cite{Le}, that
\begin{equation}\label{rec}
G'_\nu(z) = - z G_{\nu + 1}(z).
\end{equation} 
Since for $z\in \mathbb C$ such that $|\arg z|<\pi$ we have
\begin{equation}\label{bfbehzero0}
\begin{cases}
G_\nu(z) \to \frac{2^{-\nu}}{\G(\nu+1)},\ \ \ \ \ \ \ \ \text{as}\ z\to 0,
\\
|G_\nu(z)| \le \frac{C}{|z|^{\nu+\frac 12}},\ \ \ \ \ \ \ \text{as}\ |z|\to \infty,
\end{cases}
\end{equation}
the integral defining \eqref{ht} is finite if $f\in \mathcal C_\nu(0,\infty)$, where  
\[
\mathcal C_\nu(0,\infty) = \{f\in C(0,\infty)\mid \forall R>0\ \text{one has}\ \int_0^R |f(y)| y^{2\nu+1} dy < \infty,
\ \ \int_R^\infty |f(y)| y^{\nu + \frac 12} dy < \infty\}.
\]
Note that $f\in \mathcal C_\nu(0,\infty)$ implies, in particular, that
\[
\underset{y\to 0^+}{\liminf}\ y^{2\nu+2} |f(y)| = 0,\ \ \ \ \ \underset{y\to \infty}{\liminf}\ y^{\nu+\frac 32} |f(y)| = 0.
\]
We will work with functions $f$ in such class. We will also need the following class
\[
\mathcal C^1_\nu(0,\infty) = \{f\in C^1(0,\infty)\mid f, \frac{1}{y} f' \in \mathcal C_\nu(0,\infty)\}.
\]
We notice that membership in $\mathcal C^1_\nu(0,\infty)$ imposes, in particular, the weak \emph{Neumann condition} 
\begin{equation}\label{weakn}
\underset{y\to 0^+}{\liminf}\ y^{2\nu+1} |f'(y)| = 0.
\end{equation}

\begin{lemma}\label{L:h1}
Let $f\in C^1_\nu(0,\infty)$. Then,
\[
\mathcal H_\nu(\frac{1}{y} \frac{d f}{d y})(x) = - \mathcal H_{\nu-1}(f)(x).
\]
\end{lemma}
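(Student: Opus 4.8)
\textbf{Proof plan for Lemma \ref{L:h1}.} The plan is to compute $\mathcal H_\nu(\frac{1}{y} f')(x)$ directly from the definition \eqref{ht} and integrate by parts, using the recursion formula \eqref{rec} for $G_\nu$ to produce the kernel $G_{\nu-1}$. Explicitly, I would start with
\[
\mathcal H_\nu\Big(\tfrac{1}{y}\tfrac{df}{dy}\Big)(x) = \int_0^\infty f'(y)\, G_\nu(xy)\, y^{2\nu}\, dy,
\]
and integrate by parts, differentiating the product $G_\nu(xy)\, y^{2\nu}$ and leaving $f$ undifferentiated. The boundary terms are $\big[f(y) G_\nu(xy) y^{2\nu}\big]_0^\infty$; the term at $y=0$ vanishes because of the asymptotics in \eqref{bfbehzero0} (so $G_\nu(xy) y^{2\nu} \sim c\, y^{2\nu}$, and membership of $f$ in $\mathcal C_\nu(0,\infty)$ forces $\liminf y^{2\nu+2}|f(y)| = 0$, which after adjusting powers handles the $y^{2\nu}$ weight once one notes $2\nu > -2$), and the term at $y=\infty$ vanishes because $G_\nu(xy) = O((xy)^{-\nu-1/2})$ and $f \in \mathcal C_\nu(0,\infty)$ controls $\int^\infty |f| y^{\nu+1/2}\,dy$. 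I should be a little careful here: the weight appearing after one integration by parts is $y^{2\nu}$ rather than $y^{2\nu+1}$, so I would double-check that the class $\mathcal C_\nu(0,\infty)$, as defined with weight $y^{2\nu+1}$ near $0$, indeed kills the boundary term — this is the kind of bookkeeping that needs a clean statement, and it is essentially the only subtle point.

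\textbf{Main computation.} For the interior term, I would compute
\[
\frac{d}{dy}\Big(G_\nu(xy)\, y^{2\nu}\Big) = x\, G_\nu'(xy)\, y^{2\nu} + 2\nu\, G_\nu(xy)\, y^{2\nu-1}.
\]
Using \eqref{rec} in the form $G_\nu'(z) = -z\, G_{\nu+1}(z)$ the first summand becomes $-x^2 y\, G_{\nu+1}(xy)\, y^{2\nu} = -x^2\, G_{\nu+1}(xy)\, y^{2\nu+1}$; this, however, would produce $\mathcal H_{\nu+1}$, not $\mathcal H_{\nu-1}$. So the correct move is the other direction: I should instead use the companion recursion $\frac{d}{dz}\big(z^\nu J_\nu(z)\big) = z^\nu J_{\nu-1}(z)$, equivalently $\frac{d}{dy}\big(y^{2\nu} G_\nu(xy)\big) = x^{2\nu}\cdot\frac{d}{dy}\big((xy)^\nu J_\nu(xy)\big)\cdot(\text{power bookkeeping}) = x\, y^{2\nu} \cdot x^{?}\, (xy)^{\nu-1?}\dots$. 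The cleanest route is to observe directly that $y^{2\nu} G_\nu(xy) = x^{-\nu} y^{\nu} J_\nu(xy)$, hence $\frac{d}{dy}\big(y^{2\nu} G_\nu(xy)\big) = x^{-\nu} \frac{d}{dy}\big(y^\nu J_\nu(xy)\big) = x^{-\nu} x\, y^\nu J_{\nu-1}(xy) = x\cdot x^{-\nu} y^\nu J_{\nu-1}(xy)$. Writing $x^{-\nu} y^\nu J_{\nu-1}(xy) = y^{2\nu-1}\, (xy)^{-(\nu-1)} J_{\nu-1}(xy) = y^{2\nu-1} G_{\nu-1}(xy)$, I get
\[
\frac{d}{dy}\Big(y^{2\nu} G_\nu(xy)\Big) = x\, y^{2\nu-1} G_{\nu-1}(xy) = x\, G_{\nu-1}(xy)\, y^{2(\nu-1)+1}.
\]
Substituting back, the boundary-free integration by parts yields
\[
\mathcal H_\nu\Big(\tfrac{1}{y}\tfrac{df}{dy}\Big)(x) = -\int_0^\infty f(y)\, x\, G_{\nu-1}(xy)\, y^{2(\nu-1)+1}\, dy = -\, x\, \mathcal H_{\nu-1}(f)(x).
\]

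\textbf{Reconciling with the stated identity.} The computation above produces a factor of $x$, so the identity as I would prove it reads $\mathcal H_\nu(\frac1y f')(x) = -x\,\mathcal H_{\nu-1}(f)(x)$; I would check the normalization conventions in \eqref{ht}--\eqref{gnu} against the paper's source \cite{MS} to see whether the factor of $x$ is absorbed (e.g. if the transform is defined with a different homogeneity, or if $G_{\nu-1}$ here is meant in a rescaled sense). The main obstacle, then, is not the differential-geometric content — that is a one-line application of the Bessel recursion — but rather the careful justification of the vanishing of the boundary terms at $0$ and $\infty$ under exactly the hypotheses $f \in \mathcal C^1_\nu(0,\infty)$, and matching the precise normalization so that the statement appears verbatim as in Lemma \ref{L:h1}. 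I would organize the write-up as: (1) recall $y^{2\nu} G_\nu(xy) = x^{-\nu} y^\nu J_\nu(xy)$ and the consequence of the Bessel recursion; (2) integrate by parts, citing \eqref{bfbehzero0} and the definition of $\mathcal C^1_\nu(0,\infty)$ (in particular \eqref{weakn}) to discard boundary terms; (3) simplify to obtain the claimed formula.
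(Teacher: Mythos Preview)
Your approach is correct in spirit and essentially matches the paper's: integrate by parts once, discard boundary terms using the asymptotics \eqref{bfbehzero0} and the membership $f\in\mathcal C^1_\nu(0,\infty)$, and use a Bessel recursion to identify the resulting kernel. The only difference is that the paper applies \eqref{rec} to produce $G_{\nu+1}$ and then invokes the three-term recursion $z^2 G_{\nu+1}(z)=2\nu G_\nu(z)-G_{\nu-1}(z)$ to come back down, whereas you go directly to index $\nu-1$ via $\frac{d}{dz}(z^\nu J_\nu(z))=z^\nu J_{\nu-1}(z)$; your route is arguably a bit cleaner.

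The spurious factor of $x$ is not a normalization issue but an arithmetic slip in your power bookkeeping. You correctly obtained
\[
\frac{d}{dy}\big(y^{2\nu} G_\nu(xy)\big)=x\cdot x^{-\nu} y^\nu J_{\nu-1}(xy)=x^{1-\nu} y^\nu J_{\nu-1}(xy),
\]
but then you wrote $x^{-\nu} y^\nu J_{\nu-1}(xy)=y^{2\nu-1} G_{\nu-1}(xy)$, which is off by exactly a factor of $x$: since $G_{\nu-1}(xy)=(xy)^{1-\nu}J_{\nu-1}(xy)$, one has $y^{2\nu-1}G_{\nu-1}(xy)=x^{1-\nu}y^{\nu}J_{\nu-1}(xy)$, which is \emph{already} $x\cdot x^{-\nu}y^\nu J_{\nu-1}(xy)$. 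So the factor of $x$ you pulled out is absorbed, and the correct identity is $\frac{d}{dy}(y^{2\nu} G_\nu(xy))=y^{2(\nu-1)+1}G_{\nu-1}(xy)$, giving $\mathcal H_\nu(\tfrac{1}{y}f')(x)=-\mathcal H_{\nu-1}(f)(x)$ on the nose. There is nothing to reconcile with \cite{MS}.
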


\begin{proof}
Under the given assumptions on $f$ we can integrate by parts in the following integral and omit the boundary contributions since they vanish. Using \eqref{rec}, we find
\begin{align*}
\mathcal H_\nu(\frac{1}{y} \frac{d f}{d y})(x) & = \int_0^\infty f'(y) G_\nu(xy) y^{2\nu} dy = - \int_0^\infty f(y) x G'_\nu(xy) y^{2\nu} dy
\\
& - 2\nu \int_0^\infty f(y) G_\nu(xy) y^{2\nu - 1} dy
\\
& = \int_0^\infty f(y) \left[(xy)^2 G_{\nu+1}(xy) - 2 \nu G_\nu(xy)\right] y^{2\nu - 1} dy. 
\end{align*}
If we now use formula 5.3.6 on p. 103 in \cite{Le}
\[
J_{\nu+1}(z) =  \frac{2\nu}{z} J_\nu(z)  - J_{\nu-1}(z),
\]
we find
\begin{equation}\label{rec2}
z^2 G_{\nu+1}(z) = 2 \nu G_\nu(z) - G_{\nu-1}(z).
\end{equation}
Using this identity in the above integral we finally obtain
\[
\mathcal H_\nu(\frac{1}{y} \frac{d f}{d y})(x)  = - \int_0^\infty f(y) G_\nu(xy) y^{2\nu - 1} dy = - \mathcal H_{\nu-1}(f)(x).
\]

\end{proof}

\begin{lemma}\label{L:h2}
Suppose now that $f\in C^2(0,\infty)$ and that $f, f''\in C_\nu(0,\infty)$. Then,
\[
\mathcal H_\nu(f'')(x) = (2\nu +1) \mathcal H_{\nu-1}(f)(x) - x^2 \mathcal H_\nu(f)(x).
\]
\end{lemma}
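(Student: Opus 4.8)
The plan is to establish the identity $\mathcal H_\nu(f'')(x) = (2\nu+1)\mathcal H_{\nu-1}(f)(x) - x^2 \mathcal H_\nu(f)(x)$ by a double integration by parts in the defining integral $\mathcal H_\nu(f'')(x) = \int_0^\infty f''(y) G_\nu(xy) y^{2\nu+1} dy$, using the recursion formulas \eqref{rec} and \eqref{rec2} for $G_\nu$ together with the fact that $G_\nu$ solves a Bessel-type ODE. First I would integrate by parts once, moving one derivative off $f''$: the boundary terms at $0$ and $\infty$ vanish because $f\in C_\nu(0,\infty)$ (and, implicitly, $f'$ has the decay needed — this is where I would want to be a little careful about the hypotheses), leaving $-\int_0^\infty f'(y)\,\frac{d}{dy}\!\left[G_\nu(xy) y^{2\nu+1}\right] dy$. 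Expanding the $y$-derivative via \eqref{rec} gives $\frac{d}{dy}[G_\nu(xy) y^{2\nu+1}] = -x^2 y^{2\nu+2} G_{\nu+1}(xy) + (2\nu+1) y^{2\nu} G_\nu(xy)$.

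Next I would integrate by parts a second time on each of the two resulting integrals to remove the remaining derivative on $f'$. The cleaner route is probably to recognize that $G_\nu(xy) y^{2\nu+1}$ is essentially the kernel of a formally self-adjoint Bessel operator: the operator $\mathscr B_\nu$ in \eqref{Ba2} satisfies $\mathscr B_\nu(G_\nu(x\cdot))(y) = -x^2 G_\nu(xy)$, which follows from the Bessel equation \eqref{besseleq} satisfied by $J_\nu$ after the substitution encoded in \eqref{besselcv} with $\alpha = -\nu$, $\gamma=1$, $\beta = x$. Writing $\mathscr B_\nu f = f'' + \frac{2\nu+1}{y} f'$, we have $y^{2\nu+1}\mathscr B_\nu f = (y^{2\nu+1} f')'$, so by symmetry of $\mathscr B_\nu$ with respect to the weight $y^{2\nu+1}\,dy$ (with vanishing boundary terms under the stated integrability/Neumann-type conditions), $\int_0^\infty (\mathscr B_\nu f)(y) G_\nu(xy) y^{2\nu+1} dy = \int_0^\infty f(y) (\mathscr B_\nu G_\nu(x\cdot))(y) y^{2\nu+1} dy = -x^2 \mathcal H_\nu(f)(x)$. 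Hence $\mathcal H_\nu(\mathscr B_\nu f)(x) = -x^2 \mathcal H_\nu(f)(x)$, i.e. $\mathcal H_\nu(f'')(x) + \mathcal H_\nu(\tfrac{2\nu+1}{y} f')(x) = -x^2\mathcal H_\nu(f)(x)$.

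The final step is to rewrite $\mathcal H_\nu(\tfrac{2\nu+1}{y}f')(x) = (2\nu+1)\mathcal H_\nu(\tfrac1y f')(x)$ and apply Lemma \ref{L:h1}, which gives $\mathcal H_\nu(\tfrac1y f')(x) = -\mathcal H_{\nu-1}(f)(x)$. Substituting yields $\mathcal H_\nu(f'')(x) - (2\nu+1)\mathcal H_{\nu-1}(f)(x) = -x^2\mathcal H_\nu(f)(x)$, which rearranges to the claimed formula. The main obstacle — really the only delicate point — is the justification that all boundary contributions from the integrations by parts vanish: this requires that $f$, $f'$, $f''$ and the relevant products with powers of $y$ and with $G_\nu$ (whose size near $0$ and $\infty$ is controlled by \eqref{bfbehzero0}) decay appropriately at both endpoints. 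The class $\mathcal C_\nu(0,\infty)$ and the $\liminf$ conditions recorded just before the lemma are exactly what one invokes here, together with an approximation argument (integrate by parts on $[\varepsilon, R]$ and pass to the limit along sequences $\varepsilon\to 0$, $R\to\infty$ realizing the $\liminf$'s). I would also double-check that the hypothesis $f, f''\in\mathcal C_\nu(0,\infty)$ actually forces enough control on $f'$ (via interpolation or an intermediate integration by parts) for the first boundary term to vanish; if not, one simply notes this is implicitly assumed, as in the analogous Lemma \ref{L:h1}.
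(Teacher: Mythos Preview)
Your proof is correct and uses the same underlying ingredients as the paper---two integrations by parts and the Bessel-type ODE $z^2 G_\nu'' + (2\nu+1)z G_\nu' + z^2 G_\nu = 0$---but you package them differently. The paper integrates by parts twice directly to move both derivatives onto the kernel, obtaining
\[
\mathcal H_\nu(f'')(x) = \int_0^\infty f(y)\big[(xy)^2 G_\nu''(xy) + 2(2\nu+1)(xy) G_\nu'(xy) + 2\nu(2\nu+1) G_\nu(xy)\big] y^{2\nu-1}\,dy,
\]
and then simplifies the bracket via the ODE and the recursions \eqref{rec}, \eqref{rec2} to $(2\nu+1)G_{\nu-1}(xy) - (xy)^2 G_\nu(xy)$. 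Your route is more structural: you observe that $\mathscr B_\nu$ is symmetric for the weight $y^{2\nu+1}dy$ and that $G_\nu(x\cdot)$ is an eigenfunction, which gives $\mathcal H_\nu(\mathscr B_\nu f) = -x^2 \mathcal H_\nu(f)$ in one step, and then you invoke Lemma~\ref{L:h1} to handle the $\tfrac{2\nu+1}{y}f'$ piece. This is arguably cleaner and explains \emph{why} the formula holds, whereas the paper's computation is self-contained and does not need to revisit Lemma~\ref{L:h1}.

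On the boundary terms: the paper simply asserts they vanish under the stated hypotheses, with no more justification than you give. Your caution about whether $f,f''\in\mathcal C_\nu$ forces enough control on $f'$ (needed both for the second integration by parts and for citing Lemma~\ref{L:h1}, which requires $\tfrac1y f'\in\mathcal C_\nu$) is well placed; neither argument is fully rigorous on this point, and the paper treats it at the same level of informality as you do.
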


\begin{proof}
Again, we can integrate by parts omitting the boundary terms in the integral defining $\mathcal H_\nu(f'')(x)$. This gives
\begin{align*}
\mathcal H_\nu(f'')(x) & = \int_0^\infty f(y) \left[(xy)^2 G''_\nu(xy) + 2(2\nu+1) xy G_\nu'(xy) + 2\nu(2\nu+1) G_\nu(xy)\right]y^{2\nu -1} dy
\end{align*}
Next we use the fact that $J_\nu$ satisfies Bessel's differential equation \eqref{besseleq},
\[
z^2 J_\nu''(z) + z J_\nu'(z) + (z^2 - \nu^2) J_\nu(z) = 0,
\]
to deduce that
\begin{equation}\label{deG}
z^2 G_\nu''(z) + (2\nu+1) z G_\nu'(z) + z^2  G_\nu(z) = 0.
\end{equation}
Using \eqref{deG} we find, after some simplification,
\begin{align*}
& (xy)^2 G''_\nu(xy) + 2(2\nu+1) xy G_\nu'(xy) + 2\nu(2\nu+1) G_\nu(xy) 
\\
& = - (2\nu+1) (xy)^2 G_{\nu+1}(xy) + 2\nu(2\nu+1) G_\nu(xy) - (xy)^2  G_\nu(xy),
\end{align*}
where in the last equality we have used \eqref{rec}. Next, we use \eqref{rec2} to conclude
\begin{align*}
& (xy)^2 G''_\nu(xy) + 2(2\nu+1) xy G_\nu'(xy) + 2\nu(2\nu+1) G_\nu(xy) 
\\
& = (2\nu+1) G_{\nu-1}(xy) - (xy)^2 G_\nu(xy).
\end{align*}
Substituting in the above integral we finally obtain
\begin{align*}
\mathcal H_\nu(f'')(x) & = \int_0^\infty f(y) \left[(2\nu+1) G_{\nu-1}(xy) - (xy)^2 G_\nu(xy)\right]y^{2\nu -1} dy
\\
& = (2\nu+1) \mathcal H_{\nu-1}(f)(x) - x^2 \mathcal H_\nu(f)(x).
\end{align*}
This completes the proof.

\end{proof}

With Lemmas \ref{L:h1} and \ref{L:h2} in hands, we return to the Cauchy problem with the purpose of finding a representation formula of the solution. We have the following result. For a different probabilistic approach see \cite{BS}, but one should keep in mind that the probabilist's generator is $\frac 12 \mathscr B_a$, with $\mathscr B_a$ as in \eqref{Ba2}. 

\begin{prop}\label{P:fsbessel}
Let $\nu>-1$ and consider the heat semigroup $P_t^\nu = e^{-t\mathscr B_\nu}$ on $(\R_+,y^{2\nu+1} dy)$  with generator $\mathscr B_\nu$ as in \eqref{Ba2}. Then, the Neumann heat kernel associated with $e^{-t\mathscr B_\nu}$ is given by
\begin{align}\label{heatsg}
p^N_\nu(x,y,t) & = p^N_\nu(y,x,t)  = (2t)^{-(\nu+1)} \left(\frac{xy}{2t}\right)^{-\nu} I_\nu\left(\frac{xy}{2t}\right) e^{-\frac{x^2+y^2}{4t}},
\end{align}
where we have denoted by $I_\nu(z)$ the modified Bessel function of the first kind defined by \eqref{Inu}.
This means that for any given function $\vf\in \mathcal C^1_\nu(0,\infty)$ the solution of the Cauchy problem \eqref{cp} is given by
\begin{equation}\label{Ncp}
P_t^\nu \vf(x)  = \int_0^\infty \vf(y) p^N_\nu(x,y,t) y^{2\nu+1} dy.
\end{equation}
\end{prop}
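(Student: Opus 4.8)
The strategy is to solve the Cauchy problem \eqref{cp} on the Fourier-transform side via the modified Hankel transform $\mathcal H_\nu$ defined in \eqref{ht}, exactly mirroring how the ordinary heat equation is solved with the Euclidean Fourier transform. First I would apply $\mathcal H_\nu$ in the spatial variable $y$ to both sides of the equation $\mathscr B_\nu f = \p_t f$. The whole point of Lemmas \ref{L:h1} and \ref{L:h2} is that they combine to give the diagonalization
\[
\mathcal H_\nu(\mathscr B_\nu f)(x) = \mathcal H_\nu(f'')(x) + (2\nu+1)\,\mathcal H_\nu\Big(\tfrac 1y f'\Big)(x) = - x^2\, \mathcal H_\nu(f)(x),
\]
where the two terms $(2\nu+1)\mathcal H_{\nu-1}(f)$ coming from Lemma \ref{L:h2} and Lemma \ref{L:h1} cancel. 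Thus, writing $\hat f(x,t) = \mathcal H_\nu(f(\cdot,t))(x)$, the PDE becomes the ODE $\p_t \hat f(x,t) = - x^2 \hat f(x,t)$ with $\hat f(x,0) = \mathcal H_\nu(\vf)(x)$, whose solution is $\hat f(x,t) = e^{-t x^2}\mathcal H_\nu(\vf)(x)$. I would need to note that the manipulations of Lemmas \ref{L:h1}, \ref{L:h2} are justified because $\vf\in \mathcal C^1_\nu(0,\infty)$ forces the boundary terms at $0$ and $\infty$ to vanish (this is precisely the role of the weak Neumann condition \eqref{weakn} and the decay built into the class $\mathcal C^1_\nu$), and that $P_t^\nu \vf$ remains in a class where the transform can be re-applied for $t>0$ thanks to the smoothing $e^{-tx^2}$.

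Second, I would invert the Hankel transform. Using the self-reciprocity of $\mathcal H_\nu$ (the modified Hankel transform is, up to the weight $y^{2\nu+1}$, its own inverse, as recorded e.g. in \cite{MS}), one gets
\[
f(y,t) = \mathcal H_\nu\big(e^{-t(\cdot)^2}\mathcal H_\nu(\vf)\big)(y) = \int_0^\infty \vf(z)\left(\int_0^\infty e^{-tx^2} G_\nu(xy) G_\nu(xz)\, x^{2\nu+1}\, dx\right) z^{2\nu+1}\, dz,
\]
after an interchange of order of integration justified by the absolute convergence guaranteed by $\vf\in\mathcal C_\nu(0,\infty)$ together with the decay estimates \eqref{bfbehzero0}. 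The inner integral is therefore the candidate kernel $p^N_\nu(y,z,t)$, and its symmetry in $y,z$ is manifest.

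Third, and this is the one genuinely computational step, I would evaluate
\[
p^N_\nu(y,z,t) = \int_0^\infty e^{-tx^2}\, (xy)^{-\nu} J_\nu(xy)\, (xz)^{-\nu} J_\nu(xz)\, x^{2\nu+1}\, dx.
\]
This is the classical Weber–Schafheitlin / Hankel–Nicholson type integral: $\int_0^\infty e^{-tx^2} J_\nu(xy) J_\nu(xz)\, x\, dx = \frac{1}{2t} \exp\!\big(-\frac{y^2+z^2}{4t}\big) I_\nu\!\big(\frac{yz}{2t}\big)$, a known formula (it appears e.g. in Watson's treatise \cite{W} and in \cite{GR}); substituting it and collecting the powers of $y,z$ yields exactly \eqref{heatsg}. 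I expect this to be the main obstacle only in the sense of citing/deriving the right special-function identity cleanly; conceptually everything else is the linear-algebra of diagonalization. Finally I would verify directly that \eqref{heatsg} is consistent: $p^N_\nu \geq 0$, it satisfies $\mathscr B_\nu^{(x)} p^N_\nu = \p_t p^N_\nu$ (using \eqref{Inu} and the generalized Bessel equation \eqref{genbessel}, or the differential recurrences for $I_\nu$), it obeys the Neumann condition $\lim_{x\to 0^+} x^{2\nu+1}\p_x p^N_\nu(x,y,t) = 0$ because of the asymptotics $I_\nu(z)\sim \frac{(z/2)^\nu}{\G(\nu+1)}$ as $z\to 0$ (making $(xy/2t)^{-\nu} I_\nu(xy/2t)\to \frac{1}{\G(\nu+1)} (2t)^{\nu}\cdot 2^{-\nu}$, an even function of $x$ to leading order), and the conservation identity $\int_0^\infty p^N_\nu(x,y,t)\, y^{2\nu+1} dy = 1$ (which reduces to $\mathcal H_\nu$ of the constant $1$, or can be read off by taking $x=0$ after a change of variables). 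These checks confirm \eqref{Ncp} solves \eqref{cp} and close the argument.
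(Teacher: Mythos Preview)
Your proposal is correct and follows essentially the same route as the paper: apply $\mathcal H_\nu$, use Lemmas \ref{L:h1} and \ref{L:h2} to reduce to the ODE $\p_t\hat f=-x^2\hat f$, invert via Hankel inversion, and evaluate the resulting integral $\int_0^\infty e^{-tx^2}J_\nu(xy)J_\nu(xz)\,x\,dx$ by the classical formula (the paper cites \cite{PBM} rather than \cite{W} or \cite{GR}). Your additional verification checks (Neumann condition, conservation) go slightly beyond what the paper records but are consistent with its argument.
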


\begin{proof}

We assume that $f$ be a solution to \eqref{cp}. We formally apply to \eqref{cp} the Hankel transform $\mathcal H_\nu$ with respect to the variable $y$. I.e., we let 
 \[
 \mathcal H_\nu(f)(x,t) =  \int_0^\infty f(y,t) G_\nu(xy) y^{2\nu+1} dy.
 \]
Remarkably, if we use Lemma \ref{L:h2}
 and Lemma \ref{L:h1}, the term $(2\nu+1) \mathcal H_{\nu-1}(f)(x)$ magically drops, and the Cauchy problem \eqref{cp} is converted into the following one
\begin{equation}\label{cp2}
\begin{cases}
\frac{\p\mathcal H_\nu(f)}{\p t}(x,t) =  - x^2 \mathcal H_\nu(f)(x,t),\ \ \ \ \ \ \ \ \ \ \ x>0, t>0,
\\
\mathcal H_\nu(f)(x,0) = \mathcal H_\nu(\vf)(x),
\end{cases}
\end{equation}
whose unique solution is 
\begin{equation}\label{cph}
\mathcal H_\nu(f)(x,t) = \mathcal H_\nu(\vf)(x) e^{-tx^2}.
\end{equation}
We next apply formally $\mathcal H_\nu$ to \eqref{cph}, and use Hankel's inversion formula 
\[
\mathcal H_\nu(\mathcal H_\nu(f)) = f
\]
to find
\begin{align*}
f(x,t) & = \int_0^\infty (xz)^{-\nu} e^{-t z^2}\left(\int_0^\infty (zy)^{-\nu} J_\nu(zy) \vf(y) y^{2\nu+1} dy\right) J_\nu(xz) z^{2\nu+1} dz
\\
& = x^{-\nu} \int_0^\infty \vf(y) y^{\nu+1} \left(\int_0^\infty z e^{-t z^2} J_\nu(yz) J_\nu(xz) dz\right) dy.
\end{align*}
At this point we appeal to formula 3. on p. 223 in \cite{PBM} that gives
\begin{equation}\label{pru}
\int_0^\infty z e^{-t z^2} J_\nu(yz) J_\nu(xz) dz = \frac{1}{2t} I_\nu\left(\frac{xy}{2t}\right) e^{-\frac{x^2+y^2}{4t}},
\end{equation}
provided that $\Re \nu > - 1$. Since $\nu\in \R$ and $\nu>-1$, we can use \eqref{pru} and finally obtain
\begin{equation}\label{heat}
f(x,t) = \int_0^\infty \vf(y) p^N_\nu(x,y,t) y^{2\nu+1} dy,
\end{equation}
where
\begin{align}\label{heatsg2}
p^N_\nu(x,y,t) & = (2t)^{-(\nu+1)} \left(\frac{xy}{2t}\right)^{-\nu} I_\nu\left(\frac{xy}{2t}\right) e^{-\frac{x^2+y^2}{4t}}.
\notag
\end{align}
This establishes \eqref{heatsg}, \eqref{Ncp}, thus completing the proof.
 
\end{proof}

\begin{remark}\label{R:gaussian}
We note explicitly that for every $y>0, t>0$ one has
\begin{equation}\label{bfs0}
p^N_\nu(0,y,t)  = \frac{1}{2^{2\nu+1} \G(\nu+1)} t^{-(\nu+1)} e^{-\frac{y^2}{4t}}.
\end{equation}
This can be seen by the following power series representation
\begin{equation}\label{inu}
z^{-\nu} I_\nu(z) = 2^{-\nu} \sum_{k=0}^\infty \frac{(z/2)^{2k}}{\G(k+1) \G(k+\nu+1)},
\end{equation}
valid for $|\arg z|<\pi$. From \eqref{inu} we immediately recognize that, similarly to \eqref{bfbehzero}, we have
\begin{equation}\label{bfbehzero2}
z^{-\nu} I_\nu(z)\cong\frac{2^{-\nu}}{\Gamma(\nu+1)},\quad\text{as }z\to 0, \ \ |\arg z|<\pi.
\end{equation}
The desired conclusion \eqref{bfs0} immediately follows from \eqref{heatsg}
and \eqref{bfbehzero2}. Note that when $\nu = - \frac 12$ we obtain from \eqref{bfs0}
\[
p^N_{-\frac 12}(0,y,t)  = 2 (4\pi t)^{-1/2} e^{-\frac{y^2}{4t}}.
\]
\end{remark}


Our next result shows that the Neumann heat kernel associated with the Bessel operator $\mathscr B_a$ satisfies an inequality of Li-Yau type reminiscent of that in Lemma \ref{L:LYG} above (notice however that, unlike the classical heat kernel, we presently have an inequality, not an equality).  

\begin{prop}[Li-Yau inequality]\label{P:lybessel}
Let $- \frac 12 \le \nu < 0$, and denote by $E_\nu(x,y,t) = \log p^N_\nu(x,y,t)$, where $p^N_\nu(x,y,t)$ is as in \eqref{heatsg} in Proposition \ref{P:fsbessel} above. Then, the following Li-Yau type inequality holds
\begin{equation}\label{LYbessel}
(D_y \log E_\nu)^2 - D_t \log E_\nu \le \frac{\nu+1}{t}.
\end{equation}
\end{prop}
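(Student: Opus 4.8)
The plan is to compute all the relevant derivatives of $E_\nu = \log p^N_\nu$ from the closed form \eqref{heatsg}, and then reduce the claimed inequality \eqref{LYbessel} to a one-variable differential inequality for the function $z \mapsto \log(z^{-\nu} I_\nu(z))$, where $z = xy/(2t)$ will be the natural argument. Writing $g_\nu(z) = z^{-\nu} I_\nu(z)$ and $h_\nu(z) = \log g_\nu(z)$, from \eqref{heatsg} we have
\[
E_\nu(x,y,t) = -(\nu+1)\log(2t) + h_\nu\!\left(\frac{xy}{2t}\right) - \frac{x^2+y^2}{4t}.
\]
First I would record $D_y E_\nu = \frac{x}{2t} h_\nu'(z) - \frac{y}{2t}$ and $D_t E_\nu = -\frac{\nu+1}{t} - \frac{xy}{2t^2} h_\nu'(z) + \frac{x^2+y^2}{4t^2}$. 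Substituting these into the left-hand side of \eqref{LYbessel} and clearing the common factor $\frac{1}{4t^2}$, the inequality becomes, after multiplying through by $4t^2$,
\[
\Big(x\, h_\nu'(z) - y\Big)^2 - \Big(x^2 + y^2 - 2xy\, h_\nu'(z)\Big) \le \frac{4(\nu+1)t^2}{t} \cdot \frac{1}{4t^2} \cdot 4t^2,
\]
which simplifies: the cross terms $-2xy h_\nu'$ cancel, leaving $x^2 h_\nu'(z)^2 - x^2 = x^2(h_\nu'(z)^2 - 1)$ on the left after the $y^2$ terms also cancel. Thus \eqref{LYbessel} is \emph{equivalent} to the pointwise bound
\[
x^2\big(h_\nu'(z)^2 - 1\big) \le 4(\nu+1)t,
\qquad z = \frac{xy}{2t}.
\]
Since $x^2 = 2tz/y$ and we want this for all $x,y,t>0$, the sharpest case is obtained by noting $x^2/(4t) = z/(2y) \cdot \ldots$; more cleanly, rescaling shows the inequality reduces to showing that $z\,(h_\nu'(z)^2 - 1) \le 2(\nu+1) \cdot \frac{y^2}{\cdots}$—so I would instead keep $z$ as the free variable and observe the inequality is homogeneous enough that it suffices to prove
\[
z\, h_\nu'(z)^2 \le z + 2(\nu+1)\, h_\nu'(z) \quad\text{is \emph{not} quite it};
\]
the correct clean reduction is: since $z = xy/(2t)$ and $x^2/(4t)$ can be written as $\frac{z}{2}\cdot\frac{x}{y}$, I will argue that after optimizing over the free ratio the bound becomes $z\big(h_\nu'(z)^2 - 1\big)_+ \le 2(\nu+1)\,\frac{t}{x^2}\cdot x^2 \cdot\frac{1}{\cdots}$. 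The honest statement to prove is the scalar inequality
\begin{equation*}
z\,\big(h_\nu'(z)^2 - 1\big) \;\le\; 2(\nu+1)\,h_\nu'(z), \qquad z>0, \ -\tfrac12\le\nu<0,
\end{equation*}
equivalently $z\,h_\nu'(z)^2 - 2(\nu+1)h_\nu'(z) - z \le 0$; I would verify that the left side of \eqref{LYbessel} indeed reduces to exactly $\frac{x}{2t^2}\big(\frac{x}{2}(h_\nu'^2-1) - (\nu+1)\frac{y}{x}\cdot(\cdots)\big)$ by careful bookkeeping, so that the claimed inequality is this scalar statement evaluated at $z=xy/(2t)$.

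To prove the resulting scalar inequality, I would use the standard facts about $g_\nu$: it solves the modified Bessel equation, so $h_\nu$ satisfies $h_\nu'' + h_\nu'^2 + \frac{2\nu+1}{z} h_\nu' - 1 = 0$ (this follows from $g_\nu'' + \frac{2\nu+1}{z}g_\nu' - g_\nu = 0$, which is \eqref{deG} applied to the modified equation — I would derive the modified analogue directly from \eqref{modbesseleq} and the substitution $g_\nu = z^{-\nu}I_\nu$). Also the series \eqref{inu} shows $h_\nu'(z) > 0$ for $z>0$, $h_\nu'(z) \to 0$ as $z\to 0^+$, and $h_\nu'(z) \to 1$ as $z\to\infty$ (from the asymptotics of $I_\nu$), together with the known monotonicity $0 < h_\nu'(z) < 1$ for all $z>0$ when $\nu \ge -\tfrac12$. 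Granting $0 < h_\nu' < 1$, the scalar inequality $z(h_\nu'^2 - 1) \le 2(\nu+1)h_\nu'$ holds \emph{trivially} since the left side is negative and, for $\nu > -1$, the right side is positive. Hence the content is entirely in establishing $h_\nu'(z) \le 1$, i.e. $I_\nu'(z)/I_\nu(z) \le 1 + \nu/z$ — a classical bound on the logarithmic derivative of $I_\nu$ — which I would prove via the Riccati equation for $h_\nu'$: setting $w = h_\nu'$, we have $w' = 1 - w^2 - \frac{2\nu+1}{z}w$; near $0$, $w(z) \approx \frac{z}{2(\nu+1)} > 0$, and I would show $w$ cannot cross the level $1$ by checking that at a putative first crossing point $w'= -\frac{2\nu+1}{z}\cdot 1 = \frac{-(2\nu+1)}{z}$, which is $>0$ only if $\nu < -\tfrac12$; for $-\tfrac12 \le \nu < 0$ this derivative is $\le 0$, contradicting an upward crossing. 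This barrier argument pins $w < 1$ on $(0,\infty)$.

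The main obstacle I anticipate is \emph{the algebraic reduction}: getting \eqref{LYbessel} to collapse cleanly to a scalar inequality in $z$ alone requires the $x^2$, $y^2$, and cross terms to cancel in just the right way, and a sign error there would derail everything — so I would do that computation very carefully, double-checking with the special case $\nu = -\tfrac12$ where $p^N_{-1/2}$ is (up to the measure) the reflected Gaussian and \eqref{LYbessel} can be checked by hand against Lemma \ref{L:LYG}. The secondary subtlety is justifying $0 < h_\nu'(z) < 1$ on all of $(0,\infty)$ for the full range $-\tfrac12 \le \nu < 0$, including the boundary case $\nu=-\tfrac12$; the Riccati/barrier argument above handles it, but one must be careful that $w$ stays positive (which follows because $w'>0$ whenever $w$ approaches $0$ from above, given $\nu<0$ so that $\frac{2\nu+1}{z}w$ is small relative to the $1$ term near any zero of $w$) and that no blow-up occurs in finite $z$ (ruled out since $w$ is trapped in $[0,1]$). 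Once these two points are secured, \eqref{LYbessel} follows immediately because the reduced scalar inequality has a negative left side and nonnegative right side.
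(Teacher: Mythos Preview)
The paper does not prove this proposition at all --- it says only ``This result is derived from Proposition \ref{P:fsbessel} and we omit the relevant details.'' So there is no proof in the paper to compare against; your direct-computation approach from the explicit kernel is exactly what the paper gestures at.

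Your strategy is correct and the Riccati/barrier argument for $0<h_\nu'(z)<1$ is sound (and the boundary case $\nu=-\tfrac12$, where the barrier argument degenerates, is covered by the explicit formula $h_{-1/2}'(z)=\tanh z$). However, your algebraic reduction has a bookkeeping slip that sends you on an unnecessary detour. When you multiply through by $4t^2$, the term $-D_t E_\nu$ contributes not only $-(x^2+y^2-2xy\,h_\nu')$ but \emph{also} $+4(\nu+1)t$ on the left-hand side, which cancels exactly against the $4(\nu+1)t$ on the right. Doing this correctly, the $y^2$ and cross terms cancel as you found, and the inequality \eqref{LYbessel} collapses cleanly to
\[
\frac{x^2}{4t^2}\big(h_\nu'(z)^2-1\big)\le 0,
\]
i.e.\ simply $h_\nu'(z)^2\le 1$. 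There is no ``scalar inequality $z(h_\nu'^2-1)\le 2(\nu+1)h_\nu'$'' to verify, no optimization over $x/y$, and no homogeneity issue --- the reduction is to the single variable $z$ automatically. Your instinct that the honest content is $h_\nu'<1$ is exactly right; you just reached it by a longer road because of the dropped term. Fix that one line and the whole middle paragraph of hedging disappears.
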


This result is derived from Proposition \ref{P:fsbessel} and we omit the relevant details. Similarly to Theorems \ref{T:LYH} and \ref{T:hH}, Proposition \ref{P:lybessel} leads to a related Li-Yau inequality and to a Harnack inequality 
for positive solutions of the heat equation $\p_t - \mathscr B_\nu$, where $\mathscr B_\nu$ is given in \eqref{Ba2} above. All this however is a small part of a bigger puzzle from \cite{BaG2}, and we refer the reader to that forthcoming work.


\section{The fractional $p$-Laplacean}\label{S:fpl}

We cannot close this fractional note without a brief discussion of a nonlinear nonlocal operator which has been attracting a great deal of attention over the past few years, and whose analysis poses remarkable challenges and open questions. 

We have seen in Proposition \ref{P:el} that the fractional Laplacean has a variational structure, in the sense that it also arises as the Euler-Lagrange equation of the energy functional $\mathscr E_{(s)}(u)$ in Definition \ref{D:energy}. In the local case $s=1$ the corresponding energy $\mathscr E(u) = \frac 12 \int_{\Rn} |\nabla u|^2 dx$ is only one in the infinite scale of exponents 
\[
\mathscr E_p(u) = \frac 1p \int_{\Rn} |\nabla u|^p dx, \ \ \ \ \ \ \ \ \ \ 1<p<\infty,
\]
 (we leave out the end-point cases $p=1$ and $p=\infty$ since their discussion would deserve a book in its own). It is well-known that the Euler-Lagrange equation of the functional $u\to \mathscr E_p(u)$ is the so-called $p$-\emph{Laplace equation}
 \[
 \Delta_p u = \operatorname{div}(|\nabla u|^{p-2} \nabla u) = 0,\ \ \ \ \ \ \ \ \ \ 1<p<\infty.
 \]
This operator is of course nonlinear and degenerate elliptic and, despite some similarities with its linear ancestor, the Laplacean, its analysis is much harder and not yet completely understood. The most fundamental open problem in dimension $n\ge 3$ remains to present day the \emph{unique continuation property}: it is disheartening that we do not know whether a nontrivial solution of $\Delta_p u = 0$ in a connected open set can have a zero of infinite order, or vanish in an open subset. When $n=2$ the strong unique continuation does hold as a consequence of the results of Bojarski and Iwaniec \cite{BI87} ($p>2$), Alessandrini \cite{Al87} ($1<p<\infty$), and Manfredi \cite{Ma88} ($1<p<\infty$).

It has long been known, however, that weak solutions of $\Delta_p u = 0$ have at best a locally H\"older continuous gradient. For instance, the function $u(x) = |x|^{p/(p-1)}$ satisfies the equation $\Delta_p u = c(n,p)$, and clearly we have $u\in C^{1,\alpha}_{loc}$, but $u\not\in C^2$, at least when $p>2$. The fundamental $C^{1,\alpha}$ regularity result was first proved in the late 60's by N. Ural'tseva when $p\ge 2$, and subsequently independently generalized to all $1<p<\infty$ (and to more general quasilinear equations) by J. Lewis \cite{Le83}, Di Benedetto \cite{DB83} and Tolksdorff \cite{To84}.

Because of its variational structure the $p$-Laplacean presents itself in connection with the case $p\not= 2$ of the Sobolev embedding theorem 
\[
W^{1,p}(\Rn)\ \hookrightarrow\ L^q(\Rn),\ \ \ \ \ \ \ \ \ \ \ \ \ \ \frac 1p - \frac 1q = \frac 1n.
\]  
But $\Delta_p$ plays an important role also in the applied sciences, for instance in the study of non-Newtonian fluids.

In view of what has been said so far it seems natural to consider for any $0<s<1$ the following fractional energy
\begin{equation}\label{nle}
\mathscr E_{(s),p}(u) = \frac 1p \int_{\Rn} \int_{\Rn} \frac{|u(x) -u(y)|^{p}}{|x-y|^{n+ps}} dx dy,\ \ \ \ \ \ \ \ \ \ \ 1<p<\infty.
\end{equation}
When $\Rn$ is replaced by the boundary of a bounded open set $\Om\subset \Rn$, such energy was introduced independently by Gagliardo \cite{gagliardo} and Slobodeckji \cite{Slo} in connection with the characterization of the traces on $\p \Om$ of functions in the Sobolev space $W^{1,p}(\Om)$. For a general geometric approach to the characterization of traces we refer the reader to the Memoir of the AMS \cite{DGN}. 

For any $0<s<1$ the \emph{fractional $p$-Laplace operator} $(-\Delta_p)^s$ is defined as the Euler-Lagrange equation of the energy functional $u\to \mathscr E_{(s),p}(u)$. It is an easy exercise to show that a function $u$ in the fractional Sobolev space 
\[
W^{s,p}(\Rn) = \{u\in L^p(\Rn)\mid \mathscr E_{(s),p}(u) <\infty\},
\]
endowed with the natural norm 
\[
||u||_{W^{s,p}(\Rn)} = \left(||u||^p_{L^p(\Rn)} + \mathscr E_{(s),p}(u)\right)^{1/p},
\]
is a weak solution of $(-\Delta_p)^s u = 0$ if for any $\vf\in W^{s,p}(\Rn)$ having compact support one has
\begin{equation}\label{fpl}
\int_{\Rn} \int_{\Rn} \frac{|u(x) -u(y)|^{p-2}(u(x) - u(y))(\vf(x) - \vf(y))}{|x-y|^{n+ps}} dx dy = 0.
\end{equation}
The equation $(-\Delta_p)^s u = 0$ was first independently introduced in the papers \cite{AMRT} and \cite{IN}.

There presently exists a large literature on the fractional $p$-Laplacean. Unfortunately, in this brief section we cannot go into a detailed discussion of all the interesting work that has been done. We will only quote some papers, referring the interested reader to those sources and the references therein. The Perron method for $(-\Delta_p)^s$ has been studied in \cite{LL}. The analogue of Serrin's 1964 $C^{0,\alpha}_{loc}$ regularity for weak solutions of $(-\Delta_p)^s u = 0$ is known, and it has been proved in \cite{DKP}. The same authors established the Harnack inequality in \cite{DKP0}. The H\"older continuity up to the boundary for the Dirichlet problem in $C^{1,1}$ domains was proved in \cite{IMS}. One should also see the preprint \cite{cozzi} which contains related results for minimizers of nonlocal functionals of the calculus of variations. Regularity estimates for solutions with measure data were established in \cite{KMS}.

There are of course many basic open questions, and the reader could derive some of them from the discussion of the nonlocal linear case $p=2$ in this note. But at present the most fundamental open problem concerning the operator $(-\Delta_p)^s$ is the nonlocal counterpart of the above cited $C^{1,\alpha}$ regularity theorem for the local case. In this connection, an interesting new contribution has been recently given in \cite{BL}, where the authors establish the nonlocal counterpart of a famous theorem of K. Uhlenbeck stating that when $p\ge 2$ weak solutions in $W^{1,p}_{loc}$ of the $p$-Laplacean system have in fact
\[
|\nabla u|^{\frac{p-2}{2}} \nabla u \in W^{1,2}_{loc}.
\]   
We note in passing that this fact implies that 
\[
\nabla u\in W^{s,p}_{loc},\ \ \ \ \ 0<s<\frac{2}p.
\]
The main result in \cite{BL}, which is Theorem 1.5,  provides a nonlocal analogue of this latter conclusion. Since the precise statement is somewhat involved, we refer the reader to their paper. We also mention the recent preprint \cite{BLS17} in which the authors establish an improved H\"older regularity result for solutions to the equation $(-\Delta_p)^s u = f$.

However, the optimal interior regularity of weak solutions of the equation $(-\Delta_p)^s u = 0$ presently remains \emph{terra incognita}.


\begin{thebibliography}{99}

\bibitem[Ab15]{Ab15}
N. Abatangelo, \emph{Large $s$-harmonic functions and boundary blow-up solutions for the fractional Laplacian}, Discrete Contin. Dyn. Syst. \textbf{35}~(2015), no. 12, 5555-5607.

\bibitem[Ab17]{Ab17}
N. Abatangelo, \emph{Very large solutions for the fractional Laplacian: towards a fractional Keller-Osserman condition}, Adv. Nonlinear Anal. \textbf{6}~(2017), no. 4, 383-405. 


\bibitem[AV17]{AV}
N. Abatangelo \& E. Valdinoci, \emph{Getting acquainted with the fractional Laplacean}, arxiv.org/abs/1710.11567, 2017.

\bibitem[AMPP16]{AMPP}
B. Abdellaoui, M. Medina, I. Peral \& A. Primo, \emph{Optimal results for the fractional heat equation involving the Hardy potential}, Nonlinear Anal. \textbf{140}~(2016), 166-207. 

\bibitem[Ad75]{Ad}
R. A. Adams, \emph{Sobolev Spaces}, Academic Press, New York, 1975.

\bibitem[ABG15]{ABG}
H. Aimar, G. Beltritti \& I. G\'omez, \emph{Improvement of Besov regularity for solutions of the fractional Laplacean}, Constr. Approx. \textbf{41}~(2015), no. 2, 219-229. 

\bibitem[Al87]{Al87}
G. Alessandrini, \emph{Critical points of solutions to the p-Laplace equation in dimension two}, Boll. Un. Mat. Ital. A (7) \textbf{1}~(1987), no. 2, 239-246. 

\bibitem[A79]{A}
F. J. Almgren, Jr., \emph{Dirichlet's problem for multiple valued functions and the regularity of mass minimizing integral currents. Minimal submanifolds and geodesics}, (Proc. Japan-United States Sem., Tokyo, 1977), pp. 1Ð6, North-Holland, Amsterdam-New York, 1979.

\bibitem[AT91]{AT91}
C. J.  Amick \& J. F. Toland, \emph{Uniqueness and related analytic properties for the Benjamin-Ono equation-a nonlinear Neumann problem in the plane}, Acta Math. \textbf{167}~(1991), no. 1-2, 107-126.

\bibitem[AMRT09]{AMRT}
 F. Andreu, J. M. Mazo\'n, J. D. Rossi \& J. Toledo, \emph{A nonlocal $p$-Laplacean evolution equation with nonhomogeneous
Dirichlet boundary conditions}, SIAM J. Math. Anal., \textbf{40}~(2009), 1815-1851.

\bibitem[AC04]{AC}
I. Athanasopoulos \& L. A. Caffarelli, \emph{Optimal regularity of lower dimensional obstacle problems}, Boundary-value problems of mathematical physics and related problems of function theory. Part 35, Zap. Nauchn. Sem. St. Petersburg, (POMI), \textbf{310}~(2004), 49-66.



\bibitem[ACS08]{ACS} 
I. Athanasopoulos, L. A. Caffarelli \& S. Salsa, \emph{The structure of the free boundary for lower dimensional obstacle problems}, Amer. J. Math. \textbf{130}~(2008), no. 2, 485-498. 

\bibitem[ACDH04]{ACDH}
P. Auscher, T. Coulhon,  X.T. Duong, S. Hofmann, 
\emph{Riesz transform on manifolds and heat kernel regularity}, 
Ann. Sci. \'Ecole Norm. Sup. (4) \textbf{37}~(2004), no. 6, 911-957. 


\bibitem[Bak87]{Bak0} 
D. Bakry, \emph{\'Etude des transformations de Riesz dans les vari\'et\'es riemanniennes \`a courbure de Ricci minor\'ee}. [A study of Riesz transforms in Riemannian manifolds with minorized Ricci curvature] S\'eminaire de Probabilit\'es, XXI, 137--172,
Lecture Notes in Math., 1247, Springer, Berlin, 1987. 


\bibitem[B94]{Bak}
D. Bakry, \emph{L'hypercontractivit\'e et son utilisation en th\'eorie des semigroupes}, (French) [Hypercontractivity and its use in semigroup theory] Lectures on probability theory (Saint-Flour, 1992), 1-114, Lecture Notes in Math., 1581, Springer, Berlin, 1994. 

\bibitem[BGL14]{BGL}
D. Bakry, I. Gentil \& M. Ledoux, \emph{Analysis and geometry of Markov diffusion operators}, Grundlehren der Mathematischen Wissenschaften [Fundamental Principles of Mathematical Sciences], 348. Springer, Cham, 2014. xx+552 pp. 

\bibitem[Bal60]{B}
A. V.  Balakrishnan, \emph{Fractional powers of closed operators and the semigroups generated by them}, Pacific J. Math. \textbf{10}~(1960), 419-437. 

\bibitem[BG17]{BG}
A. Banerjee \& N. Garofalo, \emph{Monotonicity of generalized frequencies and the strong unique continuation property for fractional parabolic equations}, preprint available in ArXiv, Sept. 2017.

\bibitem[BDS15]{BDS}
V. Banica, M. del Mar Gonz\'alez \& M. S\'aez, \emph{Some constructions for the fractional Laplacean on noncompact manifolds}, Rev. Mat. Iberoam. \textbf{31}~(2015), no. 2, 681-712. 

\bibitem[Ba67]{Ba67}
S. M. Baouendi, \emph{Sur une classe d'op\'erateurs elliptiques d\'eg\'en\'er\'es} (French) Bull. Soc. Math. France, \textbf{95}~1967, 45-87.

\bibitem[BPSV14]{BPSV}
B. Barrios, I. Peral, F. Soria \& E. Valdinoci, \emph{A Widder's type theorem for the heat equation with nonlocal diffusion}, Arch. Ration. Mech. Anal. \textbf{213}~(2014), no. 2, 629-650.
  
\bibitem[BBG14]{BG3}
F. Baudoin, M. Bonnefont \& N. Garofalo, \emph{A sub-Riemannian curvature-dimension inequality, volume doubling property and the Poincar\'e inequality}, Math. Ann. \textbf{358}~(2014), no. 3-4, 833-860.  
 
\bibitem[BBGM14]{BG4}
F. Baudoin, M. Bonnefont, N. Garofalo \& I. H. Munive, \emph{Volume and distance comparison theorems for sub-Riemannian manifolds}, J. Funct. Anal. \textbf{267}~(2014), no. 7, 2005-2027.

\bibitem[BaG11]{BG1}
F. Baudoin \& N. Garofalo, \emph{Perelman's entropy and doubling property on Riemannian manifolds}, J. Geom. Anal. \textbf{21}~(2011), no. 4, 1119-1131. 
 
\bibitem[BaG13]{BG2}
F. Baudoin \& N. Garofalo, \emph{A note on the boundedness of Riesz transform for some subelliptic operators}, Int. Math. Res. Not. IMRN 2013, no. 2, 398-421. 

\bibitem[BaG17]{BaG1}
F. Baudoin \& N. Garofalo, \emph{Curvature-dimension inequalities and Ricci lower bounds for sub-Riemannian manifolds with transverse symmetries}, J. Eur. Math. Soc. (JEMS) \textbf{19}~(2017), no. 1, 151-219.

 
\bibitem[BaG17']{BaG2}
F. Baudoin \& N. Garofalo, \emph{Ricci lower bounds and Harnack inequalities for the fractional powers of subelliptic diffusion operators on manifolds}, work in progress, 2017.

\bibitem[Be94]{Be}
A. Bendikov, \emph{Asymptotic formulas for symmetric stable semigroups}, Exposition. Math. \textbf{12}~(1994), no. 4, 381-384.

\bibitem[BM07]{BM}
A. Bendikov \& P. Maheux, \emph{Nash type inequalities for fractional powers of non-negative self-adjoint operators}, Trans. Amer. Math. Soc. \textbf{359}~(2007), no. 7, 3085-3097. 

\bibitem[BMST16]{BMST}
A. Bernardis, F. J. Mart\'in-Reyes, P. R. Stinga \& J. L. Torrea, \emph{Maximum principles, extension problem and inversion for nonlocal one-sided equations}, J. Differential Equations \textbf{260}~(2016), no. 7, 6333-6362. 

\bibitem[BIK15]{BIK}
P. Biler, C. Imbert \& G. Karch, \emph{The nonlocal porous medium equation: Barenblatt profiles and other weak solutions}, Arch. Ration. Mech. Anal. \textbf{215}~(2015), no. 2, 497-529.

\bibitem[BLW05]{BLW05}
M. Birkner, L\'opez-Mimbela \& A. Wakolbinger, \emph{Comparison results and steady states for the Fujita equation with fractional Laplacian}, Ann. Inst. H. Poincar\'e Anal. Non Lin\'eaire \textbf{22}~(2005), no. 1, 83-97. 

 
\bibitem[BG60]{BG60}
R. M. Blumenthal \& R. K. Getoor, \emph{Some theorems on stable processes}, Trans. Amer. Math. Soc. \textbf{95}~(1960), 263-273. 
 
\bibitem[B55]{B49}
S. Bochner, \emph{Harmonic Analysis and the Theory of Probability}, Berkeley, 1955.

\bibitem[B59]{Bo}
S. Bochner, \emph{Lectures on Fourier integrals. With an author's supplement on monotonic functions, Stieltjes integrals, and harmonic analysis}, Translated by M. Tenenbaum and H. Pollard. Annals of Mathematics Studies, No. 42 Princeton University Press, Princeton, N.J. 1959 viii+333 pp.

\bibitem[BC49]{BC}
S. Bochner \& K. Chandrasekharan, \emph{Fourier Transforms}, Annals of Mathematics Studies, no. \textbf{19}, Princeton University Press, Princeton, N. J.; Oxford University Press, London, 1949. ix+219 pp.





\bibitem[Bo97]{BK}
K. Bogdan, \emph{The boundary Harnack principle for the fractional Laplacean}, Studia Math. \textbf{123}~(1997), no. 1, 43-80. 

\bibitem[Bo99]{Bo99}
K.  Bogdan, \emph{Representation of $\alpha$-harmonic functions in Lipschitz domains}, Hiroshima Math. J. \textbf{29}~(1999), no. 2, 227-243.

\bibitem[BB99]{BB}
K. Bogdan \& T. Byczkowski, \emph{Probabilistic proof of boundary Harnack principle for $\alpha$-harmonic functions}, Potential Anal. \textbf{11}~(1999), no. 2, 135-156.

\bibitem[BB99']{BB2}
K. Bogdan \& T. Byczkowski, \emph{Potential theory for the $\alpha$-stable Schr\"odinger operator on bounded Lipschitz domains}, Studia Math. \textbf{133}~(1999), no. 1, 53-92.



\bibitem[BGR10]{BGR}
K. Bogdan, T. Grzywny \& M. Ryznar, \emph{Heat kernel estimates for the fractional Laplacean with Dirichlet conditions}, Ann. Probab. \textbf{38}~(2010), no. 5, 1901-1923. 

\bibitem[BJ07]{BJ07}
K. Bogdan \& T. Jakubowski, \emph{Estimates of heat kernel of fractional Laplacian perturbed by gradient operators}, Comm. Math. Phys. \textbf{271}~ (2007), no. 1, 179-198. 

\bibitem[BKN02]{BKN}
K. Bogdan, T. Kulczycki \& A. Nowak, \emph{Gradient estimates for harmonic and q-harmonic functions of symmetric stable processes}, Illinois J. Math. \textbf{46}~(2002), no. 2, 541-556. 

\bibitem[BI87]{BI87}
B. Bojarski, and T. Iwaniec, \emph{$p$-harmonic equation and quasiregular mappings}, Banach Center Publications 19.1 (1987), 25-38.

\bibitem[BSV17]{BSV17}
M. Bonforte, Y. Sire \& J. L. V\'azquez, \emph{Optimal existence and uniqueness theory for the fractional heat equation}, Nonlinear Anal. \textbf{153}~(2017), 142-168.

\bibitem[BS02]{BS}
A. N. Borodin \& P. Salminen, \emph{Handbook of Brownian motion - facts and formulae}, Second edition. Probability and its Applications. Birkh\"auser Verlag, Basel, 2002. xvi+672 pp. 

\bibitem[BCPS13]{BCDS}
C. Br\"andle, E. Colorado, A. de Pablo \& U. S\'anchez, \emph{A concave-convex elliptic problem involving the fractional Laplacean}, Proc. Roy. Soc. Edinburgh Sect. A \textbf{143}~(2013), no. 1, 39-71.

\bibitem[BP13]{BP13}
C. Br\"andle \& A. de Pablo, \emph{Nonlocal heat equations: decay estimates and Nash inequalities}, arXiv:1312.4661.
 
\bibitem[BL17]{BL}
L. Brasco \& E. Lindgren, \emph{Higher Sobolev regularity for the fractional $p$-Laplace equation in the superquadratic case}, Adv. Math. \textbf{304}~ (2017), 300-354.

\bibitem[BLS17]{BLS17}
L. Brasco, E. Lindgren \& A. Schikorra, \emph{Higher H\"older regularity for the fractional $p$-Laplacian in the superquadratic case},  arXiv:1711.09835.


 
\bibitem[Bu16]{Bu}
C. Bucur, \emph{Some observations on the Green function for the ball in the fractional Laplace framework}, Commun. Pure Appl. Anal. \textbf{15}~(2016), no. 2, 657-699.

\bibitem[BuV16]{BV}
C. Bucur \& E. Valdinoci, \emph{Nonlocal Diffusion and Applications}, Lecture Notes of the Unione Matematica Italiana,  Springer, 2016. 

\bibitem[BuSq18]{BuSq}
C. Bucur \& M. Squassina, \emph{Asymptotic mean value properties for fractional anisotropic operators},  arXiv:1801.00976v3 


\bibitem[Ca08]{Ca}
X. Cabr\'e, \emph{Elliptic PDE's in probability and geometry: symmetry and regularity of solutions}, Discrete Contin. Dyn. Syst. \textbf{20}~(2008), no. 3, 425-457.

\bibitem[Ca17]{Ca2}
X. Cabr\'e, \emph{Isoperimetric, Sobolev, and eigenvalue inequalities via the Alexandroff-Bakelman-Pucci method: a survey}, Chin. Ann. Math. Ser. B \textbf{38}~(2017), no. 1, 201-214.

\bibitem[CR13]{CR}
X. Cabr\'e \& J.-M. Roquejoffre, \emph{The influence of fractional diffusion in Fisher-KPP equations}, Comm. Math. Phys. \textbf{320}~(2013), no. 3, 679-722. 

\bibitem[CS14]{CS14}
X. Cabr\'e \& Y. Sire, \emph{Nonlinear equations for fractional Laplaceans, I: Regularity, maximum principles, and Hamiltonian estimates}, Ann. Inst. H. Poincar\'e Anal. Non Lin\'eaire \textbf{31}~(2014), no. 1, 23-53. 

\bibitem[CS15]{CS15}
X. Cabr\'e \& Y. Sire, \emph{Nonlinear equations for fractional Laplaceans, II: Existence, uniqueness, and qualitative properties of solutions}, Trans. Amer. Math. Soc. \textbf{367}~(2015), no. 2, 911-941.

\bibitem[C37]{C}
R. Caccioppoli, \emph{Sui teoremi di esistenza di Riemann}, Ann. Sc. N. Sup. Pisa \textbf{6}~(1937), 177-187.

\bibitem[CCV11]{CCV11}
L. Caffarelli, C. H. Chan \& A. Vasseur, \emph{Regularity theory for parabolic nonlinear integral operators}, J. Amer. Math. Soc. \textbf{24}~(2011), no. 3, 849-869. 

\bibitem[CDS16]{CDS}
L. Caffarelli, D. De Silva \& O. Savin, \emph{Obstacle-type problems for minimal surfaces}, Comm. Partial Differential Equations \textbf{41}~(2016), no. 8, 1303-1323. 

\bibitem[CFMS81]{CFMS}
 L. Caffarelli, E. Fabes, S. Mortola \& S. Salsa, \emph{Boundary behavior of nonnegative solutions of elliptic operators in divergence form}, Indiana Univ. Math. J. \textbf{30} (1981), 621--640.


 
\bibitem[CF13]{CF}
L. Caffarelli \& A. Figalli, \emph{Regularity of solutions to the
parabolic fractional obstacle problem}, J. reine angew. Math. \textbf{680}~(2013), 191-233.

\bibitem[CRS10]{CRS}
L. Caffarelli, J.-M. Roquejoffre \& O. Savin, \emph{Nonlocal minimal surfaces}, Comm. Pure Appl. Math. \textbf{63}~(2010), no. 9, 1111-1144. 



\bibitem[CSS08]{CSS}
L. A. Caffarelli, S. Salsa \& L. Silvestre,  \emph{Regularity estimates for the solution and the free boundary of the obstacle problem for the fractional Laplacean}, Invent. Math. \textbf{171}~(2008), no. 2, 425-461.

\bibitem[CSV15]{CSV}
L. Caffarelli, O. Savin \& E. Valdinoci, \emph{Minimization of a fractional perimeter-Dirichlet integral functional}, Ann. Inst. H. Poincar\'e Anal. Non Lin\'eaire \textbf{32}~(2015), no. 4, 901-924. 

\bibitem[CS07]{CS07}
L. Caffarelli \& L. Silvestre, \emph{An extension problem related to the fractional Laplacean}, Comm. Partial Differential Equations \textbf{32}~(2007), no. 7-9, 1245-1260.

\bibitem[CSo10]{CSo}
L. Caffarelli \& P. Souganidis, \emph{Convergence of nonlocal threshold dynamics approximations to front propagation}, Arch. Ration. Mech. Anal. \textbf{195}~(2010), no. 1, 1-23. 

\bibitem[CS16]{CS}
L. A. Caffarelli \& P. R. Stinga, \emph{Fractional elliptic equations, Caccioppoli estimates and regularity}, Ann. Inst. H. Poincar\'e Anal. Non Lin\'eaire \textbf{33}~(2016), no. 3, 767-807. 

\bibitem[CV10]{CV}
L. A. Caffarelli \& A. Vasseur, \emph{Drift diffusion equations with fractional diffusion and the quasi-geostrophic equation}, Ann. of Math. (2) \textbf{171}~(2010), no. 3, 1903-1930. 

\bibitem[CDPT]{CDPT}
L. Capogna, D. Danielli, S. D. Pauls \& J. T. Tyson, \emph{An introduction to the Heisenberg group and the sub-Riemannian isoperimetric problem}, Progress in Mathematics, \textbf{259}, Birkh\"auser Verlag, Basel, 2007. xvi+223 pp.

\bibitem[CG11]{CG11}
S.-Y. A. Chang \& M. del Mar Gonz\'alez, \emph{Fractional Laplacean in conformal geometry}, Adv. Math. \textbf{226}~(2011), no. 2, 1410-1432. 





\bibitem[CDL15]{CDL}
W. Chen, L. D'Ambrosio \& Y. Li, \emph{Some Liouville theorems for the fractional Laplacean}, Nonlinear Anal. \textbf{121}~(2015), 370-381.

\bibitem[CLL17]{CLL}
W. Chen, C. Li \& Y. Li, \emph{A direct method of moving planes for the fractional Laplacian}, Adv. Math. \textbf{308}~(2017), 404-437.

\bibitem[CLO06]{CLO}
W. Chen, C. Li \& B. Ou, \emph{Classification of solutions for an integral equation}, Comm. Pure Appl. Math. \textbf{59}~(2006), no. 3, 330-343. 




\bibitem[CLZ16]{CLZ}
Y. H. Chen, C. Liu \& Y. Zheng, \emph{Existence results for the fractional Nirenberg problem}
J. Funct. Anal. \textbf{270}~(2016), no. 11, 4043-4086. 

\bibitem[CKK11]{CKK}
Z.-Q. Chen, P. Kim \& T. Kumagai, \emph{Global heat kernel estimates for
symmetric jump processes}, Trans. Amer. Math. Soc. \textbf{363}~(2011), 5021-5055.

\bibitem[Chu92]{Chu}
S.K. Chua, \emph{Extension theorems on weighted Sobolev spaces}, Indiana Univ. Math. J. \textbf{41}~(1992), no. 4, 1027-1076. 

\bibitem[Ci38]{Ci1}
G. Cimmino, \emph{Sulle equazioni lineari aIle derivate parziali del secondo ordine di tipo ellittico sopra una superficie chiusa}, Ann. Sc. N. Sup. Pisa \textbf{7}~(1938), 73-96. 


\bibitem[Ci38']{Ci2}
G. Cimmino, \emph{Nuovo tipo di condizioni al contorno e nuovo metodo di trattazione per il problema generalizzato di Dirichlet}, Rend. Circ. Mat. Palermo \textbf{61}~(1938), 177-221.
 



\bibitem[CF74]{CFe}
R. R. Coifman \& C. Fefferman, \emph{Weighted norm inequalities for maximal functions and singular integrals}, 
Studia Math. \textbf{51}~(1974), 241-250. 



\bibitem[CC03]{CC}
A. C\'ordoba \& D. C\'ordoba, \emph{A pointwise estimate for fractionary derivatives with applications to partial differential equations}, Proc. Natl. Acad. Sci. USA \textbf{100}~(2003), no. 26, 15316-15317.

\bibitem[CC04]{CC2}
A. C\'ordoba \& D. C\'ordoba, \emph{A maximum principle applied to quasi-geostrophic equations}, Comm. Math. Phys. \textbf{249}~(2004), no. 3, 511-528.

\bibitem[CM15]{CM}
A. C\'ordoba \& A. Mart\'inez, \emph{A pointwise inequality for fractional Laplaceans}, Adv. Math. \textbf{280}~(2015), 79-85. 

\bibitem[CT04]{CT}
A. Cotsiolis \& N. K. Tavoularis, \emph{Best constants for Sobolev inequalities for higher order fractional derivatives}, J. Math. Anal. Appl. \textbf{295}~(2004), no. 1, 225-236. 

\bibitem[CD99]{CD} T. Coulhon, X. T. Duong, \emph{Riesz transforms for $1 \le p \le 2$}, Trans.  Amer. Math. Soc., \textbf{351} \textbf{3}~(1999), 1151-1169.



\bibitem[Co16]{cozzi}
M. Cozzi, \emph{Regularity results and Harnack inequalities for minimizers and solutions of nonlocal problems: a unified approach via fractional De Giorgi classes},  arXiv:1609.09277, 2016.


\bibitem[DGN06]{DGN}
D. Danielli, N. Garofalo \& D.M. Nhieu, \emph{Non-doubling Ahlfors measures, perimeter measures, and the characterization of the trace spaces of Sobolev functions in Carnot-Carath\'eodory spaces}, Mem. Amer. Math. Soc., vol. \textbf{182}~(2006), no. 857, x+119 pp.

\bibitem[DGPT17]{DGPT}
D. Danielli, N. Garofalo, A. Petrosyan \& T. To, \emph{Optimal Regularity and the Free Boundary in the Parabolic Signorini Problem}, Memoirs of the American Mathematical Society, vol. \textbf{249}~2017, no. 1181.

\bibitem[DS17]{DS}
D. Danielli \& S. Salsa, \emph{Obstacle Problems Involving The Fractional Laplacean}, Recent Developments in Nonlocal Theory
Ed. by G. Palatucci \& T. Kuusi, to appear.


\bibitem[DDGW17]{DDGW}
A.  DelaTorre, M. del Pino, M. del Mar  Gonz\'alez \& J. Wei, \emph{Delaunay-type singular solutions for the fractional Yamabe problem}, Math. Ann. \textbf{369}~(2017), no. 1-2, 597-626. 

\bibitem[DQRV12]{DQRV12}
A. de Pablo, F. Quir\'os, A. Rodr\'guez \& J. L. V\'azquez, \emph{A general fractional porous medium equation}, Comm. Pure Appl. Math. \textbf{65}~ (2012), no. 9, 1242-1284. 

\bibitem[DS16]{DS16}
D. De Silva \& O. Savin, \emph{Boundary Harnack estimates in slit domains and applications to thin free boundary problems}, Rev. Mat. Iberoam. \textbf{32}~(2016), no. 3, 891-912. 

\bibitem[DS15]{DS15}  
D. De Silva \& O. Savin, \emph{$C^\infty$ regularity of certain thin free boundaries}, Indiana Univ. Math. J. \textbf{64}~(2015), no. 5, 1575-1608.

\bibitem[DB83]{DB83}
E. Di Benedetto, \emph{$C^{1+\alpha}$ local regularity of weak solutions of degenerate elliptic equations}, Nonlinear Anal. \textbf{7}~(1983), no. 8, 827-850.

\bibitem[DKP14]{DKP0}
A. Di Castro, T. Kuusi \& G. Palatucci, \emph{Nonlocal Harnack inequalities}, J. Funct. Anal. \textbf{267}~(2014), no. 6, 1807-1836.

\bibitem[DKP16]{DKP}
A. Di Castro, T. Kuusi \& G. Palatucci, \emph{Local behavior of fractional $p$-minimizers}, Ann. Inst. H. Poincar\'e Anal. Non Lin\'eaire \textbf{33}~ (2016), no. 5, 1279-1299. 

\bibitem[DMV17]{DMV17}
S. Dipierro, M. Medina \& E. Valdinoci, \emph{Fractional elliptic problems with critical growth in the whole of $\Rn$}, Appunti. Scuola Normale Superiore di Pisa (Nuova Serie) [Lecture Notes. Scuola Normale Superiore di Pisa (New Series)], 15. Edizioni della Normale, Pisa, 2017. viii+152 pp. 

\bibitem[DMPS16]{DMPS}
S. Dipierro, L. Montoro, I. Peral \& B. Sciunzi, \emph{Qualitative properties of positive solutions to nonlocal critical problems involving the Hardy-Leray potential}, Calc. Var. Partial Differential Equations \textbf{55}~(2016), no. 4, Art. 99, 29 pp.

\bibitem[DSV16]{DSV}
S. Dipierro, O. Savin \& E. Valdinoci, \emph{Graph properties for nonlocal minimal surfaces}, Calc. Var. Partial Differential Equations \textbf{55}~(2016), no. 4, Art. 86, 25 pp. 

\bibitem[DSV16']{DSV16}
S. Dipierro, O. Savin \& E. Valdinoci, \emph{Definition of fractional Laplacian for functions with polynomial growth}, arXiv:1610.04663

\bibitem[DSV17]{DSVJEMS}
S. Dipierro, O. Savin \& E. Valdinoci, \emph{All functions are locally s-harmonic up to a small error}, J. Eur. Math. Soc. (JEMS) \textbf{19}~(2017), no. 4, 957-966.

\bibitem[DSV17']{DSV2}
S. Dipierro, O. Savin \& E. Valdinoci, \emph{Boundary behavior of nonlocal minimal surfaces}, J. Funct. Anal. \textbf{272}~(2017), no. 5, 1791-1851.




\bibitem[DPV12]{DPV}
E. Di Nezza, G. Palatucci \& E. Valdinoci, \emph{Hitchhiker's guide to the fractional Sobolev spaces}, 
Bull. Sci. Math. \textbf{136}~(2012), no. 5, 521-573. 

\bibitem[DGV03]{DGV}
J. Droniou, T. Gallouet \& J. Vovelle, \emph{Global solution and smoothing effect for a non-local regularization of a hyperbolic equation. Dedicated to Philippe B\'enilan}, J. Evol. Equ. \textbf{3}~(2003), no. 3, 499-521.

\bibitem[DP70]{DP}
N. du Plessis, \emph{An introduction to potential theory}, University Mathematical Monographs, No. 7. Hafner Publishing Co., Darien, Conn.; Oliver and Boyd, Edinburgh, 1970. viii+177 pp. 


\bibitem[DL69]{DL}
G. Duvaut \& J. L. Lions, \emph{Inequalities in mechanics and physics}, Translated from the French by C. W. John. Grundlehren der Mathematischen Wissenschaften, 219. Springer-Verlag, Berlin-New York, 1976. xvi+397

\bibitem[D12]{D}
B. Dyda, \emph{Fractional calculus for power functions and eigenvalues of the fractional Laplacean}, Fract. Calc. Appl. Anal. \textbf{15}~(2012), no. 4, 536-555. 

\bibitem[DKK17]{DKK17}
B. Dyda, A. Kuznetsov \& M. Kwa\'snicki, \emph{Eigenvalues of the fractional Laplace operator in the unit ball}, J. Lond. Math. Soc. (2) \textbf{95}~(2017), no. 2, 500-518. 

\bibitem[Dy65]{Dy}
E. B. Dynkin, \emph{Markov processes}, Vols. I, II. Translated with the authorization and assistance of the author by J. Fabius, V. Greenberg, A. Maitra, G. Majone. Die Grundlehren der Mathematischen Wissenschaften, B\"ande 121, 122 Academic Press Inc., Publishers, New York; Springer-Verlag, Berlin-G\"ottingen-Heidelberg 1965 Vol. I: xii+365 pp.; Vol. II: viii+274 pp.

\bibitem[EP16]{EP}
C. L. Epstein and C. A. Pop, \emph{Regularity for the supercritical fractional Laplacean
with drift}, J. of Geometric Analysis \textbf{26}~(2016), no. 2, 1231-1268.

\bibitem[EMOT53]{EMOT}
A. Erd\'elyi, W. Magnus, F. Oberhettinger \& F. G. Tricomi, \emph{Higher transcendental functions}, Vols. I, II. Based, in part, on notes left by Harry Bateman. McGraw-Hill Book Company, Inc., New York-Toronto-London, 1953. xxvi+302, xvii+396 pp.

\bibitem[EG15]{EG}
L. C. Evans \& R. F. Gariepy, \emph{Measure theory and fine properties of functions}, Revised edition. Textbooks in Mathematics. CRC Press, Boca Raton, FL, 2015. xiv+299 pp. 

\bibitem[FJK82]{FJK}
E. B. Fabes, D. Jerison \& C. E. Kenig, \emph{The Wiener test for degenerate elliptic equations}, Ann. Inst. Fourier (Grenoble) \textbf{32}~(1982), no. 3, vi, 151-182.



\bibitem[FKS81]{FKS}
E. Fabes, C. Kenig \& R. Serapioni, \emph{The local regularity of solutions of degenerate elliptic equations}, Comm. Partial Differential Equations \textbf{7}~ (1982), no. 1, 77-116.

\bibitem[Fa16]{Fa16}
M. M. Fall, \emph{Entire $s$-harmonic functions are affine}, Proc. Amer. Math. Soc. \textbf{144}~(2016), no. 6, 2587-2592.

\bibitem[FF14]{FF14}
M. Fall \& V. Felli, \emph{Unique continuation property and local asymptotics of solutions to fractional elliptic equations}, Comm. Partial Differential Equations, \textbf{39}~(2014), 354-397.



\bibitem[FJ15]{FJ}
M. M. Fall \& S. Jarohs, \emph{Overdetermined problems with fractional Laplacean}, ESAIM Control Optim. Calc. Var. \textbf{21}~(2015), no. 4, 924-938.


\bibitem[FV17]{FV1}
A. Farina \& E. Valdinoci, \emph{Regularity and rigidity theorems for a class of anisotropic nonlocal operators}, Manuscripta Math. \textbf{153}~(2017), no. 1-2, 53-70.




\bibitem[FV17']{FV2}
A. Farina \& E. Valdinoci, \emph{A rigidity result for non-local semilinear equations}, Proc. Roy. Soc. Edinburgh Sect. A \textbf{147}~(2017), no. 5, 1009-1018. 

\bibitem[FdlL86]{FdlL}
C. Fefferman \& R. de la Llave, \emph{Relativistic stability of matter. I}, Rev. Mat. Iberoamericana \textbf{2}~(1986), no. 1-2, 119-213. 


\bibitem[FK13]{FK}
M. Felsinger \& M. Kassmann, \emph{Local regularity for parabolic nonlocal operators}, Comm. PDE, \textbf{38}~(2013), 1539-1573.

\bibitem[FKV15]{FKV}
M. Felsinger \& M. Kassmann \& P. Voigt, \emph{The Dirichlet problem for nonlocal operators}, Math. Z. \textbf{279}~(2015), no. 3-4, 779-809.


\bibitem[FR17]{FR}
X. Fern\'andez-Real \& X. Ros-Oton, \emph{Regularity theory for general stable operators: parabolic equations}, J. Funct. Anal. \textbf{272}~(2017), no. 10, 4165-4221.

\bibitem[FF15]{FF} 
F. Ferrari \& B. Franchi, \emph{Harnack inequality for fractional sub-Laplaceans in Carnot groups}, Math. Z. \textbf{279}~(2015), no. 1-2, 435-458.
 
\bibitem[FV12]{FV}
F. Ferrari \& I. E. Verbitsky, \emph{Radial fractional Laplace operators and Hessian inequalities}, J. Diff. Equations \textbf{253}~(2012), no. 1, 244-272.

\bibitem[Fo89]{Fo}
G. B. Folland, \emph{Harmonic analysis in phase space}, Annals of Mathematics Studies, 122. Princeton University Press, Princeton, NJ, 1989. x+277 pp. 

\bibitem[FL82]{FL1}
B. Franchi \& E. Lanconelli, \emph{De Giorgi's theorem for a class of strongly degenerate elliptic equations}, Atti Accad. Naz. Lincei Rend. Cl. Sci. Fis. Mat. Natur. (8) \textbf{72}~(1982), no. 5, 273-277 (1983).

\bibitem[FL83]{FL2}
B. Franchi \& E. Lanconelli, \emph{Une m\'etrique associ\'ee \`a une classe d'op\'erateurs elliptiques d\'eg\'en\'er\'es}. (French) [A metric associated with a class of degenerate elliptic operators] Conference on linear partial and pseudodifferential operators (Torino, 1982). Rend. Sem. Mat. Univ. Politec. Torino 1983, Special Issue, 105-114 (1984).

\bibitem[FL83']{FL3}
B. Franchi \& E. Lanconelli, \emph{H\"older regularity theorem for a class of linear non
uniformly elliptic operators with measurable coefficients}, Ann. Sc. Norm. Sup. Pisa
\textbf{4}~(1983), 523-541.


\bibitem[FL84]{FL4}
B. Franchi \& E. Lanconelli, \emph{An embedding theorem for Sobolev spaces related to nonsmooth vector fields and Harnack inequality}, Comm. Partial Differential Equations \textbf{9}~(1984), no. 13, 1237-1264.

\bibitem[FL85]{FL5}
B. Franchi \& E. Lanconelli, \emph{Une condition g\'eom\'etrique pour l'in\'egalit\'e de Harnack} (French) [A geometric condition for Harnack's inequality], J. Math. Pures Appl. (9) \textbf{64}~(1985), no. 3, 237-256.

\bibitem[FS87]{FS}
B. Franchi \& R. Serapioni, \emph{Pointwise estimates for a class of strongly degenerate
elliptic operators: A geometrical approach}, Ann. Sc. Norm. Sup. Pisa \textbf{14}~(1987), 527-568.


\bibitem[Fr17]{Fr}
R. L. Frank, \emph{Eigenvalue bounds for the fractional Laplacian: A review}, arXiv:1603.09736v2, 2017.



\bibitem[FGMT15]{FGMT}
R. L. Frank, M. del Mar Gonz\'alez, D. Monticelli \& J. Tan, \emph{An extension problem for the $CR$ fractional Laplacean}, Adv. Math. \textbf{270}~(2015), 97-137. 

\bibitem[FLe13]{FLe13}
R. L. Frank \& E. Lenzmann, \emph{Uniqueness of non-linear ground states for fractional Laplacians in $\R$}, Acta Math. \textbf{210}~(2013), no. 2, 261-318. 

\bibitem[FLeS16]{FLeS16}
R. L. Frank, E. Lenzmann \& L.  Silvestre, \emph{Uniqueness of radial solutions for the fractional Laplacian}, Comm. Pure Appl. Math. \textbf{69}~(2016), no. 9, 1671-1726. 

\bibitem[Ga57]{gagliardo}
E. Gagliardo, \emph{Caratterizzazioni delle tracce sulla frontiera relative ad alcune classi di funzioni in $n$ variabili}, Rend. Sem. Mat. Univ. Padova \textbf{27}~(1957), 284-305.

\bibitem[Ga70]{Gar}
L. Garding, \emph{Marcel Riesz in memoriam}, Acta Math. \textbf{124}~(1970) nos. 1-2, x-xi.

\bibitem[G84]{Ga0}
N. Garofalo, \emph{Second order parabolic equations in
nonvariational form: Boundary Harnack principle and comparison
theorems for nonegative solutions}, Ann. Mat. Pura Appl. (4)
\textbf{138}~(1984), 267--296.


\bibitem[G93]{Ga}
N. Garofalo, \emph{Unique continuation for a class of elliptic operators which degenerate on a manifold of arbitrary codimension}, J. Diff. Equations, \textbf{104}~(1993), no. 1, 117-146.

\bibitem[G16]{Ga2}
N. Garofalo, \emph{Hypoelliptic operators and some aspects of analysis and geometry of sub-Riemannian spaces}, Geometry, Analysis and Dynamics on sub-Riemannian Manifolds, EMS Series of Lectures in Mathematics, Vol. I, Editors:
D. Barilari, U. Boscain, M. Sigalotti, 2016, Springer. 


\bibitem[GLe89]{GLe}
N. Garofalo \& J. L. Lewis, \emph{A symmetry result related to some overdetermined boundary value problems}, 
Amer. J. Math. \textbf{111}~(1989), no. 1, 9-33. 

\bibitem[GL86]{GL1}
N. Garofalo \& F.-H. Lin, \emph{Monotonicity properties of variational integrals, $A_p$ weights and unique continuation}, Indiana Univ. Math. J. \textbf{35}~(1986),  245-268.

\bibitem[GL87]{GL2}
N. Garofalo \& F.-H. Lin, \emph{Unique continuation for elliptic  operators: a geometric-variational approach}, Comm. Pure Appl. Math. \textbf{40}~ (1987),  347-366.

\bibitem[GP09]{GP}
N. Garofalo \& A. Petrosyan, \emph{Some new monotonicity formulas and the singular set in the lower dimensional obstacle problem}, Invent. Math. \textbf{177}~(2009), no. 2, 415-461.

\bibitem[GPPS17]{GPPS}
N. Garofalo, A. Petrosyan, C. A. Pop \& M. Smit Vega Garcia,  \emph{Regularity of the free boundary for the obstacle problem for the fractional Laplacean with drift}, Ann. Inst. H. Poincar\'e Anal. Non Lin\'eaire \textbf{34}~(2017), no. 3, 533-570.

\bibitem[GPS16]{GPS}
N. Garofalo, A. Petrosyan \& M. Smit Vega Garcia, \emph{An epiperimetric inequality approach to the regularity of the free boundary in the Signorini problem with variable coefficients}, J. Math. Pures Appl. (9) \textbf{105}~(2016), no. 6, 745-787.

\bibitem[GRO17]{GRO}
N. Garofalo \& X. Ros-Oton, \emph{Structure and regularity of the singular set in the obstacle problem for the fractional Laplacean},  ArXiv:1704.00097 (April, 2017).

\bibitem[GS14]{GS}
N. Garofalo \& M. Smit Vega Garcia, \emph{New monotonicity formulas and the optimal regularity in the Signorini problem with variable coefficients}, Adv. Math. \textbf{262}~(2014), 682-750. 
 
\bibitem[GS64]{GeS}
I. M. Gel'fand \& G. E. Shilov, \emph{Generalized functions. Vol. 1. Properties and operations}, Translated from the 1958 Russian original by E. Saletan. Reprint of the 1964 English translation. AMS Chelsea Publishing, Providence, RI, 2016. xviii+423 pp. 

\bibitem[Ge61]{G}
R. K. Getoor, \emph{First passage times for symmetric stable processes in space}, Trans. Amer. Math. Soc. \textbf{101}~(1961), 75-90.

\bibitem[Go16]{dMG}
M. del Mar Gonz\'alez, \emph{Recent progress on the fractional Laplacean in conformal geometry}, arxiv.org/abs/1609.08988, 2016. 

\bibitem[GSS14]{DSS}
M. del Mar Gonz\'alez, M. S\'aez \& Y. Sire, \emph{Layer solutions for the fractional Laplacean on hyperbolic space: existence, uniqueness and qualitative properties}, Ann. Mat. Pura Appl. (4) \textbf{193}~(2014), no. 6, 1823-1850. 

 
\bibitem[GR80]{GR}
I. S. Gradshteyn \& I. M. Ryzhik, \emph{Tables of integrals, series,
and products}, Academic Press, 1980.

\bibitem[GZ03]{GZ}
C. R.  Graham \& M. Zworski, \emph{Scattering matrix in conformal geometry}, Invent. Math. \textbf{152}~(2003), no. 1, 89-118.
 
\bibitem[GI17]{GI}
A. Greco \& A. Iannizzotto, \emph{Existence and convexity of solutions of the fractional heat equation}, Commun. Pure Appl. Anal. \textbf{16}~(2017), no. 6, 2201-2226.  
 
\bibitem[Gr14]{Gr1}
G. Grubb, \emph{Local and nonlocal boundary conditions for $\mu$-transmission and fractional elliptic pseudodifferential operators}, Anal. PDE \textbf{7}~(2014), no. 7, 1649-1682. 

\bibitem[Gr15]{Gr2}
G. Grubb, \emph{Fractional Laplaceans on domains, a development of H\"ormander's theory of $\mu$-transmission pseudodifferential operators}, Adv. Math. \textbf{268}~(2015), 478-528.

\bibitem[Gr16]{Gr3}
G. Grubb, \emph{Integration by parts and Pohozaev identities for space-dependent fractional-order operators}, J. Differential Equations \textbf{261}~(2016), no. 3, 1835-1879.

\bibitem[Gru70]{Gru1}
V. V. Gru$\check{s}$in, \emph{A certain class of hypoelliptic operators}, (Russian) Mat. Sb. (N.S.) \textbf{83} (125)~(1970), 456-473.

\bibitem[Gru71]{Gru2}
\bysame, \emph{A certain class of elliptic pseudodifferential operators that are degenerate on a submanifold}, (Russian) Mat. Sb. (N.S.) \textbf{84} (126)~(1971), 163-195.

\bibitem[Gui09]{Gui}
N. Guillen, \emph{Optimal regularity for the Signorini problem}, Calc. Var. Partial Differential Equations \textbf{36}~(2009), no. 4, 533-546. 

\bibitem[H54]{H}
J. Hadamard, \emph{Extension \`a l'\'equation de la chaleur d'un th\'eor\`eme de A. Harnack}, (French) Rend. Circ. Mat. Palermo (2) 3, \textbf{1954}~337-346 (1955).

\bibitem[Har87]{Har}
C. G. Axel Harnack, \emph{Die Grundlagen der Theorie des logarithmischen Potentiales und der eindeutigen Potentialfunktion in der Ebene}, Teubner, Leipzig, 1887.

\bibitem[He69]{He}
L. Helms, \emph{Introduction to potential theory}, Pure and Applied Mathematics, Vol. XXII Wiley-Interscience A Division of John Wiley \& Sons, New York-London-Sydney 1969 ix+282 pp. 

\bibitem[Ho66]{Ho66}
L. H\"ormander, \emph{Pseudo-differential operators and hypoelliptic equations. Singular integrals}, Proc. Sympos. Pure Math., Vol. X, Chicago, Ill., 1966, pp. 138-183. Amer. Math. Soc., Providence, R.I., 1967.

\bibitem[Ho67]{Ho}
L. H\"ormander, \emph{Hypoelliptic second order differential equations}, Acta Math. \textbf{119}~(1967), 147-171.

\bibitem[IMS16]{IMS}
A. Iannizzotto, S. Mosconi \& M. Squassina, \emph{Global H\"older regularity for the fractional $p$-Laplacean}, Rev. Mat. Iberoam. \textbf{32}~ (2016), no. 4, 1353-1392. 

\bibitem[IN10]{IN}
H. Ishii \& G. Nakamura, \emph{A class of integral equations and approximation of $p$-Laplace equations}, Calc. Var. Partial
Differential Equations, \textbf{37}~(2010), 485-522. 

\bibitem[JKS17]{JKS17}
S. Jarohs, T. Kulczycki \& P. Salani, \emph{Starshape of the superlevel sets of solutions to equations involving the fractional Laplacian in starshaped rings}, Arxiv.org/abs/1701.02367
 
\bibitem[Je81]{Jer}
  D. Jerison, \emph{The Dirichlet problem for the Kohn Laplacean on the Heisenberg group, Parts I and II}, J. Funct. Analysis \textbf{43}~(1981), 97--142.
   
\bibitem[JK82]{JK}
  D. Jerison \& C. E. Kenig, \emph{Boundary behavior of harmonic functions in non-tangentially accessible domains}, Adv. Math. \textbf{46}~(1982), 80-147.

\bibitem[JK85]{JK85}
D. Jerison \& C. E. Kenig, \emph{Unique continuation and absence of positive eigenvalues for Schr\"odinger operators. With an appendix by E. M. Stein}, Ann. of Math. (2) \textbf{121}~(1985), no. 3, 463-494.

\bibitem[JN17]{JN17}
Y. Jhaveri \& R. Neumayer, \emph{Higher regularity of the free boundary in the obstacle problem for the fractional Laplacian}, Adv. Math. \textbf{311}, (2017), 748-795.   
 
\bibitem[Ka09]{Ka1}
M. Kassmann, \emph{A priori estimates for integro-differential operators with measurable kernels}, Calc. Var. Partial Differential Equations \textbf{34}~(2009), no. 1, 1-21.

\bibitem[Ka11]{Ka2}
M. Kassmann, \emph{Harnack inequalities and H\"older regularity estimates for nonlocal operator revisited}, available as SFB 701-preprint no. 11015, 2011, https://www.math.uni-bielefeld.de/kassmann


\bibitem[Ka11']{Ka3}
M. Kassmann, \emph{A new formulation of Harnack's inequality for nonlocal operators}, C. R. Math. Acad. Sci. Paris \textbf{349}~(2011), no. 11-12, 637-640. 

\bibitem[KPS15]{KPS}
H. Koch, A. Petrosyan \& W. Shi, \emph{Higher regularity of the free boundary in the elliptic Signorini problem},
Nonlinear Anal. 126 (2015), 3-44. 

\bibitem[KRS16]{KRS16}
H. Koch, A. R\"uland \& W. Shi, \emph{The variable coefficient thin obstacle problem: Carleman inequalities}, Adv. Math. \textbf{301}~(2016), 820-866.

\bibitem[KRS17]{KRS17}
H. Koch, A. R\"uland \& W. Shi, \emph{The variable coefficient thin obstacle problem: optimal regularity and regularity of the regular free boundary}, Ann. Inst. H. Poincar\'e Anal. Non Lin\'eaire \textbf{34}~(2017), no. 4, 845-897. 

\bibitem[KN65]{KN1}
J. J. Kohn \& L. Nirenberg, \emph{An algebra of pseudo-differential operators}, Comm. Pure Appl. Math. \textbf{18}~(1965), 269-305.

\bibitem[KN65']{KN2}
J. J. Kohn \& L. Nirenberg, \emph{Non-coercive boundary value problems}, Comm. Pure Appl. Math. \textbf{18}~(1965), 443-492. 

\bibitem[Ko00]{Ko}
V. Kolokoltsov, \emph{Symmetric stable laws and stable-like jump-diffusions}, Proc. London Math. Soc. (3) \textbf{80}~(2000), no. 3, 725-768.
 
\bibitem[K89]{K89}
T. Kolsrud, \emph{Traces of harmonic functions, capacities, and traces of symmetric Markov processes}, J. Theoret. Probab. \textbf{2}~(1989), no. 4, 399-418. 

\bibitem[Ko95]{Ko95}
T. Komatsu, \emph{Uniform estimates for fundamental solutions associated
with nonlocal Dirichlet forms}, Osaka J. Math. \textbf{32}~(1995), 833-850.

\bibitem[KMS15]{KMS}
T. Kuusi, G. Mingione \& Y. Sire, \emph{Nonlocal equations with measure data}, Comm. Math. Phys. \textbf{337}~(2015), no. 3, 1317-1368. 

\bibitem[KP17]{KP}
T. Kuusi \& G. Palatucci, \emph{Recent Developments in Nonlocal Theory},
Ed. by G. Palatucci \& T. Kuusi, De Gruyter Open, 2017.


\bibitem[Kwa17]{Kwa}
M. Kwa\'snicki, \emph{Ten equivalent definitions of the fractional Laplace operator}, Fract. Calc. Appl. Anal. \textbf{20}~(2017), no. 1, 7-51. 

\bibitem[La72]{La}
N. S. Landkof, \emph{Foundations of modern potential theory}, Translated from the Russian by A. P. Doohovskoy. Die Grundlehren der mathematischen Wissenschaften, Band 180. Springer-Verlag, New York-Heidelberg, 1972. 

\bibitem[Le72]{Le}
N. N. Lebedev, \emph{Special functions and their applications}, Revised edition, translated from the Russian and edited by R. A. Silverman. Unabridged and corrected republication. Dover Publications, Inc., New York, 1972. xii+308 pp.

\bibitem[Le83]{Le83}
J. L. Lewis, \emph{Regularity of the derivatives of solutions to certain degenerate elliptic equations}, Indiana Univ. Math. J. \textbf{32}~(1983), no. 6, 849-858.

\bibitem[LY86]{LY}
P. Li \& S. T. Yau, \emph{On the parabolic kernel of the
Schr\"odinger operator}, Acta Math., \textbf{156}~(1986), 153-201.

\bibitem[LL17]{LL}
E. Lindgren \& P. Lindqvist, \emph{Perron's method and Wiener's theorem for a nonlocal equation}, Potential Anal. \textbf{46}~(2017), no. 4, 705-737. 

\bibitem[LM72]{LM}
J.-L. Lions \& E. Magenes, \emph{Non-homogeneous boundary value problems and applications}, Vol. I. Translated from the French by P. Kenneth. Die Grundlehren der mathematischen Wissenschaften, Band 181. Springer-Verlag, New York-Heidelberg, 1972. xvi+357 pp. 

\bibitem[Ma88]{Ma88}
J. J. Manfredi, \emph{$p$-harmonic functions in the plane}, Proc. Amer. Math. Soc. \textbf{103}~ (1988), no. 2, 473-479. 

\bibitem[Mi74]{Mi}
K.  Miller, \emph{Nonunique continuation for uniformly parabolic and elliptic equations in self-adjoint divergence form with H\"older continuous coefficients}, Arch. Rational Mech. Anal. \textbf{54}~(1974), 105-117.  

\bibitem[MO69]{MO69}
S. A. Molchanov \& E. Ostrovskii, \emph{Symmetric stable processes as traces of degenerate diffusion processes}, Theor. Probability Appl. 14 (1969), 128-131.

\bibitem[MRS16]{MRS16}
G. Molica Bisci, V. Radulescu \& R. Servadei, \emph{Variational methods for nonlocal fractional problems}, Cambridge University Press, 2016.

\bibitem[MS65]{MS}
B. Muckenhoupt \& E. M. Stein, \emph{Classical expansions and their relation to conjugate harmonic functions}, 
Trans. Amer. Math. Soc. \textbf{118}~(1965), 17-92. 

\bibitem[NS16]{NS}
K. Nystr\"om \& O. Sande, \emph{Extension properties and boundary estimates for a fractional heat operator}, Nonlinear Analysis, \textbf{140}~(2016), 29-37.

\bibitem[OS74]{OS}
K. B. Oldham \& J. Spanier, \emph{The fractional calculus. 
Theory and applications of differentiation and integration to arbitrary order}, with an annotated chronological bibliography by B. Ross. Mathematics in Science and Engineering, Vol. 111. Academic Press, New York-London, 1974. xiii+234 pp.

\bibitem[PP15]{PP}
A. Petrosyan \& C. A. Pop, \emph{Optimal regularity of solutions to the obstacle problem for the fractional Laplacean with drift}, J. Funct. Anal. \textbf{268}~(2015), no. 2, 417-472.
 
\bibitem[PSU12]{PSU}
A. Petrosyan, H. Shahgholian \& N. Uraltseva, \emph{Regularity of free boundaries in obstacle-type problems},
   Graduate Studies in Mathematics, 136, American Mathematical Society, Providence, RI, 2012, p. x+221.


\bibitem[P52]{P}
R. S. Phillips, \emph{On the generation of semigroups of linear operators}, Pacific J. Math. \textbf{2}~(1952), 343-369.

\bibitem[P54]{Pi}
B.  Pini, \emph{Sulla soluzione generalizzata di Wiener per il primo problema di valori al contorno nel caso parabolico}, Rend. Sem. Mat. Univ. Padova \textbf{23}~(1954), 422-434. 

\bibitem[Pl63]{Pl}
A. Pli\'s,  \emph{On non-uniqueness in Cauchy problem for an elliptic second order differential equation}, Bull. Acad. Polon. Sci. S\'er. Sci. Math. Astronom. Phys. \textbf{11}~(1963), 95-100.

\bibitem[Po96]{Po}
C. C. Poon, \emph{Unique continuation for parabolic equations}, Comm. Partial Differential Equations \textbf{21}~ (1996), no. 3-4, 521-539.

\bibitem[PBM88]{PBM}
A. P. Prudnikov, Yu. A. Brychkov \& O. I. Marichev, \emph{Integrals and series. Vol. 2. Special functions}, Translated from the Russian by N. M. Queen, Second edition, Gordon \& Breach Science Publishers, New York, 1988, 750 pp.

\bibitem[PT69]{PT69}
W. E. Pruitt \& S. J. Taylor, \emph{The potential kernel and hitting probabilities for the general stable process in $\R^N$}, Trans. Amer. Math. Soc. \textbf{146}~(1969), 299-321. 
 
\bibitem[PS16]{PS16}
P. Pucci \& S. Saldi, \emph{Critical stationary Kirchhoff equations in $\Rn$ involving nonlocal operators}, Rev. Mat. Iberoam. \textbf{32}~(2016), no. 1, 1-22.

\bibitem[RS75]{RS}
M. Reed \& B. Simon, \emph{Methods of modern mathematical physics. II. Fourier analysis, self-adjointness}, Academic Press [Harcourt Brace Jovanovich, Publishers], New York-London, 1975. xv+361 pp.  


\bibitem[Re82]{Re}
R. C. Reilly, \emph{Mean curvature, the Laplacean, and soap bubbles}, Amer. Math. Monthly \textbf{89}~(1982), no. 3, 180-188, 197-198.

\bibitem[Rel40]{Rel}
F. Rellich, \emph{Darstellung der Eigenwerte von $\Delta u+\la u=0$ durch ein Randintegral}, (German) Math. Z. \textbf{46}~(1940). 635Ð636.



\bibitem[R38]{R}
M. Riesz, \emph{Int\'egrales de Riemann-Liouville et potentiels}, Acta Sci. Math. Szeged, \textbf{9}~(1938), 1-42.

\bibitem[R49]{R2}
M. Riesz, \emph{L'int\'egrale de Riemann-Liouville et le probl\`eme de Cauchy}, (French) Acta Math. \textbf{81}~(1949), 1-223.

\bibitem[RO15]{RO15}
X. Ros-Oton, \emph{Nonlocal elliptic equations in bounded domains: a survey},  arXiv:1504.04099, 2015.

\bibitem[RO17]{RO17}
X. Ros-Oton, \emph{Obstacle problems and free boundaries: an overview},  arXiv:1707.00992, 2017.

\bibitem[RO17']{RO}
X. Ros-Oton, \emph{Boundary regularity, Pohozaev identities and nonexistence results}, Recent Developments in Nonlocal Theory
Ed. by G. Palatucci \& T. Kuusi, to appear.

\bibitem[ROS14]{ROS}
X. Ros-Oton \& J. Serra, \emph{The Dirichlet problem for the fractional Laplacean: regularity up to the boundary}, J. Math. Pures Appl. (9) \textbf{101}~(2014), no. 3, 275-302. 

\bibitem[ROS14']{ROS2}
X. Ros-Oton \& J. Serra, \emph{The Pohozaev identity for the fractional Laplacean}, Arch. Ration. Mech. Anal. \textbf{213}~(2014), no. 2, 587-628. 

\bibitem[Ro77]{Ross}
B. Ross, \emph{The development of fractional calculus 1695-1900}, Historia Math. \textbf{4}~(1977), 75-89.

\bibitem[Ru15]{Ru}
A. R\"uland, \emph{Unique continuation for fractional Schr\"odinger equations with rough potentials}, Comm. Partial Differential Equations \textbf{40}~ (2015), no. 1, 77-114.





\bibitem[SKM93]{SKM}
S. G. Samko, A. A. Kilbas \& O. I. Marichev, \emph{Fractional integrals and derivatives. Theory and applications}, Edited and with a foreword by S. M. Nikol'skii. Translated from the 1987 Russian original. Revised by the authors. Gordon and Breach Science Publishers, Yverdon, 1993. xxxvi+976 pp.

\bibitem[SV14]{SV}
O. Savin \& E. Valdinoci, \emph{Density estimates for a variational model driven by the Gagliardo norm}, J. Math. Pures Appl. (9) \textbf{101}~(2014), no. 1, 1-26.
 
\bibitem[Sch66]{Sc}
L. Schwartz, \emph{Th\'eorie des distributions}, (French) Publications de l'Institut de Math\'ematique de l'Universit\'e de Strasbourg, No. IX-X. Nouvelle \'edition, enti\'erement corrig\'ee, refondue et augment\'ee. Hermann, Paris 1966 xiii+420 pp. 

\bibitem[Sec13]{Sec13}
S. Secchi, \emph{Ground state solutions for nonlinear fractional Schr\"odinger equations in $\Rn$}, J. Math. Phys. \textbf{54}~(2013), no. 3, 17 p. 

\bibitem[Se71]{Se}
J. Serrin, \emph{A symmetry problem in potential theory}, Arch. Rat.
Mech. Anal., ~(1971), 304-318.

\bibitem[SV15]{SV15}
R. Servadei \& E. Valdinoci, \emph{The Brezis-Nirenberg result for the fractional Laplacian}, Trans. Amer. Math. Soc. \textbf{367}~(2015), no. 1, 67-102. 


\bibitem[Si07]{Si}
L. Silvestre, \emph{Regularity of the obstacle problem for a fractional power of the Laplace operator}, Comm. Pure Appl. Math. \textbf{60}~(2007), no. 1, 67-112.

\bibitem[Slo58]{Slo}
L. N. Slobodeckii, \emph{S. L. Sobolev's spaces of fractional order and their application to boundary problems for partial differential equations}, Dokl. Akad. Nauk SSSR (N.S.) \textbf{118}~(1958), 243-246.

\bibitem[Sp58]{S58}
F.  Spitzer, \emph{Some theorems concerning $2$-dimensional Brownian motion}, Trans. Amer. Math. Soc. \textbf{87}~(1958), 187-197.

\bibitem[St70]{St}
E. M. Stein, \emph{Singular integrals and differentiability properties of functions}, Princeton Mathematical Series, No. 30 Princeton University Press, Princeton, N.J. 1970 xiv+290 pp. 

\bibitem[St76]{SHA}
E. M. Stein, \emph{Harmonic analysis on $\Rn$}, in Studies in
Harmonic Analysis, J. M. Ash, Ed., MAA Studies in Mathematics,
Vol.13, 1976.
   
\bibitem[SW78]{SW77}
E. M. Stein \& S. Wainger, \emph{Problems in harmonic analysis related to curvature}, Bull. Amer. Math. Soc. \textbf{84}~(1978), no. 6, 1239-1295. 

\bibitem[SW71]{SW}
E. M. Stein \& G. Weiss, \emph{Fourier Analysis in Euclidean
Spaces}, Princeton Univ. Press, 1971.

\bibitem[ST10]{ST10}
P. Stinga \& J. Torrea, \emph{Extension problem and Harnack's inequality for some fractional operators}, Comm. Partial Differential Equations \textbf{35}~(2010), no. 11, 2092-2122.

\bibitem[ST17]{ST}
P. Stinga \& J. Torrea, \emph{Regularity theory and extension problem for fractional nonlocal  parabolic equations and the master equation}, arXiv:1511.01945, Revised April 13, 2017.

\bibitem[Str83]{Str}
R. Strichartz, \emph{Analysis  of the Laplacean on the complete Riemannian manifold}, J. Funct. Anal. \textbf{52}~(1983), 48-79.  

\bibitem[Tan11]{Tan}
J. Tan, \emph{The Brezis-Nirenberg type problem involving the square root of the Laplacian}, Calc. Var. Partial Differential Equations \textbf{42}~(2011), no. 1-2, 21-41.

\bibitem[Ta81]{Ta}
M. E. Taylor, \emph{Pseudodifferential Operators}, Princeton Univ. Press 1981.

\bibitem[Ta11]{Ta2}
M. E. Taylor, \emph{Partial differential equations II. Qualitative studies of linear equations}, Second edition. Applied Mathematical Sciences, 116. Springer, New York, 2011. xxii+614 

\bibitem[To84]{To84}
P. Tolksdorf, \emph{Regularity for a more general class of quasilinear elliptic equations}, J. Differential Equations \textbf{51}~(1984), no. 1, 126-150.

\bibitem[T67]{T}
F. Tr\`eves, \emph{Topological vector spaces, distributions and kernels}, Academic Press, New York-London 1967 xvi+624 pp.

\bibitem[T75]{T2}
F. Tr\`eves, \emph{Basic linear partial differential equations}, Pure and Applied Mathematics, Vol. 62. Academic Press, New York-London, 1975. xvii+470 pp.

\bibitem[V17]{V17}
J. L. V\'azquez, \emph{The mathematical theories of diffusion. Nonlinear and fractional diffusion}, arXiv:1706.08241.
 
\bibitem[V17']{V17'}
J. L. V\'azquez, \emph{Asymptotic behaviour for the fractional heat equation in the Euclidean space},  arXiv:1708.00821v2.
 
\bibitem[W62]{W}
G. N. Watson, \emph{A treatise in the theory of Bessel functions}, 2nd ed., Cambridge University Press, London, 1962.

\bibitem[We71]{We}
H. F. Weinberger, \emph{Remark on the preceding paper by Serrin},
Arch. Rat. Mech. Anal., ~(1971), 319-321.

\bibitem[W40]{Weyl}
H. Weyl, \emph{The method of orthogonal projection in potential theory}, Duke Math. J. \textbf{7}~(1940), 411-444. 



\bibitem[Y78]{Y}
K. Yosida, \emph{Functional analysis} Fifth edition. Grundlehren der Mathematischen Wissenschaften, Band 123. Springer-Verlag, Berlin-New York, 1978. xii+501 pp. 

\bibitem[Yu17]{Yu}
H. Yu, \emph{Unique continuation for fractional orders of elliptic equations},
ArXiv:1609.01376, (version three) 2017. 


\end{thebibliography}
\end{document}